\newtheorem{thm}{Theorem}[section]
\newtheorem{prop}[thm]{Proposition}
\newtheorem{lemma}[thm]{Lemma}
\newtheorem{cor}[thm]{Corollary}
\newtheorem{hyp}[thm]{Hypothesis}
\theoremstyle{definition}
\newtheorem{defn}[thm]{Definition}
\newtheorem{notation}[thm]{Notation}
\newtheorem{remark}[thm]{Remark}
\newtheorem{remarks}[thm]{Remarks}
\newtheorem{example}[thm]{Example}
\newtheorem{examples}[thm]{Examples}
\newtheorem{mynote}{Comment[mynote]}
\newtheorem{para}[thm]{}
\newtheorem{thm*}{Theorem}[chapter]
\numberwithin{equation}{section}
\newcounter{spec}
{\end{list}}
\renewcommand{\hat}{\widehat}
\newcommand{\Spec}{\operatorname{Spec}} 
\newcommand{\Spf}{\operatorname{Spf}} 
\newcommand{\Hom}{\operatorname{Hom}}      
\newcommand{\Aut}{\operatorname{Aut}}      
\newcommand{\Ext}{\operatorname{Ext}}      
\newcommand{\iext}{\operatorname{\underline{Ext}}}
\newcommand{\Hp}{\operatorname{\cH_{p}^{\varphi}}}
\newcommand{\Hpp}{\operatorname{\cH_{p}^{\varpi}}}
\newcommand{\hp}{\operatorname{h_p}}   
\newcommand{\W}{\operatorname{W}}
\newcommand{\D}{\operatorname{\mathbb{D}}}
\newcommand{\K}{\operatorname{\mathbb{K}}}
\newcommand{\Mi}{\mathcal{M}_1}   
\newcommand{\gMi}{\mathcal{M}^{\text{gr}}_1} 
\newcommand{\tMi}{{}^t\mathcal{M}_1} 
\newcommand{\Tcrys}{\operatorname{T_{crys}}} 
\newcommand{\TdR}{\operatorname{T_{dR}}} 
\newcommand{\Tsing}{\operatorname{T_{sing}}} 
\newcommand{\Tp}{\operatorname{T_{p}}} 
\newcommand{\Tl}{\operatorname{T_{\ell}}} 
\newcommand{\TsingQ}{\operatorname{T_{sing}^{\Q}}} 
\newcommand{\mup}{\mu_{p^{\infty}}} 
\newcommand{\ocp}{\cO_{\C_p}}
\newcommand{\Sch}{\operatorname{Sch}}
\newcommand{\Mod}{{\operatorname{Mod\text{}}}}
\newcommand{\Vect}{{\operatorname{Vect\text{}}}}
\newcommand{\Nat}{\operatorname{Nat}}
\newcommand{\Sp}{\operatorname{sp}}
\newcommand{\Zp}{\operatorname{Z_p}}
\newcommand{\Vp}{\operatorname{V_p}}
\newcommand{\Ainf}{\operatorname{A_{inf}}}
\newcommand{\BdR}{\operatorname{B_{dR}}}
\newcommand{\BdRp}{\operatorname{B^+_{dR}}}
\newcommand{\Acris}{\operatorname{A_{cris}}}
\newcommand{\Bcris}{\operatorname{B_{cris}}}
\newcommand{\Bcrisp}{\operatorname{B^+_{cris}}}
\newcommand{\At}{\operatorname{A_2}}
\newcommand{\Bt}{\operatorname{B_2}}
\newcommand{\Bct}{\operatorname{B_{2,cris}}}
\newcommand{\BHT}{\operatorname{B_{HT}}}
\newcommand{\DBHT}{\operatorname{D_{B_{HT}}}}
\newcommand{\DdR}{\operatorname{D_{dR}}}
\newcommand{\Dcris}{\operatorname{D_{cris}}}
\newcommand{\DHT}{\operatorname{D_{HT}}}
\newcommand{\Aocris}{\operatorname{A^0_{cris}}}
\newcommand{\Ann}{\operatorname{Ann}}  
\newcommand{\End}{\operatorname{End}}      
\newcommand{\Tr}{\operatorname{Tr}}
\newcommand{\Div}{\operatorname{Div}}
\newcommand{\ihom}{{\rm\underline{Hom}}}  
\newcommand{\E}{\mathbb{E}}      
\newcommand{\C}{\mathbb{C}}     
\newcommand{\F}{\mathbb{F}}     
\newcommand{\Q}{\mathbb{Q}}     
\newcommand{\Z}{\mathbb{Z}}     
\newcommand{\N}{\mathbb{N}}
\newcommand{\G}{\mathbb{G}}     
\newcommand{\R}{\mathbb{R}}     
\newcommand{\ve}{^{\vee}} 
\newcommand{\im}{\operatorname{Im}}        
\renewcommand{\ker}{\operatorname{Ker}}  
\newcommand{\coker}{\operatorname{Coker}} 
\newcommand{\gr}{\operatorname{gr}}        
\newcommand{\Pic}{\operatorname{Pic}}     
\newcommand{\Alb}{\operatorname{Alb}}     
\newcommand{\rank}{\operatorname{rank}}    
\newcommand{\tors}{{\operatorname{tors}}} 
\renewcommand{\tilde}{\widetilde}
\newcommand{\ie}{{\it i.e., }}
\newcommand{\st}{{\,\mid\, }}
\newcommand{\Rep}{{\operatorname{Rep_{\Q_p}}}}
\newcommand{\Lie}{{\operatorname{Lie}}} 
\newcommand{\Gal}{{\operatorname{Gal}}} 
\newcommand{\coLie}{{\operatorname{coLie}}} 
\newcommand{\Der}{{\operatorname{Der}}} 
\newcommand{\Tang}{{\operatorname{T}}} 
\newcommand{\Fil}{{\operatorname{Fil}}} 
\newcommand{\MF}{{\operatorname{MF^{\varphi}_{K}}}} 
\newcommand{\MFw}{\operatorname{MF^{\varphi,w.a.}_{K}}} 
\newcommand{\Ex}{{\operatorname{E}}}
\newcommand{\nat}{^{\natural}}
\newcommand{\logcris}{\operatorname{\log_{cris}}}
\renewcommand{\bar}{\overline}
\newcommand{\barQ}{\overline{\Q}} 
\newcommand{\barQp}{\overline{\Q}_p} 
\newcommand{\into}{\hookrightarrow}
\renewcommand{\lim}{\varprojlim}
\newcommand{\colim}{\varinjlim}
\newcommand{\onto}{\mbox{$\to\!\!\!\!\to$}}
\newcommand{\cA}{\mathcal{A}}
\newcommand{\cB}{\mathcal{B}}
\newcommand{\cC}{\mathcal{C}}
\newcommand{\cD}{\mathcal{D}}
\newcommand{\cF}{\mathcal{F}}
\newcommand{\cH}{\mathcal{H}}
\newcommand{\cI}{\mathcal{I}}
\newcommand{\cK}{\mathcal{K}}
\newcommand{\cM}{\mathcal{M}}
\newcommand{\cO}{\mathcal{O}}
\newcommand{\cP}{\mathcal{P}}
\newcommand{\cS}{\mathcal{S}}
\newcommand{\cT}{\mathcal{T}}
\newcommand{\sA}{\mathscr{A}}
\newcommand{\sD}{\mathscr{D}}
\newcommand{\sF}{\mathscr{F}}
\newcommand{\sG}{\mathscr{G}}
\newcommand{\sI}{\mathscr{I}}
\newcommand{\sV}{\mathscr{V}}
\newcommand{\sX}{\mathscr{X}}
\newcommand{\sY}{\mathscr{Y}}
\newcommand{\fg}{\mathfrak{g}}
\newcommand{\fp}{\mathfrak{p}}
\newcommand{\fh}{\mathfrak{h}}
\newcommand{\fm}{\mathfrak{m}}
\newcommand{\fb}{\mathfrak{b}}
\newcommand{\fa}{\mathfrak{a}}
\newcommand{\fppf}{_{\text{fppf}}} 
\newcommand{\etale}{_{\text{\'etale}}} 
\renewcommand{\phi}{\varphi}
\renewcommand{\epsilon}{\varepsilon}
\newcommand{\boxtensor}{\def\boxtimesten{\Box\kern-7.59pt\raise1.2pt
\hbox{$\times$} }}                                  
\newcounter{elno}                   
  \newtheoremstyle{remarkstyle}{\topsep}{\topsep}{\rm}{}{\bfseries}{.}{.5em}{}
  \theoremstyle{remarkstyle}
\begin{document}


\notachapter{Abstract}
This work aims to develop p-adic analogs of known results for classical periods, focusing specifically on 1-motives. We establish an integration theory for 1-motives with good reductions, which generalizes the Colmez-Fontaine-Messing p-adic integration for abelian varieties with good reductions. We also compare the integration pairing with other pairings such as those induced by crystalline theory. Additionally, we introduce a formalism for periods and formulate p-adic period conjectures related to p-adic periods arising from this integration pairing. Broadly, our p-adic period conjecture operates at different depths, with each depth revealing distinct relations among the p-adic periods. Notably, the classical period conjecture (Kontsevich-Zagier conjecture over $\barQ$) for 1-periods fits within our framework, and, according to the classical subgroup theorem of Huber-Wüstholz for 1-motives, the conjecture for classical periods of 1-motives holds true at depth 1. Finally, we identify three $\Q$-structures arising from $\barQ$-rational points of the formal p-divisible group associated with a 1-motive $M$ with a good reduction at $p$, and we prove p-adic period conjectures at depths 2 and 1, relative to periods induced by the p-adic integration of $M$ and these $\Q$-structures. Our proof involves a p-adic version of the subgroup theorem that we obtain for 1-motives with good reductions.

\clearpage

\notachapter{R\'esum\'e}
Ce travail vise à développer des analogues p-adiques des résultats connus pour les périodes classiques, en se concentrant spécifiquement sur les 1-motifs. Nous établissons une théorie de l'intégration pour les 1-motifs avec bonne réduction, qui généralise l'intégration p-adique de Colmez-Fontaine-Messing pour les variétés abéliennes avec bonne réduction. Nous comparons également cette int\'egration avec d'autres int\'egrations, telless que celles induites par la théorie cristalline. De plus, nous introduisons un formalisme pour les périodes et formulons des conjectures sur les périodes p-adiques liées aux périodes p-adiques provenant de ce couplement d'intégration. De manière générale, notre conjecture sur les périodes p-adiques opère à différents niveaux de profondeur, chaque profondeur révélant des relations distinctes entre les périodes p-adiques. Notamment, la conjecture des périodes classiques (conjecture de Kontsevich-Zagier sur $\barQ$) pour les 1-périodes apparaît comme un cas particulier dans notre cadre, où il est connu que la conjecture des périodes est vérifiée au niveau 1, comme le montre le théorème classique de sous-groupe de Huber-Wüstholz pour les 1-motifs. Enfin, nous identifions trois $\Q$-structures provenant des points $\barQ$-rationnels du groupe formel p-divisible associé à un 1-motif $M$ avec bonne réduction en p, et nous prouvons les conjectures sur les périodes p-adiques aux profondeurs 2 et 1, en relation avec les périodes induites par l'intégration 
p-adique de $M$ et ces $\Q$-structures. Notre preuve implique une version p-adique du théorème du sous-groupe que nous obtenons pour les 1-motifs avec bonne réduction.

\clearpage

\clearpage

\notachapter{Acknowledgement}   
 I would like to express my deepest gratitude to my supervisor, Prof. Abdellah Sebbar, for his unwavering support, kindness, patience, and empathy throughout my doctoral journey. He was the first to introduce me to the topics of classical periods of elliptic curves, Grothendieck's period conjecture, and Andr\'e's related work, which has profoundly shaped the direction of my research. His belief in my potential has been a constant source of motivation and personal growth, for which I am deeply grateful.


\clearpage

\tableofcontents
\newpage
\thispagestyle{plain}
\begin{flushright}
{\large\fontfamily{ppl}\selectfont \emph{Nor you nor I can read the etern decree,\\
To that enigma we can find no key;\\
You and I talk only this side of the veil;\\
But, if that veil be lifted, neither you nor I will remain.
}}
\vspace{0.5cm}

\footnotesize{---Khayyam, Omar. Rubaiyat of Omar Khayyam.\\ Translated by Edward FitzGerald, edited version}
\end{flushright}
\clearpage
\nonumchapter{Introduction}
The integration pairing, by Stoke's theorem, induces a pairing
\[
\operatorname{H^{sing}_n}(X^{an},\Q)\times \operatorname{H_{dR}^n}(X)\to\C,\,\, (\sigma,\omega)\mapsto\int_\sigma\omega
\]
where $X$ is a smooth variety over $K\subseteq\C$ and $X^{\text{an}}$ is the analytification of $X$. The above pairing is called a (classical\footnote{We use the term "classical" to distinguish these periods from p-adic periods, which are the main focus of this research.}) period pairing. A classical period of $X$ is a complex number in the image of this pairing. The classical period pairing is perfect (by the Grothendieck-Deligne's theorem), i.e., after extending the scalars to $\C$, we have a functorial isomorphism
\[
\operatorname{H^n_{sing}}(X,\Q)\otimes_{\Q}\C\cong\operatorname{H^n_{dR}(X)}\otimes_K\C
.\]
We have two types of obvious relations among these periods:
\begin{enumerate}
    \item Bilinearity: $$\displaystyle\int_{a_1\sigma_1+a_2\sigma_2}b_1\omega_1+b_2\omega_2=a_1b_1\int_{\sigma_1}\omega_1+a_1b_2\int_{\sigma_1}\omega_2+a_2b_1\int_{\sigma_2}\omega_1+a_2b_2\int_{\sigma_2}\omega_2$$
for any $\sigma_1,\sigma_2\in\operatorname{H^n_{sing}}(X,\Q), \omega_1,\omega_2\in\operatorname{H^n_{dR}(X)}$, $a_1,a_2\in\Q$, and $b_1,b_2\in K$.
    \item Functoriality: $\displaystyle\int_{f_*\sigma}\omega=\int_{\sigma}f^*\omega$
    for any morphism $f:X\to Y$, where $\omega\in\operatorname{H^n_{dR}(Y)}$ and $\sigma\in\operatorname{H^n_{sing}}(X,\Q)$.
\end{enumerate}
The classical period conjecture states that there are no relations among periods except those induced by bilinearity and functoriality.

The transcendence properties
of periods make it natural to ask questions about linear and algebraic relations
among them. The study of linear relations of periods was originally formulated in \cite{MR1829883} and \cite{MR1718044} which is a special case of algebraic relations among these periods. Periods are given a cohomological interpretation and all relations among them should be induced by relations between motives. This idea was initiated in \cite{grothendieck1966rham} to study period algebras. A neutral Tannakian category is required to construct the period algebras. In this framework, the period conjecture for a full abelian rigid tensor subcategory generated by a motive $M$ is equivalent to Grothendieck’s period conjecture which states that, the transcendental degree of the period algebra of $M$ is equal to the dimension of the motivic Galois group $G(M)$. Grothendieck did not publish his conjecture himself. For the history and the formulation of the conjecture, see \cite{And04} and \cite{And17}. For further details on equivalent constructions of classical periods and the conjecture in the context of Nori motives, we refer the reader to \cite{huber2017periods} or \cite{huber2018galois}. In the general case, the Grothendieck's period conjecture is widely open, however, the linear space of periods and the relations among them are now better known for certain cases specifically in the category of 1-motives.

A Deligne 1-motive (\cref{definition of 1-motive}) over a field $K$ is a two-term complex $M=[L\to G]$ of group schemes over $K$, where $L$ is a lattice and $G$ is a semi-abelian group, i.e., an extension of an abelian scheme $A$ by a torus $T$. A 1-motive $M$ over a subfield $K\subset\C$ is equipped with the singular realization functor $\Tsing(M)$ and the de Rham realization functor $\TdR(M)$ yielding a comparison isomorphism 
\[
\Tsing(M)\otimes_{\Z}\C\cong\TdR(M)\otimes_K\C.
\]
Let $\TsingQ(M):=\Tsing(M)\otimes_\Z\Q$. One can write the pairing 
\begin{equation}\label{pairing1}
\TsingQ(M)\times\TdR\ve(M)\to \C
\end{equation}
induced by the above isomorphism which indeed arises from integration of 1-forms. A classical period of $M$ is a complex number which lies in the image of the above pairing. Huber and Wüstholz in \cite{huber2022transcendence} showed that all relations among classical periods of 1-motives over $\barQ$ are induced by bilinearity and functoriality. One of the main tools in their proof was the use of the celebrated analytic subgroup theorem (\cite{wustholz1989algebraische}).

\begin{center}
    \textit{What about p-adic analogues of this tale?}
\end{center}
This research is mainly motivated by this question.
The first step towards addressing this question is to find a suitable perfect pairing in the p-adic setting, known as the p-adic integration pairing. For a 1-motive $M$ with a good reduction over a p-adic local field $K$, we develop a p-adic integration theory which generalizes Colmez's construction of p-adic integration for abelian varieties with good reductions (\cite{colmez1992periodes}).
\begin{thm*}[\cref{theorem p-adic integration pairing for M}]
There exists a pairing $$\displaystyle\int^{\varpi}:\Tp(M)\times\TdR\ve(M)\to\Bt$$ which is bilinear, perfect, and Galois-equivariant in the first argument. Moreover, it respects the Hodge filtration.
\end{thm*}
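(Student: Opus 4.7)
The plan is to extend the Colmez--Fontaine--Messing integration to the whole $1$-motive $M=[u\colon L\to G]$, with $0\to T\to G\to A\to 0$, by passing to the universal vectorial extension $M^\natural=[u\colon L\to G^\natural]$, constructing the integration on each weight-graded piece, and then gluing the pieces along the weight exact sequences
\[
0\to\Tp(T)\to\Tp(G)\to\Tp(A)\to 0,\qquad 0\to\Tp(G)\to\Tp(M)\to L\otimes\Z_p\to 0.
\]
Throughout, $\TdR\ve(M)$ is identified with $\Lie(G^\natural)$ endowed with its Hodge filtration induced by the embedding $\omega_G\into\Lie(G^\natural)\ve$.

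On the torus piece, the pairing of $\Tp(T)\cong\Tp(\G_m)^r$ with invariant differentials is given by the classical $p$-adic logarithm on principal units, with values in $\Bt$. On the abelian piece, I would follow Colmez: for a compatible system $(x_n)\in\Tp(A)$ with $p^nx_n=0$, choose lifts $\tilde x_n\in\hat A(\ocp)$, evaluate an invariant differential $\omega\in\omega_A$ at the formal-group logarithm of $p^n\tilde x_n$ (which converges on $\hat A(\ocp)$), and divide by $p^n$ in $\Bt$. On the lattice piece, the pairing of $\lambda\in L$ against a translation-invariant $1$-form on $G^\natural$ is the algebraic value obtained by evaluating the form at $u(\lambda)$, viewed inside $\Bt$ via $K\subset\Bt$. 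Gluing along the two weight sequences uses compatible lifts in $G^\natural(\ocp)$ and the fact that the logarithm of the vector-group part of $G^\natural$ is the identity.

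Bilinearity is built into the construction and Galois equivariance in the first argument follows because all three local ingredients (the $p$-adic and formal logarithms, and the map $u$) are $G_K$-equivariant, and lifts $\tilde x_n$ may be chosen Galois-equivariantly modulo $p^n$. Compatibility with the Hodge filtration reduces to the standard fact that the formal logarithm pairs integral classes against $\omega_G$ into $\Fil^0\Bt$, together with the observation that the lattice contribution already lies in $K\subset\Fil^0\Bt$. The main obstacle is \emph{perfectness}: I would prove it by a five-lemma applied to the pairing of the two weight exact sequences after base change to $\Bt$, reducing to perfectness on the three graded pieces. These reduce to (a) the $\G_m$-logarithm pairing, (b) the Colmez--Fontaine--Messing comparison for $A$ with good reduction, and (c) the trivial pairing for the lattice part. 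The most delicate step is checking that the Hodge and weight filtrations are strictly interchanged over $\Bt$, which will ultimately rest on the compatibility of Colmez's integration with the crystalline comparison isomorphism.
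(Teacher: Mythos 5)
Your high-level architecture coincides with the paper's: pass to the universal vector extension, prove perfectness by a five-lemma dévissage along the two weight exact sequences (reducing to the $\G_m$-case, Colmez's theorem for abelian schemes with good reduction, and the essentially trivial lattice case), and get bilinearity and Galois equivariance from the construction. The genuine gap is in how you propose to \emph{build} the pairing. The $\Bt$-valued periods do not come from logarithms evaluated on $\ocp$-points: the construction must use the square-zero thickening $\At\to\ocp$. The paper applies the snake lemma to $0\to\cK\to G\nat(\At)\to G(\ocp)\to 0$, where $\cK$ is assembled from $V(M)\otimes_{\cO_K}\At$ and $\Lie(G\nat)\otimes_{\cO_K}\bar{I}$, together with the twisted multiplication maps $(x,g)\mapsto u(x)+p^n g$ on $L\times G\nat(\At)$; the lifts live in $G\nat(\At)$, not in $G\nat(\ocp)$ or $\hat{A}(\ocp)$. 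As you state it, the torus pairing is the classical $p$-adic logarithm on principal units; but elements of $\Tp(\G_m)$ are compatible systems of roots of unity, whose classical logarithm vanishes, so your torus pairing would be identically zero and could not be perfect — the nonzero period $t\in\C_p(1)\subset\Bt$ only appears after lifting to $\At^{\times}$ (Teichm\"uller lifts) and taking the logarithm there. Likewise, lifting only into $G\nat(\ocp)$ produces the Coleman component $\Tp(M)\to V(M)\otimes\ocp$ and loses the $\Lie(G)\otimes\C_p(1)$ (Fontaine) component, and your abelian-piece recipe pairs only against $\omega_A=\Fil^1$, not against all of $\TdR\ve(A)$. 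Finally, gluing pairings defined on the weight-graded pieces does not determine the pairing on $M$: the off-diagonal entries are extension data (for a Kummer motive $[\Z\to\G_m]$, $1\mapsto q$, they are $\log([q^{\flat}]/q)\in\C_p(1)$ and $\log_p q\in K$, with $q^{\flat}$ a compatible system of $p$-power roots of $q$), so one needs a direct construction on $M$, which is exactly what the paper's diagram with $q$ and $q\nat$ provides.

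The Hodge-filtration step is also not established by what you wrote: $\Fil^0\Bt$ is all of $\Bt$, so the assertions invoking it are vacuous. What must be shown is that pairing any $x\in\Tp(M)$ against $\omega\in\Fil^1\TdR\ve(M)=\coLie(G)$ lands in $\Fil^1\Bt=\C_p(1)$. The paper obtains this by checking that the two snake-lemma constructions are compatible, so that $\varpi_M$ composed with the projection $\Lie(G\nat)\otimes\Bt\to\Lie(G)\otimes\Bt$ factors through Fontaine's map $\varphi_M\colon\Tp(M)\to\Lie(G)\otimes\C_p(1)$, giving $\int^{\varpi}_x\omega=\int^{\varphi}_x\omega\in\C_p(1)$; no appeal to the crystalline comparison is needed. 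Note also that your claim that the lattice contribution against such $\omega$ is an algebraic value in $K\subset\Bt$ is incorrect: for the Kummer motive it is $\log([q^{\flat}]/q)\in\C_p(1)$, while $\log_p q\in K$ is the pairing against the complementary, non-$\Fil^1$, de Rham class coming from the Coleman map (and if the contribution really were a nonzero element of $K$, it would contradict filtration compatibility, since $K\cap\Fil^1\Bt=0$).
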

In the above, $M=[L\to G]$ is a 1-motive with a good reduction over the p-adic local field $K$, and $\Tp(M)$ is the Tate module of $M$. Moreover, $\Bt:=\BdRp/t^2\BdRp$, where $\BdRp$ is the de Rham local ring with a uniformizer $t$ and the residue field $\C_p$. The map $\Tp(M)\to\TdR(M)\otimes_K\Bt$ corresponding to the pairing above is denoted by $\varpi_M$. As part of our work to develop a p-adic integration pairing for 1-motives with good reductions, we also introduce the following pairing: 
\[
\int^{\varphi}:\Tp(M)\times\coLie(G)\to\C_p(1).
\]
This pairing is induced by the map $\phi_M:\Tp(M)\to\Lie(G)\otimes_K\C_p(1)$, which coincides with Fontaine's map (\cite{fontaine1982formes}) when $M=[0\to A]$ is an abelian variety. We still refer to the map $\phi_M$ as Fontaine's map and the pairing $\displaystyle\int^{\phi}$ as Fontaine's pairing for the 1-motive $M$. Fontaine's map $\phi_M:\Tp(M)\to \coLie(G)\otimes\C_p(1)$ is surjective when tensored with $\C_p$, though the resulting map has a large kernel. Without tensoring the Fontaine's map $\phi_M$ and the integration map $\varpi_M$ with $\C_p$ and $\Bt$, we propose Theorems \ref{kernel Fontaine map phi} and \ref{kernel p-adic integration map varpi} to describe their kernels.

We call an element in $\Bt$ a Fontaine-Messing period of $M$ if it lies in the image of $$\displaystyle\int^{\varpi}:(\Tp(M)\otimes_{\Z_p}\Bt)\times(\TdR\ve(M)\otimes_K\Bt)\to\Bt.$$ Note that we have bilinearity and functoriality relations among the Fontaine-Messing periods of $M$. Comparing to the archimedean setting where natural $\Q$-structures exist on both sides of the comparison isomorphism 
\[\TsingQ(M)\otimes_{\Q}\C\cong\TdR(M)\otimes_K\C,\]
it is natural to ask the following question:

\textbf{Question. }\textit{Are there non-trivial natural $\Q$-structures $F(M)\subset\Tp(M)\otimes_{\Z_p}\Bt$ and $G(M)\subset\TdR\ve(M)\otimes_{K}\Bt$ such that all relations among the Fontaine-Messing periods of $M$ relative to $(F,G)$ are induced by bilinearity and functoriality? If there are any additional relations beyond bilinearity and functoriality, what are they?}

    Here, a Fontaine-Messing period of $M$ relative to $(F,G)$ is a Fontaine-Messing period of $M$ which is in the image of 
    \[
    \left.\int^{\varpi}\right|_{F(M)\times G(M)}:F(M)\times G(M)\to\Bt.
    \]
    When $M$ is a 1-motive over a number field $\K$, a natural choice for $G(M)$ is to take elements from $\TdR\ve(M_{\barQ})$. The main challenge is to find a suitable choice for $F(M)$ such that the Fontaine-Messing periods of $M$ relative to $(F,G)$ capture some arithmetic information of $M$.
    
    To address this problem, in Chapter 4, we first introduce a formalism of periods for an abelian category, which allows us to define and formulate the concept of period spaces and period conjectures in the p-adic setting. Roughly speaking, for a pairing $\cH$, an $\cH$-period is a number arising from the pairing $\cH$. We define the formal space of periods at depth $i$ (\cref{space of formal periods at depth i}), along with a period conjecture at depth $i$ (\cref{definition period conjecture at depth i}), inspired by \cite{hormann2021note}. Each depth reveals a different level of relations among $\cH$-periods, and the validity of the period conjecture at depth $1$ implies that all relations among $\cH$-periods are induced by bilinearity and functoriality. This framework also recovers the classical periods. In the case of 1-motives over $\barQ$ with $\cH$ defined by the integration pairing (\ref{pairing1}), it is known, according to \cite{huber2022transcendence}, that all relations among classical periods are induced by bilinearity and functoriality. Thus, in our setting, this means that the period conjecture holds at depth 1 for the classical periods of 1-motives over $\barQ$.

    Now that we have a suitable formalism of periods and a p-adic integration pairing for 1-motives, the main challenge is to find the $\Q$-structure $F(M)$. We identify three $\Q$-structures: $\hp(M)$, $\Hp(M)$, and $\Hpp(M)$. Broadly speaking, $\hp(M)$ arises from the image of the logarithm map over $\barQ$-rational points of the formal p-divisible group associated with $M$ (\cref{def tilde exp}), and the other two $\Q$-structures, $\Hp(M)$ and $\Hpp(M)$, are induced by pull-backs of $\hp(M)$ via the diagrams induced by Fontaine's map $\phi_M$ and the p-adic integration map $\varpi_M$ (\cref{definition Hp} and \cref{definition Hpp}). Inspired by \cite{cartier_l-functions_2007}, we provide different descriptions of elements in $\hp(M)$ using the notion of $\exp$-map for Dieudonn\'e modules in \cref{section p-adic logarithm}. In addition, we offer an interpretation of these elements from the perspective of p-adic Galois representations (\cref{theorem classification for algebraic poin in image of logarithm}).
    
 Finally, we define three pairings $\displaystyle\int^{\hp}$, $\displaystyle\int^{\Hp}$, and $\displaystyle\int^{\Hpp}$, obtained by restricting the p-adic integration pairing to these $\Q$-structures\footnote{For $\int^{\hp}$ and $\int^{\Hp}$, we set $G(M)=\Fil^1\TdR\ve(M_{\barQ})$. However, for $\int^{\Hpp}$, we choose $G(M)$ differently, as explained in \cref{three pairings hp Hp Hpp}.} (\cref{three pairings hp Hp Hpp}). We then prove the period conjectures at depths $1$, $2$, and $2$ for these pairings, respectively, for the category of 1-motives over a number field $\K$ with good reductions at $p$ (\cref{level 2 period cinjecture for Hpp}, \cref{depth 2 and 1 period cinjecture for Hp and hp}). We can summarize these results as follows:
 \begin{thm*}[\cref{level 2 period cinjecture for Hpp} and \cref{depth 2 and 1 period cinjecture for Hp and hp}]\label{theorem B}
     Let $M=[L\to G]$ be a 1-motive over a number field $\K$ with a good reduction at $p$ and $\K^u$ denote the algebraic extension of $\K$ obtained by taking the compositum of all finite extensions of $\K$ in which the prime $p$ does not ramify.
     \begin{enumerate}
         \item There exists a canonical $\Q$-structure $\tilde{\Hpp}(M)\subset\Tp(M)\otimes_{\Z_p}\Bt$ and a $\K^u$-linear subspace $\tilde{N}(M)$ of $\TdR\ve(M_{\bar{\K}})$ such that if $\displaystyle\alpha=\int^{\Hpp}_x\omega=0$ for $x\in\tilde{\Hpp}(M)$ and $\omega\in\tilde{N}(M)$, then there exists an exact sequence
     \[
     0\to M_1\to M^n\to M_2\to 0
     \]
     of 1-motives over a finite extension of $\K$ with good reductions at $p$, where $n\in\{1,2\}$, $x\in\tilde{\Hpp}(M_1)$, and $\omega\in\tilde{N}(M_2)$.
    \item Consider the $\Q$-structure $\Hp(M)\subset\Tp(M)\otimes_{\Z_p}\C_p$ ($\hp(M)$, resp.) and $\barQ$-linear subspace $\coLie(G_{\barQ})$ of $\TdR\ve(M_{\barQ})$. If $\displaystyle\alpha=\int^{\Hp}_x\omega=0$ for $x\in\tilde{\Hpp}(M)$ ($x\in\hp(M)$, resp.) and $\omega\in\coLie(G_{\barQ})$, then there exists an exact sequence
     \[
     0\to M_1\to M^n\to M_2\to 0
     \]
     of 1-motives over a finite extension of $\K$ with good reductions at $p$, where $n\in\{1,2\}$ ($n=1$, resp.), $x\in\tilde{\Hpp}(M_1)$ ($x\in\hp(M_1)$, resp.), and $\omega\in\Fil^1(\TdR\ve(M_{2}))_{\barQ}=\coLie(G_2)_{\barQ}$.
     \end{enumerate}
 \end{thm*}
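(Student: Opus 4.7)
The plan is to prove the theorem by a p-adic adaptation of the Huber--Wüstholz strategy, using the p-adic subgroup theorem for 1-motives with good reductions announced in the introduction as the transcendence input, and exploiting the diagrammatic definitions of $\Hp$ and $\Hpp$ as pull-backs of $\hp$ to handle the depth-$2$ cases. The guiding principle is that a Fontaine--Messing period relative to one of the $\Q$-structures encodes the value of a $\barQ$-linear form on the $\phi$-image or $\varpi$-image of a $\barQ$-algebraic point of the formal $p$-divisible group, so its vanishing should force an algebraic incidence that descends to a sub-1-motive.

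I would start with the $\hp$-case, which is the cleanest and yields the depth-$1$ conclusion. An element $x\in\hp(M)$ comes from a $\barQ$-rational point $P$ of the formal $p$-divisible group attached to $M$, and the description of $\hp$ via the exp-map for Dieudonn\'e modules in the section on the p-adic logarithm identifies $\int^{\hp}_x\omega$ with $\omega(\log P)$, where $\log P\in\Lie(G)\otimes\C_p(1)$. Vanishing of this quantity for some $\omega\in\coLie(G_{\barQ})$ is then a $\barQ$-linear relation between the logarithm of an algebraic point and a linear form with $\barQ$-algebraic coefficients. The p-adic subgroup theorem for 1-motives with good reductions then produces a proper connected algebraic subgroup $G_1\subseteq G$, defined over a finite extension of $\K$ and with good reduction at $p$, such that $P$ lifts through the formal group of $G_1$ and $\omega$ is pulled back from $G/G_1$. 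Translating $G_1$ back to a sub-1-motive $M_1\subseteq M$ with quotient $M_2$ via the standard equivalence between sub-1-motives and pairs (sublattice of $L$, algebraic subgroup of $G$), and checking that $x\in\hp(M_1)$ and $\omega\in\Fil^1\TdR\ve(M_2)_{\barQ}=\coLie(G_2)_{\barQ}$, gives the $n=1$ statement.

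For $\Hp$ and $\Hpp$, the corresponding $\Q$-structures are defined as pull-backs along Fontaine's map $\phi_M$ and the integration map $\varpi_M$, so an element of $\Hp(M)$ (respectively $\Hpp(M)$) is represented by a pair of data in $\hp$-type spaces subject to a compatibility condition, and the vanishing of the associated period splits into two independent vanishings of $\hp$-type periods. Applying the $\hp$-case to each component produces two exact sequences of 1-motives over a common finite extension of $\K$ with good reductions at $p$; assembling them on $M\oplus M=M^2$ through the diagonal, together with the kernel descriptions for $\phi_M$ and $\varpi_M$ provided by the theorems on kernels in the introduction, yields an exact sequence $0\to M_1\to M^2\to M_2\to 0$ containing $x$ and $\omega$ with the right structures. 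This accounts for the allowed $n=2$ in the depth-$2$ conclusion, while the case $n=1$ arises when the two pieces of data already come from the same sub/quotient decomposition. For $\Hpp$ one must additionally track that $\tilde{N}(M)\subseteq\TdR\ve(M_{\bar{\K}})$ is $\K^u$-linear rather than $\barQ$-linear, but this only affects the field of definition of the sequence, not the mechanism.

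The principal obstacle I expect is establishing the p-adic subgroup theorem for 1-motives with good reductions in sufficient generality: the extension from abelian varieties to semi-abelian groups requires handling the toric part on its own (where the relevant transcendence statement concerns $p$-adic logarithms of algebraic units and is classical) and then gluing through the extension data, while passing from semi-abelian groups to 1-motives $M=[L\to G]$ requires controlling the contribution of the lattice $L$ to $\Tp(M)$ and verifying compatibility with the Hodge filtration. A secondary technical point is to verify that the sub-1-motive produced has good reduction at $p$; this should follow from the fact that the subgroup output of the p-adic subgroup theorem can be arranged to extend to an appropriate Néron model, but it must be checked explicitly against the construction of $\Tp(M)$, $\Tcrys(M)$, and the integration pairing on both $M_1$ and $M_2$ to ensure that $x$ and $\omega$ land in the prescribed $\Q$-structures $\tilde{\Hpp}(M_1)$, $\Hp(M_1)$, $\hp(M_1)$, and $\tilde{N}(M_2)$, $\Fil^1\TdR\ve(M_2)_{\barQ}$ respectively.
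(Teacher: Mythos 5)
Your outline of the $\hp$-case matches the paper's mechanism (p-adic analytic subgroup theorem applied to the logarithm of an algebraic point of the formal $p$-divisible group, then conversion of the resulting algebraic subgroup into a sub-1-motive), but the passage from $\hp$ to $\Hp$ and $\Hpp$ is where your argument breaks. An element $x\in\Hp(M)$ is not a pair of $\hp$-type data whose period "splits into two independent vanishings of $\hp$-type periods": by the split exact sequence $0\to V(M)\otimes\C_p\to\Hp(M)\to\hp(M,\bar{\K})\to 0$ one has $x=u+v$ with $u\in\hp(M)$ and $v\in V(M)\otimes\C_p$, and for $\omega\in\coLie(G)_{\barQ}$ the component $v$ pairs to zero (because $\coLie(G)$ annihilates $V(M)$ inside $\coLie(G\nat)$). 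So the vanishing of $\int^{\Hp}_x\omega$ carries no transcendence information about $v$ whatsoever, and there is no "$\hp$-type period" of $v$ to which your $\hp$-case could be applied — yet the conclusion requires all of $x$, hence $v$, to lie in $\Hp(M_1)$ resp.\ $\tilde{\Hpp}(M_1)$. The paper instead applies the subgroup theorem once to the single element $u+v$ viewed inside $\Lie(G\nat)_{\C_p}$, where the logarithm of the vector part is the identity, and obtains $\Ann(x)\subseteq\TdR\ve(M_2)$ for $x$ itself (\cref{p-adic subgroup theorem for Fontaine pairing}); the case $n=2$ arises when $v\notin V(M_1)_{\C_p}$, forcing the construction of a second sub-1-motive $M_1'=[L_1\to N\ve]$ with $v\in V(M_1')_{\C_p}$ and $u\in\hp(M_1')$ and the summation of the two sequences inside $M^2$ — not from assembling two separately obtained $\hp$-vanishings, and the kernel theorems for $\phi_M$ and $\varpi_M$ play no role in this placement. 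The Kummer motive (\cref{example M[LtoGm]}), where $u=0$, $v\neq 0$, the period vanishes trivially and no $n=1$ sequence exists, is exactly the configuration your splitting strategy cannot handle.

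The second gap is that the $\Hpp$ statement is not the $\Hp$ statement with $\K^u$-coefficients. There $\omega$ ranges over $\tilde{N}(M)$, the non-zero-slope part of the crystalline realization, which in general is not contained in $\Fil^1\TdR\ve(M)=\coLie(G)$, so the period is not of the form $\omega(\log P)$ and the subgroup theorem cannot be invoked directly. The paper's proof of \cref{level 2 period cinjecture for Hpp} first uses weak admissibility and the slope hypothesis (\cref{lemma 5.6.2}) to write $\omega=F^n(\gamma)$ with $\gamma\in\coLie(G)\cap\tilde{N}$, then uses Frobenius-equivariance of the crystalline integration pairing, injectivity of $\phi_{cris}$, and the precise definition of $\tilde{\Hpp}(M)$ (coefficients in $\bigcap_m\phi^m_{cris}(\Bct)$ and the Tate module of the connected part) to convert $\int^{\Hpp}_x\omega=0$ into $\int^{\varphi}_{\pi(x)}\gamma=0$ with $\gamma\in\coLie(G)$, and only then applies the subgroup theorem, finishing with a diagram chase using exactness of $\Hp$ and $\Hpp$ to place $x$ in $\tilde{\Hpp}(M_1)$ and $\omega=F^n(\gamma)$ in $\tilde{N}(M_2)$. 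None of this reduction appears in your proposal, so as written neither part of the theorem for $\Hpp$, nor the required containment of the full element $x$ in the $\Hp$ case, would follow.
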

    These theorems provide a precise description of the vanishing behaviour of our p-adic periods. Moreover, our results provide insights into the $\barQ$-linear (or $\K^u$-linear) independence of our p-adic periods; see, for example, \cref{prop 4.4.1}. One of the key components of the proofs is the \cref{p-adic subgroup theorem for Fontaine pairing}, which we refer to as the p-adic subgroup theorem for 1-motives. We derive this theorem using the p-adic analytic subgroup theorem for commutative algebraic groups (\cite{fuchs2015p}). Recall that, in the proof of the period conjecture for the classical periods of 1-motives (in \cite{huber2022transcendence}), the main ingredient was the (classical) analytic subgroup theorem (\cite{wustholz1989algebraische}). While the classical analytic subgroup theorem fostered a productive interplay between transcendence theory and algebraic groups via the exponential map for Lie groups, the p-adic subgroup theorem establishes a similar connection in the p-adic setting through the logarithm map.

    It is important to note that in the proof of these results, we do not rely on the main results from \cite{hormann2021note}. We only use Hormann's work in \cref{remark Hpp period does not hold at depth 1}, where we show that for $\Hpp$-periods (or $\Hp$-periods) of 1-motives with good reductions at $p$, there are some relations among periods beyond bilinearity and functoriality. We conclude this dissertation by some examples and computations in \cref{section: examples}. 

    What remains unknown is whether the $\Q$-structure $\hp(M)$ is a finite dimensional $\Q$-vector space. If so, then \cref{theorem B} implies that the dimension of the space of $\Hpp$-periods (respectively $\Hp$-periods or $\hp$-periods) of $M$ is equal to the dimension of its formal space at depth $2$ (respectively $2$ or $1$). Unlike the case of classical periods of 1-motives, where we could relate the dimension of the formal space of classical periods to the dimension of $\End(M)$ and $\End(\Tsing(M))$, this approach cannot be directly applied here due to the increased complexity of these $\Q$-structures, and we did not pursue further investigation into this matter.

    \subsection*{Related work}
    Several research efforts aimed at discovering p-adic analogs of classical periods have been carried out in \cite{ancona2022algebraic}, \cite{Ayoub2020NouvellesCD}, \cite{brown_notes_2017}, and \cite{furusho_p_2004}. For a smooth projective variety $X$ over $\barQ$ with a good reduction at $p$, Berthelot's comparison theorem \cite[Theorem V.2.3.2]{Berthelot} gives the isomorphism
    \begin{equation}\label{2}
        \operatorname{H^n_{dR}}(X/\barQ)\otimes\barQp\cong\operatorname{H^n_{crys}}(\bar{X}
,\barQp).   
\end{equation}
    In \cite{ancona2022algebraic}, the authors construct their p-adic periods, which they refer to as Andr\'e's p-adic periods, as the entries of the matrix $cl_p(X)\into \operatorname{H_{dR}}(X)\otimes\barQp$ via Berthelot's comparison isomorphism with respect to all choices of $\barQ$-bases, where $cl_p(X)$ denotes the algebraic classes modulo $p$ in crystalline cohomology. It is still unknown whether Andr\'e's p-adic periods arise from Coleman-Berkovich-Vologodsky p-adic integration (\cite{besser2000generalization}, \cite{berkovich2007integration}, \cite{vologodsky2001hodge}). However, using the motivic framework developed in \cite{ancona2022algebraic}, they establish an upper bound for the transcendental degree of Andr\'e's p-adic period algebra of a motive $M$ with good reduction at $p$:
    \begin{equation*}
     \dim\cP_p(\langle M\rangle)\leq \dim G_{dR}(M)-\dim G_{crys}(M),
    \end{equation*}
    where $G_{dR}(M)$ and $G_{crys}(M)$ are de Rham and crystalline Galois groups (dual groups), respectively.
    Similar to the classical Grothendieck period conjecture, the p-adic Grothendieck period conjecture holds for $M$ if the equality holds in the above inequality. This conjecture is widely open. As noted in \cite[Example 9.6]{ancona2022algebraic}, and based on \cite[Examples 2 and 3]{Yamashita2010}, the only non-trivial case currently known where this conjecture holds is for the Kummer motive $M(\alpha)$, where $\alpha$ is a rational number.

    The approach of \cite{ancona2022algebraic} does not provide insight into linear relations or vanishing behaviour of p-adic periods, nor can it be applied to construct p-adic periods of curves, as algebraic classes within crystalline cohomology cannot be identified in this case. Furthermore, this interpretation has no meaningful application to the category of 1-motives. In contrast, our approach addresses these issues and describes all possible relations among our p-adic periods. Additionally, their method assumes that the standard conjecture, stating that numerical equivalence is the same as homological equivalence  ( $\sim_{num}=\sim_{hom}$), holds true for the category of motives with good reduction, while our results are obtained without assuming the validity of any conjecture.

    In the setting of mixed Tate motives analogous constructions to Andr\'e's p-adic periods already exist in \cite{furusho_p_2004} and \cite{brown_notes_2017}. These works considered the action of the Frobenius on the crystalline realization and defined the p-adic periods of the motive as the coefficients of this action with respect to all choices of bases induced by the isomorphism (\ref{2}). Andr\'e's p-adic periods for mixed Tate motives come indeed from these coefficients (\cite{ancona2022algebraic}).

    The algebra of abstract p-adic periods constructed in \cite{Ayoub2020NouvellesCD} is related to the algebra of Anr\'e's p-adic periods, as it is pointed out in \cite{ancona2022algebraic}. Their construction of p-adic periods fits within the Formalism of periods we introduce in Chapter 4. However the relation between our p-adic periods ($\hp$-periods, $\Hp$-periods, and $\Hpp$-periods) and Andr\'e's p-adic periods remains unclear. On the one hand, our p-adic periods arise from p-adic integration (which we develop in Chapter 3), while it is still unknown yet whether Andr\'e's p-adic period originates from a p-adic integration theory. Furthermore, even for an abelian variety $A$, it is unclear whether elements in our $\Q$-structure $\hp(A)$, $\Hp(A)$, or $\Hpp(A)$ correspond to algebraic classes within $\operatorname{H^2_{crys}}(\bar{A})$. We have not explored their potential connections further, leaving them as a topic for future research.
    \subsection*{Why 1-motives?}
   In \cite{Murre-introductionmotives}, Murre associates to a smooth $n$-dimensional projective variety $X$ over an algebraic closed field $K$, a Chow cohomological Picard motive $M^1(X)$ along with the Albanese motive $M^{2n-1}(X)$. The projector $\pi_1\in CH^n(X\times X)_{\Q}$ defining $M^1(X)$ is obtained via the isogeny $\Pic^0(X)\to \Alb(X)$ between the Picard and Albanese variety, given by the restriction to a smooth curve $C$ on $X$ since $\Alb(C) = \Pic^0(C)$. In the case of curves, $M^1(X)$ is the Chow motive of $X$ (\cite{jannsen_classical_1994}) and we have 
    \begin{equation}\label{1}
    \Hom(M^1(X),M^1(Y))\cong\Hom(\Pic^0(X),\Pic^0(Y))  
    \end{equation}
    by \cite[Theorem 22]{weil_courbes_1971}. Moreover, the semi-simple abelian category of abelian varieties up to isogeny is equivalent to the pseudo-abelian envelope of the category of Jacobians up to isogeny. As Grothendieck observed, the theory of pure motives for smooth projective curves is, in fact, equivalent to the theory of abelian varieties up to isogeny, since one-dimensional pure motives are precisely abelian varieties.

    The identification (\ref{1}) suggests that we may take objects represented by Picard functors as models for larger categories of mixed motives of any varieties over arbitrary base schemes $S$. Let $X$ be a variety and $\bar{X}$ a closure of $X$ with divisor at infinity $X_{\infty}$, i.e., $X=\bar{X}-X_{\infty}$. When $\bar{X}$ is smooth, we have that $\Pic(X)$ is the cokernel of the canonical map $\Div_{\infty}(\bar{X})\to\Pic(\bar{X})$ associating $D\mapsto\cO(\D)$, for divisors $D$ on $\bar{X}$ supported at infinity. Thus, we may set our models following \cite{SCC_1958-1959__4__A10_0} and \cite{deligne1974theorie} as
    \[
    [\Div^0_{\infty}(\bar{X})\to\Pic^0(\bar{X})]
    \]
    induced by mapping algebraically equivalent zero divisors at infinity to line bundles.
    As discussed in the end of Chapter 4, this two-term complex is called the 1-motive associated with $X$ (or the homological Picard 1-motive of $X$). For further details on this topic and related discussions, we refer the reader to \cite{Barbieri-ontheory1-motive}. The primary goal of studying 1-motives is to recover the information about $\operatorname{H}^1(X)$ for various known Weil cohomology theories--such as de Rham cohomology, Betti cohomology, $\ell$-adic cohomology, and crystalline cohomology--by 1-motives and their realization functors. Specifically, the subcategory of $\operatorname{\cM\cM^{eff}_{Nori}}$ generated by $H^i(X,Y)$ for $i\leq 1$ is equivalent to Deligne's category of 1-motives (\cite{ABV15}). 

    1-motives lie at the intersection of several key objects in algebraic geometry, such as abelian varieties, tori, and finite groups. They generalize both these structures and encode important geometric and arithmetic information. In practical terms, 1-motives provide a framework that allows for explicit computations, such as in the study of extensions of algebraic groups, cohomology theories, and period integrals. The structure of 1-motives facilitates the development of comparison isomorphism theorems between their realization functors, providing deep insights into the arithmetic of varieties and allowing for the exploration of both classical periods and p-adic periods associated with them.

\section*{Reading guide}
This dissertation is composed of 4 chapters.
\subsection*{Chapter 1} This chapter serves as preparation for the core elements of the thesis. It will gather and summarize key results in the theory of p-divisible groups and p-adic Hodge theory, 
which will be used throughout the rest of the dissertation and provide a deeper understanding of the topics covered in this research.
For further details, we direct readers to the primary references listed at the beginning of the chapter. This chapter is challenging to read due to the depth and complexity of the topics it covers. To aid understanding, we briefly outline its main structure here. Chapter 1 is broadly divided into three main parts:
\begin{itemize}
    \item \textbf{Theory  of p-divisible groups (Barsotti-Tate groups)}: We begin by introducing the basic definitions and properties of p-divisible groups over a general base scheme $S$. We bring the definition of the p-adic logarithm associated with a p-divisible group over the valuation ring of a p-adic local field, and we state the Hodge-Tate decomposition theorem for p-divisible groups (\cref{Theorem: Hodge-Tate decomposition}).
    
    \item \textbf{Dieudonn\'e theory}: We review contravariant Dieudonn\'e theory over a prefect field $k$ of characteristic $p$, which associates a $\sigma$-semilinear $\W(k)[\cF]$-module $\D(G)$ of finite type to any p-divisible group $G$ over $k$. Here, $\W(k)$ denotes the ring of Witt vectors, and the action of $\cF$ is induced by the relative Frobenius on $G$. Inspired by Dieudonn\'e-Manin classification (\cref{Dieudoone Manin classification}), we introduce the concept of slope decomposition for isocrystals over $K_0=\W(k)[1/p]$. To generalize the Dieudonn\'e functor over more general base schemes, we review Grothendieck-Messing theory, which allows us to define the covariant Dieudonn\'e functor as an $\cF$-crystal on the crystalline site of $S$. Using this theory, one can obtain a canonical identification between the de Rham cohomology of an abelian scheme over $\W(k)$ and the contravariant Dieudonn\'e module associated with its reduction modulo $p$.
   
    \item \textbf{Period rings}: The Grothendieck’s comparison isomorphism between the de Rham cohomology of a smooth variety $X$ over subfield $K\into \C$ and the singular cohomology of its analytification $X^{an}/\C$ holds when the coefficients of both cohomology groups are extended to $\C$. Now, let $K$ be a p-adic local field. In search of a ring $B$ that provides a comparison isomorphism
    \begin{equation}\label{0.0.1}
    \displaystyle \operatorname{H^n_{\'et}}(X_{\bar{K}},\Q_p)\otimes_{\Q_p}B\cong\operatorname{H^n_{dR}}(X)\otimes_K B
    \end{equation}
    which is Galois equivariant, Fontaine defined the de Rham period ring $\BdRp$ and $\BdR$, where $\BdRp$ is a complete local ring with residue field $\C_p$, and $\BdR$ is its field of fractions. For more on why $B=\C_p$  does not work, and further details see \cite[\S 2, \S 3]{niziol_hodge_2021}. Faltings (\cite{faltings_crystalline_1988}) showed that for any proper smooth variety $X$ over a p-adic local field $K$, the choice $B=\BdR$ provides a canonical comparison isomorphism \ref{0.0.1}, compatible with the Galois action and filtration. In this section, in addition to introducing the de Rham period ring, we review the crystalline period ring $\Bcrisp$ and its field of fractions, $\Bcris$. The ring $\Bcrisp$ is indeed a local subring of $\BdRp$ that  is sufficiently large to which the Frobenius action naturally extends. As we will see at the end of this chapter, this period ring gives rise to a comparison isomorphism between the Tate module of a p-divisible group and its associated covariant Dieudonn\'e module.
\end{itemize}

\subsection*{Chapter 2} In this chapter, we begin by introducing 1-motives over a general base scheme $S$ and $S$-motivic points. We also define the category $\gMi(K)$ of 1-motives with good reduction at $p$ over a field $K$. For any $M\in\gMi(K)$, we consider three realization functors: the de Rham realization $\TdR(M)$, the Tate module $\Tp(M)$, and the crystalline realization $\Tcrys(\bar{M})$ of the reduction $\bar{M}$ modulo $p$. We compute the Hodge-Tate weights of the Tate module of $M$ and review the crystalline-de Rham comparison isomorphism as described in \cite{andreatta2005crystalline}.

\subsection*{Chapter 3} In this chapter, we construct the p-adic integration pairing for 1-motives with good reductions. By interpreting integrals as Riemann sums, we are led to P. Colmez’s approach (\cite{colmez1992periodes}) to p-adic integration (in the sense of Fontaine-Messing) for abelian varieties with good reduction over a p-adic local field $K$. The resulting periods from this integration relate the Tate module to the first de Rham cohomology of the abelian variety. These periods do not live in $\C_p$, but in $\Bt:=\frac{\BdRp}{t^2\BdRp}$, where $t$ is a uniformizer of the de Rham local ring $\BdRp$. We extend Colmez's construction of p-adic integration for abelian varieties with good reduction to 1-motives with good reduction. We introduce Fontaine's pairing for 1-motives and provide a detailed comparison with both the p-adic integration pairing for 1-motives and the crystalline integration pairing for 1-motives. Furthermore, we show that our p-adic integration pairing for 1-motives with good reduction is perfect and respects the Hodge filtration.

In the remainder of this chapter, we aim to recover the logarithm of semi-abelian varieties through the logarithm of their associated p-divisible groups. Inspired by \cite{cartier_l-functions_2007}, we obtain a local inverse for this logarithm and further explore its image using Galois cohomology.

\subsection*{Chapter 4} 
In this chapter, we first develop a formalism of periods that enables us to define the concept of periods and period conjectures relative to a pairing within an abelian category equipped with fibre functors. Next, we leverage the results from Chapter 3, to introduce our three $\Q$-structures: $\hp(M)$, $\Hp(M)$, and $\Hpp(M)$. These structures are associated with the pairings $\displaystyle\int^{\hp}$, $\displaystyle\int^{\Hp}$, and $\displaystyle\int^{\Hpp}$, yielding specific Fontaine-Messing p-adic periods of $M$, which we refer as $\hp$-periods, $\Hp$-periods, and $\Hpp$-periods of $M$, respectively. Finally, we identify all possible relations among these periods by proving the period conjectures relative to these pairings at depths 1, 2 and 2, respectively. We conclude Chapter 4 with several illustrative examples.

\subsection*{Appendices} This dissertation also includes Appendix A and Appendix B, which provide essential definitions and key properties from Algebraic Geometry and Number Theory that are relevant to the thesis.

\section*{Conventions}
\begin{itemize}
    \item We fix a prime number $p$.
    \item Unless otherwise specified, $K$ is typically a non-archimedean local field of mixed characteristic $(0,p)$ with perfect residue field $k$ and valuation ring $\cO_K$. The algebraic closure of $K$ is denoted by $\bar{K}$, and $\C_p$ is the completion of $\bar{K}$. Let us denote by $\Gamma_K$ the absolute Galois group of $K$.
    \item We denote the maximal unramified extension of $K$ by $K^{ur}$, with its residue field denoted by $\bar{\F_p}$.
    \item $\K$ denotes a number field, and $\bar{\K}=\barQ$ is its algebraic closure. We denote by $\Gamma_{\K}$ the absolute Galois group of $\K$.
    \item The scheme $S$ is always a locally Noetherian, integral, regular scheme.
    \item By $S$-group schemes, we always mean the commutative ones.
    \item If $\cA$ is one of the category of abelian varieties, connected commutative groups, or 1-motives over $K$, we always consider it as the isogeny category $\cA\otimes\Q$, which has the same objects as $\cA$, but morphisms are tensored with $\Q$. For the categories that we are interested in, the isogeny category is always an abelian category.
\end{itemize}

\clearpage


\pagenumbering{arabic}
\chapter{Comparison theorems for Barsotti-Tate groups}
Barsotti-Tate groups, also known as p-divisible groups, are fundamental structures in the intersection of algebraic geometry and number theory. They capture rich arithmetic and geometric information, particularly in characteristic $p$, playing a crucial role in understanding the properties of abelian varieties.

Comparison isomorphisms of p-divisible groups are pivotal in the theory of p-divisible groups as they bridge different cohomological frameworks, connecting Dieudonné theory, crystalline cohomology, and \'etale and de Rham cohomology. These isomorphisms are essential for understanding the local behavior of p-divisible groups and their deformation properties.

The study of Barsotti-Tate groups is central to advanced topics in arithmetic geometry such as the Mordell-Weil theorem over function fields, the formulation of Fontaine-Messing theory, deformation theory and the study of p-adic Hodge theory.

In this chapter, we review the theory of Barsotti-Tate groups, and Dieudonn\'e theory, and we conclude the chapter by outlining the fundamental concepts of p-adic Hodge theory. We mainly use the following references:\\
\begin{itemize}
    \item \textbf{For the theory of Barsotti-Tate groups:} \cite{demazure2006lectures}, \cite{tate1967pdivisble}, \cite{grothendieck1974groupes}, \cite{stix2009course}, \cite{shatz1986group}, \cite{messing_crystals_1972}
    \item \textbf{For the theory of Dieudonn\'e modules:} \cite{manin1961theory}, \cite{messing_crystals_1972}, \cite{mazur2006universal}, \cite{demazure2006lectures}, \cite{fontaine1982formes}, \cite{Dieudonne_1958}, \cite{AST_1977__47-48__1_0}
    \item \textbf{For p-adic Hodge Theory:} \cite{fontaine1994corps}, \cite{brinon2009cmi}, \cite{szamuely2016p-adicHodgeAccordingtoBeilinson}, \cite{fontaine2008theory}, \cite{bhatt_integral_2018}.
\end{itemize}

Let $S$ be a scheme and recall the notations of $S\fppf$, and $S\etale$ for the (small) fppf site\footnote{fppf stands for faithfully flat and locally of finite presentation. This site over $S$ consists of fppf schemes over $S$ with fppf coverings.} on $S$ and the (small) \'etale site on $S$ (see \cref{Appendix: def fppf and etale}). By Yoneda's lemma, we have a fully faithful embedding $G\mapsto \underline{G}:=\Hom_S(.,G)$ from the category of commutative group schemes over $S$ to the category of abelian representable sheaves on the $S\fppf$, the fppf site of $S$. We always view $G$ as an fppf representable sheaf on $S\fppf$. The category of representable fppf sheaves is an abelian category with enough injectives. Given any two $S$-group schemes $G$ and $H$, we use the notation $\iext^i_S(G,H)$ to denote the $i$-th Yoneda extension group of $G$ by $H$ in the abelian category of fppf representable sheaves over $S$. We write $\ihom_S(G,H)$ as the group of $S$-homomorphisms in $S\fppf$. If $S=\Spec(K)$, and both $G$ and $H$ are algebraic varieties over $K$\footnote{An algebraic variety over $K$ is a reduced, separated scheme of finite type over $K$.}, we write the $i$-th Yoneda extension group of $G$ by $H$ as $\Ext_K(G,H)$, and the group of $K$-homomorphisms from $G$ to $H$ as $\Hom_K(G,H)$.

We have the following commutative diagram 
\[
\begin{tikzcd}
\text{CommGrp/S} \arrow[r, hook] \arrow[d, hook]   & \text{AbSh}(S\fppf) \arrow[d, hook] \\
C^b(\text{CommGrp/S)} \arrow[r, hook]                     & C^b(\text{AbSh}(S\fppf))           
\end{tikzcd}
\]
where $C^b(\cA)$ is the category of bounded complexes for any abelian category $\cA$. The fully faithful embedding $\cA\into C^b(\cA)$ associates to an object $A$ in $\cA$ the complex \[\dots\to 0\to A\to 0\to\cdots .\]
Hence, we can, and will, identify a commutative group scheme $G$ over $S$ with the element in the derived category of abelian fppf sheaves representing $\underline{G}[0]$.

The basic facts about commutative group schemes and finite flat group schemes are summarized  in \cref{Appendix: groups}.

\section{P-divisible groups (Barsotti-Tate groups)}\label{sec: p-dvisible groups}

Recall the \cref{def: finite flat group scheme} of finite flat group schemes.
\begin{defn}
Let $p$ be a prime, $h\geq 0$ an integer, and $S$ a scheme.
\begin{enumerate}
    \item A p-divisible group (or a Barsotti-Tate group) of height $h$ over $S$ is a direct system $G=\{G_n\}_{n\in \N}$ of finite flat commutative group schemes over $S$ such that each $G_n$ is of order $p^{nh}$ and we have the exact sequence
    $$0\to G_n\xrightarrow{i_{n}}G_{n+1}\xrightarrow{[p^n]}G_{n+1} $$
    for each $n$. We often write $G_n[p^m]:=\ker([p^m]_{G_n})$.
    \item Let $G$ and $H$ be p-divisible groups over $S$. A homomorphism from $G$ to $H$ is a system $\{f_n\colon G_n\to H_n\}$ of group scheme homomorphisms that are compatible with the transition maps.
    \item Let $f\colon G\to H$ be a homomorphism of p-divisible groups. We define the kernel of $f$ to be $\ker f=\colim\ker f_n$.
    \item For a p-divisible group $G=\colim G_n$ over an affine scheme $S=\Spec R$ of height $h$, we can define the Cartier dual of $G$ to be $G\ve:=\colim G_n\ve$ with transition maps $[p]\ve\colon G_n\ve\to G_{n+1}\ve$. The group scheme $G\ve$ is a p-divisible group of height $h$.
\end{enumerate}
\end{defn}
\begin{remark}
    \begin{enumerate}
        \item Let $R$ be a henselian local ring with residue field $k$. Recall the connected-\'etale exact sequence for finite flat group schemes (\cref{connected-etale exact sequence for finite flat}). Assume that $G=\colim G_n$ is a p-divisible group over $R$, $G^0_n$ denotes the connected component of $G_n$, and $G^{\'et}_n=G_n/G^0_n$ its \'etale part. Then $G^0:=\colim G^0_n$ and $G^{\'et}:=\colim G^{\'et}_n$ are p-divisible groups over $R$ so that we have an exact sequence
        $$0\to G^0\to G\to G^{\'et}\to 0$$
        This is true since the functors $G_n\mapsto G_n^{0}$ and $G_n\mapsto G^{\'et}_n$ are exact. We say that $G$ is connected (resp. \'etale) if each $G_{n}$ is connected (resp. \'etale).
        \item Let $G=\colim G_n$ be a p-divisible group over scheme $S$ in characteristic $p$ of height $h$. Recall the Frobenius twist $G^{(p)}_n$ (\cref{Frobenius twist}). The Frobenius twist of $G$ is $G^{(p)}:=\colim G^{(p)}_n$ where the transition maps are induced by the transition maps for $G$. The group scheme $G^{(p)}$ is a p-divisible group over $S$ of height $h$. We define the relative Frobenius of $G$ by $F_G:=(F_{G_n})$ and the Verschiebung of $G$ by $V_G:=(V_{G_n})$.
    \end{enumerate}
\end{remark}

Constant group schemes can be viewed as abstract groups. When $M$ is an abstract group, its associated constant group scheme is denoted by $\underline{M}_S$. See \cref{examples group schemes in appendix}(5), for details.
\begin{example}
\begin{enumerate}
    \item \textbf{The constant p-divisible group.} The group scheme $\underline{\Q_p/\Z_p}=\colim \underline{\Z/p^n\Z}$ with the natural inclusions is an \'etale p-divisible group of height $1$.
    \item \textbf{The p-power roots of unity.} The group scheme $\mu_{p^{\infty}}:=\colim\mu_{p^n}$ with natural inclusions is a connected p-divisible group of height $1$. Indeed, $\mu_{p^{\infty}}=\colim \G_m[p^n]:=\G_m[p^{\infty}]$.
    \item Given an abelian scheme $A$ over $S$, we can define its p-divisible group (or Barsotti-Tate group) by $A[p^{\infty}]=\colim A[p^n]$ with the natural inclusions. The height of $A[p^{\infty}]$ is $2g$ where $g$ is the dimension of $A$.
    \item We have $(\underline{\Q_p/\Z_p})\ve\cong \mu_{p^{\infty}}$, and, by the duality theorem for abelian schemes (\cref{duality theorem for abelian schemes}), $A[p^{\infty}]\ve\cong A\ve[p^{\infty}]$. 
\end{enumerate}
\end{example}
\begin{prop}
Let $G=\colim G_n$ be a p-divisible group over $S$.
\begin{enumerate}
    \item $[p]:G\to G$ is an epimorphism (surjective as fppf sheaves) and $\ker([p^n]_G)=G_n$.
    \item $G\ve=\colim \Hom(G_n,\mu_{p^n})$, where $\mu_{p^n}$ is $\G_m[p^n]$.
\end{enumerate}
\end{prop}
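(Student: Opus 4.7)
For part (1), my plan is to deduce both claims from the structural exact sequence
\[ 0 \to G_n \xrightarrow{i_n} G_{n+1} \xrightarrow{[p^n]} G_{n+1} \]
combined with order counts for finite flat group schemes. To establish $\ker([p^n]_G) = G_n$, I will show that $\ker([p^n]_{G_m}) = G_n$ as a subgroup of $G_m$ for every $m \geq n$, since kernels commute with filtered colimits of fppf sheaves. The case $m = n+1$ is the defining sequence, which in particular implies that $[p^n]$ annihilates $G_n$. For $m \geq n+2$, I proceed inductively: the defining sequence at level $m-1$ gives $\ker([p^{m-1}]_{G_m}) = G_{m-1}$, and since $[p^n]$ divides $[p^{m-1}]$, any $p^n$-torsion in $G_m$ lies in $G_{m-1}$; the inductive hypothesis then yields $\ker([p^n]_{G_m}) = G_n$.

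Next, to prove $[p]: G \to G$ is an fppf epimorphism, I will show that for each $m \geq 0$ the map $[p]: G_{m+1} \to G_{m+1}$ factors as an fppf surjection $G_{m+1} \twoheadrightarrow G_m$ followed by $i_m: G_m \hookrightarrow G_{m+1}$; the claim for $G$ then follows by passing to the colimit. The image of $[p]$ in $G_{m+1}$ as an fppf sheaf is represented by the finite flat quotient $G_{m+1}/G_1$, of order $p^{(m+1)h}/p^h = p^{mh}$. Since $[p^{m+1}] \equiv 0$ on $G_{m+1}$, the induced monomorphism $G_{m+1}/G_1 \hookrightarrow G_{m+1}$ lands inside $\ker([p^m]_{G_{m+1}}) = G_m$ by the first step. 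This is a closed immersion of finite flat $S$-group schemes of the same order $p^{mh}$, hence an isomorphism, and $[p]: G_{m+1} \twoheadrightarrow G_m$ is fppf-surjective.

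For part (2), the content is essentially formal. By definition $G\ve = \colim G_n\ve$, and the Cartier dual of a finite flat commutative group scheme is $G_n\ve = \ihom_S(G_n, \G_m)$. Since $G_n$ is annihilated by $p^n$ (a consequence of part (1)), every homomorphism $G_n \to \G_m$ factors uniquely through $\G_m[p^n] = \mu_{p^n}$, giving the canonical natural identification $\ihom_S(G_n, \G_m) = \ihom_S(G_n, \mu_{p^n})$. One checks that this identification is compatible with the transition maps on both sides (the Cartier duals of $[p]: G_{n+1} \to G_n$ on the left, and the obvious inclusions on the right), which together yield $G\ve = \colim \ihom_S(G_n, \mu_{p^n})$.

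The main obstacle, where I would spend most of my care, is the order-count argument identifying $G_{m+1}/G_1$ with $G_m$ inside $G_{m+1}$ in part (1). This rests on two nontrivial inputs from the theory of finite flat group schemes: the existence of fppf quotients by finite flat subgroup schemes (yielding a quotient group scheme of the expected order), and the fact that a closed immersion of finite flat $S$-group schemes of the same order is an isomorphism. I would invoke these from the appendix on finite flat group schemes referenced at the start of the chapter. Everything else is formal bookkeeping with filtered colimits and Cartier duality for finite flat group schemes.
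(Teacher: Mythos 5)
The paper states this proposition without proof (it is quoted as a standard fact from the references listed at the start of the chapter, e.g.\ Tate and Stix), so there is no in-text argument to compare against; judged on its own, your proof is correct and is exactly the standard one: the induction giving $\ker([p^n]_{G_m})=G_n$ for $m\geq n$, the order count identifying the fppf image $G_{m+1}/G_1$ with $G_m$ inside $G_{m+1}$, exactness of filtered colimits of abelian fppf sheaves, and the factorization $\ihom_S(G_n,\G_m)=\ihom_S(G_n,\mu_{p^n})$ for $p^n$-torsion groups. The two inputs you flag (existence of the finite flat quotient $G_{m+1}/G_1$ of the expected order, and that a closed immersion of finite locally free $S$-groups of equal order is an isomorphism) are indeed the only nonformal ingredients and are standard. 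One small imprecision in part (2): the transition maps on the right are not just "the obvious inclusions" $\mu_{p^n}\into\mu_{p^{n+1}}$; they are given by precomposing with the epimorphism $p\colon G_{n+1}\to G_n$ produced in part (1) and then postcomposing with that inclusion, so that they match the duals $[p]\ve\colon G_n\ve\to G_{n+1}\ve$ under your termwise identification — your argument covers this, but the parenthetical should be stated accordingly.
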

\begin{example}\label{example: p-divisible group elliptic curve connected-\'etale}
    Let $E$ be an elliptic curve over $\bar{\F}_p$. The finite flat group scheme $E[p]$ admits a connected-\'etale exact sequence 
    \begin{equation}
        0\to E[p]^0\to E[p]\to E[p]^{\'et}\to 0
    \end{equation}
which is split over $\bar{\F}_p$. The \'etale part $E[p]^{\'et}$ is of order $1$ or $p$, since $E[p]^{\'et}(\bar{\F}_p)\cong E[p](\bar{\F}_p)$. Therefore, it is of order $1$ when $E$ is supersingular and it is of order $p$ when $E$ is ordinary.

Assume that $E$ is ordinary. We obtain
\[
\mu_p\cong(\underline{\Z/p\Z})\ve\into E[p]\ve\cong E\ve[p]\cong E[p]
\]
where,
\begin{itemize}
    \item The embedding is obtained by taking duality from $E[p]\twoheadrightarrow E[p]^{\'et}$;
    \item The first isomorphism is obtained by the duality theorem for abelian schemes;
    \item The last isomorphism is due to the fact that $E$ is self-dual.
\end{itemize}
We obtain an embedding $\mu_p\into E[p]^0$ which must be an isomorphism because $\mu_p$ is connected and its order is equal to $E[p]$. Hence, we have the following splitting exact sequence
\begin{equation}
    0\to \mu_p\to E[p]\to \underline{\Z/p\Z}\to 0.
\end{equation}
As $E[p^{\infty}]$ has height $2$, we can say  that both the connected and the \'etale parts of $E[p^{\infty}]$ have height $1$, i.e., they are $\mup$ and $\underline{\Q_p/\Z_p}$ respectively. Hence, the connected-\'etale exact sequence of $E[p^{\infty}]$ is
\begin{equation}
    0\to \mu_{p^{\infty}}\to E[p^{\infty}]\to \underline{\Q_p/\Z_p}\to 0.
\end{equation}
\end{example}

\begin{defn}
A quasi-isogeny of $p$-divisible groups $G$ and $H$ is a global section $\rho$ of the Zariski sheaf $\ihom_S(G,H)\otimes_{\Z}\Q$ such that locally there exists an integer $n$ for which $p^n\rho$ is an isogeny.
\end{defn}
\begin{thm}[Rigidity theorem for p-divisible groups, \cite{drinfel1976coverings}]
Let $\cO$ be a complete discrete valued ring of mixed characteristic $(0,p)$, and let $S$ be a scheme over $\cO$ on which $p$ is locally nilpotent. Suppose $S_0\to S$ is a closed immersion whose ideal of definition is locally nilpotent. Then every homomorphism $\bar{\varphi}\colon G\times_S S_0 \to H\times_S S_0$ of $p$-divisible groups admits a unique lifting $\varphi:G\to H$, i.e., the special fibre functor $G\mapsto G\times_S S_0$ is fully faithful. Moreover, $\varphi$ is a quasi-isogeny if and only if $\bar{\varphi}$ is.
\end{thm}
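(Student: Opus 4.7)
The plan is to establish uniqueness first, then existence, then the quasi-isogeny statement, using a standard dévissage on the nilpotent ideal of definition combined with the $p$-divisibility of $G$.

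\textbf{Step 1: Reduction to the square-zero case.} Let $I\subset\cO_S$ be the ideal defining $S_0\hookrightarrow S$. Working Zariski-locally on $S$, we may assume $I$ is nilpotent with $I^n=0$ and that $p$ is nilpotent on $S$. Filtering by $S_0=V(I)\subset V(I^{n-1})\subset\cdots\subset V(I)\subset S$, and lifting one square-zero step at a time, we reduce to the case where $I^2=0$. In this case, $I$ becomes a quasi-coherent $\cO_{S_0}$-module. Locally, we may further assume $p^N\cO_S=0$ for some fixed $N$, so that in particular $p^NI=0$.

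\textbf{Step 2: Uniqueness.} If $\varphi_1,\varphi_2$ both lift $\bar\varphi$, then $\psi=\varphi_1-\varphi_2\colon G\to H$ vanishes on $S_0$. I would show each component $\psi_n\colon G_n\to H_n$ is zero. Since $\psi_n$ restricts to zero on $S_0$, the map $\psi_n$ factors through the subsheaf of $H_n$ cut out by $I$, which as a sheaf of abelian groups is killed by $p^N$ (because $p^NI=0$ and $H_n$ is $p^n$-torsion controlled by $\cO_S$-linear data). Now invoke the key fact that $G=\colim G_m$ is $p$-divisible: fppf-locally, any section $x$ of $G_n$ lifts to a section $y$ of $G_{n+N}$ with $p^Ny=x$. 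Then $\psi_n(x)=\psi_{n+N}(p^Ny)=p^N\psi_{n+N}(y)=0$, since $\psi_{n+N}(y)$ lies in a subsheaf killed by $p^N$. Hence $\psi=0$.

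\textbf{Step 3: Existence.} For each $n$, the obstruction to lifting $\bar\varphi_n\colon G_n\times_S S_0\to H_n\times_S S_0$ to a morphism over $S$ is controlled, via the deformation theory of finite flat group schemes in a square-zero thickening, by a group $E_n$ of $\cO_{S_0}$-linear data tensored with $I$; in particular $E_n$ is killed by $p^N$. The idea is to use $p$-divisibility to absorb this obstruction: the map $\bar\varphi_{n+N}$, composed with $[p^N]_{H}\colon H_{n+N}\to H_n$ (which factors through $H_n\hookrightarrow H_{n+N}$) and restricted via $[p^N]_G\colon G_{n+N}\to G_{n+N}$ (surjective with kernel $G_N$), yields a diagram in which the obstruction class is killed. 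Concretely, lift $\bar\varphi_{n+N}$ after multiplying the obstruction by $p^N$, which gives an honest lift of $\bar\varphi_n$. Patching the lifts over all $n$ using the uniqueness of Step 2 produces a compatible system $\varphi=(\varphi_n)$, hence a morphism of $p$-divisible groups.

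\textbf{Step 4: The quasi-isogeny statement.} One direction is trivial: restricting a quasi-isogeny $\varphi$ to $S_0$ gives a quasi-isogeny $\bar\varphi$. Conversely, assume $\bar\varphi$ is a quasi-isogeny, so that some $p^m\bar\varphi$ is an isogeny, i.e., an epimorphism with finite flat kernel. By the existence part, lift to $\varphi\colon G\to H$ with $p^m\varphi$ lifting $p^m\bar\varphi$. Now $\ker(p^m\varphi)$ is a closed subgroup of some $G_k$; flatness of $\ker(p^m\bar\varphi)$ over $S_0$ combined with the local criterion of flatness (applied in the square-zero case, iterated by dévissage) shows $\ker(p^m\varphi)$ is flat over $S$. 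Surjectivity of $p^m\varphi$ as an fppf map is likewise checked after reduction to $S_0$ using Nakayama-type arguments for fppf sheaves. Hence $p^m\varphi$ is an isogeny and $\varphi$ is a quasi-isogeny.

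\textbf{Main obstacle.} The hardest part is Step 3, the existence of the lift, because one must make precise the assertion that the deformation-theoretic obstruction groups for finite flat group schemes in a square-zero thickening are killed by a uniform power of $p$, and then exploit $p$-divisibility of $G$ to annihilate that obstruction by rewriting $\bar\varphi_n$ as $p^N$ times $\bar\varphi_{n+N}$. The uniqueness in Step 2 is needed crucially to glue the level-by-level lifts into a compatible system indexed by $n$, so the two steps must be orchestrated together.
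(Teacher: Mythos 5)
The paper itself gives no proof of this theorem; it is quoted from the literature with a citation to \cite{drinfel1976coverings}, so your proposal has to be measured against the standard Drinfeld--Katz rigidity argument. Your Steps 1 and 2 follow that argument correctly: reduce by d\'evissage to a square-zero ideal $I$ with $p^N I=0$, observe that for square-zero $I$ the kernel of $H(R')\to H(R'/I)$ is (on the connected part; the \'etale part lifts points uniquely) identified with $\Lie(H)\otimes I$ and hence killed by $p^N$, and then use that multiplication by $p^N$ is an fppf epimorphism on $G$ to write a section as $p^N y$ and conclude $\psi=0$. Your justification of the ``killed by $p^N$'' claim at finite level $H_n$ is terse, but the intended mechanism is the right one.

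The genuine gap is Step 3, and it cannot be repaired as written, because the existence claim it is trying to establish is false in this generality: along a nilpotent thickening an arbitrary homomorphism $\bar\varphi$ need \emph{not} lift. By Serre--Tate deformation theory (cf.\ \cite{katz1981serre}), a non-canonical deformation over $\Z/p^2$ of the $p$-divisible group of an ordinary elliptic curve over $\F_p$ has endomorphism ring $\Z_p$, strictly smaller than the endomorphism ring of its special fibre, so $\Hom(G,H)\to\Hom(\bar G,\bar H)$ is injective but not surjective; the theorem as transcribed in the paper is itself an overstatement of Drinfeld's result. Concretely, your obstruction argument only shows that the obstruction class is killed by $p^N$, and killing it ``by multiplying by $p^N$'' produces a lift of $p^N\bar\varphi_n$, not of $\bar\varphi_n$; there is no way to divide the lifted map by $p^N$ afterwards. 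What the rigidity argument actually yields is: the reduction functor is faithful, and for every $\bar\varphi$ there is a unique lift of $p^N\bar\varphi$ for suitable $N$; consequently quasi-isogenies lift uniquely and the isogeny categories agree, which is the true content of the ``moreover'' clause. Your Step 4 inherits the problem, since it invokes the honest lift of $\bar\varphi$; it should instead be run entirely with the lifts of $p^m\bar\varphi$ and of $p^m$ times a quasi-inverse, using the uniqueness of Step 2 to see that their composites are $p^{2m+a}\cdot\mathrm{id}$, rather than via the flatness and Nakayama checks on $\ker(p^m\varphi)$.
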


\begin{defn}
Let $G=\colim G_n$ be a p-divisible group over field $K$. We define the Tate module of $G$ by 
$$\Tp(G)=\lim G_n(\bar{K}),$$
where the transition maps are induced by $[p]\colon G_{n+1}\to G_n$. We denote the Tate module of $\mu_{p^{\infty}}$ by $\Z_p(1)$. The Tate comodule of $G$ is 
$$\Phi_p(G)=\colim G_n(\bar{K}).$$
The Tate comodule is $G(\bar{K})$, where $G$ is considered as an fppf sheaf. 
\end{defn}
\begin{prop}\cite{stix2009course}
Assume that $G$ is a p-divisible group over a field $K$ of height $h$. We have the following:
\begin{enumerate}
    \item $\Tp(G)$ is a free $\Z_p$-module of rank $h$.
    \item $\Tp (G)\cong\Hom_{\Z_p}(\Q_p/\Z_p,G(\bar{K}))$.
    \item $\Phi_p(G)\cong \Tp(G)\otimes_{\Z_p}\Q_p/\Z_p$.
    \item $\Tp(G)\cong \Hom_{\Z_p}(\Tp(G\ve),\Z_p(1))$.
    \item $\Phi_p(G)\cong\Hom_{\Z_p}(\Tp(G\ve),\mu_{p^{\infty}}(\bar{K}))$
\end{enumerate}
\end{prop}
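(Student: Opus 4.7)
The plan is to establish (1) as the structural backbone and derive (2)--(5) from it by formal manipulation of limits and colimits together with the Weil pairing. By the paper's conventions, $K$ has characteristic zero, so every finite flat commutative $K$-group scheme is \'etale and inseparability subtleties do not intervene (over a field of characteristic $p$ the statement would actually fail, as $\Tp(\mup)=0$ over $\bar\F_p$), and every fppf epimorphism between such group schemes is surjective on $\bar K$-points.

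For (1), I would show by induction on $n$ that $G_n(\bar K)\cong (\Z/p^n\Z)^h$. Since $G_n$ is \'etale of order $p^{nh}$, one has $|G_n(\bar K)|=p^{nh}$, and the previous proposition supplies a short exact sequence
\[
0\to G_1(\bar K)\to G_{n+1}(\bar K)\xrightarrow{[p]}G_n(\bar K)\to 0
\]
coming from the fppf epimorphism $[p]\colon G_{n+1}\twoheadrightarrow G_n$ with kernel $G_1$. In the base case $n=1$, being annihilated by $p$ of order $p^h$ forces $G_1(\bar K)\cong (\Z/p\Z)^h$; for the inductive step, noting that $G_{n+1}(\bar K)$ is an abelian $p$-group of order $p^{(n+1)h}$ killed by $p^{n+1}$ whose $p$-torsion subgroup has order $p^h$, the structure theorem for finite abelian $p$-groups forces $G_{n+1}(\bar K)\cong (\Z/p^{n+1}\Z)^h$. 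Passing to the inverse limit along $[p]$ yields $\Tp(G)\cong\Z_p^h$.

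For (2) and (3), I would rewrite $\Q_p/\Z_p=\colim_n\Z/p^n\Z$ with transition $1\mapsto p$ and commute $\Hom$ with the colimit to get
\[
\Hom_{\Z_p}(\Q_p/\Z_p,G(\bar K))=\lim_n G(\bar K)[p^n]=\lim_n G_n(\bar K)=\Tp(G),
\]
after verifying that the induced transition maps on the right are exactly multiplication by $p$. Part (3) follows because (1) supplies compatible isomorphisms $\Tp(G)/p^n\Tp(G)\cong G_n(\bar K)$, whence
\[
\Tp(G)\otimes_{\Z_p}\Q_p/\Z_p=\colim_n \Tp(G)/p^n\Tp(G)=\colim_n G_n(\bar K)=\Phi_p(G).
\]

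For (4), the Weil pairing $G_n\times G_n\ve\to\mu_{p^n}$ is a perfect duality of finite flat group schemes compatible with the transition maps $[p]$; taking $\bar K$-points and inverse limits produces a perfect $\Z_p$-bilinear pairing $\Tp(G)\times \Tp(G\ve)\to\Z_p(1)$ between free rank-$h$ modules, giving the identification in (4). Finally, (5) is obtained by combining (3) and (4): since $\Tp(G\ve)$ is free of finite rank, $\Hom_{\Z_p}(\Tp(G\ve),-)$ commutes with $-\otimes_{\Z_p}\Q_p/\Z_p$, yielding
\[
\Phi_p(G)\cong \Tp(G)\otimes_{\Z_p}\Q_p/\Z_p\cong \Hom_{\Z_p}\bigl(\Tp(G\ve),\Z_p(1)\otimes_{\Z_p}\Q_p/\Z_p\bigr)\cong \Hom_{\Z_p}\bigl(\Tp(G\ve),\mup(\bar K)\bigr).
\]
The only real care point throughout is bookkeeping the connecting maps in these (co)limits and verifying naturality; once the base case (1) is settled, the remaining identifications are canonical and formal.
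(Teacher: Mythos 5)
Your proposal is correct and takes essentially the same route as the paper, whose own justification is a citation to Stix together with a sketch that performs the same formal $\lim$/$\colim$ manipulations for (2)--(3) and Cartier duality on $\bar{K}$-points for (4)--(5). Your detailed induction establishing $G_n(\bar{K})\cong(\Z/p^n\Z)^h$ for (1), and the explicit characteristic-zero caveat, are sound elaborations of steps the paper leaves implicit.
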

\section{Formal schemes and formal Lie groups}\label{sec: formal shcemes and formal lie groups}
Let $A$ be an admissible topological ring and $I_{\lambda}$ a fundamental system of ideals of definition for ring $A$ (see \cite[\href{https://stacks.math.columbia.edu/tag/07E8}{Definition 07E8}]{stacks-project}). The affine formal scheme, denoted $\sX=\Spf(A)$, is the subspace of $\Spec A$ consisting of all open prime ideals of $A$ endowed with the structure sheaf $\cO_{\sX}=\lim\cO_{\Spec(A/I_{\lambda})}$. Every morphism between two affine formal schemes corresponds to a continuous homomorphism of topological rings. If $A$ is a ring with discrete topology, then $\Spf(A)$ is equal to $\Spec(A)$ (see \cite[\href{https://stacks.math.columbia.edu/tag/0AHY}{Section 0AHY}]{stacks-project} for more details).

Let $\sX=\Spf(A)$ be an affine formal scheme and $h_{\sX}$ be its functor of points. Then $h_{\sX}=\colim h_{\Spec(A/I)}$ where the colimit is over the collection of ideals of definition of the admissible topological ring $A$. Indeed, $h_{\sX}$ is a (classical) affine formal algebraic space. In fact, the category of affine formal schemes is equivalent to the category of classical affine formal algebraic spaces (\cite[\href{https://stacks.math.columbia.edu/tag/0AI6}{Section 0AI6}]{stacks-project}).

Let $S$ be a scheme, $X$ an affine scheme over $S$, and $T$ a closed subset of $X$. The functor 
$$(\Sch/S)_{\text{fppf}}\to \text{Sets},\, U\mapsto \{f:U\to X\mid f(U)\subseteq T\}$$
is a formal algebraic space and it is called the formal completion of $X$ along $T$ which we denote by $\hat{X}_{T}$, and $T=\mathbb{V}(I)$ for some finitely generated ideal $I\subset A$. Then $\hat{X}_{T}=\Spf(\hat{A})$ where $\hat{A}$ is the $I$-adic completion of $A$ (\cite[\href{https://stacks.math.columbia.edu/tag/0AIX}{Section 0AIX}]{stacks-project}). In fact, the category of smooth affine formal schemes over a noetherian complete local ring $R$ is equivalent to the category of complete noetherian local $R$-algebras.

\begin{remark}[Formal completion of an abelian sheaf along its zero section]
    A closed immersion $T\into U$ is called a nil-immersion of order $\leq k$ if the ideal $\sI$ defining it verifies $\sI^{k+1}=0$.

    Let $\cF$ be an abelian fppf sheaf on $S$. For any positive integer $k$ we denote by $\text{Inf}^k\cF$  the sheaf associated to the sub-presheaf of $\cF$
\[
U\mapsto \{s\in \cF(U)\st \text{there exists a nil--immersion }T\into U \text{ of order }\leq k \text{ with  } s\mid_T=0\}.\]
We denote $\hat{\cF}:=\colim \text{Inf}^k\cF$ and it is called the formal completion of $\cF$ along its zero section.
\end{remark}
A formal group over $S$ is a group object in the category of formal schemes over $S$.
\begin{example}\label{formal completion of G along its zero section}
If $G$ is a group scheme over $S$ with augmentation ideal $\sI$, then $\text{Inf}^kG=\Spec(\cO_G/\sI^{k+1})$ and $\hat{G}=\colim \Spec(\cO_G/\sI^{k+1})$ is the completion of $G$ along its identity. If $G$ is a formal group scheme, then $\hat{G}=G$. 
\end{example}

Let $S$ be a scheme and $\sX=\colim\Spec(A/I_{\lambda}), \sY=\colim\Spec(B/J_{\mu})$ two affine formal schemes over $S$ corresponding to $A$ and $B$ respectively. The fibre product $X\times_SY$ is also an affine formal scheme corresponding to $A\hat{\otimes}_CB$, where $\hat{\otimes}$ is the completed tensor product (\cite[\href{https://stacks.math.columbia.edu/tag/0AN3}{Lemma 0AN3}]{stacks-project}).

Assume that $R$ is a complete noetherian local ring with residue field $k$ of characteristic $p$. A formal group (or formal group scheme) over $R$ is a group object in the category of formal schemes over $R$. A connected smooth formal group over $R$ is called a formal Lie group. For a formal Lie group $\sG=\Spf(\sA)$ over $R$ we have an isomorphism $A\cong R[[t_1,\dots,t_d]]$ of profinite $R$-algebras. The number $d$ is called the dimension of the formal Lie group $\sA$ which is uniquely defined as the $R$-rank of the tangent space $\sI/\sI^2$, where $\sI=(t_1,\dots,t_d)$ is the augmentation ideal of $\sA$.

The group structure on $\Spf(\sA)$ is given by a continuous ring homomorphism $$\mu\colon\sA\to\sA\hat{\otimes}\sA=R[[t_1,\dots,t_d]]\hat{\otimes}R[[u_1,\dots,u_d]]=R[[t_1,\dots,t_d,u_1,\dots,u_d]]$$ such that the power series $\Phi_i(T,U):=\mu(t_i)$ form a family $\Phi(T,U):=(\Phi_i(T,U))$ that satisfies the following axioms:
\begin{enumerate}
    \item associativity: $\Phi(T,\Phi(U,V))=\Phi(\Phi(T,U),V)$
    \item unit: $\Phi(T,0)=T=\Phi(0,T)$
    \item If the formal Lie group is commutative, then we also have $\Phi(T,U)=\Phi(U,T)$.
\end{enumerate}
Let $[p]^*\colon\sA\to\sA$ be the map induced by multiplication by $p$ on the formal Lie group $\sG=\Spf(\sA)$. We say that $\sG$ (or $\sA$) is a p-divisible formal Lie group if $\sG$ is commutative and $[p]^*$ is finite flat.

\begin{example}
The formal Lie group $\hat{\G}_a$ and $\hat{\G}_m$ are given by $R[[t_1]]$ with comultiplications $t_1\mapsto t_1+u_1 $ and $t_1+u_1+t_1u_1$ respectively.
\end{example}

\begin{prop}[Serre-Tate]\cite{demazure2006lectures}\label{prop 1.2.1}
Let $R$ be a complete noetherian local with residue field $k$ of characteristic $p$.
\begin{enumerate}
    \item Let $G=\colim G_n$ be a connected p-divisible group over $R$ with $G_n=\Spec(A_n)$ for each $n$. We have a continuous isomorphism $$\lim (A_n\otimes_R k)\cong k[[t_1,\dots,t_d]]$$
    for some positive integer $d$. Also, this isomorphism can be lifted to the continuous isomorphism $R[[t_1,\dots,t_d]]\cong \lim A_n$.
    \item Let $\sG=\Spf(\sA)$ be a p-divisible formal Lie group over $R$ with augmentation ideal $\sI$. Define $A_n:=\sA/[p^n]^*(\sI)$ and $G_n:=\Spec(A_n)$ for each $n$. Then, each $G_n$ is a finite flat group scheme and $\sG[p^{\infty}]:=\colim G_n$ is a connected p-divisible group scheme over $R$.
    \item Let $\sA$ be a p-divisible formal Lie group over $R$ with the augmentation ideal $\sI$ and let $A_n:=\sA/[p^n]^*(\sI)$. Then we have a natural continuous isomorphism $\sA\cong\lim A_n$.
    \item There is an equivalence of categories
    $$\Big\{\text{p-divisible formal Lie groups over }R \Big\} \xrightarrow{\cong}\Big\{\text{connected p-divisible groups over }R\Big\}$$
\end{enumerate}
\end{prop}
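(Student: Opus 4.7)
The plan is to prove the four parts in order, with parts (1), (2), and (3) feeding into the equivalence in (4). For part (1), I would begin by analyzing the reduction modulo the maximal ideal of $R$ and then lift. Each $\bar{A}_n := A_n \otimes_R k$ is the affine algebra of a connected finite flat commutative group scheme over the perfect field $k$, so it is a local Artinian $k$-algebra. The system $\{\bar{G}_n\}$ forms a connected $p$-divisible group over $k$ with $\bar{G}_n = \bar{G}_{n+1}[p^n]$, and $\lim \bar{A}_n$ is the coordinate ring of the formal completion of $\colim \bar{G}_n$ at the identity; by the standard structure theory of connected formal groups over a perfect field (compare \cref{formal completion of G along its zero section}), this is a formal power series ring $k[[t_1,\dots,t_d]]$ with $d$ equal to the dimension of the tangent space of $\bar{G}_1$. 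To lift to $R$, I would choose preimages $\tilde t_i \in \lim A_n$ of the $t_i$ and apply formal smoothness of $R[[t_1,\dots,t_d]]$ together with Nakayama over the complete local ring $R$ (at each finite truncation) to show that $R[[t_1,\dots,t_d]] \to \lim A_n$ is a continuous isomorphism.

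For part (2), since $[p]^{*}\colon \sA\to\sA$ is finite flat, the quotient $A_1 = \sA/[p]^{*}(\sI)$ is a finite flat $R$-algebra; iterating with $[p^n]^{*} = ([p]^{*})^n$ then shows that every $A_n = \sA/[p^n]^{*}(\sI)$ is finite flat over $R$. The comultiplication $\mu$ of $\sA$ descends to each $A_n$ because $[p^n]$ commutes with $\mu$, endowing $G_n = \Spec A_n$ with the structure of a finite flat commutative group scheme over $R$. The identifications $G_n = G_{n+1}[p^n]$ and the transition maps $G_n \into G_{n+1}$ then follow directly from the construction, and each $G_n$ is local (hence connected) since it is a quotient of the local ring $\sA$. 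The height $h$ is the generic degree of $[p]^{*}$.

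For part (3), the content is that the $\lim A_n$ recovers $\sA$, which amounts to showing that the ideals $[p^n]^{*}(\sI)$ and the powers $\sI^m$ of the augmentation ideal define the same topology on $\sA$. One direction, $[p^n]^{*}(\sI) \subset \sI$, holds because $[p^n]^{*}$ is a local homomorphism. In the other direction, part (2) gives that $A_1 = \sA/[p]^{*}(\sI)$ is Artinian local, so its maximal ideal $\sI/[p]^{*}(\sI)$ is nilpotent, yielding $\sI^k \subset [p]^{*}(\sI)$ for some $k$; iterating gives the required cofinality $\sI^{k^n}\subset[p^n]^{*}(\sI)$. Then $\sA \cong \lim \sA/[p^n]^{*}(\sI) = \lim A_n$ as topological $R$-algebras.

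Finally, for part (4), the functor $\sG \mapsto \sG[p^\infty]$ is provided by (2), and a quasi-inverse is given by $G \mapsto \Spf(\lim A_n)$: part (1) ensures that the target is a formal Lie group, and the multiplication-by-$p$ map on $\lim A_n$ is finite flat because its kernel is represented by the finite flat $G_1$. Part (3) gives one composition, $\sA \cong \lim A_n$, while the other composition $\Spf(\lim A_n)[p^\infty] \cong G$ is checked by comparing the defining filtrations. The main obstacle I anticipate is the topological bookkeeping in (3): verifying carefully that the $[p^n]^{*}(\sI)$-adic topology coincides with the $\sI$-adic topology on $\sA$ is what makes the two functors mutually quasi-inverse and is the technical heart of the Serre--Tate equivalence.
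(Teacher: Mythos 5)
The paper itself gives no argument for this proposition: it is quoted from Demazure's lectures \cite{demazure2006lectures}, so there is nothing internal to compare against except the standard Tate--Demazure proof, which your outline for (1), (2) and (4) follows at sketch level. The serious problem is in your part (3), which you correctly identify as the technical heart but then resolve with a false claim. The ring $A_1=\sA/[p]^{*}(\sI)\sA$ is a finite flat local $R$-algebra; it is Artinian only when $R$ is, and its maximal ideal is $(\fm_R\sA+\sI)/[p]^{*}(\sI)\sA$, not $\sI/[p]^{*}(\sI)$. Consequently the inclusion $\sI^{k}\subseteq[p]^{*}(\sI)\sA$ you deduce from ``nilpotence'' fails in exactly the mixed-characteristic situation the thesis cares about: for $\sG=\hat{\G}_m$ over $R=\Z_p$ one has $[p]^{*}(t)=(1+t)^p-1=t\,(p+\binom{p}{2}t+\cdots+t^{p-1})$, and no power $t^{k}$ lies in this ideal (evaluating at $t=0$ and inducting shows $t^{k}\in((1+t)^p-1)$ would force the non-unit $p+\cdots+t^{p-1}$ to be a unit). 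The reverse cofinality also fails: $[p^{n}]^{*}(\sI)\sA$ contains $p^{n}t+\cdots$, hence is never inside $\sI^{2}$. So the two filtrations do \emph{not} define the same topology on $\sA$, and your stated route to $\sA\cong\lim A_n$ collapses.

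The statement is nevertheless true, and the standard repair is to interleave the $\fm_R$-adic direction with the $\sI$-adic one rather than work $\sI$-adically alone. Modulo $\fm_R$ the map $[p]$ on $\bar\sA=\sA\otimes_R k$ factors through Frobenius, so $\bar\sI^{N}\subseteq\overline{[p^{n}]^{*}(\sI)}$ for $N\gg0$; combined with $[p^{n}]^{*}(\sI)\sA\subseteq\sI$ this shows that the doubly indexed family $[p^{n}]^{*}(\sI)\sA+\fm_R^{m}\sA$ is cofinal with the powers of the maximal ideal $\fm_R\sA+\sI$ of $\sA$. Since each $A_n$ is finite over the complete ring $R$, hence $\fm_R$-adically complete, one gets $\lim_n A_n=\lim_{n,m}\sA/([p^{n}]^{*}(\sI)+\fm_R^{m})\sA\cong\sA$, which is what (3) and the mutual quasi-inverseness in (4) actually require. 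Two further (smaller) gaps to be aware of: in (1) the real content is the smoothness of the connected formal group over $k$, which you assume via ``standard structure theory over a perfect field'' although the proposition does not assume $k$ perfect, and the injectivity of $R[[t_1,\dots,t_d]]\to\lim A_n$ needs a flatness/completeness argument, not just Nakayama; in (4) the finite flatness of $[p]^{*}$ on $\lim A_n$ does not follow merely from the kernel being the finite flat $G_1$ --- one needs the usual fibrewise flatness argument for an isogeny of smooth formal groups of equal dimension.
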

\begin{prop}
Let $G$ be a p-divisible group of height $h$ over $R$. Let $d$ and $d^{\vee}$ denote the dimensions of $G$ and $G^{\vee}$ respectively. Then $h=d+d\ve$.
\end{prop}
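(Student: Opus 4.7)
The plan is to reduce the problem by successive simplifications. First, since both the height of a $p$-divisible group and the dimension of its associated formal Lie group are preserved by base change to the residue field $k$, it suffices to prove the identity over $k$. Base-changing further to the perfect closure does not affect either quantity, so I may assume $k$ is perfect of characteristic $p$.

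Next, I would exploit the connected-\'etale exact sequence $0 \to G^{0} \to G \to G^{\et} \to 0$, which gives $h = h(G^{0}) + h(G^{\et})$ and $d = d(G^{0})$ (since an \'etale $p$-divisible group has trivial formal Lie group). Dualising yields $0 \to (G^{\et})\ve \to G\ve \to (G^{0})\ve \to 0$. The Cartier dual of an \'etale $p$-divisible group is of multiplicative type (for example, $(\underline{\Q_p/\Z_p})\ve = \mu_{p^{\infty}}$), and such a group has dimension equal to its height. Hence $d\ve = h(G^{\et}) + d((G^{0})\ve)$, and the identity reduces to the connected case:
\[ h(G^{0}) = d(G^{0}) + d((G^{0})\ve). \]

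For the connected case, I would invoke the explicit coordinates $\sA \cong k[[t_1, \ldots, t_d]]$ from \cref{prop 1.2.1}, making $\sA$ a free module over itself via $[p]^{*}$ of rank $p^{h}$. The cleanest route is via Dieudonn\'e theory, which is developed in the next section: to $G$ one attaches a free $\W(k)$-module $\D(G)$ of rank $h$ equipped with operators $F$ and $V$ satisfying $FV = VF = p$, together with canonical identifications that relate $\dim_{k}\D(G)/F\D(G)$ and $\dim_{k}\D(G)/V\D(G)$ to $d$ and $d\ve$ (the precise matching depending on whether one uses the covariant or contravariant convention). Since $F$ induces an isomorphism $\D(G)/V\D(G) \iso F\D(G)/p\D(G)$, the filtration $p\D(G) \subset F\D(G) \subset \D(G)$ at once yields $h = \dim_{k} \D(G)/p\D(G) = d + d\ve$.

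The main obstacle is that the connected case relies on Dieudonn\'e theory, which is only introduced later in this chapter; logically one could either forward-reference that machinery, or give a direct proof using only the formal group law and a careful analysis of the finite flat group schemes $G_1$ and $G_1\ve$ via Cartier duality. The latter, self-contained argument is elementary but technically delicate, and is carried out in Tate's paper on $p$-divisible groups.
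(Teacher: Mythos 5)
Your outline is correct, but note that the paper offers no internal argument for this proposition at all: its ``proof'' is just the citation to Stix's notes (Theorem 72), so the real comparison is with the standard argument in the literature, which is essentially what you reproduce. Two remarks. First, if you are willing to use the Dieudonn\'e theory recalled later in the chapter, the connected--\'etale d\'evissage is unnecessary: after reducing to the perfect residue field $k$ one has $h=\operatorname{rank}_{\W(k)}\D(G)$, $d=\dim_k \D(G)/F\D(G)$ and $d\ve=\dim_k \D(G)/V\D(G)$ (check the contravariant convention on $\mup$ and $\underline{\Q_p/\Z_p}$), and since $F$ is injective on the torsion-free module $\D(G)$ with $F(V\D(G))=p\D(G)$, the chain $p\D(G)\subseteq F\D(G)\subseteq\D(G)$ gives $h=\dim_k\D(G)/p\D(G)=\dim_k\D(G)/F\D(G)+\dim_k F\D(G)/p\D(G)=d+d\ve$ in one stroke; the d\'evissage is only needed for the elementary route (Tate's direct analysis of the connected part via the formal group law and Frobenius--Verschiebung, which is what the cited source carries out). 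Second, in your reduction you implicitly use that dimension is additive in short exact sequences of p-divisible groups when you write $d\ve=h(G^{\et})+d((G^{0})\ve)$; this is true, but it is not part of the definitions and deserves a sentence (it follows from exactness of the associated formal Lie groups, or again from exactness of $\D$ together with $d=\dim_k\D/F\D$). With those two points made explicit, your proof is sound; the forward-reference issue you flag is a matter of exposition only, since the paper itself quotes the statement from the literature rather than proving it.
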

\begin{proof}
    \cite[Theorem 72]{stix2009course}
\end{proof}
\begin{cor}\label{we have only two kind p-divisible group over algebraic closed charac p}
Let $k$ be an algebraic closed field of characteristic $p$. Every p-divisible group of height $h=1$ over $k$ is isomorphic to either $\underline{\Q_p/\Z_p}$ or $\mu_{p^{\infty}}$.
\end{cor}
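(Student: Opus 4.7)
The plan is to apply the height-dimension relation $h = d + d^{\vee}$ from the preceding proposition. Since $h = 1$ and $d, d^{\vee} \geq 0$, there are only two possibilities: $(d, d^{\vee}) = (0,1)$ or $(d, d^{\vee}) = (1,0)$. These will correspond to $\underline{\Q_p/\Z_p}$ and $\mu_{p^{\infty}}$ respectively, the second being obtained from the first by Cartier duality.

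First I would treat the case $d = 0$. Via the Serre--Tate equivalence in \cref{prop 1.2.1}, the associated formal Lie group has dimension $0$ and is therefore trivial, so the connected component $G^0$ is trivial and $G = G^{\text{\'et}}$ is \'etale. Since $k$ is algebraically closed, each $G_n$ is a constant group scheme $\underline{M_n}$ for some finite abelian $p$-group $M_n$ of order $p^n$. From the defining exact sequence
\[
0 \to G_n \to G_{n+1} \xrightarrow{[p^n]} G_{n+1}
\]
the image of $[p^n]$ on $G_{n+1}$ is annihilated by $[p]$ and has order $|G_{n+1}|/|G_n| = p$, so $G_{n+1}/G_n \cong G_1$ and in particular $G_n[p] = G_1$ has order $p$. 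An abelian $p$-group possessing a unique subgroup of order $p$ is cyclic, so $M_n \cong \Z/p^n\Z$ compatibly with the transition maps $i_n$, and hence $G \cong \underline{\Q_p/\Z_p}$.

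Next I would handle the case $d = 1$ by duality. Since $d_{G^\vee} = d^{\vee} = 0$ and $G^{\vee}$ is again a $p$-divisible group of height $1$, the previous case applies to $G^{\vee}$ and yields $G^{\vee} \cong \underline{\Q_p/\Z_p}$. Dualizing back and using the identification $(\underline{\Q_p/\Z_p})^{\vee} \cong \mu_{p^{\infty}}$ recorded earlier in the chapter, we conclude $G \cong \mu_{p^{\infty}}$.

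The only step requiring a small amount of care is the identification of a height-one \'etale $p$-divisible group over $k$ with $\underline{\Q_p/\Z_p}$, which boils down to the structural remark that an abelian $p$-group with a unique subgroup of order $p$ is cyclic. Everything else is a direct application of the height-dimension formula together with Cartier duality, so I do not anticipate a genuine obstacle.
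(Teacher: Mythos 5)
Your proposal is correct and takes essentially the same route as the paper: reduce to the \'etale case via the relation $h=d+d\ve$ together with Cartier duality and the identification $(\underline{\Q_p/\Z_p})\ve\cong\mup$, then recognize an \'etale height-one p-divisible group over the algebraically closed field $k$ as $\underline{\Q_p/\Z_p}$ by showing each $G_n(\bar{k})$ is cyclic of order $p^n$. The only cosmetic difference is that the paper first splits into the \'etale/connected dichotomy and invokes the dimension formula only in the connected case, whereas you split by dimension and deduce \'etaleness of the $d=0$ case from the Serre--Tate equivalence.
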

\begin{proof}
As $G$ is of height 1, $G$ is either \'etale or connected. If $G$ is \'etale, then each $G_n=G_{n+1}[p^n]$ is finite \'etale. The group $G_n(\bar{k})$ is the $p^n$-torsion subgroup of $G_{n+1}(\bar{k})$ of order $p^n$. By induction, we can show that $G_n(\bar{k})=\Z/p^n\Z$ which implies that $G_n\cong \underline{\Z/p^n\Z}$. Because the functor $T\mapsto T(\bar{k})$ is an equivalence from the category of finite \'etale groups to the category of finite $\Gamma_k$-modules.

Now assume that $G$ is connected. As $G$ has dimension $1$, the above proposition implies that $G\ve$ has dimension $0$, and therefore it is \'etale. As we showed in the previous part, $G\ve=\underline{\Q_p/\Z_p}$ which implies that $G\cong \mup$.
\end{proof}
\section{Formal points on p-divisible groups}
For the rest of the section, we fix the base $R=\cO_K$ where $K$ is a local field extension of $\Q_p$ with the prefect residue field $k$ of characteristic $p$. We also let $L$ be the p-adic completion of an algebraic extension of $K$, and denote by $\fm_L$ and $k_L$ its maximal ideal and its residue field respectively. The completion of the algebraic closure of $K$ is denoted by $\C_p:=\hat{\bar{K}}$. Let us denote by $\Gamma_K$ the absolute Galois group of $K$.
\begin{defn}
Let $G=\colim G_n$ be a p-divisible group over $\cO_K$. We define the group of $\cO_L$-valued formal points on $G$ by 
$$G(\cO_L):=\lim_i G(\cO_L/\fm^i_L)=\lim_i\colim_n G_n(\cO_L/\fm^i_L).$$
\end{defn}
\begin{example}
By the definition, $\mu_{p^{\infty}}(\cO_L)=\lim_i\mu_{p^{\infty}}(\cO_L/\fm^i_L)=1+\fm_L$. Because the group $\lim_i\mu_{p^{\infty}}(\cO_L/\fm^i_L)$ contains all elements $x$ in $\cO_L^{\times}$ such that $\nu(x^{p^n}-1)$ can be arbitrary large. Moreover, we have $x^{p^n}-1=(x-1)^{p^n}$ mod $\fm_L$. Hence, $x\in 1+\fm_L$, when $x\in \mup(\cO_L)$.

On the other hand, the group of ordinary $\cO_L$-points on $\mu_{p^{\infty}}$ is given by $$\colim_n\mu_{p^n}(\cO_L)=\{\text{p-power torsion elements in }\cO^{\times}_L\}.$$
\end{example}
\begin{prop}
Let $G=\colim G_n$ be a p-divisible group over $\cO_K$ with $G_n=\Spec(A_n)$ for each $n$. Then
\begin{enumerate}
    \item We have a continuous isomorphism $G(\cO_L)\cong\Hom_{\cO_K-\text{cont}}(\sA,\cO_L)$, where $G(\cO_L)$ is the group of $\cO_L$-valued formal points equipped with its natural topology arising from that on $\cO_L$, $\sA=\lim_n A_n$, and the topology on the right side is the $\fm$-adic topology. When $G$ is connected, $\sA$ is a p-divisible formal Lie group, $\sA=\cO_K[[t_1,\dots,t_d]]$, and the continuous $R$-algebra homomorphisms are exactly the local $R$-algebra homomorphism. 
    
    \item Consider the natural $\Z_p$-module structure on $G(\cO_L)$ arising from that on $G(\cO_L/\fm^i_L)=\colim_n G_n(\cO_L/\fm^i_L)$. The torsion part of $G(\cO_L)$ is given by
    $$G(\cO_L)_{\tors}\cong \colim_n\lim_i G_n(\cO_L/\fm^i_L).$$

    \item If $G$ is \'etale, then $G(\cO_L)\cong G(k_L)$ where $k_L$ is the residue field of $\cO_L$.

    \item We have canonical isomorphisms $G_n(\bar{K})\cong G_n(\C_p)\cong G_n(\cO_{\C_p})$, and $G(\cO_{\C_p})^{\Gamma_K}=G(\cO_K)$.
    \item  We have an exact sequence 
    $$0\to G^0(\cO_L)\to G(\cO_L)\to G^{\'et}(\cO_L)\to 0.$$

    \item If $L$ is algebraically closed, then $G(\cO_L)$ is p-divisible i.e. the multiplication by $p$ on $G(\cO_L)$ is surjective.

\end{enumerate}
\end{prop}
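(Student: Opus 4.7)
All six items should be proved by unwinding the definition
\[
G(\cO_L)=\lim_i\colim_n G_n(\cO_L/\fm^i_L)=\lim_i\colim_n \Hom_{\cO_K}(A_n,\cO_L/\fm^i_L),
\]
and then reducing via the connected–\'etale exact sequence $0\to G^0\to G\to G^{\'et}\to 0$ to the two pure cases, where $G^0$ is described by a formal power series ring by \cref{prop 1.2.1} and $G^{\'et}$ is controlled by its residue-field points. The arguments for items (1)--(3) are essentially bookkeeping with the colimit/limit, whereas (4)--(6) need Galois descent and some mild Henselian input.

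\textbf{Items (1)--(3).} For (1), since $A_n$ has discrete topology and $G_n=\Spec(A_n)$, one has $G_n(\cO_L/\fm^i_L)=\Hom_{\cO_K}(A_n,\cO_L/\fm^i_L)$. Passing to the colimit over $n$ converts $\Hom$ out of $\colim A_n$ into continuous $\Hom$ out of $\sA=\lim_n A_n$ (equipped with its profinite topology), and passing to $\lim_i$ converts target $\cO_L/\fm^i_L$ into $\cO_L$ with its $\fm_L$-adic topology; this gives the claimed identification. When $G$ is connected, \cref{prop 1.2.1} gives $\sA\cong\cO_K[[t_1,\dots,t_d]]$, and a continuous $\cO_K$-algebra map is determined by the images of the $t_i$, which must lie in $\fm_L$ (so the map is automatically local). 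For (2), an element of $G(\cO_L)$ is $p^n$-torsion iff it factors through the kernel of $[p^n]$ on $G$, which is precisely $G_n$; hence the torsion subgroup is $\colim_n G_n(\cO_L)=\colim_n\lim_i G_n(\cO_L/\fm^i_L)$. For (3), since $\cO_L$ is Henselian and each $G_n$ is finite \'etale over $\cO_K$, we have $G_n(\cO_L/\fm^i_L)=G_n(k_L)$ for every $i$, so the transition maps in $i$ are isomorphisms and the double limit collapses to $\colim_n G_n(k_L)=G(k_L)$.

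\textbf{Items (4)--(6).} For (4), since $A_n$ is a finitely generated $\cO_K$-module, any $\cO_K$-algebra map $A_n\to\C_p$ has image in a finite $\cO_K$-subalgebra, hence lands in $\bar K$, and integrality forces it into $\cO_{\C_p}$; this gives $G_n(\bar K)\cong G_n(\C_p)\cong G_n(\cO_{\C_p})$. The identity $G(\cO_{\C_p})^{\Gamma_K}=G(\cO_K)$ is then proved by Galois descent split along the connected–\'etale sequence: on the connected part we use item (1) to write $G^0(\cO_{\C_p})=(\fm_{\C_p})^d$ and take $\Gamma_K$-invariants coordinate-wise, while on the \'etale part we use item (3) together with $G^{\'et}(\bar{\F}_p)^{\Gamma_K}=G^{\'et}(k)$. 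For (5), left exactness is formal from the fact that $\cO_L$-points form a left exact functor, and surjectivity onto $G^{\'et}(\cO_L)=G^{\'et}(k_L)$ follows because $G\to G^{\'et}$ is an fppf epimorphism of smooth $\cO_L$-sheaves, so the Henselian property of $\cO_L$ and the smoothness of $G$ over $\cO_L$ along the zero section allow a lift (reducing to finding power-series preimages in $\sA$). For (6), assume $L$ is algebraically closed. On the \'etale quotient, $[p]$ is surjective on $G^{\'et}(k_L)$ since $k_L$ is algebraically closed. On the connected part, $[p]^*:\sA\to\sA$ is finite flat by \cref{prop 1.2.1}, so after Weierstrass preparation on each coordinate, solving $[p]y=x$ for $x\in(\fm_L)^d$ reduces to finding roots in $\cO_L$ of certain distinguished polynomials; these exist because $L$ is algebraically closed and the roots are integral over $\cO_L$. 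Combining with (5) gives divisibility of the whole $G(\cO_L)$ via the five-lemma-type argument applied to $[p]$.

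\textbf{Anticipated obstacle.} The most delicate step is the surjectivity in (5) and the solvability argument in the connected case of (6), which both require passing from an fppf epimorphism of sheaves to actual lifts of $\cO_L$-valued points. The cleanest way is to exploit the explicit formal-power-series description from \cref{prop 1.2.1} together with Weierstrass preparation applied to $[p]^*$, noting that $[p]^*(t_i)\equiv p\,t_i\pmod{\deg 2}$, so the resulting Weierstrass polynomial has coefficients in $\fm_K$ and its roots can be sought in $\fm_L$ once $L$ is algebraically closed. The Galois-invariance statement in (4) is also subtle because the connected–\'etale sequence need not split over $\cO_K$; one must therefore combine descent on the two pieces with a diagram chase using (5).
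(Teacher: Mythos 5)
Items (1)--(3) of your proposal follow essentially the same route as the paper (the paper rewrites $\Hom_{\cO_K}(A_n,\cO_L/\fm^i_L)$ as $\Hom_{\cO_K}(A_n/\fm^iA_n,\cO_L/\fm^i_L)$ using completeness of the finite free $\cO_K$-modules $A_n$, then uses the infinitesimal lifting criterion for (3)), and for the first half of (4) your finiteness-plus-integrality argument is an acceptable substitute for the paper's ``\'etale generic fibre + valuative criterion''. For the Galois-invariance in (4), however, you take a detour: the paper deduces $G(\ocp)^{\Gamma_K}=G(\cO_K)$ in one line from item (1), since a $\Gamma_K$-equivariant continuous $\cO_K$-algebra map $\sA\to\ocp$ must land in $\ocp^{\Gamma_K}=\cO_K$ (Tate's theorem $\C_p^{\Gamma_K}=K$); your splitting along the connected--\'etale sequence also works (the diagram chase needs no $H^1$), but it makes (4) depend on (5), which the paper's argument does not.

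The genuine gap is in your proof of (5). The surjectivity of $G(\cO_L)\to G^{\et}(\cO_L)$ does not follow from ``$G\to G^{\et}$ is an fppf epimorphism of smooth $\cO_L$-sheaves plus Henselianness'': $G$ is not smooth (each $G_n$ is finite flat; only the connected part is formally smooth along the zero section), and even for honestly smooth surjections, Henselianness merely reduces lifting an $\cO_L$-point to producing a $k_L$-point of the special fibre of the relevant fibre, which is not automatic. Here the fibre over a point of $G^{\et}$ is a torsor under the connected part, and the indispensable input --- absent from your sketch --- is that this torsor acquires a $k_L$-point because $k_L$ is perfect: equivalently, the connected--\'etale sequence of each $G_n$ splits over $k_L$, i.e.\ a torsor under an infinitesimal group over a perfect field is trivial. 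Only after that does one lift through the square-zero extensions $\cO_L/\fm^{i+1}_L\to\cO_L/\fm^i_L$, using formal smoothness of the connected formal group (and allowing $n$ to grow), and finally pass to the limit over $i$ using surjectivity of the transition maps on $G^0(\cO_L/\fm^i_L)$. This is precisely the content of Tate's Proposition 4, which the paper cites instead of reproving (and likewise his Corollary 1 for (6)); since your (4) and (6) both invoke (5), the gap propagates to them. Your outline of (6) is otherwise essentially Tate's argument, but ``Weierstrass preparation on each coordinate'' is shaky for $d>1$ and is better replaced by the finite flatness of $[p]^*$ on $\sA$ together with the fact that $\cO_L$ is integrally closed in the algebraically closed field $L$, which produces the required $\cO_L$-valued preimage directly.
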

\begin{proof}
    \begin{enumerate}
        \item We can make the following identifications by using the fact that $\cO_L$ is complete, and thus $A_n$ is complete for each $n$, since it is finite free over $\cO_K$ (see \href{https://stacks.math.columbia.edu/tag/031B}{Tag 031B}).  
        \begin{gather*}
        G(\cO_L)=\lim_i\colim_n G_n(\cO_L/\fm^i_L)=
        \lim_i\colim_n \Hom_{\cO_K}(A_n,\cO_L/\fm^i\cO_L)\\=\lim_i\colim_n \Hom_{\cO_K}(A_n/\fm^iA_n,\cO_L/\fm^i\cO_L)=\lim_i \Hom_{\cO_K}(\lim_{n}A_n/\fm^iA_n,\cO_L/\fm^i\cO_L)\\=\Hom_{\cO_K-cont}(\lim_{i,n}A_n/\fm^iA_n,\lim_i\cO_L/\fm^i\cO_L)\\=\Hom_{\cO_K-cont}(\lim_{n}A_n,\cO_L)=\Hom_{\cO_K-cont}(\sA,\cO_L).
        \end{gather*}
When $G$ is connected, it follows from \cref{prop 1.2.1}(1) that  $\sA\cong\cO_K[[t_1,\dots,t_d]]$.
        \item The group $\G(\cO_L)_{\tors}$ contains only $p$-torsions as $G(\cO_L/\fm^i\cO_L)=\colim_n G(\cO_L/\fm^i\cO_L)$ is a $\Z_p$-module. The exact sequence 
        \[
        0\to G_n(\cO_L/\fm^i\cO_L)\to G(\cO_L/\fm^i\cO_L)\xrightarrow{[p^n]}G(\cO_L/\fm^i\cO_L)\to 0
        \]
        yields an exact sequence by taking limit
        \[
        0\to \lim_i G_n(\cO_L/\fm^i\cO_L)\to G(\cO_L)\xrightarrow{[p^n]}G(\cO_L)\to0.
        \]
        Hence, the $p^n$-torsion points in $G(\cO_L)$ is $\lim_iG_n(\cO_L/\fm^i\cO_L)$. Thus, 
        \[
        G(\cO_L)_{\tors}=\colim_{n}\lim_i G_n(\cO_L/\fm^i\cO_L).
        \]
        \item Each $G_n$ is \'etale. Every \'etale group scheme is formally \'etale. By the infinitesimal lifting criterion of \'etaleness, we have $G_n=(\cO_L/\fm^i\cO_L)\cong G_n(\cO_L/\fm^{i+1}\cO_L)$. Thus
        \[
        G(\cO_L)=\lim_i\colim_n G_n(\cO_L/\fm^i\cO_L)\cong \lim_i\colim_nG_n(k_L)\cong G(k_L).
        \]
        \item By (1) we have $G(\ocp)^{\Gamma_K}=G(\cO_K)$. The identification $G_n(\bar{K})=G_n(\C_p)$ follows from the fact that the generic fibre of $G_n$ is \'etale. The isomorphism $G_n(\C_p)=G_n(\ocp)$ is a consequence of the valuation criterion of properness.
        \item \cite[Proposition 4]{tate1967pdivisble}.
        \item \cite[Corollary 1]{tate1967pdivisble}.
    \end{enumerate}
\end{proof}

Indeed, the identification $G(\cO_L)\cong\Hom_{\cO_K-\text{cont}}(\sA,\cO_L)$ states that for a connected p-divisible group $G$, the $\cO_L$-formal points on $G$ are exactly the $\cO_L$-points on the formal Lie group associated to $G$ from the Serre-Tate equivalence.

\section{Hodge-Tate decomposition}
Let $G$ be a p-divisible group over $\cO_K$ with the p-divisible formal Lie group $\sA^0=\cO_K[[t_1,\dots,t_d]]$ associated to $G^0$. Let $\sI$ be the augmentation ideal of $\sA^0$.
\begin{defn}
For $G$  as above:
\begin{enumerate}
    \item Given an $\cO_K$-module $M$, the tangent space of $G$ with values in $M$ is 
    $$t_G(M):=\Lie(G)(M):=\Hom_{\cO_k}(\sI/\sI^2,M),$$
    and the contangent space of $G$ with values in $M$ is
    $$t\ve_G(M):=\Lie\ve(G)(M):=\sI/\sI^2\otimes_{\cO_K}M.$$
    \item The valuation filtration of $G^0(\cO_L)$ is 
    $$\Fil^{\lambda}G^0(\cO_L):=\{f\in G^0(\cO_L):\nu(f(x))\geq\lambda,\, \forall x\in\sI\}$$
    for any real number $\lambda>0$, where we have the identification $G^0(\cO_L)\cong\Hom_{\cO_K-cont}(\sA^0,\cO_L)$.
\end{enumerate}
\end{defn}
\begin{lemma}
Let $f\in G(\cO_L)$ and $x\in\sI$. Then $\operatorname{lim}_{n\to\infty}\frac{(p^nf)(x)}{p^n}$ exists in $L$, and it is  zero if $x\in\sI^2$.
\end{lemma}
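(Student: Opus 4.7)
The plan is to reduce to the connected case and exploit the formal group law for $p$-multiplication. After observing that $x \in \sI$ lies in the augmentation ideal of the formal Lie group $\sA^0$ associated with $G^0$, only the $G^0$-component of $f$ can affect $(p^nf)(x)$, so we may assume $f \in G^0(\cO_L)$. Under the identification $G^0(\cO_L) \cong \Hom_{\cO_K-\text{cont}}(\sA^0, \cO_L)$ of the previous proposition, view $f$ as a continuous $\cO_K$-algebra map $\sA^0 = \cO_K[[t_1, \dots, t_d]] \to \cO_L$, set $a_i := f(t_i) \in \fm_L$, and write $g_n := f \circ [p^n]^*$ so that $(p^nf)(x) = g_n(x)$ for every $x \in \sA^0$. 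Denote by $\nu$ the valuation on $L$ and put $\lambda_n := \min_i \nu((p^nf)(t_i))$.

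The core step is the growth estimate $\lambda_n \to \infty$ (in fact $\lambda_n \geq n\,\nu(p) + O(1)$ for large $n$). This rests on the standard formal-group identity, valid in characteristic zero, that $[p]_i(T) \equiv p\,T_i$ modulo $(T_1, \dots, T_d)^2$; equivalently, $[p]^*(y) \equiv p\,y \pmod{\sI^2}$ for every $y \in \sI$. From the recursion
\[
(p^{n+1}f)(t_i) = [p]_i\bigl((p^nf)(t_1), \dots, (p^nf)(t_d)\bigr) = p\,(p^nf)(t_i) + R_i^{(n)},
\]
where $R_i^{(n)}$ is the value at $((p^nf)(t_1), \dots, (p^nf)(t_d))$ of a power series in $\cO_K[[T]]$ of order $\geq 2$, I get $\nu(R_i^{(n)}) \geq 2\lambda_n$ and hence
\[
\lambda_{n+1} \geq \min\bigl(\nu(p) + \lambda_n,\, 2\lambda_n\bigr).
\]
Starting from $\lambda_0 > 0$, the valuations at least double as long as $\lambda_n < \nu(p)$, and once $\lambda_{n_0} \geq \nu(p)$ they increase by $\nu(p)$ at each step; linear growth follows.

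Convergence of $(p^nf)(x)/p^n$ then follows by a telescoping argument. For $x \in \sI$,
\[
\frac{(p^{n+1}f)(x)}{p^{n+1}} - \frac{(p^nf)(x)}{p^n} = \frac{f\bigl([p^{n+1}]^*x - p\,[p^n]^*x\bigr)}{p^{n+1}} = \frac{g_n\bigl([p]^*x - p\,x\bigr)}{p^{n+1}}.
\]
Since $u := [p]^*x - p\,x$ lies in $\sI^2$, it is the image of a power series in $\cO_K[[T]]$ of order $\geq 2$, so $\nu(g_n(u)) \geq 2\lambda_n$. Combined with the growth estimate,
\[
\nu\!\left(\frac{(p^{n+1}f)(x)}{p^{n+1}} - \frac{(p^nf)(x)}{p^n}\right) \geq 2\lambda_n - (n+1)\,\nu(p) \longrightarrow \infty,
\]
so $\{(p^nf)(x)/p^n\}$ is Cauchy in $L$ and converges. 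For $x \in \sI^2$, the same reasoning applied directly to $g_n(x)$ gives $\nu(g_n(x)) \geq 2\lambda_n$, whence $\nu((p^nf)(x)/p^n) \geq 2\lambda_n - n\,\nu(p) \to \infty$, and the limit is $0$.

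The only non-trivial ingredient is the growth estimate for $\lambda_n$: one must handle the initial doubling regime ($\lambda_n < \nu(p)$) separately from the subsequent arithmetic regime, where $\lambda_n$ increases by $\nu(p)$ at each step. Everything else is formal, following from the tangent-space identity $[p]^* \equiv p$ on $\sI/\sI^2$ together with the telescoping above.
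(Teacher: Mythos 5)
Your core argument---the connected case---is correct and is essentially the standard one: the paper gives no proof of its own but cites Stix (11.3.2), and the argument there (going back to Tate) is exactly your estimate: the cotangent identity $[p]^*y\equiv py\pmod{\sI^2}$, the recursion $\lambda_{n+1}\geq\min\bigl(\nu(p)+\lambda_n,\,2\lambda_n\bigr)$ with its doubling-then-linear growth, and the telescoping bound $2\lambda_n-(n+1)\nu(p)\to\infty$.

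The gap is your reduction to $f\in G^0(\cO_L)$. There is no ``$G^0$-component'' of $f$: the connected--\'etale sequence $0\to G^0(\cO_L)\to G(\cO_L)\to G^{\et}(\cO_L)\to 0$ has no canonical splitting, so the phrase is not meaningful; and, more seriously, without some reduction the expression $(p^nf)(x)$ for $x\in\sI\subset\sA^0$ is not even defined when $f\notin G^0(\cO_L)$, since $f$ is a continuous functional on $\sA=\lim_n A_n$ and the natural map goes $\sA\to\sA^0$, not the other way, so $f$ cannot simply be evaluated on $\sI$. The reduction your proof actually needs is the torsion statement: $G(\cO_L)/G^0(\cO_L)$ injects into $G^{\et}(\cO_L)\cong G^{\et}(k_L)=\colim_n G^{\et}_n(k_L)$ (part (3) of the preceding proposition), which is a torsion group, so $p^Nf\in G^0(\cO_L)$ for some $N$. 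This both gives meaning to $(p^nf)(x)$ for $n\geq N$ (as evaluation of an element of $\Hom_{\cO_K-\text{cont}}(\sA^0,\cO_L)$) and reduces the lemma to the connected case, because $\frac{(p^nf)(x)}{p^n}=p^{-N}\,\frac{(p^{\,n-N}(p^Nf))(x)}{p^{\,n-N}}$, so existence of the limit (and its vanishing on $\sI^2$) for $p^Nf$ yields it for $f$. With that substituted for your first paragraph, the rest of your proof goes through as written.
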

\begin{proof}
    \cite[11.3.2]{stix2009course}.
\end{proof}
The above lemma leads us to the following definition.
\begin{defn}\label{def logarithm p-divisible group}
Let $G$ be a p-divisible group over $\cO_K$ with the augmentation ideal $\sI$. We define the logarithm of $G$ to be the map 
$$\log_G\colon G(\cO_L)\to \Lie(G)(L)=t_G(L),\, \log_G(f)(x)=\operatorname{lim}_{n\to \infty}\frac{(p^nf)(\tilde{x})}{p^n}$$
such that $f\in G(\cO_L)$ and $x\in\sI/\sI^2$ with $\tilde{x}$ is any lift of $x$ to $\sI$.
\end{defn}
\begin{example}\label{logarith of p-divisible multiplicative}
When $G=\mu_{p^{\infty}}$ we have 
$$\mu_{p^{\infty}}(\cO_L)=\Hom_{\cO_K-cont}(\cO_L[[t]],\cO_L)\cong\fm_L\cong 1+\fm_L ,$$ where the last isomorphism is given by $x\mapsto x+1$. Let $\sI:=(t)$ be the augmentation ideal of the formal p-divisible group $\mup$, we get
$$t_{\mu_{p^{\infty}}}(L)=\Hom_{\cO_K}(\sI/\sI^2,L)\cong L.$$
Thus, we have the commutative diagram 
\begin{equation*}
\begin{tikzcd}
\mup(\cO_L) \arrow[r, "\log_{\mup}"] \arrow[d, "\cong"] & t_{\mup}(L) \arrow[d, "\cong"] \\
1+\fm_L \arrow[r]                                       & L                             
\end{tikzcd}
\end{equation*}
where the left vertical arrow and right vertical arrow are given by $f\mapsto 1+f(t)$ and $g\mapsto g(t)$ respectively. Let $x\in\fm_L$ and $f\in \mup(\cO_L)$. By considering the fact that the group law on formal p-divisible group $\Spf(\cO_K[[t]])$ is induced by $\cO_K[[t]]\to\cO_K[[t,t']],\, t\mapsto (1+t)(1+t')-1$, we can write
\[
(p^nf)(t)=f([p^n]_{\mup}(t))=f((1+t)^{p^n}-1)=(1+f(t))^{p^n}-1
\]
and therefore 
\begin{equation}\label{logarithm series 1 for mup Gm}
\log_{\mup}(1+x)=\colim_{n\to\infty}\frac{(1+x)^{p^n}-1}{p^n}=\colim_{n\to\infty}\sum^{p^n}_{i=1}\frac{1}{p^n}\binom{p^n}{i}x^i.
\end{equation}
For each $n$ and $i\leq p^n$,  we obtain 
\[
\frac{1}{p^n}\binom{p^n}{i}x^i-\frac{(-1)^{i-1}x^i}{i}=\frac{(p^n-1)\cdots (p^n-i+1)-(-1)^{i-1}(i-1)!}{i!}x^i.
\]
As the numerator of the right side is divisible by $p^n$, we have
\[
\nu\Big(\frac{1}{p^n}\binom{p^n}{i}x^i-\frac{(-1)^{i-1}x^i}{i}\Big)\geq n+i\nu(x)-\nu(i!)\geq n+i\nu(x)-\frac{i}{p-1}.
\]
This means that both series $\sum^{\infty}_{i=1}\frac{(-1)^{i-1}}{i}x^i$ and the series in the right side of \cref{logarithm series 1 for mup Gm} p-adically converge to the same number. Thus,
\[
\log_{\mup}(1+x)=\sum^{\infty}_{i=1}\frac{(-1)^{i-1}x^i}{i},
\]
which coincides with the usual p-adic logarithm $\log_p:1+\fm\to L$.
\end{example}
\begin{prop}
Let $G$ be a p-divisible group over $\cO_K$ with the augmentation ideal $\sI$. Then
\begin{enumerate}
    \item $\log_G$ is a $\Z_p$-homomorphism.
    \item $\log_G$ induces an isomorphism
    $$\Fil^{\lambda}G^0(\cO_L)\xrightarrow{\cong}\{\tau\in t_G(L)\colon \nu(\tau(x))\geq \lambda, \forall x\in\sI/\sI^2\}.$$
    \item $\ker(\log_G)=G(\cO_L)_{\tors}$ and $\log_G$ induces an isomorphism $G(\cO_L)\otimes_{\Z_p}\Q_p\cong t_G(L)$.
    \item We have an exact sequence 
    $$0\to \Phi_p(G)\to G(\cO_{\C_p})\xrightarrow{\log_G}\Lie G(\C_p)\to 0.$$
\end{enumerate}
\end{prop}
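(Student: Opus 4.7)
The plan is to prove the four parts in order, with (1) being essentially formal from the group law, (2) being the substantive step via constructing an inverse, and (3) and (4) following from (2) combined with divisibility properties. I would work throughout with the identification $G(\cO_L)\cong\Hom_{\cO_K\text{-cont}}(\sA,\cO_L)$ from the previous proposition (and focus on the connected part, since $G^{\et}(\cO_L)=G^{\et}(k_L)$ is already torsion and killed by $\log_G$ tautologically).

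For (1), the key linearization facts are $\mu(x)\equiv x\otimes 1+1\otimes x\pmod{\sI\hat\otimes\sI}$ and $[a]^{*}(x)\equiv ax\pmod{\sI^{2}}$ for $x\in\sI$ and $a\in\Z$. For $f,g\in G^{0}(\cO_L)$ one then gets $(f+g)(x)\equiv f(x)+g(x)$ and $(af)(x)\equiv af(x)$ modulo a quadratic error lying in the ideal generated by products of values of $f,g$ on $\sI$. Applying this to $p^{n}f$ and $p^{n}g$, whose values on $\sI$ sit in increasingly small $\fm_L^{\lambda_n}$, the quadratic error divided by $p^{n}$ tends to $0$, proving additivity and $\Z$-linearity. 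Continuity in the $p$-adic topology then upgrades $\Z$-linearity to $\Z_p$-linearity.

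For (2), I first check, using the preceding lemma and the formula defining $\log_G$, that $\log_G$ sends $\Fil^{\lambda}G^{0}(\cO_L)$ into the asserted subspace of $t_G(L)$. To invert it, I construct a formal exponential: choose coordinates $t_1,\dots,t_d$ on $\sA^{0}=\cO_K[[t_1,\dots,t_d]]$ and write the formal logarithm power series $\ell(T)\in K[[T_1,\dots,T_d]]^{d}$ in the sense of formal groups; its compositional inverse $\exp(T)$ converges on $\Fil^{\lambda}$ whenever $\lambda>\tfrac{1}{p-1}$, and gives a two-sided inverse to $\log_G$ there. The claim for a general $\lambda>0$ is then deduced by the standard trick: multiplication by $p$ carries $\Fil^{\lambda}$ bijectively onto its image inside $\Fil^{\lambda+\nu(p)}$ (modulo torsion) and intertwines $\log_G$ with multiplication by $p$ on $t_G$, so an isomorphism at high enough filtration propagates down.

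For (3), the inclusion $G(\cO_L)_{\tors}\subseteq\ker\log_G$ is immediate from the definition; conversely, if $\log_G(f)=0$, then for $n\gg 0$ the point $p^{n}f$ lies in some $\Fil^{\lambda}$ on which part~(2) is an honest isomorphism, forcing $p^{n}f=0$. The isomorphism $G(\cO_L)\otimes_{\Z_p}\Q_p\cong t_G(L)$ then follows because any $\tau\in t_G(L)$ satisfies $p^{n}\tau\in\log_G(\Fil^{\lambda})$ for $n$ sufficiently large, hence lies in the image rationally. For (4), injectivity of $\Phi_p(G)\hookrightarrow G(\cO_{\C_p})$ uses the identifications $G_n(\bar K)=G_n(\cO_{\C_p})$ from the previous proposition; exactness in the middle follows from (3) together with the fact that over the algebraically closed field $\C_p$ every torsion point in $G(\cO_{\C_p})$ already lies in $\Phi_p(G)$; and surjectivity onto $\Lie G(\C_p)$ combines the rational isomorphism from (3) with $p$-divisibility of $G(\cO_{\C_p})$ (part~(6) of the previous proposition).

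The main obstacle is part~(2): carefully constructing the formal exponential in the right filtration range and then extending the bijection down to arbitrary $\lambda>0$ via the $p$-multiplication argument. Everything else either reduces to formal manipulations of the group law (1), is a direct corollary of (2), or uses already established facts about $G(\cO_L)$.
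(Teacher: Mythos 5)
Your overall route for (1), (3) and (4) is sound and matches the standard argument: the paper itself gives no written proof here (parts (1)--(3) are citations to Stix, 11.3.2, Lemma 86, Corollary 87, and (4) is noted to be a direct consequence of (3)), and your deduction of (3) from (2) (using that $p^nf$ eventually lands deep in the filtration) and of (4) from (3) (using $p$-divisibility of $G(\cO_{\C_p})$ and the identification of the torsion of $G(\cO_{\C_p})$ with $\Phi_p(G)$) is exactly how these implications go.

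The genuine gap is in (2), in the step where you propagate the isomorphism from $\lambda>\nu(p)/(p-1)$ down to arbitrary $\lambda>0$. That step cannot be repaired, because the assertion for small $\lambda$ is false; the source being quoted (Tate's proposition, Stix's Lemma 86) states (2) only for $\lambda>\nu(p)/(p-1)$, and the paper's transcription silently drops this hypothesis. Concretely, take $G=\mup$ and $L$ containing $\mu_p$: the point $f$ with $f(t)=\zeta_p-1$ lies in $\Fil^{\lambda}G^0(\cO_L)$ for every $\lambda\le\nu(p)/(p-1)$, is torsion, and is killed by $\log_G$, so $\log_G$ is not injective on these filtration steps; moreover, for $\nu(u-1)=\lambda$ slightly below $\nu(p)/(p-1)$ one can have $\nu(\log_{\mup}(u))=p\lambda-\nu(p)<\lambda$, so $\log_G$ does not even carry $\Fil^{\lambda}$ into the asserted target. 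Your intertwining claim is also inaccurate: since $[p]^*(t_j)=pt_j+(\text{higher-order terms, possibly with unit coefficients})$, multiplication by $p$ sends $\Fil^{\lambda}$ only into $\Fil^{\min(p\lambda,\,\lambda+\nu(p))}$, and its kernel (the torsion) is precisely what your ``modulo torsion'' caveat discards --- but the statement to be proved is about $\Fil^{\lambda}$ itself, so bijectivity at level $\lambda+\nu(p)$ gives nothing at level $\lambda$. The fix is to prove (2) with the hypothesis $\lambda>\nu(p)/(p-1)$, where your formal-exponential inverse already closes the argument with no propagation step; this restricted form is all that your proofs of (3) and (4) actually use, since $p^nf$ eventually lies in $\Fil^{\lambda}$ with $\lambda$ in that range.
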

\begin{proof}
    \begin{enumerate}
        \item \cite[11.3.2]{stix2009course}.
        \item \cite[Lemma 86]{stix2009course}.
        \item \cite[Crollary 87]{stix2009course}.
        \item It is a direct consequence of (3). 
    \end{enumerate}
\end{proof}
\begin{defn}
Let $G=\colim G_n$ be a p-divisible group over $\cO_K$. We define the Tate-module and Tate-comodule of $G$ to be $\Tp(G):=\lim G_n(\bar{K})$ and $\Phi_p(G):=\colim G_n(\bar{K})$. We also denote $\Vp(G):=\Tp(G)\otimes_{\Z_p}\Q_p$.
\end{defn}
\begin{thm}[Hodge-Tate decomposition for p-divisible groups]\label{Theorem: Hodge-Tate decomposition}
We have a commutative diagram
\begin{equation*}
\begin{tikzcd}[column sep=tiny]
0 \arrow[r] & \Phi_p(G) \arrow[r] \arrow[d, "\cong"]                      & G(\cO_{\C_p}) \arrow[r, "\log_G"] \arrow[d, "\alpha"] & \Lie G(\C_p) \arrow[d, "d\alpha"] \arrow[r] & 0 \\
0 \arrow[r] & {\Hom_{\Z_p}(\Tp(G\ve),\mu_{p^{\infty}}(\bar{K}))} \arrow[r] & {\Hom_{\Z_p}(\Tp(G\ve),1+\fm_{\C_p})} \arrow[r]  & {\Hom_{\Z_p}(\Tp(G\ve),\C_p)} \arrow[r]  & 0
\end{tikzcd}    
\end{equation*}
where $\alpha$ and $d\alpha$ are $\Gamma_K$-equivariant and injective. Their restrictions to $\Gamma_K$-invariant elements yields the isomorphisms
\begin{align*}
G(\cO_K)\to\Hom_{\Z_p[\Gamma_K]}(\Tp(G\ve),\mu_{p^{\infty}}(\cO_{\C_p})),\\
t_G(K)\to \Hom_{\Z_p[\Gamma_K]}(\Tp(G\ve),\C_p).   
\end{align*}
Moreover, there is a canonical $\C_p[\Gamma_K]$-isomorphism
$$\Hom_{\Z_p}(\Tp(G),\C_p)\cong \Lie\, G\ve(\C_p)\oplus \coLie\,G(\C_p)(-1)$$
which is called the Hodge-Tate decomposition for $G$.
\end{thm}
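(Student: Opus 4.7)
The plan is to construct the middle vertical map $\alpha$ from the Cartier duality pairing, derive $d\alpha$ by passing to Lie algebras via the logarithm, check commutativity and exactness, and then prove injectivity by invoking Tate's theorem on the Galois cohomology of $\C_p$. The Hodge--Tate decomposition is extracted at the end by combining the result for $G$ and $G\ve$.

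For the construction, Cartier duality provides a canonical bilinear pairing
\[
G(\ocp)\times \Tp(G\ve)\longrightarrow \mup(\ocp)=1+\fm_{\C_p},
\]
obtained by viewing a compatible system $\phi=(\phi_n)\in\Tp(G\ve)$ as a morphism $\phi\colon G\to\mup$ of fppf sheaves and setting $\alpha(x)(\phi):=\phi(x)$. $\Gamma_K$-equivariance is immediate from the functoriality of the pairing. When $x$ is torsion, $\phi(x)$ is torsion, so $\alpha$ restricts on $\Phi_p(G)$ to the Cartier duality isomorphism $\Phi_p(G)\cong\Hom_{\Z_p}(\Tp(G\ve),\mup(\bar{K}))$ already recorded in the preceding proposition. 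Each $\phi\colon G\to\mup$ differentiates to $d\phi\colon\Lie G\to\Lie\mup=\G_a$, and the formula $d\alpha(\tau)(\phi):=d\phi(\tau)$ defines the right-hand column; commutativity of the right square is the naturality of $\log$ with respect to morphisms of p-divisible groups, together with the identification of $\log_{\mup}$ from \cref{logarith of p-divisible multiplicative}.

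Exactness of the top row is the logarithm sequence for $G$ already established. For the bottom row I would apply the functor $\Hom_{\Z_p}(\Tp(G\ve),-)$ to the logarithm sequence $0\to\mup(\bar{K})\to 1+\fm_{\C_p}\to\C_p\to 0$; right-exactness survives because $\Tp(G\ve)$ is a free $\Z_p$-module of finite rank. The main obstacle is then the injectivity of $d\alpha$; injectivity of $\alpha$ follows by a diagram chase using the Cartier duality isomorphism on the left and the exactness of both rows. For $d\alpha$, I would invoke Tate's theorem: the vanishing $H^0(\Gamma_L,\C_p(n))=0$ for $n\ne 0$ and finite $L/K$, together with $\C_p^{\Gamma_K}=K$ (Ax--Sen--Tate), pins down the Hodge--Tate weights of both sides and shows that a non-zero element of $\Lie G\otimes_K\C_p$ (of pure weight $0$) cannot be killed by the $\Gamma_K$-equivariant $\C_p$-linear extension of $d\alpha$. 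Taking $\Gamma_K$-invariants preserves injectivity; combined with the isomorphism $G(\cO_K)\otimes\Q_p\cong t_G(K)$ supplied by $\log_G$, a dimension count against Tate's theorem forces the two invariant maps to be isomorphisms onto the stated targets.

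Finally, the Hodge--Tate decomposition is obtained by applying the foregoing to both $G$ and $G\ve$ and using the Cartier duality identification $\Tp(G\ve)\cong\Hom_{\Z_p}(\Tp(G),\Z_p(1))$. Dualizing the injection $\Lie G\otimes_K\C_p\hookrightarrow\Hom_{\Z_p}(\Tp(G\ve),\C_p)$ and twisting yields $\coLie G(\C_p)(-1)\hookrightarrow\Hom_{\Z_p}(\Tp(G),\C_p)$ with image of pure Hodge--Tate weight $-1$; applying the theorem to $G\ve$ produces $\Lie G\ve(\C_p)\hookrightarrow\Hom_{\Z_p}(\Tp(G),\C_p)$ with image of pure weight $0$. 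The two weights differ, so the images meet trivially, and a rank count against $h=d+d\ve$ forces their direct sum to exhaust the target, giving the canonical $\C_p[\Gamma_K]$-decomposition. The crux throughout is Tate's injectivity (equivalently the vanishing of $H^0(\Gamma_L,\C_p(n))$ for $n\ne 0$), from which the rest of the statement follows by linear algebra organized by Hodge--Tate weights.
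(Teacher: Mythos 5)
Your construction of the maps is exactly the one the paper records after the statement: $\alpha$ via the identification $\Tp(G\ve)\cong\Hom(G\times_{\cO_K}\ocp,(\mup)_{\ocp})$ and evaluation, $d\alpha$ by differentiating, and the bottom row by applying $\Hom_{\Z_p}(\Tp(G\ve),-)$ to the logarithm sequence of $\mup$. Note, however, that the paper does not prove the theorem at all (it cites Tate, \S 4), so the substance of your proposal is a reconstruction of Tate's argument, and there it has a genuine gap at the central step. The injectivity of $d\alpha$ does not follow from the weight bookkeeping you describe: the Hodge--Tate weights of $\Hom_{\Z_p}(\Tp(G\ve),\C_p)$ are precisely what the theorem is asserting, so they cannot be ``pinned down'' beforehand, and even granting them, a $\Gamma_K$-equivariant $\C_p$-linear map out of the ``weight $0$'' space $t_G(K)\otimes_K\C_p$ can perfectly well have a nonzero Galois-stable kernel (any $K$-rational subspace tensored up to $\C_p$ is Galois-stable), so no weight argument alone excludes one. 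In Tate's proof the logical order is the reverse of yours: one first proves the isomorphisms on $\Gamma_L$-invariants for every finite extension $L/K$ -- and this is a cohomological chase on both rows of the diagram using the vanishing of $H^0$ and $H^1$ of $\C_p(1)$, not just right-exactness of $\Hom$ from a free module -- and only then deduces injectivity of $d\alpha$, because $d\alpha$ factors as $t_G(K)\otimes_K\C_p\cong W^{\Gamma_K}\otimes_K\C_p\to W$ with $W=\Hom_{\Z_p}(\Tp(G\ve),\C_p)$, and the map $W^{\Gamma_K}\otimes_K\C_p\to W$ is injective for any $\C_p$-semilinear representation by the Ax--Sen--Tate theorem $\C_p^{\Gamma_K}=K$. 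Your plan of proving injectivity first ``by weights'' and recovering the invariant statements afterwards by a dimension count cannot be executed in that order.

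There is also a slip in the final step: dualizing the injection $\Lie G\otimes_K\C_p\hookrightarrow\Hom_{\Z_p}(\Tp(G\ve),\C_p)$ produces a surjection $\Hom_{\Z_p}(\Tp(G),\C_p)\twoheadrightarrow\coLie G(\C_p)(-1)$, not an injection into $\Hom_{\Z_p}(\Tp(G),\C_p)$, so you cannot speak of two embedded images meeting trivially. The correct route (Tate's) is to show that the composite $t_{G\ve}(\C_p)\xrightarrow{d\alpha_{G\ve}}\Hom_{\Z_p}(\Tp(G),\C_p)\to\coLie G(\C_p)(-1)$ vanishes -- this is where the vanishing of invariants of nonzero twists legitimately enters, since the induced equivariant pairing $t_{G\ve}(L)\times t_G(L)\to\C_p(-1)$ must land in $\C_p(-1)^{\Gamma_L}=0$ -- and then to conclude by the count $h=d+d\ve$ that $\operatorname{im}(d\alpha_{G\ve})$ is exactly the kernel of the surjection; the resulting short exact sequence is split (e.g.\ using $H^1(\Gamma_K,\C_p(1))=0$), which yields the stated decomposition. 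So the skeleton of your sketch is the right one, but the injectivity of $d\alpha$ and the direct-sum step both need the arguments above rather than the ones you gave.
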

\begin{proof}
A complete proof can be found in \cite[\S 4]{tate1967pdivisble}.
\end{proof}
Let us construct the maps $\alpha$ and $d\alpha$ as we will use them later. We write
\begin{align*}
    \Tp(G\ve)=\lim G\ve_n(\bar{K})=\lim G\ve_n(\ocp)=\lim\Hom((G_n)_{\ocp},(\mu_{p^n})_{\ocp})
\end{align*}
\begin{equation}\label{1.4.2}
    =\Hom(G\times_{\cO_K}\ocp,(\mup)_{\ocp}).
\end{equation}
Let $u\in \Tp(G\ve)$. Under the above identification of (\ref{1.4.2}) and functoriality on points, $u$ corresponds to the map $u(\ocp):G(\ocp)\to \mup(\ocp)=1+\fm_{\C_p}$. We set 
\[
\alpha(g)(u):=u(\ocp)(g),\, \text{for any }g\in G(\ocp) \text{ and } u\in \Tp(G\ve).
\]
We also define $d\alpha: t_G(\C_p)\to \Hom_{\Z_p}(\Tp(G\ve),\C_p)$ by setting
\[
d\alpha(v)(u):=du_{\C_p}(v),\, \text{for any }v\in t_G(\C_p) \text{ and } u\in \Tp(G\ve),
\]
where $du_{\C_p}:t_{G}(\C_p)\to t_{\mup}(\C_p)\cong\C_p$ is the map corresponding to $u$ by functoriality on tangent spaces and under the identification \ref{1.4.2}.

The bottom row of \cref{Theorem: Hodge-Tate decomposition} is obtained by applying $\Hom_{\Z_p}(\Tp(G\ve), \, .)$ on the exact sequence
\[
0\to \mup(\bar{K})\to \mup(\ocp)\to t_{\mup}(\C_p)\to 0.
\]

\subsection{Generic fibre functor}
\textbf{References:} \cite{hainesnotes}, \cite{tate1967pdivisble}\\
Again, in this section, we consider the category of p-divisible groups over $\cO_K$, where $K$ is a local discrete-valued field with prefect residue field $k$.

Recall the definition of the discriminant ideal:
\begin{defn}
    Let $D$ be a Dedekind domain with field of fraction $K$. Let $L/K$ be a field extension and $B$ the integral closure of $D$ in $L$. The discriminant ideal is the ideal in $D$ generated by all discriminants of the form 
    \[
    \Delta(x_1,\dots,x_n):=\det[\Tr(x_ix_j)],
    \]
    where $\{x_1,\dots,x_n\}$ is a basis of $L$ over $K$. 
    
    The discriminant ideal is a principal ideal, as $L/K$ is finite. 
\end{defn}
\begin{prop}
Let $f\colon G\to H$ be a homomorphism of $p$-divisible groups over $\cO_K$. If the map $f$ induces an isomorphism on the generic fibres $G\times_{\Spec\cO_K}\Spec K\to H\times_{\Spec\cO_K}\Spec K$, then $f$ is an isomorphism. 
\end{prop}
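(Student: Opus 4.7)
The plan is to show that each level morphism $f_n \colon G_n \to H_n$ is an isomorphism of finite flat $\cO_K$-group schemes; since $f = (f_n)$, this yields the claim. Because $f_K$ is an isomorphism, $G$ and $H$ share the same height $h$ (as this is determined by the $K$-rank $p^h$ of $\cO(G_{1,K})$), and so $G_n$ and $H_n$ are both finite flat over $\cO_K$ of rank $p^{nh}$. Setting $A_n := \cO(G_n)$ and $B_n := \cO(H_n)$, the map $f_n$ corresponds to a Hopf algebra homomorphism $f_n^* \colon B_n \to A_n$ between finite free $\cO_K$-modules of the same rank; since $\cO_K$ is a discrete valuation ring and $f_n^* \otimes_{\cO_K} K$ is an isomorphism, $f_n^*$ is injective.

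The strategy is to compare discriminants over $\cO_K$. Choosing $\cO_K$-bases of $A_n$ and $B_n$, the map $f_n^*$ is represented by a matrix $C$ with $\det C \neq 0$, and the standard formula relating the discriminants of two lattices of the same rank inside a common $K$-vector space gives
\[
\operatorname{disc}(B_n/\cO_K) \;=\; (\det C)^2 \cdot \operatorname{disc}(A_n/\cO_K).
\]
Since $f_n^*$ is an isomorphism if and only if $\det C$ is a unit in $\cO_K$, it therefore suffices to prove $\operatorname{disc}(A_n/\cO_K) = \operatorname{disc}(B_n/\cO_K)$ as ideals of $\cO_K$.

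To establish this equality, I would invoke the discriminant formula for $p$-divisible groups due to Tate \cite{tate1967pdivisble}, according to which $\operatorname{disc}(\cO(G_n)/\cO_K)$ depends only on $n$, the height $h$, and the dimension $d$ of $G$. The heights of $G$ and $H$ already agree, so the issue reduces to proving $d_G = d_H$. For this, note that $\Tp G_K \cong \Tp H_K$ as $\Gamma_K$-representations, and the Hodge-Tate decomposition (\cref{Theorem: Hodge-Tate decomposition})
\[
\Hom_{\Z_p}(\Tp G_K, \C_p) \;\cong\; \Lie G_K\ve(\C_p) \,\oplus\, \coLie G_K(\C_p)(-1)
\]
identifies $d_G$ as the multiplicity of the Hodge-Tate weight $-1$ in $\Tp G_K \otimes_{\Z_p} \C_p$, a purely Galois-theoretic invariant of $G_K$. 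Thus $d_G = d_H$, so $\operatorname{disc}(A_n) = \operatorname{disc}(B_n)$, $\det C$ is a unit, and $f_n$ is an isomorphism. The main obstacle in this plan is precisely the equality $d_G = d_H$: the dimension is a priori an invariant of the formal structure of $G$ over $\cO_K$ (the rank of $\sI/\sI^2$ for the connected formal Lie group $\sA^0$), and since every $p$-divisible group over a characteristic-zero field is \'etale, naive Lie-algebra considerations on the generic fibre give zero. The Hodge-Tate decomposition, already established in this chapter, provides the crucial bridge between the integral/formal invariant $d$ and a Galois-theoretic datum attached to $G_K$.
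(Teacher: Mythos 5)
Your proof is correct and follows essentially the same route as the paper: reduce to the level maps, deduce injectivity of $f_n^*$ from flatness of $B_n$ together with the generic isomorphism, and conclude by comparing discriminants via Tate's formula $p^{ndp^{nh}}$. The only real difference is that you explicitly justify the equality of dimensions $d_G=d_H$ through the Hodge--Tate decomposition (so that the generic fibre determines $d$), a step the paper's proof leaves implicit when it asserts that $A_n$ and $B_n$ have the same discriminant ideal.
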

\begin{proof}
    Let $G=\colim G_n$ and $H=\colim H_n$. Assume that $G_n=\Spec(A_n)$ and $H_n=\Spec(B_n)$ for each $n$. Let $g_n:B_n\to A_n$ be the map induced by $f$. We want to show that $g_n$ is an isomorphism. As $B_n$ is flat over $\cO_K$ and $g_n\otimes 1_K:B_n\otimes_{\cO_K}K\to A_n\otimes_{\cO_K}K$ is an isomorphism, $g_n$ must be injective\footnote{Here, we actually use the fact that $B_n$ is faithfully flat. By \cite[\href{https://stacks.math.columbia.edu/tag/00HP}{Lemma 00HP}]{stacks-project}, $B_n$ is faithfully flat since $B_n/\fm_KB_n\neq 0$ due to Nakayama's Lemma}. Both $A_n$ and $B_n$ have the same discriminant ideal generated by $p^{ndp^{nh}}$, where $h$ is the height and $d$ is the dimension of $G$ (see \cite[Proposition 2]{tate1967pdivisble}). Thus, $A_n\cong B_n$.   
\end{proof}
\begin{prop}[\cite{tate1967pdivisble}]
Let $G$ be a p-divisible group over $\cO_K$, and let $M$ be a $\Z_p$-direct summand of $\Tp(G)$ that is stable under the action of $\Gamma_K$. There exists a p-divisible group $H$ over $\cO_K$ with a homomorphism $\iota\colon H\to G$ which induces an isomorphism $\Tp(H)\cong M$.
\end{prop}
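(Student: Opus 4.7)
The idea is to build $H$ layer by layer: first on the generic fibre, where everything is étale and Galois representations are available, and then extend to $\cO_K$ via scheme-theoretic closure inside $G$. The direct summand hypothesis on $M$ is what ensures the resulting finite flat layers fit together into a p-divisible group.

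\emph{Step 1 (Generic fibre).} Since $K$ has characteristic zero, the generic fibre $G_K$ is étale, and under the anti-equivalence between étale p-divisible groups over $K$ and finite free $\Z_p$-modules with continuous $\Gamma_K$-action, the inclusion $M\hookrightarrow \Tp(G)$ corresponds to a closed embedding $\iota_K\colon H_K\hookrightarrow G_K$ of étale p-divisible groups over $K$ with $\Tp(H_K)=M$. Concretely, $H_{n,K}\subset G_{n,K}$ is the finite étale subgroup whose $\bar{K}$-points are the image of $M$ in $G_n(\bar{K})=\Tp(G)/p^n\Tp(G)$.

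\emph{Step 2 (Closure inside $G$).} For each $n$, let $H_n\subset G_n$ be the scheme-theoretic closure of $H_{n,K}$. Because $\cO_K$ is a discrete valuation ring, schematic closure produces a closed subscheme flat over $\cO_K$; as a closed subscheme of the finite $\cO_K$-scheme $G_n$, it is finite flat. Multiplication and inversion on $H_{n,K}$ extend to $H_n$ by flatness and density of the generic fibre, so $H_n$ is a finite flat closed subgroup scheme of $G_n$.

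\emph{Step 3 (p-divisible group axioms).} Let $h':=\rank_{\Z_p}M$. Orders of finite flat group schemes are preserved under base change, so $|H_n|=|H_{n,K}|=p^{nh'}$. The transitions $H_n\hookrightarrow H_{n+1}$ arise by taking closures of $H_{n,K}\subset H_{n+1,K}$ inside $G_{n+1}$. To verify $H_n=H_{n+1}[p^n]$: both are flat closed subgroup schemes of $G_{n+1}$, and they agree on the generic fibre. Here the direct summand hypothesis is essential: writing $\Tp(G)=M\oplus N$ gives $M\cap p^n\Tp(G)=p^nM$, so $H_{n+1,K}[p^n](\bar{K})$ equals the image of $M$ in $G_{n+1}(\bar{K})$ killed by $p^n$, which is precisely $H_{n,K}(\bar{K})$. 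Two flat closed subschemes of $G_{n+1}$ with the same generic fibre coincide, hence $H_{n+1}[p^n]=H_n$. Thus $H:=\colim H_n$ is a p-divisible group of height $h'$ over $\cO_K$, and the embeddings $H_n\hookrightarrow G_n$ assemble into the homomorphism $\iota\colon H\to G$.

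\emph{Step 4 (Tate module).} By construction, $\Tp(H)=\lim H_n(\bar{K})=\lim M/p^nM=M$, compatibly with $\Tp(\iota)\colon \Tp(H)\hookrightarrow \Tp(G)$. \textbf{Main obstacle.} The decisive technical point is Step 3, namely the identification $H_{n+1}[p^n]=H_n$ as closed subgroup schemes, which is where the direct summand hypothesis is genuinely used; for an arbitrary Galois-stable sub-$\Z_p$-module $M$, the orders of the closures $H_n$ would not grow as $p^{nh'}$ and the family would fail to form a p-divisible group.
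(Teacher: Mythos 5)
The paper itself gives no proof of this proposition (it is quoted from Tate), so the comparison is with the standard argument behind that citation, which is exactly the route you take: pass to the \'etale generic fibre, realize $M$ as the Tate module of a sub-$p$-divisible group $H_K\subseteq G_K$, and take schematic closures $H_n\subseteq G_n$. Your Steps 1, 2 and 4 are correct (flatness of the closure over the discrete valuation ring, stability under the group law by density of the generic fibre, the order count $|H_n|=p^{nh'}$, and $\Tp(H)=M$ all check out).

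The genuine gap is in Step 3, at precisely the point you yourself single out as decisive. You argue that $H_{n+1}[p^n]$ and $H_n$ are ``both flat closed subgroup schemes'' with the same generic fibre, and conclude they coincide. The principle that two \emph{flat} closed subschemes of $G_{n+1}$ with the same generic fibre agree is fine, but the flatness of $H_{n+1}[p^n]$ is exactly what has to be proved, and it is not automatic: $H_{n+1}[p^n]=H_{n+1}\cap G_n$ is the kernel of $[p^n]$ on a finite flat group scheme (equivalently an intersection of two flat closed subgroup schemes inside $G_{n+1}$), and such kernels and intersections can fail to be flat over $\cO_K$. For instance, over $\Z_p[\zeta_p]$ the kernel of the homomorphism $\underline{\Z/p\Z}\to\mu_p$, $1\mapsto\zeta_p$, has trivial generic fibre but nontrivial special fibre; equivalently, inside $\mu_p\times\underline{\Z/p\Z}$ the intersection of the graph of this homomorphism with $\{1\}\times\underline{\Z/p\Z}$ is a non-flat closed subgroup scheme. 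The direct-summand hypothesis, which you invoke here, only controls the generic fibre ($M\cap p^n\Tp(G)=p^nM$); the possible failure of $H_n=H_{n+1}[p^n]$ is concentrated in the special fibre, and nothing in your argument excludes it. What comes for free is only the inclusion $H_n\subseteq H_{n+1}[p^n]$, since the closure is the smallest closed subscheme of $G_{n+1}$ containing $H_{n,K}$. To finish one must prove the reverse inclusion by a real argument: for example, form the finite flat quotient $Q:=H_{n+1}/H_n$ (which one checks is killed by $p$) and show that the map $Q\to H_1$ induced by $[p^n]$ is a monomorphism, which requires order/discriminant bookkeeping of Tate's kind together with the faithfully flat ``division'' maps $G_{n+1}\to G_n$, and not merely the agreement of generic fibres. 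As written, the proof assumes the key point rather than establishing it.
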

\begin{thm}[\cite{tate1967pdivisble}]
The generic fibre functor $G\mapsto G\times_{\Spec\cO_K}\Spec K$ is fully faithful. That is, for any p-divisible groups $G$ and $H$ over $\cO_K$, the natural map $$\Hom(G,H)\to\Hom(G\times_{\Spec\cO_K}\Spec K,H\times_{\Spec\cO_K}\Spec K)$$
is bijective.
\end{thm}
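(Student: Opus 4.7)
The plan is to prove faithfulness and fullness separately, using the two propositions that immediately precede the theorem.

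For faithfulness, suppose $f\colon G\to H$ is a homomorphism of p-divisible groups over $\cO_K$ whose generic fibre $f_K$ is zero. Writing $G=\colim G_n$, $H=\colim H_n$ with $G_n=\Spec(A_n)$ and $H_n=\Spec(B_n)$, each ring $A_n$, $B_n$ is finite flat, hence torsion-free, over $\cO_K$. The morphism $f_n$ corresponds to a ring map $g_n\colon B_n\to A_n$, and $f_{n,K}=0$ means that $g_n\otimes_{\cO_K}K$ factors through the augmentation. By flatness (torsion-freeness) $g_n$ itself factors through the augmentation, so $f_n=0$ and thus $f=0$.

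For fullness, the strategy is the Tate module graph trick. Given $\bar f\colon G_K\to H_K$, over a field of characteristic $0$ every finite flat group scheme is \'etale, so $\bar f$ is determined by the $\Gamma_K$-equivariant homomorphism $\Tp(\bar f)\colon \Tp(G)\to \Tp(H)$ on Tate modules. Form the graph
\[
M\;:=\;\Gamma_{\Tp(\bar f)}\;=\;\{(x,\Tp(\bar f)(x))\st x\in\Tp(G)\}\;\subset\;\Tp(G\oplus H)\;=\;\Tp(G)\oplus\Tp(H).
\]
Since $\Tp(H)$ is a free $\Z_p$-module, the surjection $\Tp(G)\oplus\Tp(H)\twoheadrightarrow\Tp(H)$, $(x,y)\mapsto y-\Tp(\bar f)(x)$, admits a $\Z_p$-linear splitting, so $M$ is a $\Z_p$-direct summand; it is $\Gamma_K$-stable because $\bar f$ is defined over $K$.

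Apply the previous proposition to obtain a p-divisible group $W$ over $\cO_K$ and a homomorphism $\iota\colon W\to G\oplus H$ inducing $\Tp(\iota)\colon \Tp(W)\iso M\into \Tp(G)\oplus \Tp(H)$. Let $p_G,p_H$ be the two projections of $G\oplus H$, and set $\alpha:=p_G\circ\iota$, $\beta:=p_H\circ\iota$. On Tate modules, $\Tp(\alpha)$ becomes the composition $M\iso\Tp(G)$, $(x,\Tp(\bar f)(x))\mapsto x$, an isomorphism of $\Gamma_K$-modules. Hence $\alpha_K\colon W_K\to G_K$ is an isomorphism of \'etale p-divisible groups over $K$ (the Tate module functor is an equivalence there). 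By the first of the two preceding propositions (the rigidity statement on generic fibres), $\alpha\colon W\to G$ is itself an isomorphism. Define $f:=\beta\circ\alpha^{-1}\colon G\to H$; by construction $f_K=\bar f$, establishing fullness.

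The only subtlety lies in the Tate module graph construction: one must verify that $M$ is a $\Z_p$-direct summand (immediate from the freeness of $\Tp(H)$) and $\Gamma_K$-stable, and then that Tate-module isomorphisms of \'etale p-divisible groups over $K$ lift to isomorphisms of group schemes, which in characteristic zero follows from the standard equivalence between \'etale p-divisible groups over $K$ and continuous $\Gamma_K$-representations on finite free $\Z_p$-modules. With these in hand, everything else is routine application of the two preceding propositions.
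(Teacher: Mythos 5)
Your proposal is correct and is essentially the argument the paper is implicitly relying on: the paper itself gives no proof, citing \cite{tate1967pdivisble}, and Tate's proof is exactly your graph construction, combining the two propositions recorded just before the theorem (the rigidity statement that an isomorphism on generic fibres is an isomorphism over $\cO_K$, and the existence of a p-divisible subgroup realizing a Galois-stable $\Z_p$-direct summand of the Tate module). Your handling of the two minor points — that the graph is a direct summand because $\Tp(H)$ is free, and that Tate-module isomorphisms lift to isomorphisms of \'etale p-divisible groups over a characteristic-zero field — is accurate, so nothing is missing.
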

\begin{cor}\label{theorem: fully faithfullness of tate module for p-divisible group}
The functor $G\mapsto\Tp(G)$ is fully faithful, i.e. for any $p$-divisible groups $G$ and $H$ over $\cO_K$, the natural map 
$$\Hom(G,H)\to \Hom_{\Z_p[\Gamma_K]}(\Tp(G),\Tp(H))$$
is bijective.
\end{cor}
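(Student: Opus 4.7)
The plan is to combine the three results immediately preceding the corollary: the full faithfulness of the generic fibre functor (the Theorem just above), the existence of a p-divisible group realising any $\Gamma_K$-stable $\Z_p$-direct summand of a Tate module (the preceding Proposition), and the fact that a morphism of p-divisible groups over $\cO_K$ inducing an isomorphism on generic fibres is itself an isomorphism (the first Proposition of the subsection). By the generic fibre theorem, it suffices to prove that the natural map
$$\Hom(G_K, H_K) \longrightarrow \Hom_{\Z_p[\Gamma_K]}(\Tp(G), \Tp(H))$$
is bijective.

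For injectivity, I would use that each generic fibre $G_{n,K}$ is a finite \'etale commutative group scheme over $K$, hence determined by $G_n(\bar{K})$ together with its $\Gamma_K$-action. Since $\Phi_p(G) = \colim G_n(\bar{K}) \cong \Tp(G)\otimes_{\Z_p}\Q_p/\Z_p$, a morphism $f\colon G\to H$ with $\Tp(f) = 0$ yields $\Phi_p(f)=\Tp(f)\otimes 1 = 0$, so each $f_n$ vanishes on $G_n(\bar{K})$, forcing $f_n = 0$ and hence $f=0$.

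For surjectivity, given a $\Gamma_K$-equivariant $\Z_p$-linear map $\phi\colon \Tp(G)\to\Tp(H)$, I would consider its graph
$$\Gamma_\phi := \{(x,\phi(x)) : x\in \Tp(G)\} \subset \Tp(G)\oplus\Tp(H) = \Tp(G\times H).$$
It is $\Gamma_K$-stable by equivariance of $\phi$ and a $\Z_p$-direct summand via the splitting $\Tp(G\times H) = \Gamma_\phi \oplus (0\oplus\Tp(H))$. By the preceding Proposition there is a p-divisible group $M$ over $\cO_K$ and a homomorphism $\iota\colon M\to G\times H$ with $\Tp(\iota)\colon \Tp(M)\xrightarrow{\sim}\Gamma_\phi$. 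The composite $\pi_G\circ\iota\colon M\to G$ then induces an isomorphism on Tate modules (the graph projects isomorphically onto its first factor), hence on $\bar{K}$-points, hence on generic fibres, and so by the first Proposition of the subsection it is an isomorphism of p-divisible groups over $\cO_K$. The desired morphism is then $f := \pi_H\circ\iota\circ(\pi_G\circ\iota)^{-1}\colon G\to H$, and by construction $\Tp(f)=\phi$.

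The main obstacle is the surjectivity step, as it requires packaging the Galois-theoretic datum $\phi$ into an actual morphism of p-divisible groups; the graph construction is the elegant device that allows this, but it only works because one has available both the realisation of direct summands as sub-p-divisible groups and the rigidity statement that generic-fibre isomorphisms lift. Injectivity, by contrast, is routine once one recalls the identification $\Phi_p(G)\cong \Tp(G)\otimes_{\Z_p}\Q_p/\Z_p$ and the \'etaleness of the generic fibres.
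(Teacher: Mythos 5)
Your proof is correct, but it takes a genuinely different route from the paper, at least for surjectivity. The paper argues entirely at the level of the generic fibre: since $K$ has characteristic $0$, every $p$-divisible group over $K$ is \'etale, so $G_K\mapsto\Tp(G)$ is (fully faithful and even) an equivalence onto finite free $\Z_p[\Gamma_K]$-modules; composing with Tate's fully faithful generic fibre functor then gives the corollary in two lines, with surjectivity of the map on Hom's coming for free from that equivalence. You instead handle surjectivity integrally via the graph trick: $\Gamma_\phi\subset\Tp(G\times H)$ is a Galois-stable $\Z_p$-direct summand, the preceding Proposition realizes it as $\Tp(M)$ for a sub-$p$-divisible group $M\to G\times H$ over $\cO_K$, and the rigidity statement (a map of $p$-divisible groups over $\cO_K$ that is an isomorphism on generic fibres is an isomorphism) lets you invert $\pi_G\circ\iota$ and set $f=\pi_H\circ\iota\circ(\pi_G\circ\iota)^{-1}$. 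Both arguments are sound; what each buys is slightly different. The paper's version is shorter and yields the stronger intermediate statement that $\Tp$ is an equivalence on $p$-divisible groups over $K$, whereas your construction produces the integral homomorphism directly and in fact only uses the \emph{faithfulness} of the generic fibre functor (for your injectivity step), at the cost of invoking the direct-summand Proposition and the isomorphism-detection Proposition, and of the implicit step (which you should make explicit) that an isomorphism on Tate modules of the \'etale generic fibres gives an isomorphism $M_K\cong G_K$ — which is exactly the \'etale-descent-to-Galois-modules ingredient the paper uses.
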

\begin{proof}
By the above theorem, we know that the generic fibre functor is fully faithful from the category of p-divisible groups over $\cO_K$ to the category of p-divisible groups over $K$. As $K$ is a field of characteristic $0$, every p-divisible group over $K$ is \'etale. Therefore, the assignment $G_{K}\mapsto\Tp(G)$ is fully faithful. If $M$ is a finite free $\Z_p[\Gamma_K]$-module, $G_n:=M/p^nM$ gives a finite flat group scheme. Hence, the assignment $G_K\mapsto\Tp(G)$ is also essentially surjective. Thus, we have an equivalence of categories from the category of p-divisible groups over $K$ to the category of finite free $\Z_p[\Gamma_K]$-modules. Therefore, the functor $G_{\cO_K}\mapsto G_K \mapsto \Tp(G)$ is fully faithful.
\end{proof}
\begin{prop}[\cite{chai2013complex} Proposition 1.4.4.3]
The special fibre functor $G\mapsto G_k$ is faithful.
\end{prop}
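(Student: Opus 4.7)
The plan is to combine the rigidity theorem for $p$-divisible groups (stated earlier in this section) with the $\fm_K$-adic completeness of $\cO_K$. The key observation is that for every $n \geq 1$ the ring $\cO_K/\fm_K^n$ has $p$ nilpotent (because $p \in \fm_K$), and the closed immersion $\Spec k \into \Spec \cO_K/\fm_K^n$ is a nil-immersion with ideal $\fm_K/\fm_K^n$. Consequently, the rigidity theorem applies and yields, for every $n$, a bijection
\[
\Hom_{\cO_K/\fm_K^n}\!\bigl(G \otimes_{\cO_K} \cO_K/\fm_K^n,\; H \otimes_{\cO_K} \cO_K/\fm_K^n\bigr) \;\iso\; \Hom_k(G_k, H_k).
\]

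Given $f\colon G \to H$ with $f_k = 0$, I would then observe that each reduction $f \otimes_{\cO_K} \cO_K/\fm_K^n$ is a lift of the zero homomorphism to $\cO_K/\fm_K^n$; by the uniqueness part of the rigidity theorem the only such lift is $0$, so $f \otimes_{\cO_K} \cO_K/\fm_K^n = 0$ for every $n \geq 1$.

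To pass from this layer-by-layer vanishing to $f = 0$, I would work level by level. Writing $G = \colim G_m$ and $H = \colim H_m$ with $G_m = \Spec A_m$, $H_m = \Spec B_m$ finite free $\cO_K$-algebras, the component $f_m$ is dual to a Hopf algebra map $\phi_m\colon B_m \to A_m$. The hypothesis $f_m \otimes \cO_K/\fm_K^n = 0$ translates into $\phi_m(b) - u_{A_m}(\varepsilon_{B_m}(b)) \in \fm_K^n A_m$ for every $b \in B_m$, where $\varepsilon_{B_m}$ and $u_{A_m}$ denote the counit of $B_m$ and the unit of $A_m$, respectively. Since $A_m$ is a finitely generated module over the Noetherian local ring $\cO_K$, Krull's intersection theorem gives $\bigcap_{n} \fm_K^n A_m = 0$; hence $\phi_m = u_{A_m}\circ \varepsilon_{B_m}$, i.e.\ $f_m = 0$. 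This holds for every $m$, so $f = 0$.

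The only nontrivial technical step is verifying the hypotheses of the rigidity theorem at each truncation $\Spec k \into \Spec \cO_K/\fm_K^n$; once that is in place, the remainder of the argument is a formal completeness computation and presents no genuine obstacle.
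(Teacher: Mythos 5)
Your argument is correct, but note that the paper itself gives no proof of this proposition -- it is simply quoted from Chai--Conrad--Oort (Prop.\ 1.4.4.3) -- so the comparison is with that citation rather than with an in-paper argument. What you do is assemble a self-contained proof from two ingredients already available in the chapter: the Drinfeld rigidity theorem stated a few lines earlier, applied to the nilpotent thickenings $\Spec k \into \Spec \cO_K/\fm_K^n$ (where indeed $p$ is nilpotent and the ideal $\fm_K/\fm_K^n$ is nilpotent, so the hypotheses hold), and Krull's intersection theorem to pass from vanishing modulo every $\fm_K^n$ to vanishing over $\cO_K$. Two small points are worth making explicit. First, you only use the \emph{uniqueness} half of the rigidity theorem (injectivity of $\Hom_{\cO_K/\fm_K^n}(G_n,H_n)\to\Hom_k(G_k,H_k)$), which is the robust part of Drinfeld's result; this matters because level-wise rigidity is false for finite flat group schemes (e.g.\ $x\mapsto \epsilon x$ on $\alpha_p$ over $k[\epsilon]$ vanishes on the special fibre), so it is essential, as you do, to apply rigidity to the whole $p$-divisible group over $\cO_K/\fm_K^n$ and only then read off the level-wise statement $\phi_m(b)-u_{A_m}(\varepsilon_{B_m}(b))\in\fm_K^nA_m$. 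Second, faithfulness asks that $f_k=g_k$ force $f=g$; since the Hom-sets are abelian groups and reduction is additive, your reduction to the case $f_k=0$ is harmless but deserves a sentence. With those remarks, the completeness/Krull step is exactly as routine as you say, and the proof stands.
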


\section{Dieudonn\'e functor}\label{sec: Dieudonne}
The \cref{theorem: fully faithfullness of tate module for p-divisible group}, along with its proof technique, signifies the true inception of the p-adic Hodge theory. Tate's proof of this result led to his discovery of the Hodge–Tate decomposition for abelian varieties 
$A$ over $K$ with good reduction. This subsequently prompted him to question whether a similar decomposition might apply universally to the p-adic étale cohomology of all smooth proper K-schemes. This leads us to the theory of Dieudonn\'e modules.

\subsection{Witt vectors}
Witt vectors were first proposed by \cite{zbMATH03025104}. For any associative, commutative ring $R$ with unity, the ring of Witt vectors over $R$ is denoted by $\W(R)$. For any natural number $n$, the ring $\W_n(R)$ represents the truncated Witt vectors of length $n$. The assignment $R\mapsto\W(R)$ defines a covariant functor from the category of commutative rings with unity into the category of rings. We can write any element of $\W(R)$ as a sequence $(a_0,a_1,\dots)$, where $a_i\in R$. The map $[.]:R\to\W(R), \, a\mapsto (a,0,0,\dots)$ is called the Teichm$\ddot{\text{u}}$ler lift. For the detailed construction we refer the readers to \cite[Chapter VI, page 330]{lang_algebra_2002}, \cite{rabinoff2014theory}, and \cite{encyclopediaWittvector}.

One of the motivations behind the definition of Witt vectors was to provide a systematic construction of the ring of p-adic integers. In particular, We have $\W(\F_p)=\Z_p$, and $\W_n(\F_p)$ represents the set of $(u_i)_{i\geq 0}\in\Z_p$ such that $u_k=0$ for all $k\geq n$, so $\W_n(\F_p)\cong\Z_p/p^n$.

Another motivation for the definition of Witt vectors was to develop a machinery to describe the unramified extensions of p-adic fields. The theory of Witt vectors enables the recovery of the maximal unramified subfield of $K$ by providing a canonical construction of local fields with residue fields. Let $K$ be a p-adic local field with prefect residue field $k$. There is a unique ring map $\W(k)\to\cO_K$ lifting the identification $\W(k)/p=k=\cO_K/\fm$. The map $\W(k)\to\cO_K$ is local and injective, since the image of $p$ is nonzero in $\fm$. Moreover, $\cO_K/p$ is a vector space over $\W(k)/p=k$ with basis $\{\pi^i\mid i=0,1\dots,e-1\}$, where $\pi$ is the uniformizer and $e$ is the ramification index. This implies that $\{\pi^i\mid i=0,1\dots,e-1\}$ is a $\W(k)$-basis for $\cO_K$. In particular, $\cO_K$ is a finite free module over $\W(k)$ of rank $e$. Thus, $K$ is a finite extension of $K_0=\W(k)[\frac{1}{p}]$ of degree $e$. This shows that $K_0$ is the maximal unramified subfield $K$ and we have $\hat{\bar{K_0}}=\hat{\bar{K}}=\C_p$.

Let $\bar{k}$ be the algebraic closure of $k$ which is the residue field of $\cO_{\bar{K}}$. We cannot embed $\W(\bar{k})$ into $\cO_{\bar{K}}$, as $\cO_{\bar{K}}$ is not complete. However, there is a canonical embedding $\W(\bar{k})\to \cO_{\hat{\bar{K}}}=\cO_{\C_p}=\cO_{\hat{\bar{K_0}}}$ and $\W(\bar{k})$ is the valuation ring of the completion $\hat{K^{un}_0}$ of the maximal unramified extension of $K_0$ with residue field $\bar{k}$. In particular, $\W(\bar{\F}_p)$ is the valuation ring of the completion of the maximal unramified extension of $\Q_p$.

Witt vectors were also motivated by the need to connect arithmetic in characteristic $p$ with characteristic $0$, providing a powerful framework for studying p-divisible groups, and their deformations in number theory and algebraic geometry. Moreover, as discussed in this section, Witt vectors play a crucial role in Dieudonné theory, a cohomology theory that generalizes the de Rham cohomology and offers a framework for studying cohomological invariants in characteristic $p$.

The ring of Witt vectors over a prefect $\F_p$-algebra satisfies the following universal property:
\begin{prop}{\cite[lemma 1.1.6]{Kedlaya_2015}}
Let $A$ be a prefect $\F_p$-algebra and let $R$ be a p-adic complete ring. Let $\W(A)$ be the ring of Witt vectors over $A$ and $\bar{\pi}:A/pA\to R$ a multiplicative map such that $A/pA\xrightarrow{\bar{\pi}} R\to R/pR$ is a ring homomorphism. Then $\bar{\pi}$ has a unique lift to a multiplicative map $\hat{\pi}:A\to R$ and a ring homomorphism $\pi:\W(A)\to R$ such that
\[
\pi(\sum^{\infty}_{n=0}[a_n]p^n)=\sum^{\infty}_{n=0}\hat{\pi}(a_n)p^n,\,\, \text{for any } a_n\in A
\]
where $[.]$ denotes the Teichm$\ddot{\text{u}}$ler lift of $a_n$ in $\W(A)$.
\end{prop}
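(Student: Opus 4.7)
The plan is to first construct $\hat{\pi}$ via a Teichm\"uller-style limit using perfectness of $A$, then establish its uniqueness as a multiplicative lift, and finally show that the prescribed formula $\pi(\sum [a_n]p^n)=\sum \hat{\pi}(a_n)p^n$ defines a ring homomorphism by exploiting the Teichm\"uller expansion in $\W(A)$ together with the $p$-adic completeness of $R$.

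\textbf{Construction and uniqueness of $\hat{\pi}$.} First I would fix any set-theoretic map $\tilde{\pi}:A\to R$ whose reduction modulo $p$ agrees with $\bar{\pi}$ (no multiplicativity required at this stage). Using perfectness, for each $a\in A$ and $n\geq 0$ the element $a^{1/p^n}\in A$ is well-defined, and I set
\[
\hat{\pi}(a):=\operatorname{lim}_{n\to\infty}\tilde{\pi}(a^{1/p^n})^{p^n}.
\]
Convergence in the $p$-adic topology on $R$ is controlled by the elementary lemma that $x\equiv y\pmod{p^k}$ implies $x^p\equiv y^p\pmod{p^{k+1}}$: both $\tilde{\pi}(a^{1/p^{n+1}})^p$ and $\tilde{\pi}(a^{1/p^n})$ lift $\bar{\pi}(a^{1/p^n})$ modulo $p$, so raising to the $p^n$-th power shows that consecutive terms of the sequence agree modulo $p^{n+1}$. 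The same lemma yields independence of the choice of $\tilde{\pi}$ and multiplicativity $\hat{\pi}(ab)=\hat{\pi}(a)\hat{\pi}(b)$, by comparing $\tilde{\pi}(a^{1/p^n})\tilde{\pi}(b^{1/p^n})$ with $\tilde{\pi}((ab)^{1/p^n})$. For uniqueness, any multiplicative lift $\hat{\pi}'$ of $\bar{\pi}$ satisfies $\hat{\pi}'(a)=\hat{\pi}'(a^{1/p^n})^{p^n}$, and $\hat{\pi}'(a^{1/p^n})\equiv\hat{\pi}(a^{1/p^n})\pmod{p}$, so $\hat{\pi}'(a)\equiv\hat{\pi}(a)\pmod{p^{n+1}}$ for every $n$; the $p$-adic completeness of $R$ then forces $\hat{\pi}'=\hat{\pi}$.

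\textbf{Extension to $\pi$ and the main obstacle.} Since $A$ is perfect, every element of $\W(A)$ admits a unique convergent Teichm\"uller expansion $\sum_{n\geq 0}[a_n]p^n$, so the displayed formula unambiguously defines a set-theoretic map $\pi:\W(A)\to R$ that lifts $\bar{\pi}$ modulo $p$. The principal obstacle is verifying that $\pi$ is a ring homomorphism. My strategy is to exploit the fact that the ring operations on $\W(A)$ are encoded by universal Witt polynomials with integer coefficients: over a perfect base, $[a]+[b]$ and $[a]\cdot[b]$ admit Teichm\"uller expansions whose coefficients are polynomial expressions in iterated $p$-th roots of $a$ and $b$, and the shape of these expressions does not depend on the coefficient ring. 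Applying $\pi$ then transports each such universal identity from $\W(A)$ to $R$; because $\hat{\pi}$ is multiplicative and commutes with extraction of $p$-th roots up to arbitrarily high $p$-adic precision, one checks additivity and multiplicativity modulo $p^N$ for every $N$ (working in $\W_N(A)$ and $R/p^NR$) and passes to the limit using $p$-adic completeness of $R$. Uniqueness of $\pi$ is then immediate from uniqueness of $\hat{\pi}$, since $\pi$ is forced by continuity to be determined by its values on Teichm\"uller representatives.

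\textbf{Alternative route.} If the bookkeeping with Witt polynomials becomes unwieldy, I would instead appeal to the characterization of $\W(A)$ as the unique strict $p$-ring with reduction $A$, and prove abstractly that any strict $p$-ring is universal among $p$-adically complete target rings receiving a specified ring map from $A$ modulo $p$. The Teichm\"uller expansion then supplies the explicit formula displayed in the proposition, and the multiplicative lift $\hat{\pi}$ constructed in the first step coincides with the restriction of this universal ring map to Teichm\"uller representatives.
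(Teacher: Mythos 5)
The paper itself gives no argument for this statement (it is quoted with a citation to Kedlaya), so your attempt can only be measured against the standard proof. Your first part is correct and essentially complete: constructing $\hat{\pi}(a)=\operatorname{lim}_n\tilde{\pi}(a^{1/p^n})^{p^n}$, and proving convergence, independence of the lift, multiplicativity and uniqueness from the elementary congruence $x\equiv y\pmod{p^k}\Rightarrow x^p\equiv y^p\pmod{p^{k+1}}$, is exactly the standard first step.

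The gap is in the verification that $\pi$ is a ring homomorphism. The sentence ``applying $\pi$ then transports each such universal identity from $\W(A)$ to $R$'' is circular: you may not use the homomorphism property of $\pi$ to transport identities before it is established. The fallback reason offered --- that $\hat{\pi}$ is multiplicative and compatible with $p$-th roots --- does not repair this, because the universal Teichm\"uller coefficients of $[a]+[b]$ are \emph{sums} of monomials in $p$-power roots of $a$ and $b$, and a merely multiplicative map cannot be distributed over those sums; so the promised check of additivity modulo $p^N$ is never actually performed. What is missing is a bridging device. The standard one: for each $N$ the ghost component $w_{N-1}\colon\W_N(R)\to R$, $(r_0,\dots,r_{N-1})\mapsto\sum_i p^i r_i^{p^{N-1-i}}$, is a ring homomorphism, and since $r\equiv r'\pmod p$ forces $r^{p^{N-1-i}}\equiv r'^{p^{N-1-i}}\pmod{p^{N-i}}$, it descends to a ring homomorphism $\W_N(R/pR)\to R/p^NR$; composing with $\W_N$ of the given ring map $A\to R/pR$ and with $F^{-(N-1)}$ (Frobenius is bijective because $A$ is perfect) gives compatible ring homomorphisms $\W(A)\to R/p^NR$, which agree modulo $p^N$ with your formula since $\hat{\pi}(c)\equiv\tilde{\pi}(c^{1/p^{N-1}})^{p^{N-1}}\pmod{p^N}$; passing to the limit then proves $\pi$ is a homomorphism. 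Alternatively your universal-polynomial idea can be made rigorous by producing a ring map from $\W$ of the perfect polynomial ring $\F_p[X^{1/p^\infty},Y^{1/p^\infty}]$ (equivalently, the $p$-adic completion of $\Z[X^{1/p^\infty},Y^{1/p^\infty}]$) to $R$ sending $X^{1/p^j}\mapsto\hat{\pi}(a^{1/p^j})$, $Y^{1/p^j}\mapsto\hat{\pi}(b^{1/p^j})$, and then invoking uniqueness of multiplicative lifts to identify the images of the universal coefficients with $\hat{\pi}$ of their specializations --- but that identification is precisely the step your sketch omits. Your ``alternative route'' suffers from the same circularity: the universality of $\W(A)$ (equivalently, of strict $p$-rings) among $p$-adically complete targets \emph{is} the proposition, so it cannot be invoked as a known fact.
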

\begin{example}
Let $\W(A)$ be the Witt vectors over a $\F_p$-algebra $A$. Then by the above lemma, the $p$-th power map on $A$ uniquely lifts to a map \[\sigma_{\W(A)}:\W(A)\to\W(A)\]
called the Frobenius automorphism of $\W(A)$, which satisfies
\[
\sigma_{\W(A)}(\sum^{\infty}_{n=0}[a_n]p^n)=\sum^{\infty}_{n=0}[a^p_n]p^n,\,\, \text{for any } a_n\in A
\]
for all $a_n\in \W(A)$. The perfectness of $A$ implies that $\sigma_{\W(A)}$ is indeed an automorphism.
\end{example}
\begin{example}
    Let $A=\F_q$, where $q=p^n$. Then we have the identification $\W(\F_q)=\Z_p[\zeta_{q-1}]$ where $\zeta_{q-1}$ is a primitive $(q-1)$-th root of unity. The Forbenius automorphism $\sigma_{\W(\F_q)}$ is a map defined by $\zeta_{q-1}\mapsto\zeta^p_{q-1}$ which is invariant on $\Z_p$. The field of fractions of $\W(\F_q)$ is the unramified extension of $\Q_p$ with residue field $k=\F_q$. 
\end{example}
Let $k$ be a perfect field of characteristic $p>0$, $\W(k)$ the ring of its Witt vectors, and $\sigma$ the Forbenius automorphism over $\W(k)$.
\begin{defn}
    The Dieudonn\'e ring of $k$ is the associative ring \[\sD_k:=\W(k)[\sF,\sV]/(\sF\sV-p,\sV\sF-p,Fc-\sigma(c)\sF,c\sV-\sV\sigma(c) ,\,\, \forall\sigma\in\W(k)).\]
\end{defn}
The Dieudonn\'e ring $\sD_k$ is a non-commutative ring when $k\neq\F_p$. For $\F_p$, we have $\sD_k=\Z_p[X,Y]/(XY-p)$.
\begin{remark}
    A left $\sD_k$-module is the same as a $\W(k)$-module $D$ equipped with a $\sigma$-semilinear endomorphism $F:D\to D$ and a $\sigma^{-1}$-semilinear endomorphism $V:D\to D$ such that $FV=VF=[p]_D$.
\end{remark}
\begin{defn}\label{Def: Dieudonn\'e mdoule}
    A Dieudonn\'e module over $\W(k)$ is a finite free $\W(k)$-module equipped with a Frobenius semilinear endomorphism $F:D\to D$ such that $pD\subseteq F(D)$.
\end{defn}
We have the following theorem that expresses the basic theory of Dieudonn\'e modules. The proof and more details can be found in \cite{AST_1977__47-48__1_0} and \cite{Dieudonne_1958}.
\begin{thm}\label{Theorem anti-equivalence dieudonne module and p-divisbible group}
    \begin{itemize}
        \item There is an anti-equivalence $\D$ of categories from the category of finite flat group schemes $G$ of $p$-power order over $k$ to the category of $\sD_k$-modules of finite $W(k)$-length with the following properties:
        \begin{enumerate}
            \item The order of $G$ is $p^{\ell(\D(G))}$.
            \item The functor $\D$ is compatible with perfect extensions i.e. if $k'/k$ is an extension of prefect fields then we have a natural isomorphism $\W(k')\otimes_{\W(k)}\D(G)\cong\D(k')$ as left $\sD_{k'}$-modules. In particular, if we take $\sigma:k\cong k$, we have the identification $\sigma^*(\D(G))\cong\D(G^{(p)})$ as $\W(k)$-modules, where $G^{(p)}$ is the base change of $G$ along $\sigma$.
            \item The relative Frobenius $F_G:G\to G^{(p)}$ induces the $\W(k)$-linear map
            \[
            \sigma^*(\D(G))\cong\D(G^{(p)})\xrightarrow{\D(F_G)}\D(G)
            \]
            which is the action of $F$ on $\D(G)$. Moreover, $G$ is connected (\'etale resp.) if and only if $F$ is nilpotent (isomorphism resp.)\footnote{Recall \cref{frobenius and connectedness and etaleness} for relative Frobenius and Verschiebung}.
            \item The $k$-vector space $\D(G)/F\D(G)$ is canonically isomorphic to the tangent space $t\ve_G$.
        \end{enumerate}
        \item The functor $G\mapsto \D(G):=\lim\D(G_n)$ is an anti-equivalence of categories
        \[
        \D:\Big\{\text{p-divisible groups over } k \Big\}\xrightarrow{\cong}\Big\{\text{Dieudonn\'e modules over }\W(k)\Big\}
        \] and we have 
        \begin{enumerate}
            \item This equivalence is compatible with any extension $k'/k$ of prefect fields.
            \item We have a canonical isomorphism $\D(G_n)\cong \D(G)/p^n\D(G)$ for any p-divisible group $G=\lim G_n$ over $k$.
            \item The rank of $\D(G)$ is equal to the height of $G$.
            \item There exists a canonical identification $\D(G\ve)[1/p]\cong \D(G)[1/p]\ve$.
        \end{enumerate}
    \end{itemize}
\end{thm}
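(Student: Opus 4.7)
The plan is to construct $\D$ as a $\Hom$-functor into the Witt covectors, reduce to the connected and \'etale cases via the canonical splitting over the perfect field $k$, and then pass to $p$-divisible groups by an inverse limit. First I would set
\[
\D(G) := \Hom_{k\text{-gp}}(G,\mathrm{CW}_k),
\]
where $\mathrm{CW}_k$ denotes the ind-formal group of Witt covectors over $k$, equipped with its natural operators $F$ and $V$ satisfying $FV = VF = p$; functoriality then endows $\D(G)$ with a left $\sD_k$-module structure. Direct computations on the generating objects $\mu_{p^n}$, $\underline{\Z/p^n\Z}$, and $\alpha_p$ establish the length formula in (1)(i), the tangent-space identification in (1)(iv), and the base-change compatibility in (1)(ii) for perfect extensions $k'/k$.

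The heart of the argument is the anti-equivalence itself, which I would prove using the canonical connected-\'etale splitting $G = G^0 \oplus G^{\et}$ available since $k$ is perfect. Applying $\D$ yields $\D(G) = \D(G^0) \oplus \D(G^{\et})$ on which $F$ is invertible on the second summand and topologically nilpotent on the first, matching (1)(iii), so the two cases can be treated independently. On the \'etale side, finite \'etale $p$-power group schemes over $k$ are equivalent via Galois descent to finite-length $\W(k)$-modules with a $\sigma$-semilinear automorphism, which is precisely the subcategory of Dieudonn\'e modules on which $F$ is invertible; this case reduces to unramified Galois theory and is essentially tautological. On the connected side I would invoke Cartier theory: connected finite flat $p$-power groups over $k$ arise as quotients of $p$-divisible formal Lie groups by \cref{prop 1.2.1}, and Cartier's equivalence between formal groups and modules over the Cartier ring $\mathrm{Cart}(k)$, combined with the identification of a suitable quotient of $\mathrm{Cart}(k)$ with $\sD_k$, yields the desired anti-equivalence. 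The hard part will be essential surjectivity on the connected side: given a finite-length $\W(k)$-module $D$ with $F$ topologically nilpotent, one must reconstruct a connected finite flat group scheme with $\D(G) \cong D$, and this demands the nontrivial input of Cartier theory rather than formal diagram chasing.

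For the extension to $p$-divisible groups, I would set $\D(G) := \lim_n \D(G_n)$ for $G = \colim G_n$, with transition maps induced by the inclusions $G_n \hookrightarrow G_{n+1}$. Property (2)(ii) is then built into the definition, (2)(i) follows from its finite-level analogue, and (2)(iii) follows because $\D(G)/p\D(G) \cong \D(G_1)$ has $\W(k)$-length equal to the height $h$ by (1)(i). The duality (2)(iv) is obtained, after inverting $p$, from the compatible system of perfect Cartier pairings $G_n \times G_n\ve \to \mu_{p^n}$ together with the explicit computation $\D(\mu_{p^\infty})[1/p] \cong \W(k)[1/p]$ with $F$ acting as $\sigma$; inverting $p$ is needed precisely to kill the torsion discrepancy between $\D(G\ve)$ and $\D(G)\ve$ coming from the $V$-part of the Dieudonn\'e ring at finite level.
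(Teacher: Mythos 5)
The paper does not actually prove this theorem: it is stated as classical background, with the proof deferred to the cited sources (Fontaine's \emph{Groupes $p$-divisibles sur les corps locaux} and Dieudonn\'e), so there is no in-text argument to measure yours against. Your outline is essentially the route of those sources: define $\D(G)=\Hom_{k\text{-gp}}(G,\mathrm{CW}_k)$, d\'evissage through the connected--\'etale splitting available over the perfect field $k$, Galois descent on the \'etale part, Cartier/Fontaine theory on the connected part, and an inverse limit to pass to $p$-divisible groups. As a blueprint this is sound, and it is the standard proof.

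A few points need repair or more care. First, your computation of $\D(\mu_{p^\infty})$ is wrong under the paper's contravariant normalization: the paper's own example list has $\D(\mup)\cong\W(k)$ with $F=p\sigma$, while $F=\sigma$ is $\D(\underline{\Q_p/\Z_p})$; after inverting $p$ these are non-isomorphic isocrystals (slope $1$ versus slope $0$), and this is exactly the Tate-twist bookkeeping that the duality in (2)(iv) must absorb, so the error is not cosmetic for the step where you use it. Second, (2)(ii) is not ``built into the definition'': you need exactness of $\D$ applied to $0\to G_n\to G_{n+m}\xrightarrow{p^n}G_m\to 0$ together with surjectivity of the transition maps to identify $\D(G)/p^n\D(G)$ with $\D(G_n)$, and for (2)(iii) you also need to show that $\lim\D(G_n)$ is a finite free $\W(k)$-module, which is an argument, not a formality. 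Third, your essential-surjectivity step mixes two machines: $\D$ is defined contravariantly via covectors, but you reconstruct connected groups through the covariant Cartier-ring equivalence (and you cite \cref{prop 1.2.1}, which concerns connected $p$-divisible groups, not finite connected group schemes directly, so you also need the classical fact that such a group scheme sits inside a $p$-divisible formal group). Either carry out the comparison between the Cartier picture and the covector picture, or stay within Fontaine's framework and prove representability/biduality for $\mathrm{CW}_k$ directly; as written the two halves of the proof do not yet speak the same language.
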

Recall that a map $f:G\to H$ between two p-divisible groups is an isogeny, if it is finite and faithfully flat (epimorphism as fppf sheaves) with finite and flat kernel. Two p-divisible groups are called isogeneous if there exists such an $f$.
\begin{prop}
Let $f:G\to H$ be a homomorphism of p-divisible groups. The following are equivalent:
\begin{enumerate}
    \item $f $ is an isogeny.
    \item $\D(f): \D(H)\to \D(G)$ is injective.
    \item The induced map $\D(G)\otimes\Q_p\to \D(H)\otimes\Q_p$ is an isomorphism.
\end{enumerate}
\end{prop}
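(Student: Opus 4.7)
The strategy is to leverage the exactness of the contravariant Dieudonn\'e anti-equivalence in \cref{Theorem anti-equivalence dieudonne module and p-divisbible group}, which sends short exact sequences of (p-divisible or finite flat $p$-power) group schemes to short exact sequences of Dieudonn\'e modules and back. The central reformulation I would use is that $f$ is an isogeny exactly when it fits in a short exact sequence $0\to K\to G\xrightarrow{f}H\to 0$ with $K=\ker f$ a finite flat $p$-power group scheme, matching a sequence $0\to\D(H)\xrightarrow{\D(f)}\D(G)\to\D(K)\to 0$ with $\D(K)$ of finite $\W(k)$-length.

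For $(1)\Rightarrow(2)\Rightarrow(3)$ I would apply $\D$ to the above exact sequence: injectivity of $\D(f)$ is immediate, and since $\D(K)$ has finite $\W(k)$-length we get $\D(K)\otimes_{\Z_p}\Q_p=0$, so $\D(f)\otimes\Q_p$ is an isomorphism. For $(3)\Rightarrow(1)$, starting from an isomorphism $\D(f)\otimes\Q_p$, I would note that $\D(H)$ and $\D(G)$ must have equal $\W(k)$-rank (hence $G$ and $H$ share the same height), that $\D(f)$ itself is injective (its kernel would be $\W(k)$-torsion inside the free module $\D(H)$), and that $C:=\coker(\D(f))$ is a finitely generated torsion $\W(k)$-module of finite $\W(k)$-length carrying an induced $\sD_k$-structure. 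By the first bullet of \cref{Theorem anti-equivalence dieudonne module and p-divisbible group}, $C$ corresponds to a finite flat $p$-power group scheme $K$ with $\D(K)\cong C$; inverting the anti-equivalence on $0\to\D(H)\to\D(G)\to C\to 0$ would then recover a short exact sequence $0\to K\to G\to H\to 0$ of fppf sheaves identifying $K$ with $\ker f$ and exhibiting $f$ as an isogeny.

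The main technical point I expect to spend some care on is the ``gluing'' between the two halves of \cref{Theorem anti-equivalence dieudonne module and p-divisbible group}: one needs that a short exact sequence $0\to\D(H)\to\D(G)\to C\to 0$, whose first two terms are Dieudonn\'e modules and whose third term is a $\sD_k$-module of finite $\W(k)$-length, really does correspond under the combined anti-equivalence to a short exact sequence of fppf sheaves $0\to K\to G\to H\to 0$ with $K$ finite flat of $p$-power order. I plan to handle this by passing to the $p^n$-torsion truncations $G_n, H_n$ and exploiting the identification $\D(G_n)\cong\D(G)/p^n\D(G)$ together with the exactness of $\D$ on finite flat $p$-power group schemes, which reduces everything to a straightforward diagram chase.
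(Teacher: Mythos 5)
Your argument is correct, but it follows a genuinely different route from the paper. You reformulate ``isogeny'' via the kernel, i.e.\ a short exact sequence $0\to K\to G\to H\to 0$ with $K$ finite flat of $p$-power order, and then rely on exactness of $\D$ across the finite and $p$-divisible settings, reconstructing $K$ from $\coker(\D(f))$ in the direction $(3)\Rightarrow(1)$; the gluing/truncation step you flag is exactly where the real work sits (in particular, surjectivity of $G\to H$ in the colimit needs the small extra observation that the cokernel is killed by a power of $p$ while $[p^n]$ is an fppf epimorphism on $H$, so it is slightly more than a pure diagram chase, though entirely standard). The paper instead quotes the characterization that $f$ is an isogeny if and only if there exists $g:H\to G$ with $g\circ f=p^n\,1_G$ and $f\circ g=p^n\,1_H$ (citing Fargues--Lubin--Tate, Lemma 9), after which all three equivalences become formal consequences of \cref{Theorem anti-equivalence dieudonne module and p-divisbible group}: $\D(f)$ acquires a two-sided inverse up to $p^n$, hence is injective and becomes an isomorphism after $\otimes\,\Q_p$, and conversely one clears denominators and uses full faithfulness to produce $g$. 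The paper's route is shorter and sidesteps the mixed finite/$p$-divisible limit arguments entirely; yours is more self-contained with respect to the stated anti-equivalence (no external lemma needed) and yields the extra identification $\D(\ker f)\cong\coker(\D(f))$, at the cost of the truncation bookkeeping.
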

\begin{proof}
Over connected or quasi-compact base scheme $S$, a morphism $f:G\to H$ is an isogeny if and only if there exists a morphism $g:H\to G$ such that $g\circ f=p^n1_{G}$ and $f\circ g=p^n1_{H}$ for some integer $n\geq 0$ (see \cite[lemma 9]{fargues2022lubin-tate}). Now, the statement is straightforward from \cref{Theorem anti-equivalence dieudonne module and p-divisbible group}.
\end{proof}

\begin{example}
    \begin{enumerate}
        \item $\D(\underline{\Q_p/\Z_p})\cong\W(k)$ together with $F=\sigma$.
        \item $\D(\mup)\cong\W(k)$ together with $F=p\sigma$.
        \item If $k$ is algebraically closed, then \cref{we have only two kind p-divisible group over algebraic closed charac p} implies that the two previous Dieudonn\'e modules are the only Dieudonn\'e modules of dimension 1 over $\W(k)$. 
        \item By \cref{example: p-divisible group elliptic curve connected-\'etale}, we can say that if $E$ is an ordinary elliptic curve over $\bar{k}$, then we have $\D(E[p^{\infty}])\cong\W(k)\oplus\W(k)$ together with $F=\sigma\oplus p\sigma$.
    \end{enumerate}
\end{example}

For any such Dieudonn\'e module $D$ (in \cref{Def: Dieudonn\'e mdoule}), $F$ is bijective on $D[1/p]:=D\otimes_{\Z_p}\Q_p$. Hence, $D[1/p]$ is an example of the following sort of structure:
\begin{defn}\label{Def: isocrystal}
Let $K_0$ be the field of fractions of $\W(k)$. An isocrystal over $K_0$ is a finite dimensional $K_0$-vector space $N$ equipped with a bijective Frobenius semilinear endomorphism $F:N\to N$.
\end{defn}
\begin{remark}
Given an isocrystal $N$ over $K_0$, the dual space $N\ve=\Hom_{K_0}(N,K_0)$ is naturally an isocrystal over $K_0$ where the Frobenius $F_{N\ve}$ is give by 
\[
F_{N\ve}(f)(n)=\sigma(f(F^{-1}(n))),\,\, \text{for all }f\in\N\ve \text{ and } n\in N.
\]

Given two isocrystals $N_1$ and $N_2$ over $K_0$, the $K_0$-vector space $N_1\otimes_{K_0}N_2$ is an isocrystal over $K_0$ with the Frobenius automorphism $F_{N_1}\otimes F_{N_2}$.\\

If $M$ is a $\W(k)$-lattice in an isocrystal $N$, then $M$ is a Dieudonn\'e module if and only if $M$ is stable under $F$ and $V:=pF^{-1}$.
\end{remark}

\begin{defn}\label{simple isocrystal with slope}
Let $K_0[\sF]=\sD_k[1/p]$ be the polynomial ring satisfying $\sF c=\sigma(c)\sF$ for any $c\in K_0$. Let $r$ and $d$ be two integers with $r>0$. The quotient 
\[
N_{r,d}:=K_0[\sF]/(K_0[\sF](\sF^r-p^d))
\]
is an isocrystal, where the Frobenius structure on $N_{r,d}$ is defined by left action by $\sF$.

The isocrystal $N_{r,d}$ is isomorphic to the isocrystal $\bigoplus^rK_0$ together with $\sigma$-semilinear automorphism $F_{r,d}$ given by 
\[
F_{r,d}(e_1)=e_2,\cdots,F_{r,d}(e_{r-1})=e_r, F_{r,d}(e_r)=p^de_1;
\]
where $e_1,\cdots,e_r$ are standard basis vectors.

Let $\lambda=d/r$, where $gcd(r,d)=1$ and $r>0$. We define the simple isocrystal of slope $\lambda$ to be $N_{\lambda}:=N_{r,d}$.
\end{defn}
\begin{defn}
A filtered module over a commutative ring $R$ is an $R$-module $M$ endowed with a collection $\{\Fil^i(M)\}_{i\in\Z}$ of submodules that is decreasing i.e. $\Fil^{i+1}(M)\subseteq\Fil^i(M)$ for all $i\in\Z$ and satisfies the following properties:
\begin{enumerate}
    \item[(I)] It is separated i.e. $\bigcap_i\Fil^i(M)=0$,
    \item[(II)] It is exhaustive i.e. $\bigcup_i\Fil^i(M)=M$.
\end{enumerate}
For any filtered $R$-module $M$, the associated graded module is $$\gr(M)=\bigoplus_i\Fil^i(M)/\Fil^{i+1}(M)$$ and we write $\gr^i(M)=\Fil^i/\Fil^{i+1}(M)$ for every $i\in\Z$.

An $R$-linear map between two filtered modules $M$ and $N$ over $R$ is called a morphism of filtered modules if it maps each $\Fil^i(M)$ into $\Fil^{i}(N)$.

We denote by $\Fil_R$ the category of finitely generated filtered modules over $R$.
\end{defn}
\begin{defn}
A filtered ring is a ring $R$ equipped with an exhaustive and separated filtration $\{R^i\}$ as $\Z$-modules such that $1\in R^0$ and $R^i.R^j\subseteq R^{i+j}$ for all $i,j\in\Z$. The associated graded ring is $\gr(R)=\bigoplus_i R^i/R^{i+1}$.

The filtered $R$-algebra is an $R$-algebra $A$ with a structure of filtered ring $\{A^i\}$ such that each $A^i$ is an $R$-submodule of $A$. The associated graded $R$-algebra is $\gr(A)=\bigoplus_i A^i/A^{i+1}$.
\end{defn}
\begin{defn}\label{filtered Dieudonn\'e module}{\cite{Fontaine_Laffaille_1982}}
A filtered Dieudonn\'e module over $\W(k)$ is an $\W(k)$-module $D$ of finite type
endowed with:
\begin{enumerate}
    \item a decreasing filtration $(D^i)$ where the $D^i$ are direct factors of $D$;
    \item a family of Frobenius linear maps $F_i:D^i\to D$ such that
\end{enumerate}
\begin{gather*}
    D^i=D, D^j=0; \text{ for }i\ll 0, \text{ and }j\gg 0\\
    F_i\mid_{D^{i+1}}=p.F_{i+1}, \text{ for any }i;\\
    D=\sum_i F_i(D^i).
\end{gather*}
\end{defn}
\begin{prop}{\cite{Fontaine_Laffaille_1982}}
    The category of filtered Dieudonn\'e modules over $\W(k)$ is an abelian category.
\end{prop}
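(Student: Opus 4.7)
My plan is to verify the abelian category axioms directly, with the key observation that every morphism in this category is automatically \emph{strict} with respect to the filtration—a phenomenon forced by the surjectivity axiom $D=\sum_iF_i(D^i)$. The proof proceeds in four stages.

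First, I would establish additivity. A morphism $f\colon(M,(M^i),(F_i))\to(N,(N^i),(G_i))$ is a $\W(k)$-linear map $f\colon M\to N$ satisfying $f(M^i)\subseteq N^i$ and $f\circ F_i=G_i\circ(f|_{M^i})$ for every $i$. Hom-sets are $\W(k)$-modules under pointwise addition, composition is bilinear, the trivial module is a zero object, and the direct sum $(M\oplus N,\,M^i\oplus N^i,\,F_i\oplus G_i)$ is again a filtered Dieudonn\'e module, so the category is additive with biproducts.

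Second, I would prove the key strictness lemma: for any morphism $f\colon M\to N$ and any $i$, the equality $f(M^i)=f(M)\cap N^i$ holds. The inclusion $\subseteq$ is immediate. For the reverse inclusion, I would argue by descending induction on $i$, starting from $i\gg 0$ where $N^i=0$. The inductive step uses the compatibility $G_i|_{N^{i+1}}=p\cdot G_{i+1}$ together with the axiom $N=\sum_jG_j(N^j)$ to rewrite an alleged obstruction in $f(M)\cap N^i$ as a sum of terms $G_j(y_j)$ with $y_j\in N^j\cap f(M)$, which by induction lift to elements of $f(M^j)$ for $j\geq i$, and the direct-factor hypothesis on $M^i$ allows the assembled lift to be modified into an element of $M^i$. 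I expect this strictness lemma to be the main obstacle, since it is the only place where the three axioms (direct-factor filtration, compatibility of the $F_i$, and the generation property) interact non-trivially.

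Third, with strictness in hand, kernels and cokernels are constructed naturally. For the kernel of $f$, take $K:=\ker f$ with $K^i:=K\cap M^i$; the restriction of $F_i$ lands in $K$ because $f\circ F_i=G_i\circ(f|_{M^i})$ vanishes on $K^i$. Writing $M=M^i\oplus E^i$ via the direct-factor hypothesis yields $K=K^i\oplus(K\cap E^i)$, so $K^i$ is a direct factor of $K$; strictness applied to the inclusion $K\hookrightarrow M$ then gives $K=\sum_iF_i(K^i)$, and the termination and compatibility axioms are inherited from $M$. For the cokernel, set $C:=N/f(M)$ with $C^i$ the image of $N^i$ in $C$; strictness forces $C^i\cong N^i/f(M^i)$, which is a direct factor of $C$ since $N^i$ is a direct factor of $N$, and the $G_i$ descend unambiguously to maps $C^i\to C$ satisfying the required axioms.

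Finally, to conclude that every monomorphism is the kernel of its cokernel and every epimorphism is the cokernel of its kernel, I would invoke strictness once more: an injective morphism $f$ induces $M^i\xrightarrow{\sim}f(M^i)=f(M)\cap N^i$, so the filtration on $M$ agrees with the one induced on its image, and dually the quotient filtration on an epimorphism coincides with the intrinsic one on the target. Combined with the kernel and cokernel constructions above, this yields all the abelian category axioms.
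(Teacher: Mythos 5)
The paper itself gives no argument for this proposition — it simply cites Fontaine--Laffaille — so the only meaningful comparison is with the cited proof, and against that standard your outline identifies the right center of gravity (strictness of morphisms is indeed the crux) but leaves the decisive step unproved. In your inductive argument for strictness, from $y\in f(M)\cap N^i$ the axiom $N=\sum_j G_j(N^j)$ only produces components $y_j\in N^j$; there is no reason whatsoever that these can be chosen inside $f(M)$, and asserting $y_j\in N^j\cap f(M)$ amounts to assuming that the image of $f$, equipped with the induced filtration and induced semilinear maps, again satisfies the generation axiom $\sum_j G_j\bigl(f(M)\cap N^j\bigr)=f(M)$ — which is essentially the statement you are trying to prove. (The treatment of the terms with $j<i$ and the final ``modification'' via the direct-factor hypothesis are likewise not arguments.) In Fontaine--Laffaille strictness is not obtained by such a direct induction but by d\'evissage to finite-length objects and a length count: the natural surjection $\bigoplus_i \Fil^i M\to M$, $(x_i)\mapsto\sum_i F_i(x_i)$, becomes an isomorphism after dividing by the submodule generated by the elements $(px)_i-(x)_{i+1}$ for $x\in\Fil^{i+1}M$, because source and target have the same length; strictness and the exactness properties are deduced from this. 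That counting ingredient is absent from your sketch, and without it the strictness lemma is a genuine gap.

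Second, the verifications of the direct-factor condition for kernels and cokernels rest on false module-theoretic claims. A submodule $K\subseteq M=M^i\oplus E^i$ need not decompose as $(K\cap M^i)\oplus(K\cap E^i)$: take $M=\W(k)^2$ with $M^i$ the first factor and $K$ the graph of multiplication by $p$, where both intersections vanish but $K\neq 0$; similarly, the image of a direct summand in a quotient need not be a direct summand (the image of $\W(k)\oplus 0$ in $\W(k)^2/\{(a,pa)\}$ is $p\W(k)\subset\W(k)$). So the assertions that $K^i$ is a direct factor of $K$ and $C^i$ of $C$ are precisely the points where the axioms of the category — strictness plus the generation condition — must be invoked, not consequences of the ambient decomposition of $M$ or $N$. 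Until the strictness lemma is actually proved and these direct-factor claims are derived from it, the proposal is a plausible outline of the Fontaine--Laffaille result rather than a proof of it.
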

\begin{defn}\label{definition of filtered isocrystal}
    A filtered isocrystal (or a filtered $\varphi$-modules) over $K$ is an isocrystal $N$ over $K_0$ with a collection $\{\Fil^i(N_K)\}_{i\in\Z}$ which yields a structure of a decreasing, separated, and exhaustive filtered vector space over $K$ on $N_K:=N\otimes_{K_0}K$.
    
    A morphism of filtered isocrystals is a morphism $f:N_1\to N_2$ of isocrystals such that the induced $K$-linear map $f_K:N_1\otimes_{K_0} K\to N_2\otimes_{K_0} K$ is a morphism of filtered $K$-vector spaces.
    
    We denote by $\MF$ the category of filtered isocrystals over $K$.
\end{defn}
\begin{remark}
    The category $\MF$ is an additive category. Moreover, the tensor product $N_1\otimes N_2$ is a filtered isocrystal with natural structure of isocrystal on $N_1\otimes_{K_0}N_2$ and filtration 
    \[
    \Fil^i(N_{1,K}\otimes_K N_{2,K})=\sum_{n+m=i}\Fil^nN_{1,K}\otimes_K\Fil^{m}N_{2,K}.\
    \]
    The dual object $N\ve$ is a filtered isocrystal where its isocrystal structure is induced by $N\ve=\Hom_{K_0}(N,K_0)$ and the filtration is given by 
    \[
    \Fil^i(N\ve)_K=(\Fil^{-i+1}N_K)\ve.
    \]
\end{remark}
\begin{example}\label{unit filtered Dieudonn\'e module}
    A unit filtered Dieudonn\'e module is the filtered Dieudonn\'e module $1_{FD}:=\W(k)$ with $F=\sigma$ and together with the filtration
    \[
    \Fil^i\W(k)=\begin{cases}
    \W(k), & i\leq 0\\ 0,& i>0,
    \end{cases}
    \]
    and $F_i$ on $\Fil^i\W(k)$ for $i\leq 0$ is $p^{-i}$ times the usual Frobenius.
    
    Similarly, $K_0$ can be viewed as a filtered isocrystal with $F=\sigma$ and together with the filtration 
    \[
    \Fil^i K=\begin{cases}
        K,& i\leq 0\\ 0,& i> 0.
    \end{cases}
    \]
\end{example}

    A filtered isocrystal is not actually a filtered object in the category of isocrystals, as the action of the Frobenius is not necessarily compatible with the filtration.

\begin{example}
Let $A$ be an abelian variety over $\cO_K$ and let $\bar{A}$ be its special fibre over a perfect field $k$. By the work of Berthelot-Breen-Messing \cite{Berthelot_Breen_Messing_1982}, we have a comparison isomorphism between crystalline and de Rham cohomology 
\[
\D(\bar{A}[p^{\infty}])[1/p]\otimes_{K_0}K\cong \operatorname{H^1_{cris}}(\bar{A}/\W(k))\otimes_{\W(k)}K\cong \operatorname{H^1_{dR}}(A/\cO_K)\otimes_{\cO_K}K.
\]
in which the first one is a filtered isocrystal over $K$, where the filtration of $K$-vector space is induced by Hodge-filtration on the right side.
\end{example}

In the view of \cref{simple isocrystal with slope}, where $F$ looks as it acts with eigenvalues of p-adic ordinal $d/r$, we shall write $\Delta_{\alpha}=N_{r,d}$ for each $\alpha=d/r$ in reduced form with $r>0$. One can bring the following definition:

We say that $\alpha_1\leq \cdots\leq \alpha_n$ are slopes of an isocrystal $N$ if we can have 
\[
N=\bigoplus^n_{i=1}\Delta_{\alpha_i}
\]

However, this is not well-defined. Fix a basis $\{e_i\}$ of $N$ and consider the matrix $(a_{ij})$ associated to the action $F_N$, where we have $F_N(e_j)=\sum a_{ij}e_i$. This matrix transforms in a semilinear manner under a change of basis, and so its set of eigenvalues $\{\lambda_i\}$ depends on the choice of the basis. Moreover, the following example of Katz shows that the set $\{ord_p(\lambda_i)\}$ is dependent on the choice of the basis as well.

\begin{example}[Katz-\cite{brinon2009cmi}]\label{example slopes in different basis}
 Let $K_0=\W(\F_{p^2})[1/p]=\Q_p(\zeta_{p^2-1})$ with $p\equiv 3\pmod{4}$. As $p-1$ is even, $K_0$ contains $i=\sqrt{-1}$. Let $N=K_0e_1\oplus K_0e_2$ and we define $F$ by 
 \[
 F(e_1)=(p-1)e_1+(p+1)ie_2,\,\, F(e_2)=(p+1)ie_1-(p-1)e_2.
 \]
 $F$ corresponds to the matrix 
 \[
 \begin{pmatrix}
p-1 & (p+1)i\\ (p+1)i & -(p-1)
 \end{pmatrix}.
 \]
 We can extend $F$ uniquely by Frobenius-semi-linearity to form an isocrystal $N$. With respect to the given basis, the above matrix has the characteristic polynomial $\lambda^2-4p$, so its roots are $\pm 2\sqrt{p}$ which have p-adic ordinal $1/2$.

 Now we consider a new basis as follows
 \[
 e'_1=e_1+ie_2,\,\, e'_2=ie_1+e_2.
 \]
 We have \[F(e'_1)=F(e_1)+\sigma(i)F(e_2)=(p-1)e_1+(p+1)ie_2+(p+1)e_1+(p-1)ie_2=2p(e_1+ie_2)=2pe'_1\] and 
 \[
 F(e'_2)=\sigma(i)F(e_1)+F(e_2)=-i(p-1)e_1+(p+1)e_2+(p+1)ie_1-(p-1)e_2=2e'_2.
 \]
 Hence, with respect to the new basis the matrix 
 \[\begin{pmatrix}
     2p & 0\\
     0 & 2
 \end{pmatrix}\]
 has the eigenvalues $2$ and $2p$ with p-adic ordinal $0$ and $1$ respectively.
\end{example}
The above example shows that we need to find another way to define the concept of slope for an isocrystal $N$ over $K_0$.
\begin{thm}{(Dieudonn\'e-Manin, \cite[II]{manin1961theory})}\label{Dieudoone Manin classification}
For an algebraic closed field $k$ of characteristic $p>0$, the category of isocrystals over $K_0=\W(k)[1/p]$ is semisimple (i.e. all objects are finite direct sums of copies of simple objects and all short exact sequences split). Furthermore, the simple objects are given up to isomorphism by the isocrystal $N_{r,d}=\Delta_{\alpha}$, with $\alpha=d/r$ and $gcd(r,d)=1$. 
\end{thm}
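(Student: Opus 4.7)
The plan is to prove the classification by establishing the two structural facts separately: every isocrystal decomposes as a direct sum of simple objects, and these simple objects are exactly the $\Delta_{\alpha}$. Semisimplicity in the stronger sense (splitting of all short exact sequences) then follows formally from the existence of such a decomposition for the middle term.

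The core technical step will be an \emph{eigenvector existence} lemma: if $N$ is a nonzero isocrystal over $K_0=\W(k)[1/p]$ with $k$ algebraically closed, then there exist integers $r>0$, $d$, and a nonzero vector $v \in N$ with $F^r(v) = p^d v$. To prove this I would reduce to solving a $\sigma^r$-semilinear equation of the form $F^r(x) = p^d x$ on a chosen $W(k)$-lattice $M \subset N$, with $r$ chosen so that $F^r$ preserves the lattice. After scaling by a suitable power of $p$ one is led to an equation $\Phi(x) = x$ where $\Phi$ is a $\sigma^r$-semilinear endomorphism of a finite free $W(k)$-module; writing $x$ in coordinates and reducing modulo $p$ reduces solvability to the existence of roots of equations $x_i^{p^r} = f_i(x_1,\ldots,x_n)$ over $k$, which holds because $k$ is algebraically closed. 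One then lifts the mod-$p$ solution step-by-step to a solution in $M$ via a Hensel-style successive approximation, using the fact that $W(k)$ is $p$-adically complete.

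Given the eigenvector $v$ with $F^r(v)=p^d v$, the $K_0$-subspace spanned by $\{v, F(v),\ldots, F^{r'-1}(v)\}$ (where $r' = r/\gcd(r,d)$) is an $F$-stable subspace of $N$ isomorphic to $\Delta_{\alpha}$ for $\alpha = d/r$ in lowest terms. Simplicity of $\Delta_{\alpha}$ follows because any $F$-stable $K_0$-subspace $N' \subset \Delta_{\alpha}$ contains an eigenvector $v'$ with $F^{r''}(v') = p^{d''}v'$; comparing with the cyclic presentation $\Delta_{\alpha} = K_0[F]/(F^r-p^d)$, a degree/determinant computation forces $d''/r'' = d/r$, and then by irreducibility of $F^r - p^d$ over $K_0[F]$ one concludes $N' = \Delta_{\alpha}$. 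To build a global decomposition, I would argue by induction on $\dim_{K_0}N$: pick a sub-isocrystal $N_1 \subset N$ isomorphic to some $\Delta_{\alpha}$, and split the inclusion. The splitting argument is the most delicate ingredient: one shows directly that $\Hom$ and $\Ext^1$ groups between simple isocrystals vanish in the right way. Concretely, $\Ext^1(\Delta_{\beta},\Delta_{\alpha}) = 0$ when $\beta \leq \alpha$ is proved by using the eigenvector lemma applied to the extension class, and the symmetric case is handled by dualizing (using that $\Delta_{\alpha}^{\vee} \cong \Delta_{-\alpha}$).

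The hardest step I expect to be the eigenvector existence in the mixed case where $N$ has several slopes: the naive solution produced from the $\sigma^r$-semilinear equation could land entirely outside the slope one is targeting, so one must carefully control the Newton-polygon invariant. I would circumvent this by first establishing existence for the \emph{lowest} slope: take $r$ large enough and consider a minimal-valuation eigenvalue of the matrix of $F^r$ on a lattice; the fixed-point equation can then be solved without spoiling the valuation estimate. Once the lowest-slope summand is split off, induction on dimension completes the decomposition. Finally, to pass from the existence of a direct sum decomposition into simples to the semisimplicity of the abelian category, note that any short exact sequence $0\to N'\to N\to N''\to 0$ splits because the decomposition of $N$ into $\Delta_\alpha$'s combined with $\Hom_{F\text{-equiv}}(\Delta_\alpha, \Delta_{\alpha'}) = 0$ for $\alpha \neq \alpha'$ and the explicit computation of $\Ext^1$ between equal simples reduces splitting to standard semisimplicity of modules over the skew field $\End(\Delta_\alpha)$.
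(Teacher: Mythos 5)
A point of reference first: the paper does not prove this theorem at all — it states it with a citation to Manin, and the surrounding text only uses the resulting slope decomposition. So your proposal can only be compared with the standard proofs in the literature (Manin, Demazure, Brinon--Conrad), and its overall architecture is essentially theirs: produce a vector with $F^r v = p^d v$ by solving a $\sigma^r$-semilinear fixed-point equation on a lattice (reduction mod $p$ plus successive approximation, using that $k$ is algebraically closed), split off the corresponding copy of $\Delta_{d/r}$ starting from the lowest slope, and conclude by induction together with $\Hom$/$\Ext$ vanishing between the $\Delta_\alpha$.

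Two steps, however, have genuine gaps as written. (1) In the eigenvector lemma, the reduced system $x_i^{p^r}=f_i(x_1,\dots,x_n)$ always has the trivial solution, so ``roots exist because $k$ is algebraically closed'' proves nothing; what you need is the Lang-type statement that for a bijective $\sigma^r$-semilinear endomorphism $\Phi$ of a nonzero finite-dimensional $k$-vector space the fixed points form an $\F_{p^r}$-form (in particular are nonzero), and, prior to that, the existence of a $\W(k)$-lattice stable under $p^{-d}F^r$ whose reduction is bijective. That lattice-stability statement is exactly the Newton-polygon control you defer to the ``hardest step'' and only gesture at; it is the heart of the proof rather than a technical afterthought. (2) The $\Ext$ argument is flawed: dualizing sends $0\to\Delta_\alpha\to E\to\Delta_\beta\to 0$ to $0\to\Delta_{-\beta}\to E\ve\to\Delta_{-\alpha}\to 0$, i.e.\ it negates slopes and swaps sub and quotient simultaneously, so the condition ``quotient slope $\leq$ sub slope'' is carried to itself and duality does not produce the missing case $\beta>\alpha$. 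That case must be treated directly; it is true, by the same successive-approximation device (for equal slopes one needs surjectivity of $x\mapsto x-u\sigma(x)$ on $\W(k)$, which again uses that $k$ is algebraically closed — as it must, since over a non-closed perfect field the category is not semisimple). A smaller unjustified point: the cyclic span of $v$ with $F^rv=p^dv$ need not have dimension $r/\gcd(r,d)$, so one must argue separately that it contains a copy of $\Delta_{d/r}$ with $d/r$ in lowest terms.
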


    Let $N$ be an isocrystal over $K_0$. The Dieudonn\'e-Manin classification gives a unique decomposition of $\hat{N}:=\hat{K^{un}_0}\otimes_{K_0}N$ 
    \begin{equation}\label{eq: Dieudonne-Manin decompostion}
\hat{N}=\bigoplus_{\alpha\in\Q}\hat{\Delta}^{e_{\alpha}}_{\alpha}
    \end{equation}
The integer $\mu_{\alpha}=\dim_{\hat{K^{un}_0}}\hat{\Delta}=re_{\alpha}$ is called the number (with multiplicity) of eigenvalues of $F_N$ with slope $\alpha$, where $\alpha=r/d$ (with $r>0$, $gcd(r,d)=1$). We say that $\alpha\in\Q$ is a slope of $N$ if $\hat{\Delta}_{\alpha}$ is non-trivial direct summand in decomposition \ref{eq: Dieudonne-Manin decompostion}. 
\begin{defn}\label{def: isoclinic}
    We say that an isocrystal $N$ is isoclinic with slope $\alpha$ if $N\neq 0$ and $\hat{N}\cong \hat{\Delta}^e_{\alpha}$ for some $e\geq 1$.
\end{defn}
\begin{example}
    Back to \cref{example slopes in different basis}, we want to show that $N$ has slopes $0$ and $1$. Considering the basis $\{e'_1,e'_2\}$ we saw that $F$ acts via multiplication by $2p$ and $2$ with respect to this basis. We want to show that $\hat{N}=\hat{\Q^{un}_{p}}\otimes_{K_0}N\cong \hat{\Delta}_0\oplus\hat{\Delta}_1$. As the map $\W(\bar{\F}_p)^{\times}\to\W(\bar{\F}_p)^{\times},\,\,a\mapsto \sigma(a)/a$ is surjective, we can find some $b\in\W(\bar{\F}_p)^{\times}$ such that $\sigma(b)/b=1/2$. Now we change the basis to $\{be'_1,be'_2\}$. We have 
    \[
    F(be'_1)=\sigma(b)F(e'_1)=(1/2)b(2pe'_1)=p(be'_1),\,\, F(be'_2)=\sigma(b)F(e'_2)=(1/2)b(2e')=be'_2.
    \]
    Thus, $\hat{\Q^{un}_{p}}e_1$ represents $\Delta_1$ and $\hat{\Q^{un}_{p}}e_2$ represents $\Delta_0$. Hence, $\hat{N}=\hat{\Q^{un}_{p}}\otimes_{K_0}N\cong \hat{\Delta}_0\oplus\hat{\Delta}_1$. 
\end{example}
\begin{remark}
    If $N_1$ and $N_2$ are isocrystals over $K_0$ that are isoclinic with slopes $\alpha_1$ and $\alpha_2$ respectively, then $N_1\otimes N_2$ is isoclinic with slope $\alpha_1+\alpha_2$. Because $\hat{\Delta}_{\alpha_1}\otimes\hat{\Delta}_{\alpha_2}=\hat{\Delta}_{\alpha_1+\alpha_2}$.
\end{remark}
The Dieudonné–Manin classification does not hold when the base field is not algebraically closed; however, the slope decomposition into isoclinic parts still uniquely descends.
\begin{lemma}\label{isoclinic decomposition lemma}
For any nonzero isocrystal $N$ over $K_0$ with slopes $\alpha_1<\cdots<\alpha_n$, there is a unique decomposition $N=\bigoplus N(\alpha_i)$ into direct sum of nonzero subobjects that are isoclinic with respective slopes $\alpha_1<\cdots<\alpha_n$.
\end{lemma}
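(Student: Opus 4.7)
The plan is to obtain the decomposition over $K_0$ by first invoking Dieudonn\'e--Manin over the larger base $\hat{K_0^{un}} = \W(\bar k)[1/p]$ and then descending along the faithfully flat, Frobenius-equivariant extension $K_0 \hookrightarrow \hat{K_0^{un}}$. More precisely, set $\hat{N} := \hat{K_0^{un}} \otimes_{K_0} N$, which is an isocrystal over $\hat{K_0^{un}}$ with semilinear Frobenius $F \otimes \sigma$. By \cref{Dieudoone Manin classification} and the discussion following it, $\hat{N}$ admits a unique decomposition
\[
\hat{N} \;=\; \bigoplus_{i=1}^{n} \hat{N}(\alpha_i),
\]
where $\hat{N}(\alpha_i)$ is isoclinic of slope $\alpha_i$ and the $\alpha_i$ are precisely the slopes appearing in the Dieudonn\'e--Manin decomposition of $\hat{N}$.

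Next I would show that this slope decomposition is intrinsic, in the sense that $\hat{N}(\alpha_i)$ is characterized as the (unique) maximal isoclinic sub-isocrystal of $\hat{N}$ of slope $\alpha_i$; equivalently, $\hat{N}(\alpha_i)$ is the image of the evaluation map
\[
\Hom_{\text{isoc}}\!\bigl(\hat{\Delta}_{\alpha_i}, \hat{N}\bigr) \otimes_{\Q_p} \hat{\Delta}_{\alpha_i} \;\longrightarrow\; \hat{N}.
\]
This characterization is purely categorical, using only the additive $F$-structure, together with the semisimplicity and the fact that $\Hom_{\text{isoc}}(\hat{\Delta}_{\alpha}, \hat{\Delta}_{\beta}) = 0$ for $\alpha \neq \beta$ (a direct consequence of the Dieudonn\'e--Manin classification, since the slopes are additive under tensor and a nonzero morphism would force an isomorphism between simple summands of equal slope).

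With intrinsicness in hand, the Galois group $G = \Gal(\bar k/k)$ acts on $\hat{N}$ through $\W(\bar k)$ in a way that commutes with $F$, and therefore permutes the canonical summands $\hat{N}(\alpha_i)$ trivially: each $\hat{N}(\alpha_i)$ is $G$-stable. Since $\hat{K_0^{un}} \supset K_0$ is a faithfully flat extension whose $G$-invariants recover $K_0$, and since continuous Hilbert 90 gives effective Galois descent for finite-dimensional $\hat{K_0^{un}}$-vector spaces with continuous semilinear $G$-action, the $K_0$-subspace
\[
N(\alpha_i) \;:=\; \hat{N}(\alpha_i)^{G}
\]
satisfies $\hat{K_0^{un}} \otimes_{K_0} N(\alpha_i) = \hat{N}(\alpha_i)$. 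It is $F$-stable because $F$ commutes with $G$, it is nonzero since $\alpha_i$ is a slope of $N$, and it is isoclinic of slope $\alpha_i$ because this property is detected after base change to $\hat{K_0^{un}}$. The identity $N = \bigoplus_i N(\alpha_i)$ now follows from the analogous identity for $\hat{N}$ by taking $G$-invariants (or, equivalently, by faithful flatness of the extension).

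Finally, uniqueness is immediate from the uniqueness of the Dieudonn\'e--Manin decomposition: any decomposition $N = \bigoplus M_i$ into nonzero isoclinic pieces $M_i$ of slope $\alpha_i$ yields, after base change, a decomposition of $\hat{N}$ into isoclinic pieces of the same slopes, which must coincide with $\bigoplus \hat{N}(\alpha_i)$, so $\hat{K_0^{un}} \otimes M_i = \hat{N}(\alpha_i)$ and taking $G$-invariants gives $M_i = N(\alpha_i)$. The main subtlety I would expect to dwell on is the Galois descent step, since $\hat{K_0^{un}}/K_0$ is not algebraic but only a completion of an algebraic extension; this is handled by working with the continuous action of $G = \Gal(\bar k/k)$ on finite-dimensional $\hat{K_0^{un}}$-vector spaces, for which the usual vanishing of $H^1$ still yields effective descent.
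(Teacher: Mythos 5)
Your proposal is correct, and it is essentially the paper's approach: the paper offers no argument of its own here, simply citing \cite[Lemma 8.1.11]{brinon2009cmi}, whose proof is exactly the one you reconstruct — Dieudonn\'e--Manin over $\hat{K^{un}_0}$, the intrinsic (hence Galois-stable) characterization of the isoclinic summands, and unramified descent via vanishing of continuous $H^1$ for semilinear $\Gal(\bar{k}/k)$-actions on finite-dimensional $\hat{K^{un}_0}$-vector spaces. Your flagged subtlety (that $\hat{K^{un}_0}/K_0$ is only the completion of an algebraic extension, so the descent needs the continuous-cohomology/successive-approximation form of Hilbert 90) is indeed the only delicate point, and you handle it correctly.
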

\begin{proof}
    \cite[Lemma 8.1.11]{brinon2009cmi}.
\end{proof}
\begin{defn}\label{Newton number}
Let $N$ be a nonzero isocrystal over $K_0$ with slopes $\{\alpha_1<\cdots<\alpha_n\}$ having multiplicities $\{\mu_1,\cdots,\mu_n\}$. The Newton polygon $\operatorname{Newt}(N)$ is the lower convex hull with leftmost point $(0,0)$ and having $\mu_i$ consecutive segments of horizontal distance $1$ and slope $\alpha_i$. In other words, it matches the points $$(0,0), (\mu_1,\mu_1\alpha_1),\cdots, (\mu_1+\cdots+\mu_n,\mu_1\alpha_1+\cdots+\mu_n\alpha_n)$$ consecutively. As $\mu_i\alpha_i\in\Z$, all the second components are integers.

The Newton number is the $y$-coordinate of the rightmost endpoint which is \[t_N(N)=\sum_i\mu_i\alpha_i.\] 
\end{defn}
\begin{example}
Let $E$ be an ordinary elliptic curve over $k$. \cref{example: p-divisible group elliptic curve connected-\'etale} shows that $E[p^{\infty}]\cong \mup\oplus \underline{\Q_p/\Z_p}$ over $\bar{k}$. Passing to isocrystals we have $\D(E[p^{\infty}])[1/p]\cong\Delta_0\oplus \Delta_1$. Thus, the Newton polygon of $\D(E[p^{\infty}])[1/p]$ connects the points $(0,0), (1,0),$ and $(2,1)$.
\begin{center}
    \includegraphics[scale=1.5]{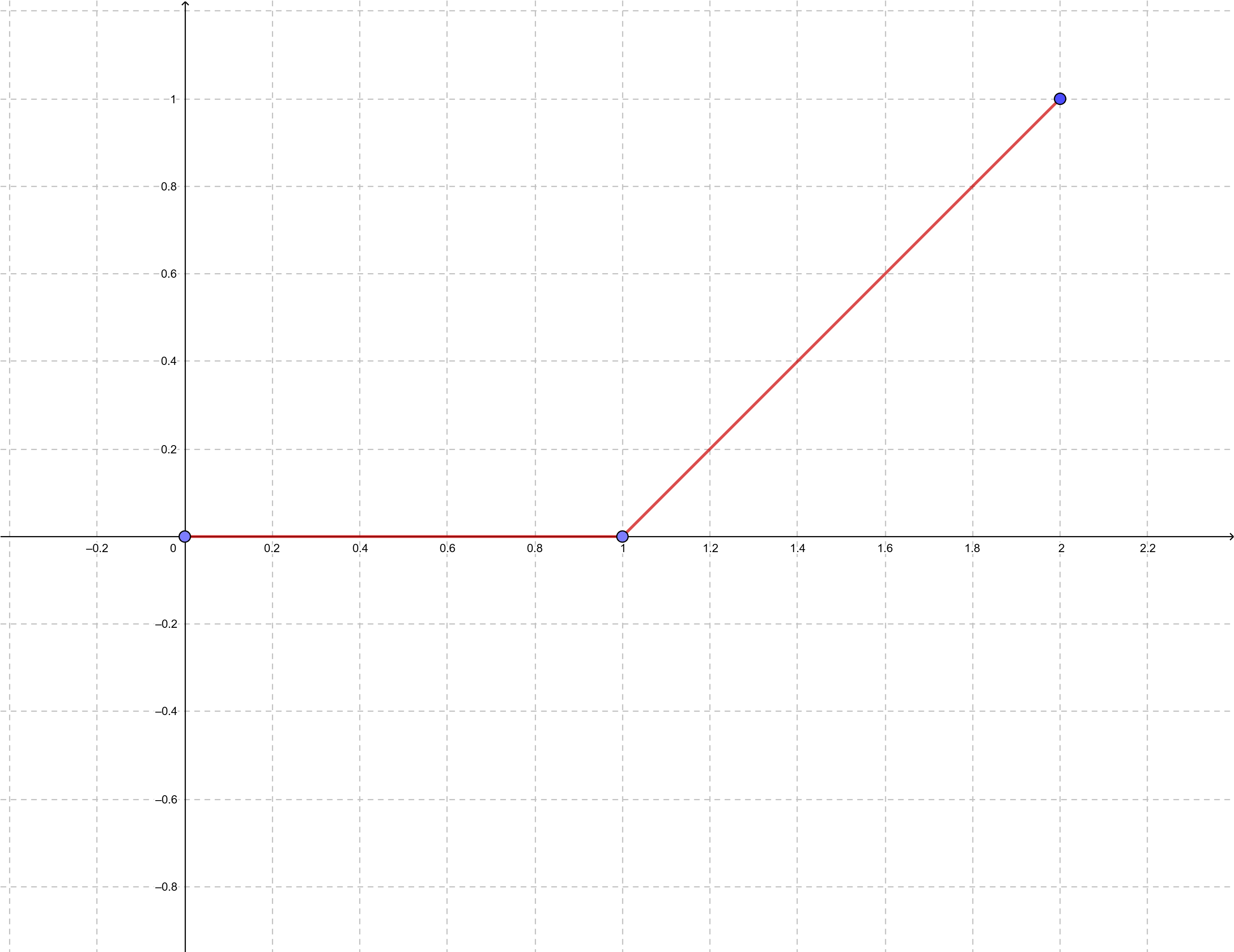}
\end{center}

\end{example}
\begin{cor}{\cite[page 88]{demazure2006lectures}}
    If $N$ is an isocrystal over $K_0$ arising form a p-divisible group over $k$ then we have the slope decomposition $N=\bigoplus N(\alpha_i)$, where each $N(\alpha_i)$ is an isoclinic with slope $\alpha_i$ in $[0,1]$. 
\end{cor}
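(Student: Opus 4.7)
The plan is to combine the slope decomposition already provided by \cref{isoclinic decomposition lemma} with the extra structure coming from the Dieudonn\'e module $D := \D(G)$. Since $N = D[1/p]$ with $D$ a finite free $\W(k)$-module satisfying $F(D) \subseteq D$ and $pD \subseteq F(D)$, and since $FV = VF = p$, the Verschiebung $V := pF^{-1}$ is well-defined on $D$ and preserves it: for any $x \in D$, one has $px \in pD \subseteq F(D)$, so $px = F(y)$ for a unique $y \in D$, whence $V(x) = y \in D$. Thus both $F$ and $V$ preserve $D$. By \cref{isoclinic decomposition lemma}, $N = \bigoplus_i N(\alpha_i)$, so the only remaining task is to show $\alpha_i \in [0,1]$ for every $i$.

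I would next reduce to the isoclinic case. The canonical projection $\pi_i : N \to N(\alpha_i)$ is $F$-equivariant, and hence $V$-equivariant, because the slope decomposition is intrinsic. Consequently $D_i := \pi_i(D)$ is a finitely generated $\W(k)$-submodule of $N(\alpha_i)$ that spans $N(\alpha_i)$ over $K_0$ (since $D$ spans $N$), i.e.\ a $\W(k)$-lattice, and it is stable under both $F$ and $V$. It therefore suffices to prove: if a nonzero isoclinic isocrystal $M$ of slope $\alpha$ admits an $F$-stable $\W(k)$-lattice, then $\alpha \geq 0$; and symmetrically, if it admits a $V$-stable lattice, then $\alpha \leq 1$.

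The heart of the argument is a determinant computation. The top exterior power $\det M := \wedge^{\dim M} M$ is a one-dimensional isocrystal on which $F$ acts as multiplication by some $\lambda \in K_0^\times$ with $v_p(\lambda) = (\dim M)\cdot\alpha$; this equality is obtained by the unramified base change furnished by \cref{Dieudoone Manin classification}, which identifies $M \otimes_{K_0} \hat{K^{un}_0} \cong \hat\Delta_\alpha^{\oplus m}$, and then reducing the computation to the cyclic-shift presentation $\hat\Delta_\alpha = \hat{K^{un}_0}[F]/(F^r - p^d)$ whose Frobenius matrix has determinant $\pm p^d$. The $F$-stable lattice $\wedge^{\dim M} D_i \subseteq \det M$ then forces $\lambda \in \W(k)$, hence $(\dim M)\cdot\alpha \geq 0$, and so $\alpha \geq 0$. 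Applying the same reasoning to $V$, and using that $V^r = p^r F^{-r} = p^{r-d}$ on the isoclinic piece (so $V$ has slope $1-\alpha$), yields $1-\alpha \geq 0$, i.e.\ $\alpha \leq 1$.

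The main obstacle I anticipate is the bookkeeping of semilinear operators under base change, together with justifying the formula $v_p(\det F|_M) = (\dim M)\cdot\alpha$ without circularity with the slope decomposition. A cleaner alternative that sidesteps the determinant is to argue directly after base change to $\hat{K^{un}_0}$: any $F$-stable $\W(\bar{k})$-lattice $D'$ in $\hat\Delta_\alpha$ would satisfy $F^r D' \subseteq D'$, i.e.\ $p^d D' \subseteq D'$; if $d < 0$ this forces the strict inclusion $p^{-|d|} D' \subseteq D'$, contradicting finite generation of $D'$ over $\W(\bar k)$. Either route completes the proof.
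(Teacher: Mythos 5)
Your argument is correct, and it is essentially the standard proof of this fact: the paper itself gives no proof (it only cites Demazure, p.~88), and the cited argument is exactly the one you reconstruct — an isocrystal coming from a p-divisible group carries a lattice stable under both $F$ and $V=pF^{-1}$, and projecting that lattice onto the isoclinic pieces of the decomposition forces each slope to lie in $[0,1]$ ($F$-stability giving $\alpha\ge 0$, $V$-stability giving $\alpha\le 1$ via the determinant/valuation computation on a one-dimensional top exterior power). One small imprecision worth noting in your ``cleaner alternative'': after base change, left multiplication by $F^r$ on $\hat\Delta_\alpha$ is not literally multiplication by $p^d$ but $p^d$ composed with a $\sigma^r$-semilinear automorphism fixing the standard basis, so ``$F^rD'\subseteq D'$, i.e.\ $p^dD'\subseteq D'$'' needs a word of justification; the quickest fix is commensurability, i.e.\ choose $a$ with $p^a\Lambda_0\subseteq D'\subseteq p^{-a}\Lambda_0$ for the standard lattice $\Lambda_0$, note $F^{rn}\Lambda_0=p^{dn}\Lambda_0$, and conclude $a+dn\ge -a$ for all $n$, whence $d\ge 0$. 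With that caveat, both routes you give are sound, and your main (determinant) route needs no repair.
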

If $N$ has the slope sequence $\alpha_1<\cdots<\alpha_n$, then the slope sequence for $N\ve $ is $1-\alpha_n<\cdots<1-\alpha_1$.
\begin{remark}
Since the p-divisible group of an abelian variety $A[p^{\infty}]$ is in the same isogeny set as $A[p^{\infty}]\ve$, the isogeny classes of isocrystals coming from abelian varieties correspond to Newton polygons with the following information:
\begin{itemize}
    \item All slopes are between $0$ and $1$.
    \item The Newton polygon starts with $(0,0)$ and ends at $(2g,g)$ where $g$ is dimension of abelian variety $A$
    \item The Newton polygon is symmetric i.e. $\alpha_i=1-\alpha_{2g-i+1}$ for all $1\leq i\leq 2g$.
\end{itemize}
We have two cases:
\begin{enumerate}
    \item All slopes of $A[p^{\infty}]$ are equal to $1/2$ and $A$ is called supersingular.
    \item The Newton polygon connects $(0,0)$ to $(2g,g)$ and $A$ is called ordinary.
\end{enumerate}
\end{remark}
The category $\MF$ of filtered isocrystals over $K$ is not an abelian category. But there is a full subcategory of $\MF$ that is abelian, closed under extensions and dual (see \cite[\S 4]{fontaine1982formes}). It is called the category of weakly admissible filtered isocrystals.
\begin{defn}\label{definition weakly addmissible isocrystal}
A filtered isocrystal $N$ over $K$ is weakly admissible if
\[
t_N(N')\geq t_H(N'):=\sum_{i\in\Z}i.\dim_K\gr^i(N'_K), \,\text{ for all sub-filtered isocrystal } N'\subset N
\] and equality holds when $N'=N$. $T_H(N')$ is referred to as the Hodge number of $N'$, and its corresponding polygon is called the Hodge polygon.

The full subcategory of $\MF$ consisting of weakly admissible filtered isocrystals is denoted by $\MFw$. 
\end{defn}
Similarly, we can define the notions of Hodge and Newton numbers, as well as weakly admissible isocrystals, for any isocrystals equipped with a filtration over $K_0$, such as those filtered isocrystals over $K_0$ that arise from a filtered Dieudonn\'e module (\cref{filtered Dieudonn\'e module}).
\section{Crystalline nature of p-divisible groups}\label{sec: crystalline nature of p-divisible groups}
The problem of generalizing the Dieudonn\'e theory to p-divisible groups over a more general base scheme $S$ over which $p$ is locally nilpotent has been advertised and tackled by Grothendieck \cite{grothendieck1974groupes}. Grothendieck’s proposal was to define $\D(G)$ as an $\cF$-crystal on the crystalline site of $S$ (see \cref{def: crystalline site}). This gives a direct application to the theory of infinitesimal extension of p-divisible groups and clears the connection to the classical Dieudonn\'e theory. 

We summarize the key results from \cite{mazur2006universal}, \cite{messing_crystals_1972}, and \cite{Grothendieck_1971}.

\begin{defn}[Divided powers]
Let $R$ be a ring and $I$ an ideal of $R$. We say that $I$ is equipped with divided powers if we are given a family of maps $\gamma_n:I\to I$ for $n\geq 0$ which satisfy the following
conditions:
\begin{enumerate}
    \item $\gamma_0(x)=1$, for all $x\in I$,
    \item $\gamma_1(x)=x$, for all $x\in I$,
    \item $\gamma_n(x+y)=\sum_{0\leq i\leq n}\gamma_i(x)\gamma_{n-i}(y)$, for all $x,y\in I$,
    \item $\gamma_i(ax)=a^i\gamma_i(x)$, for all $a\in R, x\in I, i\geq 1$,
    \item $\gamma_i(x)\gamma_j(x)=\frac{(i+j)!}{i!j!}\gamma_{i+j}(x)$, for all $i,j\geq 0, x\in I$,
    \item $\gamma_i(\gamma_j(x))=\frac{(ij)!}{i!(j!)^i}\gamma_{ij}(x)$, for all $i,j\geq 1, x\in I$.
\end{enumerate}
\end{defn}
\begin{remark}
    The basic idea for a divided power (DP) structure is that $\gamma_i(x)$ behaves like $x^i/i!$. Although dividing by $i!$ may not make sense in general, we can read $\gamma_i(x)$ as $x^i/i!$.

    If $\Q\subseteq R$, we have a unique divided power structure on $(R,I)$ that is $\gamma_i(x)=x^i/i!$.
\end{remark}
\begin{defn}
    Given a DP structure $(R,I,\gamma)$, we say that the divided powers are nilpotent if there is an integer $N$ such that the ideal generated by elements of the form 
    \[
    \gamma_{i_1}(x_1)\cdots\gamma_{i_k}(x_k),\,\, i_1+\cdots+i_k\geq N
    \]
    is zero. This means that $I^N=0$.
\end{defn}
Assume that $(R,I,\gamma)$ has a divided power (DP) structure. We can define an exponential group homomorphism $I\to 1+I$ by
\[
\exp(x):=1+\sum_{n\geq 1}\gamma_n(x)
\] and a logarithmic map $1+I\to I$ by
\[
\log(1+x)=\sum_{n\geq 1}(n-1)!(-1)^{n-1}\gamma_n(x).
\]
These homomorphisms are inverse to each other and yields an isomorphism $(1+I)\cong I$.
 \begin{example}
         The map $\gamma_n(p)=p^n/n!$ uniquely extends a DP structure on $(\W(k),p\W(k))$. For any integer $n\geq 1$ we can write
         \[
         n=a_0+a_1p+\cdots+a_mp^m,
         \]
         where $0\leq a_j\leq p-1, \, j=0,\cdots m$. Let $s=a_0+a_1+\cdots a_m$, then 
         \[
         \nu(n!)=\frac{n-s}{p-1}\leq n-1
         \]
         where $\nu$ is the p-adic valuation. This implies that $\gamma_n(p)=p^n/n!\in p\W(k)$.

         We can replace $\W(k)$ by any separated and complete Noetherian adic ring of characteristic zero.
 \end{example}
\begin{defn}
    Let $(R,I,\gamma)$ and $(R',I',\gamma')$ be two DP structures. A DP homomorphism is a ring homomorphism $\phi:R\to R'$ which is compatible with DP structures and $\phi(I)\subseteq I'$.
\end{defn}
\begin{defn}
    Let $(R,I,\gamma)$ be a DP structure and let $\phi:R\to R'$ a ring homomorphism. We say that $\gamma$ extends to $R'$ if there exists a DP structure $\gamma'$ on $IR'$ such that the map $(R,I,\gamma)\to (R',IR',\gamma')$ is a DP homomorphism.
\end{defn}
\begin{prop}{\cite{grothendieck1974groupes}}
    Let $(R,I,\gamma)$ be a DP structure and $\phi:R\to R'$ be a ring homomorphism. Then $\gamma$ extends to $R'$ if one of the following conditions hold:
    \begin{enumerate}
        \item $I$ is a principal ideal.
        \item $R\to R'$ is flat.
    \end{enumerate}
\end{prop}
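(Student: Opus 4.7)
The strategy for both parts is the same: define $\gamma'_n : IR' \to R'$ by extending $\gamma_n$ in the only way compatible with the axioms, and then verify independence of presentation. For an element $y \in IR'$ written as $y = \sum_{i=1}^k \phi(x_i) r'_i$ with $x_i \in I$ and $r'_i \in R'$, axioms (3) (addition) and (4) (scalar) force the definition
\[
\gamma'_n(y) \;:=\; \sum_{\alpha_1+\cdots+\alpha_k = n} \phi(\gamma_{\alpha_1}(x_1))\cdots\phi(\gamma_{\alpha_k}(x_k))\,(r'_1)^{\alpha_1}\cdots(r'_k)^{\alpha_k}.
\]
Once this formula is shown to be independent of the chosen presentation of $y$, verifying the six DP axioms for $\gamma'$ reduces to a mechanical bookkeeping exercise using the corresponding axioms for $\gamma$, and the fact that $\phi$ becomes a DP homomorphism is then immediate (since on $y = \phi(x)$ the formula collapses to $\phi(\gamma_n(x))$). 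So the entire content of the proposition is the well-definedness of $\gamma'_n$, and the two hypotheses give two different routes to it.

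For case (2), flatness of $\phi$ preserves the exactness of any presentation $0 \to K \to R^{\oplus S} \to I \to 0$, so every $R'$-relation among the $\phi(x_i) r'_i$ is the base change of a relation already present in $I \otimes_R R'$. One then passes through the divided power polynomial algebra $\Gamma_R(I)$ (where the analogous universal formula is tautologically well-defined), and invokes the natural isomorphism $\Gamma_R(I) \otimes_R R' \xrightarrow{\sim} \Gamma_{R'}(IR')$, which holds thanks to flatness. The DP structure transported from $\Gamma_R(I)$ to $IR'$ via the canonical $R$-algebra map $\Gamma_R(I) \to R$ is the desired $\gamma'$.

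For case (1), write $I = (a)$. Every $y \in IR'$ admits a one-term presentation $y = \phi(a) s$ with $s \in R'$, and the displayed formula collapses to $\gamma'_n(\phi(a) s) = \phi(\gamma_n(a))\,s^n$. Well-definedness here is the statement that the kernel $J := I \cap \ker(\phi)$ is a sub-DP ideal of $(I,\gamma)$: any element of $J$ has the form $ar$ with $\phi(a)\phi(r) = 0$, and axiom (4) gives $\gamma_n(ar) = r^n \gamma_n(a)$, so one must argue $\phi(r^n \gamma_n(a)) = 0$. This is where principality of $I$ is essential, since it reduces the check to a single generator relation and allows one to use the identity $a\gamma_{n-1}(a) = n\gamma_n(a)$ (axiom (5) with $i = 1$) inductively. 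Once $J$ is sub-DP, $\gamma$ descends to a DP structure on $I/J$, which injects into $IR'$ and provides the extension.

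The main obstacle is precisely the well-definedness step in case (1): without flatness one cannot reduce to a base change statement, and the argument ultimately rests on exploiting the one-generator form of $I$ via axiom (4). The flat case is conceptually cleaner because flat base change of the universal DP algebra does the work automatically, whereas the principal case requires a direct, slightly delicate manipulation of the DP identities to control how $\gamma_n$ interacts with the annihilator of $\phi(a)$ in $R'$.
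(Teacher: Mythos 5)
The paper itself gives no argument here (it simply cites Grothendieck), so your proposal has to stand on its own. Your case (2) sketch follows the standard route (flatness identifies $I\otimes_R R'$ with $IR'$, so every relation among the generators descends from $R$; equivalently one invokes the equational criterion of flatness or base change of the PD polynomial algebra), although the final transport of the divided powers from $\Gamma_R(I)\otimes_R R'$ down to the ideal $IR'$ is left vague and itself needs either the universal property or the same well-definedness check you are trying to avoid.

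Case (1), however, has a genuine gap. For $I=(a)$ the well-definedness question is: if $\phi(a)s=\phi(a)s'$ with $s,s'\in R'$, why is $\phi(\gamma_n(a))s^n=\phi(\gamma_n(a))(s')^n$? The ambiguity $c=s-s'$ is an arbitrary annihilator of $\phi(a)$ in $R'$; it need not be of the form $\phi(r)$, so recasting the problem as ``$J=I\cap\ker(\phi)$ is a sub-DP ideal'' does not capture where the relations live — and even granting it, a DP structure on $\phi(I)\cong I/J$ is not yet a DP structure on the ideal $IR'$ that it generates, which is exactly the statement being proved, so that step is circular. Moreover, the identity $a\,\gamma_{n-1}(a)=n\,\gamma_n(a)$ only yields $n\,c\,\phi(\gamma_n(a))=c\,\phi(a)\,\phi(\gamma_{n-1}(a))=0$; since $n$ need not be invertible and $R'$ may have torsion (the crystalline applications are precisely in mixed/positive characteristic), you cannot divide by $n$, and the proposed induction never closes. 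The observation that actually makes the principal case work is that $\gamma_n$ maps $I$ into $I=(a)$, so one may write $\gamma_n(a)=c_n a$ with $c_n\in R$; then
\[
\phi(\gamma_n(a))\bigl(s^n-(s')^n\bigr)=\phi(c_n)\,\bigl(s^{n-1}+\cdots+(s')^{n-1}\bigr)\bigl(\phi(a)s-\phi(a)s'\bigr)=0,
\]
which gives well-definedness of $\gamma'_n(\phi(a)s):=\phi(\gamma_n(a))s^n$ in one line; the verification of the DP axioms then proceeds as you indicate. Without this divisibility trick (or some substitute for it), your argument for (1) fails.
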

Let $S$ be a scheme and $\cI$ a quasi-coherent ideal sheaf of $\cO_{S}$. A DP structure on $(S,\cI)$ is given by assigning to each open subset $U$ a DP structure on $(\Gamma(U,S),\Gamma(U,\cI))$ commuting with the restriction maps. A morphism of DP structures between $(S,\cI,\gamma)$ and $(S',\cI',\gamma')$ is a morphism $f:S\to S'$ such that $f^{-1}(\cI')$ maps into $\cI$ under the map $f^{-1}\cO_{S'}\to \cO_{S}$ and the DP structure induced on the image of $f^{-1}(\cI')$ coincide with the one defined by $\gamma'$.

\begin{defn}[Crystalline site]\label{def: crystalline site}
    For a scheme $X/S$, we define the crystalline site $Crys(X/S)$ of $X$ relative to $S$ to be a category whose objects are $T_U:=(U\into T,\gamma)$ where $U$ is open subscheme of $X$, and $U\into T$ is a locally nilpotent immersion. There exists a locally nilpotent DP structure $\gamma=(\gamma_i)$ on the ideal $\cI$ of $U$ in $T$ compatible with DP structure on $S$. 
    
    The morphisms from $(U\into T,\gamma)$ to $(U'\into T',\gamma')$ are morphisms $(f, \bar{f}))$ making the following diagram commutative
    \begin{equation*}
\begin{tikzcd}
U \arrow[d, "f"] \arrow[r, hook] & T \arrow[d, "\bar{f}"] \\
U' \arrow[r, hook]               & T'                    
\end{tikzcd}
    \end{equation*}
    and $\bar{f}$ is a DP morphism.

    A covering of an object $(U\into T,\gamma)$ is a collection of morphisms $\{(U_i\into T_i,\gamma_i)\to (U\into T,\gamma)\}$ such that $T_i$ is an open subscheme of $T$, $U_i$ is an open subscheme of $U$ and $\bigcup U_i=U$.

    A crystalline sheaf on $X$ is a sheaf on the crystalline site $Crys(X)$ with respect to covering mentioned above (see \cref{def: sheaf on site}). 
\end{defn}
\begin{example}
    The structural sheaf $\cO_{X/S}$ on $Crys(X/S)$ is defined by 
    \[
    (\cO_{X/S})_{(U,T,\gamma)}=\cO_T
    \]
    for every object $(U\into T, \gamma)$.
\end{example}
\begin{defn}
    A crystal of $\cO_{X/S}$-modules is a sheaf $\cF$ of $\cO_{X/S}$-modules such that for any morphism $f:(U\into T,\gamma)\to (U'\into T',\gamma')$ in $Crys(X/S)$, the induced map
    \[
    f^*\cF_{(U'\into T',\gamma')}\to \cF_{(U\into T,\gamma)}
    \]
    is an isomorphism.
\end{defn}
By the method of exponential, one can associate to certain p-divisible groups on $S_0$ a crystal in finite locally free $\cO_{S/S_0}$-modules. More precisely, for those p-divisible groups that are locally liftable to infinitesimal neighborhoods. To such p-divisible group $G$, Messing defined:
\begin{enumerate}
    \item a crystal in fppf groups: $\E(G)$,
    \item a crystal in formal Lie groups: $\hat{\E(G)}$,
    \item a crystal in finite locally free modules: $\D(G)$.
\end{enumerate}

\subsection{Construction of $\Ex(G)$ (\cite{messing_crystals_1972})}
    Recall that an extension of $G$ by a vector group $G$ is called a vector extension. It is said to be universal if for any group $M$ the natural map 
    \[
    \Hom_S(V,M)\to \Ext^1_S(G,M)
    \] obtained by pushout is a bijective.

    If $S$ is a scheme in characteristic $p$ (i.e. $p^n\cO_S=0$ for some integer $N$), then for a p-divisible group $G$ over $S$ the universal vector extension $\Ex(G)$ of $G$ by $V(G)$ exists (\cite[Chapter IV 1.10]{messing_crystals_1972}). Moreover, if $u:G\to H$ is a homomorphism of p-divisible groups over $S$, there is a unique homomorphism $\Ex(u):\Ex(G)\to \Ex(H)$ such that we can obtain a morphism of extensions:
\begin{equation}
\begin{tikzcd}
0 \arrow[r] & V(G) \arrow[d, "V(u)"] \arrow[r] & \Ex(G) \arrow[d, "\Ex(u)"] \arrow[r] & G \arrow[d, "u"] \arrow[r] & 0 \\
0 \arrow[r] & V(H) \arrow[r]                   & \Ex(H) \arrow[r]                    & H \arrow[r]                & 0
\end{tikzcd}
\end{equation}
where $V(u)$ is induced on invariant differentials by the Cartier dual of $u$.

Now assume that $S=\Spec(A)$ such that $A$ is in characteristic $p$. Let $S_0=\Spec(A/I)$, where $I$ is an ideal of $A$ with nilpotent DP structure. Let $G$ and $H$ be two p-divisible groups over $S$ and assume that $u_0:G_0\to H_0$ is a homomorphism between $G_0:=G\times_S S_0$ and $H_0:=H\times_S S_0$. We have the diagram 
\begin{equation*}
\begin{tikzcd}
0 \arrow[r] & V(G_0) \arrow[d, "V(u_0)"] \arrow[r] & \Ex(G_0) \arrow[d, "\Ex(u_0)"] \arrow[r] & G \arrow[d, "u_0"] \arrow[r] & 0 \\
0 \arrow[r] & V(H_0) \arrow[r]                   & \Ex(H_0) \arrow[r]                    & H_0 \arrow[r]                & 0.
\end{tikzcd}
\end{equation*}
Then
\begin{enumerate}
    \item There is a unique morphism $v:\Ex(G)\to \Ex(H)$ lifting $\Ex(u_0)$.
    \item Assume that $w:V(G)\to V(H)$ is a lifting of $V(u_0)$ such that $d=i\circ w-v\mid_{V(G)}:V(G)\to V(H)$ induces zero on $S_0$, where $i: V(H)\to V(G)$ is inclusion. We denote $E_S(u_0):=v$.
    \item If $u_0:G_0\to H_0$ and $u'_0:H_0\to K_0$ are homomorphisms of p-divisible groups over $S_0$, then 
    \[
    E_S(u'_0\circ u_0)=E_S(u'_0)\circ E_S(u_0).
    \]
    \item $E_S(1_{G_0})=1_{\Ex(G)}$.
    \item If $u_0:G_0\cong H_0$ is an isomorphism, then $E_S(u_0)$ is an isomorphism.
\end{enumerate}

We claim that the assignment $G_0\mapsto \Ex(G_U)$, for a lifting $G_U$ of $G_0\mid_{U_0}$ to $U$, is a crystal. It suffices to give the value of the crystal on sufficiently small objects $(U_0\into U)$ of the crystalline site of $S_0$. Take an affine scheme $U_0$, from the above observation $\Ex(G_U)$ is independent of the choice of lifting up to canonical isomorphism. If $V_0\into V$ is another object and there is a morphism 
\begin{equation*}
\begin{tikzcd}
U_0 \arrow[r, hook] \arrow[d, "f"] & U \arrow[d, "\bar{f}"] \\
V_0 \arrow[r, hook]                & V                     
\end{tikzcd}
\end{equation*}
then a lifting $G\mid_U$ of $G_0\mid_{U_0}$ to $U$ and a lifting $G_V$ of $G_0\mid_{V_0}$ to $V$ induce a canonical isomorphism $\bar{f^*}(\Ex(G_U))\cong \Ex(G_V)$.

The formal completion $\hat{\Ex(G)} :=\colim \text{Inf}^k \Ex(G)$ of $\Ex(G)$ along its zero section is a formal Lie group (\cite[Chapter IV 1.19]{messing_crystals_1972}).
\begin{defn}\label{def: three crystals}{\cite[Chapter IV, 2.5.4]{messing_crystals_1972}.}
    We define three crystals
    \begin{gather*}
        \E(G_0)_{(U_0\into U)}:=\Ex(G_U),\\
        \hat{\E(G_0)}_{U_0\into U}:=\hat{\Ex(G_U)},\\
        \D(G_0)_{(U_0\into U)}:=\Lie(\hat{\E(G_0)}_{U_0\into U}).
    \end{gather*}
    We call $\D$ our covariant Dieudonn\'e crystal. The contravariant Dieudonn\'e crystal is \[\D^*(G_0)_{(U_0\into U)}:=\Lie(\hat{\Ex(G\ve_0)}_{U\into U}).\]
\end{defn}
\begin{thm}[\cite{messing_crystals_1972}]
Let $\Spec(A_0)\into\Spec(A)$ be a locally nilpotent DP extension. Suppose that $G$ is a p-divisible group over $A$ and $G_0=G\times_{\Spec A}\Spec(A_0)$. Then there is a canonical exact sequence
\begin{equation}\label{1.6.2}
0\to \Lie(G\ve)\ve\to \D(G_0)_{\Spec(A_0)\into\Spec(A)}\to\Lie(G)\to 0.
\end{equation}
Here $G\ve$ is the Cartier dual of $G$, and $\Lie(G\ve)\ve$ is the dual of the tangent space of $G\ve$.

Suppose that $A$ is an abelian scheme over $A$ such that there exists an isomorphism
\[
A[p^{\infty}]\times_{\Spec(A)}\Spec(A_0)\cong G_0.
\]
Then there exists a natural isomorphism
\[
\D(G_0)_{\Spec(A_0)\into\Spec(A)}\cong \operatorname{H^{dR}_1}(A),
\]
such that it identifies the exact sequence \ref{1.6.2} with the Hodge filtration
\[
0\to \Lie(A\ve)\ve\to \operatorname{H^{dR}_1}(A)\to\Lie(A)\to 0.
\]
\end{thm}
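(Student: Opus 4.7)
The plan is to obtain the exact sequence as the Lie algebra of the universal vector extension of a lift, and then, in the abelian case, match $\Lie(\Ex(A))$ with $\operatorname{H^{dR}_1}(A)$ via the classical Mazur--Messing identification. First, I would start from the universal vector extension of a chosen lift $G$ of $G_0$ to $\Spec(A)$,
\[
0 \to V(G) \to \Ex(G) \to G \to 0,
\]
where $V(G) = \Lie(G\ve)\ve$ is the vector group of invariant differentials on the Cartier dual $G\ve$. Completing formally along the identity section and applying the $\Lie$ functor to this short exact sequence of smooth formal groups yields
\[
0 \to \Lie(G\ve)\ve \to \Lie(\hat{\Ex(G)}) \to \Lie(G) \to 0,
\]
which by \cref{def: three crystals} is precisely the desired sequence for $\D(G_0)_{\Spec(A_0)\into\Spec(A)}$. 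Exactness is preserved because $V(G)$ is a vector group and the Lie algebra of a smooth formal group is exact on such presentations.

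Next, I would verify canonicity by showing independence of the lift: if $G'$ is another lift of $G_0$ to $\Spec(A)$, the identity on $G_0$ extends uniquely via the operator $E_S(-)$ recalled above to a morphism $\Ex(G) \to \Ex(G')$ that is an isomorphism after taking Lie algebras of the formal completions, so the sequence is intrinsically associated with $G_0$. Functoriality of the sequence in $G_0$ follows from the same lifting principle applied to arbitrary homomorphisms.

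For the second statement, the starting point is that when $p$ is locally nilpotent on the base, the formal completion of the abelian scheme $A$ along its identity coincides with that of its p-divisible group $A[p^{\infty}]$, so they share the same tangent space and the same invariant differentials; in particular $V(A) = V(A[p^{\infty}])$. Since $A$ itself admits a universal vector extension $0 \to V(A) \to \Ex(A) \to A \to 0$, one obtains a canonical isomorphism $\Lie(\hat{\Ex(A[p^{\infty}])}) \cong \Lie(\Ex(A))$ reducing the problem to identifying $\Lie(\Ex(A))$ with $\operatorname{H^{dR}_1}(A)$. This is done by interpreting $\operatorname{H^1_{dR}}(A)$ through the hypercohomology of the de Rham complex as $\Ext^1(A,\G_a)$, dualizing, and using the universal property of $\Ex(A)$; under this identification the tautological subobject $V(A) = \Lie(A\ve)\ve$ maps onto $\Fil^1 \operatorname{H^1_{dR}}(A)\ve$, which is precisely the Hodge filtration piece $\Lie(A\ve)\ve$.

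The main obstacle is the last step, namely the identification $\Lie(\Ex(A)) \cong \operatorname{H^{dR}_1}(A)$ with compatible filtrations. The construction of the exact sequence in Steps~1--2 is essentially a direct unwinding of the universal vector extension and the crystal property; by contrast, matching the exponential-based construction of $\D$ with coherent hypercohomology of the de Rham complex requires the nontrivial comparison between Yoneda extension classes of $A$ by $\G_a$ and hypercohomology classes, and carefully tracking how the filtration by the vector part corresponds to the Hodge filtration across this equivalence.
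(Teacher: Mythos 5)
The paper does not prove this statement itself (it is quoted from Messing, with the abelian-scheme comparison going back to Mazur--Messing), so your proposal has to be judged against that standard route, which it follows. The first half is fine and is exactly the intended argument: by \cref{def: three crystals}, $\D(G_0)_{\Spec(A_0)\into\Spec(A)}=\Lie(\hat{\Ex(G)})$ for the given lift $G$, the vector part of the universal extension of a p-divisible group is $\omega_{G\ve}=\Lie(G\ve)\ve$, and applying $\Lie$ to $0\to V(G)\to\Ex(G)\to G\to 0$ (exact since all terms are formally smooth and $\Lie$ is exact there) gives \ref{1.6.2}; independence of the lift is Messing's rigidity statement $E_S(-)$ recalled just before the theorem.

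The second half, however, has two genuine gaps. First, the identification you lean on, ``$\operatorname{H^1_{dR}}(A)\cong\Ext^1(A,\G_a)$ via hypercohomology,'' is false: $\Ext^1(A,\G_a)\cong H^1(A,\cO_A)$, which is only the quotient graded piece of de Rham cohomology, and dualizing it recovers only the subobject $V(A)=\Lie(A\ve)\ve$, never the full middle term $\Lie(\Ex(A))$. The mechanism that actually works (Mazur--Messing) is that $\Ex(A)$ represents the functor of \emph{rigidified} ($\natural$-)extensions of $A\ve$ by $\G_m$, i.e.\ extensions equipped with a connection; its Lie algebra is then the group of $\natural$-extensions by $\G_a$, computed by the hypercohomology $\mathbb{H}^1(A\ve,\cO_{A\ve}\to\Omega^1_{A\ve})=\operatorname{H^1_{dR}}(A\ve)=\operatorname{H^{dR}_1}(A)$, and under this identification $V(A)=\omega_{A\ve}$ is precisely the Hodge filtration step. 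Without the connection data your universal-property argument only sees $\Hom(V(A),\G_a)\cong\Ext^1(A,\G_a)$, i.e.\ the sub, not the extension. Second, the passage from ``$A$ and $A[p^{\infty}]$ have the same formal completion, hence the same $\Lie$ and the same $V$'' to ``$\Lie(\hat{\Ex(A[p^{\infty}])})\cong\Lie(\Ex(A))$'' is not a proof: two extensions with the same sub and the same quotient need not be canonically isomorphic. The correct argument is that $\Ex(A)\times_A A[p^{\infty}]$ \emph{is} the universal vector extension of $A[p^{\infty}]$ (using $\ihom(A[p^{\infty}],\G_a)=0$ and the comparison of the relevant $\iext^1$'s), after which the agreement of infinitesimal neighbourhoods of the identity gives $\hat{\Ex(A[p^{\infty}])}\cong\hat{\Ex(A)}$ and hence equality of Lie algebras. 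With these two repairs your outline becomes the standard Mazur--Messing proof, including the filtration compatibility.
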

\begin{remark}\label{crystal on crystalline site k}
Let $k$ be a perfect field of characteristic $p$ and $G$ a p-divisible group over $k$. For $p\geq 3$ we define $\operatorname{W}_n:=\W(k)/p^n\W(k)$, with a surjective homomorphism $\operatorname{W}_n\to \W(k)/p\W(k)=k$ giving a nilpotent immersion $\Spec k\into\Spec W_n$ with nilpotent DP structure on the ideal $p\W(k)/p^n\W(k)$. The Grothendieck-Messing crystal recovers the classical Dieudonn\'e theory by 
\begin{equation}\label{1.6.3}
\D(G)=\lim\D(G\ve)_{(\Spec k\into \Spec\W_n)},
\end{equation}
where $\D(G)$ is Dieudonn\'e module associated to $G$ via the Dieudonn\'e functor as described in \cref{Theorem anti-equivalence dieudonne module and p-divisbible group}. If $p=2$, we take $\W_n:=\W(k)/4^n\W(k)$.
\end{remark}

\section{Foundations of p-adic Hodge theory}
P-adic Hodge theory is a powerful and intricate branch of arithmetic geometry that seeks to understand the relationship between p-adic representations and the various cohomological theories associated with algebraic varieties over p-adic fields. Originating from the pioneering work of John Tate, Jean-Marc Fontaine, Pierre Colmez, and others, p-adic Hodge theory provides profound insights into the structure of p-adic Galois representations by connecting them to more classical objects in Hodge theory and algebraic geometry. One of the fundamental question that p-adic Hodge theory aims to find an answer to is which p-adic representations come from geometry. At its core, p-adic Hodge theory explores the ways in which p-adic analytic methods can be employed to study the cohomology of algebraic varieties. A central theme in p-adic Hodge theory is the comparison theorems, which establish deep connections between different cohomology theories.

In this section, we will highlight the basic concepts that will be used throughout the dissertation. In this section we set $K$ to be a a discrete-valued and complete extension of $\Q_p$ with ring of integers $\cO_K$ and prefect residue field $k$. We denote the Witt vectors of $k$ by $\W(k)$, and its fraction field by $K_0$. The completion of the algebraic closure of $K$ is denoted by $\C_p:=\hat{\bar{K}}$. The absolute Galois group of $K$ is denoted by $\Gamma_K$. We denote by $\Rep(\Gamma_K)$ the category of finite dimensional p-adic $\Gamma_K$-representations over $\Q_p$.
\begin{defn}
Let $M$ be a $\Z_p[\Gamma_K]$-module. We define $n$-th Tate twist of $M$ to be the $\Z_p[\Gamma_K]$-module
$$M(n):=\begin{cases}
        M\otimes_{\Z_p[\Gamma_k]}\Tp(\mu_{p^{\infty}})^{\otimes n} &  n\geq 0\\
        \Hom_{\Z_p[\Gamma_k]}(\Tp(\mu_{p^{\infty}}^{\otimes -n}),M) & n<0.
    \end{cases}
$$
The homomorphism $\chi\colon\Gamma_K\to \Aut(\Z_p(1))\cong\Z^{\times}_p$ which represents the action of $\Gamma_K$ on $\Z_p(1)$ is called the p-adic cyclotomic character of $K$.
\end{defn}
\begin{prop}
Let $M$ be a $\Z_p[\Gamma_K]$-module.
\begin{enumerate}
    \item We have canonical $\Gamma_K$-equivariant isomorphisms
    $$M(m)\otimes_{\Z_p}\Z_p(n)\cong M(m+n)$$ and $$M(n)\ve\cong M\ve(-n)$$
    for each $m,n\in\Z$.
    \item If $\rho$ is the action of $\Gamma_K$ on $M$, then $\rho\otimes\chi^n$ represents the action of $\Gamma_K$ on $M(n)$ for each $n$.
\end{enumerate}
\end{prop}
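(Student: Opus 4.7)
The plan is to treat everything as a formal consequence of the fact that $L := T_p(\mu_{p^\infty}) = \Z_p(1)$ is a free $\Z_p$-module of rank one on which $\Gamma_K$ acts by the cyclotomic character $\chi$, together with the standard identifications available for a rank one free module. I would proceed by case analysis on the signs of the integers $m,n$, reducing every claim to the single canonical identity
\[
L^{\otimes n} \otimes_{\Z_p} \Hom_{\Z_p}(L^{\otimes n},\Z_p) \;\cong\; \Z_p
\]
given by evaluation (which is $\Gamma_K$-equivariant since the actions of $\Gamma_K$ on the two factors, by $\chi^n$ and $\chi^{-n}$, cancel).

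For the first isomorphism of part (1), namely $M(m)\otimes_{\Z_p}\Z_p(n)\cong M(m+n)$, the case $m,n\geq 0$ is immediate from associativity/commutativity of the tensor product and the definition $M(k)=M\otimes L^{\otimes k}$. The case $m,n\leq 0$ uses the adjunction $\Hom_{\Z_p}(A,M)\otimes_{\Z_p}\Hom_{\Z_p}(B,\Z_p)\cong \Hom_{\Z_p}(A\otimes B, M)$, valid because $A$ and $B$ are finite free, to rewrite
\[
\Hom_{\Z_p}(L^{\otimes -m},M)\otimes_{\Z_p}\Hom_{\Z_p}(L^{\otimes -n},\Z_p)\;\cong\;\Hom_{\Z_p}(L^{\otimes -(m+n)},M).
\]
For mixed signs, say $m\geq 0$ and $n< 0$ with $m+n\geq 0$, I would use the evaluation identity above to contract $L^{\otimes m}\otimes\Hom_{\Z_p}(L^{\otimes -n},\Z_p)$ down to $L^{\otimes (m+n)}$; the remaining subcase $m+n<0$ is dual. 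Each contraction is canonical and Galois-equivariant because $\Gamma_K$ acts on both sides of the evaluation pairing by mutually inverse characters.

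For the second isomorphism $M(n)^\vee\cong M^\vee(-n)$ of part (1), the case $n\geq 0$ is the adjunction
\[
\Hom_{\Z_p}(M\otimes L^{\otimes n},\Z_p)\;\cong\;\Hom_{\Z_p}(L^{\otimes n},\Hom_{\Z_p}(M,\Z_p))\;=\;M^\vee(-n),
\]
again canonical and $\Gamma_K$-equivariant. The case $n<0$ follows by the same adjunction together with the evaluation identity to identify $\Hom_{\Z_p}(\Hom_{\Z_p}(L^{\otimes -n},\Z_p),\Z_p)$ with $L^{\otimes -n}$.

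Part (2) is then essentially a corollary: by definition $\Gamma_K$ acts on $L = \Z_p(1)$ by $\chi$, hence on $L^{\otimes n}=\Z_p(n)$ by $\chi^n$ for $n\geq 0$ and on $\Hom_{\Z_p}(L^{\otimes -n},\Z_p)=\Z_p(n)$ by $\chi^n$ for $n<0$ (this is where the sign in the definition of $M(n)$ for $n<0$ is chosen precisely so that $\chi^n$ governs the action uniformly). Since $M(n)=M\otimes_{\Z_p}\Z_p(n)$ up to canonical isomorphism by part (1) applied with $m=0$, the action of $\Gamma_K$ on $M(n)$ is $\rho\otimes\chi^n$. The only mild obstacle is bookkeeping across the sign cases when verifying canonicality and compatibility with the Galois action; once the single evaluation isomorphism is checked to be Galois-equivariant, everything else reduces to standard tensor-Hom manipulations with finite free modules.
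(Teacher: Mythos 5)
Your proof is correct. Note that the paper states this proposition without proof, treating it as a standard fact about Tate twists, so there is no argument to compare against; your reduction of every case to the Galois-equivariant evaluation pairing $\Z_p(1)^{\otimes n}\otimes_{\Z_p}\Hom_{\Z_p}(\Z_p(1)^{\otimes n},\Z_p)\cong\Z_p$ together with tensor--Hom adjunction for finite free $\Z_p$-modules is exactly the routine verification being omitted, and your sign bookkeeping (including reading the paper's $\Hom_{\Z_p[\Gamma_K]}$ and $\otimes_{\Z_p[\Gamma_K]}$ in the definition of $M(n)$ as $\Hom_{\Z_p}$ and $\otimes_{\Z_p}$ with the conjugation, respectively diagonal, Galois action, which is the only reading under which part (2) makes sense) is sound.
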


\begin{defn}
    Let $B$ be a $\Q_p$-algebra with an action of $\Gamma_K$, and $F$ a p-adic field. We say that $B$ is $F$-regular if
    \begin{enumerate}
    \item[(I)] $B^{\Gamma_K}=C^{\Gamma_K}$, where $C$ is the fraction field of $B$, endowed with a natural action of $\Gamma_K$ which extends the action on $B$.
    \item[(II)] $b\in B$ is unit if the set $F.b=\{c.b\mid c\in F\}$ is stable under the action of $\Gamma_K$.
    \end{enumerate}
\end{defn}
\begin{defn}
Assume that $B$ is a regular $\Q_p$-algebra. Let $E:=B^{\Gamma_K}$, and let $\Vect(E)$ denote the category of finite-dimensional vector spaces over $E$, and $\operatorname{Rep_F}(\Gamma_K)$ the category of $F$-linear $\Gamma_K$-representations of finite type.
\begin{enumerate}
    \item We define the functor $D_B\colon \operatorname{Rep_F}(\Gamma_K)\to \Vect(E)$ by 
    $$D_B(V):=(V\otimes_{F}B)^{\Gamma_K}\,\,\, \text{for every }V\in \operatorname{Rep_F}(\Gamma_K)$$
    \item We say $V\in \operatorname{Rep_F}(\Gamma_K)$ is $B$-admissible if $\dim_E D_{B}(V)=\dim_{F}V$.
    \item We denote the category of $B$-admissible $p$-adic representations by $\operatorname{Rep_{F}^B}(\Gamma_K)$.
\end{enumerate}
\end{defn}
When $B$ is regular, $\dim_E D_B(V)\leq\dim_F(V)$ with equality if and only if the induced map $D_B(V)\otimes_E B\to V\otimes_{F}B$ is an isomorphism.
\subsection*{Prefectoid fields and tilt}
\textbf{References:} \cite{brinon2009cmi}, \cite{scholze2012perfectoid}
\begin{defn}
Let $C$ be a complete nonarchimedean field with residue field of characteristic $p$. We say that $C$ is a perfectoid field if it satisfies the following conditions:
\begin{enumerate}
    \item[(I)] The valuation on $C$ is nondiscrete.
    \item[(II)] The $p$-th power map on $\cO_C/p\cO_C$ is surjective.
\end{enumerate}
\end{defn}
\begin{prop}
Let $C$ be a complete nonarchimedean field of residue characteristic $p$. Assume that the $p$-th power map is surjective on $C$, then, $C$ is a perfectoid field. In particular, a nonarchimedean field of characteristic $p$ is perfectoid if and only if it is complete and perfect.
\end{prop}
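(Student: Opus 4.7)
The plan is to verify the two defining conditions of a perfectoid field—(I) nondiscrete valuation and (II) surjectivity of the $p$-th power map on $\cO_C/p\cO_C$—directly from the hypothesis that the $p$-th power map is surjective on $C$, and then specialize to derive the equivalence in characteristic $p$.

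For (II), the key observation I would make first is that the hypothesis actually upgrades to surjectivity of the $p$-th power map on the valuation ring $\cO_C$ itself: given $x \in \cO_C$ and $y \in C$ with $y^p = x$, the multiplicativity of the norm gives $|y|^p = |x| \leq 1$, hence $|y| \leq 1$ and $y \in \cO_C$. Reducing modulo $p\cO_C$ immediately yields (II), at no extra cost.

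For (I), I would argue by contradiction: if the valuation $v$ on $C$ were discrete with uniformizer $\pi$, pick $y \in C^{\times}$ with $y^p = \pi$; then $p \cdot v(y) = v(\pi)$, which is impossible in the cyclic value group $\Z \cdot v(\pi)$ since $p \geq 2$. This is the only step with genuine content, and it is essentially a one-line argument. The only minor care needed is to rule out the trivially valued case, which is excluded by the standing assumption that $C$ is a nontrivial nonarchimedean field. I do not expect any real obstacle here; the main subtlety of the proposition lies in having identified the right one-line reason.

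For the "in particular" clause, characteristic $p$ collapses (II) to the surjectivity of Frobenius on $\cO_C$, since $p\cO_C = 0$ forces $\cO_C/p\cO_C = \cO_C$. A short standard argument (given $x \in C^{\times}$, multiply by a suitable element of $\cO_C$ of positive valuation, extract a $p$-th root in $\cO_C$, and divide out) shows that surjectivity of Frobenius on $\cO_C$ is equivalent to surjectivity on all of $C$, i.e.\ to $C$ being perfect. The "if" direction of the equivalence then follows from the first half of the proposition, while the "only if" direction is immediate from the definition of a perfectoid field. The mild care here is again ruling out the trivial valuation case so that "nondiscrete" is non-vacuous; apart from that I foresee no difficulties, since the "in particular" part is essentially a direct corollary of the first half combined with the collapse $\cO_C/p\cO_C = \cO_C$ in characteristic $p$.
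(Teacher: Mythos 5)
Your proof is correct. The paper itself states this proposition as background without proof (deferring to \cite{brinon2009cmi} and \cite{scholze2012perfectoid}), and your argument is exactly the standard one from those sources: surjectivity of $x\mapsto x^p$ on $C$ upgrades, via multiplicativity of the absolute value, to surjectivity on $\cO_C$, giving condition (II), while the value-group computation $p\,v(y)=v(\pi)$ rules out a discrete valuation, giving (I); the characteristic-$p$ equivalence then follows from $p\cO_C=0$ together with the divide-out trick $x=(y\,t^{-m})^p$, with the trivial-valuation convention handled as you note.
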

\begin{defn}
We define the tilt of $C$ by 
$C^{\flat}:=\lim_{x\mapsto x^p}C$ equipped with the natural multiplication.
For every $c=(c_n)\in C^{\flat}$, we write $c^{\sharp}:=c_0$.
\end{defn}
For every element $\varpi\in C^{\times}$ with $0<\nu(\varpi)\leq\nu(p)$, we have a natural multiplicative bijection 
    $$\lim_{x\mapsto x^p}\cO_{C}\cong \lim_{x\mapsto x^p}\cO_C/\varpi\cO_C.$$
The ring structure on $\cO_C/\varpi\cO_C$ induces a ring structure on $\lim_{x\mapsto x^p}\cO_{C}$. We define the tilt ring to be $\cO_{C^{\flat}}:=\lim_{x\mapsto x^p}\cO_{C}$ that does not depend on the choice of $\varpi$. This becomes naturally a complete valuation ring with fraction field $C^{\flat}$ of characteristic $p$ and valuation $\nu^{\flat}$ given by $\nu^{\flat}(c):=\nu(c^{\sharp})$. Moreover, $C^{\flat}$ is a prefectoid field of characteristic $p$.

\begin{example}
Let $\hat{\Q_p(p^{1/p^{\infty}})}$ be the completion of $\bigcup_{n\geq 1}\Q_p(p^{1/p^n})$. Then, $\hat{\Q_p(p^{1/p^{\infty}})}$ is a perfectoid field and the argument below shows that its tilt is isomorphic to $\hat{\F_p(u^{1/p^{\infty}})}$ which is the $u$-adic completion of the perfection of the polynomial ring $\F_p(u)$.
$$\lim_{x\mapsto x^p}\hat{\Z_p[1/p^{\infty}]/p}\cong\lim_{x\mapsto x^p}\Z_p[1/p^{\infty}]/p\cong\lim_{x\mapsto x^p}\F_p[u^{1/p^{\infty}}]/u\cong \hat{\F_p[u^{1/p^{\infty}}]}.$$

Similar argument shows that the completion of $\Q_p(\mu_{p^{\infty}})=\bigcup_{n\geq 1}\Q_p(\zeta_{p^n})$ is also a prefectoid field whose tilt is isomorphic to $\hat{\F_p(u^{1/p^{\infty}})}$.
\end{example}
According to the above example, the tilting functor is not fully faithful on the category of prefectoid fields over $\Q_p$. However, Peter Scholze showed that for every perfectoid field $C$, the tilting functor induces an equivalence between the category of prefectoid fields over $C$ and the category of prefectoid fields over $C^{\flat}$ (\cite{scholze2012perfectoid}).

\subsection*{Period rings}\label{sec: period rings}
\textbf{References:} \cite{brinon2009cmi}, \cite{caruso2019p-adicperiod}, \cite{szamuely2016p-adicHodgeAccordingtoBeilinson}, \cite{fontaine2008theory}\\

Let $K$ be a local field with prefect residue field $k$ of characteristic $p$. Let $\C_p$ be the completion of the algebraic closure of $K$. We also fix the valuation $\nu$ on $\C_p$ with $\nu(p)=1$ inducing the valuation $v^{\flat}$ on $\C^{\flat}_p$ given by $\nu^{\flat}(c)=\nu(c^{\sharp})$ for any $c\in \C^{\flat}_{p}$.
\begin{defn}
We define the infinitesimal period ring, denoted by $\Ainf$, to be the ring of Witt vectors over $\cO_{\C^{\flat}_{p}}$, $\Ainf:=\W(\cO_{\C^{\flat}_{p}})$. For any $c\in \cO_{\C^{\flat}_{p}}$, we write $[c]$ for its Teichm\"uller lift in $\Ainf$.
\end{defn}

There exists a surjective ring homomorphism $\theta\colon\Ainf\to\cO_{\C_p}$ with 
$$\theta(\sum^{\infty}_{n=0}[c_n]p^n)=\sum^{\infty}_{n=0}c^{\sharp}_np^n$$
for all $c_n\in \cO_{\C^{\flat}_{p}}$. Let $\theta[1/p]\colon \Ainf[1/p]\to\C_p$ be the induced map on $\Ainf[1/p]$. Choose an element $p^{\flat}\in \cO_{\C^{\flat}_{p}}$ such that $(p^{\flat})^{\sharp}=p$, and set $\xi:=[p^{\flat}]-p\in\Ainf$.

We define the de Rham local ring by 
$$\BdRp:=\lim_i\Ainf[1/p]/\ker(\theta[1/p])^i,$$
and denote by $\theta^+_{\text{dR}}$ by the natural projection $\BdRp\onto\Ainf[1/p]/\ker(\theta[1/p])$.
We also define the de Rham period ring $\BdR$ as the fraction field of $\BdRp$.

We can observe that both $\ker(\theta)$ and $\ker(\theta[1/p])$ are generated by $\xi$ as ideals in the ring $\Ainf$ and $\Ainf[1/p]$, respectively. Moreover, the natural map $\Ainf[1/p]\to \BdRp$ is injective. Therefore, we can canonically identify $\Ainf[1/p]$ as a subring of $\BdRp$. 

    The ring $\BdRp$ is a complete discrete valuation ring with maximal ideal $\ker(\theta^+_{\text{dR}})$ and residue field $\C_p$. In addition, $\xi$ is a uniformizer of $\BdRp$. We have a filtered ring structure $\{\xi^n\BdRp\}_{n\in\Z}$. This filtration does not depend on the choice of the uniformizer. In fact, $\xi^n\BdRp=\ker(\theta^+_{\text{dR}})^n$ for each $n\in\Z$. 

    Let $K_0$ be the fraction field of $\W(k)$. The field $K$ is a finite totally ramified extension of $K_0$ and there is a natural commutative diagram
    \begin{equation*}
        \begin{tikzcd}
K_0 \arrow[r, hook] \arrow[d] & \bar{K} \arrow[r, hook] \arrow[d, hook] & \C_p \\
{\Ainf[1/p]} \arrow[r, hook]  & \BdRp \arrow[ru, two heads]             &     
\end{tikzcd}
    \end{equation*}

    There exists a continuous map $\log:\Z_p(1)\to\BdRp$ given by 
    $$\log(\epsilon)=\sum^{\infty}_{n=1}(-1)^{n+1}\frac{([\epsilon]-1)^n}{n}$$
    for every $\epsilon\in \Z_p(1)=\Tp\mu_{p^{\infty}}=\lim\mu_{p^n}(\bar{K})=\{\epsilon\in \cO_{\C^{\flat}_p}:\epsilon^{\sharp}=1\}$. The power series $\sum^{\infty}_{n=1}(-1)^{n+1}\frac{([\epsilon]-1)^n}{n}$ converges with respect to discrete valuation topology on $\BdRp$. Fix a $\Z_p$-basis $\epsilon\in\Z_p(1)$ and let $t:=\log(\epsilon)$. The element $t\in\BdRp$ is a uniformizer.

\begin{defn}
We define $\BHT:=\bigoplus_{i\in\Z}\C_p(i)$. The ring $\BHT$ is regular. We say that the p-adic Galois representation $V$ is Hodge-Tate if $V$ is $\BHT$-admissible. 

Assume that $V$ is a Hodge-Tate representation. This implies that the natural map $\DBHT(V)\otimes_K \C_p\to V\otimes_{\Q_p}\C_p$ is an isomorphism. As $V$ is finite dimensional, there are finitely many $i$ such that $(V\otimes_{\Q_p}\C_p(i))^{\Gamma_{K}}\neq 0$. The Hodge-Tate weights of $V$ are the $i$ for which $(V\otimes_{\Q_p}\C_p(i))^{\Gamma_{K}}\neq 0$ and the multiplicity of the weight $i$ is $\dim_K((V\otimes_{\Q_p}\C_p(i))^{\Gamma_{K}})$.
\end{defn}
\begin{remark}
The natural action of $\Gamma_K$ on $\BdR$ satisfies the following properties:
\begin{enumerate}
    \item The $log$ and $\theta^+_{\text{dR}}$ are $\Gamma_K$-equivariant.
    \item For every $\gamma\in\Gamma_K$ we have $\gamma(t)=\chi(\gamma)t$.
    \item Each $t^n\BdRp$ is stable under the action of $\Gamma_K$.
    \item We have natural $\Gamma_K$-equivariant isomorphisms $$\BdRp/\ker(\theta^+_{\text{dR}})\cong\BdRp/t\BdRp\cong\Ainf[1/p]/\ker(\theta[1/p])\cong\C_p$$ and $$ \ker(\theta^+_{\text{dR}})^n/\ker(\theta^+_{\text{dR}})^{n+1}\cong t^n\BdRp/t^{n+1}\BdRp\cong\C_p(n) \text{ for all }n\in\Z.$$ 
    \item There exists a natural $\Gamma_K$-equivariant isomorphism of graded $K$-algebras $$\gr(\BdR)=\bigoplus_{n\in\Z}t^n\BdRp/t^{n+1}\BdRp\cong\bigoplus_{n\in\Z}\C_p(n)=\BHT.$$
    \item $\operatorname{B^{\Gamma_K}_{dR}}=(\operatorname{B^{+}_{dR}})^{\Gamma_K}\cong K$, and $\BdR$ is regular.
\end{enumerate}
 
\end{remark}

\begin{defn}\label{def B2}
    We define the rings $$\Bt:=\frac{\Ainf[1/p]}{(\ker\theta[1/p])^2}\cong \frac{\BdRp}{t^2\BdRp} \text{ and }\At:=\frac{\Ainf}{(\ker\theta)^2}.$$
The ring $\At$ embeds into $\Bt$, as $\Ainf$ has no $p$-power torsion, and we have $\Bt=\At[1/p]$.
\end{defn}

\begin{defn}
We define the integral crystalline period ring by
$$\Acris:=\big\{\sum^{\infty}_{n=0}a_n\frac{\xi^n}{n!}\in\BdRp:a_n\in\Ainf \text{ with } \operatorname{lim}_{n\to\infty}a_n=0 \big\}.$$
We write $\Bcrisp:=\Acris[1/p]$.
\end{defn}
\begin{prop}(\cite[\S 7.1]{fontaine2008theory})
Let $\Aocris$ be the divided power envelope of $\Ainf$ with respect to $\ker\theta$, that is, by adding all elements $\gamma_n(a):=\frac{a^n}{n!}$ for all $a\in \ker\theta$ and $n\in\N$. Then the p-adic completion of $\Aocris$, $\lim \Aocris/p^n\Aocris$, is canonically isomorphic to $\Acris$.
\end{prop}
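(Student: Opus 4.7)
The plan is to identify both $\Aocris$ and $\Acris$ as subrings of $\BdRp$ and then to show that $\Acris$ is obtained from $\Aocris$ by completing with respect to the $p$-adic topology. Because $\ker\theta\subset\Ainf$ is the principal ideal generated by $\xi=[p^{\flat}]-p$ (as recalled in the excerpt), the universal property of the divided power envelope for a principal ideal gives an explicit description: $\Aocris$ is the $\Ainf$-subalgebra of $\Ainf[1/p]$ generated by the symbols $\xi^{[n]}:=\xi^n/n!$, $n\geq 1$, subject to $\xi^{[n]}\xi^{[m]}=\binom{n+m}{n}\xi^{[n+m]}$. Since $\BdRp$ is a DVR of residue field $\C_p$ of characteristic $0$, the integer $p$ is a unit in $\BdRp$, so the elements $\xi^n/n!$ are well-defined there, and the composite $\Aocris\to\Ainf[1/p]\hookrightarrow\BdRp$ is a ring injection. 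In particular $\Aocris$ is $p$-torsion free and every element has the form of a \emph{finite} sum $\sum_{n<N}a_n\xi^{[n]}$ with $a_n\in\Ainf$.

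Next I would define the map $\iota:\Aocris\to\Acris$ as the inclusion that views a finite sum $\sum_{n<N}a_n\xi^{[n]}$ as a series with $a_n=0$ for $n\geq N$. To show that $\iota$ exhibits $\Acris$ as the $p$-adic completion of $\Aocris$, the first substep is density: given $b=\sum_{n\geq 0}a_n\xi^{[n]}\in\Acris$ with $a_n\to 0$ in the $p$-adic topology of $\Ainf$, the partial sums $b_N=\sum_{n<N}a_n\xi^{[n]}\in\Aocris$ satisfy $b-b_N=\sum_{n\geq N}a_n\xi^{[n]}$, and for any $k\geq 1$ one can choose $N$ so large that $p^k\mid a_n$ in $\Ainf$ for all $n\geq N$; then $b-b_N=p^k\sum_{n\geq N}(a_n/p^k)\xi^{[n]}\in p^k\Acris$. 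Hence the partial sums form a $p$-adic Cauchy sequence in $\Aocris$ converging to $b$, and $\iota$ extends to a surjection $\widehat{\Aocris}\twoheadrightarrow\Acris$.

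The main obstacle, and the technical heart of the proposition, is injectivity of this extension, i.e.\ showing that the $p$-adic topology on $\Aocris$ inherited from $\Acris$ coincides with its intrinsic $p$-adic topology; equivalently, \[p^n\Acris\cap\Aocris=p^n\Aocris\quad\text{inside }\BdRp.\] The inclusion $\supseteq$ is clear. For $\subseteq$, suppose $x=\sum_{m<M}c_m\xi^{[m]}\in\Aocris$ can also be written as $p^n\sum_{m\geq 0}d_m\xi^{[m]}$ with $d_m\in\Ainf$ and $d_m\to 0$ $p$-adically. Working modulo the filtration by powers of $\xi$ (or equivalently using the $\BdRp$-valuation to separate the contributions of each $\xi^{[m]}$), one reads off $c_m=p^n d_m$ in $\Ainf$ for $m<M$, and the tail contribution $p^n\sum_{m\geq M}d_m\xi^{[m]}$ must vanish; both conclusions use that the $\xi^{[m]}$ are $\Ainf[1/p]$-linearly independent in $\BdRp$. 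This yields $c_m\in p^n\Ainf$ for every $m<M$, so $x\in p^n\Aocris$.

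Once the two topologies on $\Aocris$ match, the map $\iota$ extends to a continuous ring isomorphism $\widehat{\Aocris}\xrightarrow{\sim}\Acris$, because $\Acris$ is $p$-adically complete (each series $\sum a_n\xi^{[n]}$ with $a_n\to 0$ is literally its own limit of partial sums) and $\Aocris$ is dense in it. The canonical nature of the isomorphism follows from the universal property of the DP envelope and the fact that no choices were made beyond the generator $\xi$ of $\ker\theta$, which does not affect the resulting DP structure.
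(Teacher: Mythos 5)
Since the paper only cites Fontaine for this proposition and supplies no proof of its own, your proposal has to stand on its own merits, and unfortunately its central step fails. The skeleton (embed $\Aocris$ into $\BdRp$, show the partial sums are $p$-adically dense in $\Acris$, compare the induced and intrinsic topologies, conclude via completeness) is a reasonable plan, and the density argument is fine. But the decisive step, the equality $p^n\Acris\cap\Aocris=p^n\Aocris$, is justified by the claim that the divided powers $\xi^{[m]}=\xi^m/m!$ are $\Ainf[1/p]$-linearly independent in $\BdRp$, and this is false: each $\xi^{[m]}$ already lies in $\Ainf[1/p]$ (as $m!$ is invertible there), so they cannot be linearly independent over that ring; concretely $\xi\cdot\xi^{[0]}-1\cdot\xi^{[1]}=0$ and $\xi\cdot\xi^{[1]}-2\,\xi^{[2]}=0$ are nontrivial relations with coefficients even in $\Ainf$. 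Consequently the representation of an element of $\Aocris$ (let alone of $\Acris$) as $\sum_m a_m\xi^{[m]}$ is highly non-unique, and you cannot ``read off'' $c_m=p^nd_m$ or force the tail to vanish; the variant using the $\xi$-adic filtration of $\BdRp$ fails for the same reason, since a coefficient $a_m\in\Ainf$ may itself be divisible by a large power of $\xi$, so valuations do not separate the terms. This topology comparison (equivalently, injectivity of the map from the $p$-adic completion of $\Aocris$ into $\BdRp$) is exactly the nontrivial content of the proposition; in the literature it is obtained from a structural description of $\Aocris/p^n$ (for instance of $\Aocris/p$ as a divided-power polynomial algebra over $\Ainf/(p,\xi^p)$), not from comparing coefficients inside $\BdRp$.

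Two further points need attention. First, the $p$-adic completeness and separatedness of $\Acris$ as you define it (the series subring of $\BdRp$) are asserted, not proved: the parenthetical ``each series is its own limit of partial sums'' shows that certain sequences converge, not that every $p$-adic Cauchy sequence in $\Acris$ converges in $\Acris$, nor that $\bigcap_n p^n\Acris=0$; separatedness certainly cannot be inherited from $\BdRp$, where $p$ is a unit so $p^n\BdRp=\BdRp$ for all $n$. Second, the identification of the PD envelope $\Aocris$ with the subring of $\Ainf[1/p]$ generated by the $\xi^n/n!$ should at least invoke that $\ker\theta$ is generated by the nonzerodivisor $\xi$ and that $\Ainf/\xi\cong\ocp$ is $p$-torsion free; without such input the envelope could a priori have torsion that your embedding silently kills. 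As it stands, the proof has a genuine gap at precisely the point that makes the statement non-trivial.
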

We have $t\in\Acris$ and $t^{p-1}\in p\Acris$. Moreover, we have an identification $\Bcrisp[1/t]=\Acris[1/t]$. We define the crystalline period ring by 
    $$\Bcris:=\Bcrisp[1/t]=\Acris[1/t].$$

   By construction we have  $$\Ainf[1/p]\subseteq\Acris[1/p]=\Bcrisp\subseteq\Bcris\subseteq\BdR.$$
We have the following universal property for $\Acris$:
The map $\Acris\to\cO_{\C_p}$ is a universal p-adically complete divided power thickening of $\cO_{\C_p}$. This means that for any separated, complete, p-adic valuation ring $A$, and for any continuous surjective ring map $\alpha\colon A\to\cO_{\C_p}$ whose kernel has P.D structure compatible with the one on $pA$, there exists a unique homomorphism $\lambda_{\alpha}\colon\Acris\to A$ such that the diagram 
\begin{equation*}
\begin{tikzcd}
\Acris \arrow[rd, "\theta", two heads] \arrow[rr, "\lambda_{\alpha}", dotted] &            & A \arrow[ld, "\alpha", two heads] \\
                                                                              & \cO_{\C_p} &                                  
\end{tikzcd}
\end{equation*}
commutes.

\begin{remark}
$\Acris$ is not regular. However, $\Bcris$ is naturally a filtered subalgebra of $\BdR$ over $K_0$ with filtration $\Fil^n(\Bcris):=\Bcris\cap t^n\BdRp$ which is table under the action of $\Gamma_K$. Moreover, we have canonical isomorphisms $\gr(\Bcris\otimes_{K_0}K)\cong\gr(\BdR)\cong\BHT$ and $(\Bcris)^{\Gamma_K}\cong K_0$. Hence, $\Bcris$ is regular.

Let $V\in\Rep(\Gamma_K)$. Then $\DdR(V)$ is naturally a filtered vector space over $K$ with the filtration 
\begin{equation}\label{filtration on DdR(V)}
\Fil^n(\DdR(V)):=(V\otimes_{\Q_p}\Fil^n(\BdR))^{\Gamma_K}.    
\end{equation}
If $V\in\Rep(\Gamma_K)$, then $\Dcris(V)$ is naturally a filtered isocrystal over $K$ with the Frobenius automorphism $1\otimes\varphi$ and the filtration on $\Dcris(V)_K=\Dcris(V)\otimes_{K_0}K$ given by $$\Fil^n(\Dcris(V)_K):=(V\otimes_{\Q_p}\Fil^n(\Bcris\otimes_{K_0}K))^{\Gamma_K}.$$
\end{remark}

\begin{prop}[\cite{fontaine1994corps},\cite{injectivfroben-Conrad}]
The Frobenius automorphism of $\Ainf$ naturally extends to a $\Gamma_K$-equivariant endomorphism $\varphi_{cris}$ on $\Bcris$ with $\varphi_{cris}(t)=pt$. The Frobenius $\varphi_{cris}$ of $\Bcris $ is injective.
\end{prop}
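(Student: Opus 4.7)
My plan is to extend $\varphi$ along the chain $\Ainf \subset \Ainf[1/p] \subset \Acris \subset \Bcrisp \subset \Bcris$ and then reduce the injectivity claim to an analysis modulo $p$ on $\Acris$.

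The Frobenius on $\Ainf = \W(\cO_{\C_p^{\flat}})$ is the Witt vector functor applied to the $p$-th power endomorphism of the perfect ring $\cO_{\C_p^{\flat}}$, explicitly $\varphi\bigl(\sum_n [c_n]p^n\bigr) = \sum_n [c_n^p]p^n$; it extends trivially to $\Ainf[1/p]$, and $\Gamma_K$-equivariance follows from the functoriality of $\W(-)$ and the fact that $\Gamma_K$ acts on $\cO_{\C_p^{\flat}}$ compatibly with raising to the $p$-th power. To push $\varphi$ onto $\Acris$ I would use the description of $\Acris$ as the $p$-adic completion of $\Aocris$, the divided-power envelope of $\Ainf$ along $\ker\theta = (\xi)$, compatible with the canonical PD structure on $(p)\subset\Ainf$. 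The key calculation is
\[
\varphi(\xi) \;=\; [p^{\flat}]^p - p \;=\; (\xi + p)^p - p \;=\; \xi^p + p\,\eta
\]
for some $\eta\in\Ainf$, using that every binomial term $\binom{p}{k}\xi^k p^{p-k}$ with $1\leq k\leq p-1$ is divisible by $p$. Since $\xi^p = p!\,\gamma_p(\xi)$ lies in the PD ideal of $\Aocris$ and $p\eta$ lies in $(p)$, which also carries its canonical PD structure, $\varphi(\xi)$ belongs to the PD ideal of $\Aocris$. The universal property of the PD envelope (with compatibility along $(p)$) then provides a unique ring map extending $\varphi$ from $\Ainf$ into $\Aocris$, and computing $\gamma_n(\varphi(\xi))$ via the PD axioms shows that the resulting divided-power expansions converge in the $p$-adic topology, so $\varphi$ extends further to the completion $\Acris$.

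For the passage to $\Bcris = \Acris[1/t]$, the only additional input needed is $\varphi(t) = pt$. Using the defining series $t = \log[\epsilon] = \sum_{n\geq 1}(-1)^{n+1}([\epsilon]-1)^n/n$, which converges in $\Acris$, together with the ring-homomorphism property of $\varphi$, one obtains $\varphi(t) = \log([\epsilon]^p) = p\log[\epsilon] = pt$, which is a unit in $\Bcris$. Hence $\varphi_{cris}$ extends uniquely to $\Bcris$ via $\varphi_{cris}(x/t^n) = \varphi_{cris}(x)/(pt)^n$, and $\Gamma_K$-equivariance is preserved at each step.

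The main obstacle is the injectivity of $\varphi_{cris}$ on $\Bcris$. Since $\Bcris = \Acris[1/p,1/t]$ and $\Acris$ is $p$-torsion-free (it injects into $\BdRp$), it suffices to prove that $\varphi\colon \Acris \to \Acris$ is injective; by $p$-adic completeness this in turn reduces, via a standard induction on the $p$-adic filtration, to the injectivity of the induced map $\bar\varphi$ on $\Acris/p\Acris$. I would then exploit the explicit PD presentation of $\Acris/p$, in which the divided powers $\gamma_n(\xi)$ generate a decreasing filtration over the perfect base $\cO_{\C_p^{\flat}}/([p^{\flat}])$, and verify that $\bar\varphi$ is compatible with this filtration and acts on the associated graded essentially by the $p$-th power on $\cO_{\C_p^{\flat}}/([p^{\flat}])$ — an injective operation by perfectness. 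The delicate point is the bookkeeping of the PD identities through the mod-$p$ reduction and the identification of the graded action; for the full argument I would follow the standard treatment in \cite{fontaine1994corps} and \cite{injectivfroben-Conrad}.
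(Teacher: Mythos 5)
The construction half of your proposal is correct and is the standard one: $\varphi(\xi)=[p^{\flat}]^p-p=\xi^p+p\eta$ shows $\varphi(\ker\theta)$ lands in an ideal carrying divided powers compatibly with $(p)$, the universal property of the PD envelope plus $p$-adic continuity extends $\varphi$ to $\Acris$, the series computation gives $\varphi_{cris}(t)=\log([\epsilon]^p)=pt$, and inverting $t$ (a nonzerodivisor, since $\Acris\subset\BdRp$ is a domain) handles $\Bcris$ together with Galois equivariance. The paper itself offers no proof and defers to the cited references, so up to this point there is nothing to compare.

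The injectivity argument, however, has a genuine gap that cannot be repaired along the lines you sketch: the induced Frobenius on $\Acris/p\Acris$ is \emph{not} injective, so no d\'evissage along the $p$-adic filtration can reduce the problem to a mod-$p$ statement. Concretely, from $\varphi(\xi)=\xi^p+p\eta$ and the PD axioms,
\[
\varphi(\gamma_n(\xi))=\gamma_n(\xi^p+p\eta)=\sum_{i+j=n}\gamma_i(\xi^p)\,\gamma_j(p\eta),\qquad
\gamma_i(\xi^p)=\tfrac{(pi)!}{i!}\,\gamma_{pi}(\xi),\qquad
\gamma_j(p\eta)=\tfrac{p^j}{j!}\,\eta^j,
\]
and since $v_p\bigl((pi)!/i!\bigr)=i$ and $v_p(p^j/j!)\geq 1$ for $j\geq 1$, every term with $(i,j)\neq(0,0)$ lies in $p\Acris$. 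Hence $\varphi(\gamma_n(\xi))\in p\Acris$ for all $n\geq 1$, and more generally $\varphi(\Acris)\subseteq\Ainf+p\Acris$, so modulo $p$ the image of $\varphi$ is just the image of $\Ainf/p$ while its kernel contains the classes of all $\gamma_{p^i}(\xi)$ — which are nonzero (and nilpotent) by the standard description of $\Acris/p$ as a free module over $\cO_{\C^{\flat}_p}/\bigl((p^{\flat})^p\bigr)$ on the monomials $\prod_i\gamma_{p^i}(\xi)^{e_i}$ with $0\leq e_i<p$. Your final ingredient is also not available: perfectness makes the $p$-th power map bijective on $\cO_{\C^{\flat}_p}$ itself, but on the quotient $\cO_{\C^{\flat}_p}/(p^{\flat})$ it kills every class of valuation at least $\nu(p^{\flat})/p$, so it is far from injective there. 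This is precisely why the injectivity of $\varphi_{cris}$ is a delicate theorem: Frobenius on $\Acris$ is injective even though it is very far from injective modulo $p$, and the proofs in the references the paper cites proceed by a different, finer structural route (analysing $\Bcris$ inside $\BdR$ and comparing with auxiliary period rings), not by reduction mod $p$.
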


\begin{prop}\label{log_cris}
For any $x\in 1+\fm_{\cO_{\C_p^{\flat}}}$, the sum 
\[
\logcris([x])=\sum_{n\geq 1}(-1)^{n+1}\frac{([x]-1)^n}{n}
\]
converges in $\Bcrisp$, and
we have $\Gamma_K$-equivariant homomorphism 
\[
x\mapsto \logcris([x])
\]
such that $\varphi_{cris}(\logcris([x]))=\logcris([x^p])=p\logcris([x])$.
\end{prop}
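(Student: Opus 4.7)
The plan is to verify each of the four claims separately, treating convergence of the series as the principal technical point and deducing the remaining properties from formal manipulations of the convergent series.

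The hardest step will be convergence of $\logcris([x])$ in $\Bcrisp$. The difficulty is that for a general $x \in 1 + \fm_{\cO_{\C_p^{\flat}}}$, the element $[x]-1$ need not lie in $\ker\theta$: one has $\theta([x]-1) = x^{\sharp} - 1$, which is a nonzero element of $\fm_{\cO_{\C_p}}$ in general, so the divided powers that build $\Acris$ do not act on $[x]-1$ directly. I would bypass this by exploiting the Frobenius relation that is part of the conclusion. Since $\C_p^{\flat}$ has characteristic $p$, one has $\nu^{\flat}(x^{p^N}-1) = p^N \nu^{\flat}(x-1) \to \infty$, so for $N$ sufficiently large $x^{p^N}-1 \in p^{\flat}\cO_{\C_p^{\flat}}$. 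Reducing mod $p$ (using $\Ainf/p = \cO_{\C_p^{\flat}}$), this forces $[x^{p^N}]-1 \in [p^{\flat}]\Ainf + p\Ainf$, and the identity $[p^{\flat}] = \xi + p$ yields a decomposition $[x^{p^N}]-1 = u + p v$ with $u \in \ker\theta$ and $v \in \Ainf$. Expanding $(u+pv)^n$ binomially and rewriting $u^k = k!\,\gamma_k(u)$ via the divided powers of $\Acris$, one checks that $([x^{p^N}]-1)^n/n! \in \Acris$ (respectively $\Bcrisp$ for $p=2$) and that the rewriting $([x^{p^N}]-1)^n/n = (n-1)!\cdot ([x^{p^N}]-1)^n/n!$ has $p$-adic valuation tending to infinity, since $v_p((n-1)!) \sim n/(p-1)$. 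Thus $\logcris([x^{p^N}])$ converges in $\Bcrisp$. Finally, one defines $\logcris([x]) := p^{-N}\logcris([x^{p^N}]) \in \Bcrisp$, checking independence of $N$ by applying the same construction to $x^{p^{N+1}} = (x^{p^N})^p$ and using that on the level of partial sums the two definitions telescope compatibly with the target Frobenius identity.

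Once convergence is in hand, the remaining three assertions follow formally. For the homomorphism property $\logcris([xy]) = \logcris([x]) + \logcris([y])$, multiplicativity $[xy] = [x][y]$ in $\Ainf$ reduces the claim to the universal power-series identity $\log(ab) = \log(a) + \log(b)$, which holds in any commutative ring whenever all three series converge. Galois equivariance is immediate: $\Gamma_K$ acts continuously on $\Ainf$ and commutes with Teichmüller lifts, so $\gamma([x]) = [\gamma(x)]$ for every $\gamma \in \Gamma_K$, and the action passes through the convergent sum. For the Frobenius relation, continuity of $\varphi_{cris}$ together with $\varphi_{cris}([x]) = [x^p]$ gives
\[
\varphi_{cris}(\logcris([x])) = \sum_{n \geq 1} (-1)^{n+1}\frac{([x^p]-1)^n}{n} = \logcris([x^p]),
\]
while $\logcris([x^p]) = \logcris([x]^p) = p\logcris([x])$ follows directly from the additivity established in the previous step. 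The main obstacle throughout is the initial convergence argument; all subsequent assertions are essentially formal consequences of the continuity and multiplicativity of Teichmüller lifts, together with the behaviour of $\varphi_{cris}$ and $\Gamma_K$ on $\Ainf$.
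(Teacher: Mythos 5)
Your central decomposition is exactly the one used in the proof that the paper cites for this statement (it gives no argument of its own beyond the reference to \cite[Lemma 9.2.2]{brinon2009cmi}): choose $N$ with $x^{p^N}-1\in p^{\flat}\cO_{\C_p^{\flat}}$, lift to $[x^{p^N}]-1=u+pv$ with $u\in\ker\theta$ via $[p^{\flat}]=\xi+p$, and control $([x^{p^N}]-1)^n/n=(n-1)!\cdot([x^{p^N}]-1)^n/n!$ using the divided powers of $\Acris$ and $v_p((n-1)!)\to\infty$. That part of your argument is sound (and the caveat for $p=2$ is unnecessary: since $v_p(m!)<m$ for all $m\geq 1$, the terms $\gamma_k(u)\,p^{\,n-k}v^{\,n-k}/(n-k)!$ lie in $\Acris$ for every $p$).

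The gap is in how you pass from $x^{p^N}$ back to $x$. The proposition asserts that the series $\sum_{n\geq 1}(-1)^{n+1}([x]-1)^n/n$ converges in $\Bcrisp$ for \emph{every} $x\in 1+\fm_{\cO_{\C_p^{\flat}}}$, but you only prove convergence for $x^{p^N}$ and then \emph{define} $\logcris([x]):=p^{-N}\logcris([x^{p^N}])$. This produces an element but does not prove the stated convergence; moreover your later steps (Galois equivariance "through the convergent sum" and $\varphi_{cris}(\logcris([x]))=\sum_{n\geq1}(-1)^{n+1}([x^p]-1)^n/n$) manipulate the series for $[x]$ itself, which at that point has not been shown to converge, and the well-definedness "telescoping" check is only asserted. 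The repair is short and makes the $p^{-N}$ device unnecessary: since $([x]-1)^{p^N}\equiv [x]^{p^N}-1=[x^{p^N}]-1 \pmod{p\Ainf}$, the same reduction gives $([x]-1)^{p^N}\in\ker\theta+p\Ainf$; writing $n=qp^N+r$ and expanding $\left(([x]-1)^{p^N}\right)^q$ binomially exactly as in your estimate shows $([x]-1)^n/n\in p^{m(n)}\Acris$ with $m(n)\geq \frac{q}{p-1}-1-\log_p q-v_p(n)$, which tends to infinity since $v_p(n)\leq\log_p n$ and $q\sim n/p^N$. Hence the original series converges in $\Bcrisp$, its value coincides with $p^{-N}\logcris([x^{p^N}])$, and your formal arguments for additivity, $\Gamma_K$-equivariance and the Frobenius relation then go through as written.
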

\begin{proof}
    \cite[Lemma 9.2.2]{brinon2009cmi}
\end{proof}

The exact sequence provided in the following theorem is called the fundamental exact sequence of p-adic Hodge theory.
\begin{thm}{\cite{fontaine2008theory},\cite[Theorem 9.1.8]{brinon2009cmi}.}\label{Fundamental exact sequence of p-adic Hodge theory}
There exists a natural exact sequence
$$0\to \Q_p\to(\Bcris)^{\varphi=1}\to\BdR/\BdRp\to 0.$$
\end{thm}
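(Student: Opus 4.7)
The plan is to establish the three pieces of exactness separately, with the right-hand surjectivity being the main obstacle.

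\textbf{Injectivity and middle exactness.} The inclusion $\Q_p \hookrightarrow (\Bcris)^{\varphi=1}$ is immediate since $\varphi$ is $\Q_p$-linear. For exactness at the middle term I must prove $(\Bcris)^{\varphi=1}\cap\BdRp = \Q_p$. Using $\Bcris = \Bcrisp[1/t]$ together with $\varphi(t) = pt$, one first checks that $\Bcris\cap\BdRp = \Bcrisp$ (an element of negative $t$-adic valuation lying in $\BdRp$ must actually lie in $\Bcrisp$), so the problem reduces to proving $(\Bcrisp)^{\varphi=1} = \Q_p$, equivalently $(\Acris)^{\varphi=1} = \Z_p$. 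For this I would reduce modulo $p$: the ring $\Acris/p$ is the mod-$p$ reduction of the divided power envelope of $\Ainf$ along $\ker\theta$; since $\Ainf/p = \cO_{\C_p^{\flat}}$ is perfect, the Frobenius acts on this subring as the $p$-th power map, whose fixed set is $\F_p$. A direct Fontaine--Messing type verification shows that divided powers of $\ker\theta$ modulo $p$ contribute no new $\varphi$-fixed vectors, giving $(\Acris/p)^{\varphi=1} = \F_p$. Successive approximation along the $p$-adic filtration on $\Acris$, together with $p$-adic completeness, lifts this identity to $(\Acris)^{\varphi=1} = \Z_p$, and inverting $p$ yields the required kernel computation.

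\textbf{Reduction of surjectivity.} Given $y \in \BdR$, I must find $x \in (\Bcris)^{\varphi=1}$ with $x - y \in \BdRp$. Every element of $\BdR$ has finite pole order at $t$, so $y \in t^{-n}\BdRp$ for some $n \geq 1$, and a finite induction on $n$ reduces the task to exhibiting, for each $n \geq 1$ and each $c \in \C_p$, a $\varphi$-invariant element of $\Bcris$ whose image in $\gr^{-n}\BdR \cong \C_p(-n)$ equals $c t^{-n}$. Writing such an element as $x = b t^{-n}$ with $b \in \Bcrisp$ and using $\varphi(t^{-n}) = p^{-n}t^{-n}$, the invariance $\varphi(x) = x$ forces $\varphi(b) = p^n b$ and $\theta(b) = c$. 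Hence surjectivity reduces to showing that the period map
\[
\theta\colon (\Bcrisp)^{\varphi=p^n} \longrightarrow \C_p
\]
is surjective for every $n \geq 1$.

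\textbf{Constructing eigenvectors: the main obstacle.} For $n = 1$ the essential tool is the crystalline logarithm of \cref{log_cris}: for every $u \in 1 + \fm_{\cO_{\C_p^{\flat}}}$, the element $\logcris([u]) \in \Bcrisp$ satisfies $\varphi(\logcris([u])) = p\logcris([u])$ and reduces modulo $t$ to $\log_p(u^{\sharp})$. Since $u \mapsto u^{\sharp}$ maps $1 + \fm_{\cO_{\C_p^{\flat}}}$ onto $1 + \fm_{\ocp}$, and since $\log_p(1 + \fm_{\ocp})$ combined with $\Q_p$-scalar multiplication (which preserves $\varphi$-eigenvalues) exhausts $\C_p$ as a $\Q_p$-vector space, surjectivity for $n = 1$ follows. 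For $n \geq 2$, the multiplicative structure on $\Bcrisp$ carries an $n$-fold product of $\varphi = p$ eigenvectors to a $\varphi = p^n$ eigenvector with multiplicative reduction; since any $c \in \C_p$ factors trivially as $c \cdot 1 \cdots 1$ with each factor realized by the $n = 1$ case, surjectivity propagates to all $n \geq 1$. The conceptual heart of the argument is precisely the production of these crystalline logarithms living in $\Bcrisp \subset \BdRp$: this is exactly where the Frobenius structure on $\Bcris$, absent from $\BdR$ itself, becomes indispensable, and the interplay between the $p$-adic topology on $\Acris$ and the $t$-adic topology on $\BdR$ is what makes the construction work.
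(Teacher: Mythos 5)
The paper does not actually prove this statement; it is quoted from Fontaine and from Brinon--Conrad (Theorem 9.1.8), so your proposal has to be measured against that standard proof. Your surjectivity half is essentially that argument and is fine: the reduction along the $t$-adic filtration to surjectivity of $\theta$ on $(\Bcrisp)^{\varphi=p^n}$, the use of $\logcris([u])$ for $u\in 1+\fm_{\cO_{\C_p^{\flat}}}$ to generate the $\varphi=p$ eigenspace modulo $\ker\theta$, and the multiplication/twisting trick for $n\geq 2$ all work. Your computation $(\Acris)^{\varphi=1}=\Z_p$ (hence $(\Bcrisp)^{\varphi=1}=\Q_p$) is also correct in outline, although the structure of $\Acris/p$ that makes the mod-$p$ step work is asserted rather than proved.

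The genuine gap is the sentence ``one first checks that $\Bcris\cap\BdRp=\Bcrisp$.'' This is equivalent to the assertion that $\Bcrisp\cap t^n\BdRp=t^n\Bcrisp$ for all $n$, i.e.\ that $t$-divisibility of an element of $\Bcrisp$ can be tested in $\BdRp$, and your parenthetical justification is circular: elements of $\BdRp$ have non-negative $t$-adic valuation by definition, and the whole issue is whether lying in $\BdRp$ forces an element of $\Bcris$ back into $\Bcrisp$. This is exactly the kind of interaction between the divided-power subring $\Bcrisp$ and the filtration induced from $\BdR$ that is known to be delicate (for instance $\varphi$ does not even preserve $\Fil^1$, since $\theta(\varphi(\xi))=p^p-p\neq 0$), and the identity $\Fil^0\Bcris=\Bcrisp$ is not available; the cited proofs are structured precisely to avoid it. What Fontaine and Brinon--Conrad actually prove is an eigenvector statement: for $n\geq 1$, any $b\in\Bcrisp$ with $\varphi(b)=p^n b$ and $\theta(b)=0$ lies in $t\cdot(\Bcrisp)^{\varphi=p^{n-1}}$ (equivalently $(\Bcrisp)^{\varphi=p^n}\cap t^n\BdRp=\Q_p t^n$), and this, combined with $(\Bcrisp)^{\varphi=1}=\Q_p$ and an induction on the pole order, yields $\Bcris^{\varphi=1}\cap\BdRp=\Q_p$. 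That eigenvector lemma is the technical heart of the theorem; by replacing it with the unproved ring identity you have relocated, not resolved, the difficulty, and without $\Fil^n\Bcrisp\subseteq t^n\Bcrisp$ your reduction of the middle exactness to $(\Bcrisp)^{\varphi=1}=\Q_p$ does not go through.
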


\begin{defn}
    We say that $V\in\Rep(\Gamma_K)$ is de Rham (crystalline, or Hodge-Tate resp.) if it is $\BdR$-admissible ($\Bcris$-admissible, or $\BHT$-admissible resp.). We write $\DdR$ ($\Dcris$, or $\DHT$ resp.) for the functor $D_{\BdR}$ ($D_{\Bcris}$, or $D_{\BHT}$ resp.). We denote by $\operatorname{Rep^{dR}_{\Q_p}}(\Gamma_K)$ and $\operatorname{Rep^{cris}_{\Q_p}}(\Gamma_K)$, the category of de Rham and crystalline representations respectively.
\end{defn}

\begin{thm}\label{Theorem: filtered isocrystal va crystalline representaion}{\cite{colmez2000construction},\cite{fargues_courbes_2018}, or \cite{berger_construction_2008}.}
\begin{enumerate}
\item Let $V$ be a de Rham representation of $\Gamma_K$. Then $V$ is Hodge-Tate with a natural $K$-isomorphism of graded vector spaces
$$\gr(\DdR(V))\cong\DHT(V).$$
\item The functor $\DdR$ with values in $\Fil_K$ is faithful and exact on $\operatorname{Rep^{dR}_{\Q_p}}(\Gamma_K)$.
\item Let $V$ be a crystalline representation of $\Gamma_K$. Then $V$ is de Rham with a natural isomorphism of filtered vector spaces
$$\Dcris(V)_K=\Dcris(V)\otimes_{K_0}K\cong\DdR(V).$$
\item The functor $\Dcris$ with values in $\MF$ is faithful and exact on $\operatorname{Rep^{cris}_{\Q_p}}(\Gamma_K)$. 
\item The functor $\Dcris$ is an equivalence between the category $\operatorname{Rep^{cris}_{\Q_p}}(\Gamma_K)$ and $\MFw$. The inverse functor is given by 
\[
N\mapsto (N\otimes_{K_0}\Bcris)^{\varphi=1}\cap \Fil^0(N_K\otimes_K\BdR).
\]
\end{enumerate}
\end{thm}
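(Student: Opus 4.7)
The plan is to separate the five parts into three tiers of difficulty. Parts (1)--(4) follow formally from the structural properties of the period rings already recorded in this chapter, whereas part (5) is the genuinely deep Colmez--Fontaine theorem and is where the main obstacle lies. I would dispose of the first four parts using dimension counts against the regularity of $\BHT$, $\BdR$, and $\Bcris$, and then isolate the hard direction of (5).

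For (1), the key input is the $\Gamma_K$-equivariant identification $\gr(\BdR) \cong \BHT$ of graded $K$-algebras. Given a de Rham $V$, equip $\DdR(V)$ with the filtration (\ref{filtration on DdR(V)}); this is an exhaustive, separated, finite filtration on a $K$-vector space of dimension $\dim_{\Q_p} V$ (by admissibility). From the short exact sequences $0 \to V \otimes \Fil^{n+1}\BdR \to V \otimes \Fil^n\BdR \to V \otimes \C_p(n) \to 0$ and left exactness of $\Gamma_K$-invariants, one extracts a natural injection $\gr \DdR(V) \hookrightarrow \bigoplus_n (V \otimes \C_p(n))^{\Gamma_K} = \DHT(V)$. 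Admissibility gives $\dim_K \gr \DdR(V) = \dim_{\Q_p} V$, while $\BHT$-regularity forces $\dim_K \DHT(V) \leq \dim_{\Q_p} V$; these bounds collide and the injection is an isomorphism, proving $V$ is Hodge--Tate. Parts (2) and (4) share a common template. For a $B$-admissible $V$ one has the comparison isomorphism $V \otimes_{\Q_p} B \cong D_B(V) \otimes_E B$, which lets one recover $D_B(V) \otimes_E B$ from $V \otimes B$. Faithfulness is immediate (if $D_B(V) = 0$ then $V \otimes B = 0$ and $V = 0$ by $\Q_p$-flatness of $B$), and exactness on a short exact sequence $0 \to V' \to V \to V'' \to 0$ of admissible representations reduces to a dimension count: the admissible subcategory is stable under sub-objects and quotients, all three terms satisfy $\dim_E D_B = \dim_{\Q_p}$, and additivity of dimensions forces the induced sequence of $D_B$'s to be short exact.

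For (3), use the inclusion $\Bcris \otimes_{K_0} K \hookrightarrow \BdR$ from the diagram of period rings. It induces a natural $K$-linear, filtration-preserving map $\Dcris(V)_K \to \DdR(V)$, injective because it is obtained by applying $(V \otimes \cdot)^{\Gamma_K}$ to an injection of rings and passing through the admissibility comparison. If $V$ is crystalline then $\dim_K \Dcris(V)_K = \dim_{\Q_p} V$, whereas $\BdR$-regularity gives $\dim_K \DdR(V) \leq \dim_{\Q_p} V$; the dimensions must coincide, so the map is an isomorphism, which simultaneously shows $V$ is de Rham and yields the claimed identification of filtered $K$-vector spaces.

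The main obstacle is (5), the Colmez--Fontaine theorem. The easier direction (crystalline $\Rightarrow$ weakly admissible) is obtained by applying the candidate inverse functor $N \mapsto V(N) := (N \otimes_{K_0} \Bcris)^{\varphi=1} \cap \Fil^0(N_K \otimes_K \BdR)$ to any sub-filtered-isocrystal $N' \subseteq \Dcris(V)$; comparing $\dim_{\Q_p} V(N')$ to the Newton and Hodge numbers of $N'$ yields $t_N(N') \geq t_H(N')$, with equality for $N' = \Dcris(V)$ since $V$ is crystalline of full admissible dimension. For the hard converse one must show $\dim_{\Q_p} V(N) = \dim_{K_0} N$ for every weakly admissible $N$. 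The universal inequality $\dim V(N) \leq \dim N$ is formal; producing enough $\varphi$-fixed vectors that actually land in $\Fil^0$ is the heart of the matter, and this is where weak admissibility is used in an essential way. I would follow Colmez--Fontaine's dévissage: use the isoclinic decomposition of \cref{isoclinic decomposition lemma} to reduce to the isoclinic case, twist by the simple objects $N_{r,d}$ of \cref{simple isocrystal with slope} to reduce further to slope zero, and then exploit the fundamental exact sequence (\cref{Fundamental exact sequence of p-adic Hodge theory}) $0 \to \Q_p \to \Bcris^{\varphi=1} \to \BdR/\BdRp \to 0$ together with the polygon inequalities on sub-isocrystals to force the obstruction to vanish. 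Alternatively, one could invoke Berger's reformulation via $(\varphi,\Gamma)$-modules over the Robba ring, reducing (5) to Kedlaya's slope filtration theorem; in either approach, (5) rests on substantially deeper ground than (1)--(4).
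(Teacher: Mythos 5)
There is no proof of this theorem in the paper to compare against: it is stated as background with citations to Colmez--Fontaine, Fargues--Fontaine and Berger, so your proposal has to be measured against the literature rather than against an in-paper argument. For parts (1)--(4) your arguments are the standard ones and are essentially correct: left exactness of $\Gamma_K$-invariants applied to $0\to V\otimes\Fil^{n+1}\BdR\to V\otimes\Fil^n\BdR\to V\otimes\C_p(n)\to 0$ plus the collision of the admissibility equality with the regularity bound gives (1), and the dimension-count template gives (2)--(4). Two small caveats: exactness ``with values in $\Fil_K$'' (resp.\ $\MF$) also requires strictness of the induced maps with respect to the filtrations, which again follows from a degree-by-degree dimension count but should be said; and in (3) the injectivity of $\Dcris(V)\otimes_{K_0}K\to\DdR(V)$ rests on the injectivity of $\Bcris\otimes_{K_0}K\to\BdR$, a nontrivial theorem of Fontaine that you should cite rather than treat as formal.

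For (5) you correctly isolate the Colmez--Fontaine theorem as the real content, but the d\'evissage you sketch would not work as stated. The Dieudonn\'e--Manin decomposition (\cref{Dieudoone Manin classification}) is available only after extending scalars to $\hat{K^{un}_0}$, and even the isoclinic decomposition over $K_0$ (\cref{isoclinic decomposition lemma}) concerns the isocrystal alone: the Hodge filtration on $N_K$ does not respect the slope summands, weak admissibility is not inherited by them, and so ``reduce to isoclinic, twist to slope zero, apply the fundamental exact sequence'' does not reduce the general case to a tractable one. The actual proofs proceed differently (Colmez--Fontaine's induction on $\dim_{K_0}N$ using $\Bcris^{\varphi=1}$-modules and the existence of weakly admissible subobjects, Berger's reduction to Kedlaya's slope filtration via $(\varphi,\Gamma)$-modules over the Robba ring, or the Fargues--Fontaine classification of vector bundles on the curve), and none of them can be reconstructed from the tools assembled in this chapter. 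Note also that the equivalence in (5) requires full faithfulness, i.e.\ that $N\mapsto(N\otimes_{K_0}\Bcris)^{\varphi=1}\cap\Fil^0(N_K\otimes_K\BdR)$ applied to $\Dcris(V)$ recovers $V$; this uses \cref{Fundamental exact sequence of p-adic Hodge theory} and is worth making explicit rather than leaving implicit in the ``easy direction''. Since the paper itself defers (5) to the cited references, doing the same is legitimate, but the specific reduction you propose should not be presented as a proof of it.
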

\begin{remark}\label{Hodge-Tate weights of tate module}
Let $G$ be a p-divisible group over $\cO_K$. The Hodge-Tate decomposition (\cref{Theorem: Hodge-Tate decomposition}) implies that $\Tp(G)\cong \coLie(G\ve)_{\C_p}\oplus \Lie(G)_{\C_p}(1)$. Hence, $\Tp(G)$ is Hodge-Tate with Hodge-Tate weights of $0$ and $1$ with multiplicity $\dim(G\ve)$ and $\dim(G)$ respectively.
\end{remark}

\begin{remark}\label{tate module is crystalline}
In this remark, we summarize the results of \cite[\S 12.2]{brinon2009cmi} and \cite[\S 6]{faltings_integral_1999}. Let $G$ be a p-divisible group over $\cO_K$. Recall that $\Tp(G)=\Hom(\Q_p/\Z_p,G_{\cO_{\bar{K}}})$. Let $f:\Q_p/\Z_p\to G_{\cO_{\bar{K}}}$, and we denote by $f$ its base change to $\ocp$. Recall the contravariant Dieudonn\'e functor from \cref{def: three crystals}. Evaluating at the DP thickening $\Spec\ocp\to\Spec\Acris$ provides a map
\[
\D^*(G)_{(\ocp\to\Acris)}\to \D^*(\Q_p/\Z_p)_{\Acris}\cong\Acris.
\]
Therefore, we get a pairing
\[
\Tp(G)\times\D(\bar{G})\to\Acris,
\]
where here $\bar{G}:=G\times_{\Spec\cO_K}\Spec k$, and $\D(\bar{G})$ is the classical Dieudonn\'e module as described in \ref{1.6.3}. If we tensor with $\Bcrisp$, we get an isomorphism
\[
\Tp(G)\otimes_{\Z_p}\Bcrisp\cong \D(\bar{G}\ve)\otimes_{\W(k)}\Bcrisp.
\]
This isomorphism is $\Gamma_K$-equivariant and respects the filtration. The filtration on the right side is induced by the filtered isocrystal $\D(\bar{G}\ve)\otimes_{\W(k)}K$ and natural filtration on $\Bcrisp$. The filtration on the left side is induced by the filtration on $\Bcris$. It also respects the action of the Frobenius. The action of the Frobenius on the right side is induced by $F\otimes\phi_{cris}$ and on the left side is induced by $1\otimes\phi_{cris}$. This identification shows that $\Vp(G)$ is a crystalline representation i.e. \[\Dcris(\Vp(G))=\D(\bar{G})\otimes_{\W(k)}K_0.\]
\end{remark}



\clearpage
\chapter{1-motives and their realisation functors}
1-motives were originally defined in \cite{deligne1974theorie} over an algebraically closed field.
Deligne's construction of 1-motives was motivated by the desire to study Hodge and étale realizations in a broader context than that provided by pure motives. Pure motives, associated with smooth projective varieties, have well-established Hodge structures, but they do not capture the complexities of non-smooth or non-projective varieties. By including algebraic tori and extending to mixed motives, 1-motives offer a more inclusive theory that addresses these cases.
Deligne also constructs several realization functors carrying a filtration and provides comparison isomorphism relating them which is compatible with the filtrations. For more detailed information on 1-motives, refer to \cite{barbieri2016derived}. 
\section{Basic definitions}
\begin{defn}\label{definition of 1-motive}
An integral Deligne 1-motive $M$ over $S$ is a two-term complex of $S$-commutative group schemes $M=[L\xrightarrow{u}G]$, where:
\begin{enumerate}
    \item $L$ is a lattice over $S$, i.e., it is an $S$-group scheme which, locally for the \'{e}tale topology on $S$, is isomorphic to a  constant finitely generated free $\Z$-module;
    \item $G$ is a semi-abelian $S$-scheme, i.e., it is an extension of an abelian $S$-scheme $A$ by an $S$-torus $T$; and
    \item $u$ is a morphism of $S$-group schemes.
\end{enumerate}

An effective torsion 1-motive over $S$ is $M=[L\xrightarrow{u}G]$, where $G$ is a semi-abelian $S$-scheme but $L$ is finitely generated $\Z$-module and \'{e}tale locally constant. From this point forward, we will refer to integral Deligne 1-motives simply as 1-motives.
\end{defn}
Morphisms of Deligne 1-motives (effective torsion 1-motives) are morphisms of complexes $L\to G$. We denote the category of 1-motives over $S$ by $\Mi(S)$. There is a canonical exact sequence
\begin{equation}\label{canonical exact sequence for any 1-motive}
    0\to[0\to G]\to M\to [L\to 0]\to 0
\end{equation} for any 1-motive $M=[L\to G]$ over $S$.

\begin{example}
Assume that $S=\Spec K$ for some field $K$. If the $M=[L\xrightarrow{u} G]$ is a Deligne 1-motive (resp. torsion 1-motive), then $L$ is a finite free $\Z$-module (resp. discrete $K$-group scheme) equipped with a continuous action of $\Gamma_K$, and $u:L\to G(\bar{K})$ is a $\Gamma_K$-equivariant group homomorphism.
\end{example}
Let $S=\Spec(R)$, where $R$ is a henselian local ring with residue field $k$. If $M=[L\xrightarrow{u} G]$ is a (Deligne) 1-motive over $R$, then $L$ is an isotrivial lattice (see \cref{Appendix isotrivial lattice torus}(1)), and $u$ induces a $\Gamma_k$-equivariant group homomorphism $L\to G(R^s)$, where $R^s$ is the universal covering of $R$ at $x:\Spec(\bar{k})\to\Spec(k)$. If $K$ is the field of fractions of $R$, then the map $u$ gives a group homomorphism $L\to G(R')$, where $R'$ is the integral closure of $R$ in some finite unramified extension $K'$ of $K$. For more details, we refer to \cite[\S 1.6]{matev2014good}.
\begin{remark}
Let $K$ be any field, and $S=\Spec(K)$. The category $\tMi$ of 1-motives with torsion is the localization of effective torsion 1-motives with respect to the multiplicative class of quasi-isomorphisms. This is an abelian category if $char(K)=0$ (see \cite[C.2,C.3,C.5]{barbieri2016derived}). By \cite[Prop. C.7.1]{barbieri2016derived}, the canonical embedding $\Mi\to\tMi$ has a left adjoint given by $$M=[L\xrightarrow{u} G]\mapsto M_{fr}:=[L/L_{\text{tor}}\to G/u(L_{\text{tor}})].$$ The category of 1-motives up to isogeny $\Mi^{\Q}:=\Mi\otimes\Q$ is abelian and $\Mi^{\Q}\cong\tMi^{\Q}:=\tMi\otimes\Q$.
\end{remark}
From now on, by the category of 1-motives, we refer to the isogeny category of 1-motives, and we will use the same notation $\Mi$ to denote this isogeny category.
\begin{defn}
    There is a standard filtration associated to a 1-motive $M=[L\to G]$ called weight filtration. It is defined as follows
    \[
    \W_i(M)=\begin{cases}
        0 & i<-2\\
        T & i=-2\\
        G & i=-1\\
        M & i\geq 0.
    \end{cases}
    \]
    The graded pieces are defined as follows:
       \[
    \gr_i(M)=\begin{cases}
        0 & i\leq -3 \text{ or } i\geq 1,\\
        T & i=-2,\\
        A & i=-1,\\
        L & i= 0.
    \end{cases}
    \]
    By $T, G, A, $ and $L$, we mean the complexes $[0\to T], [0\to G], [0\to A],$ and $[L\to 0]$, respectively.
\end{defn}
In general, for any abelian category $\cA$, one can define a filtered object as a pair $(X, F)$, where $X\in\cA$, and $F =(F^n(X))_{n\in\Z}$ is a sequence of objects in $\cA$ such that for any $n\leq m$, it satisfies that $F^n(X)\subseteq F^m(X)$. To any such filtered object, one can associate a graded object $Gr^F (X)$. Although the category of 1-motives over $S$ is not abelian, it can be viewed as a subcategory of the category of complexes of representable abelian sheaves on $S\fppf$. Thus, a 1-motive $M=[L\to G]$ over $S$ can be identified with a complex, where $L$ is in degree $-1$ and $G$ is in degree $0$. Under this identification, the pair $(M, W)$ forms a filtered object. For more details on filtrations, we refer to \cite{Deligne1971}. 

\begin{defn}
    Let $\cT\colon\Mi(S)\to \Mod(R)$ be an additive exact covariant functor from the category of 1-motives over $S$ to the category of modules over $R$, where $R$ is a commutative ring with unity. We define the standard weight filtration on $\cT$ to be
    \[
    \W_i\cT(M)=\begin{cases}
        0 & i<-2\\
        \cT(T) & i=-2\\
        \cT(G) & i=-1\\
        \cT(M) & i\geq 0.
    \end{cases}
    \]
    for any $M=[L\to G]\in\Mi$. Here, $0\to T\to G\to A\to 0$ is an extension of the abelian $S$-scheme $A$ by an $S$-torus $T$. By $\cT(T)$ ($\cT(G)$ and $\cT(L)$ resp.), we mean $\cT([0\to T])$ ($\cT([0\to G])$ and $\cT([L\to 0])$ resp.).
\end{defn}

\begin{prop}
The category $\Mi$ has all finite limits and colimits. In particular, for any  given two effective torsion 1-motives $M=[L\xrightarrow{u}G]$, $M'=[L'\xrightarrow{u}G']$, a morphism $\varphi=(f,g):M\to M'$ admits the kernel and cokernel as a morphism of complexes.The kernel of $\varphi$ is given by $\ker\varphi=[\ker^0 f\xrightarrow{u}\ker^0 g]$ and the cokernel of $\varphi$ is given by $\coker\varphi=[\coker f\xrightarrow{\bar{u}'}\coker g]$, where $\ker^0 g$ is the reduced connected component of the kernel of $g$ in the category of commutative group schemes, $\ker^0f$ is the pullback of $\ker^0g$ along $u:\ker f\xrightarrow{u}G$, and $\bar{u}'$ is the map induced by $u'$.
\end{prop}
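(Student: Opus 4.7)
The plan is to verify the universal properties directly using the explicit constructions, after checking the constructions land in $\Mi$. Since $\Mi$ is $\Q$-linear additive with evident biproducts (termwise direct sums of lattices and of semi-abelian schemes are lattices and semi-abelian schemes), the existence of all finite limits and colimits will follow from the existence of kernels and cokernels. So the whole task reduces to the two constructions $\ker\varphi$ and $\coker\varphi$.

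For the kernel, I would first argue that the naive complex $[\ker f\to \ker g]$ fails to be a 1-motive because $\ker g$ need not be connected: $\ker g$ sits inside the semi-abelian scheme $G$ as a closed subgroup scheme, and its identity component $\ker^0 g$ is closed, connected, and commutative, hence semi-abelian (a connected closed subgroup of a semi-abelian scheme is semi-abelian). In the isogeny category this replacement is harmless since $\ker g/\ker^0 g$ is finite. I would then define $\ker^0 f$ as the pullback $\ker f\times_{\ker g}\ker^0 g$, which is a subgroup of finite index in $\ker f$; hence still a lattice in the isogeny category, and $u$ restricts to a map $\ker^0 f\to \ker^0 g$, giving the two-term complex $\ker\varphi=[\ker^0 f\to \ker^0 g]$. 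Universality: given $N=[L''\to G'']$ with a morphism $\psi=(f'',g''):N\to M$ satisfying $\varphi\circ\psi=0$, one has $g''(G'')\subseteq \ker g$; because $G''$ is connected, $g''$ factors through $\ker^0 g$, and then by the commuting square $f''$ factors through $\ker^0 f$. Uniqueness is automatic since both arrows factor through subobjects.

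For the cokernel, I would use that the isogeny category of semi-abelian schemes is abelian (this is part of the conventions stated in the introduction), so $\coker g$ is a well-defined semi-abelian scheme in $\Mi$. Similarly $\coker f = L'/f(L)$ is a lattice in the isogeny category, and composing $u'$ with the projection $G'\to\coker g$ kills $u'\circ f(L)=g\circ u(L)$, so it descends to $\bar u':\coker f\to \coker g$, giving the complex $\coker\varphi$. For universality, given $\psi':M'\to N$ with $\psi'\circ\varphi=0$, the semi-abelian component of $\psi'$ factors uniquely through $\coker g$ (abelianness of the isogeny subcategory), and the lattice component factors through $\coker f$ because its composition with $f$ is zero; compatibility of these two factorizations with the differentials follows from the corresponding property for $\psi'$.

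The main subtle point is not the abstract universal properties but making sure the two constructions actually stay inside $\Mi$, i.e.\ that $\ker^0 g$ and $\coker g$ are semi-abelian, and that the lattice components remain lattices after taking kernels and cokernels; this is what forces the passage to connected components on one side and to the isogeny category on the other. A secondary technical point is to handle the distinction between effective torsion 1-motives and Deligne 1-motives: one constructs $\ker$ and $\coker$ first in $\tMi$ where the categorical operations on the lattice component cause no issue, and then applies the localization/freeness functor $M\mapsto M_{\mathrm{fr}}$ recalled in the preliminaries to land in $\Mi$ after inverting isogenies. Once this is arranged, the finite limits and colimits follow formally from the existence of finite biproducts, kernels, and cokernels in an additive category.
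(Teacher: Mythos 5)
Your construction-and-verification argument is sound, but note that the paper does not prove this proposition at all: its ``proof'' is a citation to Barbieri-Viale--Kahn, Prop.\ C.1.3, so what you have written is essentially a reconstruction of the cited argument rather than an alternative to anything in the text. The route you take is the standard one and it works: reduce finite (co)limits to kernels and cokernels in an additive category with biproducts, check that the displayed complexes are again 1-motives, and verify the universal properties directly, using connectedness of the semi-abelian part of a test object for the kernel and surjectivity of $L'\to\coker f$ for compatibility of the two factorizations in the cokernel. Two small points deserve to be made explicit. First, your reduction ``$\varphi\circ\psi=0$ in the isogeny category, hence $g''(G'')\subseteq\ker g$'' silently uses that a morphism of 1-motives killed by an integer is zero (the semi-abelian component has connected image inside a finite torsion subgroup, the lattice component is torsion-free); this is easy but should be said, since otherwise ``zero up to isogeny'' does not immediately give a strict factorization. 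Second, your parenthetical ``a connected closed subgroup of a semi-abelian scheme is semi-abelian'' is a characteristic-zero statement as you use it; in positive characteristic the identity component of $\ker g$ can be non-reduced, which is exactly why the statement takes the \emph{reduced} connected component $\ker^0 g$ (and one needs the residue field hypotheses under which this reduced part is a subgroup). Since the ambient chapter works over fields of characteristic $0$ this is harmless, but as written your sketch would not literally cover the generality suggested by the wording of the proposition. With those two clarifications, your proof is a correct, self-contained substitute for the external reference.
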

\begin{proof}
See \cite[Prop. C.1.3]{barbieri2016derived}.
\end{proof}

\section{Cartier duality}
Given a 1-motive $M$ over $S$ we can construct another 1-motive $M\ve$ called the Cartier
dual of $M$. This defines in fact a contravariant functor on the category of
1-motives
\[(.)\ve:\Mi(S)\to\Mi(S)\]
with the property that there exist a canonical isomorphisms $(M\ve)\ve\cong M$. This construction generalizes the duality of abelian schemes, given by the functor $\Ext^1_S(.,\G_m)$, and the Cartier duality of affine commutative group schemes taking tori to lattices
and vice versa, which is given by $\Hom_S(.,\G_m)$ (see \cref{sec: Cartier dual in appendix}).

\subsection*{Torsors.}
    We recall general facts about torsors which can be found in \cite[\S III]{Giraud_1971}. We fix a site $\cS$ and work in the topos $Sh(\cS)$ of sheaves on $S$. Let $H\in Sh(\cS)$ with $H$ a sheaf of abelian groups over $S$. By an $H$-torsor over $S$ we will mean a sheaf $P$ over $S$ endowed with an $H$-action $m: H\times_{S} P\to P$ such that:
    \begin{enumerate}
        \item[(1)] the morphism 
        \[
       H\times_S P\to P\times_S P,\, (h,e)\mapsto (m(h,e),e) 
        \]
        is an isomorphism, and 
        \item[(2)] the structural morphism $P\to S$ is an epimorphism.
    \end{enumerate}
    A morphism of torsors is a morphism of corresponding sheaves which is compatible with the actions. The trivial $H$-torsor is just $H$ with action given by multiplication.

    Condition (1) is equivalent to the following:
    For any $T\in Sh(\cS)$, the action of $H(T)=\Hom_S(T,H)$ on $P(T)=\Hom(T,P)$ is simply transitive.

    Condition (2) is equivalent to the following:
    There exists an epimorphic cover $\{S_i\to S\}$ such that $P\times_S S_i$ is the trivial $H\times_S S_i$-torsor over $S_i$.

\begin{defn}[\cite{Grothendieck_1972}]
Let $H, A,B$ be sheaves of abelian groups on a
site. A biextension of $(A,B)$ by $H$ is an $H_{A\times B}$-torsor $P$ over $A\times B$ which is endowed with a structure of extension of $B_A$ by $H_A$ and a structure of extension of $A_B$ by $H_B$, such that both structures are compatible.  
\end{defn}
\begin{example}
A biextension of abelian groups is a biextension on the punctual topos,
i.e., the category of sheaves on the site with one object $*$ and one morphism $id_{*}:*\to *$. Thus for abelian groups $H, A,B$, a biextension of $(A,B)$ by $H$ is given by a set $P$ endowed with a simply transitive $H$-action and a surjective function $P\to A\times B$ such that the fibers $P_a$, for every $a\in A$, have the structure of the extension of
$B$ by $H$, and the fibers $P_b$, for every $b\in B$ have the structure of the extension of $A$ by $H$.
\end{example}
\begin{example}
Given three commutative group schemes $G,H,N$ over a base field $K$, a biextension of $(G,H)$ by
$N$ is a morphism $B\to G\times_K H$ plus two relative homomorphisms. The first one, for $E\to H$, makes
$E\to H$ an extension of $G_H:=G\times_K H$ by $N_H:= N\times_K H$, while the second group law, for $E\to G$, makes $E\to G$ an extension of $H_G$ by $N_G$.
\end{example}

Consider the fppf site $S\fppf$. Let $A$ be an abelian scheme over $S$. The dual abelian scheme of $A$ is characterized by a pair $(A\ve,P_A)$, where $A\ve$ is isomorphic to $\underline{\Pic}^0_{A/S}$ as fppf sheaves (\cite{Faltings_Chai_1990}) and $P_A$ is a biextension of $(A,A\ve)$ by $\G_m$, called a Poincar\'e biextension. When composing with the isomorphism $\underline{\Pic}^0_{A/S}\cong \iext^1_S(A,\G_{m,S})$ we get the Weil-Barsotti formula $A\ve\cong\iext^1(A,\G_{m,S})$, which maps a section $a\ve\in A\ve$ to the fiber $P_{A_S,a\ve}$ of $P_{A_S}$ over $a\ve$. Since $\G_m$ is affine over $S$, $P_A$ is representable and it is locally trivial with respect to the \'etale topology on $S$, as $\G_m$ is smooth, i.e., $P_{A_S}$ is a $\G_m$-torsor over $A_S\times_SA\ve_S$ on \'etale site $S\etale$.
If $S$ is $\Spec K$, where $K$ is a complete discrete valuation field with ring of algebraic integers $\cO_K$, then $P_{A_{K}}$ extends canonically to a biextension $P_{A}$ of $(A_0,A_0\ve)$ by $\G_{m,\cO_K}$, where $A_0$ is the N\'eron model of $A_K$ over $\cO_K$, and $A_0\ve$ is the N\'eron model of $A\ve_K$ (see \cite[Expos\'e VIII]{Grothendieck_1972}).

In \cite{deligne1974theorie}, Deligne generalized the notion of biextension to complexes of sheaves.
\begin{defn}{\cite[\S 10.2.1]{deligne1974theorie}}
Let $C_1=[A_1\to B_1]$ and $C_2=[A_2 \to B_2]$ be two complexes of sheaves of abelian groups concentrated in degrees $0$ and $-1$. A biextension of $(C_1, C_2)$ by a sheaf of abelian groups $H$ consists of:
\begin{enumerate}
    \item[(1)] a biextension $P$ of $(B_1,B_2)$ by $H$,
    \item[(2)] a trivialization of the biextension of $(B_1,A_2)$ by $H$ obtained as the pullback of $P$ over $B_1\times A_2$, and
    \item[(3)] a trivialization of the biextension of $(A_1,B_2)$ by $H$ obtained as the pullback of $P$ over $A_1\times B_2$.
\end{enumerate}
Moreover, the trivialization conditions in (2) and (3) must coincide on $A_1\times A_2$.

We now review the Poincar\'e biextension associated with a 1-motive $M$, as described in \cite[\S 1.2]{andreatta2005crystalline}. We want to construct the Poincar\'e biextension $P$ of $(M,M\ve)$ by $\G_m$. Every $x\ve\in T\ve$ corresponds to the extension of $M_A$ by $\G_m$ obtained by the pushout of the exact sequence \[0\to T\to M\to M_A\to 0\] along $-x:T\to\G_m$
\begin{equation}
\begin{tikzcd}
0 \arrow[r] & T \arrow[r] \arrow[d, "-x"]                   & M \arrow[r] \arrow[d, "\phi", dashed]      & M_A \arrow[r] \arrow[d]                  & 0 \\
0 \arrow[r] & \G_m \arrow[r]                                & P'_{u\ve(x\ve)} \arrow[r]                  & M_A \arrow[r]                            & 0 \end{tikzcd}
.\end{equation}
The map $\phi$ induces a trivialization of the pullback $P_{u\ve(x\ve)}$ of $P'_{u\ve(x\ve)}$ along $M$
\begin{equation}
\begin{tikzcd}
0 \arrow[r] & \G_m \arrow[r] \arrow[d] & P_{u\ve(x\ve)} \arrow[r] \arrow[d] & M \arrow[r] \arrow[ld, "\phi"] \arrow[d] & 0 \\
0 \arrow[r] & \G_m \arrow[r]                                & P'_{u\ve(x\ve)} \arrow[r]                  & M_A \arrow[r]                            & 0
\end{tikzcd}
.\end{equation}
We can see that $P$ is indeed a canonical trivialization over $G\times L\ve$, as the pullback of $P$ to $G\times{x\ve}$ is $P_{u\ve(x\ve)}$ by construction. The biextension $P$ is called the Poincar\'e biextension of $(M,M\ve)$ by $\G_m$.

\end{defn}
\subsection*{Construction of the dual of $M$}
Let $F^{\bullet}$ and $G^{\bullet}$ be objects in $D^b(S\fppf)$. We have an internal $\ihom$ denoted by $\ihom_{S\fppf}(F^{\bullet},G^{\bullet})$, and also we have \[\iext^i_{S\fppf}(F^{\bullet},G^{\bullet})=H^i(\ihom_{S}(F^{\bullet},G^{\bullet})).\] This $\iext^i_{S\fppf}$ sheaf is indeed sheafification of the presheaf
\begin{gather*}
    (S\fppf)^{opp}\to Set\\ (T\to S)\mapsto \Ext^i_T(F^{\bullet}\mid_T,G^{\bullet}\mid_T)
\end{gather*}
with respect to the fppf site on $S$, where 
\[\Ext^i_S(F^{\bullet},G^{\bullet}):=\Hom_{D^b(S\fppf)}(F^{\bullet},G^{\bullet}[i])=\Hom_{D^b(S\fppf)}(F^{\bullet}[-i],G^{\bullet}).\]

We now proceed to the construction of the Cartier dual of $M=[L\xrightarrow{u} G]$. Let $S$ be a locally notherian scheme. As before, $G$ is a an extension of abelian scheme $A$ by a torus $T$. Denote $M_A:=[L\xrightarrow{v}A]$, where $v$ is a map that makes the following diagram commute 
\begin{equation}\label{5.2.1}
\begin{tikzcd}
            &             & L \arrow[d, "u"] \arrow[rd, "v"] &             &   \\
0 \arrow[r] & T \arrow[r] & G \arrow[r]                      & A \arrow[r] & 0
.\end{tikzcd}
\end{equation}
The dual of $M$ is a 1-motive $M=[T\ve\xrightarrow{u\ve} G\ve]$, where $G\ve$ is an extension of $A\ve$ by $L\ve$ and defined as follows:
\begin{enumerate}
    \item The dual of the torus $T$ is a lattice over $S$, i.e., $T\ve$ is the group scheme which represents the sheaf $\ihom_{S}(T,\G_m)$ (see \cref{sec: Cartier dual in appendix}).
    \item The dual of the lattice $L$ is a torus over $S$, i.e., $L\ve$ is the group scheme which represents $\ihom_{S}(L,\G_m)$.
    \item The dual of abelian scheme $A$ is $A\ve$, which is an abelian scheme over $S$ representing the sheaf $\iext^1_S(A,\G_m)$, by the Weil-Barsotti formula $A\ve\cong \iext^1_S(A,\G_m)$ (see \cite[Chapter III]{oort2006commutative}).
    \item We define $G\ve$ to be the group scheme over $S$ which represents the sheaf $\iext^1_S(M_A,\G_m)$. More explicitly, applying the functor $\iext^{\bullet}_S(.,\G_m)$ to the diagram 
    \begin{equation}\label{eq: 5.2.2}
        \begin{tikzcd}
            &             & 0                       &             &   \\
0 \arrow[r] & A \arrow[r] & M_A \arrow[u] \arrow[r] & L \arrow[r] & 0\\
            &             & M\arrow[u]                       &             &   \\
                        &             & T\arrow[u]                       &             &   \\
                                                &             & 0\arrow[u]                       &             &   
\end{tikzcd}
    \end{equation}
    gives the diagram 
    \begin{equation}\label{eq: 5.2.3}
\begin{tikzcd}[column sep=tiny]
                 &                               &                & \vdots\arrow[d]                                   &                              &\\
                 &                               &                & \ihom_S(M,\G_m) \arrow[d]                         &                              &        \\
                 &                               &                & T\ve \arrow[d, "u\ve"]                   &                              &        \\
\cdots \arrow[r] & {\ihom_{S}(A,\G_m)} \arrow[r] & L\ve \arrow[r] & {\iext_S^1(M_A,\G_m)} \arrow[r] \arrow[d] & {\iext_S^1(A,\G_m)} \arrow[r] &\iext^1_S(L,\G_m)\arrow[r]  & \cdots \\
                 &                               &                & \iext^1_S(M,\G_m)\arrow[d]                                   &                              &
                 \\
                 &                               &                & \vdots                                   &                              &
\end{tikzcd}
\end{equation}
The sheaf $\iext^1_S(M_A,\G_m)$ is representable and we denote by $G\ve$ the group scheme over $S$ that represents it. As $A$ is an abelian scheme (it is proper), $\ihom_{S}(A,\G_m)=0$, and by \cref{1.2.5}, $\iext^1_S(L,\G_m)=0$.
\end{enumerate}
Therefore, the dual of $M$ is $M\ve=[T\ve\xrightarrow{u\ve} G\ve]$ and we have the commutative diagram
\begin{equation}\label{eq: 5.2.4}
\begin{tikzcd}
            &                & T\ve \arrow[d, "u\ve"] \arrow[rd, "v\ve"] &                &   \\
0 \arrow[r] & L\ve \arrow[r] & G\ve \arrow[r]                            & A\ve \arrow[r] & 0
\end{tikzcd}
\end{equation}
A morphism $\varphi:M_1\to M_2$ induces a morphism of complexes in $\ref{eq: 5.2.2}$ and $\ref{eq: 5.2.3}$. As a result, it gives a morphism $\varphi\ve:M\ve_2\to M\ve_1$.

We can repeat the above construction to get a 1-motive $(M\ve)\ve$, 
\begin{equation}
\begin{tikzcd}
            &                & (L\ve)\ve \arrow[d, "(u\ve)\ve"] \arrow[rd, "(v\ve)\ve"] &                &   \\
0 \arrow[r] & (T\ve)\ve \arrow[r] & (G\ve)\ve \arrow[r]                            & (A\ve)\ve \arrow[r] & 0
\end{tikzcd}
\end{equation}
There are natural isomorphisms $(L\ve)\ve\cong L$, $(T\ve)\ve\cong T$, and $(A\ve)\ve\cong A$. This implies that $(G\ve)\ve\cong G$, and in particular $(M\ve)\ve\cong M$ as 1-motives.

The Cartier duality on $L$, $T$, and $A$, can uniquely extend to a duality on $\Mi(S)$.
If the functor $(.)^{D}:\Mi(S)\to\Mi(S)$ induces the Cartier duality $(.)\ve$ on $L$, $T$, and $A$, then $(.)^D$ induces natural isomorphisms between the weight filtrations $M^D$ and $M\ve$, for any $M\in\Mi(S)$. Hence, $(.)^D=(.)\ve$. 

\begin{cor}
   The Cartier dual $(.)\ve:\Mi(S)\to\Mi(S)$ is an exact contravariant functor with the property that $(.)\ve\,\ve\cong id_{\Mi}$. 
\end{cor}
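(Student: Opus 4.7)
The plan is to establish the three assertions in order: contravariant functoriality, involutivity $(\cdot)\ve\,\ve\cong\id$, and finally exactness, reducing the last to the first two via the weight filtration.

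First I would make contravariant functoriality explicit. Given a morphism $\varphi\colon M_1\to M_2$ of $1$-motives, one obtains a morphism $\varphi_A\colon M_{1,A}\to M_{2,A}$ between the associated complexes $[L\xrightarrow{v}A]$. Applying the functor $\iext^{\bullet}_S(\,\cdot\,,\G_m)$ to $\varphi_A$ produces, by naturality, a morphism of the long exact sequences \eqref{eq: 5.2.3} and in particular a morphism $G_2\ve\to G_1\ve$ compatible with the dualized lattice map $u_2\ve\to u_1\ve$ obtained from Cartier duality of tori. This yields the assignment $\varphi\mapsto\varphi\ve$ and additivity is automatic because $\iext^{\bullet}_S(\,\cdot\,,\G_m)$, together with Cartier duality on $L$ and $T$, are additive.

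For involutivity, the construction in the excerpt already produced canonical isomorphisms $(L\ve)\ve\cong L$, $(T\ve)\ve\cong T$, and $(A\ve)\ve\cong A$ — the first two are Cartier duality for affine commutative group schemes and the third is biduality of abelian schemes. Running the dual construction twice on $M=[L\xrightarrow{u}G]$ produces a $1$-motive whose weight-graded pieces are canonically identified with those of $M$, hence $(G\ve)\ve\cong G$ by the five lemma applied to the extension $0\to T\to G\to A\to 0$, and these identifications are compatible with $u$ and $(u\ve)\ve$ by the naturality of the Poincar\'e biextension (which is self-dual under swapping the two factors). Naturality in $\varphi$ gives $(\varphi\ve)\ve=\varphi$ under these identifications, so $(\cdot)\ve\,\ve\cong\id_{\Mi}$ as functors.

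It remains to show exactness. Because $(\cdot)\ve\,\ve\cong\id$, the functor $(\cdot)\ve$ is its own quasi-inverse, so it suffices to prove that it is half-exact, e.g.\ that it carries a short exact sequence $0\to M_1\to M_2\to M_3\to 0$ in $\Mi$ to a complex that is exact at the two outer terms; exactness in the middle then follows by applying the same statement to the dual sequence and using $(\cdot)\ve\,\ve\cong\id$. A short exact sequence of $1$-motives induces short exact sequences on each weight-graded piece $L_i$, $A_i$, $T_i$. Cartier duality is exact on lattices and tori (it is an anti-equivalence between these two categories), and exactness of $A\mapsto A\ve=\iext^1_S(A,\G_m)$ on short exact sequences of abelian schemes follows from the vanishing $\ihom_S(A,\G_m)=0$ and $\iext^1_S(A,\G_m)$ being representable together with the long exact $\iext$ sequence. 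Assembling these via the snake lemma applied to the dualized diagram \eqref{eq: 5.2.3} and to the analogue of \eqref{eq: 5.2.2} produces an exact sequence $0\to M_3\ve\to M_2\ve\to M_1\ve\to 0$.

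The main obstacle I anticipate is bookkeeping the exactness at the middle term $G\ve$: one must check that the map $G_2\ve\to G_1\ve$ obtained from $\iext^1_S(\,\cdot\,,\G_m)$ fits into a short exact sequence of semi-abelian schemes compatible with the dualized lattice data, and that the composite on the $L\ve$-part (originating from the $\ihom_S(A,\G_m)=0$ edge of the long exact sequence) identifies correctly with the dual of the tori $T_i$. This is essentially a diagram chase combining the long exact $\iext$-sequence for \eqref{eq: 5.2.2} with the already-established exactness on $L$, $T$, and $A$, but it is the only point where the structure of the extension $G$ of $A$ by $T$ enters nontrivially.
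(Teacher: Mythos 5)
Your proposal is correct and takes essentially the same route as the paper, which obtains the corollary directly from the preceding construction: contravariant functoriality via the induced morphisms of the diagrams defining $M\ve$, biduality from the canonical isomorphisms $(L\ve)\ve\cong L$, $(T\ve)\ve\cong T$, $(A\ve)\ve\cong A$ (hence $(G\ve)\ve\cong G$ and $(M\ve)\ve\cong M$), and exactness checked on the weight-graded pieces using the anti-equivalence between tori and lattices, the duality of abelian schemes, and the vanishings of \cref{1.2.5}. The only remark is that your preliminary ``half-exactness suffices'' reduction is superfluous (and, as phrased, presupposes exactness of the dual sequence before it is established), but your subsequent direct snake-lemma argument on the graded pieces already yields full exactness, so nothing essential is affected.
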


\begin{thm}\label{1.2.5}
Let $X$ be either a finite flat group scheme, a torus or a lattice over $S$, and let $A$ be an abelian scheme over $S$. Then
\begin{enumerate}
    \item $\iext_S^1(X,\G_m)=0$.
    \item $\iext_S^2(A,\G_m)=0$.
    \item $\ihom_S(X,A)\cong \iext^1_S(A\ve,X\ve)$.
\end{enumerate}
\end{thm}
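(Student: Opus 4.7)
The plan is to treat the three statements in turn: (1) by fppf-local reduction to standard cases, (2) by invoking Breen's deep vanishing theorem, and (3) as a formal consequence of the biextension formalism once (1) is available.

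For (1), I would work locally for the fppf topology on $S$, so that $X$ takes a standard shape. If $X$ is a lattice, it is \'etale-locally isomorphic to $\Z^r$, and the problem reduces to $\iext^1(\Z,\G_m)=0$, which holds because $\Z$ is projective as a $\Z$-module. If $X$ is a torus, it is fppf-locally $\G_m^r$, so the vanishing reduces to $\iext^1(\G_m,\G_m)=0$; Cartier-dualizing, this is the trivial fact that extensions of $\Z$ by $\Z$ (as sheaves of abelian groups) split. Finally, if $X$ is a finite flat commutative group scheme, the vanishing $\iext^1_{S\fppf}(X,\G_m)=0$ is classical (\cite{Grothendieck_1972}, Expos\'e VIII): one uses Cartier duality for $X$ and the fact that $\G_m$ is, in the appropriate sense, fppf-injective among smooth commutative group schemes.

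Part (2) is the vanishing theorem of Breen, which asserts $\iext^2_{S\fppf}(A,\G_m)=0$ for any abelian scheme $A$ over $S$, and is used as a black box.

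For (3), I would use the theory of biextensions from \cite{Grothendieck_1972}. The Poincar\'e biextension $P_A$ of $(A,A\ve)$ by $\G_m$ pulls back along $f\times\operatorname{id}_{A\ve}$, for any $f\in\ihom_S(X,A)$, to a biextension of $(X,A\ve)$ by $\G_m$. By the general biextension formalism, biextensions of $(X,A\ve)$ by $\G_m$ are parametrized, on the one hand, by $\ihom_S(X,\iext^1_S(A\ve,\G_m))=\ihom_S(X,A)$ via the Weil-Barsotti formula applied in the first variable, and on the other hand, using $\iext^1_S(X,\G_m)=0$ from part (1) to collapse the relevant spectral sequence, by extensions of $A\ve$ by $\ihom_S(X,\G_m)=X\ve$, i.e.\ by $\iext^1_S(A\ve,X\ve)$. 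Comparing the two parametrizations yields the stated isomorphism, the inverse being obtained symmetrically by combining Cartier duality on $X\ve$ with Weil-Barsotti on $A\ve$.

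The main obstacles are the finite flat case of (1), which requires some care with fppf descent and Cartier duality, and (2), which is a nontrivial theorem of Breen; granting these, (3) is purely formal from the biextension dictionary.
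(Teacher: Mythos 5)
Your proposal is correct in substance, but it takes a genuinely different route from the paper in parts (2) and (3), and two of your micro-justifications in (1) need repair. For (1) both you and the paper ultimately reduce to classical references (SGA3, SGA7, Oort); however, $\iext^1_S(\underline{\Z},\G_m)$ vanishes because it is the sheafification of $T\mapsto\Pic(T)$, which is Zariski-locally trivial, not because $\Z$ is a projective $\Z$-module (the constant sheaf is not projective among abelian fppf sheaves), and the torus case cannot simply be ``Cartier-dualized'' to extensions of $\Z$ by $\Z$: one must first know that an fppf extension of $\G_m$ by $\G_m$ is representable and of multiplicative type, which is precisely the SGA3 input the paper cites. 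For (2) the paper does not use the vanishing as a black box: it quotes Breen only for the torsionness of $\iext^i_S(A,\G_m)$ for $i>1$, and then applies part (1) to $A[m]$ in the sequence $0\to A[m]\to A\xrightarrow{[m]}A\to 0$ to see that multiplication by $m$ is injective on $\iext^2_S(A,\G_m)$, so the torsion sheaf vanishes; citing Breen's later vanishing theorem outright is legitimate, but the paper's argument exhibits how (1) feeds (2). For (3) the paper avoids biextensions entirely: given $f\colon X\to A$ it applies $\iext^{\bullet}_S(-,\G_m)$ to the exact sequence of complexes $0\to[0\to A]\to[X\to A]\to[X\to 0]\to 0$ and, using (1) together with $\ihom_S(A,\G_m)=0$, obtains an extension $0\to X\ve\to\iext^1_S([X\to A],\G_m)\to A\ve\to 0$, i.e.\ a class in $\iext^1_S(A\ve,X\ve)$; conversely an extension $0\to X\ve\to G\to A\ve\to 0$ yields, after applying $\iext^{\bullet}_S(-,\G_m)$ and Weil--Barsotti, a connecting map $X=\ihom_S(X\ve,\G_m)\to\iext^1_S(A\ve,\G_m)=A$. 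Your argument, computing $\Biext^1(X,A\ve;\G_m)$ in two ways via the SGA7 exact sequences and the symmetry of $\Biext$, with one side collapsed by (1) and the other by Weil--Barsotti, is also standard and buys a symmetric, coordinate-free comparison; the paper's d\'evissage is more elementary and has the additional benefit that the extension it produces is exactly the semi-abelian part $G\ve$ of the dual 1-motive constructed in the same section.
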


\begin{proof}
\begin{enumerate}
    \item If $X$ is a finite flat group, the proof can be found in \cite[Th. III.16.1]{oort2006commutative}. If $X$ is a constant group scheme or a torus, then it follows from \cite[exp. XIII]{Grothendieck_Demazure_2011} and \cite[exp. VIII]{grothendieck1967elements}.

    \item It is shown in \cite{Breen_1969} that $\iext^i_S(A,\G_m)$ are torsion for all $i>1$. Using the statement (1), we can see that the multiplication by $[m]$ on $\iext^2_S(A,\G_m)$ is injective so $\iext^2(A,\G_m)=0$.

    \item Let $f:X\to A$ be a morphism. It yields an exact sequence of complexes
\[0\to [0\to A]\to [X\to A]\to [X\to 0]\to 0\]
By (1), we know that $\iext^1(X,\G_m)=\ihom_{S}(A,\G_m)=0$. Applying $\iext^i_S(.\,,\G_m)$, we obtain the exact sequence 
\[
0\to X\ve\to \iext^1(M_A,\G_m)\to A\ve\to 0
\]
Therefore, we have obtained a map $\ihom_S(X,A)\to \iext^1_S(A\ve,X\ve),\, f\mapsto\iext^1(M_A,\G_m)$. For the inverse map, consider the exact sequence \[0\to X\ve\to G\to A\ve\to 0\]
Taking $\iext^i_S(.\,,(\G_m))$, we get 
\[0\to \ihom_S(G,\G_m)\to X\to A\to \iext^1_S(G,\G_m)\to 0\] This gives us a map $X\to A$.
\end{enumerate}
\end{proof}
\section{Points on 1-motive}
Let $M$ be a 1-motive over $S$. The definition of $S$-points of $M$ is inspired by \cite[\S 4.3]{deligne1974theorie} and also \cite[\S 7.1]{Andreatta_Bertapelle_2011}.
\begin{defn}
The group of $S$ points of $M$ is 
\[
M(S):=\Ext^1_S(M\ve_S,\G_m)\cong\Ext^1_S(\underline{\Z},M_S)\cong H^0\fppf(S,M)\cong\Hom_S(\Z,M_S)
\]
where the $\Hom$ and $\Ext^1$ are considered in the derived category of abelian fppf sheaves on fppf site $S\fppf$.
\end{defn}
For an abelian scheme $A$ over $S$, the previous identifications reduce to the Weil-Barsotti formula 
\[
M(S)=\Ext^1(A_S\ve,\G_{m,S})=\Hom_S(\Z,A_S)=A_S(S)
\]
\begin{remark}\label{S-point of 1-motive when split}
Let $M\ve=[T\ve\to G\ve]$ be the dual of the 1-motive $M=[L\to G]$ over $S$ and $M\ve_A=[T\ve\xrightarrow{v\ve}A\ve]$ (see \ref{1.2.5}). We have
\begin{equation}
    M\ve_A(S)=\Ext^1(G,\G_m)=G(S)
\end{equation}
The exact sequence 
\[
0\to L\ve\to M\ve\to M\ve_A\to 0
\]
of complexes of fppf sheaves induces a long exact sequence
\begin{equation}
0\to \Hom_S(M\ve,\G_m)\to L(S)\to G(S)\to M(S)\to H^1(S\fppf,L)\to \cdots   
\end{equation}
The group $H^1(S\fppf,L)$ is equal to $H^1(S\etale,L)$ as $L$ is \'etale over $S$. By \cite[Prop. VIII.5.1]{grothendieck1967elements}, if $S'\to S$ is a finite flat \'etale Galois extension with Galois group $G$ such that $L\times_SS'$ is constant, then we have $H^1(S'\etale,L)=0$.
Thus, the group $H^1(S\etale,L)=H^1(G,L)$. The group $H^1(G,L)$ is a finite Galois cohomology, so it is torsion.

Hence, over a locally noetherian base scheme $S$, we have 
\begin{equation}
    M(S)\otimes_{\Z}\Q=(G(S)/\im(L))\otimes_{\Z} \Q.
\end{equation}
Particularly, when $S=\Spec(K)$ and $L$ is split over $K$, then $H^1(S\fppf,L)=0$ and we have an exact sequence 
\[
L(K)\to G(K)\to M(K)\to 0,
\]
and $M(K)=G(K)/\im(u_K)$.
\end{remark}
\begin{remark}
    We can give a more explicit description of an element in $M(S)\otimes_{\Z}\Q$. Assume that $M=[L\xrightarrow{u}G]$ is a 1-motive over $S$. Let $x\in G(S)$ and  $M_x=[L\oplus\Z\to G]$ be a 1-motive induced by the map $(\ell,1)\mapsto u(\ell)+x$. The canonical exact sequence
\[
0\to M\to M_x\to \Z\to 0
\]
in $\Ext^1_S(\Z,M)$ depends only on the class of $x$ in $G(S)/\im(u)$. Taking the Cartier dual $(.)\ve$ gives the element 
\[
0\to \G_m\to M_x\ve\to M\ve\to 0
\]
in $M(S)$. Conversely, if $y\in M(S)$, the identification $(M(S)\otimes_{\Z}\Q=G(S)/\im(u))\otimes_{\Z}\Q$ implies that there is a power of $y$ which comes from a point $x\in G(S)$. Moreover, $y$ corresponds to the pull-back of $x$ along $M\ve\to G\ve$, i.e.,
\begin{equation*}
    \begin{tikzcd}
0 \arrow[r] & \G_m \arrow[r] \arrow[d] & E_y \arrow[r] \arrow[d] & M\ve \arrow[r] \arrow[d] & 0 \\
0 \arrow[r] & \G_m \arrow[r]           & E_x \arrow[r]           & G\ve \arrow[r]           & 0
\end{tikzcd}
\end{equation*}
Taking $(.)\ve$ gives
\begin{equation*}
\begin{tikzcd}
0 \arrow[r] & M \arrow[r] \arrow[d] & E_y\ve \arrow[r] \arrow[d] & \Z \arrow[r] \arrow[d] & 0 \\
0 \arrow[r] & G\ve \arrow[r]           & E_x\ve \arrow[r]           & \Z \arrow[r]           & 0
\end{tikzcd}
\end{equation*}
As $E_x\ve$ corresponds to a map $\Z\to G$, it gives us a 1-motive $[\Z\xrightarrow{u}\ G]$. Therefore, $E_y$ is a 1-motive of the form $[L\oplus\Z\xrightarrow{v} G]$. As the above diagram is commutative, we have $v(\ell,n)=u(\ell)+nx$.
\end{remark}
We can summarize the above remark in the following
\begin{cor}\label{cor: explicit description of M(S) points}
Let $M=[L\xrightarrow{u} G]$ be a 1-motive over $S$. We have 
\begin{equation}
    M(S)\otimes_{\Z}\Q=\Big\{0\to M\to M_x\to \Z\to 0 \st \, x\in G(S),\, M_x=[L\oplus\Z\xrightarrow{(\ell,n)\mapsto u(\ell)+nx}G] \Big\}\otimes_{\Z}\Q
\end{equation}
\end{cor}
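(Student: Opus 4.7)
The plan is to directly package the content of the preceding two remarks into the stated bijection. First I would construct the map
\[
\Psi\colon G(S)\longrightarrow M(S)=\Ext^1_S(\Z,M),\qquad x\longmapsto [0\to M\to M_x\to\Z\to 0],
\]
where $M_x=[L\oplus\Z\to G]$ is the 1-motive in the statement. The displayed sequence is short exact as a sequence of complexes of fppf sheaves: in degree $-1$ it reads $0\to L\to L\oplus\Z\to\Z\to 0$, and in degree $0$ it reads $0\to G\to G\to 0\to 0$. Hence $\Psi(x)$ is a well-defined extension class.

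Next I would verify that $\Psi$ factors through $G(S)/\im(u)$. For $x=u(\ell_0)$ with $\ell_0\in L(S)$, the automorphism of $L\oplus\Z$ sending $(\ell,n)\mapsto(\ell-n\ell_0,n)$ (together with $\mathrm{id}_G$) lifts to an isomorphism of complexes $M_{u(\ell_0)}\cong M\oplus[\Z\xrightarrow{0}G]$, splitting the extension. A Baer-sum calculation identifies $\Psi(x+x')$ with the class of $M_{x+x'}$, so $\Psi$ is additive.

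To finish, I would identify the induced map $\bar\Psi\colon G(S)/\im(u)\to M(S)$ with the connecting morphism in the long exact sequence
\[
L(S)\to G(S)\to M(S)\to H^1(S\fppf,L)
\]
of \cref{S-point of 1-motive when split}. After tensoring with $\Q$, the torsion term $H^1(S\fppf,L)$ vanishes, so $\bar\Psi\otimes\Q$ becomes an isomorphism onto $M(S)\otimes_{\Z}\Q$, which is exactly the description given in the corollary.

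The main obstacle will be the compatibility of $\bar\Psi$ with the connecting morphism. I would address it by passing to Cartier duals: applying $(\cdot)\ve$ to $0\to M\to M_x\to\Z\to 0$ produces $0\to\G_m\to M_x\ve\to M\ve\to 0$, which a diagram chase using the construction of the Cartier dual recalled earlier in this chapter identifies with the pull-back along $M\ve\to G\ve$ of the extension $0\to\G_m\to E_x\ve\to G\ve\to 0$ corresponding to $x\in G(S)=\Ext^1_S(G\ve,\G_m)$. Via the Weil--Barsotti identification $M(S)=\Ext^1_S(M\ve,\G_m)$, this matches $\Psi(x)$ with the image of $x$ under the connecting map, completing the argument.
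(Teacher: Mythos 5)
Your proposal is correct and follows essentially the same route as the paper's own argument (the remark preceding the corollary): both build the extension class of $M_x$ from a point $x\in G(S)$, invoke the identification $M(S)\otimes_{\Z}\Q\cong(G(S)/\im(u))\otimes_{\Z}\Q$ from \cref{S-point of 1-motive when split}, and reconcile the two descriptions via Cartier duality, i.e.\ by recognizing the dual extension as the pull-back of $E_x$ along $M\ve\to G\ve$. The only blemishes are cosmetic: the split form should be $M\oplus[\Z\to 0]$ rather than $M\oplus[\Z\xrightarrow{0}G]$, and the map $G(S)\to M(S)$ in that exact sequence is the one induced by $G\to M$ (equivalently by pulling back along $M\ve\to G\ve$), not literally a connecting morphism.
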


\section{The de Rham realization for 1-motive}\label{sec: de Rham realisation}
In this section, we start by defining the universal vector extension for 1-motives. For the construction of $M^{\natural}$, the universal vector extension of a Deligne 1-motive $M$ over a field of characteristic zero, we refer to \cite[10.1.7]{deligne1974theorie}. For the more general case over a base scheme $S$, see \cite[Prop. 2.2.1]{barbieri2009sharp}, and \cite[\S 2.3, 2.4]{andreatta2005crystalline}.

Recall that a vector group scheme over $S$ is an $S$-group scheme that is locally isomorphic for the fpqc topology\footnote{fpqc stands for fidèlement plate et quasi-compacte.} to a finite product of $\G_a$'s. If $V$ is a vector group over $S$ then the sheaf $\ihom_S(.,V)$ is a locally free $\cO_S$-module of finite rank. Conversely, every locally free $\cO_S$-module $\cM$ of finite rank induces a vector group $V$ whose sections over an $S$-scheme $T$ are $\cM(T)=\Gamma(T,\cO_T\otimes_{\cO_S}\cM)$.

A commutative group scheme $G$ over $S$ is semi-abelian if and only if $\ihom_S(G,V)=0$ for all vector group scheme $V$ over $S$.

\begin{defn}
Let $G$ be a semi-abelian group scheme.
A vector extension of $G$ over $S$ is an extension of $G$ by a vector group scheme $V$ over $S$.

The vector extension \[0\to V(G)\to \Ex(G)\to G\to 0\] is called the universal vector extension of $G$ if for any vector extension 
\[0\to V\to G'\to G\to 0,\] there exists a unique homomorphism of $S$-vector group schemes $\varphi:V(G)\to V$ such that $G'$ is the push-out of $\Ex(G)$ by $\varphi$. 
\end{defn}
\begin{remark}
Indeed, the vector extension 
   \[0\to V(G)\to \Ex(G)\to G\to 0\]
   is universal if and only if the map \[\Hom_{\cO_S}(V(G),V)\to\Ext^1_S(G,V)\] induced by push-out is an isomorphism for all vector groups $V$ over $S$.
    
    If the following conditions are satisfied:
    \begin{enumerate}
        \item[I.] $\ihom_{S}(G,\G_{a,S})=0$,
        \item[II.] $\iext^1_S(G,\G_{a,S})$ is a locally free $\cO_S$-module of finite rank,
    \end{enumerate}
    then, $V(G):=\ihom_{\cO_S}(\iext^1_S(G,\G_{a,S}),\cO_S)$ is a vector group scheme over $S$ and the universal vector extension of $G$ exists. Moreover, we have
    \[\iext^1_S(G,V)=\iext^1_S(G,\G_{a,S})\otimes_{\cO_S}V=\ihom_{\cO_S}(V(G),\cO_S)\otimes_{\cO_S}V=\ihom_{\cO_S}(V(G),V).
    \]
    Thus, $\Ex(G)$, by definition, is the extension corresponding to the identity morphism on $V(G)$.
\end{remark}
\begin{prop}
Let $G$ be a semi-abelian group scheme over $S$. The universal vector extension 
\begin{equation}\label{Universal vector extension G}
    0\to V(G)\to \Ex(G)\to G\to 0
\end{equation}
exists, where $V(G)=\ihom_{\cO_S}(\iext^1_S(G,\G_{a,S}),\cO_S)$ and there is an isomorphism $$\Ex(G)\cong \Ex(A)\times_A G.$$
\end{prop}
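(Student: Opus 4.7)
The plan is to verify the two hypotheses (I) and (II) from the preceding remark for the semi-abelian scheme $G$, which will automatically give the existence of $\Ex(G)$ together with the formula for $V(G)$, and then identify $\Ex(G)$ with the pullback $\Ex(A)\times_A G$ by a universal-property argument. Write $G$ as an extension $0\to T\to G\to A\to 0$ of the abelian scheme $A$ by the torus $T$.

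Condition (I), namely $\ihom_S(G,\G_{a,S})=0$, is exactly the characterization of semi-abelian schemes recalled just above the proposition, so there is nothing to do. For condition (II), I would apply $\iext^{\bullet}_S(-,\G_{a,S})$ to $0\to T\to G\to A\to 0$ to obtain the long exact sequence
\begin{equation*}
0\to\ihom_S(A,\G_a)\to\ihom_S(G,\G_a)\to\ihom_S(T,\G_a)\to\iext^1_S(A,\G_a)\to\iext^1_S(G,\G_a)\to\iext^1_S(T,\G_a)\to\cdots
\end{equation*}
The two outer terms involving $A$ behave well: $\ihom_S(A,\G_{a,S})=0$ since $A$ is proper, and $\iext^1_S(A,\G_{a,S})\cong\coLie(A\ve)$ is locally free of rank $\dim A$, by the classical Rosenlicht/Messing description of the universal vector extension of $A$ (compare with \cref{1.2.5} for the multiplicative analogue). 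The key technical input is $\ihom_S(T,\G_{a,S})=\iext^1_S(T,\G_{a,S})=0$: the first vanishing holds because a torus has no non-constant characters valued in $\G_a$, and the second because any vector extension of a torus splits locally for the fpqc topology (using that $T$ is of multiplicative type while $\G_a$ is unipotent; one reduces via \cite{Grothendieck_Demazure_2011} to the split torus and then to $\G_m$, where the vanishing is a standard computation). Combining these vanishings yields the isomorphism $\iext^1_S(A,\G_{a,S})\iso\iext^1_S(G,\G_{a,S})$, so (II) holds and moreover $V(G)\cong V(A)$.

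With (I) and (II) verified, the remark preceding the proposition produces the universal vector extension
\begin{equation*}
0\to V(G)\to\Ex(G)\to G\to 0
\end{equation*}
with $V(G)=\ihom_{\cO_S}(\iext^1_S(G,\G_{a,S}),\cO_S)$. To obtain the formula $\Ex(G)\cong\Ex(A)\times_A G$, I would exhibit the pullback of $\Ex(A)\to A$ along $G\to A$ as a vector extension of $G$ by $V(A)$ and show it satisfies the universal property. Concretely, for any vector group $V$ on $S$, the same long exact sequence with target $V$ combined with the vanishings $\ihom_S(A,V)=\ihom_S(T,V)=\iext^1_S(T,V)=0$ (by the same arguments as above, since $V$ is a power of $\G_{a,S}$ locally) gives a functorial isomorphism $\Ext^1_S(A,V)\iso\Ext^1_S(G,V)$ induced by pullback along $G\to A$; composing with the universal property of $\Ex(A)$ shows that pushing out $\Ex(A)\times_A G$ along a unique map $V(A)\to V$ realises any prescribed vector extension of $G$. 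By uniqueness of the universal vector extension we conclude $\Ex(G)\cong\Ex(A)\times_A G$.

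The main potential obstacle is the vanishing of $\iext^1_S(T,\G_{a,S})$ in the required generality: over a field this is classical, but to make it work functorially on $S\fppf$ for a general locally Noetherian base I would reduce locally to the split case, then use that any extension of $\G_m$ by $\G_a$ is classified by $H^1(\G_m,\cO_{\G_m})=0$. The rest of the argument is essentially a formal diagram chase once these vanishings and the local freeness of $\iext^1_S(A,\G_{a,S})$ are in hand.
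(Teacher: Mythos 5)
The paper offers no proof of this proposition at all: its ``proof'' is the single reference to \cite[\S 1.7]{mazur2006universal}. Your write-up reconstructs that argument, and it is essentially the same strategy the paper itself carries out later for the 1-motive analogue (\cref{prop 2.4.2}): verify conditions (I) and (II) of the preceding remark by applying $\iext^{\bullet}_S(-,\G_{a,S})$ to $0\to T\to G\to A\to 0$, using $\ihom_S(T,\G_{a,S})=\iext^1_S(T,\G_{a,S})=0$ together with the local freeness of $\iext^1_S(A,\G_{a,S})$, and then obtain $\Ex(G)\cong\Ex(A)\times_A G$ from the bijectivity of the pullback map $\Ext^1_S(A,V)\to\Ext^1_S(G,V)$ for every vector group $V$ combined with the universal property of $\Ex(A)$; this last step is exactly the Mazur--Messing identification, and your diagram chase (pushout of the pullback equals pullback of the pushout on extension classes) is sound. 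Two small points to tighten. First, $\iext^1_S(A,\G_{a,S})$ is $\underline{\Lie}(A\ve)$ rather than $\coLie(A\ve)$ (compare \cref{V(M)=coLie(Gv)}); this is harmless, since all you use is that it is locally free of rank $\dim A$, and dualizing then gives $V(A)=\coLie(A\ve)$ as expected. Second, the vanishing $\iext^1_S(\G_m,\G_{a})=0$ does not follow from $H^1(\G_m,\cO_{\G_m})=0$ alone: affineness only trivializes the underlying $\G_a$-torsor, and one must still check that the resulting symmetric (Hochschild) $2$-cocycle is a coboundary, or instead invoke the standard multiplicative-versus-unipotent vanishing; this is the same fact the paper uses without proof inside \cref{prop 2.4.2}, so treating it as a black box is acceptable, but your parenthetical justification as stated is incomplete rather than wrong.
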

\begin{proof}
See \cite[\S 1.7]{mazur2006universal}.
\end{proof}
\begin{defn}
A vector extension of a 1-motive $M=[L\to G]$ over $S$ is an extension of $M$ by a vector group scheme $V$ over $S$. That is an exact sequence \[0\to [0\to V]\to [L\to G']\to [L\to G]\to 0\] as complexes of $S$-group schemes. We usually denote $V$ by $V:=[0\to V]$.
\end{defn}
\begin{defn}
The vector extension \[0\to V(M)\to M^{\natural}\to M\to 0\] is called the universal vector extension of $M$ if for any vector extension $$0\to V\to M'\to M\to 0,$$ there exists a unique homomorphism of $S$-vector group schemes $\varphi:V(M)\to V$ such that $M'$ is the push-out of $M^{\natural}$ by $\varphi$. 
\end{defn}

The vector extension 
   \[0\to V(M)\to M^{\natural}\to M\to 0\]
   is universal if and only if the map
   \begin{equation}\label{2.4.2}
   \Hom_{\cO_S}(V(M),V)\to\Ext^1_S(M,V)    
   \end{equation}
   induced by push-out is an isomorphism for all vector groups $V$ over $S$.

    If the following conditions are satisfied:
    \begin{enumerate}
        \item[I.] $\ihom_{S}(M,\G_{a,S})=0$,
        \item[II.] $\iext^1_S(M,\G_{a,S})$ is a locally free $\cO_S$-module of finite rank,
    \end{enumerate}
    then, $V(M):=\ihom_{\cO_S}(\iext^1_S(M,\G_{a,S}),\cO_S)$ is a vector group scheme over $S$ and the universal vector extension of $M$ exists.

\begin{prop}\label{prop 2.4.2}
Let $M=[L\xrightarrow{u}G]$ be a 1-motive over $S$. The universal vector extension 
\begin{equation}\label{Universal extension M}
    0\to V(M)\to M^{\natural}\to M\to 0
\end{equation}
exists, where $V(M)=\ihom_{\cO_S}(\iext^1_S(M,\G_{a,S}),\cO_S)$ and $M^{\natural}$ is given by $M^{\natural}=[L\xrightarrow{u^{\natural}}G^{\natural}]$. Moreover, we have an extension of $S$-group schemes 
\begin{equation}\label{universal' extension Gnat}
0\to V(M)\to G^{\natural}\to G\to 0 
\end{equation}
such that $G^{\natural}$ is the push-out of the universal vector extension \[0\to V(G)\to \Ex(G)\to G\to 0\] of semi-abelian scheme $G$ along the inclusion \[V(G)=\ihom_{\cO_S}(\iext^1_S(G,\G_{a,S}),\cO_S)\hookrightarrow\ihom_{\cO_S}(\iext^1_S(M,\G_{a,S}),\cO_S)=V(M)\] and consequently, there is a non-canonical isomorphism $G^{\natural}\cong \Ex(G)\times_S (L\otimes_{\Z}\G_{a,S})$.
\end{prop}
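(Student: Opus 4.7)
My plan is to verify the two hypotheses (I) and (II) for $M$, deduce the existence of $V(M)$ and the structure of its exact sequence, and finally identify $G^\natural$ via the stated push-out.

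First, I would apply the derived functor $R\ihom_S(-,\G_{a,S})$ to the canonical short exact sequence $0\to [0\to G]\to M\to [L\to 0]\to 0$ from \eqref{canonical exact sequence for any 1-motive}. Since $G$ is semi-abelian, $\ihom_S(G,\G_{a,S})=0$ and $\iext^1_S(G,\G_{a,S})$ is locally free of finite rank (this is precisely the input for the analogous statement for $G$, used in the preceding proposition). Since $L$ is a lattice, $\ihom_S(L,\G_{a,S})=L\ve\otimes_\Z\cO_S$ is locally free of finite rank, and $\iext^i_S(L,\G_{a,S})=0$ for $i\geq 1$ (since lattices are \'etale-locally free $\Z$-modules of finite rank). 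Noting that $[L\to 0]$ is $L[1]$ in the derived category, the long exact sequence yields $\ihom_S(M,\G_{a,S})=0$ together with a short exact sequence
\[
0\to \ihom_S(L,\G_{a,S})\to \iext^1_S(M,\G_{a,S})\to \iext^1_S(G,\G_{a,S})\to 0,
\]
exhibiting $\iext^1_S(M,\G_{a,S})$ as an extension of locally free $\cO_S$-modules of finite rank, hence itself locally free of finite rank.

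With (I) and (II) in hand, set $V(M):=\ihom_{\cO_S}(\iext^1_S(M,\G_{a,S}),\cO_S)$. Using $\iext^1_S(M,V)\cong\iext^1_S(M,\G_{a,S})\otimes V\cong\ihom_{\cO_S}(V(M),V)$ for any vector group $V$, together with the vanishing of $\ihom_S(M,V)$, I obtain the desired isomorphism \eqref{2.4.2} and take $M^\natural$ to be the extension corresponding to $1_{V(M)}$. Dualizing the sequence above produces
\[
0\to V(G)\to V(M)\to L\otimes_\Z\G_{a,S}\to 0.
\]
A vector extension $0\to [0\to V(M)]\to M^\natural\to M\to 0$ of the two-term complex $M$ by a one-term complex forces, via the snake lemma in the category of complexes, that $M^\natural=[L\xrightarrow{u^\natural}G^\natural]$ with $0\to V(M)\to G^\natural\to G\to 0$ and the map induced on the degree $-1$ term being the identity.

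To identify $G^\natural$ with the push-out, functoriality of universal vector extensions gives a canonical morphism $\Ex(G)\to G^\natural$ over $G$ whose restriction to the kernels is precisely the inclusion $V(G)\hookrightarrow V(M)$ produced above. Since this square is cocartesian---equivalently, the class of $G^\natural$ in $\Ext^1_S(G,V(M))\cong\Hom(V(G),V(M))$ under the universal property of $\Ex(G)$ corresponds to the inclusion $V(G)\hookrightarrow V(M)$---the extension $G^\natural$ agrees with the push-out $\Ex(G)\amalg^{V(G)}V(M)$. Finally, any splitting of $0\to V(G)\to V(M)\to L\otimes\G_{a,S}\to 0$ (which exists Zariski-locally, as all three terms are vector groups, and globally whenever $\Ext^1(L\otimes\G_a,V(G))=0$) gives $V(M)\cong V(G)\oplus(L\otimes\G_{a,S})$, and a direct computation of the push-out then yields the non-canonical isomorphism $G^\natural\cong\Ex(G)\times_S(L\otimes_\Z\G_{a,S})$.

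The main obstacle is ensuring that the lift $u^\natural:L\to G^\natural$ is specified \emph{canonically} rather than merely that some lift exists: it must be the one pinned down by requiring that $[L\xrightarrow{u^\natural}G^\natural]$ represent $1_{V(M)}$ under the isomorphism $\Ext^1_S(M,V(M))\cong\Hom(V(M),V(M))$. Concretely, this is the lift obtained by combining the canonical lift $L\to L\otimes\G_{a,S}$, $\ell\mapsto\ell\otimes 1$, with the image of the component $L\otimes\G_{a,S}\to V(M)$ of any section of the push-out sequence and the original $u:L\to G$; once $u^\natural$ is defined this way, the universal property follows tautologically from the decomposition of $\Ext^1(M,V)$ into its $\Ext^1(G,V)$ and $\Hom(L,V)$ parts along the dualized sequence of Step~2.
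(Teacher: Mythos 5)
Your verification of conditions (I) and (II) is essentially the paper's own argument: both you and the paper apply $\iext^{\bullet}_S(-,\G_{a,S})$ to the weight sequence $0\to G\to M\to [L\to 0]\to 0$, use $\ihom_S(G,\G_{a,S})=0$ and $\iext^1_S(L,\G_{a,S})=0$ to get $\ihom_S(M,\G_{a,S})=0$ and the short exact sequence $0\to\ihom_S(L,\G_{a,S})\to\iext^1_S(M,\G_{a,S})\to\iext^1_S(G,\G_{a,S})\to 0$, and conclude local freeness of the middle term. The only cosmetic difference there is that the paper re-derives the finite local freeness of $\iext^1_S(G,\G_{a,S})$ from $0\to T\to G\to A\to 0$ (via $\ihom_S(T,\G_{a,S})=\iext^1_S(T,\G_{a,S})=0$, so $\iext^1_S(G,\G_{a,S})\cong\iext^1_S(A,\G_{a,S})$), whereas you import it from the existence statement for $\Ex(G)$; both are acceptable. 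Where you genuinely diverge is the second half: the paper disposes of the push-out description of $G^\natural$ and the product decomposition by citing \cite[Prop.~2.2.1]{barbieri2009sharp}, while you give a self-contained argument — functoriality of universal vector extensions applied to the inclusion $[0\to G]\into M$ yields a morphism of extensions $\Ex(G)\to G^\natural$ over $G$ restricting on kernels to $V(G)\into V(M)$, and the standard fact that such a morphism of extensions exhibits the target as the push-out along the kernel map then identifies $G^\natural$. This is a correct and worthwhile replacement for the citation, and your identification of $M^\natural=[L\to G^\natural]$ from the degreewise structure of the extension is fine (with the paper's definition of vector extension of a 1-motive it is even immediate).

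One caveat, which you yourself flag: the final isomorphism $G^\natural\cong\Ex(G)\times_S(L\otimes_{\Z}\G_{a,S})$ in your argument requires a global splitting of $0\to V(G)\to V(M)\to L\otimes_{\Z}\G_{a,S}\to 0$, which over a general base is only guaranteed Zariski-locally (or under the vanishing of the relevant $\Ext^1$). Since a short exact sequence of finite locally free modules over an affine base splits (finite locally free implies projective), the splitting — hence the product decomposition — is automatic over a field or over $\Spec(\cO_K)$, which are the bases actually used in the rest of the paper, so your hedge is consistent with the scope in which the statement is later applied.
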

%
\begin{proof}
We prove the conditions (I), and (II). Observe that $\ihom_{S}(G,\G_{a,S})=0$ implies that $\ihom_{S}(M,\G_{a,S})=0$. To prove (II), consider the exact sequence 
\[
0\to G\to M\to L\to 0
\]
and apply $\iext^i(.\,,\G_{a,S})$ to get an exact sequence
\[
0\to \ihom_{S}(L,\G_{a,S})\to \iext^1_{S}(M,\G_{a,S})\to\iext^1_{S}(G,\G_{a,S})\to 0.
\]
Notice that $\ihom_{S}(G,\G_{a,S})=\iext^1_S(L,\G_a)=0$. Then $\iext^1_{S}(M,\G_{a,S})$ will be a locally free sheaf of $\cO_S$-modules of finite rank if both $\ihom_{S}(L,\G_{a,S})$ and $\iext^1_{S}(G,\G_{a,S})$ have finite rank. For $\ihom_{S}(L,\G_{a,S})$, we know that it is of finite rank. For $\iext^1(G,\G_{a,S})$, notice that the exact sequence 
\[
0\to T\to G\to A\to 0
\]
yields an isomorphism $\iext_S^1(A,\G_{a,S})\cong\iext_S^1(G,\G_{a,S})$, since we know that $\ihom_S(T,\G_{a,S})\cong\iext_S^1(T,\G_{a,S})=0$. Hence, $\iext_S^1(G,\G_{a,S})$ is of finite rank as well.

For the proof of the second part of the statement, see \cite[Prop. 2.2.1]{barbieri2009sharp}.
\end{proof}
\begin{example}
Consider the 1-motive $L=[L\to 0]$ over $S$. We have 
\[
V(L)=\ihom_{\cO_S}(\iext^1_S(L,\G_{a,S}),\cO_S)=\ihom_{\cO_S}(\ihom_S(L,\G_{a,S},\cO_S),\cO_S)=L\otimes\G_{a,S}.
\]
Let $\psi:L\to\ihom_{\cO_S}(\ihom_S(L,\G_{a,S},\cO_S),\cO_S)=L\otimes\G_{a,S}=L\otimes\G_{a,S}$ given by
\[
x\mapsto ( f\mapsto f(x))
\]
for any $f\in\ihom_{S}(L,\G_{a,S})$. Then the isomorphism \ref{2.4.2} sends the identity to $$\psi\in \Hom(L,L\otimes\G_{a,S})=\Ext^1(L,L\otimes\G_{a.S}).$$ It follows that $L\nat=[L\xrightarrow{f}L\otimes\G_{a,S}]$.
\end{example}

\begin{remark}\label{V(M)=coLie(Gv)}
By \cite[Propposition 2.3]{bertapelle_delignes_2009}, we know that 
\[
\iext^1_S(M,\G_{a,S})=\iext^1_S(M_A,\G_{a,S})=\underline{\Lie}( G\ve),
\]
and therefore $$V(M)=\ihom_{\cO_S}(\iext^1_S(M,\G_{a,S}),\cO_S)=\ihom_{\cO_S}(\Lie G\ve,\cO_S)=\underline{\coLie}(G\ve).$$ Hence, we have that
\begin{equation*}
\begin{tikzcd}
0 \arrow[r] & V(G) \arrow[r] \arrow[d,Rightarrow, no head] & V(M) \arrow[r] \arrow[d,Rightarrow, no head] & V(L) \arrow[r] \arrow[d, Rightarrow, no head] & 0 \\
0 \arrow[r] & \coLie(A\ve) \arrow[r]                        & \coLie(G\ve) \arrow[r]                        & \coLie(T\ve) \arrow[r]                        & 0
.\end{tikzcd}
\end{equation*}

\end{remark}

\begin{defn}
The de Rham realisation of the 1-motive $M=[L\to G]$ over $S$ is defined as \[\TdR(M):=\underline{\Lie}_{G^{\natural}}(S)=\Lie(G\nat).\]

In addition to the standard weight filtration, the de Rham realisation carries the Hodge filtration 
\[
\Fil^i\TdR(M)=\begin{cases}
    V(M)=\ker(\Lie(G\nat)\to\Lie(G)), & i=0,\\
    \TdR(M), & i=-1,\\
    0 & i\neq 0,-1.
\end{cases}
\]
\end{defn}
We have a canonical exact sequence 
\[
0\to V(M)\to\Lie(G\nat)\to\Lie(G)\to 0.
\]

Assume that $M$ is a 1-motive defined over a field $K$. Let $\pi:M^{\natural}\to M$ in \ref{Universal extension M} be the map $\pi=(id_L,\pi_G)$, where $\pi_G:G^{\natural}\to G$ and $\ker \pi_G=V(M)$. Therefore, the kernel of the induced map $d\pi_G: \Lie(G^{\natural})=\TdR(M)\to \Lie(G)$ is again $V(M)\subseteq \TdR(M)$. Thus, $\TdR(M)$ together with the $K$-subspace $V(M)$ can be regarded as a filtered $K$-vector space.

\begin{prop}{\cite[Lemma 2.3.2]{barbieri2009sharp}.}
    The functor $M\mapsto M\nat$ is exact.
\end{prop}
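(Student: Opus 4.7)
The plan is to work inside the isogeny category $\Mi^{\Q}$, which the paper has already noted is abelian, and to deduce exactness of $M\mapsto M^\natural$ by a $3\times 3$-lemma argument from the exactness of the two "flanking" functors $M\mapsto M$ and $M\mapsto V(M)$. Given an exact sequence $0\to M_1\to M_2\to M_3\to 0$ of 1-motives with $M_i=[L_i\to G_i]$, the weight filtration is a functorial filtration on an abelian category and its graded pieces are exact, so one obtains exact sequences $0\to L_1\to L_2\to L_3\to 0$ of lattices and $0\to G_1\to G_2\to G_3\to 0$ of semi-abelian schemes.

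First I would verify that $V$ is exact. Using the identification $V(M)=\coLie(G^\vee)$ from \cref{V(M)=coLie(Gv)} and the exactness of Cartier duality (Corollary above), the given sequence dualizes to an exact sequence of 1-motives whose weight $-1$ parts read $0\to G_3^\vee\to G_2^\vee\to G_1^\vee\to 0$. Since $\Lie$ (and hence its $\cO_S$-linear dual $\coLie$) is exact on smooth commutative group schemes, this yields the short exact sequence
\[
0\to V(M_1)\to V(M_2)\to V(M_3)\to 0.
\]

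Next, functoriality of the universal vector extension, which is immediate from the uniqueness in the universal property, produces canonical morphisms $M_1^\natural\to M_2^\natural\to M_3^\natural$ sitting in a commutative diagram with exact columns:
\begin{equation*}
\begin{tikzcd}
0 \arrow[r] & V(M_1) \arrow[r] \arrow[d] & V(M_2) \arrow[r] \arrow[d] & V(M_3) \arrow[r] \arrow[d] & 0 \\
0 \arrow[r] & M_1^\natural \arrow[r] \arrow[d] & M_2^\natural \arrow[r] \arrow[d] & M_3^\natural \arrow[r] \arrow[d] & 0 \\
0 \arrow[r] & M_1 \arrow[r] \arrow[d] & M_2 \arrow[r] \arrow[d] & M_3 \arrow[r] \arrow[d] & 0 \\
& 0 & 0 & 0 &
\end{tikzcd}
\end{equation*}
whose top and bottom rows have just been shown to be exact. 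The $3\times 3$-lemma in the abelian category $\Mi^{\Q}$ then forces the middle row to be short exact as well.

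The main point to check carefully is that the morphisms produced by the universal property genuinely form a complex and are compatible with the vertical maps, i.e.\ that the middle row of the diagram above commutes with the top and bottom rows. This reduces to uniqueness in the universal property of each $M_i^\natural$ relative to the pulled-back vector extension $M_{i+1}^\natural\times_{M_{i+1}} M_i$, and so is automatic; once the diagram is in place, the conclusion is a purely formal diagram chase.
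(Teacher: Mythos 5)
Your argument is correct, but note that the paper offers no internal proof of this statement: it simply cites \cite[Lemma 2.3.2]{barbieri2009sharp}, so what you have written is a self-contained substitute rather than a variant of an argument appearing in the text. Your route — exactness of $M\mapsto V(M)$ via exactness of Cartier duality together with the identification $V(M)=\coLie(G\ve)$ of \cref{V(M)=coLie(Gv)}, then functoriality of the universal extension and the nine lemma — is sound, and you correctly isolate the one non-formal point, namely that the middle row must be shown to be a complex compatible with the columns (the nine lemma does \emph{not} supply this for free). Two details should be made explicit to close the argument. First, the uniqueness you invoke ultimately rests on the vanishing $\ihom_S(G,V)=0$ for $G$ semi-abelian and $V$ a vector group: the composite $M_1\nat\to M_3\nat$ lifts $g\circ f=0$ and restricts to $V(g)\circ V(f)=0$ on $V(M_1)$, hence factors through a morphism $M_1\to V(M_3)$, whose group component $G_1\to V(M_3)$ vanishes by that remark; the same vanishing is what makes the canonical lifts unique, hence makes $(\cdot)\nat$ functorial and sends the zero morphism to zero, which is exactly the "complex" condition. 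Second, the nine lemma cannot be applied literally inside $\Mi\otimes\Q$, since the objects $M_i\nat$ are not 1-motives; the chase should be run in the abelian category of two-term complexes of abelian fppf sheaves (up to isogeny), where exactness of the middle row means degreewise exactness, i.e.\ exactness of $0\to L_1\to L_2\to L_3\to 0$ and of $0\to G_1\nat\to G_2\nat\to G_3\nat\to 0$, which is the statement one actually wants. With these two points spelled out, your proof is complete, and it has the merit of making the exactness transparent from facts already established in the paper instead of deferring to the reference.
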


\begin{defn}
If $M=[L\xrightarrow{u}G]$ is a 1-motive defined over a subfield $K$ of $\C$. The singular realisation of $M$, denoted by $\Tsing(M)$, is the fibre product of $L$ and $\Lie(G^{an})$ over $G^{an}$ under the structure map $u:L\to G$ and the exponential map $\exp: \Lie(G^{an})\to G^{an}$.
\end{defn}
\begin{prop}\label{Betti-de Rham comparison iso}
Let $M$ be a Deligne 1-motive (or 1-motive with torsion) defined over the subfield $K$ of $\C$.
\begin{enumerate}
    \item We have a natural isomorphism $$\TdR(M_{\C})\xrightarrow{\cong}\TdR(M_K)\otimes_K\C$$ and 
    $$T_{\text{sing}}(M_{\C})\xrightarrow{\cong}T_{\text{sing}}(M_K)\otimes_K\C$$
    where $M_{\C}$ is the base change of $M_K$ to $\C$.
    \item The unique homomorphism $$\varpi: T_{\text{sing}}(M_{\C})\to \TdR(M)\otimes_K\C$$ that yields $d\pi_G\circ\varpi=\tilde{u}_{\C}$ and $exp\circ\varpi =u^{\natural}_{\C}\circ\tilde{exp}$ induces an isomorphism 
    \begin{equation}\label{Betti-de Rham comparison}
    \varpi_{\C}: T_{\text{sing}}(M_{\C})\otimes_{\Z}\otimes\C\to \TdR(M)\otimes_K\C,
    \end{equation}
    where $\tilde{u}_{\C}$ is the pull-back of $u_{\C}:L_{\C}\to G_{\C}$ along $exp$ and $\tilde{exp}: T_{\text{sing}}(M_{\C})\to L_{\C}$ is the structural map given by the definition of $T_{sing}(M_{\C})$. The isomorphism $\varpi$ is called the period isomorphism.
    \item The period pairing map $per:\TdR^{\vee}(M)\times T_{\text{sing}}(M)\to \C$ given by $per(\omega,\sigma)=\omega_{\C}(\varpi(\sigma))$ is indeed equal to $\int_{\gamma}\omega $ where $\gamma$ is a path from $0$ to a point $exp(\sigma)\in G(\bar{\Q})$.
\end{enumerate}
\end{prop}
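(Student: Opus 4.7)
The plan is to address the three claims in sequence. For part (1), the key point is that the formation of the universal vector extension $M\nat$ is compatible with flat base change: the universal property characterizing $V(M) = \ihom_{\cO_S}(\iext^1_S(M,\G_{a,S}),\cO_S)$ is preserved under extension of scalars because $\iext^1_S(M,\G_{a,S})$ is locally free of finite rank by \cref{prop 2.4.2}. Combined with the fact that $\Lie$ commutes with base change for smooth group schemes, this yields $\TdR(M_\C) \cong \TdR(M_K)\otimes_K \C$. For $\Tsing$, the analytification of $M_K$ through the fixed embedding $K \into \C$ agrees with that of $M_\C$, so the fiber product defining $\Tsing$ provides a canonical identification.

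For part (2), I would construct $\varpi$ pointwise. A section $\sigma \in \Tsing(M_\C)$ is a pair $(\ell,v)$ with $\ell \in L$ and $v \in \Lie(G_\C)$ satisfying $\exp(v) = u_\C(\ell)$. Choose any lift $w \in \Lie(G\nat_\C)$ of $v$ under $d\pi_G$; then $\exp\nat(w) - u\nat_\C(\ell)$ projects to $0$ in $G(\C)$ and hence lies in $V(M)(\C)$. Since $V(M)$ is a vector group, $\exp$ restricts to an isomorphism $V(M)_\C \iso V(M)(\C)$, so there is a unique $\eta \in V(M)_\C$ correcting $w$, and I set $\varpi(\sigma) := w - \eta$. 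Uniqueness is immediate: any two candidates differ by an element of $V(M)_\C$ with trivial exponential, hence zero.

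The isomorphism $\varpi_\C$ after tensoring with $\C$ would follow from a d\'evissage along the weight filtration via the five lemma: both $\Tsing \otimes \C$ and $\TdR \otimes_K \C$ are exact on the graded pieces, so it suffices to verify the comparison for $[L\to 0]$, $[0\to T]$, and $[0\to A]$. The lattice case reduces to the identity on $L\otimes \C$; the torus case reduces to $H_1(T^{\an},\Z)\otimes \C \iso \Lie(T)\otimes \C$ via the exponential; the abelian case is the classical de Rham--Betti comparison $H_1(A^{\an},\Z)\otimes \C \iso \Lie(A\nat)\otimes \C$, a consequence of Hodge theory.

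For part (3), I identify $\omega \in \TdR\ve(M)$ with the unique translation-invariant $1$-form on $G\nat$ whose value at the origin is $\omega$, using $\TdR(M)=\Lie(G\nat)$. The defining property $\exp\nat(\varpi(\sigma)) = u\nat_\C(\ell)$ says that the straight-line path $t\mapsto t\varpi(\sigma)$ in $\Lie(G\nat_\C)$ projects under $\exp\nat$ to a path in $G\nat(\C)$ from $0$ to $u\nat(\ell)$, and then under $\pi_G$ to a path $\gamma$ from $0$ to $u(\ell)=\exp(v)$ in $G$; integrating the invariant form along this lift yields $\omega_\C(\varpi(\sigma))$ by the standard formula. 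The principal obstacle is the isomorphism claim in part (2): the five-lemma reduction is clean once compatibility of $\varpi$ with the weight filtration is verified, but the abelian case genuinely requires classical Hodge theory as an external input, and one must keep careful track of how the connecting maps between graded pieces intertwine the singular and de Rham sides.
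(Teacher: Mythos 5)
The paper offers no proof of its own here—it simply refers to \cite[Chap.~9]{huber2022transcendence} and \cite[\S 2.2]{andreatta2020motivic}—and your sketch reproduces precisely the standard construction found there (going back to Deligne): base-change compatibility of the universal vector extension for (1), construction of $\varpi$ by lifting along $d\pi_G$ and correcting by the unique element of $V(M)_{\C}$ so that $\exp\nat\circ\varpi=u\nat_{\C}\circ\tilde{\exp}$, the isomorphism by d\'evissage along the weight filtration to the lattice, torus and abelian cases, and (3) by pulling back invariant forms on $G\nat$ along $\exp\nat$ of the straight-line path. So your route is essentially the same as the paper's (i.e.\ its references'), and your explicit acknowledgement that the abelian case requires the classical Hodge-theoretic comparison as external input is exactly the point where those references likewise rely on the classical theory.
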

\begin{proof}
See \cite[Chap. 9]{huber2022transcendence}, or \cite[\S 2.2]{andreatta2020motivic}. 
\end{proof}
\begin{defn}
    The isomorphism \ref{Betti-de Rham comparison} is called Betti-de Rham comparison isomorphism.
\end{defn}

\section{$\ell$-adic realisations for 1-motive}\label{sec: tate-module of motives}
Let $M=[L\xrightarrow{u}G]$ be a 1-motive over $S$ in which we shall place $L$ in degree $-1$ and $G$ in degree $0$. Let $n$ be a positive integer. Consider the multiplication by $n$ on $M$, $n\colon M\to M$, consisting of multiplication-by-$n$ maps on both $L$ and $G$. Its associated commutative diagram 
\begin{equation*}
\xymatrix{L \ar[r]^{u} \ar[d]^{n}& G\ar[d]^{n}\\
L\ar[r]^{u}& G
 }\end{equation*}
induces a morphism of $S$-group schemes $L\to L\times_G G, x\mapsto (nx,-u(x))$ and we define 
$$M[n]:=\coker(L\to L\times_G G).$$
In other words, $M[n]$ is $H^{-1}(M/n)$ where $M/n$ is the cone of multiplication by $n$ on $M$. The exact sequence (\ref{canonical exact sequence for any 1-motive}) yields a short exact sequence of cohomology sheaves
\begin{equation}\label{exact sequence M[n]}
0\to G[n]\to M[n]\to L[n]\to 0.
\end{equation}
Indeed, we have
\begin{equation}\label{formula M[n]}
M[n]=\frac{\{(x,g)\in L\times G\mid u(x)=-ng\}}{\{(nx,-u(x)\mid x\in L\}}
\end{equation}
as an fppf quotient. In particular, when $S=\Spec K$, $M[n]$ turns into a $(\Z/n\Z)[\Gamma_K]$-module.
\begin{remark}
    Let $n$ be a positive integer and let $G$ be a semi-abelian scheme over $S$. We have the commutative exact diagram 
\begin{equation}
\begin{tikzcd}
0 \arrow[r] & T \arrow[d, "{[n]}"] \arrow[r] & G \arrow[d, "{[n]}"] \arrow[r] & A \arrow[d, "{[n]}"] \arrow[r] & 0 \\
0 \arrow[r] & T \arrow[r]                    & G \arrow[r]                    & A \arrow[r]                    & 0.
\end{tikzcd}
\end{equation}
    Hence, applying the Snake lemma yields the exact sequences
    \[
    0\to T[n]\to G[n]\to A[n]\to 0
    \]
    and 
    \begin{equation}\label{3.2.6}
    0\to G[n]\to G\xrightarrow{[n]}G\to 0.
    \end{equation}
    We know that multiplication by $[n]$ is finite and faithfully flat on both $A$ and $T$, and both $T[n]$ and $A[n]$ are finite flat group schemes over $S$, and they are \'etale if $n$ is coprime to the characteristics of all residue fields of $S$. As an extension of finite flat group schemes is again a finite flat group scheme, we conclude that $G[n]$ is a finite flat group scheme over $S$. As the sequence \ref{3.2.6} is exact and $G[n]$ is finite flat, therefore $G\xrightarrow{[n]}G$ is finite and faithfully flat, and it is \'etale if $n$ is coprime to the characteristics of all residue fields of $S$.
\end{remark}
\begin{remark}
If $M$ is a 1-motive over $S$, the exact sequence \ref{exact sequence M[n]} implies that $M[n]$ is a finite flat group scheme over $S$, which is \'etale  if $S$ can be defined over $\Z[\frac{1}{n}]$. In particular, for $S=\Spec K$, $M[n]$ is a finite flat \'etale group scheme if $n$ is coprime to the characteristic of $K$. 
\end{remark}
\begin{defn}
The p-divisible group (or Barsotti-Tate group) of $M$ is $$M[p^{\infty}]:=\colim_{n\to\infty} M[p^n]$$
where the direct limit is taken over maps $M[p^m]\to M[p^n]$, for $m\geq n$, induced by $(x,g)\mapsto (p^{m-n}x,g)$.
\end{defn}
\begin{defn}
Let $p$ be a fixed prime number and $M$ a 1-motive over $K$. The $p$-adic Tate module (or $p$-adic realization) of 1-motive $M$ is $$\Tp(M):=\lim_{m}M[p^m]$$ where the inverse limit is taken over maps $M[p^m](\bar{K})\to M[p^n](\bar{K})$, for $m\geq n$, induced by $(x,g)\mapsto (x,p^{m-n}g)$. We also denote $\Vp(M):=\Tp(M)\otimes_{\Z_p}\Q_p$.
\end{defn}
The exact sequence (\ref{exact sequence M[n]}) yields the exact sequence
\begin{equation}\label{exact sequence M[p^infty]}
    0\to G[p^{\infty}]\to M[p^{\infty}]\to L[p^{\infty}]\to 0
\end{equation}
where $L[p^{\infty}]=L\otimes \Q_p/\Z_p$. For $M=[0\to A]$ an abelian scheme we recover the Barsotti-Tate group of $A$.

Note that we also have the canonical exact sequence 
\begin{equation}\label{exact sequence for semi-abelian G[p^infty]}
0\to T[p^{\infty}]\to G[p^{\infty}]\to A[p^{\infty}]\to 0,
\end{equation}
and if we take Tate-modules, we obtain a canonical exact sequence 
\begin{equation}\label{3.2.7}
    0\to \Tp(G)\to \Tp(M)\to\Tp(L)\to 0
\end{equation}
where, $\Tp(L)\cong L\otimes\Z_p$.
\begin{prop}
There exists a finite flat extension $S'$ of $S$ such that the exact sequence (\ref{exact sequence M[p^infty]}) is split over $S'$.
\end{prop}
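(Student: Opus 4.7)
The plan is to produce a section $s : L[p^\infty] \to M[p^\infty]$ of the projection in (\ref{exact sequence M[p^infty]}) after a suitable base change. Since $L$ is \'etale-locally constant, a first finite \'etale (hence finite flat) cover $S_1 \to S$ trivializes $L$ to $\underline{\Z}^r_{S_1}$; fix a basis $e_1, \ldots, e_r$ and set $a_i := -u(e_i) \in G(S_1)$. By the explicit formula (\ref{formula M[n]}), giving such a section over $S_1$ amounts to specifying, for each $i$, a coherent system $(g_i^{(n)})_{n\geq 1} \subset G$ with
\begin{equation*}
p^n g_i^{(n)} = a_i \quad \text{and} \quad p\, g_i^{(n+1)} = g_i^{(n)}.
\end{equation*}
The assignment $s_i : 1/p^n \mapsto [e_i, g_i^{(n)}] \in M[p^n] \subset M[p^\infty]$ is then a well-defined homomorphism $\Q_p/\Z_p \to M[p^\infty]$: the inclusion $M[p^n] \hookrightarrow M[p^{n+1}]$ sends $[e_i, g_i^{(n)}]$ to $[p e_i, g_i^{(n)}]$, while $p \cdot [e_i, g_i^{(n+1)}] = [p e_i, p g_i^{(n+1)}]$, and these agree precisely when $g_i^{(n)} = p g_i^{(n+1)}$ (using that $L$ is torsion-free). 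The direct sum $s := \bigoplus_i s_i$ will then split (\ref{exact sequence M[p^infty]}).

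To construct these coherent roots, I would exploit that, since $G$ is semi-abelian, the map $[p] : G \to G$ is finite and faithfully flat with finite flat kernel $G[p]$. Hence for any $S_1$-scheme $T$ and any $h \in G(T)$, the fiber $[p]^{-1}(h) = G \times_{[p], G, h} T$ is a $G[p]_T$-torsor, finite and flat over $T$ of rank $|G[p]|$. Setting $T_0 := S_1$ and $g_i^{(0)} := a_i$, define inductively
\begin{equation*}
T_{n+1} := [p]^{-1}(g_1^{(n)}) \times_{T_n} \cdots \times_{T_n} [p]^{-1}(g_r^{(n)}),
\end{equation*}
which is finite and flat over $T_n$; on $T_{n+1}$ the universal $(n+1)$-st roots $g_i^{(n+1)}$ exist, tautologically satisfying $p\, g_i^{(n+1)} = g_i^{(n)}$. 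Taking $S' := \lim_n T_n$, coherent systems exist over $S'$ for every $i$, producing the desired splitting.

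The main subtlety is that $S' \to S$ is \emph{pro-}finite flat rather than finite flat on the nose --- the degrees of the $T_n$ grow like $|G[p]|^{rn}$ --- and a strict finite-degree splitting of the full ind-system is impossible unless the $a_i$ are already $p^\infty$-divisible in $G(S_1)$. I therefore read the proposition as a pro-finite flat (equivalently, fppf-local) statement, with the cover $S' = \lim_n T_n$ being the admissible extension; equivalently, one can prove the truncated assertion that each level $0 \to G[p^n] \to M[p^n] \to L[p^n] \to 0$ splits over a genuine finite flat cover $T_n \to S$, and note that these splittings assemble into the pro-system to split the colimit sequence.
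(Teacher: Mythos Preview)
Your approach matches the paper's exactly: the paper's proof simply asserts ``Choose a finite flat extension $S'\to S$ and a compatible set of homomorphisms $\{u_n\colon p^{-n}(L\times_S S')\to G\times_S S'\}_{n\in\N}$ with $u_0=u\times_S S'$'' and then defines the section $L[p^n]_{S'}\to M[p^n]_{S'}$ by $\bar{x}\mapsto (x,-u_n(p^{-n}x))$ --- precisely your coherent-roots construction written in basis-free form. Your observation that the resulting cover is only pro-finite flat (since adjoining all $p^n$-th roots of the $u(e_i)$ in $G$ forces an unbounded tower) is correct and is a point the paper does not address; it simply posits the existence of such an $S'$ without justification, so your reinterpretation as a pro-finite flat (or level-by-level) splitting is the honest reading.
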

\begin{proof}
    Choose a finite flat extension $S'\to S$ and a compatible set of homomorphism 
    $$\{u_n\colon\frac{1}{p^n}(L\times_S S')\to G\times_S S'\}_{n\in\N}$$
    such that $u_0=u\times_S S'$. For any $n$, we define $L[p^{n}]\times_S S'\to M[p^n] $ by $\bar{x}\mapsto (x,-u_n(p^{-n}x))$. Hence, we a a splitting of (\ref{exact sequence M[p^infty]}) over $S'$.
\end{proof}
\begin{example}
The $p$-adic Tate module of $[0\to A]$ is just a Tate-module of the abelian scheme $A$, i.e., $\Tp(A)=\lim_{m}A[p^m]$ which has rank $2\dim(A)$, if $p$ is coprime to the characteristic of all residue fields of $S$. 
\end{example}

\begin{prop}
Let $M=[L\to G]$ be a 1-motive over arbitrary field $K$, where $G$ is an extension of abelian variety $A$ by a torus $T$.
\begin{enumerate}
    \item The Tate module $\Tp(M)$ is a free $\Z_{p}$-module. If $p$ is coprime to the characteristic of $K$, then
    \[
    \rank\Tp(M)=\rank(L)+\dim(T)+2\dim(A).
    \]
    \item The action of the absolute Galois group $\Gamma_K$ on $\Tp(M)$ is continuous.
    \item The association $M\mapsto \Tp(M)$ is a covariant exact functor from $\Mi(K)$ to the category of finitely generated free $\Z_{p}$-modules with continuous Galois action.
\end{enumerate}
\end{prop}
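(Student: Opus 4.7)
The plan is to reduce each of the three assertions to the corresponding facts for the weight-graded pieces $L$, $T$, $A$ of $M$, and then assemble the results using the two canonical short exact sequences of Tate modules
\[
0 \to \Tp(T) \to \Tp(G) \to \Tp(A) \to 0, \qquad 0 \to \Tp(G) \to \Tp(M) \to \Tp(L) \to 0,
\]
the second of which appears as \cref{3.2.7}. The first is obtained by applying $\lim_n$ to the exact sequence $0\to T[p^n]\to G[p^n]\to A[p^n]\to 0$ (a snake-lemma consequence of $0\to T\to G\to A\to 0$ and multiplication by $p^n$) and noting that the inverse system of finite groups $\{G[p^n](\bar K)\}$ has surjective transition maps, hence is Mittag-Leffler and contributes no $\lim^1$.

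For part (1), I would invoke the standard identifications $\Tp(L)\cong L\otimes_{\Z}\Z_p$ (free of rank $\rank L$) and, under the hypothesis that $p$ is coprime to $\car K$, $\Tp(A)$ free of rank $2\dim A$ (classical) and $\Tp(T)$ free of rank $\dim T$ (after passage to a finite separable splitting extension, by descent). Additivity of ranks across the two sequences then yields the stated formula. For freeness of $\Tp(M)$ without the coprimality hypothesis, observe that $\Tp(M)$ is always $\Z_p$-torsion-free, since by construction it injects into $\Hom_{\Z_p}(\Q_p/\Z_p, M[p^\infty](\bar K))$, and it remains finitely generated (in characteristic $p$, the constituent Tate modules only lose rank, but remain finite-dimensional); a finitely generated torsion-free module over the principal ideal domain $\Z_p$ is necessarily free.

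For part (2), each $M[p^n]$ is a finite flat group scheme over $K$, so $\Gamma_K$ acts on the finite set $M[p^n](\bar K)$ through a finite quotient, and in particular the action is continuous when $M[p^n](\bar K)$ carries its discrete topology. Endowing $\Tp(M)=\lim_n M[p^n](\bar K)$ with the resulting profinite topology makes the inverse-limit action of $\Gamma_K$ continuous, as desired.

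For part (3), covariant functoriality follows directly from the description of $M[p^n]$ as an fppf cokernel, which transports morphisms of two-term complexes compatibly with the transition maps. For exactness, given a short exact sequence $0\to M_1\to M_2\to M_3\to 0$ in $\Mi(K)$, I would first extract a short exact sequence of Barsotti--Tate groups, then pass to the $\bar K$-points of each $p^n$-torsion layer to obtain a short exact sequence of finite $\Gamma_K$-modules, and finally take the inverse limit; Mittag--Leffler once again ensures vanishing $\lim^1$ and preservation of short exactness. The delicate step that I expect to be the principal obstacle is producing the short exact sequence on $p^n$-torsion from a short exact sequence of 1-motives: this is a snake-lemma diagram chase that relies on the surjectivity of $[p^n]$ on each semi-abelian constituent (as fppf sheaves) and on the fact that one works in the isogeny category, so that finite kernels arising from the lattice components are absorbed; the bookkeeping in this chase is the main technical point of the proof.
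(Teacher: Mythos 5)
Your proposal is correct and follows essentially the same route as the paper: the paper's proof simply observes that everything follows from the exact sequences $0\to G[p^{\infty}]\to M[p^{\infty}]\to L[p^{\infty}]\to 0$, $0\to T[p^{\infty}]\to G[p^{\infty}]\to A[p^{\infty}]\to 0$, and $0\to\Tp(G)\to\Tp(M)\to\Tp(L)\to 0$, which is exactly the weight-graded reduction you carry out. You merely supply the details (Mittag--Leffler, rank bookkeeping, continuity, the snake-lemma chase for exactness) that the paper leaves implicit.
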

\begin{proof}
    It is clear if one considers the exact sequences (\ref{exact sequence M[p^infty]}), (\ref{3.2.7}), and (\ref{exact sequence for semi-abelian G[p^infty]}).
\end{proof}

\begin{remark}\label{Faltings' theorem for Tate module of 1-motives}
The Tate-Faltings theorem on homomorphisms of abelian varieties can be generalized to 1-motives. For 1-motives $M$ and $M'$ over finitely generated field $k$ one has isomorphism 
$$\Hom(M,M')\otimes\Z_{\ell}\xrightarrow{\cong}\Hom_{G_k}(T_{\ell}(M),T_{\ell}(M')) $$
when $\ell$ is coprime to characteristic of $k$. See \cite{jannsen1995mixed}.
\end{remark}

\section{Reduction types and deformation theory}\label{section reduction types and deformation theory}
\begin{defn}
Let $G$ be a group scheme over a field $K$ and let $G$ be a p-divisible group (abelian variety, torus, lattice, or semi-abelian variety resp.).
\begin{enumerate}
    \item If $K$ is a local field of mixed characteristic $(0,p)$ with residue field $k$, we say that $G$ has a good reduction if $G$ can be extended to $\cO_K$ i.e. there exists a group scheme $G_0$ over $\cO_K$ such that $G_0$ is a p-divisible group (resp. abelian variety, torus, lattice, or semi-abelian variety) and $G_0\times_{\Spec(\cO_K)}\Spec(K)$ is isomorphic to $G$. In this case, we call $\bar{G}:=G_0\times_{\cO_K}\Spec(k)$ the reduction of $G$.
    \item If $K$ is a global field, we say that $G$ a has good reduction at a prime $\fp\subset\cO_K$ if $G$ can be extended to $(\cO_K)_{\fp}$ i.e. there exists a group scheme $G_0$ over $(\cO_K)_{\fp}$ such that $G_0$ is a p-divisible group (abelian variety, torus, lattice, or semi-abelian variety resp.) and $G_0\times_{(\cO_K)_{\fp}}\Spec(K)$ is isomorphic to $G$. In this case, we call $\bar{G}:=G_0\times_{(\cO_K)_{\fp}}\Spec(k)$ the reduction of $G$ modulo $\fp$.
\end{enumerate}
\end{defn}

    Assume the above notations. Let $K$ be a global field. $G$ has a good reduction at a prime $\fp$ in $\cO_K$ if and only if $G\times_{K}\Spec(K_{\fp})$ has a good reduction.

\begin{thm}[N\'eron-Ogg-Shafarevich criterion, \cite{bosch1990werner}, Theorem 7.4.5]
Let $A$ be an abelian variety over a field $K$ with perfect residue field of characteristic $p$. Let $\ell\neq p$ be a prime. Then $A$ has good reduction at $p$ if and only if the $\ell$-adic Tate module $\operatorname{T_{\ell}}(A)$ is unramified as a Galois representation of $\Gamma_K$.
\end{thm}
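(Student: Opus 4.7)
The plan is to handle the two implications separately; the forward direction is a direct consequence of the \'etaleness of prime-to-$p$ torsion on an abelian scheme, while the backward direction relies on the N\'eron model together with Chevalley's structure theorem and a Galois-theoretic identification originally due to Serre--Tate.

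For the forward direction, suppose $A/K$ has good reduction and extends to an abelian scheme $\mathcal{A}/\cO_K$. Since $\ell$ is invertible in the residue field $k$ (as $\ell\neq p=\operatorname{char}(k)$), the multiplication-by-$\ell^n$ isogeny $[\ell^n]\colon\mathcal{A}\to\mathcal{A}$ is finite, faithfully flat, and \'etale. Hence $\mathcal{A}[\ell^n]$ is a finite \'etale group scheme over $\cO_K$, which by the equivalence between finite \'etale $\cO_K$-schemes and finite $\Gamma_K$-sets with trivial inertia action means that the inertia subgroup $I_K\subset\Gamma_K$ acts trivially on $\mathcal{A}[\ell^n](\bar K)=\mathcal{A}[\ell^n](\cO_{\bar K})$. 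Passing to the inverse limit yields that $I_K$ acts trivially on $\Tl(A)$, i.e.\ the Tate module is unramified.

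For the backward direction, assume $\Tl(A)$ is unramified, and let $\mathcal{A}/\cO_K$ be the N\'eron model of $A$. Write $\bar{\mathcal{A}}^0$ for the identity component of its special fibre. By Chevalley's structure theorem one has a canonical exact sequence
\begin{equation*}
0\to H\to \bar{\mathcal{A}}^0\to B\to 0,
\end{equation*}
with $B$ an abelian variety over $k$ and $H$ affine, fitting in $0\to T_0\to H\to U\to 0$ for a torus $T_0$ and a unipotent group $U$. It suffices to show $H=0$, for then $\bar{\mathcal{A}}^0=B$ is proper, which together with the N\'eron mapping property and smoothness of $\mathcal{A}$ forces $\mathcal{A}$ to be an abelian scheme over $\cO_K$, giving good reduction. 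The key input is the $\Gamma_K$-equivariant Serre--Tate identification
\begin{equation*}
\Tl(A)^{I_K}\otimes_{\Z_\ell}\Q_\ell \;\cong\; \Tl(\bar{\mathcal{A}}^0)\otimes_{\Z_\ell}\Q_\ell,
\end{equation*}
proved via analysis of the inertia action on the prime-to-$p$ torsion of the N\'eron model. Since $\ell\neq p$, the unipotent part $U$ carries no $\ell$-primary torsion, so the right-hand side has $\Q_\ell$-dimension $\dim T_0+2\dim B$; by the unramifiedness hypothesis, the left-hand side has dimension $2g=2(\dim T_0+\dim U+\dim B)$. Equating yields $\dim T_0+2\dim U=0$, whence $T_0=U=0$ and $H=0$.

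The genuinely hard step is the Serre--Tate identification, which underlies the entire backward direction. Once that input is granted, the remaining argument is a rank-comparison combined with Chevalley's structure theorem and the N\'eron mapping property, both of which are standard. A clean reference encapsulating both ingredients in the form needed here is \cite{bosch1990werner}, which we follow; an alternative is Grothendieck's analysis in SGA~7, Expos\'e IX.
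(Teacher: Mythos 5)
Your proposal is correct and is exactly the standard Serre--Tate argument reproduced in the reference the paper cites for this statement (\cite{bosch1990werner}, Theorem 7.4.5); the paper itself gives no proof beyond that citation, so there is nothing different to compare. The only cosmetic point is that from properness of $\bar{\mathcal{A}}^0$ you should first conclude that the identity component $\mathcal{A}^0$ of the N\'eron model is an abelian scheme (which already gives good reduction, and a posteriori $\mathcal{A}=\mathcal{A}^0$), rather than asserting directly that $\mathcal{A}$ is an abelian scheme.
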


\begin{thm}[\cite{de1998homomorphisms}, \S 2.5]
Let $R$ be a henselian discrete
valuation ring with fraction field $K$. Let $p$ be any prime, and let $A$ be an abelian variety over $K$. Then, $A$ has good reduction if and only if the $p$-divisible group associated to $A$, $A[p^{\infty}]$, has good reduction.
\end{thm}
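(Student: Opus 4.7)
The forward implication is immediate: if $\cA/R$ is an abelian scheme extending $A$, then $\cA[p^\infty] := \colim_n \cA[p^n]$ is a p-divisible group over $R$ (each $\cA[p^n]$ is finite flat since $[p^n]\colon\cA\to\cA$ is finite flat on an abelian scheme) whose generic fiber is $A[p^\infty]$. For the converse, the plan is to split into the cases $p\neq\operatorname{char}(k)$ and $p=\operatorname{char}(k)$, where $k$ denotes the residue field of $R$.

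When $p$ is invertible in $k$, any p-divisible group $\cG/R$ with $\cG_K\cong A[p^\infty]$ is automatically étale, since each $\cG[p^n]$ is a finite flat group scheme of $p$-power order over a henselian ring in which $p$ is invertible. Hence the inertia subgroup $I_K\subset\Gamma_K$ acts trivially on $\Tp(A)=\Tp(\cG_K)=\Tp(\cG_k)$, and the Néron–Ogg–Shafarevich criterion (the previous theorem of this section, applied with $\ell=p$) gives that $A$ has good reduction.

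When $p=\operatorname{char}(k)$, let $\cA$ be the Néron model of $A$ over $R$. By Grothendieck's semistable reduction theorem, after a finite separable base extension $K'/K$---which preserves the hypothesis since p-divisible groups behave well under flat base change---I may assume the identity component $\cA^0$ has semi-abelian special fiber, fitting into an extension $0\to T\to\cA^0_s\to B\to 0$ with $T$ a $k$-torus and $B$ an abelian variety over $k$. The goal is to force $T=0$ and the component group $\cA/\cA^0$ to be trivial; once both hold, $\cA=\cA^0$ has proper special fiber, and smoothness together with the valuative criterion upgrades properness of the special fiber to properness of $\cA$ over $R$, yielding the desired abelian scheme extending $A$.

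The heart of the argument---and the main obstacle---is showing that $\cG/R$ with $\cG_K\cong A[p^\infty]$ forces $\dim T=0$. The Néron mapping property extends the closed immersions $A[p^n]\hookrightarrow A$ to homomorphisms $\cG[p^n]\to\cA$, assembling into a map $\phi\colon\cG\to\cA^0[p^\infty]$ of ind-finite-flat $R$-group schemes that is an isomorphism on the generic fiber. I would then compare heights on the special fiber: $\cG_s$ is a p-divisible group of height $2\dim A$ (heights are constant in a flat family), whereas the special fiber of $\cA^0[p^\infty]$ has height $\dim T+2\dim B=2\dim A-\dim T$; so if $\phi_s$ is an isomorphism onto its image then $2\dim A\leq 2\dim A-\dim T$, forcing $\dim T=0$. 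The delicate step is justifying that $\phi_s$ is a closed immersion (equivalently, in the relevant setting, an isomorphism); this is where de Jong's argument deploys crystalline Dieudonné theory for $\cG$ and $\cA^0[p^\infty]$, together with a strengthening of Tate's full-faithfulness result (\cref{theorem: fully faithfullness of tate module for p-divisible group}) to the quasi-finite-flat setting, to conclude that the generic isomorphism propagates to the special fiber. Once $T=0$, the same kind of height analysis applied to the étale quotient $\cA/\cA^0$ rules out any nontrivial component group, completing the proof.
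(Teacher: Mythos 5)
The paper does not actually prove this statement: it is quoted verbatim from de Jong (\cite{de1998homomorphisms}, \S 2.5), so the only possible comparison is with that reference, and your proposal is best read as a sketch of de Jong's reduction rather than a proof. Your forward direction and the case of residue characteristic different from $p$ are fine (any $p$-power-order finite flat group scheme over $R$ is then \'etale, so $\Tp(A)$ is unramified and N\'eron--Ogg--Shafarevich applies). The problem is in the equicharacteristic case, where you defer exactly the point that carries the theorem: the assertion that the map $\phi\colon\cG\to\cA^0[p^{\infty}]$, an isomorphism on generic fibres, stays a closed immersion on special fibres. Tate's full faithfulness (\cref{theorem: fully faithfullness of tate module for p-divisible group}) is available only in mixed characteristic over a complete base; extending such rigidity to a henselian $R$ of residue characteristic $p$ (in particular equicharacteristic $p$), and to the N\'eron torsion $\cA^0[p^n]$ -- which is only quasi-finite flat, so ``$\cA^0[p^{\infty}]$'' is not a $p$-divisible group and $\phi$ is not a map of ind-finite-flat groups as you claim -- is precisely the crystalline Dieudonn\'e-theoretic core of de Jong's paper. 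Invoking it as a black box leaves the heart of the statement unproved.

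There is a second structural gap: you pass to a finite extension $K'/K$ to get semistable reduction and then run the height count over $R'$. That can only show that $A_{K'}$ has good reduction, i.e.\ that $A$ has potentially good reduction, and good reduction does not descend along ramified finite extensions; your argument never returns to $K$. One must either run the comparison over $R$ itself (as de Jong does, against the semi-abelian part of the special fibre of the N\'eron model, where a priori a unipotent part must also be ruled out), or separately establish semiabelian reduction over $K$ so that quasi-unipotent-and-finite-order inertia on $T_{\ell}(A)$, $\ell\neq p$, is forced to be trivial. Two smaller points: once the toric (and unipotent) part of $\cA^0_s$ vanishes, $\cA^0$ is a semi-abelian scheme with abelian special fibre, hence an abelian scheme, and then $\cA=\cA^0$ automatically by uniqueness of N\'eron models -- so triviality of the component group is not a separate goal; and ``smooth with proper special and generic fibres plus the valuative criterion implies proper over $R$'' is not a valid implication as stated, so you should replace it by this group-scheme argument.
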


Let $S=\Spec(R)$ be a base scheme on which $p$ is nilpotent, $I$ a nilpotent ideal of $R$ and $S_0=\Spec(R/I)$.  Let $\operatorname{Def_{p-div}}(S)$ be the category of triples $(\bar{A},G,\epsilon)$ consisting of an abelian scheme $\bar{A}$ over $S_0$, a $p$-divisible group $G$ over $S$ and an isomorphism $\epsilon\colon \bar{A}[p^{\infty}]\cong G\times_S S_0$. Regarding the deformation of abelian schemes, we have the following theorem:
\begin{thm}[Serre-Tate deformation theorem, \cite{katz1981serre}]
The functor $A\mapsto (\bar{A},A[p^{\infty}],\epsilon)$ from the category of abelian schemes over $R$ to the category $\operatorname{Def_{p-div}}(S)$ is an equivalence.
\end{thm}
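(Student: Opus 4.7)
The plan is to reduce to the case of a square-zero thickening carrying a nilpotent divided power structure, and then invoke the Grothendieck--Messing deformation theory recalled in \cref{sec: crystalline nature of p-divisible groups}.

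First I would perform a standard d\'evissage: filter $I$ by the chain $I \supset I^2 \supset \cdots \supset I^N = 0$ and combine this with the decreasing filtration by powers of $p$ (using that $p$ is nilpotent in $R$). This reduces the equivalence to the case in which $S_0 \hookrightarrow S$ is a square-zero thickening with $pI = 0$, so that $I$ carries the trivial nilpotent divided power structure compatible with the canonical DP structure on $p\W(k)$. In particular $(S_0 \hookrightarrow S)$ becomes an object of the crystalline site $\text{Crys}(S_0/S)$ on which the Dieudonn\'e crystal can be evaluated.

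For full faithfulness, given abelian schemes $A, B$ over $R$ and a morphism of triples $(\bar{f}, f_p) \colon (\bar{A}, A[p^{\infty}], \epsilon_A) \to (\bar{B}, B[p^{\infty}], \epsilon_B)$, I would follow Drinfeld's trick: pick $n$ with $p^n I = 0$, observe that $p^n \cdot f_p$ lifts canonically to an fppf-sheaf homomorphism $A \to B$ (the lift is forced on $p^n$-torsion, and the remaining obstruction to extending this to all of $A$ lies in a group annihilated by $p^n I = 0$), and then use that $[p^n] \colon A \to A$ is faithfully flat to divide by $p^n$ and produce the desired $f \colon A \to B$. Uniqueness follows from rigidity, since the union of $p$-power torsion subgroups is schematically dense in $A$ and any two lifts of $\bar{f}$ must agree on it.

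For essential surjectivity, given a triple $(\bar{A}, G, \epsilon)$ I would construct an abelian lift $A$ of $\bar{A}$ with $A[p^{\infty}] \cong G$ via Grothendieck--Messing (\cref{def: three crystals}): lifts of $\bar{A}[p^{\infty}]$ from $S_0$ to $S$ correspond bijectively to lifts of the Hodge filtration
\[
0 \to \Lie(\bar{A}^{\vee})^{\vee} \to \D(\bar{A}[p^{\infty}])_{(S_0 \hookrightarrow S)} \to \Lie(\bar{A}) \to 0
\]
to a direct-summand filtration on the value of the Dieudonn\'e crystal at $S$, and the same theorem identifies lifts of $\bar{A}$ to an abelian scheme over $R$ with exactly the same data, via the canonical isomorphism $\D(\bar{A}[p^{\infty}])_{(S_0 \hookrightarrow S)} \cong \operatorname{H^{dR}_1}(A/R)$ of any such lift. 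The given $p$-divisible lift $G$ supplies a Hodge-filtration lift, hence an abelian lift $A$, and one checks that this construction recovers the prescribed triple. The main obstacle is precisely this last step: establishing the Grothendieck--Messing equivalence on both the abelian and the $p$-divisible sides over a general nilpotent DP thickening is non-trivial (it is the content of Messing's thesis), and the d\'evissage in the first step is arranged precisely to land us in the nilpotent-DP regime where this theorem applies.
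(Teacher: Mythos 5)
Your proposal splits into two halves of very different status. The full-faithfulness half (d\'evissage to square-zero steps plus Drinfeld's $p^{n}$-multiplication trick) is essentially the argument in the cited source \cite{katz1981serre} and is fine in outline, except for one incorrect justification: the $p$-power torsion of an abelian scheme in residue characteristic $p$ is \emph{not} schematically dense in general (for a supersingular abelian variety over $\bar{\F}_p$ all of $A[p^{n}]$ is infinitesimal, supported at the origin), so uniqueness of the lift cannot be argued by density. The correct and standard argument is rigidity: the difference $g$ of two lifts of $\bar{f}$ factors through the kernel sheaf $\ker\bigl(B\to i_*\bar{B}\bigr)$, which is killed by $p^{n}$, so $g\circ[p^{n}]_A=0$, and since $[p^{n}]_A$ is an fppf epimorphism, $g=0$. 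The same rigidity statement (for $p$-divisible groups) is also what you need, and do not spell out, to see that the map obtained after dividing by $p^{n}$ actually induces the prescribed $f_p$ on $p$-divisible groups.

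The genuine gap is in essential surjectivity. Grothendieck--Messing theory, as developed in \cite{messing_crystals_1972} and recalled in \cref{sec: crystalline nature of p-divisible groups}, classifies lifts of the \emph{$p$-divisible group} $\bar{A}[p^{\infty}]$ across a nilpotent DP thickening by lifts of the Hodge filtration in $\D(\bar{A}[p^{\infty}])_{(S_0\into S)}$; it does \emph{not} prove the parallel statement that lifts of the abelian scheme $\bar{A}$ itself are classified by the same filtration data. In the literature that abelian-scheme statement (Grothendieck's crystalline deformation theory of abelian varieties) is standardly \emph{deduced} from the Serre--Tate theorem together with Messing's comparison $\D(\bar{A}[p^{\infty}])_{(S_0\into S)}\cong\operatorname{H^{dR}_1}(A)$, so invoking it here makes the argument circular unless you substitute an independent proof (e.g.\ via Illusie's cotangent-complex deformation theory), which is a far heavier input than anything recalled in the paper. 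By contrast, the proof the paper points to (Drinfeld's, written up by Katz) handles essential surjectivity with the same elementary $p^{n}$-torsion lemmas used for full faithfulness, constructing the lift of $\bar{A}$ directly as an fppf sheaf from $G$ and $\bar{A}$, with no crystalline machinery at all. As written, your essential-surjectivity step therefore does not stand on its own.
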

\begin{remark}
As a special case of Serre-Tate deformation theorem, if $R=\cO_K$ and  $\bar{A}$ is an
abelian variety over the residue field $k$, then a deformation of $\bar{A}[p^{\infty}]$ to a p-divisible group $G$ over $\cO_K$ corresponds to a deformation of $\bar{A}$ to an abelian scheme $A$ over $\cO_K$.
\end{remark}

We can generalize the theorem of Serre-Tate and Grothendieck on the deformation of abelian schemes to 1-motives. This is done in \cite{bertapelle_deformations_2019}. Let $S$ be a base scheme on which $p$ is locally nilpotent. Let $S_0\to S$ be a nilpotent thickening of schemes i.e. $S_0\to S$ is a closed immersion whose the ideal of definition $I$ is locally nilpotent. If $M$ is an object over $S$, its base change to $S_0$ is denoted by $\bar{M}$. Let $\operatorname{Def^{\Mi}_{p-div}(S)}$ be the category of triples $(\bar{M},G,\epsilon)$ consisting of a 1-motive $\bar{M}$ over $S_0$, a p-divisible group $G$ over $S$ and an isomorphism $\epsilon\colon \bar{M}[p^{\infty}]\cong G\times_S S_0$. 
\begin{thm}[\cite{bertapelle_deformations_2019}]
The functor $M\mapsto (\bar{M},M[p^{\infty}],\epsilon)$ from the category $\Mi(S)$ to the category $\operatorname{Def^{\Mi}_{p-div}(S)}$ is an equivalence.
\end{thm}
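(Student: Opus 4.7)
The plan is to reduce to the classical Serre-Tate theorem piece by piece, exploiting the weight filtration of a 1-motive together with the rigidity of tori and lattices under nilpotent thickenings. Write $M=[L\xrightarrow{u}G]$ with $G$ an extension of an abelian scheme $A$ by a torus $T$, so we have the two canonical short exact sequences
\[
0\to T\to G\to A\to 0,\qquad 0\to G\to M\to L[1]\to 0,
\]
and, correspondingly, a filtration of the $p$-divisible group
\[
0\to T[p^{\infty}]\to G[p^{\infty}]\to A[p^{\infty}]\to 0,\qquad 0\to G[p^{\infty}]\to M[p^{\infty}]\to L[p^{\infty}]\to 0,
\]
from which the \'etale, multiplicative, and local-local pieces can be extracted on both sides in a functorial manner. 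Since $\Mi$ and $\mathrm{Def}^{\Mi}_{p\text{-div}}(S)$ are additive categories, it suffices to prove full faithfulness and essential surjectivity.

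For full faithfulness I would argue separately for each graded piece. Lattices are \'etale, hence lift uniquely and have $\Hom_S(L,L')\xrightarrow{\sim}\Hom_{S_0}(\bar L,\bar L')$. Tori are rigid under nilpotent thickenings: $\Hom_S(T,T')\xrightarrow{\sim}\Hom_{S_0}(\bar T,\bar T')$ because homomorphisms between tori are determined by their action on the character lattices, which are \'etale. For abelian schemes, the Serre-Tate deformation theorem cited above gives
\[
\Hom_S(A,A')\xrightarrow{\sim}\Hom_{S_0}(\bar A,\bar A')\times_{\Hom_{S_0}(\bar A[p^{\infty}],\bar A'[p^{\infty}])}\Hom_S(A[p^{\infty}],A'[p^{\infty}]).
\]
Combining these via the five lemma applied to the weight filtrations, one concludes that a morphism $\varphi\colon M\to M'$ is determined by $(\bar\varphi,\varphi[p^{\infty}])$, and conversely that any compatible pair lifts uniquely; the lifting of the map $u$ is then forced because, after identifying $L$ with its unique \'etale lift, the composite $L(S)\to G(S)\to G(S_0)$ is injective on the relevant torsion-free quotient (the kernel being controlled by the lifted $G[p^{\infty}]$).

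For essential surjectivity, start with a triple $(\bar M,H,\epsilon)$ where $\bar M=[\bar L\xrightarrow{\bar u}\bar G]$, $\bar G$ is an extension of $\bar A$ by $\bar T$, and $H$ is a $p$-divisible group over $S$ with a prescribed isomorphism $\bar M[p^{\infty}]\cong H_{S_0}$. Decompose $H$ according to the weight filtration inherited from $\bar M[p^{\infty}]$ (taking the maximal multiplicative, the abelian-type, and the \'etale constant-type subquotients; this is canonical because the connected-\'etale sequence and its refinements are natural and the formations commute with base change). Lift $\bar L$ uniquely to $L/S$, lift $\bar T$ uniquely to $T/S$ so that $T[p^{\infty}]$ matches the prescribed subquotient of $H$, and apply Serre-Tate to produce $A/S$ from $\bar A$ together with the abelian subquotient of $H$. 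To assemble $G$, lift the extension class in $\iext^1_S(A,T)$: such a lift is determined by the $p$-divisible group extension $0\to T[p^{\infty}]\to G[p^{\infty}]\to A[p^{\infty}]\to 0$ via the comparison
\[
\iext^1_S(A,T)\hookrightarrow \iext^1_S(A[p^{\infty}],T[p^{\infty}])
\]
(the map is injective by the $p$-divisibility of the target and rigidity of tori, and the element in $\iext^1_S(A[p^{\infty}],T[p^{\infty}])$ prescribed by $H$ reduces modulo $I$ to the class of $\bar G$ by construction). Finally, the homomorphism $u\colon L\to G$ is constructed by lifting $\bar u$ through the rigidity of the formal completion of $G$ along its identity, using that $\bar u$ together with the $p$-divisible component of $u$ encoded in the filtration on $H$ gives a compatible system that uniquely lifts to an element of $G(S)$ by the Serre-Tate-type argument applied to $G^{\natural}$.

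The main obstacle will be step three, namely showing that the extension class of $G$ lifts canonically together with the map $u$ so that $M[p^{\infty}]\cong H$ as filtered $p$-divisible groups and not merely on associated graded pieces. Concretely, one must verify that the diagram
\[
\begin{tikzcd}
\iext^1_S(A,T)\arrow[r]\arrow[d] & \iext^1_S(A[p^{\infty}],T[p^{\infty}])\arrow[d]\\
\iext^1_{S_0}(\bar A,\bar T)\arrow[r] & \iext^1_{S_0}(\bar A[p^{\infty}],\bar T[p^{\infty}])
\end{tikzcd}
\]
is cartesian, and the analogous statement for the class of $u$ in $\Ext^1_S(L[1],G)$. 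This is where one invokes the deepest input, namely the interpretation of $\iext^i_S(-,\G_m)$-torsors in terms of $p$-divisible data combined with the nilpotence of the divided power ideal $I$; once this is in place, both the extension class of $G$ and the homomorphism $u$ are pinned down uniquely, completing the equivalence.
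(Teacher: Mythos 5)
The paper does not actually prove this statement: it is quoted from \cite{bertapelle_deformations_2019}, whose proof runs along Drinfeld's rigidity argument for the classical Serre--Tate theorem (killing the kernel of reduction by a power of $p$ and reconstructing the lift from the $p$-divisible data), rather than the Ext-d\'evissage you propose. So your attempt can only be judged on its own terms, and as it stands it has a genuine gap. The step you yourself flag as ``the main obstacle'' --- that the square comparing $\iext^1_S(A,T)\to\iext^1_S(A[p^{\infty}],T[p^{\infty}])$ over $S$ and over $S_0$ is cartesian, together with the analogous statement for the class of $u$ --- is not a technical verification to be deferred: it \emph{is} the content of the theorem once the classical inputs (Serre--Tate for $A$, unique liftability of $L$ and $T$) are granted. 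The one-line justification you give for injectivity of $\iext^1_S(A,T)\to\iext^1_S(A[p^{\infty}],T[p^{\infty}])$ (``$p$-divisibility of the target and rigidity of tori'') is not an argument, and invoking ``the nilpotence of the divided power ideal'' is off target, since the theorem only assumes a locally nilpotent immersion $S_0\into S$ with no divided power structure anywhere.

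There is also a concrete error in your full-faithfulness paragraph. You claim the lift of $u$ is ``forced'' because $L(S)\to G(S)\to G(S_0)$ is injective on a torsion-free quotient, with the kernel ``controlled by the lifted $G[p^{\infty}]$.'' But $\ker(G(S)\to G(S_0))$ is genuinely nontrivial: since $I$ and $p$ are nilpotent it is killed by a power of $p$ (Drinfeld's lemma), and since $L$ is a free $\Z$-module there are in general many nonzero homomorphisms from $L$ into a $p$-power torsion group, so two lifts of $\bar u$ with the same reduction need not coincide. Uniqueness has to be extracted from compatibility with the given isomorphism of $p$-divisible groups, where $u$ enters through the groups $M[p^n]$, and this requires the multiply-by-$p^n$ rigidity argument, not injectivity of reduction. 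Similarly, in the essential surjectivity step, the ``canonical decomposition'' of $H$ by the weight filtration cannot be obtained from a connected-\'etale sequence (which does not exist over a general base $S$); one must first lift the \'etale and multiplicative constituents uniquely and then lift the inclusions and quotients using the rigidity theorem for $p$-divisible groups. Your skeleton (reduce to Serre--Tate along the weight filtration) is reasonable, but without proofs of the cartesian square, the statement for $u$, and a correct uniqueness argument, it does not yet constitute a proof.
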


\begin{defn}[1-motive with good reduction]
Let $M$ be a 1-motive over field $K$.
\begin{enumerate}
    \item If $K$ is a local field of mixed characteristic $(0,p)$ with ring of algebraic integers $(\cO_K,\fp,k)$, we say that $M$ has a good reduction if it can be extended to $\cO_K$, i.e., there exists a 1-motive $M_0$ in $\Mi(\cO_K)$ whose generic fibre $M_0\times_{\Spec(\cO_K)} \Spec(K)$ is isomorphic to $M$. In that case we call the 1-motive $\bar{M}:=M_0\times_{\Spec(R)}\Spec(k)$ the reduction of $M$ modulo $\fp$.
    
    \item If $K$ is a global field, we say that $M$ has a good reduction at prime $\fp\subset\cO_K$ if $M$ can be extended to $(\cO_K)_{\fp}$ i.e. there exists a 1-motive $M_0$ in $\Mi((\cO_K)_{\fp})$ whose generic fibre $M_0\times_{\Spec(\cO_K)_{\fp}} \Spec(K)$ is isomorphic to $M$. In that case we call the 1-motive $\bar{M}:=M_0\times_{\Spec(\cO_K)_{\fp}}\Spec(k)$ the reduction of $M$ modulo $\fp$.
\end{enumerate}
\end{defn}
Let $\fp$ be a fixed prime in a field $K$. The category of 1-motives over $K$ with good reductions at $\fp$ is denoted by $\Mi^{gr}(K)$. A 1-motive $M$ over a number field $K$ has a good reduction at $\fp$ if and only if its lift $M\otimes_K K_\fp$ has good reduction, where $K_\fp$ denotes the completion of $K$ at $\fp$.

\begin{prop}
Let $M$ be a 1-motive over number field $K$. $M$ has good reduction at all but finitely many primes $\fp$.
\end{prop}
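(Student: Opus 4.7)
The plan is to reduce the problem to the analogous statements for the constituent pieces of $M = [L \xrightarrow{u} G]$, namely the lattice $L$, the torus $T$, the abelian variety $A$, the extension class of $G$ in $\Ext^1(A,T)$, and finally the homomorphism $u$ itself. For each of these, the good reduction set is cofinite in $\Spec\cO_K$, and one then intersects finitely many such cofinite sets.

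First I would choose a finite Galois extension $K'/K$ over which $L$ becomes a constant lattice and $T$ becomes a split torus. By Neron--Ogg--Shafarevich applied to the (unramified) character of the Galois action on $L$ and on the cocharacter lattice of $T$, the lattice $L$ and the torus $T$ extend to \'etale respectively split finite-type group schemes over $\Spec\cO_{K,S_0}$, where $S_0$ is the finite set of primes of $K$ ramifying in $K'$. For the abelian variety $A$, I would invoke the classical theorem that an abelian variety over a number field has good reduction at all but finitely many primes (equivalently, its Neron model is an abelian scheme outside a finite set $S_1$); this is a consequence of Neron--Ogg--Shafarevich for $\Tl(A)$ combined with the fact that the $\ell$-adic representation of $A$ is unramified almost everywhere.

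Next, given an open $U \subset \Spec\cO_K$ disjoint from $S_0 \cup S_1$ over which $T$, $A$ and $L$ all have good models $\cT, \cA, \cL$, I would show that the extension $0 \to T \to G \to A \to 0$ extends uniquely to an extension $0 \to \cT \to \cG \to \cA \to 0$ over $U$ after possibly shrinking $U$ further: the group $\Ext^1_U(\cA,\cT)$ maps to $\Ext^1_K(A,T)$, and by a standard spreading-out argument (since $\Ext^1_U(\cA,\cT)$ is computed by a finitely generated group scheme and our given extension is a single rational element), the class of $G$ comes from an integral class away from a further finite set $S_2$. By \cref{1.2.5}, extensions of abelian schemes by tori are rigid, so the integral extension recovers $G$ on the generic fibre.

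Finally, I would extend the structural map $u \colon L \to G$. Since $\cL$ is \'etale and $L$ is finitely generated, $u$ is determined by the images of a finite set of generators $\ell_1,\dots,\ell_r$, each of which is a $\bar K$-point of $G$ defined over a common finite extension $K''/K'$. By the valuative criterion of properness applied to the N\'eron model of $G$, each $u(\ell_i)$ extends to an integral point away from a finite set $S_3$ of primes of $K''$; taking restriction of scalars back to $\cO_K$ gives a morphism $\cL \to \cG$ over $U \setminus (S_2 \cup S_3)$ (after possibly shrinking). The intersection of the cofinite sets produced at each step is again cofinite, and yields a 1-motive $\cM = [\cL \to \cG]$ in $\Mi(\cO_{K,S})$ for a finite set $S$, whose generic fibre is $M$. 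The main subtlety is tracking down the extension class of $G$ and the map $u$ integrally — everything else is a routine spreading-out — but both follow from the finitely-generated nature of $L$ and of $\Ext^1(A,T)$.
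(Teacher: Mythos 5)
The paper offers no argument of its own here---the proof is a one-line pointer to \cite[Corollary 4.2.7]{matev2014good}---so your self-contained spreading-out proof is a different and more explicit route, and its overall architecture is the standard, correct one: spread out the graded pieces $L$, $T$, $A$ of the weight filtration, then the extension datum of $G$, then the morphism $u$, over successively smaller cofinite opens $U\subset\Spec\cO_K$, and note that a model over $U$ restricts to a model over $(\cO_K)_{\fp}$ for every $\fp\in U$, which is exactly what the paper's definition of good reduction requires. Handling the extension class of $G$ through the Weil--Barsotti identification of \cref{1.2.5}, i.e.\ as finitely many points of the dual abelian scheme which extend over $U$, and handling $u$ through the images of finitely many generators of $L$ over a splitting field, are both sound ways to make the spreading-out concrete.

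Two of your justifications are misattributed, though the conclusions they are meant to support are true and easy to repair. First, N\'eron--Ogg--Shafarevich is a criterion for abelian varieties; for $L$ and $T$ all you need is that an isotrivial lattice or torus has good reduction at every prime unramified in its splitting field, which follows by \'etale descent of the constant (resp.\ split) model from the unramified extension---no $\ell$-adic criterion is involved. Second, the valuative criterion of properness does not apply to the semi-abelian $G$ (it is not proper unless $T=0$), and the N\'eron mapping property does not place a rational point inside the semi-abelian model either: already for $G=\G_m$ the point $p\in\G_m(\Q)$ does not extend to $\G_m(\Z_{(p)})$. What actually saves the step is elementary: each $u(\ell_i)$ is a $K''$-point of a finite-type scheme, hence extends to a point of $\cG$ over the complement of the finitely many primes occurring in its coordinates, and discarding finitely many primes is permitted. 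Finally, descending the resulting map $\cL\to\cG$ from $\cO_{K''}$ back down is Galois (\'etale) descent using the $\Gamma_K$-equivariance of $u$, not restriction of scalars. With these repairs your argument is complete and, in substance, the standard proof of the statement the paper delegates to Matev.
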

\begin{proof}
    See \cite[Corollary 4.2.7]{matev2014good}.
\end{proof}

\begin{thm}\label{good reduction and unramified Tate module}
Let $M$ be a 1-motive over $K$, where $K$ is either a number field or a p-adic local field. Then $M$ has a good reduction at $\fp$ if and only if $\Tl(M)$ is unramified at a prime $\ell$ which is different from $\fp$.
\end{thm}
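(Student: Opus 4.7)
The plan is to reduce the statement to the corresponding criterion on each graded piece of the weight filtration of $M=[L\xrightarrow{u}G]$, where $0\to T\to G\to A\to 0$, and then to reassemble an extension of $M$ over the local ring from extensions of the pieces. Throughout we may assume $K$ is local, since good reduction over a number field at $\fp$ is equivalent to good reduction of the base change to $K_\fp$.

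For the forward direction, suppose $M$ extends to a 1-motive $M_0$ over $\cO_K$. We established earlier that each $M_0[\ell^n]$ is a finite flat group scheme over $\cO_K$, and the assumption $\ell\neq\fp$ makes it \'etale. The valuation criterion of properness then identifies $M_0[\ell^n](\bar K)=M_0[\ell^n](\cO_{\bar K})=M_0[\ell^n](\bar k)$, and the Galois action on the last set factors through $\Gal(\bar k/k)$. Passing to the inverse limit shows that the inertia subgroup acts trivially on $\Tl(M)$.

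For the converse, assume $\Tl(M)$ is unramified. The Galois-equivariant exact sequences
\[
0\to\Tl(G)\to\Tl(M)\to\Tl(L)\to 0,\qquad 0\to\Tl(T)\to\Tl(G)\to\Tl(A)\to 0
\]
force each of $\Tl(L)$, $\Tl(T)$, $\Tl(A)$ to be unramified, since sub- and quotient representations of an unramified representation are unramified. The classical N\'eron--Ogg--Shafarevich criterion (recalled in the excerpt) gives good reduction of $A$ to some $A_0/\cO_K$. Unramifiedness of $\Tl(T)\cong\Hom(X^*(T),\Z_\ell(1))$ means the character lattice $X^*(T)$ is unramified, so $T$ splits over an unramified extension and uniquely extends to a torus $T_0/\cO_K$; similarly $\Tl(L)=L\otimes\Z_\ell$ unramified yields an \'etale lattice $L_0/\cO_K$ extending $L$.

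The remaining task is to reassemble $M_0$. First, the extension class of $G$ in $\Ext^1_K(A,T)$ must be lifted to $\Ext^1_{\cO_K}(A_0,T_0)$: by the Weil--Barsotti formula the latter is computed by $A_0\ve(\cO_K)\otimes_{\Z}X^*(T_0)$, and by the N\'eron mapping property applied to $A_0\ve$ (which is the N\'eron model of $A\ve$ since $A$ has good reduction) the restriction map $A_0\ve(\cO_K)\to A\ve(K)$ is a bijection, giving a unique semi-abelian $G_0/\cO_K$ with generic fibre $G$. Second, since $L_0$ is \'etale and $G_0$ is smooth and separated, the homomorphism $u:L\to G$ extends uniquely to $u_0:L_0\to G_0$ by the N\'eron mapping property applied section-by-section after an \'etale cover trivializing $L_0$. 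This produces $M_0=[L_0\xrightarrow{u_0}G_0]$ over $\cO_K$ with $M_0\otimes_{\cO_K}K\cong M$. The main obstacle in the argument is precisely this last reassembly: one must check compatibility of the Weil--Barsotti identification with good reduction of the dual abelian variety and invoke the N\'eron mapping property twice, first to lift the extension class defining $G$ and then to lift the structure map $u$.
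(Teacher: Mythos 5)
Your forward direction is fine, and in the converse the reduction to the graded pieces (N\'eron--Ogg--Shafarevich for $A$, unramified character lattice for $T$, unramified $L$) together with the lifting of the extension class of $G$ via Weil--Barsotti and properness of $A_0\ve$ is essentially correct (up to the small slip that $\Ext^1(A,T)$ involves the cocharacter lattice $X_*(T)$, not $X^*(T)$, and needs a Galois-descent step for non-split $T$). Note that the paper itself offers no argument here; it simply cites Matev's thesis.

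The genuine gap is your last step: ``since $L_0$ is \'etale and $G_0$ is smooth and separated, $u$ extends uniquely to $u_0:L_0\to G_0$ by the N\'eron mapping property.'' A semi-abelian scheme $G_0$ over $\cO_K$ with nontrivial torus part is \emph{not} a N\'eron model of its generic fibre, and $G_0(\cO_{K'})\to G(K')$ is far from surjective: already $\G_m(\cO_K)=\cO_K^{\times}\subsetneq K^{\times}$. Concretely, for the Kummer motive $M=[\Z\xrightarrow{1\mapsto q}\G_m]$ with $v(q)\neq 0$, both graded pieces extend to $\cO_K$ and the semi-abelian model $\G_{m,\cO_K}$ exists, yet $u$ does not extend and $M$ does not have good reduction --- and indeed $\Tl(M)$ is ramified, since the restriction of the Kummer class of $q$ to inertia is $v(q)\bmod \ell^n$. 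Your argument never uses unramifiedness of the full extension $\Tl(M)$, only of its graded pieces, so as written it would ``prove'' the false statement that good reduction of $L$, $T$, $A$ implies good reduction of $M$. What is missing is precisely the heart of the theorem: one must show that triviality of the inertia action on the extension $0\to\Tl(G)\to\Tl(M)\to L\otimes\Z_{\ell}\to 0$ (a homomorphism from inertia to $\Hom(L\otimes\Z_{\ell},\Tl(G))$, identified via Kummer theory with the classes of the points $u(x)\in G(K')$) forces each $u(x)$ to have trivial image in the component group of the N\'eron lft-model of $G$, equivalently to lie in $G_0(\cO_{K'})$; only then does $u$ extend to $u_0:L_0\to G_0$. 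This Kummer-theoretic/component-group argument is where the cited reference does the real work, and it cannot be replaced by a mapping property of $G_0$.
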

\begin{proof}
    See \cite[Theorem 4.1.1 ,Theorem 4.2.8]{matev2014good}.
\end{proof}
\begin{prop}
Let $R$ be a henselian discrete valuation ring with fraction field $K$ and the maximal ideal $p$. Let $M$ be a 1-motive over $K$. Then $M$ has a good reduction if and only if the Barsotti-Tate group associated to $M$, $M[p^{\infty}]$, has a good reduction.
\end{prop}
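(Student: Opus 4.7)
\medskip
\noindent\textbf{Proof plan.} The forward implication is immediate: if $M_0/R$ is a 1-motive extending $M$, then $M_0[p^\infty]$ is a p-divisible group over $R$ whose generic fibre is $M[p^\infty]$. The substantive content is the converse, so assume $\sG/R$ is a p-divisible group with $\sG_K \cong M[p^\infty]$. The strategy is to reconstruct the data $(L,T,A,G,u)$ of $M_0/R$ piece by piece from the filtration of $\sG$, exploiting the rigidity of p-divisible groups over $R$ (\cref{theorem: fully faithfullness of tate module for p-divisible group}) together with the known good-reduction criteria for abelian varieties (de Jong), tori and lattices.

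On the generic fibre, $M[p^\infty]$ carries the canonical three-step filtration $T[p^\infty] \subset G[p^\infty] \subset M[p^\infty]$, whose successive quotients are $T[p^\infty]$, $A[p^\infty]$ and $L[p^\infty] = L \otimes \Q_p/\Z_p$. By the full faithfulness of the generic fibre functor on p-divisible groups over $R$, each sub p-divisible group of $\sG_K$ extends uniquely to a sub p-divisible group of $\sG$, so the filtration lifts to $\sG^0 \subset \sG^1 \subset \sG$. The quotient $\sG/\sG^1$ is an étale p-divisible group over $R$ extending $L[p^\infty]$; its Tate module is the unramified $\Z_p[\Gamma_K]$-module $L \otimes \Z_p$. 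Since the Galois action on $L$ is continuous and factors through a finite quotient, and $GL_n(\Z) \hookrightarrow GL_n(\Z_p)$, unramifiedness on $L \otimes \Z_p$ forces unramifiedness on $L$, so $L$ extends to a lattice $L_0/R$. Dually, $\sG^0$ is multiplicative (its Cartier dual is étale), which forces the cocharacter lattice of $T$ to be unramified, so $T$ extends to a torus $T_0/R$. Finally, $\sG^1/\sG^0$ is a p-divisible group over $R$ extending $A[p^\infty]$, and by the de Jong theorem stated before the proposition, $A$ has good reduction, say $A_0/R$.

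Next one extends the semi-abelian structure $0 \to T \to G \to A \to 0$ to $R$. By Weil--Barsotti, $\Ext^1_K(A,T)$ is classified by $\Hom_K(X_K, A\ve)$ where $X$ is the cocharacter lattice of $T$. Since $X$ and $A\ve$ both extend to $R$ and $A_0\ve$ satisfies the Néron mapping property, $\Hom_R(X_0,A_0\ve) \to \Hom_K(X_K,A\ve)$ is bijective, producing a unique extension $0 \to T_0 \to G_0 \to A_0 \to 0$ of semi-abelian schemes over $R$ whose generic fibre is $G$. By construction, the p-divisible group $G_0[p^\infty]$ coincides with $\sG^1$, so the identifications are compatible.

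The main obstacle, and the step requiring the most care, is extending the structural morphism $u\colon L\to G$ to a morphism $u_0\colon L_0\to G_0$ over $R$. Here one uses that the gluing of $L[p^\infty]$ on top of $G[p^\infty]$ inside $M[p^\infty]$ is precisely encoded by $u$: explicitly, for each $x\in L$ the compatible systems $(g_n)\in G(\bar K)$ with $n g_n = -u(x)$ are the $\bar K$-points of $M[p^\infty]$ lying above $x\in L[p^\infty]$, and the extension of $\sG$ to $R$ forces these systems to lift to $\sG(R^{sh})=\sG^1(R^{sh})$, i.e. to $G_0(R^{sh})$. Since $L_0$ is étale over $R$, a $\Gamma_k$-equivariant map $L_0(R^{sh})=L(\bar K)\to G_0(R^{sh})$ uniquely determines a morphism $u_0\colon L_0\to G_0$ over $R$, and by construction its generic fibre is $u$. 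The triple $M_0=[L_0\xrightarrow{u_0} G_0]$ is then a 1-motive over $R$ whose generic fibre is $M$, proving that $M$ has good reduction.
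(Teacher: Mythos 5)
Your overall strategy (lift the weight filtration of $M[p^\infty]$ to the integral model, deduce good reduction of $T$, $A$ and $L$, then reassemble a 1-motive over $R$) is in the same spirit as the paper's argument but far more explicit: the paper merely reduces to the pure cases ($A$ via de Jong, $\G_m$ via SGA 7, lattices by Cartier duality with tori) and asserts the general case. Two intermediate claims of yours, however, are not justified by the reasons you give: $\sG/\sG^1$ is not étale over $R$ "because its generic fibre is $L[p^\infty]$" — every p-divisible group over a characteristic-zero field is étale, so the generic fibre says nothing; what is actually needed is that its Tate module $L\otimes\Z_p$ has \emph{finite} Galois image, which forces the connected part of $\sG/\sG^1$ to vanish (finite image is incompatible with a nontrivial Hodge--Tate weight $1$ part, equivalently with the unbounded ramification produced by a nontrivial formal part), and only then is the quotient étale and $L$ unramified. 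The same remark applies to the claim that $\sG^0$ is multiplicative: you must argue that its Cartier dual has Tate module $X^*(T)\otimes\Z_p$ with finite image, hence trivial connected part.

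The genuine gap is your final paragraph, the extension of $u$. The identity $\sG(R^{sh})=\sG^1(R^{sh})$ is false in general (an étale p-divisible group has plenty of formal points over $R^{sh}$), and the assertion that the compatible systems $(g_n)$ with $p^n g_n=-u(x)$ are "forced to lift to $G_0(R^{sh})$" is precisely the statement that needs proof: it is what rules out examples such as $[\Z\to\G_m]$, $1\mapsto p$ over $\Q_p$, whose p-divisible group does \emph{not} extend to $\Z_p$, exactly because the Kummer class of $p$ in $\lim K^\times/(K^\times)^{p^n}$ is not integral, even though all the torsion points $(1,g_n)$ are perfectly good points of a p-divisible group over $K$. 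A correct argument must show that good reduction of $M[p^\infty]$ forces the class of the extension $0\to G[p^\infty]\to M[p^\infty]\to L\otimes\Q_p/\Z_p\to 0$ to come from an extension of p-divisible groups over $R$, and then compare $\Ext^1_R(\Q_p/\Z_p,G_0[p^\infty])$, which is essentially $\lim G_0(\cO_{K'})/p^n$, with the Kummer class of $u(x)\in G(K')$ to conclude that $u(x)$ lies in $G_0(\cO_{K'})$ up to prime-to-$p$ torsion and isogeny (the only possible failure being in the toric coordinates, where integrality means being a unit). As written, your last step assumes exactly this integrality rather than proving it, so the reconstruction of $u_0$ is not established.
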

\begin{proof}
When $M=[0\to A]$ an abelian variety or $M=[0\to \G_m]$ a torus, the result follows from \cite[Expos\'e IX]{Grothendieck_1972} and \cite[\S 2.5]{de1998homomorphisms}. Consequently, for lattices $[L\to 0]$, which are dual of tori, the statement holds. Therefore, the statement is valid for any 1-motive $M$. 
\end{proof}
\begin{prop}
The isogeny category of $\Mi^{gr}(K)$ is an abelian category.
\end{prop}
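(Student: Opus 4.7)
The plan is to realise $\Mi^{gr}(K)\otimes\Q$ as a full subcategory of the already abelian category $\Mi(K)\otimes\Q$, and then verify the three properties that make a full subcategory of an abelian category abelian in its own right: the presence of a zero object, closure under finite direct sums, and closure under kernels and cokernels (formed in the ambient category). The zero 1-motive trivially extends to $\cO_K$ (resp.\ to $(\cO_K)_{\fp}$), and if $M_1, M_2$ are extended by $M_{1,0}, M_{2,0}$, then $M_{1,0}\oplus M_{2,0}$ is an extension of $M_1\oplus M_2$, so these two items are immediate.

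The heart of the argument is closure under kernels and cokernels. I would proceed via the Tate module criterion of \cref{good reduction and unramified Tate module}. Pick any prime $\ell\neq\mathrm{char}(k(\fp))$ and consider the realization functor
\[
\Tl\otimes\Q:\Mi(K)\otimes\Q\longrightarrow\Rep_{\Q_{\ell}}(\Gamma_K),
\]
which is exact on the isogeny category. Given a morphism $f:M_1\to M_2$ in $\Mi^{gr}(K)\otimes\Q$, let $N_1=\ker f$ and $N_2=\coker f$ computed in $\Mi(K)\otimes\Q$. Exactness of $\Tl\otimes\Q$ yields
\[
\Tl(N_1)\otimes\Q=\ker(\Tl(f)\otimes\Q),\qquad \Tl(N_2)\otimes\Q=\coker(\Tl(f)\otimes\Q),
\]
so $\Tl(N_1)\otimes\Q$ is a subrepresentation of $\Tl(M_1)\otimes\Q$ and $\Tl(N_2)\otimes\Q$ is a quotient of $\Tl(M_2)\otimes\Q$. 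Since $M_1$ and $M_2$ have good reduction at $\fp$, \cref{good reduction and unramified Tate module} tells us that both $\Tl(M_1)\otimes\Q$ and $\Tl(M_2)\otimes\Q$ are unramified at $\fp$. Because the full subcategory of unramified $\Q_{\ell}[\Gamma_K]$-representations is closed under subobjects and quotients (the inertia $I_{\fp}$ acts trivially on any sub or quotient of a trivial-inertia module), both $\Tl(N_1)\otimes\Q$ and $\Tl(N_2)\otimes\Q$ are unramified at $\fp$. Applying the converse direction of \cref{good reduction and unramified Tate module} gives that $N_1$ and $N_2$ have good reduction at $\fp$, i.e., lie in $\Mi^{gr}(K)\otimes\Q$.

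The main subtlety, more book-keeping than obstacle, is to confirm that kernels and cokernels of $f$ computed inside $\Mi(K)\otimes\Q$ agree with the isogeny classes of the complex-theoretic kernel/cokernel described in Proposition~2.1.4 (so that $\Tl$ really behaves exactly as claimed), and to choose $\ell$ so that the criterion of \cref{good reduction and unramified Tate module} applies at the prime $\fp$ in question. Both points are already handled in the text: $\Tl$ is listed as an exact covariant functor on $\Mi(K)$ and passes to the isogeny category, and the Néron--Ogg--Shafarevich criterion for 1-motives is stated for any $\ell$ different from the residue characteristic of $\fp$. Hence the three abelian-subcategory axioms hold and $\Mi^{gr}(K)\otimes\Q$ is abelian.
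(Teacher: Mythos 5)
Your proof is correct and follows essentially the same route as the paper: both arguments exhibit $\Mi^{gr}(K)\otimes\Q$ as a full subcategory of the abelian category $\Mi(K)\otimes\Q$ and use exactness of the $\ell$-adic Tate module ($\ell$ prime to the residue characteristic) together with the N\'eron--Ogg--Shafarevich criterion of \cref{good reduction and unramified Tate module} to show closure under kernels and cokernels. If anything, your version is slightly more careful, since you only invoke the (correct) fact that subobjects and quotients of unramified representations are unramified, rather than the full ``if and only if'' statement the paper asserts for the extension.
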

\begin{proof}
Without loss of generality we can assume that $K$ is a complete local field. We know that the isogeny category $\Mi(K)\otimes\Q$ is an abelian category. To prove that the isogeny category of $\Mi^{gr}(K)$ is an abelian category, it suffices to show that for each exact sequence 
    \[
    0\to M_1\to M\to M_2\to 0
    \]
    in $\Mi(K)\otimes\Q$, $M$ has a good reduction if and only if both $M_1$ and $M_2$ have good reductions. Taking the Tate module, we obtain the exact sequence
    \[
    0\to \Tl(M_1)\to \Tl(M)\to \Tl(M_2)\to 0
    \]
    of $\Gamma_K$-equivariant $\Z_p$-modules, where $\ell$ is prime different from $p$. The Galois representation $\Tl(M)$ is unramified if and only if both $\Tl(M_1)$ and $\Tl(M_2)$ are unramified. By \cref{good reduction and unramified Tate module}, the statement follows. 
\end{proof}
\begin{cor}\label{cor: good reduction exact sequence of 1-motives}
    Let 
    \[
    0\to M_1\to M\to M_2\to 0
    \]
    be an exact sequence of 1-motives over $K$. Then $M$ has a good reduction at $p$ if and only if both $M_1$ and $M_2$ have good reductions at $p$.
\end{cor}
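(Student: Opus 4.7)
The plan is to observe that the corollary is essentially the content of the key step in the proof of the preceding proposition, and to spell it out cleanly. First, since having good reduction at $\fp$ for a 1-motive over a number field is equivalent to having good reduction after base change to the completion $K_\fp$, I may reduce to the case where $K$ is a complete discrete-valued field of mixed characteristic $(0,p)$. This reduction commutes with the given short exact sequence.

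Next, I would pick any auxiliary prime $\ell\neq p$ and apply the exact functor $\Tl$ from Section 2.5 to the exact sequence of 1-motives to obtain an exact sequence
\[
0\to \Tl(M_1)\to\Tl(M)\to\Tl(M_2)\to 0
\]
of finitely generated free $\Z_\ell$-modules with continuous $\Gamma_K$-action. The bridge to good reduction is then \cref{good reduction and unramified Tate module} (the N\'eron--Ogg--Shafarevich-type criterion for 1-motives): a 1-motive over $K$ has good reduction at $\fp$ if and only if its $\ell$-adic Tate module is unramified.

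The forward direction is immediate: an unramified $\Gamma_K$-representation has all its sub- and quotient representations unramified, so if $\Tl(M)$ is unramified then so are $\Tl(M_1)$ and $\Tl(M_2)$, giving $M_1$ and $M_2$ good reduction. For the converse direction, assuming $\Tl(M_1)$ and $\Tl(M_2)$ are unramified, I must show that the inertia $I_K$ acts trivially on $\Tl(M)$. For $\sigma\in I_K$, the operator $\sigma-1$ on $\Tl(M)$ lands in $\Tl(M_1)$ (since the quotient is unramified) and vanishes on $\Tl(M_1)$ (since the sub is unramified); in particular $(\sigma-1)^2=0$. To promote this to $\sigma-1=0$ I invoke the already-established ``if and only if'' statement in \cref{good reduction and unramified Tate module} (Matev, Theorems 4.1.1 and 4.2.8 in \cite{matev2014good}), which in effect asserts that the category of 1-motives with good reduction at $\fp$ is stable under extensions in $\Mi(K)\otimes\Q$. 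This ``if and only if'' direction is the main technical point; alternatively, one can avoid relying on it abstractly by using the deformation-theoretic input of \cref{section reduction types and deformation theory}: starting from $\cO_K$-models $M_1^0,M_2^0$ of $M_1,M_2$, one extends their associated $p$-divisible groups $M_1^0[p^\infty],M_2^0[p^\infty]$ (which have good reduction) by the given extension class over $\cO_K$, and then applies the Serre--Tate-type equivalence (Theorem 2.6.6) to produce an $\cO_K$-model $M^0$ of $M$.

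The key obstacle is thus the reverse implication, namely showing that an extension of two unramified Galois representations of the form $\Tl(M_i)$ remains unramified — equivalently, that there are no nontrivial unipotent monodromy phenomena within the category of Tate modules of 1-motives with good reduction. Once this is in hand (via \cref{good reduction and unramified Tate module} or the deformation-theoretic alternative), \cref{cor: good reduction exact sequence of 1-motives} follows.
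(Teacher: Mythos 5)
Your overall route is the one the paper itself takes (its proof reads ``the same reasoning as the previous argument''): reduce to the complete local case, apply $\Tl$ for an auxiliary prime $\ell\neq p$, and convert the question into unramifiedness via \cref{good reduction and unramified Tate module}. The forward implication is correct and is exactly the paper's argument: inertia acts trivially on $\Tl(M)$, hence on the sub $\Tl(M_1)$ and the quotient $\Tl(M_2)$.

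The genuine gap is the converse, and neither of your two ways of closing it works. Reading \cref{good reduction and unramified Tate module} as ``in effect asserting that the category of 1-motives with good reduction at $\fp$ is stable under extensions'' is circular: that theorem is a statement about a single 1-motive, and extension-stability is precisely the content of the corollary you are proving; your computation only shows $(\sigma-1)^2=0$ on $\Tl(M)$ for $\sigma$ in inertia, i.e.\ unipotence of the monodromy, and nothing in the criterion upgrades this to $\sigma-1=0$. The deformation-theoretic alternative fails for the same underlying reason: the Serre--Tate-type equivalence of \cref{section reduction types and deformation theory} concerns nilpotent thickenings $S_0\into S$ on which $p$ is locally nilpotent, so it does not apply to $\Spec K\into\Spec\cO_K$, and the step ``extend the $p$-divisible groups by the given extension class over $\cO_K$'' presupposes that the extension class over $K$ comes from one over $\cO_K$ --- which is exactly the point at issue. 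In fact, with the paper's definition of good reduction (extension of the entire 1-motive, including $u$, to $\cO_K$) this step cannot be repaired in general: the Kummer motive $M=[\Z\to\G_m]$, $1\mapsto p$, sits in the canonical sequence $0\to[0\to\G_m]\to M\to[\Z\to 0]\to 0$, both ends extend to $\cO_K$, yet $M$ does not (since $p\notin\G_m(\cO_K)$), and correspondingly $\Tl(M)$ is the ramified Kummer class of $p$; so an extension of unramified Tate modules of 1-motives need not be unramified, even up to isogeny. To be fair, the paper's own proof asserts the biconditional ``$\Tl(M)$ is unramified if and only if both $\Tl(M_i)$ are'' with no more justification, so you have reproduced its gap rather than introduced a new one; but your added arguments do not fill it, and note that only the forward direction is what is actually used later (e.g.\ in the proof of \cref{p-adic subgroup theorem for Fontaine pairing}).
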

\begin{proof}
    The proof closely follows the same reasoning as the previous argument.
\end{proof}

\begin{prop}\label{Hodge-Tate weights of tate module of 1-motives} 
Let $K$ be a complete local field with perfect residue field and $M$ a 1-motive with good reduction over $K$. The Tate module $\Vp(M)=\Tp(M)\otimes_{\Z_p}\Q_p$ is a Hodge-Tate representation of weights $0$ and $1$ with multiplicity $\rank(L)+\dim(A)$ and $\dim(T)+\dim(A)$ respectively. 
\end{prop}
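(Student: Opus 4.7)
Since $M$ has good reduction at $p$, I may extend $M$ to a 1-motive $M_0=[L_0\to G_0]$ over $\cO_K$, where $G_0$ is a semi-abelian scheme arising as an extension of an abelian scheme $A_0$ by a torus $T_0$. Passing to associated $p$-divisible groups over $\cO_K$ gives the two canonical short exact sequences
\begin{equation*}
0\to T_0[p^\infty]\to G_0[p^\infty]\to A_0[p^\infty]\to 0,\qquad 0\to G_0[p^\infty]\to M_0[p^\infty]\to L_0\otimes\Q_p/\Z_p\to 0
\end{equation*}
from \eqref{exact sequence for semi-abelian G[p^infty]} and \eqref{exact sequence M[p^infty]}. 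Taking rational Tate modules yields two short exact sequences of $\Q_p$-linear $\Gamma_K$-representations
\begin{equation*}
0\to \Vp(T)\to\Vp(G)\to\Vp(A)\to 0,\qquad 0\to\Vp(G)\to\Vp(M)\to\Vp(L)\to 0,
\end{equation*}
where $\Vp(L)\cong L\otimes_{\Z}\Q_p$.

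I would then apply \cref{Theorem: Hodge-Tate decomposition} (and its consequence \cref{Hodge-Tate weights of tate module}) to each of the extremal pieces, using the identity $h=d+d^\vee$ for a $p$-divisible group and its Cartier dual. For $A_0$ one has $\dim A_0=\dim A_0^\vee$, so $\Vp(A)$ is Hodge-Tate with weights $0$ and $1$ each of multiplicity $\dim A$. For $T_0$, the Cartier-dual $p$-divisible group $T_0[p^\infty]^{\vee}\cong X^{\ast}(T_0)\otimes\Q_p/\Z_p$ is étale, hence of dimension $0$, while $T_0[p^\infty]$ has dimension $\dim T$; thus $\Vp(T)$ is Hodge-Tate and pure of weight $1$ with multiplicity $\dim T$. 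Dually, $L_0\otimes\Q_p/\Z_p$ is étale of dimension $0$ with Cartier dual a torus of dimension $\rank(L)$, so $\Vp(L)$ is Hodge-Tate and pure of weight $0$ with multiplicity $\rank(L)$.

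It then remains only to assemble the weights along the two exact sequences. For this I rely on the standard fact that the category of Hodge-Tate representations is stable under extensions, which follows from Tate's vanishing $H^1(\Gamma_K,\C_p(n))=0$ for $n\neq 0$ applied to the $\C_p$-semilinear Hom between the pure graded pieces of the sub and the quotient. The first sequence then produces $\Vp(G)$ as Hodge-Tate with weight $0$ of multiplicity $\dim A$ and weight $1$ of multiplicity $\dim A+\dim T$; the second sequence yields $\Vp(M)$ Hodge-Tate with weight $0$ of multiplicity $\dim A+\rank(L)$ and weight $1$ of multiplicity $\dim A+\dim T$, exactly as asserted. The only genuinely non-formal step is the extension-closure of Hodge-Tate representations, everything else being bookkeeping with \cref{Theorem: Hodge-Tate decomposition} and the dimension identity for Cartier duals.
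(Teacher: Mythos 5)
Your reduction of the problem to the three graded pieces and the dimension bookkeeping ($\Vp(A)$ of weights $0,1$ each with multiplicity $\dim A$, $\Vp(T)$ pure of weight $1$, $\Vp(L)$ pure of weight $0$) is fine, but the assembly step contains a genuine gap: the class of Hodge--Tate representations is \emph{not} stable under extensions. The standard counterexample is the extension of $\Q_p$ by $\Q_p$ given by the class of $\log\chi\in H^1(\Gamma_K,\Q_p)$, whose Sen operator is a nonzero nilpotent, so it is not Hodge--Tate. Your justification via Tate's vanishing $H^1(\Gamma_K,\C_p(n))=0$ for $n\neq 0$ only controls the $\C_p$-semilinear extension classes between graded pieces of \emph{distinct} weights; whenever the sub and the quotient share a weight, the obstruction lives in $H^1(\Gamma_K,\C_p)\cong K\neq 0$ and need not vanish. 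This is exactly the situation in both of your sequences: $\Vp(T)$ and $\Vp(A)$ share the weight $1$, and $\Vp(G)$ and $\Vp(L)$ share the weight $0$. So as written the argument does not show that $\Vp(G)$, let alone $\Vp(M)$, is Hodge--Tate; you would need an extra input to split the tensored sequences, and nothing purely formal about Hodge--Tate representations supplies it.

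The correct (and the paper's) route is to use good reduction one step earlier: since $M$ extends to a 1-motive over $\cO_K$, its $p$-divisible group $M[p^{\infty}]$ is a $p$-divisible group over $\cO_K$, and \cref{Theorem: Hodge-Tate decomposition} (in the form of \cref{Hodge-Tate weights of tate module}) applies \emph{directly} to $M[p^{\infty}]$, giving that $\Vp(M)$ is Hodge--Tate with weights $0$ and $1$ of multiplicities $\dim\bigl((M[p^{\infty}])\ve\bigr)$ and $\dim\bigl(M[p^{\infty}]\bigr)$. Your exact sequences then enter only at the level of $p$-divisible groups over $\cO_K$, where they are used to compute these two dimensions: $\Lie(M[p^{\infty}])=\Lie(G[p^{\infty}])$ because $L[p^{\infty}]$ is \'etale, giving $\dim(T)+\dim(A)$, and after Cartier dualizing, $(L[p^{\infty}])\ve\cong\mup^{\rank L}$ and $(T[p^{\infty}])\ve$ \'etale give $\dim\bigl((M[p^{\infty}])\ve\bigr)=\rank(L)+\dim(A)$. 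In other words, your dimension count is exactly the right computation, but it must feed into Tate's decomposition for the single $p$-divisible group $M[p^{\infty}]$ over $\cO_K$ rather than into an extension argument for Galois representations, which is where your proposal breaks down.
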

\begin{proof}
Without loss of generality, we can assume that $M$ is a 1-motive over $\cO_K$. Consider the canonical exact sequence 
\[
    0\to G[p^{\infty}]\to M[p^{\infty}]\to L[p^{\infty}]\to 0.
    \]
As $L[p^{\infty}]$ is \'etale, the connected component of $ M[p^{\infty}]$ is the same as the connected component of $G[p^{\infty}]$. Thus, $\Lie( M[p^{\infty}])=\Lie( G[p^{\infty}])$. Taking Cartier dual from the above sequence, we get the exact sequence 
\[
0\to (L[p^{\infty}])\ve\to(M[p^{\infty}])\ve\to (G[p^{\infty}])\ve\to 0 
\]
of p-divisible groups over $\cO_K$. The dimension of p-divisible group $(L[p^{\infty}])\ve$ is equal to $\rank(L)$, since $L[p^{\infty}]=L\otimes\Q_p/\Z_p$, and $(L[p^{\infty}])\ve\cong (\mup)^{\rank(L)}$. We need to find the dimension of $(G[p^{\infty}])\ve$ as well. The exact sequence \ref{exact sequence for semi-abelian G[p^infty]} gives us an exact sequence 
\[
0\to (A[p^{\infty}])\ve\to (G[p^{\infty}])\ve\to (T[p^{\infty}])\ve \to 0
\]
of p-divisible groups over $\cO_K$. The dimension of p-divisible group $(G[p^{\infty}])\ve$ is equal to $\dim(A)$, since $(A[p^{\infty}])\ve\cong A\ve[p^{\infty}]$ and $(T[p^{\infty}])\ve$ is \'etale and of dimension $0$. Thus, $\dim(\Lie(M[p^{\infty}]\ve))=\rank(L)+\dim(A)$.

\cref{Hodge-Tate weights of tate module} implies that $\Vp(M)$ is Hodge-Tate of weights $0$ and $1$ with multiplicity $\dim(\Lie(M[p^{\infty}]\ve))=\rank(L)+\dim(A)$ and $\dim (\Lie (G))=\dim(T)+\dim(A)$ respectively.
\end{proof}

\section{Crystalline realization for 1-motive}\label{section crystalline realization for 1-motive}
Recall our notations in \cref{sec: Dieudonne}. Let $k$ be a prefect field of characteristic $p$, $\W(k)$ the Witt vectors of $k$ and $M[p^{\infty}]$ the p-divisible group associated to $M$ defined over $k$. If we take the contravariant Dieudonn\'e functor, it gives us a module $\D(M[p^{\infty}])$ over the Dieudonn\'e ring 
\[\sD_k:=\W(k)[\sF,\sV]/(\sF\sV-p,\sV\sF-p,Fc-\sigma(c)\sF,c\sV-\sV\sigma(c) ,\,\, \forall\sigma\in\W(k)).\]
As pointed out in \cref{sec: crystalline nature of p-divisible groups}, this can be further extended to define a crystal on the nilpotent crystalline site on $\Spec k$. With the notations in \cref{crystal on crystalline site k}, we can define the Barsotti-Tate crystal of the 1-motive $M$ as follows.
 
\begin{defn}[\cite{andreatta2005crystalline}]
Let $k$ be a perfect field of characteristic $p$ and let $\D$ denote the contravariant Dieudonn\'e crystal. The crystalline realization of 1-motive $M$ is the following $\W(k)$-module
\[\Tcrys(M):=\lim_{n}\D(M[p^{\infty}]^{\vee}])(\Spec k\to \Spec \W_n(k)).\]
We also define 
\[\Tcrys\ve(M):=\lim_{n}\D(M[p^{\infty}]])(\Spec k\to \Spec \W_n(k)).\]
We call $\Tcrys\ve(M)$ the Barsotti-Tate crystal of the 1-motive $M$.
\end{defn}
The functor associating to a 1-motive its p-divisible group is exact and covariant. The Dieudonn\'e functor and Cartier dual are exact and contravariant. Therefore, the functor $M\mapsto\Tcrys(M)$ is exact and covariant.

For 1-motive $M=[L\to G]$, we define a weight filtration on $\Tcrys(M)$ as follows
\begin{equation}
    \W^i(\Tcrys(M))\begin{cases}
        \Tcrys(M), & i\geq 0,\\
        \Tcrys(G), & i=-1,\\
        \Tcrys(T), & i=-2,\\
        0, & i\leq -3.
    \end{cases}
\end{equation}
Where $G$ is an extension of an abelian scheme $A$ by a torus $T$.

We have a canonical isomorphism between crystalline realization of $M$ and the de Rham realization of its lift to $\W(k)$, as described in \cite[Theorem A']{andreatta2005crystalline}. Hence, the following theorem provides a crystalline-de Rham comparison isomorphism for 1-motives with good reduction.

\begin{thm}[\cite{andreatta2005crystalline}]\label{crystalline-de Rham comparison isomorphism}
    Let $M$ be a 1-motive with good reduction over local p-adic field $K$. We have a canonical isomorphism
    \[
    \Tcrys(\bar{M})\otimes_{\W(k)}K\cong\TdR(M_K).
    \]
\end{thm}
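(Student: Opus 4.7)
The strategy is to exploit the crystal property of $\D$ with respect to the embedding $\W(k)\hookrightarrow\cO_K$, and then to identify the resulting evaluation on $\Spec\cO_K$ with the de Rham realisation via the theory of universal vector extensions. By the good reduction hypothesis and the deformation theorem for 1-motives recalled in \cref{section reduction types and deformation theory}, lift $M$ to a 1-motive $M_0=[L_0\to G_0]$ over $\cO_K$ whose special fibre is $\bar M$ and whose generic fibre is $M_K$; this also produces the canonical lift $M_0[p^{\infty}]$ of $\bar M[p^{\infty}]$. Since the universal extension construction of \cref{prop 2.4.2} is functorial and compatible with flat base change, one has $\TdR(M_K)=\Lie(G_0^{\natural})\otimes_{\cO_K}K$, so it suffices to produce a canonical $\cO_K$-isomorphism
\[
\Tcrys(\bar M)\otimes_{\W(k)}\cO_K\;\cong\;\Lie(G_0^{\natural}).
\]

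The embedding $\W(k)\hookrightarrow\cO_K$ provides a morphism $(\Spec k\hookrightarrow\Spec\cO_K)\to(\Spec k\hookrightarrow\Spec\W(k))$ in the crystalline site of $\Spec k$. Applying the crystal property of $\D(\bar M[p^{\infty}]^{\vee})$ immediately gives the base-change isomorphism
\[
\D(\bar M[p^{\infty}]^{\vee})_{(\Spec k\hookrightarrow\Spec\cO_K)}\;\cong\;\Tcrys(\bar M)\otimes_{\W(k)}\cO_K.
\]
By the explicit description of the contravariant Dieudonn\'e crystal in \cref{def: three crystals}, the left-hand side equals $\Lie(\hat{\Ex(M_0[p^{\infty}])})$, the Lie algebra of the formal completion of the universal vector extension of the p-divisible group $M_0[p^{\infty}]$ over $\cO_K$. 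So the problem is reduced to the 1-motivic Messing comparison
\[
\Lie(\hat{\Ex(M_0[p^{\infty}])})\;\cong\;\Lie(G_0^{\natural}),
\]
respecting the Hodge filtrations on both sides.

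Establishing this last identification is the main technical obstacle and is the core content of \cite{andreatta2005crystalline}. One natural route is d\'evissage along the weight filtration: for the abelian piece it reduces to the classical Berthelot-Breen-Messing identification $\operatorname{H^1_{cris}}(\bar A/\W(k))\otimes_{\W(k)}\cO_K\cong\operatorname{H^1_{dR}}(A/\cO_K)$; for the toric piece both sides are computed from the Dieudonn\'e module of $\mu_{p^{\infty}}$ (namely $\W(k)$ with $F=p\sigma$); and for the lattice piece from that of $\underline{\Q_p/\Z_p}$ (namely $\W(k)$ with $F=\sigma$). A five-lemma argument along the weight filtration then assembles the full comparison, provided the identifications on the graded pieces are built functorially so as to respect the canonical exact sequences coming from $0\to T\to G\to A\to 0$ and $0\to G\to M\to L\to 0$. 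Alternatively, following Andreatta-Bertapelle, one constructs the universal vector extension of $M_0[p^{\infty}]$ directly and matches it, as a formal Lie group, with the formal completion of $G_0^{\natural}$. Tensoring the resulting $\cO_K$-isomorphism with $K$ over $\cO_K$ yields the claimed comparison $\Tcrys(\bar M)\otimes_{\W(k)}K\cong\TdR(M_K)$.
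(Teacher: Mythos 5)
The paper itself contains no proof of this statement: it is imported verbatim from \cite[Theorem A$'$]{andreatta2005crystalline}, with only the surrounding remark that the crystalline realization of $\bar{M}$ is canonically identified with the de Rham realization of a lift. Your outline is therefore not in conflict with the paper's treatment: like the paper, you ultimately defer the essential content --- the identification of the universal-extension crystal of $M_0[p^{\infty}]$ with $\Lie(G_0^{\natural})$, compatibly with the Hodge filtration --- to the cited work, and the reduction you describe (lift $M$ over $\cO_K$ using good reduction, pass to the associated p-divisible group, d\'evissage along the weight filtration with Berthelot--Breen--Messing handling the abelian part) is indeed the skeleton of Andreatta--Barbieri-Viale's argument.

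One step you present as immediate deserves a concrete caveat. The pair $(\Spec k\into\Spec\cO_K)$ is in general not an object of the crystalline site of \cref{def: crystalline site}: the immersion is not nilpotent (so at best one works with the thickenings $\cO_K/\fm_K^n$ and passes to a limit, as the paper does with $\W_n$ in \cref{crystal on crystalline site k}), and, more seriously, the maximal ideal $\fm_K$ admits divided powers compatible with the canonical ones on $p\W(k)$ only when the absolute ramification index satisfies $e\leq p-1$. For general $K$ the evaluation $\D(\bar{M}[p^{\infty}]\ve)_{(\Spec k\into\Spec\cO_K)}$ is therefore not available ``by the crystal property''; the thickening one can always use is $\Spec(\cO_K/p)\into\Spec\cO_K$ with the ideal $p\cO_K$, and relating that evaluation back to $\Tcrys(\bar{M})\otimes_{\W(k)}\cO_K$ and to $\Lie(G_0^{\natural})$ is precisely where the cited paper has to work (this is also where the universal vector extension of a p-divisible group over $\cO_K$, on which $p$ is not nilpotent, must itself be defined by a limit procedure, cf.\ the hypotheses in Messing's theorem as recalled in \cref{sec: crystalline nature of p-divisible groups}). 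Since you explicitly assign the core identification to \cite{andreatta2005crystalline}, this does not sink your outline, but the base-change isomorphism should not be described as an immediate consequence of the crystal property for arbitrary ramification.
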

We can define a filtration on the isocrystal $N=\Tcrys(\bar{M})\otimes_{\W(k)}K$ by transferring the Hodge filtration on $\TdR(M)$ via the crystalline-de Rham comparison isomorphism from \cref{crystalline-de Rham comparison isomorphism}. This turns $N$ into a filtered isocrystal\footnote{Recall \cref{definition of filtered isocrystal} for filtered isocrystal.}. Moreover, we can view $D=\Tcrys(\bar{M})$ as a filtered Dieudonn\'e module\footnote{Recall \cref{filtered Dieudonn\'e module} for filtered Dieudonn\'e module.}, where the filtration $(D^i)$ is the one induced by the Hodge filtration on the de Rham realization of the (formal) lifting $\bar{M}$ to $\W(k)$. Thus, we can state the following:
\begin{cor}\label{cor: Hodge filtration and tangent space on Dieudonne module}
    Let $M=[L\to G]$ be a 1-motive over p-adic local filed $K$ with good reduction. Let $D=\Tcrys(\bar{M})$. Then we have an exact sequence 
    \[
    0\to D^0\to D\to D/D^0\to 0
    \]
    with natural identification $D/D^0\otimes_{\W(k)}K\cong\Lie(G)$.
\end{cor}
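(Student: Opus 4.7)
The plan is to derive the statement directly from the crystalline--de Rham comparison isomorphism of \cref{crystalline-de Rham comparison isomorphism} together with the description of the Hodge filtration on $\TdR(M_K)$ given in \cref{sec: de Rham realisation}. The exact sequence
\[
0 \to D^0 \to D \to D/D^0 \to 0
\]
is immediate from the definition of a filtered Dieudonn\'e module (\cref{filtered Dieudonn\'e module}): by construction $D^0$ is a direct factor of $D$, so the quotient is well-defined and the sequence is exact. Hence the only substantive content is the identification $D/D^0 \otimes_{\W(k)} K \cong \Lie(G)$.

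For this identification, first invoke \cref{crystalline-de Rham comparison isomorphism} to obtain the canonical isomorphism
\[
D \otimes_{\W(k)} K = \Tcrys(\bar{M}) \otimes_{\W(k)} K \;\cong\; \TdR(M_K) = \Lie(G^{\natural}).
\]
By the definition of the filtration on $D$ as a filtered Dieudonn\'e module, which is precisely the one induced by the Hodge filtration on the de Rham realization of the lift (as explained in the paragraph preceding the corollary), the submodule $D^0 \otimes_{\W(k)} K$ corresponds under this comparison to $\Fil^0 \TdR(M_K)$. From \cref{sec: de Rham realisation}, the Hodge filtration is
\[
\Fil^0 \TdR(M_K) = V(M) = \ker\bigl(\Lie(G^{\natural}) \to \Lie(G)\bigr),
\]
so we have a canonical exact sequence
\[
0 \to V(M) \to \Lie(G^{\natural}) \to \Lie(G) \to 0
\]
coming from the universal vector extension (\cref{prop 2.4.2}).

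Combining these two facts, tensoring the sequence $0 \to D^0 \to D \to D/D^0 \to 0$ with $K$ over $\W(k)$ and transporting along the comparison isomorphism yields the commutative diagram identifying it with $0 \to V(M) \to \Lie(G^{\natural}) \to \Lie(G) \to 0$. In particular, $D/D^0 \otimes_{\W(k)} K \cong \Lie(G^{\natural})/V(M) \cong \Lie(G)$, as required. The only potential obstacle is ensuring that the filtration convention on $D$ is truly the one inducing $\Fil^0$ on the de Rham side (rather than some shift), but this is fixed by the remark immediately preceding the corollary, so no further work is needed.
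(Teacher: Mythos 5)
Your proposal is correct and follows essentially the same route as the paper, which states the corollary as an immediate consequence of the crystalline--de Rham comparison isomorphism (\cref{crystalline-de Rham comparison isomorphism}) and the definition of the filtration on $D$ as the one induced by the Hodge filtration, exactly the two ingredients you combine. The identification of $\Fil^0\TdR(M_K)$ with $V(M)=\ker(\Lie(G^{\natural})\to\Lie(G))$ and the passage to the quotient is the intended argument, so no gap remains.
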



\clearpage
\chapter{P-adic integration theory for 1-motives}
P-adic integration theory plays a crucial role in modern number theory, arithmetic geometry, and algebraic geometry. Developed and refined by several mathematicians, including Fontaine, Messing, Colman, and Colmez, this theory extends classical integration methods into the realm of p-adic analysis, offering a powerful framework for understanding arithmetic properties of varieties over p-adic fields.

Jean-Marc Fontaine's contributions laid the groundwork for understanding p-adic Hodge theory, a field closely linked to p-adic integration. His work established a deep connection between the arithmetic of p-adic representations and geometric structures, particularly through the use of period rings. This connection has become fundamental in modern p-adic integration.

William Messing furthered the development of p-adic integration theory by studying the deformation theory of p-divisible groups and crystalline cohomology. His work provided essential tools for understanding the deformation of varieties over p-adic fields, a key aspect of p-adic integration.

Peter Coleman and Pierre Colmez made significant advancements by refining and expanding the theory. Colman's contributions include the development of the theory of p-adic differential equations, which has profound implications for p-adic integration. Colmez, on the other hand, is known for his work on p-adic periods and his exploration of the p-adic analogs of classical integrals, particularly in the context of p-adic L-functions and p-adic Hodge theory.

In the first part of this chapter, we seek to extend the p-adic integration theory introduced in \cite{colmez1992periodes} to 1-motives with good reduction. We will show that this p-adic integration pairing is perfect and respects Hodge filtration (\cref{theorem p-adic integration pairing for M}). Furthermore, we will explore our p-adic integration maps in greater detail through \cref{kernel Fontaine map phi} and \cref{kernel p-adic integration map varpi}. The p-adic integration theory in \cite{colmez1992periodes} focuses on constructing p-adic periods for an abelian variety with good reduction defined over a local field. The methods employed highlight the analogies between p-adic theory and classical theory over the complex field, making these connections explicit. The paper concludes with a detailed comparison of this calculation with the theory of Hodge-Tate periods, initially established by Tate and further developed by Fontaine, Coleman, and others. The periods arising from this pairing cannot generally be expressed in terms of function-theoretically when the abelian variety varies in a family (i.e. this integration is nit compatible with morphisms between abelian varieties). However, there is another context in which abelian periods appear: when the integrand is viewed as a solution to certain linear differential equations (Picard-Fuchs or Gauss-Manin). For a more detailed comparison of these methods, see \cite{andre_period_2003}.

In the second part of this chapter, we will study the p-adic logarithm through Barsotti-Tate groups and identify its local inverse. The main goal is to gain a clearer interpretation of the image of $\barQ$-rational points under the $\log$ map (e.g., \cref{theorem classification for algebraic poin in image of logarithm}), using techniques from p-adic Hodge theory and Galois cohomology, inspired by \cite{cartier_l-functions_2007}.

For this chapter, we assume that $K$ is a local discrete valued field with prefect residue field $k$ of characteristic $p$, and with ring of integers $\cO_K$. Recall our notations in \cref{sec: period rings} and the definition of period rings $\BdRp$ and $\BdR$. We define $\At=\Ainf/(\ker\theta)^2$ and $\Bt:=\BdRp/t^2\BdRp$; see \cref{def B2}. 

\section{Geometric interpretation of period rings}
Consider the natural inclusion $\BdRp\into\BdR$. Recall that there exists a canonical section of $\theta:\BdRp\to\C_p$ above $\bar{K}\subset\C_p$, that is $\bar{K}\into\BdRp$ whose image is dense in $\BdRp$. This embedding induces a map $\bar{K}\to\Bt$ which is injective.

From \cref{sec: period rings}, recall the surjective ring homomorphism $\theta\colon \Ainf\to\cO_{\C_p}$. We let $I:=\ker\theta$, $J:=\ker(\theta[1/p])$, $\bar{I}=I/I^2$, and $\bar{J}=J/J^2$. Notice that $\At/\bar{I}\cong\ocp$ and $\bar{J}\cong\C_p(1)$. By construction we have the following commutative diagram with exact rows of $\Gamma_K$-equivariant maps
\begin{equation}\label{diagram Ibar A2 Jbar B2}
\begin{tikzcd}
0 \arrow[r] & \bar{I} \arrow[r] \arrow[d, hook] & \At \arrow[r] \arrow[d, hook] & \cO_{\C_p} \arrow[r] \arrow[d, hook] & 0 \\
0 \arrow[r] & \bar{J} \arrow[r]                 & \Bt \arrow[r]                 & \C_p \arrow[r]                       & 0
\end{tikzcd}
\end{equation}
    Let $\Omega=\Omega_{\Spec(\cO_{\bar{K}})/\Spec(\cO_K)}$ be the sheaf of modules of differentials with the universal derivation $d\colon\cO_{\bar{K}}\to \Omega$ and let $\cA:=\ker d$. The multiplication by $p^n$ induces a commutative diagram of $\cO_K$-modules
    \begin{equation}
\begin{tikzcd}
0 \arrow[r] & \cA \arrow[r] \arrow[d, "{[p^n]}", tail] & \cO_{\bar{K}} \arrow[r] \arrow[d, "{[p^n]}", tail] & \Omega \arrow[r] \arrow[d, "{[p^n]}", two heads] & 0 \\
0 \arrow[r] & \cA \arrow[r]                            & \cO_{\bar{K}} \arrow[r]                            & \Omega \arrow[r]                                 & 0.
\end{tikzcd}
    \end{equation}
Notice that multiplication by $p^n$ is surjective on $\Omega$, and it is injective on $\cO_{\bar{K}}$. The snake lemma yields an exact sequence
$$0\to \Omega[p^n]\to\cA/p^n\cA\to\cO_{\bar{K}}/p^n\cO_{\bar{K}}\to 0.$$
The direct system $\{\Omega[p^n]\}_{n\geq 1}$ with natural transition maps $\Omega[p^{n+1}]\onto\Omega[p^n]$ satisfies the Mitag-Leffler condition (\cite[\href{https://stacks.math.columbia.edu/tag/0594}{Section 0594}]{stacks-project}). Hence, taking projective limits yields an exact sequence
$$0\to\Tp\Omega\to \hat{\cA}\to\cO_{\C_p}\to 0,$$
where $\hat{\cA}=\lim\cA/p^n\cA$. The ring $\BdRp$ is the universal pro-infinitesimal thickening of $\cO_{\C_p}$. This universal property for $\At$ provides a unique map $\At\to\hat{\cA}$ which is an isomorphism and induces a commutative diagram
\begin{equation}
\begin{tikzcd}
0 \arrow[r] & \bar{I} \arrow[r] \arrow[d, "\cong"] & \At \arrow[r] \arrow[d, "\cong"] & \cO_{\C_p} \arrow[r] \arrow[d, Rightarrow, no head] & 0 \\
0 \arrow[r] & \Tp\Omega \arrow[r]                  & \hat{\cA} \arrow[r]              & \cO_{\C_p} \arrow[r]                                & 0
\end{tikzcd}
\end{equation}
where the maps are $\Gamma_K$-equivariant. Inverting $p$ we obtain isomorphisms $\bar{J}\cong(\Tp\Omega)[1/p]\cong\C_p(1)$, and $\Bt\cong\hat{\cA}[1/p]$.

For the rest of this chapter, assume that $M=[L\xrightarrow{u}G]$ is a 1-motive over $\cO_K$. The Tate module of $M$ is $\Tp(M)=\lim M[p^n](\bar{K})=\lim M[p^n](\cO_{C_p})$. The first goal of this chapter is to construct a bilinear pairing map 
\[\Tp(M)\times \TdR\ve(M)\to\At\]
whose generic fibre induces a bilinear, perfect pairing
\[\int\colon \Tp(M_K)\times \TdR\ve(M_K)\to\Bt\]
which is $\Gamma_K$-equivariant in the first argument and respects the Hodge filtration. This pairing is called p-adic integration pairing for the 1-motive $M$. By $\displaystyle\int_{x}\omega$, we mean $\displaystyle\int(x,\omega)$ for any $x\in\Tp(M)$ and $\omega\in\TdR\ve(M)$.  By perfect, we mean that $\displaystyle\int$ is non-degenerate:
\begin{align}
    \bullet \text{ For each } x\in\Tp(M),\text{ if }\displaystyle\int_x\omega=0 \text{ for all } \omega\in\TdR\ve(M_K), \text{ then } x=0.\nonumber \\
    \bullet \text{ For each } \omega\in\TdR\ve(M_K), \text{ if } \displaystyle\int_x\omega=0 \text{ for all } x\in\Tp(M), \text{ then } \omega=0.\label{perfect def meaning}
\end{align}
This is equivalent to saying that the induced map $\Tp(M)\otimes_{\Z_p}\Bt\to \TdR(M_K)\otimes_K\Bt$ is an isomorphism.

\begin{defn}
    Let $M=[L\to G]$ be a 1-motive. We define the Hodge filtration on $\TdR\ve(M)$ as follows
    \begin{equation}
        \Fil^i\TdR\ve(M)=\begin{cases}
            \TdR\ve(M), & i\leq 0,\\
            \coLie(G), & i=1,\\
            0, & i\geq 2.
        \end{cases}
    \end{equation}
\end{defn}
\begin{defn}
Recall that $\Fil^i\BdR=t^i\BdRp/t^{i+1}\BdRp\cong\C_p(i)$, for $i\in\Z$. We define the following filtration for the ring $\Bt$:
\[\Fil^i\Bt=\begin{cases}
    \Bt, & i\leq 0\\
    \C_p(1), & i=1\\
    0, & i\leq 2.
\end{cases}\]
\end{defn}
Saying that the pairing $\displaystyle\int$ respects filtration means that $\displaystyle\int_x\omega\in\Fil^i\Bt$ if $\omega\in\Fil^i\TdR\ve(M)$.
\section{Construction of $\displaystyle\int$ for $M=[0\to G]$}
Let $G$ be a semi-abelian scheme over $\cO_K$ which is an extension of abelian scheme $A$ by a torus $T$.

    Infinitesimal lifting criterion for formal smoothness of group schemes $G$ implies that the natural map $G(\At)\to G(\cO_{\C_p})=G(\At/\bar{I})$ is surjective and its kernel is $\Lie(G)\otimes_{\cO_K}\bar{I}$. We can write the following exact sequence
    \begin{equation}\label{exact seq for Fontaine'spairing}
        \begin{tikzcd}
    0 \arrow[r] & \Lie(G)\otimes_{\cO_K}\bar{I} \arrow[r] & G(\At) \arrow[r] & G(\cO_{\C_p}) \arrow[r] & 0.
        \end{tikzcd}
    \end{equation}
Multiplication by $p^n$ gives a commutative diagram
    \begin{equation}
        \begin{tikzcd}
0 \arrow[r] & \Lie(G)\otimes_{\cO_K}\bar{I} \arrow[r] \arrow[d, "{[p^n]}"] & G(\At) \arrow[r] \arrow[d, "{[p^n]}"] & G(\cO_{\C_p}) \arrow[r] \arrow[d, "{[p^n]}"] & 0 \\
0 \arrow[r] & \Lie(G)\otimes_{\cO_K}\bar{I} \arrow[r]                      & G(\At) \arrow[r]                      & G(\cO_{\C_p}) \arrow[r]                       & 0
\end{tikzcd}
    \end{equation}
    and the snake lemma gives a $\Gamma_K$-equivariant map 
    \[\phi_n\colon G[p^n](\cO_{\C_p})\to \Lie(G)\otimes_{\cO_K}\bar{I}/p^n\bar{I}.\]
    Recall that we have a natural isomorphism $\Tp\Omega\cong\bar{I}$, hence $\bar{I}/p^n\bar{I}\cong\Omega[p^n]$. This yields a map
    \[\phi_n\colon G[p^n](\cO_{\C_p})\to \Lie(G)\otimes_{\cO_K}\Omega[p^n]\]
    Taking inverse limit and inverting $p$, we obtain a map 
    \[\displaystyle\varphi_G\colon\Tp(G)\to \Lie(G)\otimes_{K}\C_p(1). \]
\begin{defn}
We call the map $\varphi_G$, defined above, Fontaine's map for semi-abelian scheme $G$. The pairing $\Tp(G)\times \Lie\ve(G)\to \C_p(1)$ induced by Fontaine's map $\varphi_G$ is called Fontaine's pairing.
\end{defn}

\begin{thm}\label{Fontaine map tensored by Cp surjective}
The Fontaine's map induces a surjective map \[\phi_G\otimes\C_p:\Tp(M)\otimes\C_p\to\Lie(G)\otimes_{\cO_K}\C_p(1).\]
\end{thm}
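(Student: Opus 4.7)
The plan is to reduce to the $p$-divisible group $G[p^\infty]$ and invoke the Hodge–Tate decomposition (\cref{Theorem: Hodge-Tate decomposition}). Since $G$ is semi-abelian over $\cO_K$, the group $G[p^\infty]$ is a $p$-divisible group with $\Tp(G) = \Tp(G[p^\infty])$ and $\Lie(G) = \Lie(G[p^\infty])$, and the defining exact sequence
\[
0 \to \Lie(G) \otimes_{\cO_K} \bar{I} \to G(\At) \to G(\ocp) \to 0
\]
restricts on $p^n$-torsion to the analogous sequence for $G[p^n]$. Hence the snake-lemma map $\phi_n$, and therefore $\phi_G$ itself, depends only on $G[p^\infty]$, so the target statement is really one about $p$-divisible groups.

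Next, I would identify $\phi_G \otimes \C_p$ with the map obtained by $\C_p(1)$-dualizing the tangent map $d\alpha$ from \cref{Theorem: Hodge-Tate decomposition} applied to $G[p^\infty]$. Both maps are constructed from the universal infinitesimal thickening $\Spec\ocp \hookrightarrow \Spec\At$ (equivalently, $\Spec\ocp \hookrightarrow \Spec\BdRp$ modulo $t^2$) combined with the multiplication-by-$p^n$ snake lemma, so matching them is a diagram chase using the canonical identifications $\bar{I} \cong \Tp\Omega$, $\bar{I}[1/p] \cong \C_p(1)$, together with the duality $\Tp(G^\vee) \cong \Hom(G_{\ocp},\mu_{p^\infty,\ocp})$ implicit in the definition of $\alpha$.

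Finally, \cref{Theorem: Hodge-Tate decomposition}, after $\C_p$-dualizing and twisting by $\C_p(1)$, yields a canonical $\C_p[\Gamma_K]$-decomposition
\[
\Tp(G) \otimes_{\Z_p} \C_p \;\cong\; \coLie(G^\vee)(\C_p) \;\oplus\; \Lie(G)(\C_p)(1),
\]
under which $\phi_G \otimes \C_p$ becomes projection onto the second summand, and is therefore surjective. The main obstacle lies in the identification step, which is essentially a bookkeeping exercise in Tate twists and dualities. An alternative route, which avoids this identification, proceeds via the extension $0 \to T \to G \to A \to 0$: one forms the evident $3 \times 3$ commutative diagram of Fontaine maps, uses Fontaine's original theorem (\cite{fontaine1982formes}) for the surjectivity of $\phi_A \otimes \C_p$, reduces $\phi_T \otimes \C_p$ to the split case $T \cong \G_m^{\dim T}$ (where it is the obvious isomorphism on $\mu_{p^\infty}$-factors) by a finite unramified descent, and concludes by the five lemma.
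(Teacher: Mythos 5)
Your proposal is correct, but it runs along a different axis than the paper's own argument. The paper does not pass through \cref{Theorem: Hodge-Tate decomposition} at all: it introduces the map $\rho\colon\Omega_G(G)\to\Hom_{\Z_p[\Gamma_K]}(\Tp(G),\C_p(1))$ obtained by pulling back invariant differentials along integral points, quotes the Tate--Raynaud/Fontaine injectivity theorem (\cite[Theorem 2]{fontaine1982formes}) for $\rho$, and then spends its effort showing that $\phi_G\otimes\C_p$ coincides with the dual $\rho\ve$, using the identifications $\bar{I}\cong\Tp\Omega$ and $\bar{J}/\bar{I}\cong\Omega$ to compare the $\At$-snake-lemma construction with the differential-forms construction; surjectivity then falls out of injectivity of $\rho$. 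Your main route instead reduces to the p-divisible group $G[p^{\infty}]$ (legitimate: $\Tp(G)=\Tp(G[p^{\infty}])$ and $\Lie(G)=\Lie(G[p^{\infty}])$) and imports Tate's decomposition, identifying $\phi_G\otimes\C_p$ with the projection onto the $\Lie(G)(\C_p)(1)$-summand; your alternative route (dévissage along $0\to T\to G\to A\to 0$, Fontaine's theorem for $A$, the explicit $\G_m$ computation plus unramified descent for $T$, then the four/five lemma) is also sound, and is in fact the mechanism the paper itself uses later, in the proof of \cref{theorem integration pairing for G}, to pass from tori and abelian schemes to semi-abelian $G$. What each approach buys: the paper's route needs the comparison with $\rho$ anyway (it reappears in the discussion of the integration pairing), while yours uses only statements already recorded in Chapter~1, and the dévissage variant avoids comparing decompositions at the cost of taking the abelian case as an external input from \cite{fontaine1982formes} or \cite{colmez1992periodes}.

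One caution: in both of your routes the genuine content is exactly the identification step you label ``bookkeeping'' --- matching the map built from $\Spec\ocp\into\Spec\At$ and the multiplication-by-$p^n$ snake lemma with the classical map (Tate's projection, respectively Fontaine's map for $A$). That comparison is what the paper's written proof actually consists of, so it should not be waved off; it requires the functoriality of the $\At$-construction with respect to the homomorphisms $u\in\Tp(G\ve)=\Hom(G_{\ocp},\mu_{p^{\infty},\ocp})$ together with the computation of $\phi_{\mu_{p^{\infty}}}$. Also, strictly speaking the $\C_p$-dual of $d\alpha$ alone is a map out of $\Tp(G\ve)\otimes\C_p$; to land on the projection of $\Tp(G)\otimes\C_p$ onto $\Lie(G)\otimes\C_p(1)$ you must additionally invoke the Weil-pairing identification $\Tp(G)\cong\Hom_{\Z_p}(\Tp(G\ve),\Z_p(1))$ and keep track of the twist. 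You allude to this duality, so this is an imprecision to be cleaned up rather than a gap.
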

\begin{proof}
Let $x\colon\Spec_{\cO_{\bar{K}}}\to G$ be an integer point. This induces the map $$x^*\colon \Omega_{G/\Spec\cO_K}\to\Omega_{\Spec\cO_{\bar{K}}/\Spec\cO_K}=\Omega,$$ and the map $\Omega_{G}(G)\to\Hom_{\Z[\Gamma_K]}(G(\cO_{\bar{K}}),\Omega)$ given by $\omega\mapsto (\eta_{\omega}\colon x\mapsto x^*(\omega))$ for any $\omega\in \Omega_G(G)$ and $x\in G(\cO_{\bar{K}})$. Composing it with natural injective map $$\Hom_{\Z[\Gamma_K]}(G(\bar{K}),\Omega)\into\Hom_{\Z[\Gamma_K]}((\Tp G)[1/p],(\Tp\Omega)[1/p])$$ and the restriction map on $\Tp(G)$, we get a $K$-linear map
\[\rho\colon \Omega_G(G)\to\Hom_{\Z[\Gamma_K]}(\Tp(G),\C_p(1)). \]
The Tate-Raynaud theorem, \cite[Theorem 2]{fontaine1982formes}, states that $\rho$ is injective and functorial on $G$. Notice that the dual of $\rho$ is $\rho\ve:\Tp(G)\otimes_{\cO_K}\C_p\to \Lie(G)\otimes\C_p(1)$. It suffices to show that the map $\phi_G\otimes\C_p:\Tp(M)\otimes\C_p\to\Lie(G)\otimes_{\cO_K}\C_p(1)$ coincides with $\rho\ve$. We outline the proof from \cite{colmez1992periodes}, or \cite[3.7.1]{p-adicintegration-iovita} for the case of an abelian scheme, which extends in a similar manner to the semi-abelian scheme $G$.

Note that we have an exact sequence
\[0\to \Tp\Omega\to\Vp\Omega\xrightarrow{s}\Omega\to 0\]
of $\Gamma_K$-modules, where $s$ is defined on elementary
tensors as:
\[s((x_n)_{n\geq 0}\otimes (1/p^m)) = x_m.\]
One can check that $s$ is well-defined and surjective $\Z_p[\Gamma_K]$-homomorphism, such that $\ker s=\Tp\Omega$. We can define a map $\alpha:\cO_{\bar{K}}\to\bar{J}/\bar{I}$, as $x\mapsto x_1-x_2$, where $x_1$ is the image of $x$ in $\Bt$, and $x_2$ is a lift of $x$ in $\At$. It can be shown that this map is well-defined. Moreover, via the isomorphism
\[
\bar{J}/\bar{I}\cong\Vp(\Omega)/\Tp(\Omega)\cong\Omega
\]
we can identify $\alpha$ with the differential $d:\cO_{\bar{K}}\to\Omega$.

Similarly, we extend this construction to $G(\cO_{\bar{K}})$ in the following way. Let $x\in G(\cO_{\bar{K}})$, $x_1$ the image of $x$ in $G(\Bt)$, and $x_2$ be a lift of $x$ in $G(\At)$. Then $x_1\equiv x_2\pmod{\bar{I}}$, and $$\beta:G(\cO_{\bar{K}})\to \ker(G(\Bt)\to G(\bar{I})),\, \beta(x)=x_1-x_2$$ is independent of the choices of $x_1$ and $x_2$. But, 
\[\ker(G(\Bt)\to G(\bar{I}))=\Lie(G)\otimes_{\cO_K}\bar{J}/\bar{I}\cong\Lie(G)\otimes_{\cO_K}\Omega. \] Therefore the map $\eta_{\omega}:\omega\mapsto x^*(\omega)$ coincides with the map $$\delta_x:\cO_{X,x}\to\Omega,\, \omega\mapsto\omega(x_1)-\omega(x_2)\pmod{\bar{I}}.$$ Recall that the we had 
\[
\phi_n:G[p^n](\cO_{\C_p})\to\Lie(G)\otimes(\bar{J}/p^n\bar{J})\cong\Lie(G)\otimes(\Omega[p^n])
\]
where $\phi_n(x)=p^nx_2\pmod{p^n\bar{J}}$, as $p^nx_1=p^nx=0$. Moreover, we have $\beta(x)=\phi_n(x)$. The result follows.
\end{proof}

Next, we introduce the p-adic integration pairing \[\int^{\varpi}:\Tp(G)\times\coLie(\Ex(G))\to \Bt,\]

where \[0\to V\to \Ex(G)\to G\to 0\] is the universal vector extension of $G$, and $V=\Ext^1(G,\G_a)\ve$. We construct the following diagram with exact rows and columns
\begin{equation}\label{big diagram for integration G}
\begin{tikzcd}
            & 0 \arrow[d]                                       & 0 \arrow[d]                                            & 0 \arrow[d]                                       &   \\
0 \arrow[r] & \Lie(V)\otimes_{\cO_K}\bar{I} \arrow[d] \arrow[r] & \Lie(\Ex(G))\otimes_{\cO_K}\bar{I} \arrow[d] \arrow[r] & \Lie(G)\otimes_{\cO_K}\bar{I} \arrow[r] \arrow[d] & 0 \\
0 \arrow[r] & V(\At) \arrow[d] \arrow[r]                        & \Ex(G)(\At) \arrow[d] \arrow[r]                        & G(\At) \arrow[r] \arrow[d]                        & 0 \\
0 \arrow[r] & V(\cO_{\C_p}) \arrow[d] \arrow[r]                 & \Ex(G)(\cO_{\C_p}) \arrow[d] \arrow[r]                 & G(\cO_{\C_p}) \arrow[r] \arrow[d]                 & 0 \\
            & 0                                                 & 0                                                      & 0                                                 &  
\end{tikzcd}
\end{equation}
From the above diagram we can observe that the map $\Ex(G)(\At)\to G(\cO_{\C_p})$ is surjective and its kernel is \[\cK:=\frac{(V\otimes_{\cO_K}\At)\oplus(\Lie(\Ex(G))\otimes_{\cO_K}\bar{I})}{\Lie(V)\otimes_{\cO_K}\bar{I}}\]
where we view $\Lie(V)\otimes_{\cO_K}\bar{I}$ as a submodule of $$(V\otimes_{\cO_K}\At)\oplus(\Lie(\Ex(G))\otimes_{\cO_K}\bar{I})$$ via diagonal embedding. Notice that we have an identification $\Lie(V)\cong V$. Hence, we have an exact sequence
\begin{equation}\label{4.2.7}
\begin{tikzcd}
0 \arrow[r] & \cK \arrow[r] & \Ex(G)(\At) \arrow[r] & G(\cO_{\C_p}) \arrow[r] & 0.
\end{tikzcd}
\end{equation}
Multiplication by $p^n$ gives a commutative diagram 
\begin{equation}
\begin{tikzcd}
0 \arrow[r] & \cK \arrow[d, "{[p^n]}"] \arrow[r] & \Ex(G)(\At) \arrow[d, "{[p^n]}"] \arrow[r] & G(\cO_{\C_p}) \arrow[d, "{[p^n]}"] \arrow[r] & 0 \\
0 \arrow[r] & \cK \arrow[r]                      & \Ex(G)(\At) \arrow[r]                      & G(\cO_{\C_p}) \arrow[r]                      & 0
\end{tikzcd},
\end{equation}
and the snake lemma yields a map $G[p^n](\cO_{\C_p})\to \cK/p^n\cK$. By composing with the natural map \[\cK/p^n\cK\to \Lie(\Ex(G))\otimes_{\cO_K}\At/p^n\At\] we get a map
\[\varpi_{n,G}\colon G[p^n](\cO_{\C_p})\to \Lie(\Ex(G))\otimes_{\cO_K}\At/p^n\At\]
Taking the inverse limits, we obtain the map
\[\varpi_G\colon \Tp(G)\to\TdR(G)\otimes_{\cO_{K}}\At\]
which is called p-adic integration map. We call the pairing 
\[\int^{\varpi}\colon \Tp(G_K)\times\TdR\ve(G_K)\to \Bt\]
induced by $\varpi_G$ the p-adic integration pairing, where $G_K:=G\times_{\Spec\cO_K}\Spec K$ is the generic fibre of $G$. We arrive at the following theorem, which in the case of abelian scheme $G=A$, was proven by P. Colmez (\cite{colmez1992periodes}).
\begin{thm}\label{theorem integration pairing for G}
The p-adic integration pairing $\displaystyle\int^{\varpi}$ is bilinear, perfect, and $\Gamma_K$-equivariant in the first argument. Moreover, It respects the Hodge filtration in the following sense: for all $\omega\in\Fil^1\TdR\ve(G_K)$ and all $\nu\in\Tp(G)$,
\[\int^{\varpi}_\nu\omega\in \Fil^1\Bt.\]
\end{thm}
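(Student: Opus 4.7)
The proof breaks into four parts, of which only the perfectness assertion is substantive. Bilinearity follows because the snake-lemma connecting morphism is additive and respects the inverse limit used to define $\varpi_G$, while $K$-linearity in the second argument is built into the tensor product with $\TdR\ve$. Galois equivariance in the first argument is equally formal: every object in diagram (\ref{big diagram for integration G}) carries a continuous $\Gamma_K$-action that commutes with every arrow, because the universal vector extension, $\Ainf$, $\At$, and multiplication by $p^n$ are all $\Gamma_K$-equivariant.

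Compatibility with the Hodge filtration drops out of the internal structure of $\cK$. The submodule $\Lie(\Ex(G))\otimes_{\cO_K}\bar{I}\subset\cK$ has quotient $V(\ocp)$, endowing $\cK$ with a two-step filtration, and by the definition of the universal vector extension a covector $\omega\in\Fil^1\TdR\ve(G_K)=\coLie(G_K)$ is precisely an element of $\Lie(\Ex(G))\ve$ vanishing on $V$. Hence, when paired against the snake-lemma image of $\varpi_G(\nu)$, the $V\otimes_{\cO_K}\At$-contribution dies, and the surviving term lies in $\Lie(\Ex(G))\otimes_{\cO_K}\bar{I}/p^n\bar{I}$; after passing to the inverse limit and inverting $p$, its image lands in $\bar{J}\cong\C_p(1)=\Fil^1\Bt$.

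The non-degeneracy assertion is the main obstacle, and I would attack it by dévissage along $0\to T\to G\to A\to 0$. Using $\Ex(G)\cong\Ex(A)\times_A G$ one sees that the kernel of $\Ex(G)\to\Ex(A)$ is exactly $T$, producing compatible short exact sequences
\[
0\to\Tp(T)\to\Tp(G)\to\Tp(A)\to 0,\qquad 0\to\TdR(T)\to\TdR(G)\to\TdR(A)\to 0;
\]
the functoriality of $\varpi$ provides a commuting ladder intertwining them, which reduces the question (by a five-lemma style argument for non-degenerate pairings) to the cases $G=A$ and $G=T$. The abelian case is Colmez's theorem, invoked in the text. For the torus case one has $V(T)=0$ by \cref{V(M)=coLie(Gv)}, so $\Ex(T)=T$, $\TdR(T)=\Lie(T)$, and $\cK=\Lie(T)\otimes_{\cO_K}\bar{I}$; a direct snake-lemma computation (after reducing to a split torus by faithfully flat descent) gives $\varpi_{\G_m}(\epsilon)=[\epsilon]-1\equiv t\pmod{t^2}$ for $\epsilon\in\Z_p(1)$, which is non-zero in $\Bt$, and non-degeneracy in both arguments follows because multiplication by $t$ on $\Bt$ is injective on $K\subset\Bt$ and on $\Z_p\subset\Bt$.

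The technical point I expect to demand most care is verifying that the reduction of $\varpi_G$ modulo $\bar{I}$ coincides with Fontaine's map $\phi_G$, so that the graded pieces of the $\Bt$-pairing can be controlled independently. The vertical surjection $\At\twoheadrightarrow\cO_{\C_p}$ of (\ref{diagram Ibar A2 Jbar B2}) carries diagram (\ref{big diagram for integration G}) onto (\ref{exact seq for Fontaine'spairing}), so the quotient $\Bt\to\C_p$ transports $\varpi_G$ to $\phi_G$, which is surjective after tensoring with $\C_p$ by \cref{Fontaine map tensored by Cp surjective}. Combined with the Hodge-filtration step above (handling the $\C_p(1)$-graded piece) and with the Hodge–Tate decomposition of $\Vp(G)$ from \cref{Hodge-Tate weights of tate module of 1-motives}, this supplies the input needed to promote the graded non-degeneracy to the full pairing over $\Bt$ and to cross-check that the dévissage argument is consistent with the crystalline–de Rham comparison.
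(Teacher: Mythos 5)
Your argument is correct and follows essentially the same route as the paper's: bilinearity and $\Gamma_K$-equivariance by construction, and perfectness by d\'evissage along $0\to T\to G\to A\to 0$, quoting Colmez for the abelian part and doing the torus by the explicit computation $\varpi_{\G_m}(\epsilon)=[\epsilon]-1\equiv t\pmod{t^2\BdRp}$, exactly as in the text. The only organizational difference is that the paper checks the filtration statement here only for $\G_m$ and defers the general case to \cref{theorem p-adic integration pairing for M}, where it is obtained from the same factorization of $\varpi_G$ through Fontaine's map that you spell out directly via the fact that $\Fil^1\TdR\ve(G_K)=\coLie(G_K)$ annihilates $V(G)$.
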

\begin{proof}
    The fact that the pairing is bilinear and $\Gamma_K$-equivariant in the first argument is by construction. For the case $G=\G_m$, we have $G\nat=\G_m$, and $V(G)=0$ since $\Ext^1(\G_m,\G_a)=0$. Therefore, $\cK=\Lie(\G_m)\otimes_{\cO_K}\bar{I}$ and the exact sequence \ref{4.2.7} becomes 
    \[
    0\to \Lie(\G_m)\otimes_{\cO_K}\bar{I}\to \G_m(\At)\to \G_m(\ocp)\to 0
    \]
    The map $\cK/p^n\cK\to\Lie(\G_m)\otimes_{\cO_K}\At/p^n\At$ coincides with the inclusion map $$\Lie(\G_m)\otimes_{\cO_K}\bar{I}/p^n\bar{I}\to \Lie(\G_m)\otimes_{\cO_K}\At/p^n\At.$$ The map $\G_m[p^n](\ocp)\to\Lie(\G_m)\otimes_{\cO_K}\bar{I}/p^n\bar{I}$ is induced by $x\mapsto[p^n]\tilde{x}$, where $\tilde{x}$ is a lift of $x$ to $\G_m(\At)$. Therefore, after taking the limit and generic fibre, we get the natural map
    \begin{equation}\label{4.2.9}
    \Z_p(1)\to \Lie(\G_m)\otimes_{K}\C_p(1)\to \Lie(\G_m)\otimes_{K}\Bt.
    \end{equation}
    Thus, tensoring with $\Bt$ gives a $\Gamma_K$-equivariant isomorphism 
    \[
    \Z_p(1)\otimes_{\Z_p}\Bt\to\Lie(\G_m)\otimes_{\cO_K}\Bt.
    \]
    The corresponding pairing $\Z_p(1)\times\coLie(\G_m)\to\Bt$ respects the filtration, since, by \ref{4.2.9}, the image of $\Z_p(1)$ lies in $\Lie(\G_m)\otimes_{\cO_K}\C_p(1)$.

    For the case of abelian scheme $G=A$, the statement follows from \cite[Theorem 5.2]{colmez1992periodes}. We now deal with the case of semi-abelian scheme $G$ over $\cO_K$ with the exact sequence 
    \[
    0\to T\to G\to A\to 0.
    \]
    We have the commutative diagram 
\begin{equation*}
\begin{tikzcd}
0 \arrow[r] & \Tp(T) \arrow[r] \arrow[d]     & \Tp(G) \arrow[r] \arrow[d]  & \Tp(A) \arrow[r] \arrow[d]  & 0 \\
0 \arrow[r] & \TdR(T)\otimes\Bt \arrow[r] & \TdR(G)\otimes\Bt \arrow[r] & \TdR(A)\otimes\Bt \arrow[r] & 0
\end{tikzcd}
\end{equation*}
with exact rows. Now, the integration pairing which is induced by the middle vertical arrow is perfect because both integration pairings induced by left and right vertical arrows are perfect. The fact that the pairing respects the filtration will be shown in a more general case in \cref{theorem p-adic integration pairing for M}.
\end{proof}

\section{Construction of $\displaystyle\int$ for $M=[L\to G]$}
The universal vector extension of $M=[L\xrightarrow{u}M]$ is given by $M\nat=[L\xrightarrow{u\nat} G\nat]$ which induces the exact sequence (see \cref{prop 2.4.2})
\begin{equation}\label{exact sequence for colman pam for 1-motive}
0\to V(M)\to G\nat\to G\to 0.
\end{equation}
In the same way we obtained the diagram $\ref{big diagram for integration G}$, we can come up with the following commutative diagram with exact rows and columns
\begin{equation}\label{big diagram for integration M}
\begin{tikzcd}
            & 0 \arrow[d]                                       & 0 \arrow[d]                                            & 0 \arrow[d]                                       &   \\
0 \arrow[r] & \Lie(V(M))\otimes_{\cO_K}\bar{I} \arrow[d] \arrow[r] & \Lie(G\nat)\otimes_{\cO_K}\bar{I} \arrow[d] \arrow[r] & \Lie(G)\otimes_{\cO_K}\bar{I} \arrow[r] \arrow[d] & 0 \\
0 \arrow[r] & V(M)(\At) \arrow[d] \arrow[r]                        & G\nat(\At) \arrow[d] \arrow[r]                        & G(\At) \arrow[r] \arrow[d]                        & 0 \\
0 \arrow[r] & V(M)(\cO_{\C_p}) \arrow[d] \arrow[r]                 & G\nat(\cO_{\C_p}) \arrow[d] \arrow[r]                 & G(\cO_{\C_p}) \arrow[r] \arrow[d]                 & 0 \\
            & 0                                                 & 0                                                      & 0                                                 &  
\end{tikzcd}
\end{equation}
According to this diagram, the map $G\nat(\At)\to G(\cO_{\C_p})$ is clearly surjective and its kernel is 
\[\cK:=\frac{(V(M)\otimes_{\cO_K}\At)\oplus(\Lie(G\nat)\otimes_{\cO_K}\bar{I})}{\Lie(V(M))\otimes_{\cO_K}\bar{I}}.\]
Hence, we have an exact sequence
\begin{equation}\label{7.3.2}
\begin{tikzcd}
0 \arrow[r] & \cK \arrow[r] & G\nat(\At) \arrow[r] & G(\cO_{\C_p}) \arrow[r] & 0.
\end{tikzcd}
\end{equation}
Define the maps
\begin{align*}
q\colon L\times_{\cO_K} G\to G, \, (x,g)\mapsto u(x)+p^ng \\
q\nat\colon L\times_{\cO_k} G\nat\to G\nat,  \, (x,g)\mapsto u\nat(x)+p^ng,
\end{align*}
We denote the kernel of $q$ by $\ker q:=\tilde{M[p^n]}$. Then, by \cref{formula M[n]} the group scheme $M[p^n]$ is the quotient of $\tilde{M[p^n]}$ by the image of $L$.
These maps together with the exact sequence (\ref{7.3.2}) induces a commutative diagram with exact rows
\begin{equation}\label{4.3.4}
\begin{tikzcd}
0 \arrow[r] & \cK \arrow[r] \arrow[d, "{[p^n]}"] & L\times_{\cO_K}G\nat(\At) \arrow[r] \arrow[d, "q\nat"] & L\times_{\cO_K}G(\cO_{\C_p}) \arrow[r] \arrow[d, "q"] & 0 \\
0 \arrow[r] & \cK \arrow[r]                         & G\nat(\At) \arrow[r]                                   & G(\cO_{\C_p}) \arrow[r]                               & 0.
\end{tikzcd}
\end{equation}
Applying the snake lemma gives a map
\[\tilde{M[p^n]}\to \cK/p^n\cK.\]
By composing with the natural map 
\[\cK/p^n\cK\to \Lie(G\nat)\otimes_{\cO_K}\At/p^n\At\] we get map
$\tilde{M[p^n]}\to \Lie(G\nat)\otimes_{\cO_K}\At/p^n\At$. Notice that this map factors through the quotient of $\tilde{M[p^n]}$ by the image of $L$ since the image of $L$ in $\tilde{M[p^n]}$ vanishes under the map $q\nat$. Hence, we can get a map
\[\varpi_{n,M}\colon M[p^n]\to \TdR(M)\otimes_{\cO_K}\At/p^n\At.\]
Taking the inverse limits, we get the map
\begin{equation}\label{Integration map for M}
\varpi_{M}\colon \Tp(M)\to \TdR(M)\otimes_{\cO_K}\At
\end{equation}
which we call it the p-adic integration map for motive $M$. The pairing 
\begin{equation}\label{integration pairing for M}
\int^{\varpi}\colon\Tp(M_K)\times\TdR\ve(M_K)\to\Bt
\end{equation}
induced by $\varpi_M$ will be called the p-adic integration pairing, where $M_K:=M\times_{\Spec\cO_K}\Spec K$ is the generic fibre of $M$.

If we repeat the same argument as above but this time for the  exact sequence \ref{exact seq for Fontaine'spairing}, we get a diagram 
\begin{equation}\label{equation 4.3.6}
\begin{tikzcd}
0 \arrow[r] & \Lie(G)\otimes_{\cO_K}\bar{I} \arrow[r] \arrow[d, "{[p^n]}"] & L\times_{\cO_K}G(\At) \arrow[r] \arrow[d, "q_2"] & L\times_{\cO_K}G(\cO_{\C_p}) \arrow[r] \arrow[d, "q_1"] & 0 \\
0 \arrow[r] & \Lie(G)\otimes_{\cO_K}\bar{I} \arrow[r]                         & G(\At) \arrow[r]                                   & G(\cO_{\C_p}) \arrow[r]                               & 0
\end{tikzcd}
\end{equation}
where $q_1:L\times_{\cO_K}G(\ocp)\to G(\ocp)$ and $q_2:L\times_{\cO_K}G(\At)\to G(\At)$ are induced by $(x,g)\mapsto u(x)+p^ng$.
This yields a map $\phi_{n,M}:M[p^n](\ocp)\to \Lie(G)\otimes_{\cO_K}\bar{I}/p^n\bar{I}$. By taking the limit, we have the analogue of the Fontaine's map \[\varphi_M:\Tp(M)\to \Lie(G)\otimes_{\cO_K}\C_p(1)\] for 1-motive $M$. We call the pairing 
\begin{equation}\label{Fontaine's pairing for M}
    \int^{\varphi}:\Tp(M)\times\coLie(G_K)\to\C_p(1)
\end{equation}
induced by the map $\varphi_M$ Fontaine's pairing for the motive $M$.

If we apply the same reasoning as above for the exact sequence \ref{exact sequence for colman pam for 1-motive}, we get a diagram
\begin{equation}\label{4.3.9}
\begin{tikzcd}
0 \arrow[r] & V(M) \arrow[r] \arrow[d, "{[p^n]}"] & L\times G\nat(\ocp) \arrow[r] \arrow[d, "q_2"] & L\times G(\cO_{\C_p}) \arrow[r] \arrow[d, "q_1"] & 0 \\
0 \arrow[r] & V(M) \arrow[r]  & L\times G\nat(\ocp) \arrow[r]                                   & L\times G(\cO_{\C_p}) \arrow[r]                               & 0
\end{tikzcd}
\end{equation}
Using the same argument, we obtain the map $\psi_M:\Tp(M)\to V(M)\otimes_{\cO_K}\ocp$, which we call it Coleman's map for the motive $M$.

Next, we present three theorems. The \cref{kernel Fontaine map phi} and \cref{kernel p-adic integration map varpi} provide information about the kernels of Fontaine's map $\phi_M$ and the p-adic integration map $\varpi_M$. Moreover, in the \cref{theorem p-adic integration pairing for M}, we prove that the p-adic integration pairing $$\displaystyle\int^{\varpi}:\Tp(M)\times\TdR\ve(M_K)\to\Bt$$ is perfect and respects the Hodge filtration.  

\begin{remark}\label{remark before injectivity of fontaine map}
    Let $M$ be a 1-motive over finite extension $K$ of $\Q_p$. Let $F:=K^{un}$ be the maximal unramified extension of $K$ and $N:=\hat{F}$ its completion. We want to show that the kernel of Fontaine map $\phi_M:\Tp(M)\to\Lie(G)\otimes_{\cO_K}$ is $\Tp(M)^{\Gamma_F}$. By Krasner's lemma, we know that $F\cap \bar{N}=N$. Therefore, we get a map $f:\Gamma_N\to \Gamma_F$ induced by restriction. The map $f$ is an isomorphism, since $f$ equals to the composition $\Gamma_N\xrightarrow{\cong} \Aut_{cont}(\C_p/N)\xrightarrow{\cong}\Aut_{cont}(\C_p/F)$. Moreover, $\Tp(M_N)=\Tp(M)$ as $\Z_p[\Gamma_N]$-modules, where the action of $\Gamma_N$ is via $f$. We have a commutative diagram 
    \begin{equation*}
\begin{tikzcd}
\Tp(M_N) \arrow[r] \arrow[d, "\cong", Rightarrow, no head] & \Lie(G_N)\otimes_{N}\C_P(1) \arrow[d, "\cong", Rightarrow,  no head] \\
\Tp(M) \arrow[r]                                           & \Lie(G)\otimes_K\C_p(1)                                            
\end{tikzcd}
    \end{equation*}
and $x$ belongs to the kernel of the top map if and only if it belongs to the kernel of the bottom one. 
\end{remark}
Next, we aim to study the kernel of Fontaine's map $\phi_M$. In the case of an abelian variety $A$ with good reduction, Yeuk Hay Joshua Lam and Alexander Petrov independently proved that that the $\ker(\phi_A)=\Tp(M)^{\Gamma_F}$, where $F=K^{un}$ (see \cite[Theorem A.4]{iovita_p_2022}). In the following theorem, we prove a similar result for the Fontaine map $\phi_M$ that we developed for 1-motive $M$ with good reduction, representing a more general case.

\begin{thm}\label{kernel Fontaine map phi}
Let $F:=K^{un}$ be the maximal unramified extension of $K$. The kernel of the  Fontaine map $\phi_M:\Tp(M)\to \Lie(G)\otimes_{\cO_K}\C_p(1)$ is  $\ker(\phi_M)=\Tp(M)^{\Gamma_F}$ and when tensored by $\C_p$ the resulting map $\Tp(M)\otimes_{\Z_p}\C_p\to \Lie(G)\otimes_{\cO_K}\C_p(1)$ is surjective.
\end{thm}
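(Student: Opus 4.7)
The plan is to dispatch the easy inclusion and the surjectivity first, and then to attack the reverse inclusion by d\'evissage on the weight filtration of $M$. The inclusion $\Tp(M)^{\Gamma_F}\subseteq \ker(\phi_M)$ is immediate from the $\Gamma_K$-equivariance of $\phi_M$ together with the Tate vanishing $(\Lie(G)\otimes_K\C_p(1))^{\Gamma_F}=\Lie(G)\otimes_K\C_p(1)^{\Gamma_F}=0$, since inertia acts non-trivially on $\Z_p(1)$ via the cyclotomic character. For the surjectivity, the restriction of $\phi_M$ to $\Tp(G)\subseteq \Tp(M)$ agrees with $\phi_G$ and both maps share the target $\Lie(G)\otimes_K\C_p(1)$; hence the surjectivity of $\phi_G\otimes\C_p$ established in \cref{Fontaine map tensored by Cp surjective} yields the surjectivity of $\phi_M\otimes\C_p$ at once. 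By the remark preceding the theorem, I may replace $K$ by $N=\widehat{K^{un}}$ and thereby identify $\Gamma_F$ with the full absolute Galois group of the new base.

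For the reverse inclusion, I would apply the snake lemma to the square formed by $\phi_G$ and $\phi_M$ together with the short exact sequence $0\to\Tp(G)\to\Tp(M)\to\Tp(L)\to 0$ on top and the identity on $\Lie(G)\otimes_K\C_p(1)$ on the bottom (with right column $\Tp(L)\to 0$), obtaining
\[
0\to\ker(\phi_G)\to\ker(\phi_M)\to\Tp(L)\to\coker(\phi_G)\to\coker(\phi_M)\to 0,
\]
which reduces the problem to computing $\ker(\phi_G)$ and identifying the image of $\ker(\phi_M)$ in $\Tp(L)$. A second snake lemma applied to $0\to T\to G\to A\to 0$ reduces $\ker(\phi_G)$ to the three pure cases. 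For the lattice $L$, $\phi_L=0$ trivially and $\Tp(L)=L\otimes\Z_p$ is $\Gamma_F$-fixed because $L$ becomes constant over a finite unramified extension. For the torus $T$ of good reduction, $T$ becomes $\G_m^d$ after such an extension, so $\Tp(T)^{\Gamma_F}=0$; moreover $\Tp(T)\otimes\C_p$ and $\Lie(T)\otimes_K\C_p(1)$ both have $\C_p$-dimension $\dim T$, so the surjectivity of \cref{Fontaine map tensored by Cp surjective} forces $\phi_T\otimes\C_p$ to be an isomorphism and hence $\phi_T$ to be injective. For the abelian variety $A$, the equality $\ker(\phi_A)=\Tp(A)^{\Gamma_F}$ is the theorem of Lam--Petrov cited in the paragraph preceding the statement.

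The main obstacle will be reassembling these three pieces, as the snake-lemma connecting map $\Tp(L)\to\coker(\phi_G)$ need not visibly coincide with the Galois-cohomology connecting map $\Tp(L)\to H^1(\Gamma_F,\Tp(G))$ that governs $\Tp(M)^{\Gamma_F}$. My plan to overcome this is to work with the Hodge--Tate decomposition
\[
\Tp(M)\otimes_{\Z_p}\C_p\;\cong\;\coLie(G\ve)\otimes_K\C_p\;\oplus\;\Lie(G)\otimes_K\C_p(1),
\]
under which $\phi_M\otimes\C_p$ is the projection to the weight-one summand. Then $\ker(\phi_M)=\Tp(M)\cap(\coLie(G\ve)\otimes_K\C_p)$, while $(\Tp(M)\otimes\C_p)^{\Gamma_F}=\coLie(G\ve)\otimes_K\widehat F$ by $\C_p^{\Gamma_F}=\widehat F$ and $\C_p(1)^{\Gamma_F}=0$. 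The desired equality thus becomes the integral descent statement that any $x\in\Tp(M)\cap(\coLie(G\ve)\otimes_K\C_p)$ already lies in $\coLie(G\ve)\otimes_K\widehat F$. This descent is exactly the content of the Lam--Petrov argument for abelian varieties, and I expect it to extend to $M$ by applying their method to the Barsotti--Tate group $M[p^\infty]$, whose connected-\'etale filtration interleaves the contributions of $T$, $A$, and $L$.
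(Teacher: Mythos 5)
Your treatment of the easy parts agrees with the paper: surjectivity of $\phi_M\otimes\C_p$ from \cref{Fontaine map tensored by Cp surjective} and the exact sequence $0\to\Tp(G)\to\Tp(M)\to L\otimes\Z_p\to 0$, the inclusion $\Tp(M)^{\Gamma_F}\subseteq\ker(\phi_M)$ from Galois equivariance and Tate's vanishing, and the reduction to the base $\widehat{K^{un}}$ via \cref{remark before injectivity of fontaine map}. Your Hodge--Tate reformulation is also correct: $\ker(\phi_M\otimes\C_p)$ is exactly the weight-zero isotypic component, so the theorem is equivalent to the descent statement that an integral class lying in that component is $\Gamma_F$-invariant. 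But this reformulation is only a restatement of the theorem; your proposal stops precisely where the real work begins. The final sentence, that the descent ``is exactly the content of the Lam--Petrov argument'' and that you ``expect it to extend'' to $M[p^\infty]$, defers the central step: the extension to $1$-motives (equivalently to the p-divisible group $M[p^\infty]$, which would in addition require identifying $\phi_M$ with the Hodge--Tate map of $M[p^\infty]$) is precisely what the theorem asserts, and the cited result covers only abelian varieties. Moreover, the d\'evissage you sketch first does not repair this: from $x\in\ker(\phi_M)$ you get that $\sigma\mapsto\sigma(x)-x$ is a cocycle valued in $\ker(\phi_G)=\Tp(G)^{\Gamma_F}$, but knowing the three pure cases only yields $x\in\Tp(M)^{\Gamma_F}+\Tp(G)$, not $x\in\Tp(M)^{\Gamma_F}$ --- exactly the reassembly obstacle you acknowledge and never resolve.

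The missing idea, which is the heart of the paper's proof, is an argument that the kernel is genuinely Galois-fixed. The paper proceeds as follows: after base change to $\widehat{\Q_p^{un}}$, set $S:=\ker(\phi_M\otimes\Q_p)\subseteq\Vp(M)$; since $S\otimes_{\Q_p}\C_p$ lands in the weight-zero part, $S$ is Hodge--Tate with all weights zero, so by Sen's theorem \cite{sen1980continuous} the Galois action on $S$ has finite image, factoring through a finite totally ramified extension $L/K$; because $S$ is a subrepresentation of the crystalline representation $\Vp(M)$ (\cref{tate module is crystalline}), the identity $(S\otimes_{\Q_p}\Bcris)^{\Gamma_L}=S\otimes_{\Q_p}L$ from \cite{Fontaine_Laffaille_1982} gives $\dim_{\Q_p}S^{\Gal(L/K)}=\dim_{\Q_p}\Dcris(S)=\dim_{\Q_p}S$, so the finite action is trivial and $S=S^{\Gamma_K}$; combined with $H^0(K,\Lie(G)\otimes_K\C_p(1))=0$ one gets $S=\Vp(M)^{\Gamma_K}$ and hence $\ker(\phi_M)=\Tp(M)^{\Gamma_F}$. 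Some argument of this kind (Sen plus crystallinity, or a written-out verification that the Lam--Petrov method applies to the Barsotti--Tate group of a $1$-motive with good reduction) must be supplied; as it stands your proposal proves only the trivial inclusion and the surjectivity.
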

\begin{proof}
We first prove the surjectivity. Consider the restriction of  $q_1$ and $q_2$ on $0\times G(\ocp)$ and $0\times G(\At)$ respectively in diagram \ref{equation 4.3.6}, then the snake lemma gives the Fontaine map $\phi_G: \Tp(G)\to \Lie(G)\otimes_{\cO_K}\C_p(1)$ of $G$ which factors through the Fontaine map $\phi_M:\Tp(M)\to\Lie(G)\otimes_{\cO_K}\C_p(1)$ of $M$.  We have a commutative diagram 
    \begin{equation}
\begin{tikzcd}
0 \arrow[r] & \Tp G \arrow[r] \arrow[d]               & \Tp M \arrow[r] \arrow[d]               & L\otimes\Z_p \arrow[r] \arrow[d] & 0 \\
0 \arrow[r] & \Lie(G)\otimes_{\cO_K}\C_p(1) \arrow[r] & \Lie(G)\otimes_{\cO_K}\C_p(1) \arrow[r] & L\otimes\C_p(1) \arrow[r]           & 0
\end{tikzcd}
    \end{equation}
of $\Z_p[\Gamma_K]$-modules. Tensoring with $\C_p$, the right vertical map induces a surjective map $L\otimes\C_p\to L\otimes\C_p(1)$ and the left vertical map induces a surjective map $\Tp(G)\otimes_{\Z_p}\C_p\to\Lie(G)\otimes_{\cO_K}\C_p(1)$ by \cref{Fontaine map tensored by Cp surjective}. Thus the induced map $\phi_M\otimes \C_p:\Tp(M)\otimes_{\Z_p}\C_p\to\Lie(G)\otimes_{\cO_K}\C_p(1)$ is surjective.

We now determine $\ker(\phi_M)$. By \cref{remark before injectivity of fontaine map}, we can assume that $K$ is the completion of the maximal unramified extension $\Q^{un}_p$ of $\Q_p$. Therefore, we need to show that $\ker(\phi_M)=\Tp(M)^{\Gamma_K}$.
By \cref{Hodge-Tate weights of tate module of 1-motives}, the Hodge-Tate weights of $\Vp(M)$ are $0$ and $1$ with multiplicity $\rank(L)+\dim(A)$ and $\dim(T)+\dim(A)$ respectively. Therefore, $$\Tp(M)\otimes_{\Z_p}\C_p\cong \C_p^{\rank(L)+\dim(A)}\oplus \C_p(1)^{\dim(T)+\dim(A)},$$ and the kernel of $\phi_M\otimes \C_p:\Tp(M)\otimes_{\Z_p}\C_p\to\Lie(G)\otimes_{\cO_K}\C_p$ is isomorphic to $\C_p^{\rank(L)+\dim(A)}$.

Let $S:=\ker(\phi_M\otimes \Q_p: \Vp(M)\to \Lie(G)\otimes_{K}\C_p(1))$. Notice that the composition $S\otimes_{\Q_p}\C_p\into \Vp(M)\otimes_{\Q_p}\C_p\xrightarrow{\phi_M\otimes\C_p}\Lie(G)\otimes_K\C_p(1)$ is zero. Thus, \[S\otimes_{\Q_p}\C_p\subseteq \ker(\phi_M\otimes\C_p)=\C_p^{\rank(L)+\dim(A)}.\]
       It follows that $S$ is a Hodge-Tate Galois representation with all Hodge-Tate weights $0$. By \cite[Corollary 1]{sen1980continuous}, this implies that the representation $\rho:\Gamma_K\to \End_{\Q_p}(S)$ is finite. There exists a finite totally ramified extension $L/K$ such that $\rho$ factors trough $\Gal(L/K)\to \End_{\Q_p}(S)$. As $S$ is a crystalline $\Gamma_K$-representation, by \cite[Proposition 6.10]{Fontaine_Laffaille_1982} we have $(S\otimes_{\Q_p}\Bcris)^{\Gamma_L}=S\otimes_{\Q_p}(\Bcris)^{\Gamma_L}=S\otimes_{\Q_p}L$. Hence, we get
       \[
       \Dcris(S)=(S\otimes_{\Q_p}\Bcris)^{\Gamma_K}=((S\otimes_{\Q_p}\Bcris)^{\Gamma_L})^{\Gal(L/K)}=(S\otimes_{\Q_p}L)^{\Gal(L/K)}=S^{\Gal(L/K)}\otimes_{\Q_p}K.
       \]
       Thus, $\dim_{\Q_p}(S^{\Gal(L/K)})=\dim_{\Q_p}(\Dcris(S))=\dim_{\Q_p}(S)$. This means that $S=S^{\Gal(L/K)}$. On the other hand, the action of $\Gamma_K$ on $S$ factored through $\Gal(L/K)$, thus $S=S^{\Gamma_K}$. As $\rho_M\otimes\Q_p: \Vp(M)\to\Lie(G)\otimes_K\C_p(1)$ is $\Gamma_K$-equivariant and $H^0(K,\Lie(G)\otimes_{K}\C_p(1))=0$ (by \cite[Theorem 2]{tate1967pdivisble}), we can conclude that $\Vp(M)^{\Gamma_K}\subseteq S$. But $S\subset\Vp(M)$, hence $S=S^{\Gamma_K}\subset\Vp(M)^{\Gamma_K}$, and the result follows. 
\end{proof}
For the p-adic integration map $\varpi_M:\Tp(M)\to\TdR(M)\otimes_{\cO_K}\Bt$, we cannot make a statement similar to \cref{kernel Fontaine map phi}. However, we have the following:
\begin{thm}\label{kernel p-adic integration map varpi}
Let $M=[L\to G]$ be a 1-motive over $\cO_K$, and $\sG^0$ denote the connected component of $G[p^{\infty}]$. If $\Tp(M)^{\Gamma_K}=0$ and $\ker(\varpi_G)\cap\Tp(\sG^0)=0$, then $\ker(\varpi_M)=0$. In other words, if $\Tp(M)^{\Gamma_K}=0$, the restriction $\left.\varpi\right|_{\Tp(\sG^0)}$ is injective if and only if $\varpi_M$ is injective.
\end{thm}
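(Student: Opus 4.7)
The proof will proceed by a chain of three reductions built from functoriality of $\varpi$, followed by a combined application of the two hypotheses. For the first reduction, I would apply $\varpi$ to the canonical short exact sequence $0\to [0\to G]\to M\to [L\to 0]\to 0$ of 1-motives, which by functoriality produces the commutative diagram
\begin{equation*}
\begin{tikzcd}[column sep=small]
0 \arrow[r] & \Tp(G) \arrow[r] \arrow[d, "\varpi_G"] & \Tp(M) \arrow[r] \arrow[d, "\varpi_M"] & \Tp(L) \arrow[r] \arrow[d, "\varpi_L"] & 0 \\
0 \arrow[r] & \TdR(G)\otimes\At \arrow[r] & \TdR(M)\otimes\At \arrow[r] & \TdR(L)\otimes\At \arrow[r] & 0
\end{tikzcd}
\end{equation*}
with exact rows; the exactness of the bottom row uses exactness of the de Rham realization on 1-motives together with $p$-torsion-freeness of $\At$. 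For the lattice $[L\to 0]$, the universal vector extension is $[L\xrightarrow{\psi}L\otimes\G_a]$, so $\TdR(L)=L\otimes_{\Z}\cO_K$, and running through the snake-lemma construction of $\varpi_L$ shows that it coincides with the canonical inclusion $L\otimes_{\Z}\Z_p\hookrightarrow L\otimes_{\Z}\At$, which is injective since $\At$ is $p$-torsion-free. A diagram chase then forces $\ker(\varpi_M)\subseteq\Tp(G)$, and the left-hand square (using also the injectivity of $\TdR(G)\otimes\At\hookrightarrow\TdR(M)\otimes\At$) identifies this kernel with $\ker(\varpi_G)$.

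For the second reduction, I would pass from $G$ to its connected $p$-divisible subgroup via the connected-\'etale exact sequence $0\to\sG^0\to G[p^\infty]\to\sG^{\et}\to 0$. Using the crystalline-de Rham comparison (\cref{crystalline-de Rham comparison isomorphism}) to identify $\TdR(G)$ with $\TdR(G[p^\infty])$ and the exactness of the Dieudonn\'e functor, one obtains an analogous commutative diagram of exact rows whose vertical maps are the integration maps for $\sG^0$, $G[p^\infty]$ and $\sG^{\et}$. The aim is then to show that the image of $\ker(\varpi_G)$ in $\Tp(\sG^{\et})$ vanishes; once this is established, $\ker(\varpi_G)\subseteq\Tp(\sG^0)$, and the hypothesis $\ker(\varpi_G)\cap\Tp(\sG^0)=0$ completes the proof.

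The main obstacle is precisely this last vanishing. Since $\sG^{\et}$ is \'etale over $\cO_K$, $\Tp(\sG^{\et})$ is an unramified $\Gamma_K$-representation, and the image of $\ker(\varpi_G)$ in it is $\Gamma_K$-stable. The plan is to leverage the $\Gamma_K$-equivariance of $\varpi$ together with the hypothesis $\Tp(M)^{\Gamma_K}=0$, which through the long exact cohomology sequence forces $\Tp(G)^{\Gamma_K}=0$ and so tightly constrains the Frobenius action on any potential submodule of $\Tp(\sG^{\et})$ appearing in the image, and to combine this with the fact that the integration map for an \'etale $p$-divisible group over $\cO_K$ is simply the canonical embedding into $\TdR\otimes\At$ and is therefore injective. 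Making this step rigorous---carefully reconciling the snake-lemma construction of $\varpi$ on the semi-abelian $G$ with the Dieudonn\'e-crystalline description of $\varpi$ on $G[p^\infty]$, and exploiting the Galois-theoretic input analogously to the weight analysis in \cref{kernel Fontaine map phi}---is the delicate heart of the proof.
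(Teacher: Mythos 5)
Your first reduction is fine, and in fact slightly more direct than what the paper does: the paper only concludes $x\in\Tp(G)$ after first proving that $x$ is $\Gamma_K$-invariant, whereas your diagram chase with the injectivity of $\varpi_L$ (which does follow from the computation in the proof of \cref{theorem p-adic integration pairing for M}, where $\varpi_L\otimes\Bt$ is shown to be an isomorphism) gets $\ker(\varpi_M)=\ker(\varpi_G)$ without any hypothesis. The problem is everything after that.

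The step you defer as ``the delicate heart'' is not a technical detail to be reconciled later; it is the theorem, and your sketched plan is not the mechanism that makes it true. The paper never attempts to prove $\ker(\varpi_G)\subseteq\Tp(\sG^0)$ by exhibiting an injective ``integration map for $\sG^{\et}$'' — indeed no such map is defined in this framework, where $\varpi$ is built from universal vector extensions of semi-abelian schemes, and producing one compatible with $\varpi_G$ over $\At$ is precisely the unproved content you are postponing. Instead, the paper's argument runs through the Galois action on the kernel itself: for $x\in\ker(\varpi_M)$ and $\sigma\in\Gamma_K$, the element $(\sigma-1)x$ again lies in $\ker(\varpi_M)$ by equivariance \emph{and} lies in $\Tp(\sG^0)$, so the hypothesis $\ker(\varpi_G)\cap\Tp(\sG^0)=0$ kills it; hence $x$ is $\Gamma_K$-invariant. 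Only then does $\Tp(M)^{\Gamma_K}=0$ enter, via the long exact cohomology sequence of $0\to\Tp(G)\to\Tp(M)\to\Tp(L)\to 0$ (injectivity of the connecting map $\Tp(L)^{\Gamma_K}\to H^1(K,\Tp(G))$) and then the connected–\'etale sequence for $\Tp(G)$, to force $x=0$. Your proposal inverts this logic: you hope to show unconditionally that $\ker(\varpi_G)$ maps to zero in $\Tp(\sG^{\et})$, and your only stated use of $\Tp(M)^{\Gamma_K}=0$ is that it ``tightly constrains the Frobenius action on any potential submodule of $\Tp(\sG^{\et})$,'' which is not an argument — $\Tp(\sG^{\et})$ is merely unramified, a quotient rather than a summand of $\Tp(G)$, and vanishing of invariants of $\Tp(G)$ does not by itself preclude a nonzero Galois-stable image there. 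A further warning sign: if your route worked as written (injective $\varpi_L$ plus an injective \'etale-part map and commuting diagrams), the hypothesis $\Tp(M)^{\Gamma_K}=0$ would never be needed and you would have proven a strictly stronger statement than \cref{kernel p-adic integration map varpi}; the fact that the theorem carries this hypothesis, and uses it essentially, should have signalled that the splitting-along-the-\'etale-quotient strategy cannot be carried out as stated. So there is a genuine gap: the key idea of the paper's proof — testing the hypothesis $\ker(\varpi_G)\cap\Tp(\sG^0)=0$ on the elements $(\sigma-1)x$ to obtain Galois-invariance, and then eliminating the invariant element with $\Tp(M)^{\Gamma_K}=0$ — is missing from your proposal, and the substitute you outline is both unconstructed and unlikely to be salvageable in the form described.
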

\begin{proof}
    The canonical exact sequence 
    \[
    0\to \Tp(G)\to \Tp(M)\to \Tp(L)\to 0
    \]
    gives the long exact sequence in Galois cohomology:
    \[
    0\to\Tp(G)^{\Gamma_K}\to\Tp(M)^{\Gamma_K}\to\Tp(L)^{\Gamma_K}\to H^1(K,\Tp(G)).
    \]
    We have $\Tp(G)^{\Gamma_K}=\Tp(M)^{\Gamma_K}=0$, and $\Tp(L)^{\Gamma_K}=\Tp(L)$, since $L[p^{\infty}]$ is \'etale. Assume that $\varpi_M(x)=0$, for some $x\in\Tp(M)$. Then $\varpi_{M}((\sigma-1)x)=0$, for all $\sigma\in\Gamma_K$. But, $(\sigma-1)x$ lies in the Tate module $\Tp(\sG^0)$, where $\sG^0$ is the connected component of $G[p^{\infty}]$ which is the same as the connected component of $M[p^{\infty}]$. Hence, $(\sigma-1)x\in\ker(\varpi_G)\cap\Tp(\sG^0)$ for all $\sigma\in\Gamma_K$. Therefore, $(\sigma-1)x=0$ for all $\sigma\in\Gamma_K$. Let $f:\Tp(M)\to\Tp(L)$ and $\delta:\Tp(L)\to H^1(K,\Tp(G))$ be the connecting map. As $\sigma(x)=x$ for all $\sigma\in\Gamma_K$ and $f$ is $\Gamma_K$-equivariant, we can conclude that $\delta(f(x))=0$ in $H^1(K,\Tp(G))$. Since $\delta$ is injective, it follows that $f(x)=0$, which implies that $x\in\Tp(G)\cap\ker(\varpi_M)=\ker(\varpi_G)$. Consider the exact sequence
    \begin{equation}\label{3.3.10}
    0\to \Tp(\sG^0)\to \Tp(G)\to \Tp(G)/\Tp(\sG^0)\to 0.
    \end{equation}
    The $\Z_p$-module $\Tp(G)/\Tp(\sG^0)$ is the Tate-module of the \'etale part of $G[p^{\infty}]$. If we repeat the same approach as above for the exact sequence \ref{3.3.10}, we obtain that $x\in\Tp(\sG^0)\cap\ker(\varpi_G)=0$. This shows that $\varpi_M$ is injective.
\end{proof}

In the following theorem, we show that the integration pairing $\displaystyle\int^{\varpi}:\Tp(M)\times\TdR\ve(M_K)\to\Bt$ is perfect and respects Hodge filtration. By perfect we mean the condition \ref{perfect def meaning}.

\begin{thm}\label{theorem p-adic integration pairing for M}
The p-adic integration pairing $\displaystyle\int^{\varpi}$ for a 1-motive $M$ is bilinear, perfect, and $\Gamma_K$-equivariant in the first argument. Moreover, it respects the Hodge filtration in the following sense: for all $\omega\in\Fil^1\TdR\ve(M_K)$ and $x\in\Tp(M)$,
\[\int^{\varpi}_{x}\omega\in\Fil^1\Bt.\]
In particular, $\displaystyle\int^{\varpi}_x\omega=\int^{\phi}_x\omega$ if $\omega\in\Fil^1\TdR\ve(M_K)=\coLie(G)_K$.
\end{thm}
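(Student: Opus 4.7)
\begin{para}
Bilinearity and $\Gamma_K$-equivariance in the first argument are immediate from the construction: every arrow in diagram \ref{4.3.4} is a homomorphism of $\Z_p[\Gamma_K]$-modules, and the snake-lemma connecting map $\varpi_M$ inherits both properties. This reduces the theorem to two structural statements: compatibility with the Hodge filtration (together with the identification $\int^{\varpi}=\int^{\phi}$ on $\Fil^1$), and perfectness.
\end{para}

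\begin{para}
For the filtration statement I would exploit the universal projection $\pi_G\colon G^{\natural}\twoheadrightarrow G$, whose kernel is $V(M)$. Because $\pi_G$ annihilates $V(M)$, it induces a morphism from the snake-lemma configuration \ref{4.3.4} down to \ref{equation 4.3.6}: it sends the kernel module $\cK$ onto $\Lie(G)\otimes_{\cO_K}\bar{I}$, and it sends $G^{\natural}(\At)$ into $G(\At)$. Functoriality of the snake lemma then yields, after passage to the inverse limit, the identity
\[
(d\pi_G\otimes 1)\circ\varpi_M=\iota\circ\phi_M,
\]
where $\iota\colon\Lie(G)\otimes_{\cO_K}\bar{I}\hookrightarrow\Lie(G)\otimes_{\cO_K}\At$ is the canonical inclusion. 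Any $\omega\in\Fil^1\TdR\ve(M_K)=\coLie(G)_K$ factors through the quotient $\Lie(G)_K$ of $\Lie(G^{\natural})_K$, so
\[
\int^{\varpi}_x\omega=\omega(\varpi_M(x))=\omega(\phi_M(x))=\int^{\phi}_x\omega,
\]
which lies in the image of $\bar{J}$ inside $\Bt$, that is, in $\Fil^1\Bt$. This settles the filtration claim and the comparison with Fontaine's pairing simultaneously.
\end{para}

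\begin{para}
For perfectness I would apply the five lemma to the canonical exact sequence
\[
0\to[0\to G]\to M\to[L\to 0]\to 0,
\]
which, via exactness of $(-)^{\natural}$ and of $\Tp(-)$, induces compatible short exact sequences of $\Tp(-)\otimes_{\Z_p}\Bt$ and $\TdR(-)_K\otimes_K\Bt$ with $\int^{\varpi}$ as the vertical arrows. The case $[0\to G]$ is \cref{theorem integration pairing for G}. For $M=[L\to 0]$ one has $G=0$, hence $G^{\natural}=V(M)=L\otimes_{\Z}\G_{a,\cO_K}$, so $\cK=L\otimes_{\Z}\At$; unwinding the snake lemma in \ref{4.3.4} shows that $\varpi_{[L\to 0]}$ is simply the canonical map $L\otimes_{\Z}\Z_p\to L\otimes_{\Z}\At$, which becomes an isomorphism after $\otimes_{\Z_p}\Bt$, since $\TdR([L\to 0])\otimes_K\Bt=L\otimes_{\Z}\Bt$ as well. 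The five lemma then promotes perfectness from the outer terms to $M$.
\end{para}

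\begin{para}
The principal technical hurdle lies in the second paragraph: one has to verify rigorously that the morphism induced by $\pi_G$ between the configurations \ref{4.3.4} and \ref{equation 4.3.6} really intertwines the two snake-lemma connecting maps modulo the inclusion $\bar{I}\hookrightarrow\At$. Once this diagrammatic compatibility is pinned down, the Hodge-filtration statement, the identification with Fontaine's pairing, and perfectness all follow formally from the already-established case of semi-abelian schemes.
\end{para}
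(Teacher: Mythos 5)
Your proposal is correct and follows essentially the same route as the paper: bilinearity and equivariance by construction, the Hodge-filtration claim via the morphism (induced by $\pi_G$ with kernel $V(M)$) from the snake-lemma configuration \ref{4.3.4} to \ref{equation 4.3.6}, giving exactly the factorization of $\varpi_M$ through $\phi_M$ that the paper records in diagram \ref{commutative diagram respect filtration}, and perfectness by the five lemma applied to the weight exact sequence, using \cref{theorem integration pairing for G} for the semi-abelian part and a direct computation (the paper phrases it via Coleman's map $\psi_L$) for the lattice part. The "technical hurdle" you flag is precisely what the paper checks explicitly via the maps $f$ and $g$ and the identities $q=q_1$, $\pi\circ q\nat=q_2\circ f$, $g\circ[p^n]=[p^n]\circ g$, so no genuinely different argument is involved.
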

\begin{proof}
The fact that the pairing is bilinear and $\Gamma_K$-equivariant in the first argument is by construction. We first show that the integration pairing is perfect. For the case $M=[L\to 0]$, we have
\[G\nat=L\otimes\G_a=V(L), \text{ and }\cK=V(L)\otimes_{\cO_K}\At/\bar{I}=V(L)\otimes\ocp.\]
Thus, the diagram \ref{equation 4.3.6} vanishes, i.e., every therm becomes zero, and the diagrams \ref{4.3.4} and \ref{4.3.9} are the same. Therefore, the p-adic integration map $\varpi_L$ coincides with the Coleman's map 
\[
\psi_{L}:\Z_p\otimes L\to V(L)\otimes_{\cO_K}\ocp,
\]
which is induced by $(x_n)\mapsto ([p^n]x_n)$, for $x_n\in L[p^n]$. Clearly, this pairing respects the filtration, and the induced map 
\[
L\otimes\Bt\to V(L)\otimes\Bt
\]
is an isomorphism.

The p-adic integration pairing for the case $M=[0\to G]$ is perfect by \cref{theorem integration pairing for G}. Finally for the case $M=[L\to G]$, by the construction of p-adic integration applied to the canonical exact sequence \ref{canonical exact sequence for any 1-motive}, we can obtain the following commutative diagram with exact rows
\begin{equation}
\begin{tikzcd}[column sep=small]
0 \arrow[r] & {\Tp G} \arrow[r] \arrow[d, "\varpi_{G}"]                    & {\Tp M} \arrow[r] \arrow[d, "\varpi_{M}"]                   & L\otimes\Z_p \arrow[r] \arrow[d]  & 0 \\
0 \arrow[r] & \Lie(\Ex(G))\otimes_{\cO_K}\Bt \arrow[r] & \Lie(G\nat)\otimes_{\cO_K}\Bt \arrow[r] & V(L)\otimes\Bt \arrow[r] & 0
.\end{tikzcd}
\end{equation}
 The pairing induced by the middle vertical arrow is perfect, because both pairings induced by left and right vertical arrows are perfect.
 
 We now want to show that the p-adic integration pairing for $M=[L\to G]$ respects the filtration. As $V(M)$ is the kernel of $\pi:G\nat\to G$, the map $\Lie(G\nat)\otimes_{\cO_K}\bar{I}\to\Lie(G)\otimes_{\cO_K}\bar{I}$ factors through its quotient by $V(M)\otimes_{\cO_K}\bar{I}$, and it gives a map $g:\cK\to\Lie(G)\otimes_{\cO_K}\bar{I}$. We can obtain the commutative diagram
\begin{equation}
\begin{tikzcd}
0 \arrow[r] & \cK \arrow[d, "g"] \arrow[r]            & L\times G\nat(\At) \arrow[d, "f"] \arrow[r] & L\times G(\ocp) \arrow[d, Rightarrow, no head] \arrow[r] & 0 \\
0 \arrow[r] & \Lie(G)\otimes_{\cO_K}\bar{I} \arrow[r] & L\times G(\At) \arrow[r]                    & L\times G(\ocp) \arrow[r]                                & 0
\end{tikzcd}
\end{equation}
with the property that $q=q_1$, $\pi\circ q\nat=q_2\circ f$, and $g\circ[p^n]=[p^n]\circ g$, where $q, q\nat$ are the maps shown in diagram \ref{4.3.4}, and $q_1, q_2$ are the maps shown in diagram \ref{equation 4.3.6}. This shows that applying the snake lemma on diagrams \ref{4.3.4} and \ref{equation 4.3.6} together with the above property yield the commutative diagram 
\begin{equation*}
\begin{tikzcd}
{M[p^n]} \arrow[r]                                & \cK/p^n\cK \arrow[d]             \\
{M[p^n]} \arrow[u, Rightarrow, no head] \arrow[r] & \Lie(G)\otimes\bar{J}/p^n\bar{J}.
\end{tikzcd}
\end{equation*}
By pushing-out along $\cK/p^n\cK\to\Lie(G\nat)\otimes\Bt/p^n\Bt$, we get
\begin{equation*}
\begin{tikzcd}
{M[p^n]} \arrow[r]                                & \cK/p^n\cK \arrow[d] \arrow[r]             & \Lie(G\nat)\otimes\Bt/p^n\Bt \arrow[d] \\
{M[p^n]} \arrow[u, Rightarrow, no head] \arrow[r] & \Lie(G)\otimes\bar{J}/p^n\bar{J} \arrow[r] & \Lie(G)\otimes\Bt/p^n\Bt.              
\end{tikzcd}
\end{equation*}
Hence, the p-adic integration map $\varpi_M$ factors through the Fontaine's map $\phi_M$, i.e., 
\begin{equation}\label{commutative diagram respect filtration}
\begin{tikzcd}
\Tp(M) \arrow[r, "\varpi_M"] \arrow[d, "\varphi_M"] & \Lie(G\nat)\otimes_{\cO_K}\Bt \arrow[d, two heads] \\
\Lie(G)\otimes_{\cO_K}\C_p(1) \arrow[r, hook]       & \Lie(G)\otimes_{\cO_K}\Bt                        
\end{tikzcd}
\end{equation}
which completes the proof.
\end{proof}
\subsection*{Crystalline integration}
    \cref{tate module is crystalline} gives the map $\Tp(M)\to\Tcrys(\bar{M})\otimes_{\W(k)}\Acris$ which is called the crystalline integration map. This map induces the filtered isomorphism $\Tp(M)\otimes_{\Z_p}\Bcris\cong\Tcrys(\bar{M})\otimes_{\W(k)}\Bcris$. The induced pairing
    \begin{equation}\label{crystalline integration map}
    \displaystyle\int^{cris}:\Tp(M)\times\Tcrys\ve(\bar{M})_K\to\Bcrisp
    \end{equation}
    is called crystalline integration pairing. The crystalline pairing factors through the p-adic integration pairing via the crystalline-de Rham identification $\TdR\ve(M)_K\cong\Tcrys\ve(\bar{M})_K$ (\cite{colmez_periodes_1993}).

\begin{prop}\label{prop 4.3.2}
    Let $\tilde{\Tp}(M):=\Tp(\sG^0)$, where $\sG^0$ is the connected component of the p-divisible group associated with $M$, and let $D:=\tilde{\Tcrys}\ve(\bar{M})$ the Dieudonn\'e submodule of $\Tcrys\ve(\bar{M})$ associated with $\sG^0$. We have 
    \[
    \int^{cris}_xF^n\omega=\phi_{cris}^n(\int^{cris}_x\omega)
    \]
    for any $x\in\tilde{\Tp}(M)$, $\omega\in D$, and $n\geq 0$, where $\phi_{cris}$ is Frobenius on $\Bcrisp$.
\end{prop}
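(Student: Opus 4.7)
The plan is to deduce the identity from the Frobenius equivariance of the crystalline comparison isomorphism. Recall from \cref{tate module is crystalline} and the discussion preceding \eqref{crystalline integration map} that the crystalline integration map fits into a Frobenius-equivariant isomorphism of $\Bcrisp$-modules
\[
\iota : \tilde{\Tp}(M)\otimes_{\Z_p}\Bcrisp \xrightarrow{\sim} \D((\bar{\sG}^0)\ve)\otimes_{\W(k)}\Bcrisp,
\]
where the Frobenius on the left is $1\otimes\phi_{cris}$ and on the right is $F\otimes\phi_{cris}$. Under the perfect duality $\D((\bar{\sG}^0)\ve)[1/p]\cong D[1/p]\ve$ recalled in \cref{Theorem anti-equivalence dieudonne module and p-divisbible group}, the crystalline pairing $\int^{cris}$ on $\tilde{\Tp}(M)\times D$ is realised as the natural contraction: $\int^{cris}_x\omega$ is obtained by pairing $\iota(x)$ with $\omega\in D$, extended $\Bcrisp$-linearly in the first factor.

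First I would establish the case $n=1$. Choose a $\W(k)$-basis $\{e_i\}$ of $\D((\bar{\sG}^0)\ve)$ with dual basis $\{e_i\ve\}\subset D$ (up to isogeny), and write $\iota(x)=\sum_i e_i\otimes a_i$ with $a_i\in\Bcrisp$. Frobenius equivariance of $\iota$, combined with the triviality of the Frobenius structure on the left-hand side, yields the identity
\[
\sum_i F(e_i)\otimes \phi_{cris}(a_i) \;=\; \sum_i e_i\otimes a_i
\]
in $\D((\bar{\sG}^0)\ve)\otimes_{\W(k)}\Bcrisp$. Contracting both sides against $F\omega\in D$ and using the Frobenius compatibility of the Dieudonn\'e duality (of the form $\langle Fu,v\rangle=\sigma\langle u, Vv\rangle$ under the normalisations recalled in \cref{sec: Dieudonne}) translates this into $\int^{cris}_x F\omega=\phi_{cris}\bigl(\int^{cris}_x\omega\bigr)$. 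The general statement then follows by a straightforward induction on $n$.

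The main subtle point is the restriction to the connected component $\sG^0$. On the \'etale part of $M[p^\infty]$ the Dieudonn\'e Frobenius $F$ is bijective while the associated Galois representation carries the trivial crystalline Frobenius structure, so pulling $F$ through $\int^{cris}$ cannot produce $\phi_{cris}$ in a compatible way; restricting to $\sG^0$ avoids this mismatch and is precisely what makes $F$ topologically nilpotent, ensuring that all series expressions involved converge in $\Bcrisp$. The remaining technical point I anticipate is bookkeeping the exact form of the Dieudonn\'e pairing between $\D(\sG^0)$ and $\D((\sG^0)\ve)$ together with the $\sigma$-twist of Frobenius, in order to obtain $\phi_{cris}^n$ rather than some shifted or Tate-twisted variant on the right-hand side; this is however already implicit in the standard normalisation of the crystalline comparison recalled in \cref{tate module is crystalline}, and requires only unwinding conventions rather than genuinely new input.
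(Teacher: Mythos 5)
Your route is genuinely different from the paper's: the paper disposes of this proposition in one line by citing Colmez's explicit computation (\cite[Proposition 3.1]{colmez1992periodes}), which works directly with the construction of the pairing on the formal part, whereas you try to deduce it formally from Frobenius-equivariance of the comparison isomorphism plus Dieudonn\'e duality. As written, your argument has a real gap, and it sits exactly at the step you dismiss as ``unwinding conventions''. With the normalisations actually recalled in the paper, the duality between $\D(\bar{\sG}^0)$ and $\D((\bar{\sG}^0)\ve)$ takes values in $\D(\mup)$, whose Frobenius is $p\sigma$, so it satisfies $\langle Fu,Fv\rangle=p\sigma\langle u,v\rangle$ (equivalently your $\langle Fu,v\rangle=\sigma\langle u,Vv\rangle$). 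Contracting your identity $\sum_i F(e_i)\otimes\phi_{cris}(a_i)=\sum_i e_i\otimes a_i$ against $F\omega$ then gives $\int^{cris}_xF\omega=p\,\phi_{cris}\bigl(\int^{cris}_x\omega\bigr)$, off by a factor of $p$ per application of $F$ --- the signature of an unaccounted Tate twist. The case $M=[0\to\G_m]$ makes this concrete: $D=\D(\bar{\mup})=\W(k)\omega_0$ with $F\omega_0=p\omega_0$, $N=\D(\underline{\Q_p/\Z_p})=\W(k)e$ with $Fe=e$, and $\int^{cris}_{\epsilon}\omega_0=t$, so the proposition asserts $pt=\phi_{cris}(t)$, which is true; but the element $\iota(\epsilon)$, which your contraction description forces to be $e\otimes t$, is \emph{not} $(F\otimes\phi_{cris})$-invariant (it is multiplied by $p$), so your first displayed identity fails for the natural identification, and if you rescale by $t^{-1}$ to restore invariance then your description of $\int^{cris}$ as the contraction fails instead. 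Likewise $\D(G\ve)[1/p]$ and $\D(G)[1/p]\ve$ (with dual Frobenius $f\mapsto\sigma\circ f\circ F^{-1}$) differ by a slope-one twist, so \cref{Theorem anti-equivalence dieudonne module and p-divisbible group}(4) cannot be used as a Frobenius-compatible identification without pinning that twist down.

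There is also a circularity risk: once the duality and twist conventions are fixed, the precise Frobenius-equivariance of the comparison in \cref{tate module is crystalline} is essentially equivalent to the identity you want, and that remark is stated too loosely (isomorphism over $\Bcrisp$, unspecified twists) to carry your formal deduction --- which is exactly why the paper falls back on Colmez's direct computation with the explicit construction of the pairing. Two smaller points: the restriction to $\sG^0$ is not what rescues the identity (the analogous compatibility also holds on the \'etale part, with values in $\W(\bar{k})\subset\Acris$), and no series convergence enters your argument, so that justification is misplaced; the restriction matches Colmez's setting and the later applications. To repair your proof you would have to fix the Frobenius structure by going back to the evaluation map $\D^*(\sG^0)_{(\ocp\to\Acris)}\to\Acris$ defining $\int^{cris}$ and verify the normalisation there --- at which point you are essentially reproducing Colmez's argument rather than bypassing it.
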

\begin{proof}
The proof follows directly from \cite[Proposition 3.1]{colmez1992periodes}.
\end{proof}
\begin{lemma}\label{lemma 4.3.1}
    Let $V:=\Vp(M)$. We have 
    \[
    (V\otimes\Bt)^{\Gamma_K}\cong(V\otimes\BdRp)^{\Gamma_K}\text{ and }(V\ve\otimes\Bt)^{\Gamma_K}\cong(V\ve\otimes\BdRp)^{\Gamma_K}\cong\DdR(V\ve).
    \]
\end{lemma}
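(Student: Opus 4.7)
The plan is to leverage the fact that $V=\Vp(M)$ is crystalline (\cref{tate module is crystalline}), hence de Rham, so that the de Rham comparison gives $(W\otimes t^n\BdRp)^{\Gamma_K}=\Fil^n\DdR(W)$ for every $n$ and for $W\in\{V,V\ve\}$. Using \cref{crystalline-de Rham comparison isomorphism}, I identify $\DdR(V)\cong\TdR(M_K)$, whose Hodge filtration has $\Fil^{-1}=\DdR(V)$, $\Fil^0=V(M)_K$, $\Fil^1=0$; dually, $\DdR(V\ve)\cong\TdR\ve(M_K)$ has $\Fil^0=\DdR(V\ve)$, $\Fil^1=\coLie(G)_K$, $\Fil^2=0$. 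The last isomorphism $(V\ve\otimes\BdRp)^{\Gamma_K}\cong\DdR(V\ve)$ is then immediate, since $(V\ve\otimes\BdRp)^{\Gamma_K}=\Fil^0\DdR(V\ve)=\DdR(V\ve)$.

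For the remaining two isomorphisms, I would apply $(-)^{\Gamma_K}$ to the short exact sequence $0\to W\otimes t^2\BdRp\to W\otimes\BdRp\to W\otimes\Bt\to 0$. Because $\Fil^2\DdR(W)=0$ in both cases, this yields an injection $(W\otimes\BdRp)^{\Gamma_K}=\Fil^0\DdR(W)\hookrightarrow (W\otimes\Bt)^{\Gamma_K}$. To prove surjectivity, I would pass to the two-step filtration $0\to\C_p(1)\to\Bt\to\C_p\to 0$ (induced by $t\BdRp/t^2\BdRp\cong\C_p(1)$), tensor with $W$, and take invariants, using the Hodge--Tate identification $(W\otimes\C_p(i))^{\Gamma_K}=\gr^i\DdR(W)$ (a consequence of $\gr\DdR(W)\cong\DHT(W)$ from \cref{Theorem: filtered isocrystal va crystalline representaion}). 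This gives
\[
0\to\gr^1\DdR(W)\to (W\otimes\Bt)^{\Gamma_K}\to\gr^0\DdR(W)\to H^1(\Gamma_K,W\otimes\C_p(1)).
\]

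The crucial observation is that the composite $\Fil^0\DdR(W)\hookrightarrow W\otimes\BdRp\twoheadrightarrow W\otimes\Bt\twoheadrightarrow W\otimes\C_p$ surjects onto $\Fil^0\DdR(W)/\Fil^1\DdR(W)=\gr^0\DdR(W)=(W\otimes\C_p)^{\Gamma_K}$, since $\Fil^0\DdR(W)\cap(W\otimes t\BdRp)=\Fil^1\DdR(W)$. Hence the map $(W\otimes\Bt)^{\Gamma_K}\to\gr^0\DdR(W)$ is already surjective, which forces the connecting map into $H^1$ to vanish. A dimension count then gives $\dim_K(W\otimes\Bt)^{\Gamma_K}=\dim_K\gr^1\DdR(W)+\dim_K\gr^0\DdR(W)=\dim_K\Fil^0\DdR(W)=\dim_K(W\otimes\BdRp)^{\Gamma_K}$, and combined with the injection above this yields the desired isomorphisms. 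The main technical hurdle is verifying the identification $(W\otimes t^n\BdRp)^{\Gamma_K}=\Fil^n\DdR(W)$ from the filtered de Rham comparison $W\otimes\BdR\cong\DdR(W)\otimes_K\BdR$ and matching the resulting filtration on $\DdR(V)$ with the Hodge filtration on $\TdR(M_K)$; once these identifications are in place, the rest is a routine diagram chase.
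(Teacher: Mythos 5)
Your argument is correct, but it takes a genuinely different route from the paper's. The paper never uses the de Rham property of $V$ or the filtration on $\DdR$: it filters $\BdRp$ by powers of $t$ and runs a d\'evissage, using only the Hodge--Tate weight structure ($V\otimes\C_p\cong\C_p^r\oplus\C_p(1)^m$, hence $V\ve\otimes\C_p\cong\C_p^r\oplus\C_p(-1)^m$) together with Tate's vanishing of $H^0$ and $H^1$ of $\C_p(j)$ for $j\neq 0$; the comparison with $(W\otimes\Bt)^{\Gamma_K}$ is obtained by an induction over the quotients $\BdRp/t^n\BdRp$ followed by an inverse limit, and $(V\ve\otimes\BdRp)^{\Gamma_K}\cong\DdR(V\ve)$ by climbing $t^{-n}\BdRp$ and passing to the colimit $\BdR=\colim t^{-n}\BdRp$. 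You instead exploit that $V$ (hence $V\ve$) is de Rham, use $(W\otimes t^n\BdRp)^{\Gamma_K}=\Fil^n\DdR(W)$ (which in this paper is literally the definition of the filtration, so your anticipated ``technical hurdle'' is vacuous), read off the filtration jumps, and replace any $H^1$-vanishing by your factorization trick: the surjection of $\Fil^0\DdR(W)$ onto $\gr^0\DdR(W)=(W\otimes\C_p)^{\Gamma_K}$ forces the connecting map to vanish, and a dimension count finishes. That trick is genuinely needed on your route, since $H^1(K,V\ve\otimes\C_p(1))$ contains $H^1(K,\C_p)^m\neq 0$, so a naive vanishing argument would fail for $W=V\ve$; what the paper's d\'evissage buys in exchange is that it needs no de Rham input and yields the statement for all $\BdRp/t^n\BdRp$ at once.

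One caution, about sources rather than mathematics: you extract the filtration jumps from a filtered identification $\DdR(V)\cong\TdR(M_K)$, but \cref{crystalline-de Rham comparison isomorphism} by itself does not give this, and in the paper that identification is exactly \cref{tate module of M is de Rham}, which is proved after, and by means of, \cref{lemma 4.3.1}; quoting it here would be circular. The repair is cheap: all you actually use are the graded dimensions, and these follow from de Rham-ness (available from crystallinity, \cref{tate module is crystalline}) together with the graded comparison $\gr(\DdR(W))\cong\DHT(W)$ of \cref{Theorem: filtered isocrystal va crystalline representaion} and the weights in \cref{Hodge-Tate weights of tate module of 1-motives}: the nonzero gradeds of $\DdR(V)$ sit in degrees $-1,0$ and those of $\DdR(V\ve)$ in degrees $0,1$, so in both cases $\Fil^2=0$, and for $V\ve$ moreover $\Fil^0\DdR(V\ve)=\DdR(V\ve)$. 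With that substitution your proof stands as written.
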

\begin{proof}
Let us first show that the quotient map $\BdRp\onto \Bt$ induces an isomorphism $(V\otimes_{\Q_p}\BdRp)^{\Gamma_K}\cong (V\otimes_{\Q_p}\Bt)^{\Gamma_K}$.
Since $t\BdRp/(t^2\BdRp)\cong\C_p(1)$, we have an exact sequence
$$0\to\C_p(1)\to\BdRp/t^2\BdRp\to \BdRp/t\BdRp\to 0$$
of $K$-Banach spaces. We tensor this exact sequence with $V$ to obtain an exact sequence of $\Gamma_K$-modules
\[
0\to V\otimes_{\Q_p}\C_p(1)\to V\otimes_{\Q_p}\BdRp/t^2\BdRp\to V\otimes_{\Q_p}\BdRp/t\BdRp\to 0.
\]
The Hodge-Tate weights of $V$ are $0$ and $1$ (\cref{Hodge-Tate weights of tate module of 1-motives}), hence $V\otimes_{\Q_p}\C_p(1)\cong\C_p(1)^r\oplus\C_p(2)^m$ for some positive integers $r$ and $m$. As the Galois cohomology of $\C_p(1)^r\oplus\C_p(2)^m$ vanishes in all degrees (\cite[Theorem 2]{tate1967pdivisble}), the corresponding long exact sequence of Galois cohomology yields an isomorphism
$$(V\otimes_{\Q_p}\Bt)^{\Gamma_K}\cong (V\otimes_{\Q_p}(\BdRp/t\BdRp))^{\Gamma_K}.$$
One continues by induction to show that for all $n\geq 1$, the natural map $\BdRp/(t^n\BdRp)\onto\Bt$ induces an isomorphism 
$$(V\otimes_{\Q_p}(\BdRp/t^n\BdRp))^{\Gamma_K}\cong(V\otimes_{\Q_p}\Bt)^{\Gamma_K}.$$
Taking the inverse limit gives
\begin{equation}\label{4.3.14}
(V\otimes_{\Q_p}\BdRp)^{\Gamma_K}\cong(V\otimes_{\Q_p}\lim(\BdRp/t^n\BdRp))^{\Gamma_K}\cong\lim(V\otimes_{\Q_p}(\BdRp/t^n\BdRp))^{\Gamma_K} \cong(V\otimes_{\Q_p}\Bt)^{\Gamma_K}.  
\end{equation}

Similarly, for every $n\geq 2$, the exact sequence
\[
0\to V\ve\otimes\C_p(n)\to V\ve\otimes\BdRp/t^{n+1}\BdRp\to V\ve\otimes\BdRp/t^n\BdRp\to 0
\]
yields an isomorphism $(V\ve\otimes\BdRp/t^{n+1}\BdRp)^{\Gamma_K}\cong (V\ve\otimes\BdRp/t^n\BdRp)^{\Gamma_K}$, since Galois cohomology of $V\ve\otimes\C_p(n)\cong\C_p(n)^r\oplus\C_p(n-1)^m$ vanishes in all degrees. The latter isomorphism follows from the fact that the Hodge-Tate weights of $V\ve$ are $0$ and $-1$. This implies that 
\[
(V\ve\otimes\Bt)^{\Gamma_K}\cong(V\ve\otimes\BdRp)^{\Gamma_K}.
\]

We now want to show that the induced map $i\colon (V\otimes_{\Q_p}\BdRp)^{\Gamma_K}\to\DdR(V)$ is an isomorphism. Since $t^{-1}\BdRp/\BdRp\cong\C_p(-1)$, we have an exact sequence
\[0\to \BdRp\to t^{-1}\BdRp\to \C_p(-1)\to 0\]
of $K$-Banach spaces. We then tensor this sequence with $V\ve$ to obtain the following exact sequence of $\Z[\Gamma_K]$-modules
\[0\to V\ve\otimes_{\Q_p}\BdRp\to V\ve\otimes_{\Q_p}t^{-1}\BdRp\to V\ve\otimes_{\Q_p}\C_p(-1)\to 0.\]
The Hodge-Tate weights of $\Vp(M)$ are $0$ and $1$, therefore the Hodge-Tate weights of $V\ve$ are $0$ and $-1$, and so \[V\ve\otimes_{\Q_p}\C_p(-1)\cong\C_p(-1)^r\oplus\C_p(-2)^m\]
for some positive integers $r$ and $m$. As the Galois cohomology of $\C_p(-1)^r\oplus\C_p(-2)^m$ vanishes in all degrees, the exact sequence gives an isomorphism 
\begin{equation*}
(V\ve\otimes_{\Q_p}\BdRp)^{\Gamma_K}\cong (V\ve\otimes_{\Q_p}t^{-1}\BdRp)^{\Gamma_K}.    
\end{equation*}
Similarly, we can show that for all $n\geq 2$, the natural map $t^{-n+1}\BdRp\into t^{-n}\BdRp$ induces an isomorphism
$$(V\ve\otimes_{\Q_p}t^{-n+1}\BdRp)^{\Gamma_K}\cong(V\ve\otimes_{\Q_p}t^{-n}\BdRp)^{\Gamma_K}.$$
Thus, we have $(V\ve\otimes_{\Q_p}\BdRp)^{\Gamma_K}\cong(V\ve\otimes_{\Q_p}t^{-n}\BdRp)^{\Gamma_K}$ for all $n\geq 1$. As $\BdR=\colim t^{-n}\BdRp$, we conclude that
\begin{align}\label{4.3.13}
\DdR(V\ve)\cong (V\ve\otimes_{\Q_p}\colim t^{-n}\BdRp)^{\Gamma_K}=\colim(V\ve\otimes_{\Q_p}t^{-n}\BdRp)^{\Gamma_K}\cong(V\ve\otimes_{\Q_p}\BdRp)^{\Gamma_K}.   
\end{align}

\end{proof}
\begin{cor}\label{tate module of M is de Rham}
Let $M$ be a 1-motive over $K$ with good reduction. There is a canonical isomorphism of filtered $K$-vector spaces 
\[\DdR(\Tp M)\cong\TdR(M)\]
and the p-adic Galois representation $\Vp(M)$ is de Rham.
\end{cor}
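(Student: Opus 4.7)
The plan is to deduce the corollary directly from the perfect integration isomorphism of Theorem \ref{theorem p-adic integration pairing for M} together with Lemma \ref{lemma 4.3.1}, by taking Galois invariants and then comparing dimensions against the upper bound $\dim_K\DdR(V)\leq\dim_{\Q_p}V$ that holds for any $V\in\Rep(\Gamma_K)$.

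First I would set $V=\Vp(M)$ and start from the perfect, $\Gamma_K$-equivariant isomorphism
\[
V\otimes_{\Q_p}\Bt\;\xrightarrow{\cong}\;\TdR(M_K)\otimes_K\Bt
\]
provided by Theorem \ref{theorem p-adic integration pairing for M}. Taking $\Gamma_K$-invariants of the right-hand side, and using that $\Bt^{\Gamma_K}=K$ (which one checks via the exact sequence $0\to\C_p(1)\to\Bt\to\C_p\to 0$ together with Tate's vanishing $\C_p(1)^{\Gamma_K}=0$ and the canonical embedding $K\hookrightarrow\Bt$), one obtains $(\TdR(M_K)\otimes_K\Bt)^{\Gamma_K}=\TdR(M_K)$. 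Taking $\Gamma_K$-invariants of the left-hand side and applying Lemma \ref{lemma 4.3.1}, one obtains $(V\otimes_{\Q_p}\Bt)^{\Gamma_K}\cong(V\otimes_{\Q_p}\BdRp)^{\Gamma_K}$. Combining the two yields a canonical $K$-linear isomorphism
\[
(V\otimes_{\Q_p}\BdRp)^{\Gamma_K}\;\cong\;\TdR(M_K).
\]

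Next I would deduce that $V$ is de Rham and that the isomorphism extends to $\DdR(V)$. The natural inclusion $\BdRp\hookrightarrow\BdR$ yields $(V\otimes\BdRp)^{\Gamma_K}\hookrightarrow\DdR(V)$, and $\BdR$-regularity gives the standard upper bound $\dim_K\DdR(V)\leq\dim_{\Q_p}V$. By Proposition \ref{Hodge-Tate weights of tate module of 1-motives} together with Proposition \ref{prop 2.4.2} and Remark \ref{V(M)=coLie(Gv)}, we have $\dim_{\Q_p}V=\rank L+2\dim A+\dim T=\dim_K\TdR(M_K)$. Hence the chain
\[
\dim_{\Q_p}V=\dim_K\TdR(M_K)=\dim_K(V\otimes\BdRp)^{\Gamma_K}\leq\dim_K\DdR(V)\leq\dim_{\Q_p}V
\]
forces equality throughout: $V$ is de Rham, $(V\otimes\BdRp)^{\Gamma_K}=\DdR(V)$, and our canonical map is an isomorphism $\DdR(V)\cong\TdR(M_K)$.

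Finally I would verify that this isomorphism is filtered. The key input is the Hodge filtration compatibility in Theorem \ref{theorem p-adic integration pairing for M}: for $x\in\Tp(M)$ and $\omega\in\Fil^1\TdR\ve(M_K)$ one has $\int^{\varpi}_x\omega\in\Fil^1\Bt$. Dualising, this says the integration isomorphism $V\otimes\Bt\cong\TdR(M_K)\otimes\Bt$ is a filtered isomorphism when each side carries the tensor-product filtration. After taking $\Gamma_K$-invariants and tracing through the identifications $(V\otimes\Bt)^{\Gamma_K}\cong(V\otimes\BdRp)^{\Gamma_K}=\DdR(V)$ — using that $\Fil^i\Bt$ is the image of $\Fil^i\BdR$ and that Galois invariants of $V\otimes\Fil^i\BdR$ coincide with those of $V\otimes\Fil^i\BdRp$ in the relevant range (a variant of the argument in Lemma \ref{lemma 4.3.1}, since $V\ve$ has Hodge--Tate weights in $\{-1,0\}$) — one checks that $\Fil^i\DdR(V)$ corresponds to $\Fil^i\TdR(M_K)$. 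A dimension count against the Hodge--Tate decomposition (giving $\dim_K\gr^0\DdR(V)=\rank L+\dim A=\dim_K V(M)$ and $\dim_K\gr^{-1}\DdR(V)=\dim T+\dim A=\dim_K\Lie(G)$) confirms the match, since the induced filtered isomorphism must be an injection on each $\Fil^i$ of the correct dimension. This last bookkeeping step tracking the interplay between the filtration on $\BdR$ and the Hodge filtration through the integration pairing is the main technical obstacle; everything before it is essentially formal once Theorem \ref{theorem p-adic integration pairing for M} and Lemma \ref{lemma 4.3.1} are in hand.
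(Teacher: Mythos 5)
Your proposal is correct and follows essentially the same route as the paper: take $\Gamma_K$-invariants of the perfect integration isomorphism of \cref{theorem p-adic integration pairing for M}, identify $(\Vp(M)\otimes\Bt)^{\Gamma_K}\cong(\Vp(M)\otimes\BdRp)^{\Gamma_K}$ via \cref{lemma 4.3.1}, embed into $\DdR(\Vp(M))$, and conclude by the dimension count $\dim_K\TdR(M_K)=\dim_{\Q_p}\Vp(M)\geq\dim_K\DdR(\Vp(M))$. Your closing verification of the filtration compatibility is extra bookkeeping that the paper's proof leaves implicit, but it does not change the argument.
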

\begin{proof}
    By the above lemma together with \cref{theorem p-adic integration pairing for M}, we can write
    \[
    \TdR(M)\cong(\Vp(M)\otimes_{\Q_p}\Bt)^{\Gamma_K}\cong(\Vp(M)\otimes_{\Q_p}\BdRp)^{\Gamma_K}\into\DdR(\Vp(M)).
    \]
  Since $\dim_K\TdR(M_K)=\dim_{\Q_p}(\Vp(M))$ and $\dim(\DdR(\Vp(M)))\leq\dim(\Vp(M))$, the above embedding must be an isomorphism. This completes the proof.   
\end{proof}
The following corollary will be a key ingredient in the next chapter.
\begin{cor}\label{cor 4.3.2}
With the notations of \ref{crystalline integration map}, for all $x\in\Vp(M)$ and $\omega\in\TdR\ve(M)_K$, we have $\displaystyle\int^{cris}_x\omega=0$ if and only if $\displaystyle\int^{\varpi}_x\omega=0$.
\end{cor}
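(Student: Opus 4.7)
The forward implication is immediate from the construction. By the paragraph preceding the corollary (citing \cite{colmez_periodes_1993}), the p-adic integration pairing $\int^{\varpi}$ is obtained from the crystalline integration pairing $\int^{cris}$ by composing with the canonical surjection $\Bcrisp \twoheadrightarrow \Bt$, using the crystalline-de Rham identification of \cref{crystalline-de Rham comparison isomorphism} to match $\Tcrys^{\vee}(\bar M)_K$ with $\TdR^{\vee}(M)_K$. Hence $\int^{cris}_x\omega = 0$ forces $\int^{\varpi}_x\omega = 0$.

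For the converse, set $b := \int^{cris}_x\omega \in \Bcrisp$ and assume $\int^{\varpi}_x\omega = 0$, so that $b \in \Bcrisp \cap t^2\BdRp$; the task is to conclude $b = 0$. The plan is to leverage the filtered comparison isomorphism $\Vp(M) \otimes_{\Q_p} \Bcris \cong \Tcrys(\bar M) \otimes_{K_0} \Bcris$ (from \cref{tate module is crystalline}) together with the fact, established in \cref{Hodge-Tate weights of tate module of 1-motives}, that $\Vp(M)$ has Hodge-Tate weights concentrated in $\{0,1\}$. Under this isomorphism, elements of $\Vp(M) \subset \Vp(M)\otimes \Fil^0\Bcris$ land in $\Fil^0(\Tcrys(\bar M) \otimes \Bcris) = V(M)\otimes \Bcrisp + \Tcrys(\bar M)\otimes t\Bcrisp$, so pairing with any $\omega \in \TdR^{\vee}(M)_K$ produces an element of $\Bcrisp$ with $t$-adic filtration degree $\leq 1$. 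Combined with the injectivity of $\gr^i\Bcris \hookrightarrow \gr^i\BdR \cong \C_p(i)$, this constrains $b$ sufficiently that $b \in t^2\BdRp$ will force $b = 0$.

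In practice, I expect the cleanest route to be a dévissage along the weight filtration of $M$. By the bilinearity and functoriality of both pairings, together with the perfectness established in \cref{theorem p-adic integration pairing for M}, the equivalence reduces via the exact sequences $0 \to G \to M \to L \to 0$ and $0 \to T \to G \to A \to 0$ to the three pure-weight cases. For $M = [L \to 0]$: the periods lie in $K \subset \Bcrisp$, and $K \cap t^2\BdRp = 0$ via the canonical section of $\theta^+_{\text{dR}}$, so the equivalence is trivial. For $M = [0\to T]$: all $\omega \in \coLie(T)$ lie in $\Fil^1 \TdR^{\vee}(T)$, so each period has the form $c\cdot t$ for some $c \in K$, and the condition $ct \in t^2\BdRp$ forces $c \in K \cap t\BdRp = 0$, hence $b = 0$. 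For $M = [0\to A]$: this is the classical compatibility between the crystalline and Fontaine-Messing p-adic integration pairings for abelian varieties with good reduction, proved in \cite{colmez1992periodes, colmez_periodes_1993}.

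The main obstacle is the dévissage step, since the weight-filtration exact sequences of 1-motives do not split in general, and a specific pair $(x,\omega) \in \Vp(M) \times \TdR^{\vee}(M)_K$ may mix contributions from different weight pieces. To circumvent this, the plan is: (i) choose a splitting of $\omega = \omega_L + \omega_G$ on the de Rham side (which exists once we recall that $\TdR^{\vee}(M) \onto \TdR^{\vee}(G) \cong V(G)^{\vee} + \coLie(G)^{\vee}$ admits a $K$-linear splitting after the Hodge decomposition), and pair each summand with $x$ using the functoriality of $\int^{cris}$ and $\int^{\varpi}$ with respect to the projection $M \to L$ and inclusion $G \to M$; (ii) track the decomposition $b = b_L + b_G$ through the projection $\Bcrisp \to \Bt$ and apply the equivalence in the pure-weight cases. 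The key technical verification is that the induced decomposition of $b$ is compatible with the filtration on $\Bcrisp$, so that $b \in t^2\BdRp$ descends to analogous conditions on $b_L$ and $b_G$ individually.
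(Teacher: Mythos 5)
Your forward implication is fine, but your converse has a genuine gap, and the route you sketch does not close it. Your first suggestion---that pairing $x\in\Vp(M)$ against $\omega$ "produces an element of $t$-adic filtration degree $\leq 1$"---is not something you have established: saying that $b=\int^{cris}_x\omega$ has filtration degree $\leq 1$ (i.e.\ $b\notin t^2\BdRp$ unless $b=0$) is exactly the assertion to be proved, and the computation $\Fil^0(\Tcrys(\bar M)\otimes\Bcris)=V(M)\otimes\Bcrisp+\Tcrys(\bar M)\otimes t\Bcrisp$ gives no such bound after pairing with an arbitrary $\omega$. Your fallback, d\'evissage along the weight filtration, breaks precisely where you flag it: the sequences $0\to G\to M\to L\to 0$ do not split motivically, so although $\omega_L$ pairs with the image of $x$ in $\Vp(L)$ by functoriality, the complementary term $\int_x\omega'$ is \emph{not} a period of a pure piece (no element of $\Vp(G)$ is attached to $x$), and the "key technical verification" that $b\in t^2\BdRp$ forces the summands $b_L$, $b_G$ separately into $t^2\BdRp$ is left unproved; it is not a formal consequence of a $K$-linear splitting and is essentially equivalent to the corollary itself.

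The paper's proof avoids any d\'evissage. Fixing $\omega$ and its crystalline counterpart $\omega'$, it regards the two pairings as Galois-equivariant maps $f:\Vp(M)\to\Bt$ and $g:\Vp(M)\to\BdRp$, i.e.\ as elements of $(\Vp\ve(M)\otimes_{\Q_p}\Bt)^{\Gamma_K}$ and $(\Vp\ve(M)\otimes_{\Q_p}\BdRp)^{\Gamma_K}$, and invokes \cref{lemma 4.3.1}: since the Hodge--Tate weights of $\Vp(M)$ are $0$ and $1$ (\cref{Hodge-Tate weights of tate module of 1-motives}) and $H^i(K,\C_p(n))=0$ for $n\neq 0$, the quotient $\BdRp\onto\Bt$ induces an isomorphism $(\Vp\ve(M)\otimes_{\Q_p}\BdRp)^{\Gamma_K}\cong(\Vp\ve(M)\otimes_{\Q_p}\Bt)^{\Gamma_K}$, under which $g$ corresponds to $f$, and the equivalence of vanishing follows for all $x$ at once. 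So the Hodge--Tate input you gesture at is indeed the right ingredient, but it has to be deployed at the level of Galois invariants of $\Vp\ve(M)\otimes(-)$ (equivalently, applied to the Galois-stable subspace $\{x\in\Vp(M)\st g(x)\in t^2\BdRp\}$), not through filtration bookkeeping for a single element or a weight-by-weight reduction.
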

\begin{proof}
    We have seen in chapter 1 that $\Bcrisp\subset\BdRp$ and the filtration of $\Bcrisp$ is induced by the filtration of $\BdRp$. Let $\omega\in\TdR\ve(M)$, and $\omega'\in\Tcrys\ve(\bar{M})_K$ be its corresponding element via the canonical identification $\TdR\ve(M)_K\cong\Tcrys\ve(\bar{M})_K$. Consider the maps
    \begin{align*}
        f:\Vp(M)\to\Bt,\,\, x\mapsto\int^{\varpi}_x\omega,\\
        g:\Vp(M)\to \Bcris\into\BdRp,\,\, x\mapsto\int^{cris}_x\omega'.
    \end{align*}
    We have
    \begin{align*}
        f\in \Hom_{\Z_p[\Gamma_K]}(\Vp(M),\Bt)\cong(\Vp\ve(M)\otimes_{\Q_p}\Bt)^{\Gamma_K},\\
        g\in \Hom_{\Z_p[\Gamma_K]}(\Vp(M),\BdRp)\cong(\Vp\ve(M)\otimes_{\Q_p}\BdRp)^{\Gamma_K}.
    \end{align*}
    By \cref{lemma 4.3.1}, we know that 
    \[
    (\Vp\ve(M)\otimes_{\Q_p}\Bt)^{\Gamma_K}\cong(\Vp\ve(M)\otimes_{\Q_p}\BdRp)^{\Gamma_K}.
    \]
    Therefore, the map $f$ identifies $g$ through the canonical isomorphism $\TdR\ve(M)_K\cong\Tcrys\ve(\bar{M})_K$. Consequently, $f(x)=0$ if and only if $g(x)=0$. 
\end{proof}


\section{P-adic logarithm through Barsotti-Tate groups}\label{section p-adic logarithm}
The p-adic logarithm plays a crucial role in the construction of our p-adic periods. In this section, our main goal is to study the p-adic logarithm map of a semi-abelian variety with good reduction through its associated p-divisible (Barsotti-Tate) group. We will further investigate the image of this logarithm map using techniques from p-adic Hodge theory and Galois cohomology.
    Let $G$ be a commutative group defined over a complete p-adic subfield $K$ of $\C_p$. Recall \cite[Chapter III, 7.6]{bourbaki1975elements} and \cite{zarhin1996p} the properties of the p-adic logarithm map
    \[\log_{G(K)}\colon G(K)_f\to \Lie(G(K)) \]
    where $G(K)_f$ is the smallest open subgroup of $G(K)$ such that the quotient group $G(K)/G(K)_f$ is torsion free. The map $\log_{G(K)}$ is a $K$-analytic homomorphism whose tangent map 
    \[
   d\log_{G(K)}\colon\Lie(G)\to\Lie(\Lie(G))=\Lie(G)
    \] is the identity map. Moreover, the p-adic logarithm map $\log_{G(K)}$ can be extended to a map \[
    \log^{(c)}_{G(K)}\colon G(K)\to \Lie(G)
    \] if one fixes a branch $c$ of $K$.

    The logarithm map $\log_{G(K)}$ is compatible with the base change and is functorial on $G$. Moreover, the map $\log_{G(K)}$ and the subgroup $G(K)_f$ are uniquely determined by the above properties. Specifically, the subgroup $G(K)_f$ consists of all elements $x\in G(K)$ such that the identity element of $G(K)$ is an accumulation point of the set $\{x^n\st n>0\},$ \ie there exists an increasing sequence $(n_i)$ of positive integers such that $x^{n_i}$ tends to $0$ in $G(K)$.   
\begin{example}\label{6.1.2}
Consider the multiplicative group $\G_m$ over $K$. We have 
\[
\G_m(K)_f=\{x\in K^{\times}\st \nu(x)=0\}=\cO_K^{\times}
\]
and the logarithm map $\log_{\G_m(K)}$ coincides with the usual p-adic logarithm on the open subgroup of principle units $\{x\in K^{\times}\st \nu(1-x)>0\}=1+\fm_K$.

The group $1+\fm_K$ is indeed the $\cO_K$-valued formal points on the p-divisible group $\mu_{p^{\infty}}$ associated to $\G_m$. That is
\[
\mu_{p^{\infty}}(\cO_K)=\lim_i\mu_{p^{\infty}}(\cO_K/\fm^i_K)=1+\fm_K
\]
The elements on the left side are all $x\in\cO_K^{\times}$ such that $\nu(x^{p^i}-1)$ can get arbitrary large. As the residue field of $K$ has characteristic $p$, we also have the opposite inclusion $\mu_{p^{\infty}}(\cO_K)\subseteq 1+\fm_K$. Therefore, the logarithm map of the p-divisible group $\mu_{p^{\infty}}(\cO_K)$ (see \cref{logarith of p-divisible multiplicative}) factors through $\log_{\G_m(K)}\colon \G_m(K)_f\to K$.
\end{example}

    We can generalize the above example to any semi-abelian variety $G$ over $\cO_K$, i.e., the logarithm map $\log_{G(K)}$ recovers the logarithm of the p-divisible group associated to $G$. As the multiplication map $[p^n]$ is an isogeny on $G$, we can associate a p-divisible group $\sG:=G[p^{\infty}]$ with $G$. The $\cO_{K}$-valued formal points $\sG(\cO_{K})$ is isomorphic to $\Hom_{\cO_K-cont}(\sA,\cO_K)$, where $\sG=\Spf(\sA)$ and $\sA$ equipped with $\fm$-adic topology. In particular, when $\sG$ is connected, $\sG(\cO_K)\cong \fm_K^d$ as a set, where $d$ is dimension of $\sG$ and the formal group law $\mu\colon \sA\to \sA$ induces the structure of a p-adic analytic group over $K$. Thus, we can view $\sG(\cO_{K})$ as an analytic subgroup of $G(K)$.
\begin{prop}
 The group $\sG(\cO_K)$ is a subgroup of $G(K)_f$. Moreover, the logarithm map $\log_{G}\colon G(K)_f\to \Lie(G)$ factors through the logarithm of $\sG(\cO_K)$ in \cref{def logarithm p-divisible group}.
\end{prop}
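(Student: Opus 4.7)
For $\sG(\cO_K)\subseteq G(K)_f$, recall that $G(K)_f$ consists of those $x\in G(K)$ for which the identity is an accumulation point of $\{x^n\}_{n\geq 1}$. The connected--\'etale exact sequence $0\to\sG^0(\cO_K)\to\sG(\cO_K)\to\sG^{\et}(\cO_K)\to 0$, together with the identification $\sG^{\et}(\cO_K)=\sG^{\et}(k)$, reduces the verification to two cases. Elements of $\sG^{\et}(k)$ are $p$-power torsion in $G(K)$, hence trivially lie in $G(K)_f$. For $x\in\sG^0(\cO_K)$, the Serre--Tate identification (\cref{prop 1.2.1}) realizes $\sG^0(\cO_K)$ as $\fm_K^d$ equipped with the formal group law of $\cO_K[[t_1,\dots,t_d]]$, and $[p]^*$ sends $t_i\mapsto pt_i+\cdots$; iterating yields $[p^n]x\to 0$ in the $\fm_K$-adic topology, so $x\in G(K)_f$.

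For the second assertion, the natural identification $\Lie(\sG)=\Lie(\sG^0)=\Lie(G)$ lets us view both $\log_\sG$ (\cref{def logarithm p-divisible group}) and the restriction $\log_{G(K)}|_{\sG(\cO_K)}$ as continuous group homomorphisms $\sG(\cO_K)\to\Lie(G)(K)$. Both vanish on the torsion subgroup: for $\log_\sG$ because $(p^n f)(\tilde x)=0$ eventually whenever $f$ is $p$-power torsion, and for $\log_{G(K)}$ by the general property recalled above (the intersection $G(K)_f\cap G(K)_{\tors}$ lies in the kernel of $\log_{G(K)}$). It therefore suffices to compare the two maps on $\sG^0(\cO_K)$, which via its formal coordinates acquires the structure of a $p$-adic analytic group with Lie algebra $\Lie(G)$. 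On this analytic group both logarithms are $K$-analytic homomorphisms whose tangent map at the origin is the canonical identification $\Lie(\sG^0)\cong\Lie(G)$. The uniqueness statement in the characterization of $\log_{G(K)}$ recalled at the start of the section (as the unique $K$-analytic homomorphism whose differential at the identity is $\mathrm{id}_{\Lie(G)}$) then forces $\log_\sG=\log_{G(K)}|_{\sG(\cO_K)}$.

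The main technical point will be the $K$-analyticity of $\log_\sG$ in formal coordinates together with the identification of its differential at the origin with the identity. Both are obtained by unwinding the defining formula $\log_\sG(f)(x)=\lim(p^n f)(\tilde x)/p^n$ and applying the valuation estimates on the iterates of $[p]^*$ acting on the augmentation ideal $\sI$ of $\sA$ (these estimates are implicit in the lemma preceding \cref{def logarithm p-divisible group}); the derivative at $0$ comes out to the identity because the linear term of $[p^n]$ in the formal group is exactly $p^n T$.
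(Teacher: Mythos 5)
Your proposal is correct in substance, but it reaches the first assertion by a genuinely different route than the paper. The paper treats $\G_m$ first (\cref{6.1.2}) and then, for a general semi-abelian $G$, embeds $\sG(\cO_{\C_p})$ via the Hodge--Tate map into $\Hom(\Tp(\sG\ve),\mup(\ocp))\cong\Tp(\sG)(-1)\otimes\mup(\ocp)$, deducing topological nilpotence of all formal points from that of the principal units $1+\fm_{\C_p}$; you instead work directly over $\cO_K$ through the connected--\'etale sequence, using the estimate $[p](T)=pT+(\text{higher order})$ on the formal Lie group to get $[p^n]x\to 0$ for $x\in\sG^0(\cO_K)$. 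Your route is more elementary (no Hodge--Tate input), while the paper's argument gives the statement uniformly over $\ocp$, which it reuses later. One small repair is needed in your reduction: the connected--\'etale sequence over $\cO_K$ need not split, so you cannot literally split the verification into elements of $\sG^0(\cO_K)$ and elements of $\sG^{\et}(\cO_K)\cong\sG^{\et}(k)$ viewed inside $G(K)$; instead observe that the quotient $\sG(\cO_K)/\sG^0(\cO_K)$ is torsion, so for any $x\in\sG(\cO_K)$ some $p^Nx$ lies in $\sG^0(\cO_K)$, and $[p^n](p^Nx)\to 0$ already forces $x\in G(K)_f$, while for the second assertion $p^N\bigl(\log_\sG(x)-\log_{G(K)}(x)\bigr)=0$ in the torsion-free space $\Lie(G)(K)$, so agreement on $\sG^0(\cO_K)$ suffices. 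With that adjustment, your treatment of the second assertion --- both maps are analytic homomorphisms on $\sG^0(\cO_K)$ with identity tangent map, hence coincide by the uniqueness characterizing $\log_{G(K)}$ --- is essentially the paper's own argument.
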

\begin{proof}
\cref{6.1.2} shows the proposition for $G=\G_m$. For an arbitrary semi-abelian scheme $G$, the Hodge-Tate decomposition gives us an injection $\sG(\cO_{\C_p})\to \Hom(\Tp(G\ve),\mup(\cO_{\C_p}))$ which induces an isomorphism if we take Galois invariant elements. We have
\begin{gather*}
\Tp(\sG)=\lim G[p^n](\bar{K})=\lim\Hom(G[p^n]\ve(\bar{K}),\mu_{p^n}(\bar{K}))\\=\Hom(\lim G[p^n]\ve(\bar{K}),\lim\mu_{p^n}(\bar{K}))=\Hom(\Tp(\sG\ve),\Z_p(1))=\Tp\ve(\sG\ve)\otimes\Z_p(1)\\=\Tp\ve(\sG\ve)(1).    
\end{gather*}
 Therefore, $\Tp(\sG\ve)=\Tp\ve(\sG)(1)=\Tp(\sG)(-1)\ve$ and we obtain
\begin{equation}
\Hom(\Tp(\sG\ve),\mup(\cO_{\C_p}))=\Tp\ve(\sG\ve)\otimes\mup(\cO_{\C_p})=\Tp(\sG)(-1)\otimes\mup(\cO_{\C_p}).
\end{equation}

The topology of $\Tp(G)(-1)\otimes\mup(\cO_{\C_p})$ is induced from the product topology of the adic topologies on $\Tp(G)$ and $\mup(\cO_{\C_p})$. As elements in $\mup(\cO_{\C_p})$ are topologically nilpotent, so is any element in $\Tp(G)(-1)\otimes\mup(\cO_{\C_p})$. The result follows from the fact that $\sG(\cO_K)\subset G(K)_f$ and that $\log_{\sG}:\sG(\cO_K)\to\Lie(G)_K$ is a homomorphism whose tangent map is identity. Furthermore, $\displaystyle\left.\log_{G(K)}\right|_{\sG(\cO_K)}:\sG(\cO_K)\to \Lie(G)$ is determined uniquely by these properties. 
\end{proof}

\begin{remark}
    There are some obstructions in studying the group $G(K)_f$. For instance, although $(.)_f$ is functorial on $G$, it is not exact. Moreover, the group $G(K)_f$ is not easily computable, whereas p-divisible groups are better understood. Furthermore, if $G'$ is a vector extension of a semi-abelian group $G$, then logarithms $\log_{\sG}$ and $\log_{\sG'}$ associated with their p-divisible groups share the same image.
\end{remark}

Let $G_{\K}$ be a semi-abelian variety over a number field $\K\subset K$ with a good reduction at $p$. Then $G$ can be extended to a semi-abelian scheme over $\cO_K$, which we also denote by $G$. Let $\sG:=G[p^{\infty}]$ be the associated p-divisible group over $\cO_K$.
\begin{defn}\label{def: algebraic points on G}
With above notations, we define 
    \begin{gather}  
    G(\barQ)_f:=G_{\K}(\barQ)\cap G(\C_p)_f;\\
    \sG(\barQ):=G_{\K}(\barQ)\cap\sG(\ocp).\label{5}
    \end{gather}
\end{defn}
The intersections above are inside $G(\C_p)$, as $\sG(\ocp)\subset G(\C_p)_f$ and $G_{\K}(\barQ)\subset G(\C_p)$. 

\begin{lemma}
    Let $G$ be a commutative group scheme over a field $K$. Assume that $G$ is a vector extension of $H$ i.e. there is an exact sequence \[0\to V\to G\to H\to 0\] of commutative group schemes over $K$. Let $f:\Lie(H)\to \Lie(G)$ be a splitting of \[0\to \Lie(V)\to\Lie(G)\to \Lie(H)\to 0,\]
    then there exists a canonical homomorphism $\phi: H\to G$ which is a splitting of \[0\to V(K)\to G(K)\to H(K)\to 0\]
    and we have $\Lie\,\phi=f$.
\end{lemma}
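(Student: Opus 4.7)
The plan is to produce $\phi$ from $f$ by exploiting the characteristic-zero hypothesis together with the universal vector extension of $H$: in this setting, morphisms between connected commutative group schemes are controlled by their differentials, and vector extensions carry enough structure that a Lie splitting can be integrated back to a group-scheme splitting.

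First I would establish uniqueness of $\phi$ given $f$. If $\phi_1,\phi_2 : H \to G$ are two splittings with the same differential, their difference (formed using the commutative group law on $G$) is a homomorphism $H \to V$ whose differential at the identity vanishes. In characteristic zero, any morphism of connected commutative group schemes with zero differential is the zero morphism on the identity component, so $\phi_1 = \phi_2$ on $H^\circ$. This argument will also show that $\phi$, once constructed, does not depend on auxiliary choices.

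Next I would construct $\phi$ by reduction to the universal vector extension. Since $G$ is a vector extension of $H$ by $V$, there is a unique linear map $\alpha : V(H) \to V$ such that $G$ is the pushout of $0 \to V(H) \to \Ex(H) \to H \to 0$ along $\alpha$ (cf.\ \cref{prop 2.4.2}). Applying $\Lie$ gives the corresponding pushout diagram of Lie-algebra extensions. The given splitting $f$ supplies a direct-sum decomposition $\Lie(G) = V \oplus f(\Lie(H))$; pulling back along $\Lie(\Ex(H)) \to \Lie(G)$ and using $\alpha$, I obtain a canonical Lie-algebra section $\tilde f : \Lie(H) \to \Lie(\Ex(H))$ of the universal extension. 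One then appeals to the fact that, in characteristic zero, the Lie section $\tilde f$ integrates to a genuine group-scheme section $\tilde\phi : H \to \Ex(H)$ of the universal vector extension, and sets $\phi$ to be the composition $H \xrightarrow{\tilde\phi} \Ex(H) \to G$ with the pushout map.

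Finally I would verify that $\pi \circ \phi = \mathrm{id}_H$ and $\Lie\phi = f$; both are straightforward by unwinding the pushout and the definition of $\tilde\phi$, and on $K$-points this gives the asserted splitting of $0 \to V(K) \to G(K) \to H(K) \to 0$. The main obstacle is the integration step — turning the Lie-algebra section $\tilde f$ into an algebraic section $\tilde\phi$ of $\Ex(H) \to H$. This is where the characteristic-zero assumption is essential: it allows one to use the equivalence between formal groups and Lie algebras to obtain $\tilde\phi$ formally near the identity, and then to algebraize via the specific structure of the universal vector extension (or, in the $p$-adic analytic context of the chapter, via the exponential–logarithm correspondence on $K$-points), after which the compatibility with the group law propagates the local splitting to all of $H$.
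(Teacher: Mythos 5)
There is a genuine gap at the heart of your construction: the ``integration step'' is false as stated. You claim that, in characteristic zero, the Lie-algebra section $\tilde f:\Lie(H)\to\Lie(\Ex(H))$ integrates to a group-scheme section $\tilde\phi:H\to\Ex(H)$ of the universal vector extension. But the universal vector extension of an abelian variety (or of a semi-abelian variety with nontrivial $\Ext^1(H,\G_a)$) admits \emph{no} algebraic section -- that is precisely what makes it a nontrivial, indeed universal, extension -- even though its Lie-algebra sequence always splits, since any short exact sequence of finite-dimensional vector spaces over a field splits. So a Lie-algebra section carries no obstruction-theoretic information and cannot in general be integrated to an algebraic splitting; a Lie-theoretic ``differentials determine morphisms'' principle only identifies two algebraic morphisms that already exist, it does not produce one. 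Your fallback remark (exponential--logarithm near the identity, then ``propagate by the group law'') does not repair this: the exponential of $H$ converges only on a neighbourhood of $0$, and the subgroup of $H(K)$ generated by a compact open neighbourhood of the identity need not be all of $H(K)$, so there is no direct propagation argument. In fact your goal is stronger than the statement requires (and stronger than what is true): the asserted splitting is only of the sequence of $K$-points $0\to V(K)\to G(K)\to H(K)\to 0$, and the homomorphism $\phi$ need only be a (p-adic analytic) homomorphism on points, not a morphism of group schemes.

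The paper's proof goes the other way around and avoids any integration. Exactness of the sequence on $K$-points follows because $V$ is a vector group (no higher cohomology/extension obstruction). One then fixes a branch $c$ of the p-adic logarithm, so that $\log^{(c)}_{G(K)}:G(K)\to\Lie(G)$ is a group homomorphism defined on all of $G(K)$, and composes it with the linear retraction $\bar f:\Lie(G)\to\Lie(V)$ determined by the splitting $f$. Since $\log$ restricted to the vector group is the identity under $V(K)=\Lie(V)$, the composite $\log^{(c)}_{G(K)}\circ\bar f$ is a retraction of $V(K)\into G(K)$, and the section $\phi:H(K)\to G(K)$ it induces (send $h$ to $g$ minus the image of its retraction, for any lift $g$) satisfies $\Lie\,\phi=f$. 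In short: build a left splitting on points out of the globally defined logarithm and the linear retraction, rather than trying to exponentiate a Lie section -- the latter is exactly the step that cannot work.
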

\begin{proof}
As $V$ is a vector group, $\iext^1(V,(.))=0$ and we have an exact sequence \[0\to V(K)\to G(K)\to H(K)\to 0.\]
Fix a branch of the logarithm map $\log^{(c)}$. We can construct the following commutative diagram
\begin{equation}
\begin{tikzcd}
0 \arrow[r] & V(K) \arrow[r] \arrow[d, equal] & G(K) \arrow[r] \arrow[d, "\log^{(c)}_{G(K)}"] \arrow[l, "\log^{(c)}_{G(K)}\circ \bar{f}"', bend right=49] & H(K) \arrow[r] \arrow[d, "\log^{(c)}_{H(K)}"] \arrow[l, "\phi"', dashed, bend right=49] & 0 \\
0 \arrow[r] & \Lie V \arrow[r]                              & \Lie G \arrow[r] \arrow[l, "\bar{f}", bend left=49]                                                       & \Lie H \arrow[r] \arrow[l, "f", bend left=49]                                           & 0
\end{tikzcd}
\end{equation}
where $\bar{f}$ is the retraction induced by splitting $f$ and $\phi:H(K)\to G(K)$ is the section induced by $\log^{(c)}_{G(K)}\circ \bar{f}$. It is easy to check that $\Lie\,\phi=f$. The map $\phi$ is a splitting since $\log^{(c)}_{G(K)}\circ \bar{f}:G(K)\to V(K)$ is a splitting. 
\end{proof}

\section{$\exp$: the local inverse of $\log$}
The goal of this section is to identify the local inverse of $\log_{\sG}$ when $\sG$ is a p-divisible group over $\cO_K$.

Let $M=[L\xrightarrow{u}G]$ be a 1-motive over $K$. There is a finite extension $F$ of $K$ such that $L_F$ is split. By \cref{S-point of 1-motive when split}, we have $M(F)=G(F)/\im(u_F)$.  In particular, $M(\bar{K})=G(\bar{K})/\im(u)$.

Recall the definition of $M[p^n](K)$ (see \ref{formula M[n]}).
\begin{defn}
Let $M$ be a 1-motive over $K$. We define \[M(\bar{K})[p^n]:=\ker([p^n]:M(\bar{K})\to M(\bar{K}))\] and \[M(\bar{K})[p^{\infty}]:=\colim M(\bar{K})[p^n]\]
\end{defn}

We have the following
\begin{prop}
 If $u$ is injective, then $M[p^n](\bar{K})=M(\bar{K})[p^n]$.
\end{prop}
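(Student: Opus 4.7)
The plan is to unwind both sides via the explicit description of $M[p^n]$ from formula \eqref{formula M[n]} and the identification $M(\bar K)=G(\bar K)/\im(u)$ (valid after passing to $\bar K$, since $L$ is automatically split there), and then exhibit an explicit isomorphism whose injectivity is exactly the place where the hypothesis on $u$ enters.

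Concretely, applying \eqref{formula M[n]} with $n$ replaced by $p^n$ gives
\[
M[p^n](\bar K)=\frac{\{(x,g)\in L(\bar K)\times G(\bar K)\mid u(x)=-p^n g\}}{\{(p^n y,-u(y))\mid y\in L(\bar K)\}},
\]
while on the other side
\[
M(\bar K)[p^n]=\{g\in G(\bar K)\mid p^n g\in \im u\}/\im u.
\]
First I would define $\phi:M[p^n](\bar K)\to M(\bar K)[p^n]$ by $(x,g)\mapsto -g\bmod \im u$. Well-definedness on the numerator is immediate from $p^n(-g)=-p^n g=u(x)\in \im u$, and well-definedness modulo the relations follows because $\phi(p^n y,-u(y))=u(y)\equiv 0\bmod\im u$. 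Next I would check surjectivity: given $g\in G(\bar K)$ with $p^n g=u(x)$ for some $x\in L(\bar K)$, the pair $(x,-g)$ lies in the numerator and maps to the class of $g$.

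The main (and in fact the only non-formal) step is injectivity, and this is precisely where $u$ injective is used. Suppose $\phi(x,g)=0$, i.e.\ $-g=u(y)$ for some $y\in L(\bar K)$. Then $u(x)=-p^n g=p^n u(y)=u(p^n y)$, so injectivity of $u$ on $\bar K$-points forces $x=p^n y$, and therefore $(x,g)=(p^n y,-u(y))$ is killed by the relation subgroup. Hence $\phi$ is an isomorphism. I do not foresee a serious obstacle here; the only point to be attentive to is that injectivity of $u$ as a morphism of group schemes does give injectivity on $\bar K$-points (which is automatic, since $u$ is a morphism of commutative group schemes and $L$ is \'etale), so the only real content is the bookkeeping with the quotient relation.
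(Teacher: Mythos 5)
Your proof is correct and follows essentially the same route as the paper: both unwind the explicit presentation of $M[p^n]$ from the quotient formula, map a class $(x,g)$ to $\pm g$ in $G(\bar K)/\im(u)$, and use the injectivity of $u$ to identify the kernel (the paper phrases this as constructing an explicit inverse $g\mapsto x_g$, which is the same computation as your injectivity check). The sign convention and the injectivity-plus-surjectivity bookkeeping versus an explicit two-sided inverse are immaterial differences.
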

\begin{proof}
    We know that \[
    M[p^n]=\frac{\{(x,g)\in L\times G\mid u(x)=-p^ng\}}{\{(p^nx,-u(x)\mid x\in L\}}
    \]
    The map $(x,g)\mapsto g$ defines a map $\phi:M[p^n](\bar{K})\to M(\bar{K})[p^n]$ because if $(x,g)\in M[p^n]$, then $p^ng\in\im(L)$ and it is well-defined.  Conversely, if $g\in M(\bar{K})[p^n]$, then $p^ng=u(x_g)$ for some $x_g\in L$. Since $u$ is injective, then $g\mapsto x_g$ defines a well-defined map $\psi:M(\bar{K})[p^n]\to M[p^n](\bar{K})$. We have $\phi\circ\psi=id$ and $\psi\circ\phi=id$. The result follows.
\end{proof}
\begin{cor}
Let $M$ be a 1-motive over $\cO_K$. Then $M(\bar{K})[p^{\infty}]=M[p^{\infty}](\bar{K})$ and $\Tp(M)=\lim M(\bar{K})[p^{n}]$.   
\end{cor}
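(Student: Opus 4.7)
The plan is to pass to limits in the bijection $M[p^n](\bar{K}) = M(\bar{K})[p^n]$ furnished by the preceding proposition. The first step is to reduce to the case when $u : L \to G$ is injective on $\bar{K}$-points. Since $L$ is a free lattice and $\ker u \subset L$ is a sublattice, one replaces $M$ by the quotient 1-motive (with structure map induced on $L/\ker u$, modulo torsion), working in the isogeny category $\Mi\otimes\Q$ as fixed by the paper's conventions. Applied to this representative, the previous proposition yields bijections $M[p^n](\bar{K}) \xrightarrow{\sim} M(\bar{K})[p^n]$, and I would then check that these bijections are compatible with the relevant transition maps in $n$ by tracing $(x,g) \leftrightarrow g$ through the formulas $(x,g) \mapsto (px, g)$ (for the direct system) and $(x,g) \mapsto (x, pg)$ (for the inverse system).

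For the first equality $M[p^\infty](\bar{K}) = M(\bar{K})[p^\infty]$, I would take the filtered colimit over $n$. By definition $M[p^\infty] = \colim_n M[p^n]$ as fppf sheaves, and since each $M[p^n]$ is an étale group scheme over $\Spec \bar{K}$, evaluation at $\Spec \bar{K}$ commutes with the filtered colimit. The transition map $M[p^n] \hookrightarrow M[p^{n+1}]$, given on points by $(x,g) \mapsto (p x, g)$, corresponds under the proposition to the natural inclusion $M(\bar{K})[p^n] \hookrightarrow M(\bar{K})[p^{n+1}]$; hence the colimits agree.

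For the identification $\Tp(M) = \lim M(\bar{K})[p^n]$, I would unwind the definition $\Tp(M) = \lim_n M[p^n](\bar{K})$ with transition maps $(x,g) \mapsto (x, p^{m-n} g)$ from $M[p^m](\bar{K})$ to $M[p^n](\bar{K})$. Under the identification with $M(\bar{K})[p^n]$ from the proposition, these are precisely the multiplication-by-$p^{m-n}$ maps $M(\bar{K})[p^m] \to M(\bar{K})[p^n]$, i.e., the standard projections defining the $p$-adic Tate module of an abelian group. Taking the inverse limit yields the second claim.

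The main obstacle I anticipate is the reduction to the injective case: making sense of "replacing $M$ by a 1-motive with injective structure map" requires care, since the naive quotient $L/\ker u$ need not be a lattice in the strict sense (it may have torsion in the non-isogeny category), so the argument genuinely uses the isogeny-category setup. The remaining work—commuting inverse/direct limits with the bijection of the proposition and with $\bar{K}$-point evaluation on étale schemes—is formal but must be verified by matching the explicit transition formulas.
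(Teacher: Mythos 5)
Your passage to limits is correct and is exactly the content behind the corollary: under the bijection $(x,g)\mapsto g$ of the preceding proposition, the transition maps $(x,g)\mapsto (px,g)$ and $(x,g)\mapsto (x,p^{m-n}g)$ do become the inclusions $M(\bar{K})[p^n]\subseteq M(\bar{K})[p^{n+1}]$ and the multiplication maps $p^{m-n}\colon M(\bar{K})[p^m]\to M(\bar{K})[p^n]$, so the colimit and inverse-limit identifications follow formally, and evaluation at $\Spec\bar{K}$ indeed commutes with the filtered colimit of the finite \'etale schemes $M[p^n]$. This formal step is all the paper intends; the corollary tacitly carries over the hypothesis of the proposition that $u$ is injective.

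The genuine gap is your opening reduction to the injective case. Passing from $M=[L\xrightarrow{u}G]$ to the quotient 1-motive $[L/\ker u\to G]$ is not an isogeny: the kernel of that quotient map is $[\ker u\to 0]$, a lattice of positive rank whenever $u$ is not injective, and such a morphism is not inverted in $\Mi\otimes\Q$. The replacement therefore changes both $M[p^{\infty}]$ and $\Tp(M)$ (their ranks drop by $\rank(\ker u)$), while $M(\bar{K})=G(\bar{K})/u(L)$ is unchanged, so the statement for the quotient does not imply the statement for $M$. In fact the unconditional statement is false: for $M=[\Z\xrightarrow{0}0]$ one has $M[p^n](\bar{K})=\Z/p^n\Z$ and $\Tp(M)=\Z_p$, whereas $M(\bar{K})=0$, hence $M(\bar{K})[p^{\infty}]=0\neq M[p^{\infty}](\bar{K})$ and $\lim M(\bar{K})[p^n]=0\neq\Tp(M)$. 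So no reduction can remove the injectivity hypothesis (and the torsion issue you flagged is not the real obstruction); the corollary must be read with $u$ injective, and with that hypothesis your limit arguments alone constitute the proof.
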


Let $\sG$ be the p-divisible group associated to the 1-motive $M=[L\to G]$ and $D$ the Dieudonn\'e module associated to $M=[L\to G]$ i.e. $D:=\Tcrys(\bar{M})$. The Dieudonn\'e module $D$ is equipped with a filtration which is induced from the Hodge filtration on $\TdR(M)$. Then \cref{cor: Hodge filtration and tangent space on Dieudonne module} implies that $(D/D^0)\otimes\Q$ identifies the tangent space $\Lie(G)_K=\Lie(\sG)_K$. Our goal is to define a local inverse of $\log_{\sG}$. We define 
\begin{equation}\label{eq 3.5.1}
\exp_D:(D/D^0)\otimes\Q\to (D/(1-F)D^0)\otimes\Q,  \text{ induced by } x\mapsto  x-F(x) \text{ for all }x\in D/D^0
\end{equation}
Clearly, $\exp_D$ is surjective. On the other hand, by \cref{cor: explicit description of M(S) points}, any point $x\in M(\cO_K)$ corresponds to an exact sequence 
\[
0\to M\to M_x\to \Z\to 0
\]
where $M_x=[\Z\oplus L\xrightarrow{f} G]$, and $f$ is given by $(1,\ell)\mapsto x+u(\ell)$. This gives the exact sequence
\[
0\to \Tcrys(\bar{M})\to \Tcrys(\bar{M_x})\to 1_{FD}\to 0
\]
of filtered Dieudonn\'e modules, where $1_{FD}$ is the unit filtered Dieudonn\'e module (recall \cref{unit filtered Dieudonn\'e module}). Thus, we get a map
\[
M(\cO_K)\to \Ext^1(D,1_{FD})
\]
where $\Ext^1(D,1_{FD})$ is in the category of filtered Dieudonn\'e modules over $\W(k)$. In this section, we want to show that the diagram
\begin{equation}\label{diagram 3.5.1}
\begin{tikzcd}
                                         &  & \sG(\cO_K) \arrow[d, hook] \arrow[lldd, "\log_{\sG}"'] \\
                                         &  & G(\cO_K) \arrow[d]                                     \\
\Lie(G)_K \arrow[d, Rightarrow, no head] &  & M(\cO_K) \arrow[d]                                     \\
(D/D^0)\otimes\Q \arrow[rr, "\exp"]      &  & {\Ext^1(D,1_{FD})}                                    
\end{tikzcd}
\end{equation}
commutes. By this, we actually find a local inverse of $\log_{\sG}$.

Consider the Kummer sequence \cite[\S 2.4]{tate1967pdivisble} 
\[
0\to \Tp(M)\to \cB_{\sG}\to \sG(\cO_{\bar{K}})\to 0,
\]
where \[\cB_{\sG}:=\lim(\sG(\cO_{\bar{K}})\xleftarrow{[p]}\sG(\cO_{\bar{K}})\xleftarrow{[p]}\dots).\]
The Hodge-Tate decomposition (\cref{Theorem: Hodge-Tate decomposition}) yields the $\Gamma_K$-equivariant commutative diagram
\begin{equation}
\begin{tikzcd}[column sep=small]
0 \arrow[r] & \Tp(M) \arrow[r] \arrow[d, Rightarrow, no head] & \cB_{\sG} \arrow[r] \arrow[d]                                            & \sG(\cO_{\bar{K}}) \arrow[r] \arrow[d]                                & 0 \\
0 \arrow[r] & \Tp(M) \arrow[r] \arrow[d, Rightarrow, no head] & {\lim(\Hom(\Tp(\sG\ve),\mup(\cO_{\bar{K}})))} \arrow[r] \arrow[d, "\cong"] & {\Hom(\Tp(\sG\ve),\mup(\cO_{\bar{K}}))} \arrow[r] \arrow[d, "\cong"] & 0 \\
0 \arrow[r] & \Tp(M) \arrow[r]                                & \lim(\mup(\cO_{\bar{K}})\otimes \Tp(M)(-1)) \arrow[r]                    & \mup(\cO_{\bar{K}})\otimes\Tp(M)(-1) \arrow[r]                       & 0
.\end{tikzcd}
\end{equation}
Taking Galois cohomology leads to 
\begin{equation}\label{6.2.3}
\begin{tikzcd}
\sG(\cO_K) \arrow[r, "\delta"] \arrow[d, "\log"] & {H^1(K,\Tp(M))} \arrow[d] \\
\Lie(\sG)(K) \arrow[r, "\exp", dashed]           & {H^1(K,\Vp(M))}          
,\end{tikzcd}
\end{equation}
where $\delta$ is the connecting map. We define the dotted arrow to make the above diagram commute.

We use the terminologies defined in \cite{cartier_l-functions_2007}. The fundamental exact diagram in \cite{cartier_l-functions_2007} is 
\begin{equation*}
\begin{tikzcd}
0 \arrow[r] & \Q_p \arrow[r] \arrow[d,  Rightarrow, no head] & \Bcris^{\phi=1}\oplus\BdRp \arrow[r, "\beta"] \arrow[d, hook] & \BdR \arrow[d, hook]\arrow[r] & 0 \\
0 \arrow[r] & \Q_p \arrow[r]                                & \Bcris\oplus\BdRp \arrow[r, "\gamma"]                      & \Bcris\oplus\BdR  \arrow[r]   & 0
\end{tikzcd}
\end{equation*}
where,
\[
\beta(x,y):=x-y,\,\, \gamma(x,y)=(x-\phi(x),x-y)
\]
and $\phi$ is the Frobenius on $\Bcris$. Recall the natural filtration on $\DdR(V)$ as described in \ref{filtration on DdR(V)}. We denote $\Fil^0(\DdR(V))=\DdR(V)^0$. Tensoring the above diagram with $V:=\Vp(M)$ and passing to cohomology gives
\begin{equation*}
\begin{tikzcd}
0 \arrow[r] & {H^0(K,V)} \arrow[r] & \Dcris(V)^{\phi=1}\oplus\DdR(V)^0 \arrow[r] & \DdR(V)\\ \arrow[r] & {H^1(K,V)}\arrow[r] & {H^1(K,V\otimes(\Bcris^{\phi=1}\oplus\BdRp))} \arrow[r] & {H^1(K,V\otimes\BdR)}
\end{tikzcd}
\end{equation*}
and 
\begin{equation*}
\begin{tikzcd}
0 \arrow[r] & {H^0(K,V)} \arrow[r] & \Dcris(V)\oplus\DdR(V) \arrow[r] & \Dcris(V)\oplus\DdR(V)\\ \arrow[r] & {H^1(K,V)} \arrow[r] & {H^1(K,V\otimes(\Bcris\oplus\BdRp))} \arrow[r] & {H^1(K,V\otimes(\Bcris\oplus\BdR))}
,\end{tikzcd}
\end{equation*}
where $\DdR(V)^0:=\Fil^0\DdR(V)=(V\otimes\BdRp)^{\Gamma_K}$. By \cite[Lemma 3.8.1]{cartier_l-functions_2007}, we know that $H^1(K,V\otimes\BdRp)\to H^1(K,V\otimes\BdR)$ is injective, so we get a diagram 
\begin{equation*}
\begin{tikzcd}
\Dcris(V)^{\phi=1}\oplus\DdR(V)^0 \arrow[r] \arrow[d, hook] & \DdR(V) \arrow[r] \arrow[d, hook] & {H^1_e(K,V)} \arrow[r] \arrow[d, hook] & 0 \\
\Dcris(V)\oplus\DdR(V)^0 \arrow[r]                          & \Dcris(V)\oplus\DdR(V) \arrow[r]  & {H^1_f(K,V)} \arrow[r]                 & 0
\end{tikzcd}
\end{equation*}
where 
\begin{align}\label{def: H^1_e, H^1_f}
    H^1_e(K,V):=\ker(H^1(K,V)\to H^1(K,V\otimes\Bcris^{\phi=1}))\\
    H^1_f(K,V):=\ker(H^1(K,V)\to H^1(K,V\otimes\Bcris)).
\end{align}
$\DdR(V)^0$ is in the kernel of $\DdR(V)\to H^1_e(K,V)$, so we can get a surjective map
\begin{equation}\label{exp for tate module Tp}
\exp:\DdR(V)/\DdR(V)^0\to H^1_e(K,V)    
\end{equation}
with kernel $\ker(\exp)=\Dcris(V)^{\phi=1}/H^0(K,V)$.

Since $V$ is de Rham, $\DdR(V)/\DdR(V)^0$ is indeed the tangent space i.e. $\DdR(V)/\DdR(V)^0=\Lie(\sG)=\Lie(G)_{\cO_K}$ (\cite[\S 6]{fontaine_sur_1982}). We want to show that the map $\exp$ makes the diagram \ref{6.2.3} commutative. We follow an argument similar to one in \cite{cartier_l-functions_2007}.

In the case of multiplicative group, we have the commutative diagram 
\begin{equation*}
\begin{tikzcd}
0 \arrow[r]          & \Z_p(1) \arrow[d]\arrow[r]                      & \cB_{\mup}\arrow[r] \arrow[d, "\log_{cris}"]          & \mup(\cO_{\bar{K}}) \arrow[d, "\log"]\arrow[r] & 0  \\
0 \arrow[r]          & \Q_p(1)\arrow[r] \arrow[d, Rightarrow, no head] & \Bcris^{\phi=p}\cap\BdRp\arrow[r] \arrow[d, "\psi"] & \C_p \arrow[d]\arrow[r]                        & 0  \\
0 \arrow[r] & \Q_p(1) \arrow[r]                      & \Bcris^{\phi=1}\otimes\Z_p(1) \arrow[r]    & (\BdR/\BdRp)(1) \arrow[r]             & {0}
\end{tikzcd}
\end{equation*}
where $\psi$ is given by $\psi(x)=xt^{-1}\otimes t$, $\log_{cris}$ is the map in \cref{log_cris}, and the bottom arrow can actually be obtained by tensoring the fundamental exact sequence of \cref{Fundamental exact sequence of p-adic Hodge theory} with $\Z_p(1)$. Put $T:=\Tp(M)$.

Tensoring with $T(-1)=\Tp(M)(-1)$ and passing to cohomology gives the commutative diagram
\begin{equation*}
\begin{tikzcd}[column sep=tiny]
{H^0(K,\mup(\cO_{\bar{K}})\otimes T(-1))} \arrow[d] \arrow[r, "\delta"] & {H^1(K,T)} \arrow[r, hook]          & {H^1(K,V)} \arrow[r] & {H^1(K,V\otimes\Bcris^{\phi=1})} \\
{H^0(K,\C_p\otimes T(-1))} \arrow[r]                                    & \DdR(V)/\DdR(V)^0 \arrow[ru, "\exp"] &                      &                                
\end{tikzcd}
\end{equation*}
Notice that the top row is not exact, but gives a complex. The image of $\delta$ is indeed $H^1_e(K,T)$ which is the set of all classes in $H^1(K,T)$ whose image under $H^1(K,T)\to H^1(K,V)$ lie in $H^1_e(K,V)$. Moreover, $H^0(K,\mup(\cO_{\bar{K}})\otimes T(-1))=\sG(\cO_K)$ and $H^0(K,\C_p\otimes T(-1))=\Lie(\sG)$. Thus the left vertical map coincides with the logarithm map $\log_{\sG}: \sG(\cO_K)\to \Lie(\sG)$, and we have the commutative diagram
\begin{equation}\label{6.2.8}
\begin{tikzcd}
                                          & \sG(\cO_K) \arrow[d, "\delta"] \arrow[ld, "\log_{\sG}"'] \\
\DdR(V)/\DdR(V)^0 \arrow[r, two heads, "\exp_{\sG}"] & {H^1_e(K,V)}                                  
.\end{tikzcd} 
\end{equation}
This means that $\log_{\sG}$ is a local inverse of $\exp_{\sG}$. 
This diagram is compatible with finite extensions $K'$ of $K$. This means that we have the commutative diagram
\begin{equation*}
\begin{tikzcd}
                                                                        & \sG(\cO_K) \arrow[d] \arrow[ld, "\log_{\sG}"'] \arrow[r, hook] & \sG(\cO_{K'}) \arrow[d] \arrow[rd, "\log"] &                                                \\
\DdR(V)/\DdR(V)^0 \arrow[r, "\exp_{\sG}"] \arrow[rrr, hook, bend right] & {H^1_e(K,V)} \arrow[r, hook]                                   & {H^1_e(K',V)}                              & \DdR(V)/\DdR(V)^0\otimes K' \arrow[l, "\exp"']
\end{tikzcd}
\end{equation*}
for any finite extension $K'$ of $K$.

\begin{prop}\label{proposition 6.2.2}
    If $\sG$ is connected, then $\delta$ is injective and $\exp_{\sG}:\DdR(V)/\DdR(V)^0\to H^1_e(K,V)$ is a bijection.
\end{prop}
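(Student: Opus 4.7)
The plan is to establish the two assertions separately. For the injectivity of $\delta$, I would apply $\Gamma_K$-invariants to the Kummer-type sequence
\[0\to\Tp(M)\to\cB_{\sG}\to\sG(\cO_{\bar{K}})\to 0\]
in order to identify $\ker(\delta)$ with the image of $\cB_{\sG}^{\Gamma_K}\to\sG(\cO_K)$, namely the subgroup of elements admitting a compatible $\Gamma_K$-equivariant tower of $p^n$-th roots inside $\sG(\cO_K)$. Since $\sG$ is connected, Serre--Tate (\cref{prop 1.2.1}) identifies $\sG(\cO_K)$ with the $\cO_K$-valued formal points of a formal Lie group $\sA\cong\cO_K[[t_1,\ldots,t_d]]$, so that the logarithm $\log_{\sG}$ embeds $\sG(\cO_K)\otimes_{\Z_p}\Q_p$ as a $\Q_p$-subspace of $\Lie(\sG)(K)$ commensurable with the $\cO_K$-lattice $\Lie(\sG)(\cO_K)$. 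Connectedness of $\sG$ together with $[K:\Q_p]<\infty$ forces $\sG[p^{\infty}](K)=\sG(\cO_K)_{\tors}$ to be finite (the $p^n$-torsion must live inside the formal group and is bounded by ramification arguments, as in the $\mu_{p^{\infty}}$ case), so $\sG(\cO_K)$ is the direct sum as $\Z_p$-module of a finite torsion group and a finitely generated $\Z_p$-free module. Neither summand contains a nonzero infinitely $p$-divisible element, so $\ker(\delta)=0$.

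For the bijectivity of $\exp_{\sG}$, surjectivity having been established in the discussion preceding the proposition, the task reduces to proving injectivity. The Bloch--Kato fundamental diagram yields the exact sequence
\[0\to V^{\Gamma_K}\to\Dcris(V)^{\varphi=1}\to\DdR(V)/\DdR(V)^0\xrightarrow{\exp}H^1_e(K,V)\to 0,\]
so that $\ker(\exp_{\sG})=\Dcris(V)^{\varphi=1}/V^{\Gamma_K}$, and it suffices to prove $\Dcris(V)^{\varphi=1}=0$. Combining the crystalline comparison (\cref{tate module is crystalline}) with the Dieudonn\'e--Manin slope decomposition, the connectedness of $\sG$ implies that every simple summand of $\Dcris(V)$ has nonzero slope. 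On each simple slope-$\alpha$ isocrystal $\Delta_{\alpha}$ with $\alpha\neq 0$, the operator $1-\varphi$ is bijective: for $\alpha>0$ the Frobenius $\varphi$ is topologically nilpotent, so $(1-\varphi)^{-1}=\sum_{n\geq 0}\varphi^n$ converges; for $\alpha<0$ one writes $1-\varphi=-\varphi(\varphi^{-1}-1)$ and uses topological nilpotence of $\varphi^{-1}$. In particular $\Dcris(V)^{\varphi=1}=0$, which forces $V^{\Gamma_K}=0$ as well and proves injectivity of $\exp_{\sG}$.

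The hard part is pinning down the correct slope convention for $\varphi$ on $\Dcris(V)$, because the identification $\Dcris(\Vp(G))\cong\D(\bar{G}\ve)[1/p]$ of \cref{tate module is crystalline} is only an isomorphism of isocrystals up to a Tate twist which shifts all slopes: a direct computation for $V=\Q_p(1)=\Vp(\mu_{p^{\infty}})$ yields $\varphi=p^{-1}\sigma$ on $\Dcris(V)$, of slope $-1$, rather than the unit Frobenius on $\D(\underline{\Q_p/\Z_p})[1/p]$. Once this normalisation is fixed so that connectedness of $\sG$ translates into ``the $\varphi$-slopes on $\Dcris(V)$ all lie in $[-1,0)$'', the vanishing $\Dcris(V)^{\varphi=1}=0$ is immediate from the slope discussion above, and the finiteness of $\sG[p^{\infty}](K)$ used in the first step then becomes a consequence of the Hodge--Tate weight analysis of $\Vp(\sG)$ in \cref{Hodge-Tate weights of tate module of 1-motives}.
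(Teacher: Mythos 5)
Your argument is correct and follows the same skeleton as the paper's proof: both identify $\ker(\exp_{\sG})$ with $\Dcris(V)^{\varphi=1}/H^0(K,V)$ and kill it using connectedness of $\sG$. The difference is one of packaging. For the exponential, the paper simply asserts $P(V,1)=\det_{K_0}\bigl(1-\sigma^{[K_0:\Q_p]}\bigr)\neq 0$ when $\sG$ is connected (citing Bloch--Kato), whereas you justify the same fact by the Dieudonn\'e--Manin slope decomposition: connectedness forces all slopes of $\Dcris(\Vp(\sG))$ to be nonzero (with the twist normalisation you correctly pin down via $\Q_p(1)$), so $1-\varphi$ is bijective on each isoclinic piece and $\Dcris(V)^{\varphi=1}=0$; this is exactly the content hidden in the paper's one-line claim. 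For the injectivity of $\delta$, the paper outsources the statement to Bloch--Kato \S 5, while you give a self-contained argument: $\ker(\delta)$ consists of elements of $\sG(\cO_K)$ admitting coherent $p$-power division towers, and for connected $\sG$ with $[K:\Q_p]<\infty$ the group $\sG(\cO_K)$ is (finite torsion)$\,\oplus\,$(free finitely generated $\Z_p$-module), hence has no nonzero infinitely $p$-divisible element. That finite-generation step is the only place you use $[K:\Q_p]<\infty$ rather than the chapter's nominal hypothesis of a discretely valued $K$ with perfect residue field; since the paper's own proof invokes $[K_0:\Q_p]<\infty$ through $P(V,u)$, this matches the intended setting, and in the more general discretely valued case one would instead bound $\log_{\sG}(\sG(\cO_K))$ to force infinitely divisible elements into the (finite) torsion. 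Your closing remark attributing the finiteness of $\sG[p^\infty](K)$ to the Hodge--Tate weight computation is the one loose phrase: what actually gives it is $V^{\Gamma_K}=0$ from the slope argument (or the ramification bound on torsion you already sketched), not the weights by themselves; this does not affect the proof.
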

\begin{proof}
The kernel of $\exp_{\sG}$ is $\ker(exp_{\sG})=\Dcris(V)^{\phi=1}/H^0(K,V)$. Let $P(V,u)=\det_{K_0}(1-\sigma^{[K_0:\Q_p]}u:\Dcris(V))$ be the characteristic polynomial of the $K_0$-linear action of $\sigma^{[K_0:\Q_p]}$ induced by the Frobenius $\sigma$ of $K_0$ (see \cite[\S 4]{cartier_l-functions_2007}). Since $\sG$ is connected, we have $P(V,1)\neq 0$. Therefore, $\exp_\sG$ is bijective. The injectivity of $\delta$ follows from \cite[\S 5]{cartier_l-functions_2007}.
%
\end{proof}

Now, we want to show that the map $\exp_{\sG}$ coincides with the exponential map introduced in \ref{eq 3.5.1}.
\begin{defn}
Let $D$ be a filtered Dieudonn\'e module (see \cref{filtered Dieudonn\'e module}). We define
\begin{equation*}
   h^i(D)= \begin{cases}
        \ker(1-F:D^0\to D),& i=0\\
        \coker(1-F:D^0\to D),& i=1\\
        0,& i\geq 2.
    \end{cases}
\end{equation*}
\end{defn}

The category of filtered Dieudonn\'e modules is an abelian category. Therefore, we have the identification
\[
h^i(D)\cong \Ext^i(1_{FD},D) \text{ for all } i\in\Z
\]
where $1_{FD}$ is the unit filtered Dieudonn\'e module (see \cref{unit filtered Dieudonn\'e module}). We define
\[
T(D):=\ker(1-\phi:\Fil^0(D\otimes_{\W(k)}\Acris)\to D\otimes_{\W(k)}\Acris).
\]
By \cite{Fontaine_Laffaille_1982}, it is known that the functor $D\mapsto T(D)$ is an exact and fully faithful functor from the category of filtered Dieudonn\'e modules satisfying the condition
\[
(*)\hspace{2cm} \exists\, i,j\in\Z,\,\, D^i=D, D^j=0, \text{ and } j-i<p
\]
to the category of finite $\Z_p$-modules endowed with a continuous action of $\Gamma_K$. Furthermore, $T(D)$ gives a crystalline representation and \[\Dcris(T(D)\otimes\Q)\cong D\otimes_{\W(k)} K.\]

In \cite[Lemma 4.5]{cartier_l-functions_2007}, it is shown that when condition $(*)$ holds, the canonical map
\begin{equation}\label{isomorphism for h1 with H1(K,T)}
h^i(D)=\Ext^i(1_{FD},D)\to\Ext^i_{\Z_p[\Gamma_K]}(\Z_p,T(D)) 
\end{equation}
is an isomorphism if $i=0$, and it is injective when $i=1$. Moreover, the image of $h^1(D)\to \Ext^1_{\Z_p[\Gamma_K]}(\Z_p,T(D))=H^1(K,T(D))$ is indeed $H^1_e(K,T(D))$ if $D$ has no p-torsion. Recall that $H^1_e(K,T(D))$ is the set of all classes in $H^1(K,T(D))$ whose image lie in $H^1_e(K,T(D)\otimes\Q)$.

\begin{remark}
If $D$ is the filtered Dieudonn\'e module associated to the 1-motive $M$, then $D$ satisfies condition $(*)$. According to \cref{Theorem: filtered isocrystal va crystalline representaion}(4), we have
\[
\Vp(M)\cong\Fil^0(D\otimes_{\W(k)}\Bcris)^{\phi=1}\cong \Fil^0(D\otimes_{\W(k)}\Acris)^{\phi=1}\otimes\Q=T(D)\otimes_{\Z_p}\Q.
\]
Thus,
\begin{equation}\label{diagram 3.5.10}
 h^0(D)\cong H^0(K,\Tp(M)), \text{ and } h^1(D)\cong H^1_e(K,\Tp(M))   
\end{equation}
and the following canonical diagram commutes
\begin{equation}
\begin{tikzcd}
(D/D^0)\otimes\Q \arrow[r, "1-F"] \arrow[d, "\cong"] & h^1(D)\otimes\Q \arrow[d, "\cong"] \\
\DdR(\Vp(M))/\DdR(\Vp(M))^0 \arrow[r, "\exp"]        & {H^1_e(K,\Vp(M))}        
\end{tikzcd}
\end{equation}
where the top arrow map is $1-F:(D/D^0)\otimes\Q\to (D/(1-F)D^0)\otimes\Q=h^1(D)\otimes\Q$. Both $(D/D^0)\otimes\Q$ and $\DdR(\Vp(M))/\DdR(\Vp(M))^0$ identify the tangent space. Hence, this diagram together with the commutative diagram \ref{6.2.8} gives the commutative diagram
\begin{equation}\label{diagram 3.5.11}
\begin{tikzcd}
                                       & \sG(\cO_K) \arrow[d] \arrow[ld, "\log_{\sG}"'] \\
(D/D^0)\otimes\Q \arrow[r, "\exp_{D}"] & h^1(D)\otimes\Q                               
\end{tikzcd}
\end{equation}
\end{remark}
\begin{cor}
    If $\sG$ is connected, then \[\exp:(D/D^0)\otimes\Q\to h^1(D)\otimes\Q\]
    is an isomorphism.
\end{cor}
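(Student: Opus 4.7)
The plan is to deduce the statement as a direct consequence of Proposition~\ref{proposition 6.2.2} together with the two identifications in \eqref{diagram 3.5.10}. I would set $V = \Vp(M)$, and reduce showing that the map $\exp_D$ of the corollary is an isomorphism to showing that the map $\exp_\sG \colon \DdR(V)/\DdR(V)^0 \to H^1_e(K,V)$ of diagram~\eqref{6.2.8} is one.

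First I would invoke the isomorphism \eqref{isomorphism for h1 with H1(K,T)} for $i=1$, combined with the fact that its image on $h^1(D)$ lands in $H^1_e(K,T(D))$, to identify $h^1(D) \otimes \Q \cong H^1_e(K,V)$. Simultaneously I would use that $D$ satisfies condition $(*)$ so that $V \cong T(D) \otimes \Q$, giving the canonical identification $(D/D^0) \otimes \Q \cong \DdR(V)/\DdR(V)^0$ of tangent spaces via $\Dcris(V) \otimes_{K_0} K \cong D \otimes_{\W(k)} K$. The commutative diagram just preceding \eqref{diagram 3.5.11} then ensures that under these identifications the top arrow $1-F$, which is precisely $\exp_D$, corresponds to the map $\exp_\sG$ coming from p-adic Hodge theory.

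Finally, Proposition~\ref{proposition 6.2.2} asserts that when $\sG$ is connected, the characteristic polynomial $P(V,u)$ of the Frobenius action on $\Dcris(V)$ satisfies $P(V,1) \neq 0$, which forces $\Dcris(V)^{\phi=1} = H^0(K,V)$, hence $\ker(\exp_\sG) = 0$; the map is also surjective by construction, so $\exp_\sG$ is bijective. Transporting this through the identifications above yields that $\exp_D \colon (D/D^0) \otimes \Q \to h^1(D) \otimes \Q$ is an isomorphism, as claimed.

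Since every ingredient is already in place in the preceding discussion, I do not foresee a serious obstacle; the only small point to verify is that the horizontal maps in the diagram between \eqref{diagram 3.5.10} and \eqref{diagram 3.5.11} are genuinely compatible, i.e.\ that the identification of $1-F$ with $\exp$ is natural with respect to the equivalences, which is built into the functoriality of Fontaine--Laffaille theory recalled just above.
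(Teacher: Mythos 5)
Your proposal is correct and follows essentially the same route as the paper: transfer the bijectivity of $\exp_{\sG}$ (Proposition~\ref{proposition 6.2.2}, using connectedness of $\sG$) to $\exp_D$ via the Fontaine--Laffaille identifications in \ref{diagram 3.5.10} and the commutative square relating $1-F$ to $\exp$. The paper's own proof is just a terser version of this, additionally noting that $\exp_D$ is surjective by construction with kernel $(D^{F=1}/h^0)\otimes\Q$.
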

\begin{proof}
    By construction, we know that $\exp$ is surjective and its kernel is $(D^{F=1}/h^0)\otimes\Q$. \cref{proposition 6.2.2} implies that $\exp_\sG$ is bijective. The result now follows from the diagram \ref{diagram 3.5.10}.
\end{proof}
\section{Crystalline algebraic points}\label{section crystalline algebraic points}
We aim to provide a classification of the vectors within the image of the logarithm map from the perspective of p-adic representations, as obtained in \cref{theorem classification for algebraic poin in image of logarithm}. Additionally, we define a $\Q$-structure denoted $\hp(M,\bar{\K})$, which will play an important role in the construction of our p-adic periods in the following chapter.

Let $M$ be a 1-motive over a number field $\K$ with good reduction at $p$ and $K$ a complete local field containing $\K$ (fix an embedding $\K\into K$). Let $M_K$ and $M_{\cO_K}$ denote the base change of $M$ to $K$ and extension of $M$ to $\cO_K$, respectively. We may drop the index $K$, and $\cO_K$ when it is known from the context.

\begin{defn}\label{def: tilde tate module}
Let $\sG$ be the p-divisible group associated to $M_{\cO_K}$ over $\cO_K$ and $\sG^0$ its connected component. We define
    \[
    \tilde{\Tp}(M):=\Tp(\sG^0)\subset\Tp(M) \text{ and } \tilde{\Vp}(M)=\tilde{\Tp}(M)\otimes_{\Z_p}\Q_p.
    \]
\end{defn}
Consider the above notations. The Galois group $\Gamma_{\K}$ acts continuously on $\Tp(M)$ and the action of $\Gamma_{K}$ is via $\Gamma_K\into\Gamma_{\K}$. This gets a restriction map $H^1(\K,\Tp(M))\to H^1(K,\Tp(M))$ which does not depend on the choice of embedding $\bar{\K}\into\bar{K}$. In fact, if $\lambda_1, \lambda_2$ are two such embedding, then $\lambda_1=\lambda_2\circ\sigma$ for some $\sigma\in\Gamma_{\bar{\K}}$, and since the $\Gamma_{\bar{\K}}$ acts trivially on $H^1(\K,\Tp(M))$, we conclude that both embeddings $\lambda_1$ and $\lambda_2$ induce the same map on the cohomology groups.
\begin{prop}\label{3.5.2}
Let $\K\subseteq\barQ$ and $K$ a complete local field with a fixed embedding $\K\into K$. If $H^0(K,\Tp(M))=0$, then the map $H^1(\K,\Tp(M))\to H^1(K,\Tp(M))$ is injective. 
\end{prop}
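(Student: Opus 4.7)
The plan is to prove the injectivity by a direct cocycle argument, leveraging the vanishing of $H^0(K,\Tp(M))$ to obtain uniqueness of a trivializing element.

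Take a class $[c]$ in the kernel, represented by a continuous $1$-cocycle $c\colon\Gamma_{\K}\to\Tp(M)$. By definition $c|_{\Gamma_K}$ is a coboundary, so there exists $t\in\Tp(M)$ such that $c(\sigma)=\sigma t - t$ for every $\sigma\in\Gamma_K$. The hypothesis $H^0(K,\Tp(M))=\Tp(M)^{\Gamma_K}=0$ is exactly what guarantees that such a $t$ is unique: any other element $t'$ trivializing $c|_{\Gamma_K}$ would satisfy $t'-t\in\Tp(M)^{\Gamma_K}=0$. After replacing $c$ by the cohomologous cocycle $\tilde c(\tau):=c(\tau)-(\tau t - t)$, we are reduced to showing that a continuous cocycle $\tilde c\colon\Gamma_{\K}\to\Tp(M)$ which vanishes identically on $\Gamma_K$ must be zero on all of $\Gamma_{\K}$, which is equivalent to $[c]=0$.

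To carry out the vanishing step, I would exploit the two halves of the cocycle identity:
\begin{align*}
\tilde c(\tau\sigma)&=\tilde c(\tau)+\tau\,\tilde c(\sigma)=\tilde c(\tau),\qquad \sigma\in\Gamma_K,\ \tau\in\Gamma_{\K},\\
\tilde c(\sigma\tau)&=\tilde c(\sigma)+\sigma\,\tilde c(\tau)=\sigma\,\tilde c(\tau),\qquad \sigma\in\Gamma_K,\ \tau\in\Gamma_{\K}.
\end{align*}
The first line shows that $\tilde c$ is constant on right cosets $\tau\Gamma_K$. The second, combined with the decomposition $\sigma\tau=\tau(\tau^{-1}\sigma\tau)$ applied for $\sigma\in\Gamma_K\cap\tau\Gamma_K\tau^{-1}$, forces $\tilde c(\tau)$ to be fixed by $\Gamma_K\cap\tau\Gamma_K\tau^{-1}$. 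The uniqueness statement above then promotes to the rigidity one needs: an inflation--restriction-type argument, applied after passing to an open normal subgroup $N\trianglelefteq\Gamma_{\K}$ contained in $\Gamma_K$ (and using the continuity of $\tilde c$ together with the fact that $\tilde c$ already vanishes on the generators of $\Gamma_K$), shows that $\tilde c$ represents the zero class in $H^1(\Gamma_{\K}/N,\Tp(M)^N)$ via the five-term exact sequence; the uniqueness of $t$ is what makes the inflation term vanish.

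The main obstacle is precisely this last step: $\Gamma_K$ is not normal in $\Gamma_{\K}$, so one cannot simply quote the classical inflation--restriction sequence for the pair $(\Gamma_{\K},\Gamma_K)$. The work is to choose a suitable $N$ (e.g.\ the normal core of $\Gamma_K$, or the kernel of the Galois representation on $\Tp(M)$), reduce the vanishing of $\tilde c$ to the vanishing of a class in $H^1(\Gamma_{\K}/N,\Tp(M)^N)$ which is controlled by $\Tp(M)^{\Gamma_K}=0$, and then conclude. Once the obstruction class is shown to be trivial, we obtain $\tilde c=0$, hence $c(\tau)=\tau t-t$ globally on $\Gamma_{\K}$, i.e.\ $[c]=0$, proving injectivity.
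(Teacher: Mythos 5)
Your reduction is sound as far as it goes: normalizing the cocycle so that it vanishes on $\Gamma_K$, noting that under $\Tp(M)^{\Gamma_K}=H^0(K,\Tp(M))=0$ triviality of the class is equivalent to $\tilde c\equiv 0$, and deriving the two coset identities are all correct. The genuine gap is the final step, which you yourself flag but do not close. The constraints you obtain (constancy on cosets $\tau\Gamma_K$, and invariance of $\tilde c(\tau)$ under $\Gamma_K\cap\tau\Gamma_K\tau^{-1}$) do not force $\tilde c(\tau)=0$: the hypothesis kills $\Gamma_K$-invariants, not invariants under the smaller groups $\Gamma_K\cap\tau\Gamma_K\tau^{-1}$. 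None of your proposed repairs can work as stated: there is no open (normal or not) subgroup of $\Gamma_{\K}$ contained in $\Gamma_K$, since the decomposition group $\Gamma_K$ has infinite index in $\Gamma_{\K}$; the normal core of $\Gamma_K$ is not open and gives no quotient cohomology controlled by $H^0(K,\Tp(M))$; and for $N$ the kernel of the representation, $H^1(\Gamma_{\K}/N,\Tp(M)^N)$ bears no evident relation to the kernel of restriction to the non-normal subgroup $\Gamma_K$. Worse, the purely formal statement you have reduced to --- that $\mathrm{res}\colon H^1(G,A)\to H^1(H,A)$ is injective for a closed subgroup $H$ of a profinite group $G$ whenever $A^H=0$ --- is false: take $G=S_3$, $H$ generated by a transposition, and $A=\F_3$ with the sign action; then $A^H=0$ and $H^1(H,A)=0$, yet $H^1(G,A)\cong\F_3\neq 0$. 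So no amount of cocycle manipulation alone can finish the argument; some input beyond the abstract setup (or a quotable exact sequence) is indispensable.

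This is also where your attempt diverges from the paper. The paper's proof is a two-line citation: it invokes the inflation--restriction exact sequence from Silverman's Appendix B.2 to place the kernel of $H^1(\K,\Tp(M))\to H^1(K,\Tp(M))$ inside an $H^1$ of a quotient with coefficients in an invariant module, and then kills that module because it embeds into $H^0(K,\Tp(M))=0$; no cocycle analysis is performed. In fairness, the obstruction you correctly identified --- that $\Gamma_K$ is not normal in $\Gamma_{\K}$, so inflation--restriction for the pair $(\Gamma_{\K},\Gamma_K)$ is not literally available --- is exactly what the paper's citation glosses over, so your instinct about where the difficulty sits is good; but your proposal does not overcome it, and as written it leaves the decisive step unproved.
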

\begin{proof}
From \cite[B.2]{silverman_arithmetic_2009}, we can derive the exact sequence     \[
    0\to H^1(\Gamma_{\K}/\Gamma_K, H^0(\K,\Tp(M)))\to H^1(\K,\Tp(M))\to H^1(K,\Tp(M)).
    \]
We only need to show that $H^0(\K,\Tp(M))=0$ which follows from
\[
H^0(\K,\Tp(M))\into H^0(K,\Tp(M))=0.
\]
\end{proof}

Notice that the exponential map $\DdR(\tilde{\Vp}(M))/\DdR(\tilde{\Vp}(M))^0\xrightarrow{\exp_{\sG^0}} H^1_e(K,\tilde{\Vp}(M))$ and the connecting map $\sG^0(\cO_K)\to H^1_e(K,\tilde{\Vp}(M))$ for $\sG^0$ are both injective, since $\sG^0$ is connected.
\begin{defn}\label{def tilde exp}
    Define the map $\tilde{exp_{\sG}}$ as the composition
    \[
    \DdR(\tilde{\Vp}(M))/\DdR(\tilde{\Vp}(M))^0\xrightarrow{\exp_{\sG^0}} H^1_e(K,\tilde{\Vp}(M))\to H^1_e(K,\tilde{\Vp}(M))/\sG^0(\cO_K),
    \]
    and set $\hp(M,K):=\ker(\tilde{\exp_{\sG}})\otimes_{\Z}\Q$.
    We have the following commutative diagram by construction:
    \begin{equation*}
\begin{tikzcd}
{\hp(M,K)} \arrow[r, hook] \arrow[d, hook] & \sG^0(\cO_K)\otimes\Q \arrow[d, hook] \\
\Lie(G)_K \arrow[r, "\exp"]                & {H^1_e(K,\tilde{\Vp}(M))}            
.\end{tikzcd}
    \end{equation*}
\end{defn}

For the $\Q$-structure $\hp(M,K)$, we have the following
\begin{prop}\label{fundamental equality for hp in chapter 5}
    We have 
    \[
\hp(M,K)=\ker(\tilde{\exp_{\sG}})\otimes_{\Z}\Q=\im(\log_{\sG(\cO_K)})\otimes_{\Z}\Q.
    \]
\end{prop}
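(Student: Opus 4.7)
The plan is to prove the two displayed equalities by first establishing the equality as subsets of $\Lie(G)_K$ (i.e.\ before tensoring with $\Q$), namely $\ker(\tilde{\exp_{\sG}}) = \im(\log_{\sG^0(\cO_K)})$, and then reducing $\im(\log_{\sG(\cO_K)}) \otimes \Q$ to $\im(\log_{\sG^0(\cO_K)}) \otimes \Q$ via the connected--\'etale sequence of $\sG$. Throughout, identify $\Lie(G)_K$ with $\DdR(\tilde{\Vp}(M))/\DdR(\tilde{\Vp}(M))^0$ using the crystalline--de~Rham comparison together with $\Tp(\sG^0)=\tilde{\Tp}(M)$, which is the identification baked into the definition of $\tilde{\exp_{\sG}}$.

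The core step uses the commutative triangle of diagram~\eqref{6.2.8} applied to the connected p-divisible group $\sG^0$, which gives the factorisation $\delta_{\sG^0} = \exp_{\sG^0}\circ \log_{\sG^0}$ of the Kummer connecting map $\sG^0(\cO_K) \to H^1_e(K,\tilde{\Vp}(M))$. Since $\sG^0$ is connected, \cref{proposition 6.2.2} shows that $\exp_{\sG^0}\colon \Lie(G)_K \to H^1_e(K,\tilde{\Vp}(M))$ is a bijection. For $v\in \Lie(G)_K$, the relation $\tilde{\exp_{\sG}}(v)=0$ means $\exp_{\sG^0}(v) \in \im\delta_{\sG^0}$, so $\exp_{\sG^0}(v)=\exp_{\sG^0}(\log_{\sG^0}(g))$ for some $g\in \sG^0(\cO_K)$, and injectivity of $\exp_{\sG^0}$ then forces $v=\log_{\sG^0}(g)$. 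The converse is the immediate direction: any $v=\log_{\sG^0}(g)$ satisfies $\exp_{\sG^0}(v)=\delta_{\sG^0}(g)\in \im\delta_{\sG^0}$, hence $v\in\ker(\tilde{\exp_{\sG}})$.

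It remains to upgrade the resulting equality $\ker(\tilde{\exp_{\sG}})=\im(\log_{\sG^0(\cO_K)})$ to $\im(\log_{\sG(\cO_K)})$ after tensoring with $\Q$. The connected--\'etale exact sequence
\[
0\to \sG^0(\cO_K) \to \sG(\cO_K) \to \sG^{\et}(\cO_K)\to 0
\]
together with the fact that $\sG^{\et}(\cO_K)=\sG^{\et}(k)$ is a subgroup of $(\Q_p/\Z_p)^r$ shows that $\sG^{\et}(\cO_K)$ is $p$-primary torsion, so $\sG^{0}(\cO_K)\otimes\Q \cong \sG(\cO_K)\otimes\Q$. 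By functoriality of $\log$ together with $\Lie(\sG^{\et})=0$, the composite $\sG^0(\cO_K)\hookrightarrow \sG(\cO_K)\xrightarrow{\log_{\sG}} \Lie(\sG)_K = \Lie(G)_K$ equals $\log_{\sG^0}$; and for any $g\in\sG(\cO_K)$ some $p^n g$ lies in $\sG^0(\cO_K)$, so $\log_{\sG}(g)$ lies in $\tfrac{1}{p^n}\im(\log_{\sG^0})$, which rationally is the same $\Q$-subspace. Hence $\im(\log_{\sG(\cO_K)})\otimes\Q = \im(\log_{\sG^0(\cO_K)})\otimes\Q$, and combining yields the proposition.

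I do not foresee a real obstacle: the argument is essentially bookkeeping around the commutative triangle of \eqref{6.2.8} plus a standard reduction from $\sG$ to its connected component. The only point that requires a moment's care is the compatibility of the target identifications --- that the $\Lie(G)_K$ appearing as source of $\tilde{\exp_{\sG}}$ is the same $\Lie(G)_K$ appearing as target of the classical p-adic logarithm $\log_{\sG^0}$ --- but this is exactly the content of \eqref{6.2.8}, whose construction in the previous section already incorporated the needed identification via $\DdR(\tilde{\Vp}(M))/\DdR(\tilde{\Vp}(M))^0 = \Lie(G)_K$.
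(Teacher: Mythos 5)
Your proposal is correct and follows essentially the same route as the paper: the equality $\ker(\tilde{\exp_{\sG}})=\im(\log_{\sG^0})$ is read off from the commutative triangle \eqref{6.2.8} for the connected group $\sG^0$ (using injectivity of $\exp_{\sG^0}$ from \cref{proposition 6.2.2}), and then $\im(\log_{\sG})\otimes\Q=\im(\log_{\sG^0})\otimes\Q$ is obtained by passing to a $p$-power multiple landing in $\sG^0(\cO_K)$. You merely spell out the torsion nature of $\sG^{\et}(\cO_K)$ and the compatibility of the identifications, which the paper leaves implicit.
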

\begin{proof}
    First one can establish the equality $\ker(\tilde{\exp_{\sG}})=\im(\log_{\sG^0})$ from the diagram \ref{6.2.8} for $\sG^0$. Thus, it suffices to show that $\im(\log_{\sG})\otimes\Q=\im(\log_{\sG^0})\otimes\Q$. Assume that $x$ belong to $\sG(\cO_K)$. Then, there exists some positive integer $n$ such that $p^nx\in\sG^0(\cO_K)$ and $\log_{\sG}(x)=\frac{\log_{\sG^0}(p^nx)}{p^n}$, as indicated in \cref{def logarithm p-divisible group}. Therefore, $\log_{\sG}(x)$ belongs to $\im(\log_{\sG^0})\otimes\Q$. This completes the proof.
\end{proof}
Notice that $\hp(M,K)$ is compatible with extensions $K'$ of $K$. We have the following definition.
\begin{defn}\label{definition 3.6.3}
 We define $\hp(M,K')$ to  be the extension of $\hp(M,K)$ to $\cO_{K'}$ i.e. $\hp(M,K')=\im(\log_{\sG(\cO_{K'})})\otimes\Q$.
 When $M$ is a 1-motive over a number field $\K$ contained in $K$, we define
 \[
 \hp(M,\K):=\im(\left.\log\right|_{\sG(\K)}:\sG(\cO_K)\to\Lie(G)_K)\otimes\Q,
 \]
 where $\sG(\K)=\sG(\cO_K)\cap G(\K)$, following the notation in \cref{def: algebraic points on G}. Finally, we define
 \[
 \hp(M)=\hp(M,\bar{\K}):=\colim \hp(M,\K'),
 \]
 where the direct limit is taken over all finite extensions $\K'$ of $\K$.
\end{defn}
Let $\tilde{V}:=\tilde{\Vp}(M)=\Vp(\sG^0)$.
Let $x\in H^1(K,\tilde{V})=\Ext^1(\Q_p,\tilde{V})$. The point $x$ corresponds to an exact sequence 
\begin{equation}\label{3.5.13}
0\to \tilde{V}\to E_x\to \Q_p\to 0
\end{equation}
of $\Gamma_K$ p-adic representations. Via the canonical isomorphism $H^1_e(K,\tilde{V})\cong\Ext^1(1_{FD},D)$ induced by \ref{isomorphism for h1 with H1(K,T)}, where $D$ is the covariant Dieudonn\'e module associated with $\sG^0$, the extension \ref{3.5.13} corresponds to an extension of the unit Dieudonn\'e module $1_{FD}$ by $D$ in the category of filtered Dieudonn\'e modules if and only if $x$ lies in $H^1_e(K,\tilde{V})$. In other words, for any $x\in H^1(K,\tilde{V})$ the corresponding extension $E_x$ is a crystalline p-adic representation if and only if $x\in H^1_e(K,\tilde{V})$.

By \cref{kernel Fontaine map phi}, we know that the map $\phi_M\left.\right|_{\tilde{V}}:\tilde{V}\to \Lie(G)\otimes\C_p(1)$ is injective. We obtain the diagram 
\begin{equation}\label{3.5.14}
\begin{tikzcd}
0 \arrow[r] & \tilde{V} \arrow[d, "\cong"] \arrow[r] & E_x \arrow[d, hook] \arrow[r]   & \Q_p \arrow[d, hook] \arrow[r]                            & 0 \\
0 \arrow[r] & \phi_M(\tilde{V}) \arrow[r]            & \Lie(G)\otimes\C_p(1) \arrow[r] &\displaystyle \frac{\Lie(G)\otimes\C_p(1)}{\phi_M(\tilde{V})} \arrow[r] & 0
\end{tikzcd}
\end{equation}
Passing to Galois cohomology, we get an exact sequence
\begin{align*}
H^0\left(K,\Lie(G)\otimes\C_p(1)\right)\to H^0\left(K,\frac{\Lie(G)\otimes\C_p(1)}{\phi_M(\tilde{V})}\right)\to H^1(K,\phi_M(\tilde{V}))\\=H^1(K,\tilde{V})\to H^1(K,\Lie(G)\otimes\C_p(1)).    
\end{align*}

We have $H^i(K,\Lie(G)\otimes\C_p(1))=0$ for all integers $i$, therefore
\begin{equation}\label{3.5.15}
H^0\left(K,\frac{\Lie(G)\otimes\C_p(1)}{\phi_M(\tilde{V})}\right)\cong H^1\left(K,\tilde{V}\right).    
\end{equation}
We have a diagram similar to \ref{3.5.14} and a canonical isomorphism similar to \ref{3.5.15} for any finite extension $K'$ of $K$. By \cref{3.5.2}, we know that the map $H^1(\K,\tilde{V})\to H^1(K,\tilde{V})$ is injective.

We now present the following definition, inspired by \cite[Definition 3.7]{iovita_p_2022}.
\begin{defn}\label{definition algebraic crystalline points}
    Let $\displaystyle x\in \frac{\Lie(G)\otimes\C_p(1)}{\phi_M(\tilde{V})}$.
    \begin{enumerate}
        \item We say that $x$ is an algebraic point over $K$, if the orbit $\Gamma_K.x\subset\frac{\Lie(G)\otimes\C_p(1)}{\phi_M(\tilde{V})}$ is finite. In other words, if there exists a finite extension $K'$ of $K$ such that $$\displaystyle x\in\left(\frac{\Lie(G)\otimes\C_p(1)}{\phi_M(\tilde{V})}\right)^{\Gamma_{K'}}.$$
        \item We say that $x$ is an algebraic point over $\K$, if there exists a finite extension $\K'$ of $\K$ such that $x$ belongs to the image of 
        \[
        H^1(\K',\tilde{V})\to H^1(\K'_p,\tilde{V})\cong \left(\frac{\Lie(G)\otimes\C_p(1)}{\phi_M(\tilde{V})}\right)^{\Gamma_{\K'_p}},
        \]
        where $\K'_p$ is the p-adic completion of $\K'$.
        \item We say that $x$ is a crystalline point, if $\alpha^{-1}(x\Q_p)$ is a crystalline representation, where \[\alpha:\Lie(G)\otimes\C_p(1)\to\frac{\Lie(G)\otimes\C_p(1)}{\phi_M(\tilde{V})}\] is the quotient map. 
    \end{enumerate}
\end{defn}

\begin{thm}\label{theorem classification for algebraic poin in image of logarithm}
Let $\tilde{V}:=\tilde{\Vp}(M)$ and $\operatorname{h_e}(\tilde{V}):=\colim H^1_e(K,\tilde{V})$, where the colimit runs over all finite extensions of $K$.
\begin{enumerate}

\item There is a one-to-one correspondence between $\operatorname{h_e}(\tilde{V})$ and crystalline points in $(\Lie(G)\otimes_{\cO_K}\C_p(1))/\phi(\tilde{V})$ that are algebraic over $K$.

\item Every point in $\hp(M)$ corresponds to a crystalline point in $(\Lie(G)\otimes_{\cO_K}\C_p(1))/\phi(\tilde{V})$ which is algebraic over $\K$.

\end{enumerate}
\end{thm}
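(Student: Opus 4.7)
The key technical tool is the canonical isomorphism \eqref{3.5.15}, which I would first upgrade to any finite extension $K'/K$: the injectivity of $\phi_M|_{\tilde V}$ from \cref{kernel Fontaine map phi} persists, and Tate's vanishing $H^i(K', \Lie(G)\otimes \C_p(1)) = 0$ holds over $K'$, so the same argument produces a canonical isomorphism
\[
\Theta_{K'}\colon H^1(K',\tilde V)\;\xrightarrow{\sim}\; H^0\!\left(K',\,\tfrac{\Lie(G)\otimes \C_p(1)}{\phi_M(\tilde V)}\right).
\]

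For part (1), I would pass to the colimit over all finite extensions $K'/K$. An element of $(\Lie(G)\otimes\C_p(1))/\phi_M(\tilde V)$ is algebraic over $K$ in the sense of \cref{definition algebraic crystalline points}(1) precisely when it lies in $H^0(K',\cdot)$ for some finite $K'/K$, so $\colim H^0(K',\cdot)$ is exactly the set of algebraic-over-$K$ points, and the colimit of the $\Theta_{K'}$ identifies it with $\operatorname{h_e}(\tilde V) = \colim H^1(K',\tilde V)$ (restricted to $H^1_e$). To match $H^1_e$ with crystalline points, note that by the construction in diagram \eqref{3.5.14} the representation $\alpha^{-1}(\Theta_{K'}(x)\Q_p)$ from \cref{definition algebraic crystalline points}(3) is canonically isomorphic to the extension $E_x$ of $\Q_p$ by $\tilde V$ representing the class $x$; and $E_x$ is crystalline if and only if $x\in H^1_e(K',\tilde V)$, by the identification in \eqref{isomorphism for h1 with H1(K,T)} of $H^1_e$ with $\Ext^1(1_{FD},D)$ in the abelian category of filtered Dieudonn\'e modules, together with $T(-)$ being fully faithful onto crystalline representations. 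Taking the colimit yields the bijection in part (1).

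For part (2), let $x \in \hp(M) = \colim_{\K'/\K}\hp(M,\K')$ and write $K' = \K'_p$ for the completion at a fixed prime above $p$. Since $M$ has good reduction at $p$, $\sG$ extends to $\cO_{\K',(p)}$ and the residue field at $p$ of $\K'$ is finite, so $\sG^{\text{\'et}}(\cO_{K'})$ is finite; using \cref{fundamental equality for hp in chapter 5}, after enlarging $\K'$ and clearing denominators, we may assume $x = \log_{\sG^0}(y)$ for some $y\in\sG^0(\cO_{K'})\cap G(\K')$. The Kummer exact sequence
\[
0\to \Tp(\sG)\to \cB_\sG\to \sG(\cO_{\bar{\K'}})\to 0,
\]
is $\Gamma_{\K'}$-equivariant and yields a connecting class $\delta_{\K'}(y)\in H^1(\K',\Tp(\sG))$. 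By naturality of the Kummer sequence under base change $\K'\to K'$, combined with diagram \eqref{6.2.8} applied to the connected p-divisible group $\sG^0$ over $\cO_{K'}$, the restriction of $\delta_{\K'}(y)$ to $K'$ refines to a class in $H^1_e(K',\tilde V)$ that equals $\exp_{\sG^0}(\log_{\sG^0}(y))=\exp_{\sG^0}(x)$. Part (1) then identifies this with the image $\bar x$ of $x$ in $(\Lie(G)\otimes\C_p(1))/\phi_M(\tilde V)$, so $\bar x$ is crystalline; and the fact that $\bar x$ descends to the global class $\delta_{\K'}(y)$ shows that $\bar x$ is algebraic over $\K$ in the sense of \cref{definition algebraic crystalline points}(2).

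The main obstacle is the bookkeeping around the connected--\'etale decomposition of $\sG$: this decomposition exists canonically only after completing at $p$, so the subrepresentation $\tilde V \subseteq \Vp(\sG)$ and the global class in $H^1(\K',\tilde V)$ must be produced from the $\Tp(\sG)$-valued Kummer class by a careful descent argument using the finiteness of $\sG^{\text{\'et}}(\cO_{K'})$. Once this identification is in place, the remaining verifications are formal consequences of the functoriality of the Kummer sequence, the logarithm map, and the isomorphism $\Theta_{K'}$.
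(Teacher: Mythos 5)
Your argument is essentially the paper's own proof: part (1) rests on the isomorphism \eqref{3.5.15} taken over finite extensions of $K$ together with the equivalence between $E_x$ being crystalline and $x\in H^1_e$ (via \eqref{isomorphism for h1 with H1(K,T)}), and part (2) sends $\hp(M)$ into $H^1_e(\K'_p,\tilde{V})$ through the connecting map and diagram \eqref{6.2.8} and then invokes part (1), exactly as the paper does. The only slip is the claim that $\sG^{\et}(\cO_{K'})$ is finite — in general it is only a torsion group (e.g.\ $\Q_p/\Z_p$ for the constant p-divisible group) — but torsion is all your reduction to $\sG^0$ requires, and that reduction is precisely \cref{fundamental equality for hp in chapter 5}, so nothing in the argument breaks.
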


\begin{proof}
    \begin{enumerate}
        \item Let $x\in \operatorname{h_e}(\tilde{V})=\colim H^1_e(K,\tilde{V})$. There exists a finite extension $K'$ of $K$ such that $x\in H^1_e(K',\tilde{V})$. Via the isomorphism \ref{3.5.15}, $x$ maps to an element $y\in\Lie(G)\otimes\C_p(1)/\phi_M(\tilde{V})$, which is algebraic over $K$, as it lies in $$(\Lie(G)\otimes\C_p(1)/\phi_M(\tilde{V}))^{\Gamma_{K'}}.$$
        We consider the diagram \ref{3.5.14} associated with $x$. Since $x$ is in $H^1_e(K,\tilde{V})$, we know that $E_x$ is crystalline. Moreover, $\alpha^{-1}(y\Q_p)=E_x$ by the definition. Therefore, $y$ is a crystalline point which is algebraic over $K$.

        Conversely, assume that $y\in (\Lie(G)\otimes_{\cO_K}\C_p(1))/\phi(\tilde{\Vp}(M))$ is a crystalline point that is algebraic over $K$. Thus, $y$ lies in $(\Lie(G)\otimes_{\cO_K}\C_p(1))/\phi(\tilde{\Vp}(M))^{\Gamma_{K'}}$ for some finite extension $K'$ of $K$. Again, via the isomorphism $\ref{3.5.15}$, $y$ maps to an element $x$ in $H^1(K',\tilde{V})$. Since $\alpha^{-1}(y\Q_p)=E_x$ is crystalline, this implies that $x$ lies in $H^1_e(K',\tilde{V})$.

        \item Every point $x\in\hp(M,\bar{\K})$ corresponds to a point in $y\in\sG(\K')\subset H^1(\K',\tilde{V})\into H^1(\K'_p,\tilde{V})$ (see \cref{def tilde exp}), where $\K'$ is a finite extension of $\K$. However, due to the diagram \ref{6.2.8}, $y$ belongs to $H^1_e(\K'_p,\tilde{V})$. Thus the result follows from part (1).
    \end{enumerate}
\end{proof}

The $\Q$-structure $\hp(M):=\hp(M,\bar{\K})$ is the main ingredient in the construction of our p-adic periods in the next chapter.

\clearpage
\chapter{P-adic periods of 1-motives}
The classical period numbers arise from the Grothendieck's comparison isomorphism between the de Rham cohomology of a smooth variety $X$ over a subfield $K\into\C$ with the singular cohomology of the analytification $X^{an}/\C$. The entries of this isomorphism with respect to all choice of $\barQ$-bases are called periods of $X$ which are indeed induced by integrals of global algebraic differential forms $\omega$ on $X$ over a differentiable relative chains $\sigma$.

The period conjecture asserts that there are no relations among the classical periods other than obvious relations  (for more details, see \cite{huber2017periods} and \cite{huber2018galois}). There are two primary approaches to studying these periods: investigating their algebraic relations or exploring the linear relations among these periods. For studying the algebraic relations, a neutral Tannakian category is required to construct the period algebras. In this context, the classical period conjecture predicts that the transcendental degree of the period algebra associated with a motive $M$ is equal to the dimension of the motivic Galois group $G(M)$. 

To study only the linear relations of periods, we do not need a rigid tensor category in this setting. In \cite{huber2022transcendence}, it was demonstrated that the period conjecture holds for the isogeny category of Deligne 1-motives over $\barQ$, using the celebrated Wustholz analytic subgroup theorem \cite{wustholz1989algebraische}. In fact, their results show that the Kontsevich-Zagier period conjecture holds for all smooth curves over $\barQ$.

The primary object of this chapter is to present and prove a p-adic version of the period conjecture as well as a p-adic version of the subgroup theorem for 1-motives with good reductions. We develop a formalism that enables the study of periods arising from the integration pairings introduced in Chapter 3. This formalism will encompass classical periods as well. It yields linear spaces of periods, and, as will be demonstrated at the end of this chapter, it provides insights into their vanishing behaviour and describes all the linear relations among them. Drawing inspiration from \cite{hormann2021note}, we introduce period conjectures within this framework, considering different depths of formal relations among periods.

Finally, we will demonstrate that the p-adic integration pairing (\cref{theorem p-adic integration pairing for M}) that we introduced in Chapter 3 induces three distinct pairings. By applying a p-adic version of the subgroup theorem for 1-motives, which we will establish in \cref{p-adic subgroup theorem for Fontaine pairing}, we can prove the period conjectures at depths 1 and 2 for these pairings, as outlined in \cref{level 2 period cinjecture for Hpp} and \cref{depth 2 and 1 period cinjecture for Hp and hp}.

\section{Formalism of periods}

\begin{defn}[Realization category]
Let $K$ and $L$ be two fields containing $\Q$, and $\barQ$ respectively. Let $B$ be an algebra over both $K$ and $L$. The realisation category with coefficients in $B$, denoted $\Mod^{B}_{K,L}$, is a category with the following information:
\begin{enumerate}
    \item[(i)] Objects are triples $(H_K,H_L,\varpi)$ where $H_K$ and $H_L$ are finite-dimensional vector spaces over $L$ and $K$ respectively, and $\varpi\colon H_K\otimes_K B\to H_L\otimes_L B$ is a $B$-linear isomorphism.
    \item[(ii)] Morphisms are pairs $\varphi:=(\varphi_K,\varphi_L)$ where $\varphi_L\colon H_L\to H'_L$ is a $L$-linear map, and $\varphi_K\colon H_K\to H'_K$ is a $K$-linear map such that the diagram
    \begin{equation*}
    \begin{tikzcd}
    H_{K}\otimes_K B \arrow[r, "\varpi"] \arrow[d, "\varphi_{K}\otimes 1_B"] & H_{L}\otimes_{L}B \arrow[d, "\varphi_{L}\otimes 1_{B}"] \\
    H'_{K}\otimes_K B \arrow[r, "\varpi' "]                    & H'_{L}\otimes_{L}B             
    \end{tikzcd}
    \end{equation*}
commutes.
\end{enumerate}
\end{defn}
\begin{remark}
The realisation category $\Mod^B_{K,L}$ is an abelian rigid tensor category. We have tensor structure, dual, and identity object as follows:
\begin{itemize}
    \item For $H=(H_{K},H_{L},\varpi)$ and $H'=(H'_{K}, H'_{L}, \varpi')$ we define $$H\otimes H':=(H_{K}\otimes_{K} H'_{K}, H_{L}\otimes_{L} H'_{L}, \varpi\otimes\varpi').$$
    \item We have duals $H^{\vee}=(H^{\vee}_{K},H^{\vee}_{L},\varpi^{\vee})$, where $H^{\vee}_{L}=\Hom(H_{L},L)$ is the dual $L$-vector space, $H^{\vee}_{K}=\Hom(H_{K},K)$ is the dual $K$-vector space, and $\varpi^{\vee}\colon H^{\vee}_{L}\to H^{\vee}_{K}\otimes_{B}L$ is given by $\varpi^{\vee}(f)=f\circ\varpi^{-1}$ under the identification $(H_{K}\otimes_{B}L)^{\vee}\cong H^{\vee}_{K}\otimes_{B}L$. Clearly $(H^{\vee})^{\vee}=H$.
    \item The identity object in $\Mod^B_{K,L}$ is $\mathbbm{1}=(K,L,1)$. 
\end{itemize}
The category $\Mod^B_{K,L}$ admits an internal $\ihom$ structure. We have $H^{\vee}=\ihom(H,\mathbbm{1})$.
\end{remark}

\begin{defn}\label{period pairing for T}
Let $\cC$ be an additive category, and \[T:\cC\to \Mod^B_{K,L},\, X\mapsto(T_K(X),T_L(X),\varpi_X)\] an additive functor. Let $T\ve_L:\cC\to\Vect(L)$ denote the functor $X\mapsto T_L(X)\ve$, and let $U$ be an algebraic extension of $\Q$ that makes $B$ a $U$-algebra. The additive functor $\cH=(F,G):\cC\to\Vect(\Q)\times\Vect(U)$ is called a period pairing for $T$, if
\begin{enumerate}
\item $F$ is an additive covariant exact functor and $G$ is an additive contravariant exact functor.
    \item There exist natural embeddings $F\into T_K\otimes_K B$, and $G\into T\ve_L\otimes_L B$, i.e., for every object $X$ of $\cC$, $F(X)$ and $G(X)$ are $\Q$-linear and $U$-linear subspaces of $T_K(X)\otimes_K B$ and $T_L\ve(X)\otimes_L B$, respectively. Moreover, the diagrams
    \begin{equation*}
    \begin{tikzcd}
    F(X) \arrow[r, hook] \arrow[d, "F(f)"] & T_K(X)\otimes_K B \arrow[d, "T_{K}(f)\otimes B"] & &  G(X) \arrow[r, hook] \arrow[d, "G(f)"] & T_L\ve(X)\otimes_L B \arrow[d, "T\ve_{L}(f)\otimes B"] \\
    F(X') \arrow[r, hook]                    & T_{K}(X')\otimes_K B & &   G(X') \arrow[r, hook]                    & T_{L}\ve(X')\otimes_L B          
\end{tikzcd}
    \end{equation*}
    commute for any morphism $X\xrightarrow{f}X'$ in $\cC$.
\end{enumerate}
\end{defn}

\begin{defn}\label{Space of H-periods}
Let $\cH=(F,G)$ be a period pairing for $T:\cC\to\Mod_{K,L}^B$.
\begin{enumerate}
    \item We define the $\cH$-periods of $X$ in $\cC$ as \[\cP_\cH(X):=\im(F(X)\times G(X)\to B), \, (\nu,\gamma)\mapsto \gamma(\varpi_X(\nu)), \]
    where we view $\nu$ and $\gamma$ in $T_K(X)$ and $T_{L}\ve(X)$ respectively. The above pairing is denoted by $\langle\nu,\gamma\rangle_\cH:=\gamma(\varpi_X(\nu))$.
    \item The space of $\cH$-periods $\cP_\cH\langle X\rangle$ is the $U$-vector space generated by $\cP_\cH(X)$.
    \item If $\cD$ is a full additive subcategory of $\cC$, we define the $\cH$-periods of $\cD$ to be
    \[\cP_\cH(\cD):=\bigcup_{X\in\cD}\cP_\cH(X). \]
\end{enumerate}
\end{defn}
\begin{remark}
\begin{enumerate}
    \item Assume that $F(X)$, and $G(X)$ are finite-dimensional. The $\cH$-period space $\cP_\cH\langle X\rangle$ is indeed the $U$-vector space generated by the entries of all matrix $M_{S,S'}(X)$ where $S$ is a $\Q$-basis for $F(X)$, $S'$ is a $U$-basis for $G(X)\ve$ and the matrix $M_{S,S'}(X)$ is a vertical rectangular matrix whose entries are the coordinates of $S$ with respect to $S'$ through the $B$-isomorphism $\varpi_X$.
    \item The set $\cP_\cH(\cC)$ only depends on the objects in the image of $F$ and $G$.
    \item  We have two types of obvious relations between the $\cH$-periods of $X$ in $\cC$:
        \begin{itemize}
            \item Bilinearity: \[\langle a_1\nu_1+a_2\nu_2,b_1\gamma_1+b_2\gamma_2\rangle=a_1b_1\langle \nu_1,\gamma_1\rangle+a_1b_2\langle \nu_1,\gamma_2\rangle+a_2b_1\langle \nu_2,\gamma_1\rangle+a_2b_2\langle \nu_2,\gamma_2\rangle\] where $a_1,a_2\in\Q$, $b_1,b_2\in U$, $\nu_1,\nu_2\in F(X)$, and $\gamma_1,\gamma_2\in G(X)$.
            \item Functoriality: \[\langle f_*\nu,\gamma\rangle=\langle \nu,f^*\gamma\rangle\] where $f\colon X\to Y$ is a morphism in $\cC$, $\nu\in F(X)$, and $\gamma\in G(Y)$.

        \end{itemize}

This motivates the definition of the space of formal $\cH$-periods.
    \end{enumerate}
\end{remark}

\begin{defn}\label{space of formal H-periods}
\begin{enumerate}
\item Let $\cH=(F,G)$ be a period pairing for $T:\cC\to\Mod_{K,L}^B$. The space of formal $\cH$-periods $\tilde{\cP}_\cH(\cC)$ is 
\[\displaystyle\tilde{\cP}_\cH(\cC):=(\bigoplus_{X\in\cC}F(X)\otimes_{\Q}G(X))/\text{functoriality} \]

\item The $\cH$-period map is $\text{eval}_p\colon\tilde{\cP}_\cH(\cC)\to\cP_\cH(\cC)$ which is induced by $(\nu,\gamma)\mapsto \gamma(\varpi(\nu))$ where $(\nu,\gamma)\in F(X)\times G(X)$.

\item We say that all relations among $\cH$-periods of $\cC$ induced by bilinearlity and functoriality if  the evaluation map $\tilde{\cP}_\cH(\cC)\to\cP_\cH(\cC)$ is injective.
\end{enumerate}
Note that $\cP_\cH(\cC)$ is a subspace of $B$, and the evaluation map $\tilde{\cP}_\cH(\cC)\to\cP_\cH(\cC)$ is clearly surjective. 
\end{defn}
The following definition is inspired by \cite{hormann2021note}.
\begin{defn}\label{space of formal periods at depth i}
    Let $\cH=(F,G)$ be a period pairing for $T:\cC\to \Mod^B_{K,L}$, and $X\in\cC$. The space of formal $\cH$-periods of depth $i$ of $X$, denoted $\tilde{\cP}_\cH^i(X)$, is the quotient of the vector space $F(X)\otimes_{\Q}G(X)$ modulo the subspace generated by
    \[\sum^m_{j=1}\nu_j\otimes\gamma_j\]
    for every exact sequence 
    \[
    0\to X'\to X^m\to X''\to 0
    \]
    with $\nu_j\in F(X')$, and $\gamma_j\in G(X'')$ such that $m\leq i$.

    We also define 
    \[
    \tilde{\cP}^{\infty}_\cH(X):=\colim \tilde{\cP}^i_\cH(X), \text{ and } \tilde{\cP}^{i}_\cH(\cC):=\colim_{X\in\cC}\tilde{\cP}^{i}_\cH(X).
    \]
\end{defn}

We denote the class of $\sum^m_{j=1}\nu_j\otimes\gamma_j$ in $\tilde{\cP}^i_{\cH}(X)$ by   \[\displaystyle(\sum^m_{j=1}\nu_j\otimes\gamma_j)_{\tilde{\cP}^i_{\cH}(X)}.\]

\begin{prop}\label{dimension prop}
Let $(\cC, T, \cH)$ be as above.
\begin{enumerate} 
\item $\cP_\cH(\cC)$ is a $U$-vector space. In particular, $\cP_\cH\langle X\rangle=\cP_\cH(\langle X\rangle)$, where $\langle X\rangle$ is the full additive subcategory of $\cC$ generated by $X$.

\item If $\cC$ is abelian, then $\tilde{\cP}_\cH\langle X\rangle$ is generated by the elements of $F(X)\otimes_{\Q}G(X)$ as a $U$-vector space, where here $\langle X\rangle$ is the full abelian subcategory of $\cC$ generated by $X$. We denote $\tilde{\cP}_\cH(X):=\tilde{\cP}_\cH(\langle X\rangle)$. In particular, $\dim_{U}\tilde{\cP}_\cH(X)\leq \max(\dim_{\Q}F(X),\dim_{U}G(X))^2$.

\item If $\cC$ is abelian, then $\tilde{\cP}_\cH^i\langle X\rangle$ is generated by the elements of $F(X)\otimes_{\Q}G(X)$ as a $U$-vector space.

\item If $\cC$ is abelian, then $\cP_\cH\langle X\rangle=\cP_\cH(\langle X\rangle)$.

\end{enumerate}
\end{prop}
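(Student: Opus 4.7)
The plan is to handle (1) first, then use it to bootstrap the abelian-case statements (2)--(4). For (1), I would check that $\cP_\cH(\cC)\subseteq B$ is closed under addition and under scalar multiplication by $U$. Closure under addition uses biproducts in the additive category $\cC$: given periods $\alpha_\ell=\langle\nu_\ell,\gamma_\ell\rangle_{X_\ell}$ for $\ell=1,2$, form $X=X_1\oplus X_2$ and transfer the $\nu_\ell$ along the canonical inclusions $\iota_\ell:X_\ell\to X$ and the $\gamma_\ell$ along the projections $\pi_\ell:X\to X_\ell$; functoriality gives $\alpha_1+\alpha_2=\langle \iota_{1*}\nu_1+\iota_{2*}\nu_2,\pi_1^*\gamma_1+\pi_2^*\gamma_2\rangle_X\in\cP_\cH(X)$. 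Closure under $U$-scalars uses that $G(X)$ is a $U$-module: $u\cdot\langle\nu,\gamma\rangle=\langle\nu,u\gamma\rangle\in\cP_\cH(X)$. Applying the same reasoning inside $\langle X\rangle$ (whose objects are direct summands of $X^n$) shows $\cP_\cH(\langle X\rangle)=\cP_\cH\langle X\rangle$.

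For (2), I would exploit the structure of the full abelian subcategory: every $Y\in\langle X\rangle$ fits in a presentation $Y=Y_1/Y_0$ with $Y_0\subseteq Y_1\subseteq X^n$ for some $n$. Exactness of $F$ yields a surjection $F(Y_1)\onto F(Y)$ and an inclusion $F(Y_1)\hookrightarrow F(X^n)=F(X)^{\oplus n}$; exactness of the contravariant $G$ yields an inclusion $G(Y)\hookrightarrow G(Y_1)$ and a surjection $G(X^n)=G(X)^{\oplus n}\onto G(Y_1)$. Given $\nu\otimes\gamma\in F(Y)\otimes_\Q G(Y)$, lift $\nu$ to $\tilde\nu\in F(Y_1)$ and further to $\hat\nu\in F(X^n)$, and lift $\gamma$ (viewed in $G(Y_1)$) to $\hat\gamma\in G(X^n)$. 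Two applications of functoriality identify $\nu\otimes\gamma$ with $\hat\nu\otimes\hat\gamma$ in $\tilde{\cP}_\cH(\langle X\rangle)$. Using the biproduct identity $\mathrm{id}_{X^n}=\sum_j\iota_j\pi_j$ and functoriality with respect to the $\iota_j,\pi_j$, one rewrites $\hat\nu\otimes\hat\gamma=\sum_j\nu_j\otimes\gamma_j$ with $\nu_j,\gamma_j$ the components of $\hat\nu,\hat\gamma$ in $F(X),G(X)$. Hence every element of $\tilde{\cP}_\cH\langle X\rangle$ is a $\Q$-linear (equivalently $U$-linear) combination of elementary tensors from $F(X)\otimes_\Q G(X)$. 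The dimension bound then follows from $F(X)\otimes_\Q G(X)\cong F(X)\otimes_\Q U\otimes_U G(X)$, which has $U$-dimension $\dim_\Q F(X)\cdot\dim_U G(X)\leq\max(\dim_\Q F(X),\dim_U G(X))^2$.

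For (3), the identical reduction applies: $\tilde{\cP}^i_\cH\langle X\rangle$ is a further quotient of $\tilde{\cP}_\cH\langle X\rangle$ by the depth-$i$ relations, so the image of $F(X)\otimes_\Q G(X)$ still generates it over $U$. For (4), precomposing the generation statement of (2) with the evaluation map $\mathrm{eval}_p:\tilde{\cP}_\cH(\cC)\to\cP_\cH(\cC)$ from Definition 4.1.5 shows that $\cP_\cH(\langle X\rangle)$ coincides with the $U$-span of $\cP_\cH(X)$, which is $\cP_\cH\langle X\rangle$ by definition.

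The main subtlety, rather than a genuine obstacle, lies in (2): one must verify that the two functoriality relations (along $Y_1\hookrightarrow X^n$ and along $Y_1\onto Y$) can indeed be composed inside $\tilde{\cP}_\cH(\langle X\rangle)$, since they involve different objects of the subcategory. This reduces to a direct chase in the defining relations of $\tilde{\cP}_\cH(\cD)=\bigl(\bigoplus_{Y\in\cD}F(Y)\otimes_\Q G(Y)\bigr)/\mathrm{functoriality}$ and is straightforward once the lifts $\tilde\nu,\hat\nu,\hat\gamma$ are fixed in the correct order.
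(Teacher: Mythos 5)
Parts (1), (2) and (4) of your proposal are correct and follow essentially the same route as the paper: closure of $\cP_\cH(\cC)$ under addition via biproducts, and the reduction of any pure tensor over a subquotient $Y$ of $X^n$ to a class coming from $F(X)\otimes_{\Q}G(X)$ by two functoriality moves (along $Y_1\onto Y$ and $Y_1\hookrightarrow X^n$) followed by the biproduct decomposition $\mathrm{id}_{X^n}=\sum_j\iota_j\pi_j$. For (4) the paper transfers actual periods along subquotients using exactness of $F$ and $G$, while you push the generation statement of (2) through the evaluation map; the two computations are the same, and your version is fine once one notes (as the framework assumes) that $\mathrm{eval}_p$ kills the functoriality relations.

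The genuine problem is your justification of (3). You assert that $\tilde{\cP}^i_\cH\langle X\rangle$ is ``a further quotient of $\tilde{\cP}_\cH\langle X\rangle$ by the depth-$i$ relations,'' but the relationship runs the other way. The depth-$i$ relations are consequences of functoriality: for an exact sequence $0\to X_1\to X^n\to X_2\to 0$, the remark immediately following the proposition shows that the corresponding generator $\sum_j\nu_j\otimes\gamma_j$ already vanishes modulo functoriality. Hence one has successive surjections
\[
F(X)\otimes_{\Q}G(X)\onto\tilde{\cP}^1_\cH(X)\onto\tilde{\cP}^2_\cH(X)\onto\cdots\onto\tilde{\cP}^{\infty}_\cH(X)=\tilde{\cP}_\cH(X),
\]
exactly as exploited in the proof of \cref{cor: P1=P for period conjecture}: the depth-$i$ space has fewer relations and surjects onto $\tilde{\cP}_\cH$, not conversely. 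Consequently the generation of $\tilde{\cP}^i_\cH\langle X\rangle$ by $F(X)\otimes_{\Q}G(X)$ cannot be obtained by passing to a quotient of the statement in (2); a quotient of the wrong space is being invoked. The correct argument — and what the paper means by ``the proof is similar to (2)'' — is to rerun your subquotient reduction directly at depth $i$: for $Y\in\langle X\rangle$, the identifications carrying a class of $F(Y)\otimes_{\Q}G(Y)$ to one of $F(X)\otimes_{\Q}G(X)$ are the same functoriality moves along $Y_1\onto Y$ and $Y_1\hookrightarrow X^n$ that relate the terms of $\colim_{Y\in\langle X\rangle}\tilde{\cP}^i_\cH(Y)$, and they are available independently of the depth truncation. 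Since you already carried out that reduction in (2), the repair is short, but as written the inference in (3) is backwards.
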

\begin{proof}
(1) It suffices to show that $\cP_\cH(\cC)$ is closed under the addition. If $\alpha_1$ is a $\cH$-period of $X_1$ and $\alpha_2$ is a $\cH$-period of $X_2$ in $\cC$, then $\alpha_1+\alpha_2$ is a $\cH$-period of $X_1\oplus X_2$.\\
\

(2) We need to verify that all pure tensors in $\tilde{\cP}_\cH(\langle X\rangle)$ can be expressed in terms of elements in $F(X)\otimes_{\Q}G(X)$. Every object in $\langle X\rangle$ is a subquotient of some $X^n$. Note that $F(X^n)\cong F(X)^n,\, G(X^n)\cong G(X)^n$, and we can write an element $\nu\otimes\gamma\in F(X^n)\otimes_{\Q}G(X^n)$ as 
\[
\nu\otimes\gamma=\sum^n_{k=1}(i_k)_*\nu_k\otimes\gamma=\sum^n_{k=1}\nu_k\otimes(i_k)^*\gamma\in F(X)\otimes_{\Q}G(X),\] where $\nu_1,\dots,\nu_n$ are components of $\nu$. Now, if $f\colon X^n\to Y$ is a surjective morphism in $\cC$, we want to show that all pure tensors in $F(Y)\otimes_{\Q}G(Y)$ can be identified with pure tensors in $F(X^n)\otimes_{\Q}G(X^n)$ (and as a result, in $F(X)\otimes_{\Q}G(X))$). This is because every element in $F(Y)$ can be expressed as $f_*\nu$ for some $\nu$ in $F(X^n)$. Hence,
\[
f_*\nu\otimes\gamma=\nu\otimes f^*\gamma\in F(X^n)\otimes_{\Q}G(X^n).
\]
If $f\colon Y\to X^n$ is an injective morphism in $\cC$, then all pure tensors in $F(Y)\otimes_{\Q}G(Y)$ can be identified with certain pure tensors in $F(X^n)\otimes_{\Q}G(X^n)$. Since every element in $G(Y)$ has the form $f^*\gamma$ for some $\gamma\in G(X^n)$, it follows that
\[
\nu\otimes f^*\gamma=f_*\otimes\gamma\in F(X^n)\otimes_{\Q}G(X^n).\]

(3) The proof is similar to (2).\\

(4) If $0\to X_1\to X\to X_2\to 0$ is an exact sequence in $\cC$, a similar argument as above, allows us to conclude that $\cP_\cH\langle X_1\rangle +\cP_\cH\langle X_2\rangle\subseteq \cP_\cH\langle X\rangle$. This implies that for any subquotient $Y$ of $X$, $\cP_\cH(Y)\subseteq\cP_\cH\langle X\rangle$. From (1), we have $\cP_\cH(Y^n)\subseteq\cP_\cH\langle Y\rangle$. Since all objects in $\langle X\rangle$ are subquotients of some $X^n$ for some $n$, it follows that $\cP_\cH(\langle X\rangle)=\cP_\cH\langle X\rangle$.
\end{proof}

\begin{remark}
For a given short exact sequence $$0\to X_1\xrightarrow{i} X^n\xrightarrow{p}X_2\to 0$$ in $\cC$, let $\sum^n_{i=1}\nu_i\otimes\gamma_i\in F(X)\otimes_{\Q}G(X)$ with \[(\nu_1,\dots,\nu_n)\in i_*(F(X_1)), (\gamma_1,\dots,\gamma_n)\in p^*(G(X_2)),\]
that is, $\nu_i=i_*\nu'_i$, $\gamma_i=p^*\gamma'_i$, for some $\nu'_i\in F(X_1)$ and, $\gamma'_i\in G(X_2)$. Then, we have
$$\sum^n_{i=1}\nu_i\otimes\gamma_i=\sum^n_{i=1}i_*\nu'_i\otimes\gamma_i=\sum^n_{i=1}\nu'_i\otimes i^*p^*\gamma'_i=0.$$
\end{remark}

\begin{defn}\label{definition period conjecture at depth i}
    Let $(\cC, T, \cH)$ be as above. We say that the $\cH$-period conjecture at depth $i$ holds for $\cC$, if the evaluation map
    \[
    \tilde{\cP}^i_{\cH}(\cC)\to\cP_\cH(\cC)
    \] is injective.
\end{defn}

\begin{lemma}\label{colim injective period conjecture}
Let $\cH$ be a period pairing for $T:\cC\to\Mod^B_{K,L}$. Let $\cC$ be an abelian category. For any object $X$ in $\cC$, we denote by $\langle X\rangle$ the full abelian subcategory of $\cC$ generated $X$.
\begin{enumerate}
    \item The evaluation map $\tilde{\cP}_\cH(\cC)\to\cP_\cH(\cC)$ is bijective if and only if the evaluation map $\tilde{\cP}_\cH(X)\to\cP_\cH\langle X\rangle$ is bijective for every object $X$ of $\cC$.
    \item The $\cH$-period conjecture holds at depth $i$ for $\cC$, if and only if $\cH$-period conjecture holds at depth $i$ for $\langle X\rangle$ for every object $X$ of $\cC$.
\end{enumerate}
\end{lemma}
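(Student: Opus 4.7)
Both evaluation maps are surjective by the very definitions of $\cP_\cH(\cC)$ and $\cP_\cH\langle X\rangle$, so the claim in (1) reduces to comparing injectivity of the two evaluation maps; part (2) will be handled by the same method. The main tool is \cref{dimension prop}(2), which says that $\tilde{\cP}_\cH(\langle X\rangle)$ is generated, as a $U$--vector space, by the image of $F(X)\otimes_{\Q}G(X)$, together with the standard device of packaging finitely many objects $X_1,\ldots,X_n\in\cC$ into a single object $X:=\bigoplus_i X_i$.

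For the implication $(\Leftarrow)$, I would start from $\alpha\in\tilde{\cP}_{\cH}(\cC)$ with $\mathrm{eval}_p(\alpha)=0$ and write $\alpha$ as a finite sum of pure tensors $\sum_i\nu_i\otimes\gamma_i$ with $\nu_i\in F(X_i)$ and $\gamma_i\in G(X_i)$. Setting $X:=\bigoplus_iX_i$, the split inclusions $\iota_i\colon X_i\hookrightarrow X$ and projections $\pi_i\colon X\twoheadrightarrow X_i$ with $\pi_i\circ\iota_i=1_{X_i}$ allow one to lift $\alpha$ to the element $\tilde{\alpha}:=\sum_i(\iota_i)_*\nu_i\otimes\pi_i^*\gamma_i\in F(X)\otimes_\Q G(X)$, whose class in $\tilde{\cP}_\cH(\cC)$ equals $\alpha$ by functoriality (using $\pi_i\iota_i=1_{X_i}$). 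Then $\tilde{\alpha}$ represents a class $\beta\in\tilde{\cP}_\cH(\langle X\rangle)$ with $\mathrm{eval}_p(\beta)=\mathrm{eval}_p(\alpha)=0$ inside $\cP_\cH\langle X\rangle\subseteq \cP_\cH(\cC)$; by the hypothesis applied to $X$ we conclude $\beta=0$, and therefore $\alpha=0$.

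For the implication $(\Rightarrow)$, I would start from $\beta\in\tilde{\cP}_\cH(\langle X\rangle)$ with $\mathrm{eval}_p(\beta)=0$. By \cref{dimension prop}(2), represent $\beta$ by some $\tilde{\alpha}\in F(X)\otimes_\Q G(X)$, and let $\phi\colon\tilde{\cP}_\cH(\langle X\rangle)\to\tilde{\cP}_\cH(\cC)$ be the natural map. Then $\mathrm{eval}_p(\phi(\beta))=0$, and by the global hypothesis $\phi(\beta)=0$. This means that $\tilde{\alpha}$ is a finite combination of functoriality relations $(f_j)_*\nu_j\otimes\gamma_j-\nu_j\otimes f_j^*\gamma_j$ for morphisms $f_j\colon Y_j\to Z_j$ in $\cC$, and setting $X':=X\oplus\bigoplus_j(Y_j\oplus Z_j)$ all those $f_j$ lie in $\langle X'\rangle$, so $\tilde{\alpha}$ becomes zero in $\tilde{\cP}_\cH(\langle X'\rangle)$. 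The main obstacle is to transfer this vanishing back to $\langle X\rangle$, i.e.\ to establish injectivity of the natural map $\tilde{\cP}_\cH(\langle X\rangle)\to\tilde{\cP}_\cH(\langle X'\rangle)$. The plan is to use the split projection $\pi\colon X'\twoheadrightarrow X$ (available by construction of $X'$ as a direct sum containing $X$), together with the functoriality identity $[\iota_*\nu\otimes\pi^*\gamma]=[\nu\otimes\gamma]$ obtained from $\pi\iota=1_X$, to construct a natural retraction $r\colon\tilde{\cP}_\cH(\langle X'\rangle)\to\tilde{\cP}_\cH(\langle X\rangle)$ of $\phi$. Because $\tilde{\alpha}$ already lies in $F(X)\otimes_\Q G(X)\subseteq F(X')\otimes_\Q G(X')$, the retraction sends its class in $\tilde{\cP}_\cH(\langle X'\rangle)$ back to $\beta$, forcing $\beta=0$.

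Part (2) follows by the same argument, replacing the word ``functoriality'' throughout by ``functoriality plus depth--$i$ relations''. The only additional point is that any exact sequence $0\to X_1\to X^m\to X_2\to 0$ with $m\leq i$ used in the depth--$i$ relations can be absorbed into the enlargement $X'$ by including $X_1$ and $X_2$ as direct summands; the retraction $r$ constructed above is then compatible with the extra depth--$i$ quotient, so the reduction from $\langle X'\rangle$ back to $\langle X\rangle$ works verbatim.
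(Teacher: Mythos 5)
Your reduction to injectivity and your argument for the direction ``injectivity on every $\langle X\rangle$ implies injectivity on $\cC$'' are correct and essentially coincide with the paper's proof: the paper phrases this as $\tilde{\cP}_{\cH}(\cC)=\colim_{X}\tilde{\cP}_\cH\langle X\rangle$, and your packaging of a finite sum of pure tensors into $X=\bigoplus_i X_i$ via the split inclusions and projections is the element-wise version of that colimit argument. The same applies to your treatment of part (2) in this direction.

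The converse direction, however, contains a genuine gap. You correctly isolate the crux, namely injectivity of the natural map $\phi\colon\tilde{\cP}_\cH\langle X\rangle\to\tilde{\cP}_\cH\langle X'\rangle$ (equivalently, into $\tilde{\cP}_\cH(\cC)$); the paper does not argue this point either, it simply asserts that $\tilde{\cP}_\cH\langle X\rangle\to\tilde{\cP}_\cH(\cC)$ is injective and composes with the globally injective evaluation map. But your proposed retraction does not exist as described. First, for vector spaces a retraction of $\phi$ exists if and only if $\phi$ is injective, so announcing a retraction is not by itself progress: one must define it and check that it annihilates \emph{all} functoriality relations of $\langle X'\rangle$, not only those coming from $\pi$ and $\iota$. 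The natural candidate suggested by your formula --- represent a class by an element of $F(X')\otimes_\Q G(X')$ using \cref{dimension prop} and apply $\pi_*\otimes\iota^*$ --- is not well defined. Take $X'=X\oplus X$, let $\iota,\pi$ be the inclusion and projection of the first summand, $\iota_2,p_2$ those of the second, and consider the endomorphism $g=\iota\circ p_2$ of $X'$. The relation $g_*\nu'\otimes\gamma'-\nu'\otimes g^*\gamma'$ is zero in $\tilde{\cP}_\cH\langle X'\rangle$, but since $\pi g=p_2$ and $g\iota=0$, its image under $\pi_*\otimes\iota^*$ is $(p_2)_*\nu'\otimes\iota^*\gamma'$; choosing $\nu'=(\iota_2)_*\mu$ this has class $[\mu\otimes\iota^*\gamma']$ in $\tilde{\cP}_\cH\langle X\rangle$, with $\mu\in F(X)$ arbitrary and $\iota^*\gamma'$ arbitrary in $G(X)$ (as $\iota^*$ is split surjective). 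Well-definedness would therefore force $\tilde{\cP}_\cH\langle X\rangle=0$, which is absurd in general. Relations of $\langle X'\rangle$ coming from morphisms mixing the summand $X$ with its complement are precisely what a retraction would have to kill, and precisely what this recipe fails to kill; so the injectivity of $\phi$ (which the paper takes for granted, and which dually amounts to extending natural families in $\cH_X=\Nat(F|_{\langle X\rangle},G|_{\langle X\rangle})$ from $\langle X\rangle$ to $\langle X'\rangle$) remains unproved in your proposal, and the same gap propagates to your argument for part (2).
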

\begin{proof}
We prove (1); the proof of (2) is similar.
The natural map $\tilde{\cP}_\cH\langle X\rangle\to\tilde{\cP}_\cH(\cC)$ is injective. If $\tilde{\cP}_\cH(\cC)\to \cP_\cH(\cC)$ is injective then so is the composition $\tilde{\cP}_\cH\langle X\rangle\to\tilde{\cP}_\cH(\cC)\to\cP_\cH(\cC)$ for every object $X$ of $\cC$.

Conversely, we have $\cC=\bigcup_{X\in\cC}\langle X\rangle$ because a morphism $f\colon X\to Y$ in $\cC$ is a morphism in the full abelian subcategory $\langle X\oplus Y\rangle$. As a result, we have 
$$\tilde{\cP}_{\cH}(\cC)=\colim_{X\in\cC}\tilde{\cP}_\cH\langle X\rangle.$$
On the other hand, the evaluation map $\tilde{\cP}_\cH\langle X\rangle\to\cP_\cH\langle X\rangle$ is injective for every $\langle X\rangle$, and thus it is injective on the colimit.
\end{proof}

\begin{cor}\label{cor: P1=P for period conjecture}
For any object $X$ in $\cC$,
    we have 
    \[\cP_\cH\langle X\rangle\cong\tilde{\cP}_{\cH}^1(X)=\tilde{\cP}_{\cH}^2(X)=\dots=\tilde{\cP}_{\cH}^n(X)=\dots=\tilde{\cP}_{\cH}^{\infty}(X).\]
    if and only if $\tilde{\cP}_{\cH}(X)\to\cP_{\cH}\langle X\rangle$ is injective. In particular, all relations among the $\cH$-periods of $X$ are induced by bilinearity and functoriality if and only if the period conjecture holds at depth $1$ for $X$.
\end{cor}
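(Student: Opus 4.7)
The plan is to exhibit a canonical chain of surjective $U$-linear maps
\[
\tilde{\cP}_\cH^1(X)\twoheadrightarrow\tilde{\cP}_\cH^2(X)\twoheadrightarrow\cdots\twoheadrightarrow\tilde{\cP}_\cH^\infty(X)\twoheadrightarrow\tilde{\cP}_\cH(X)\twoheadrightarrow\cP_\cH\langle X\rangle
\]
whose total composition agrees with the evaluation map on classes coming from $F(X)\otimes_\Q G(X)$, and then to deduce the equivalence via the elementary observation that such a chain of surjections collapses into a string of isomorphisms precisely when its total composition is injective.

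First I would verify the chain. The surjections $\tilde{\cP}_\cH^i\twoheadrightarrow\tilde{\cP}_\cH^{i+1}$ hold by the very definition, since imposing more relations yields a further quotient. The crucial step is the factorisation $\tilde{\cP}_\cH^\infty(X)\twoheadrightarrow\tilde{\cP}_\cH(X)$: by \cref{dimension prop}(2), $\tilde{\cP}_\cH(X)$ is generated by elements of $F(X)\otimes_\Q G(X)$, and by the remark preceding \cref{space of formal periods at depth i}, each depth-$m$ generator $\sum_{j=1}^m i_*\nu_j'\otimes p^*\gamma_j''$ attached to an exact sequence $0\to X'\xrightarrow{i}X^m\xrightarrow{p}X''\to 0$ already vanishes in $\tilde{\cP}_\cH(X)$, via functoriality and the identity $i^*p^*=0$. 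The final surjection $\tilde{\cP}_\cH(X)\twoheadrightarrow\cP_\cH\langle X\rangle$ is the evaluation map of \cref{Space of H-periods}.

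For the forward direction of the equivalence, the hypothesised chain of canonical identifications $\cP_\cH\langle X\rangle\cong\tilde{\cP}_\cH^1=\cdots=\tilde{\cP}_\cH^\infty$ makes the composition $\tilde{\cP}_\cH^\infty\twoheadrightarrow\tilde{\cP}_\cH(X)\twoheadrightarrow\cP_\cH\langle X\rangle$ a bijection, so both factors are bijections; in particular the evaluation map is injective. For the reverse direction, assume $\tilde{\cP}_\cH(X)\to\cP_\cH\langle X\rangle$ is injective, hence bijective. The naturality of $\varpi$ combined with $p\circ i=0$ shows that each depth-$i$ relation lies in the kernel of the evaluation map out of $F(X)\otimes_\Q G(X)$; this places the kernel of the composite $\tilde{\cP}_\cH^1(X)\to\cP_\cH\langle X\rangle$ inside the functoriality kernel, which equals the evaluation kernel by hypothesis. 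The elementary fact that a chain of surjective linear maps with injective total composition consists of isomorphisms then yields the desired chain of canonical equalities.

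The "in particular" assertion follows by unpacking definitions: "all relations among the $\cH$-periods of $X$ are induced by bilinearity and functoriality" is by \cref{space of formal H-periods}(3) the injectivity of $\tilde{\cP}_\cH(X)\to\cP_\cH\langle X\rangle$, and "the period conjecture holds at depth 1 for $X$", via \cref{definition period conjecture at depth i} together with \cref{colim injective period conjecture}, is the injectivity of $\tilde{\cP}_\cH^1(X)\to\cP_\cH\langle X\rangle$; the specialisation of the main equivalence gives the claim. The main obstacle is the factorisation $\tilde{\cP}_\cH^\infty(X)\twoheadrightarrow\tilde{\cP}_\cH(X)$, whose content is precisely that the exact sequences indexing depth-$m$ relations capture the functoriality structure visible on $F(X)\otimes_\Q G(X)$; everything else amounts to a straightforward kernel chase along the chain.
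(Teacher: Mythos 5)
Your chain of surjections $\tilde{\cP}_\cH^1(X)\twoheadrightarrow\tilde{\cP}_\cH^2(X)\twoheadrightarrow\cdots\twoheadrightarrow\tilde{\cP}_\cH^\infty(X)\twoheadrightarrow\tilde{\cP}_\cH(X)\twoheadrightarrow\cP_\cH\langle X\rangle$ is correctly set up: the factorisation through $\tilde{\cP}_\cH(X)$ does follow from \cref{dimension prop}(2) together with the remark that depth relations vanish modulo functoriality, and your forward direction (injectivity of the total composite forces injectivity of each surjective factor, in particular of $\tilde{\cP}_\cH(X)\to\cP_\cH\langle X\rangle$) is sound; it even gets by with the mere surjection $\tilde{\cP}_\cH^\infty(X)\twoheadrightarrow\tilde{\cP}_\cH(X)$, whereas the paper invokes H\"ormann's identification $\tilde{\cP}_\cH^\infty(X)=\tilde{\cP}_\cH(X)$ at this point.

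The converse direction, however, has a genuine gap. Assuming $\tilde{\cP}_\cH(X)\to\cP_\cH\langle X\rangle$ is injective, what must be proved is that the kernel of the evaluation map on $F(X)\otimes_\Q G(X)$ --- which by the hypothesis coincides with the functoriality kernel --- is contained in the subspace of depth-$1$ relations, i.e.\ that the surjection $\tilde{\cP}_\cH^1(X)\twoheadrightarrow\tilde{\cP}_\cH(X)$ is injective. Your kernel chase only records the opposite (and trivial) inclusion, namely that depth relations lie in the evaluation kernel; asserting that the kernel of $\tilde{\cP}_\cH^1(X)\to\cP_\cH\langle X\rangle$ ``lies inside the functoriality kernel'' restates the problem rather than solving it, because an element of the functoriality kernel need not vanish in $\tilde{\cP}_\cH^1(X)$ --- in general it does not, cf.\ \cref{remark Hpp period does not hold at depth 1} and \cref{example M[LtoGm]}, where $\tilde{\cP}^1_{\Hpp}\neq\tilde{\cP}^2_{\Hpp}$. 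Rewriting an arbitrary functoriality relation, indexed by morphisms in $\langle X\rangle$, as a combination of relations attached to short exact sequences $0\to X'\to X\to X''\to 0$ is precisely the nontrivial combinatorial input here; the paper obtains it by citing \cite[Theorem 1.3]{hormann2021note} for $\tilde{\cP}_\cH^\infty(X)=\tilde{\cP}_\cH(X)$ and \cite[Lemma 3.2 and Corollary 4.2]{hormann2021note} to descend to depth $1$ under the injectivity hypothesis. Without supplying that argument or an equivalent, your converse --- and with it the ``in particular'' clause, which is the substantive content of the corollary --- does not go through.
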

\begin{proof}
By \cite[Theorem 1.3]{hormann2021note}, we have that $\tilde{\cP}^{\infty}_\cH(X)=\tilde{\cP}_\cH(X)$. We have the successive quotients
\[
\tilde{\cP}^{1}_\cH(X)\to\tilde{\cP}^{2}_\cH(X)\to\dots \to \tilde{\cP}^{\infty}_\cH(X)=\tilde{\cP}_\cH(X).
\]
If the map $\tilde{\cP}^{1}_\cH(X)\to \cP_\cH\langle X\rangle$ is injective, then clearly  $\tilde{\cP}_\cH(X)=\tilde{\cP}^{\infty}_\cH(X)\to \cP_\cH\langle X\rangle$ must also be injective, due to the following diagram:
\begin{equation*}
\begin{tikzcd}
\tilde{\cP}^1_\cH(X) \arrow[d] \arrow[r, two heads]    & \tilde{\cP}^{\infty}_\cH(X) \arrow[d] \\
\cP_\cH\langle X\rangle \arrow[r, Rightarrow, no head] & \cP_\cH\langle X\rangle              
\end{tikzcd}
\end{equation*}
It also follows that $\tilde{\cP}^i_\cH(X)\cong\cP_\cH\langle X\rangle$ for all $i\geq 1$. Thus, 
\[\cP_\cH\langle X\rangle\cong\tilde{\cP}_{\cH}^1(X)=\tilde{\cP}_{\cH}^2(X)=\dots=\tilde{\cP}_{\cH}^n(X)=\dots=\tilde{\cP}_{\cH}^{\infty}(X)=\tilde{\cP}_{\cH}(X).\]

Conversely, if the map $\tilde{\cP}_\cH(X)=\tilde{\cP}^{\infty}_\cH(X)\to \cP_\cH\langle X\rangle$ is injective, then by \cite[Lemma 3.2 and Corollary 4.2]{hormann2021note}, we can show that the map $\tilde{\cP}^{1}_\cH(X)\to \cP_\cH\langle X\rangle$ is also injective.

\end{proof}

\begin{defn}
Let $(\cC, T, \cH)$ be as above, with $\cH=(F,G)$. For each object $X$ of $\cC$, we define $$\cH_X:=\Nat(F\mid_{\langle X\rangle},G\mid_{\langle X\rangle})),$$ where $\Nat$ is the space of natural transformations. 
\end{defn}
\label{dual Hx=fromal p-adic periods}
We have the following identifications for the dual space $\cH^{\vee}_X$
\begin{multline*}
\cH^{\vee}_X=\Nat(F\mid_{\langle X\rangle},G\mid_{\langle X\rangle})\ve=\\ \Big\{(f_Y)\in\prod_{Y\in\langle X\rangle}\Hom_{\Q}(F(Y),G\ve(Y))\mid\forall g\colon Y\to Y', f_{Y'}\circ T(g)=T(g)\circ f_Y\Big\}^{\vee}\\=\left(\bigoplus_{Y\in\langle X\rangle}F(Y)\otimes_{\Q}G(Y)\right)/\text{functoriality},
\end{multline*}
where we have used the identification $\Hom_{\Q}(F(Y),G\ve(Y))^{\vee}\cong F(Y)\otimes_{\Q}G(Y)$ and where the naturality can be interpreted as functoriality relations generated by elements of the form $\nu\otimes g^*\gamma-g_*\nu\otimes\gamma$ for all $g:Y\to Y'$ in $\langle X\rangle$ and $\nu\in F(Y), \gamma\in G(Y')$. The right hand side is indeed the space of formal $\cH$-periods $\tilde{\cP}_\cH(X)$ and so, $\cH_X^{\vee}=\tilde{\cP}_\cH (X)$.
\begin{cor}
Assume that $F(Y)$ and $G(Y)$ are finite-dimensional for every $Y\in\langle X\rangle$. Then the $\cH$-period conjecture holds at depth 1 if and only if $\dim_U(\cH\ve_X)=\dim_U \cP_{\cH}\langle X\rangle$.
\end{cor}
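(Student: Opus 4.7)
The plan is to reduce the statement to the finite dimensional linear algebra fact that a surjection between finite dimensional vector spaces is injective if and only if the source and target have the same dimension, using the identification between $\cH_X^{\vee}$ and the space of formal $\cH$-periods of $X$ that was established just before the corollary.

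First, I would record the key identification, already computed in the paragraph preceding the statement, namely
\[
\cH_X^{\vee}\;=\;\tilde{\cP}_\cH(X).
\]
By \cref{dimension prop}(2), the space $\tilde{\cP}_\cH(X)$ is generated as a $U$-vector space by the pure tensors in $F(X)\otimes_\Q G(X)$. Under the finite dimensionality assumption on $F$ and $G$ on $\langle X\rangle$, this forces $\tilde{\cP}_\cH(X)$, and hence $\cH_X^{\vee}$, to be a finite dimensional $U$-vector space. In particular $\cH_X$ is itself finite dimensional, and standard duality gives
\[
\dim_U \cH_X \;=\; \dim_U \cH_X^{\vee}\;=\; \dim_U \tilde{\cP}_\cH(X).
\]

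Next, the evaluation map
\[
\mathrm{eval}_\cH\colon \tilde{\cP}_\cH(X)\;\longrightarrow\; \cP_\cH\langle X\rangle
\]
is surjective by the very definition of the space of $\cH$-periods (\cref{Space of H-periods}) together with \cref{dimension prop}(4). Since both source and target are finite dimensional $U$-vector spaces, $\mathrm{eval}_\cH$ is an isomorphism if and only if
\[
\dim_U \tilde{\cP}_\cH(X)\;=\;\dim_U \cP_\cH\langle X\rangle,
\]
and equivalently, by the previous display, if and only if $\dim_U(\cH_X^{\vee})=\dim_U \cP_\cH\langle X\rangle$.

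Finally, I would invoke \cref{cor: P1=P for period conjecture} to translate injectivity of $\mathrm{eval}_\cH$ into the $\cH$-period conjecture at depth $1$: that corollary states that the evaluation map $\tilde{\cP}_\cH(X)\to \cP_\cH\langle X\rangle$ is injective precisely when the depth $1$ conjecture holds for $\langle X\rangle$, which by \cref{colim injective period conjecture}(2) is the same as the depth $1$ conjecture for $X$. Stringing these equivalences together yields the claim. No real obstacle is expected here; the only subtle point is ensuring that finite dimensionality of $F(Y)$ and $G(Y)$ for all $Y\in\langle X\rangle$ really does propagate to finite dimensionality of $\tilde{\cP}_\cH(X)$, and this is already handled by \cref{dimension prop}(2).
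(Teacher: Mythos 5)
Your proposal is correct and matches the paper's own proof essentially step for step: identify $\cH_X^\vee$ with $\tilde{\cP}_\cH(X)$, use \cref{dimension prop} to get finite dimensionality, observe that the evaluation map is a surjection of finite-dimensional $U$-vector spaces so that injectivity is a dimension count, and translate injectivity into the depth-$1$ conjecture via \cref{cor: P1=P for period conjecture}. The aside about $\dim_U\cH_X=\dim_U\cH_X^\vee$ and the extra appeal to \cref{colim injective period conjecture}(2) are not needed but do no harm.
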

\begin{proof}
 Assume that $F(Y)$ and $G(Y)$ are finite-dimensional for every $Y\in\langle X\rangle$. By \cref{dimension prop}, we know that $\dim_U(\cH_X^{\vee})=\dim_U(\tilde{\cP}_\cH (X))$ is finite. Therefore, the evaluation map $\tilde{\cP}_\cH (X)\to\cP_\cH\langle X\rangle$ is injective if and only if $\dim_U(\cH_X^{\vee})=\dim_U\cP_\cH\langle X\rangle$. By \cref{cor: P1=P for period conjecture}, this is equivalent to the $\cH$-period conjecture at depth $1$.
\end{proof}

\begin{example}
    Let $\cC=\Mi(\barQ)$ be the isogeny category of 1-motives over $\barQ$. Let $T:\Mi(\barQ)\to\Mod^{\C}_{\Q,\barQ}$ be the Betti-de Rham realization functor define by 
    \[
    M\mapsto (\Tsing(M)\otimes_{\Z}\Q,\TdR(M),\omega)
    \]
    where $\omega:\Tsing(M)\otimes_{\Z}\C\cong\TdR(M)\otimes_{\barQ}\C$ is the Betti-de Rham comparison isomorphism. We define the period pairing $\cH=(F,G):\Mi(\barQ)\to\Vect(\Q,\barQ)$ as follows:
    \[
    F(M):=\Tsing(M)\otimes_{\Z}\Q, \text{ and } G(M):=\TdR\ve(M)_{\barQ}.
    \]
    This defines the classical periods for 1-motives. In \cite[Theorem 9.7]{huber2022transcendence}, Hubber and Wüstholz showed that the $\tilde{\cP}^1_\cH(M)\to \cP_\cH\langle M\rangle$ is injective. It follows that $\tilde{\cP}_\cH(M)\cong\cP_\cH\langle M\rangle$, i.e, the Kontsevich-Zagier period conjecture for 1-motives over $\barQ$ holds (\cite[Theorem 9.10]{huber2022transcendence}).
\end{example}

Recall that $\gMi(K)$ denotes the isogeny category of 1-motives with good reduction over $K$ and consider the functor $T:\gMi(K)\to\Mod^{\Bt}_{\Q_p,\barQ}$ which is given by 
\[
M\mapsto (\Vp(M),\TdR(M),\varpi\otimes\Bt).
\]
Here, $\varpi\otimes\Bt:\Tp(M)\otimes_{\Z_p}\Bt\to\TdR(M)\otimes_K\Bt$ is the isomorphism induced by the integration map $\varpi:\Tp(M)\to\TdR(M)\otimes_K\Bt$ (see \cref{theorem p-adic integration pairing for M}). In the next section, we will define three distinct period pairings for $T$, namely $\displaystyle \int^{\hp}, \int^{\Hp}$, and $\displaystyle\int^{\Hpp}$, and demonstrate that the $\displaystyle\int^{\hp}$-period conjecture holds at depth 1, while both $\displaystyle\int^{\Hp}$-period conjecture and $\displaystyle\int^{\Hpp}$-period conjecture hold at depth 2.

\section{$\Q$-structures $\Hp$ and $\Hpp$}
In this section, we will define two $\Q$-structures $\Hp$ and $\Hpp$, within $\Tp(M)\otimes\C_p$ and $\Tp(M)\otimes\Bt$ by pulling-back the space $\hp(M)$ along the Fontaine's map $\varphi_M$ (see \ref{Fontaine's pairing for M}) and the p-adic integration map $\varpi_M$ (\ref{Integration map for M}). We then obtain the corresponding pairings relative to these structures. The p-adic numbers $\alpha\in\Bt$ that arise from these pairings are indeed Fontaine-Messing p-adic periods. Recall that $\alpha\in\Bt$ is called a Fontaine-Messing p-adic period of $M$ if it lies in the image of the pairing
\[
(\Tp(M)\otimes_{\Z_p}\Bt)\times(\TdR\ve(M)\otimes_K\Bt)\to\Bt
\]
induced by the integration map $\varpi_M$ (\ref{Integration map for M}).

We always assume that $M=[L\xrightarrow{u}G]$ is a 1-motive over a number field $\K$ which has a good reduction at $p$. We fix an embedding $\K\into K$ to a p-adic complete discrete-valued local field $K$ with residue field $k$. The base change of $M$ to $K$ is the 1-motive $M_K$ which has a good reduction and it can be extended to the 1-motive $M_{\cO_K}$ over $\cO_K$. For simplicity, we often drop the indices $K$ and $\cO_K$.

Recall \cref{def tilde exp}, \cref{definition 3.6.3}, and \cref{fundamental equality for hp in chapter 5}.
\begin{defn}\label{definition Hp}
  We define $\Hp(M)$ to be the fibre product of $\hp(M,\bar{\K})$  and $\Tp(M)\otimes_{\Z_p}\C_p$ over $\Lie(G)\otimes_{\cO_K}\C_p(1)$ along the map \[\phi_M\otimes 1_{\C_p}:\Tp(M)\otimes_{\Z_p}\C_p\to \Lie(G)\otimes_{\cO_K}\C_p(1)\]
  where $\phi_M$ is the Fontaine's map \ref{Fontaine's pairing for M}. We have 
\begin{equation}
\begin{tikzcd}
\Hp(M) \arrow[d] \arrow[r]                                      & {\hp(M,\bar{\K})} \arrow[d, "\iota"] \\
\Tp(M)\otimes_{\Z_p}\C_p \arrow[r, "\phi_M\otimes 1_{\C_p}"] & \Lie(G)\otimes_{\cO_K}\C_p(1)      
\end{tikzcd}
\end{equation}
  where $\iota$ is the $\Q$-linear map \[\hp(M,\bar{\K})\into\Lie(G)\otimes_{\cO_K}\C_p\to\Lie(G)\otimes_{\cO_K}\C_p(1),\,\,x\mapsto x\otimes 1.\]
\end{defn}
Notice that $\Hp(M)$ is just a $\Q$-vector space inside $\Tp(M)\otimes_{\Z_p}\C_p$, and by construction, we have the following commutative diagram with exact rows 
\begin{equation}\label{7.2.1}
\begin{tikzcd}
0 \arrow[r] & S \arrow[r] \arrow[d, Rightarrow, no head] & \Hp(M) \arrow[d, hook] \arrow[r]      & {\hp(M,\bar{\K})} \arrow[r] \arrow[d, hook] & 0 \\
0 \arrow[r] & S \arrow[r]                                & \Tp(M)\otimes_{\Z_p}\C_p \arrow[r] & \Lie(G)\otimes_{\cO_K}\C_p(1) \arrow[r]     & 0
\end{tikzcd}
\end{equation}
By \cref{Hodge-Tate weights of tate module of 1-motives}, the Hodge-Tate weights of $\Vp(M)$ are $0$ and $1$ with multiplicity $n=\rank(L)+\dim(A)$ and $m=\dim(T)+\dim(A)$ respectively. Thus, $S\cong\C_p^{n}$. The bottom sequence is split, and the splitting is unique because
\[
\Ext^1(\Lie(G)\otimes_{\cO_K}\C_p(1),S)=\Ext^1(\C_p(1)^{m},\C_p^{n})=H^1(K,\C_p(-1)^{mn})=0
\]
and, 
\[
\Hom(\Lie(G)\otimes_{\cO_K}\C_p(1),S)=\Hom(\C_p(1)^{m},\C_p^{n})=H^0(K,\C_p(-1))^{mn}=0.
\]
Notice that the above $\Ext^i(.,.)$ is in the category of $\Gamma_K$-representations. Therefore, by the Hodge-Tate decomposition, $S=\coLie(\sG\ve)\otimes_{\cO_K}\C_p$. By \cref{V(M)=coLie(Gv)}, we know that $\coLie(\sG\ve)=\coLie(G\ve)=V(M)$. Considering the exact sequence \ref{canonical exact sequence for any 1-motive}, we can obtain the following diagram
\begin{equation}\label{7.1.4}
\begin{tikzcd}
            & 0                                              & 0                          &                                                            &   \\
0 \arrow[r] & V(L)\otimes\C_p \arrow[r, Rightarrow, no head] \arrow[u] & \Hp(L) \arrow[r] \arrow[u] & 0                                                          &   \\
0 \arrow[r] & V(M)\otimes\C_p \arrow[u] \arrow[r]                      & \Hp(M) \arrow[u] \arrow[r] & {\hp(M,\bar{\K})} \arrow[u] \arrow[r]                      & 0 \\
0 \arrow[r] & V(G)\otimes\C_p \arrow[r] \arrow[u]                      & \Hp(G) \arrow[r] \arrow[u] & {\hp(G,\bar{\K})} \arrow[u, Rightarrow, no head] \arrow[r] & 0 \\
            & 0 \arrow[u]                                    & 0 \arrow[u]                & 0 \arrow[u]                                                &  
\end{tikzcd}
\end{equation}
Notice that the rows are split.
\begin{prop}
    The functor $\Hp:\gMi(\K)\to\Vect(\Q)$ is faithful and exact.
\end{prop}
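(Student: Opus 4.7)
The plan is to establish exactness first, and then deduce faithfulness from it. Given an exact sequence $0 \to M_1 \to M \to M_2 \to 0$ in $\gMi(\K)$, the defining short exact sequence \eqref{7.2.1} fits each $\Hp(M_i)$ into a column
\[ 0 \to V(M_i) \otimes \C_p \to \Hp(M_i) \to \hp(M_i, \bar{\K}) \to 0, \]
and by the $3 \times 3$ (nine) lemma, exactness of the middle row $0 \to \Hp(M_1) \to \Hp(M) \to \Hp(M_2) \to 0$ reduces to exactness of the top and bottom rows. For the top row, the identification $V(M) = \coLie(G\ve)$ in \cref{V(M)=coLie(Gv)}, combined with exactness of Cartier duality on 1-motives and of $\coLie(-)$ on semi-abelian varieties, immediately yields exactness of $0 \to V(M_1) \otimes \C_p \to V(M) \otimes \C_p \to V(M_2) \otimes \C_p \to 0$.

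The main obstacle is exactness of $\hp(-, \bar{\K})$. By \cref{fundamental equality for hp in chapter 5} and \cref{definition 3.6.3}, $\hp(M, \bar{\K})$ is the $\Q$-span of $\log_\sG(x)$ for $x \in \sG(\bar{\K}) \subset \sG(\ocp)$, where $\sG = M[p^\infty]$. Good reduction together with the deformation theorem for 1-motives yields an exact sequence of p-divisible groups $0 \to \sG_1 \to \sG \to \sG_2 \to 0$, and functoriality of the formal logarithm produces a commutative diagram relating the three $\log$ maps to the exact sequence $0 \to \Lie(G_1) \to \Lie(G) \to \Lie(G_2) \to 0$. Injectivity of $\hp(M_1, \bar{\K}) \to \hp(M, \bar{\K})$ and the vanishing of its composition with $\hp(M, \bar{\K}) \to \hp(M_2, \bar{\K})$ are then immediate. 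The delicate points are surjectivity onto $\hp(M_2, \bar{\K})$ and exactness in the middle: the plan is, given $x_2 \in \sG_2(\bar{\K})$, to lift it to $\tilde{x} \in \sG(\ocp)$ using formal smoothness along the kernel $\sG_1$, then descend $\tilde{x}$ to an algebraic point over a finite extension $\K'/\K$, absorbing the resulting torsion and integrality discrepancies under $\otimes \Q$ and modifying by $\sG_1(\bar{\K})$-elements for middle-exactness; I expect this to be the principal technical step.

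Granted exactness, faithfulness follows by showing $\Hp(N) \neq 0$ for every nonzero $N = [L \to G] \in \gMi(\K)$ with $G$ an extension of an abelian variety $A$ by a torus $T$. If either $L \neq 0$ or $A \neq 0$, then $V(N)$ is nonzero (since $V(L) = L \otimes \G_a$ and $V(A) = \coLie(A\ve)$ contribute nontrivially while $V(T) = 0$), so $V(N) \otimes \C_p \subseteq \Hp(N)$ is nonzero. In the remaining case $N = [0 \to T]$ with $T$ a nontrivial torus we have $V(T) = 0$, but $\hp(T, \bar{\K}) \neq 0$: after passing to a finite extension of $\K$ splitting $T$, reduce to $T = \G_m$ and note that, for instance, $\log_p(1+p) \in \hp(\G_m, \bar{\K})$ is nonzero in $\Lie(\G_m)_K = K$. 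Since $\gMi(\K)$ is abelian (\cref{cor: good reduction exact sequence of 1-motives}), every nonzero morphism $f : N \to N'$ factors as $N \twoheadrightarrow \im(f) \hookrightarrow N'$ with $\im(f) \neq 0$, and the exactness of $\Hp$ combined with $\Hp(\im(f)) \neq 0$ yields $\Hp(f) \neq 0$, completing faithfulness.
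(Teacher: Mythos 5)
Your reduction of the statement to exactness of $\hp(-,\bar{\K})$ (via the nine lemma on the columns $0\to V(M)\otimes\C_p\to\Hp(M)\to\hp(M,\bar{\K})\to 0$) and your derivation of faithfulness from exactness plus non-vanishing on nonzero objects both match the paper in substance. The genuine gap is exactly the step you flag and defer: surjectivity of $\hp(M,\bar{\K})\to\hp(M_2,\bar{\K})$. Your plan — lift $x_2\in\sG_2(\bar{\K})$ to $\tilde{x}\in\sG(\ocp)$ by formal smoothness and then ``descend $\tilde{x}$ to an algebraic point over a finite extension $\K'/\K$'' — does not work as stated: the fibre of $\sG(\ocp)\to\sG_2(\ocp)$ over $x_2$ is a torsor under $\sG_1(\ocp)$, almost all of whose points are transcendental, and there is no mechanism offered (and none available by elementary means) guaranteeing that this fibre contains a point of $G(\bar{\K})$, nor any sense in which a transcendental formal point can be ``descended'' to an algebraic one. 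Tensoring with $\Q$ and adjusting by $\sG_1(\bar{\K})$ only repairs torsion and middle-exactness issues; it cannot create an algebraic lift where none exists. Since $\hp$ is by definition (\cref{fundamental equality for hp in chapter 5}) the image of $\log$ on \emph{algebraic} formal points, this is the entire content of the statement, so the proposal is incomplete at its central point.

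The paper avoids this obstruction by changing the ambient object: using the $\exp$--$H^1_e$ machinery of Chapter 3, it identifies points of $\hp$ with \emph{algebraic crystalline points} of the quotient $\Lie(G)\otimes\C_p(1)/\phi_M(\tilde{V})$ (\cref{theorem classification for algebraic poin in image of logarithm}), and then lifts along the Galois-equivariant surjection
\[
\Lie(G)\otimes\C_p(1)/\phi_{M}(\tilde{V})\longrightarrow \Lie(G_2)\otimes\C_p(1)/\phi_{M_2}(\tilde{V_2}),
\]
where the two defining conditions — crystallinity of the associated extension and algebraicity (finiteness of the Galois orbit, resp.\ coming from a global $H^1$ class) — are visibly inherited by a preimage $y$ of a crystalline algebraic point $x$. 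In other words, the lifting is performed at the level of Galois-cohomological avatars of $\hp$-points, where the fibres consist of classes differing by data from $M_1$ and the relevant conditions lift, rather than at the level of actual algebraic points of the p-divisible group, where they do not obviously lift. If you want to salvage your route, you would essentially have to reprove this cohomological characterization; as written, the key surjectivity step is missing.
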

\begin{proof}
    For exactness, we only need to show that $\hp(.)$ exact, as the Lie algebras, the Tate modules, and the pull-backs functors are exact. Assume that 
    \begin{equation}\label{exact seq in proof exactness hp}
    0\to M_1\to M\to M_2\to 0
    \end{equation}
    is an exact sequence of 1-motives with good reductions at $p$, where $M_1=[L_1\to G_1]$ and $M_2=[L_2\to G_2]$. To demonstrate the exactness of the sequence 
    \[
    0\to \hp(M_1)\to \hp(M)\to \hp(M_2)\to 0,
    \]
    we use our terminologies and results from \cref{section crystalline algebraic points}. Note that we only need to show that $\hp(M)\to \hp(M_2)$ is surjective. The sequence \ref{exact seq in proof exactness hp} induces a natural Galois-equivariant exact sequence
    \[
    0\to \Lie(G_1)\otimes\C_p(1)/\phi_{M_1}(\tilde{V_1})\to \Lie(G)\otimes\C_p(1)/\phi_{M}(\tilde{V})\to \Lie(G_2)\otimes\C_p(1)/\phi_{M_2}(\tilde{V_2})\to 0,
    \]
    where $\tilde{V_i}=\Vp(\sG_i^0)$ and $\sG^i$ is the connected component of the p-divisible group associated to $M_i$. Let $x\in \Lie(G_2)\otimes\C_p(1)/\phi_{M_2}(\tilde{V_2})$ and $y\in\Lie(G_2)\otimes\C_p(1)/\phi_{M}(\tilde{V})$ belongs to the fibre of $x$. If $x$ is crystalline, then so is $y$. Furthermore, The orbit of $y$ maps bijectively to the orbit of $x$ via $\Lie(G)\otimes\C_p(1)/\phi_{M}(\tilde{V})\to \Lie(G_2)\otimes\C_p(1)/\phi_{M_2}(\tilde{V_2})$. This means that if the orbit of $x$ is finite, then so is the orbit of $y$. But, by \cref{theorem classification for algebraic poin in image of logarithm}, any point in $\hp(M_2)$ corresponds to an algebraic crystalline point $x$ in $\Lie(G_2)\otimes\C_p(1)/\phi_{M_2}(\tilde{V_2})$. Consequently, the point $y\in\Lie(G)\otimes\C_p(1)/\phi_{M}(\tilde{V})$ in the fibre of $x$ is also an algebraic crystalline point. This concludes the result.
    
    In order to verify faithfulness it suffices to show that $\Hp(M)=0$ implies that $M=0$ in $\gMi(\K)$. Consider some $M$ in $\gMi(\K)$ with $\Hp(M)=0$. The diagram \ref{7.1.4} implies that $\Hp(L)=\Hp(G)=0$, so $\hp(G,\bar{\K})=0$ and $V(L)=0$. $\hp(G,\bar{\K})=0$ implies that $G=0$, and $V(L)=L\otimes\G_a=0$ implies that $L=0$.
\end{proof}
\begin{defn}\label{definition Hpp}
We define $\Hpp(M)$ as the pullback of $\Hp(M)\into\Tp(M)\otimes_{\Z_p}\C_p$ via the map $\Tp(M)\otimes_{\Z_p}\Bt\to\Tp(M)\otimes_{\Z_p}\C_p$ induced by the quotient map $\Bt\to\C_p$.    
\end{defn}
We have the commutative diagram
\begin{equation}\label{7.2.6}
\begin{tikzcd}
{0} \arrow[r] & \Tp(M)\otimes_{\Z_p}\C_p(1) \arrow[r] \arrow[d, Rightarrow, no head] & \Hpp(M) \arrow[r] \arrow[d, hook] & \Hp(M) \arrow[r] \arrow[d, hook]   & {0} \\
0 \arrow[r]  & \Tp(M)\otimes_{\Z_p}\C_p(1) \arrow[r]                                & \Tp(M)\otimes_{\Z_p}\Bt \arrow[r] & \Tp(M)\otimes_{\Z_p}\C_p \arrow[r] & 0 
\end{tikzcd}
\end{equation}
with exact rows, where the bottom row is obtained by tensoring $\Tp(M)$ with the canonical exact sequence 
\[
0\to \C_p(1)\to\Bt\to\C_p\to 0.
\]
\begin{cor}
The functor $M\mapsto\Hpp(M)$ is faithful and exact.
\end{cor}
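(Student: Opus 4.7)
The plan is to derive both assertions directly from the short exact sequence
\begin{equation*}
0\to \Tp(M)\otimes_{\Z_p}\C_p(1)\to\Hpp(M)\to\Hp(M)\to 0
\end{equation*}
recorded in diagram \ref{7.2.6}, combined with the exactness/faithfulness already established for $\Hp$. The point is that $\Hpp$ is constructed as a pullback of $\Hp(M)\hookrightarrow \Tp(M)\otimes_{\Z_p}\C_p$ along the canonical surjection $\Tp(M)\otimes_{\Z_p}\Bt\twoheadrightarrow\Tp(M)\otimes_{\Z_p}\C_p$; since the latter is surjective with kernel $\Tp(M)\otimes_{\Z_p}\C_p(1)$, the induced map $\Hpp(M)\to\Hp(M)$ is surjective with the same kernel, which is exactly the above sequence.

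First I would address faithfulness. Assume $\Hpp(M)=0$. The surjection $\Hpp(M)\twoheadrightarrow\Hp(M)$ forces $\Hp(M)=0$, and the previous proposition then yields $M=0$ in $\gMi(\K)$. Since $\Hpp$ is an additive functor to $\Vect(\Q)$, conservativity is equivalent to faithfulness, and we are done.

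Next I would prove exactness. Given an exact sequence $0\to M_1\to M\to M_2\to 0$ in $\gMi(\K)$, one forms the commutative diagram whose rows are the three instances of the above short exact sequence (for $M_1$, $M$, $M_2$) and whose columns are
\begin{equation*}
0\to \Tp(M_i)\otimes_{\Z_p}\C_p(1)\to\Hpp(M_i)\to\Hp(M_i)\to 0.
\end{equation*}
The top row is exact because $\Tp$ is an exact functor (the Tate module of 1-motives is exact by construction) and tensoring with the flat $\Z_p$-module $\C_p(1)$ preserves exactness. The bottom row is exact by the previous proposition. All three columns are exact. The $3\times 3$ lemma then yields exactness of the middle row, which is precisely exactness of $\Hpp$ on $0\to M_1\to M\to M_2\to 0$.

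No essential obstacle is expected: the construction of $\Hpp$ as a pullback along a surjection automatically produces the short exact sequence that reduces both statements to the corresponding properties of $\Hp$ and of $\Tp(\cdot)\otimes_{\Z_p}\C_p(1)$. The only subtlety worth flagging is to verify that the functorial maps $\Hpp(M_1)\to\Hpp(M)\to\Hpp(M_2)$ arising from the universal property of the pullback agree with the ones used in the $3\times 3$ diagram, but this is immediate from the naturality of the Fontaine map $\phi_M$, the p-adic integration map $\varpi_M$, and of the quotient $\Bt\twoheadrightarrow\C_p$ used to define $\Hp$ and $\Hpp$.
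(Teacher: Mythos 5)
Your proof is correct and uses essentially the same argument as the paper, which dispatches the corollary in one sentence by citing exactness and faithfulness of $\Tp$ and $\Hp$. You simply make explicit what the paper leaves implicit: that $\Hpp$ sits in the natural short exact sequence
$0\to \Tp(M)\otimes_{\Z_p}\C_p(1)\to\Hpp(M)\to\Hp(M)\to 0$
from diagram \ref{7.2.6}, so that exactness follows from the $3\times 3$ lemma (the outer rows being exact because $\Tp(-)\otimes_{\Z_p}\C_p(1)$ is exact and $\Hp$ is exact), and $\Hpp(M)=0$ forces $\Hp(M)=0$ and hence $M=0$.

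One small caveat on presentation: the sentence ``conservativity is equivalent to faithfulness for additive functors'' is not a true statement in that generality; the equivalence between ``$F(M)=0\Rightarrow M=0$'' and faithfulness requires $F$ to be exact. Since you do prove exactness in the next paragraph (and the argument for exactness does not invoke faithfulness), the gap is merely in the ordering of the two parts, not in substance — prove exactness first, then conclude faithfulness — but the general principle you cite should not be stated as an unqualified fact about additive functors. Also, a typographical slip: you describe the rows and the columns of the $3\times3$ diagram by the same formula; one of the two should be the functorial sequences $0\to X(M_1)\to X(M)\to X(M_2)\to 0$ for $X\in\{\Tp(-)\otimes_{\Z_p}\C_p(1),\,\Hpp,\,\Hp\}$.
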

\begin{proof}
This is straightforward since both $\Tp$ and $\Hp$ are faithful and exact.   
\end{proof}

\begin{remark}
Let $\langle ,\,  \rangle$ denotes the pairing $\Lie(G\nat)_{\Bt}\times\coLie(G\nat)_{\Bt}\to\Bt$. The splitting of the first row in \ref{7.2.1} implies that $\Hp(M)$ embeds naturally into $\Lie(G\nat)_{\C_p}$. If $x\in\Hp(M)$ and $\omega\in\coLie(G)$, then diagram \ref{7.2.1} implies that 
\[
\langle x,\omega\rangle=\int^{\phi}_x\omega=\int^{\varpi}_x\omega.
\]
The latter equality follows from the fact that the p-adic integration pairing respects filtration (\cref{theorem p-adic integration pairing for M}). Moreover, we have the diagram 
\begin{equation*}
\begin{tikzcd}
0 \arrow[r] & \Tp(M)\otimes_{\Z_p}\C_p(1) \arrow[r] \arrow[d, Rightarrow, no head] & \Hpp(M) \arrow[r] \arrow[d, hook]                                & \Hp(M) \arrow[r] \arrow[d, hook]                                              & 0 \\
0 \arrow[r] & \Tp(M)\otimes_{\Z_p}\C_p(1) \arrow[r]                                & \Tp(M)\otimes_{\Z_p}\Bt \arrow[r] \arrow[dd, "\varpi_M\otimes\Bt"] & \Tp(M)\otimes_{\Z_p}\C_p \arrow[r] \arrow[d, "\varphi\otimes\C_p", two heads] & 0 \\
            &                                                                      &                                                                  & \Lie(G)\otimes_{K}\C_p(1) \arrow[d, hook]                                     &   \\
0 \arrow[r] & V(M)\otimes_K\Bt \arrow[r]                                           & \Lie(G\nat)\otimes_K\Bt \arrow[r]                                & \Lie(G)\otimes_{K}\Bt \arrow[r]                                               & 0
,\end{tikzcd}
\end{equation*}
where the top squares are commutative, and the bottom square is also commutative due to the diagram \ref{commutative diagram respect filtration}. Assume that $x\in\Hpp(M)$. We can write $x$ as $x=u+v$, where the image of $u$ belongs to $\Hp(M)$ and $v\in\Tp(M)\otimes_{\Z_p}\C_p(1)$, and hence $v$ is in the kernel of $\Tp(M)\otimes_{\Z_p}\Bt\to\Tp(M)\otimes_{\Z_p}\C_p$. Notice that this representation is not necessarily unique. As the above diagram commutes, $v$ belongs to $V(M)\otimes_K\Bt$, and for $\omega\in\coLie(G)$, we have 
\[
\displaystyle\int^{\varpi}_x\omega=\int^{\varpi}_u\omega + \int^{\varpi}_v\omega=\int^{\varpi}_u\omega+0=\int^{\varphi}_u\omega.
\]

We summarize theses results in the following corollary.
\end{remark}
\begin{cor}\label{comparing parings Hp and Hpp}
Let $\omega\in\coLie(G)_{\bar{\K}}$, and let $\pi$ denote the surjective map $\Hpp(M)\to\Hp(M)$. We have
\begin{equation}
    \int^{\varpi}_{x}\omega=\int^{\varphi}_{\pi(x)}\omega.
\end{equation}    
\end{cor}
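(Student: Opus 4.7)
The plan is to unwind the definitions of $\Hpp(M)$ and $\Hp(M)$ and to exploit two facts already established: (i) the unique splitting of the lower row of diagram \ref{7.2.1}, which gives a canonical embedding $\Hp(M) \hookrightarrow \Lie(G^{\natural})_{\C_p}$, and (ii) \cref{theorem p-adic integration pairing for M}, which guarantees that on elements $\omega \in \Fil^{1}\TdR^{\vee}(M_K) = \coLie(G)_K$ the p-adic integration pairing agrees with Fontaine's pairing, i.e.\ $\int^{\varpi}_{y}\omega = \int^{\varphi}_{y}\omega$ for every $y$ in the image of $\phi_M$ in $\Lie(G)\otimes_{\cO_K}\C_p(1)$.

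Concretely, I would first fix $x \in \Hpp(M)$ and decompose it, using the top exact row of \ref{7.2.6}, as $x = u + v$ with $u \in \Hpp(M)$ a lift of $\pi(x) \in \Hp(M)$ and $v \in \ker\bigl(\Tp(M)\otimes_{\Z_p}\Bt \twoheadrightarrow \Tp(M)\otimes_{\Z_p}\C_p\bigr) = \Tp(M)\otimes_{\Z_p}\C_p(1)$. By bilinearity of $\int^{\varpi}$ it suffices to show separately that $\int^{\varpi}_{u}\omega = \int^{\varphi}_{\pi(x)}\omega$ and that $\int^{\varpi}_{v}\omega = 0$ for every $\omega \in \coLie(G)_{\bar{\K}}$.

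For the first equation, the canonical embedding $\Hp(M) \hookrightarrow \Lie(G^{\natural})_{\C_p}$ provided by the unique splitting of the bottom row of \ref{7.2.1} identifies $\pi(x)$ with an element of $\Lie(G^{\natural})_{\C_p}$ whose image in $\Lie(G)\otimes_{\cO_K}\C_p(1)$ is $(\phi_M\otimes\C_p)(\pi(x))$. Since $\omega$ lies in $\Fil^{1}\TdR^{\vee}(M)$, the compatibility of $\varpi_M$ with the Hodge filtration stated in \cref{theorem p-adic integration pairing for M}, together with the commutative square \ref{commutative diagram respect filtration}, yields $\int^{\varpi}_{u}\omega = \int^{\varphi}_{\pi(x)}\omega$.

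For the second equation, chase $v$ through the large commutative diagram appearing in the remark preceding the statement: the composition $\Tp(M)\otimes_{\Z_p}\Bt \xrightarrow{\varpi_M\otimes\Bt} \Lie(G^{\natural})\otimes_K\Bt \to \Lie(G)\otimes_K\Bt$ factors through $\Tp(M)\otimes_{\Z_p}\C_p \xrightarrow{\phi_M\otimes\C_p} \Lie(G)\otimes_{\cO_K}\C_p(1) \hookrightarrow \Lie(G)\otimes_K\Bt$, so $v$ maps to zero in $\Lie(G)\otimes_K\Bt$ and hence $(\varpi_M\otimes\Bt)(v)$ lies in $V(M)\otimes_K\Bt = \ker\bigl(\Lie(G^{\natural})\otimes_K\Bt \to \Lie(G)\otimes_K\Bt\bigr)$. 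Pairing with $\omega \in \coLie(G)_{\bar{\K}}$ then gives $\int^{\varpi}_{v}\omega = 0$, since $V(M)$ is orthogonal to $\coLie(G)$ under the duality $\Lie(G^{\natural}) \times \coLie(G^{\natural}) \to K$. The main (and only) subtlety is checking that the decomposition $x = u + v$, although not unique, produces a well-defined value for $\int^{\varpi}_{u}\omega$; this is automatic once one observes that any two choices differ by an element of $\Tp(M)\otimes_{\Z_p}\C_p(1) \cap \Hpp(M)$, which again maps into $V(M)\otimes_K\Bt$ under $\varpi_M\otimes\Bt$ and therefore contributes zero when paired with $\omega \in \coLie(G)_{\bar{\K}}$.
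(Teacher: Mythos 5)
Your proof takes essentially the same route as the paper: the paper also fixes the decomposition $x = u + v$ with $v$ in the kernel of $\Tp(M)\otimes_{\Z_p}\Bt \twoheadrightarrow \Tp(M)\otimes_{\Z_p}\C_p$, uses the commutativity of the bottom square of the diagram preceding the corollary (coming from diagram \ref{commutative diagram respect filtration}) to conclude $(\varpi_M\otimes\Bt)(v) \in V(M)\otimes_K\Bt$, observes that $V(M)$ is orthogonal to $\coLie(G)$ so $\int^{\varpi}_v\omega=0$, and reduces $\int^{\varpi}_u\omega$ to the Fontaine pairing $\int^{\varphi}_{\pi(x)}\omega$ via the filtration compatibility of \cref{theorem p-adic integration pairing for M}. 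The only thing you add is an explicit check that the answer is independent of the choice of lift $u$; the paper merely flags the non-uniqueness ("Notice that this representation is not necessarily unique") without spelling it out, but the verification is the same one you already did for $v$, so this is an expansion rather than a different argument.
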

The $\Q$-structure $\Hpp(M)$ is very large. It contains $\Tp(M)\otimes_{\Z_p}\C_p(1)$ which we do not know anything about their image under the integration pairing $\displaystyle\int^{\varpi}$ in general. This is why we focus on some specific elements in $\Hpp(M)$. Let $\Bct:=\Bcrisp/t^2\Bcrisp\subseteq\Bt$. As $\phi_{cris}$ is injective on $\Bcrisp$, and acts on $t$ by multiplication by $p$, it follows that $\phi_{cris}$ induces an injective endomorphism on $\Bct$. Recall the \cref{def: tilde tate module} for $\tilde{\Tp}(M)$.
\begin{defn}
 We define $\tilde{\Hpp}(M)$ to be the pull-back of $\Hpp(M)\into\Tp(M)\otimes_{\Z_p}\Bt$ via $\tilde{\Tp}(M)\otimes_{\Z_p}\phi_{cris}(\Bct)\into\Tp(M)\otimes_{\Z_p}\Bt$; i.e.
 \begin{equation}
\begin{tikzcd}
\tilde{\Hpp}(M) \arrow[d, hook] \arrow[r, hook]               & \Hpp(M) \arrow[d, hook] \\
\tilde{\Tp}(M)\otimes_{\Z_p}\bigcap_{m=1}^{\infty}\phi^m_{cris}(\Bct) \arrow[r, hook] & \Tp(M)\otimes_{\Z_p}\Bt
.\end{tikzcd}
 \end{equation}
\end{defn}
We need one more step to introduce our period pairings relative to the $\Q$-structures $\Hp$, and $\Hpp$. The description of $\hp(M)$ in \cref{fundamental equality for hp in chapter 5} indicates that these elements are obtained from $\sG^0$ the connected component of $\sG$, whose associated isocrystal has non-zero slopes. Consequently, it makes sense to pair them with forms that also have non-zero slopes in the isocrystal associated to $M$. Actually, this pairing seems to be necessary, as demonstrated in the proof of our main theorem (\cref{level 2 period cinjecture for Hpp}); otherwise, we may encounter some vanishing periods, making it difficult to trace the underlying relations.

\begin{defn}\label{def: non-zero slope isocrystal}
    Let $N$ be an isocrystal over $K_0$. By \cref{isoclinic decomposition lemma}, there is a unique decomposition 
\[
N=\bigoplus N(\alpha_i)
\]
into direct sum of nonzero sub-filtered isocrystals with slopes $\alpha_i$ for $i=1,\dots, n$. We define $\tilde{N}$ to be sub-object of $N$ containing all nonzero slopes, i.e., 
\[
\tilde{N}:=\bigoplus_{\alpha_i\neq 0}N(\alpha_i)\subset N.
\]
\end{defn}

\begin{remark}\label{remark 6.2.2}
Let $\K'$ be a finite extension of $\K$ and $k'$ its residue field at the prime above $p$. Then, by the crystalline-de Rham comparison isomorphism (\cref{crystalline-de Rham comparison isomorphism}), we can obtain a canonical identification
\[
\TdR\ve(M)\otimes_{\K}K'\cong\Tcrys\ve(\bar{M})\otimes_{\W(k')}K',
\]
where $K'$ is the p-adic completion of $\K'$. Let $K'_0$ be the field of fraction of $\W(k')$, and let $N=\Tcrys\ve(\bar{M})\otimes_{\W(k')}K'_0$ and $\tilde{N}_{K'_0}$ the subobject of $N$ containing all nonzero slopes (see \cref{def: non-zero slope isocrystal}). We define $\tilde{N}(\K')$ to be the pull-back of $\TdR\ve(M)_{\K'}\into \Tcrys\ve(\bar{M})\otimes_{\W(k')}K'$ and $\tilde{N}_{K'_0}\into \Tcrys\ve(\bar{M})\otimes_{\W(k')}K'$ i.e.
\begin{equation}\label{eq 6.2.8}
\begin{tikzcd}
\tilde{N}(\K') \arrow[d, hook] \arrow[r, hook] & \tilde{N}_{K'_0} \arrow[d, hook]  \\
\TdR\ve(M)_{\K'} \arrow[r, hook]                  & \Tcrys\ve(\bar{M})\otimes_{\W(k')}K'
\end{tikzcd}
\end{equation}
where the bottom arrow is the composition 
\[
\TdR\ve(M)_{\K'}\into\TdR\ve(M)\otimes_{\K}K'\cong\Tcrys\ve(\bar{M})\otimes_{\W(k')}K'.
\]
The action of the Frobenius is compatible with unramified extensions of local fields and the above diagram is compatible with extensions of $\K$. We define 
\begin{equation}
    \tilde{N}(M)=\colim \tilde{N}(\K'),
\end{equation}
where direct limit is taken over all finite extensions of $\K$. By construction, this is the same as taking direct limit of $\colim\tilde{N}(\K')$ over all finite extensions of $\K$ where $p$ is unramified, or equivalently, over all finite extensions $\K'$ such that $\K\subseteq\K'\subseteq K^{ur}\cap\bar{\K}:=\K^{u}$. The field $\K^u$ is indeed the maximal unramified extension of $\K$ at $p$ which is the extension obtained by taking the compositum of all finite extensions of $\K$ in which the prime 
$p$ does not ramify. This extension is infinite and is a Galois extension of $\K$, where its decomposition group at $p$ is isomorphic to the profinite completion of the absolute Galois group of the residue field at $p$ (see \cite[Chapter II, \S 5]{neukirch_algebraic_1999}, or \cite[Chapter III]{serre_local_1979}).

For any $\omega\in\Lie(G\nat)_{\bar{\K}}$, there exists a finite extension $\K'$ of $\K$ such that $\omega$ belongs to $\Lie(G\nat)_{\K'}$. We have \[
\tilde{N}(M)\into\TdR\ve(M_{\K})\otimes \bar{K}\into\TdR\ve(M_K)\otimes_K\Bt=\coLie(G\nat)\otimes_K\Bt.
\]
In fact, $\tilde{N}(M)$ is a $\K^u$-linear subspace of $\coLie(G\nat)_{\bar{\K}}$.
\end{remark}
\begin{defn}\label{three pairings hp Hp Hpp}
    The restriction of the paring $(\Tp(M)\otimes_{\Z_p}\C_p)\times(\coLie(G))\to\C_p(1)$ induced by Fontaine's map $\phi_M$, on $\Hp(M)$, is denoted by 
    \[
    \displaystyle \int^{\Hp}:\Hp(M)\times\coLie(G)_{\bar{\K}}\to\C_p(1)
    \]
    and the restriction of the paring $(\Tp(M)\otimes\Bt)\times(\TdR\ve(M)\otimes\Bt)\to \Bt$ induced by the integration map $\varpi_M$, on $\tilde{\Hpp}(M)\times\tilde{N}(M)$, is denoted by 
    \[
    \displaystyle \int^{\Hpp}:\tilde{\Hpp}(M)\times \tilde{N}(M)\to\Bt.
    \]
    The restriction of the pairing $\displaystyle\int^{\Hp}$ on $\hp(M,\bar{\K})$ is denoted by $\displaystyle\int^{\hp}$.
    
    We say that a p-adic number $\alpha\in\Bt$ is an $\Hpp$-period, $\Hp$-period, or $\hp$-period of a 1-motive $M\in\gMi(\K)$, if it is in the image of the pairing $\displaystyle\int^{\Hpp}$, $\displaystyle\int^{\Hp}$, or $\displaystyle\int^{\hp}$, respectively.
\end{defn}
\begin{remark}
Let $T:\gMi\to\Mod_{\Q,\barQ}^{B_2}$ be defined by \[M\mapsto (\Vp(M),\TdR(M)_{\barQ},\varpi\otimes\Bt).\]
The pairing $\displaystyle\int^{\Hpp}$ serves as a period pairing for $T$ in the sense of \cref{period pairing for T}. In the sense of \cref{period pairing for T}, one can put $\displaystyle\int^{\Hpp}=(F,G)$, where we define
\[F:\gMi\to\Vect(\Q),\, M\mapsto\tilde{\Hpp}(M), \text{ and } G:\gMi\to \Vect(\K^u), M\mapsto\tilde{N}(M).\] For convenience, $\displaystyle\int^{\Hpp}$-period will simply be referred to as an $\Hpp$-period. The space of these $\Hpp$-periods is denoted by $\cP_{\Hpp}\langle M\rangle$ (\cref{Space of H-periods}), which is a $\K^u$-vector space. The space of formal $\Hpp$-periods and formal $\Hpp$-periods of depth $i$ are denoted by $\tilde{\cP}_{\Hpp}(M)$ and $\tilde{\cP}^i_{\Hpp}(M)$, respectively (\cref{space of formal H-periods} and \cref{space of formal periods at depth i}).

We can also view $\displaystyle\int^{\Hp}$ as a period pairing for $T$. One can put $\displaystyle\int^{\Hp}=(F',G')$, where 
\[F':\gMi\to\Vect(\Q),\, M\mapsto \Hp(M), \text{ and } G':\gMi\to \Vect(\barQ), M\mapsto\coLie(G)_{\barQ}.\]
The space of $\Hp$-periods is denoted by $\cP_{\Hp}\langle M\rangle$, which is a $\barQ$-vector space. The space of formal $\Hp$-periods and formal $\Hp$-periods of depth $i$  are denoted by $\tilde{\cP}_{\Hp}(M)$ and $\tilde{\cP}^i_{\Hp}(M)$, respectively. Analogously, The space of $\hp$-periods, formal $\hp$-periods, and formal $\hp$-periods of depth $i$ are denoted by $\cP_{\hp}\langle M\rangle$, $\tilde{\cP}_{\hp}(M)$, and $\tilde{\cP}^i_{\hp}(M)$, respectively.
\end{remark}

\begin{prop}
    The assignment $\tilde{N}:\gMi(\bar{\K})\to\Vect(\bar{\K}),\, M\mapsto\tilde{N}(M)$ is a contravariant exact functor.
\end{prop}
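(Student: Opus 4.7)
The plan is to verify contravariance and exactness separately, working first at the level of each finite unramified extension $\K'/\K$ and then passing to the colimit.

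For contravariance, I would begin with a morphism $f\colon M\to M'$ in $\gMi(\bar{\K})$. The exact contravariant realizations $\TdR\ve$ and $\Tcrys\ve$ produce compatible maps $f^*\colon\TdR\ve(M')_{\K'}\to\TdR\ve(M)_{\K'}$ and $f^*\colon\Tcrys\ve(\bar{M}')\to\Tcrys\ve(\bar{M})$, identified through the crystalline--de Rham comparison isomorphism of \cref{crystalline-de Rham comparison isomorphism}. Because the isoclinic decomposition of an isocrystal is canonical and unique (\cref{isoclinic decomposition lemma}), morphisms of isocrystals are strict with respect to the slope filtration, so $f^*$ restricts to the nonzero-slope parts $\tilde{N}_{M',K'_0}\to\tilde{N}_{M,K'_0}$. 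The universal property of the pullback then produces a $\K'$-linear map $\tilde{N}(M',\K')\to\tilde{N}(M,\K')$; taking the colimit over finite unramified extensions $\K'/\K$ yields the contravariant functoriality of $\tilde{N}$.

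For exactness, given a short exact sequence $0\to M_1\to M\to M_2\to 0$ in $\gMi(\bar{\K})$, I would fix a common finite unramified extension $\K'/\K$ over which everything is defined and combine three ingredients: exactness of $\TdR\ve$ gives an exact sequence $0\to\TdR\ve(M_2)_{\K'}\to\TdR\ve(M)_{\K'}\to\TdR\ve(M_1)_{\K'}\to 0$; exactness of $\Tcrys\ve$ gives the analogous exact sequence of isocrystals over $K'_0$; and the slope decomposition, being a functorial direct sum, preserves exactness, yielding $0\to\tilde{N}_{2,K'_0}\to\tilde{N}_{K'_0}\to\tilde{N}_{1,K'_0}\to 0$. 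Left exactness of $M\mapsto\tilde{N}(M,\K')$ is then automatic, since this functor is defined as a pullback (a limit, which commutes with limits), and passing to the filtered colimit preserves left exactness.

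The hard part will be right exactness, i.e.\ surjectivity of $\tilde{N}(M)\to\tilde{N}(M_1)$. Given $y\in\tilde{N}(M_1)$ represented over some $\K'$, the two exactness statements above produce lifts $x_A\in\TdR\ve(M)_{\K'}$ and $x_B\in\tilde{N}_{K'_0}$, whose difference $x_A-x_B$ lies in the kernel $\Tcrys\ve(\bar{M}_2)\otimes_{\W(k')}K'$. To produce a single lift in $\tilde{N}(M,\K'')$ for some larger unramified $\K''/\K'$, I would need to decompose $x_A-x_B=a-b$ with $a\in\TdR\ve(M_2)_{\K''}$ and $b\in\tilde{N}_{2,K''_0}$, so that $x:=x_A-a=x_B-b$ belongs to the intersection defining $\tilde{N}(M,\K'')$. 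The subtlety is reconciling the $\K''$-form supplied by $\TdR\ve$ with the $K''_0$-form supplied by the isocrystal; passing to the colimit over all unramified $\K''$ is essential, since only in the limit is the combined span of $\TdR\ve(M_2)_{\bar{\K}}$ and (the image of) $\tilde{N}_2$ large enough to realize the required decomposition, using the slope-$0$ and nonzero-slope components of $x_A-x_B$ together with the fact that $\TdR\ve(M_2)_{\bar{\K}}$ provides a $\bar{\K}$-structure whose scalar extension exhausts $\Tcrys\ve(\bar{M}_2)\otimes_{\W(k')}\bar{K}$.
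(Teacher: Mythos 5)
Your plan mirrors the structure of the paper's three-sentence proof, which invokes exactness of $\TdR\ve$, of the restriction to nonzero slopes, and of ``pullback.'' You are more careful than the paper on the last point, and rightly so: a fibre product of two injections into a common ambient space is an intersection of subobjects, and intersecting exact sequences of subspaces is left exact but not right exact in general (inside $\Q^2\onto\Q$ via the second projection, the lines $\Q\cdot(1,1)$ and $\Q\cdot(1,2)$ each map isomorphically onto $\Q$, yet their intersection is $0$). So the surjectivity of $\tilde{N}(M)\to\tilde{N}(M_1)$ is a genuine issue that the paper's clause ``the result follows since pullback is also exact'' leaves unaddressed, and flagging it as the hard step is the correct reading of the situation.

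Your sketch of that surjectivity, however, does not close the gap. After producing separate lifts $x_A\in\TdR\ve(M)_{\K'}$ and $x_B\in\tilde{N}_{K'_0}$, you need the difference $x_A-x_B\in\Tcrys\ve(\bar{M}_2)\otimes_{\W(k')}K'$ to lie in $\TdR\ve(M_2)_{\K''}+\tilde{N}_{2,K''_0}$ for some finite unramified $\K''\supset\K'$. The observation that $\TdR\ve(M_2)_{\bar{\K}}$ is a $\bar{\K}$-form whose scalar extension to $\bar{K}$ exhausts the crystalline space does not supply this: the required splitting $x_A-x_B=a-b$ must have $a$ literally in the $\K''$-form (not merely in its $\bar{K}$-span) and $b$ in the $K''_0$-form of the nonzero-slope isocrystal, and the slope-zero component of $x_A-x_B$ lives in $N_2(0)_{K'}$ with coordinates governed by the crystalline--de~Rham comparison, which are typically transcendental; there is no a priori reason these are hit by the $\K''$-rational image $\pi_0(\TdR\ve(M_2)_{\K''})$ for any finite $\K''$. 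Both your sketch and the paper's one-line assertion need a further ingredient here --- for instance, an a priori $\K'$-dimension count for $\tilde{N}(M,\K')$ coming from weak admissibility or a more explicit rationality property of the comparison --- before right exactness is established.
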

\begin{proof}
    Taking the functor $\tilde{(.)}$ from an exact sequence of isocrystals over $K_0$ (\cref{def: non-zero slope isocrystal}) results in an exact sequence of isocrystals with non-zero slopes. Additionally, we know that $\TdR$  and base change are exact. Therefore, the result follows since pullback is also exact. 
\end{proof}

\section{P-adic subgroup theorem for 1-motives}
In this section, we first establish a p-adic version of the subgroup theorem for 1-motives (\cref{p-adic subgroup theorem for Fontaine pairing}). Then using this theorem, we prove the $\Hpp$-period conjecture at depth 2, the $\Hp$-period conjecture at depth 2, and the $\hp$-period conjecture at depth 1 for all 1-motives with good reductions.

\begin{defn}\label{def: left-right kernel duality pairing}
Let $\fg$ be a vector space over a field $F$. Let $\langle,\,\rangle$ be the duality pairing $\fg\times\fg\ve\to K$. We define the left kernel and right kernel as follows: for any $\fa\subseteq\fg$ and $\fb\subseteq \fg\ve$
\begin{gather*}
    \Ann(\fa):=\{f\in\fg\ve\st \langle \fa,f\rangle=0\}\\
    \Ann(\fb):=\{u\in\fg\st \langle u,\fb\rangle=0\}.
\end{gather*}
Put $\Ann(u):=\Ann(\{u\})$, for any $u\in\fg$.
\end{defn}
By the definition and maximality of $\Ann(u)$, we have $\Ann\Ann(\Ann(u))=\Ann(u)$.\\
\

 Assume that $x\in\Hp(M)$. We can view $x$ in $\Lie(G\nat)_{\C_p}$, due to diagram \ref{7.1.4}. Thus, by $\Ann(x)$, we mean $\Ann(x)$ within $\coLie(G\nat)_{\C_p}$.

We need to prove the following theorem to which we refer as the p-adic subgroup theorem for 1-motives with good reduction:
\begin{thm}[P-adic subgroup theorem for 1-motives]\label{p-adic subgroup theorem for Fontaine pairing}
Let $M$ be a 1-motive over number field $\K$ with good reduction at $p$ and let $x\in\Hp(M)$. There exists an exact sequence 
\[
0\to M_1\to M^n\to M_2\to 0
\]
of 1-motives over a finite extension of $\K$ with good reductions at $p$, with $n\in\{1,2\}$, such that $x\in\Hp(M_1)$, and $\Ann(x)\subseteq\TdR\ve(M_2)$.
\end{thm}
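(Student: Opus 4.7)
The proof runs in parallel to the classical 1-motivic subgroup theorem of Huber-W\"ustholz, with Fuchs' p-adic analytic subgroup theorem for commutative algebraic groups over $\bar\K$ playing the role of W\"ustholz's classical one. First I would unpack the structure of $x$. Using the unique splitting of the first row of diagram \ref{7.1.4}, every $x \in \Hp(M)$ decomposes as $x = v + y$ with $v \in V(M) \otimes_K \C_p$ and $y \in \hp(M, \bar\K) \subseteq \Lie(G)_{\bar\K}$. By \cref{fundamental equality for hp in chapter 5}, $y$ is a $\Q$-linear combination of logarithms $\log_{\sG}(u_i)$ of formal points $u_i \in G(\bar\K) \cap \sG(\ocp)$. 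After enlarging $\K$ to a finite extension and using $\Q$-linearity of the desired conclusion (with passage to a suitable power of $M$ if more than one $u_i$ appears), I would reduce to the case $y = \log_{\sG}(u)$ for a single algebraic point $u \in G(\bar\K) \cap \sG(\ocp)$.

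\textbf{Applying Fuchs' theorem.} Choose a lift $\tilde u \in G^\natural(\bar\K)$ of $u$ along the projection $G^\natural \twoheadrightarrow G$; then $\log_{G^\natural}(\tilde u) = y + v'$ for some $v' \in V(M)_{\C_p}$ depending on the lift. Since an arbitrary $v \in V(M) \otimes \C_p$ need not be algebraic over $\bar\K$, I would absorb the discrepancy $v - v'$ by passing to $M^2$: a diagonal twist in the two copies of $V(M)$ inside $(G^\natural)^2$ realizes $x$ (viewed in $(G^\natural)^2$ via one of the coordinate inclusions plus a correction) as $\log_{(G^\natural)^2}$ of a single $\bar\K$-point. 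This is precisely where the alternative $n = 2$ is needed; when $v$ already lies in $V(M)_{\bar\K}$ we can work with $n = 1$. Fuchs' p-adic analytic subgroup theorem then produces the smallest connected algebraic subgroup $H \subseteq (G^\natural)^n$ defined over $\bar\K$ such that $\log(\tilde u) \in \Lie(H) \otimes_{\bar\K} \C_p$. The image $G_1 \subseteq G^n$ of $H$ under $(G^\natural)^n \twoheadrightarrow G^n$ is a semi-abelian subvariety over $\bar\K$, and setting $L_1$ to be the pullback of $G_1$ along $(u_M)^n : L^n \to G^n$ yields a sub-1-motive $M_1 = [L_1 \to G_1] \hookrightarrow M^n$ over a finite extension of $\K$ whose universal vector extension coincides with $H$. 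Both $M_1$ and $M_2 := M^n/M_1$ inherit good reduction at $p$ from $M^n$ by \cref{cor: good reduction exact sequence of 1-motives}.

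\textbf{Verification and main obstacle.} Since $\tilde u \in H(\bar\K)$ by construction we obtain $y \in \hp(M_1, \bar\K)$, and the vector part of $x$ lies in $V(M_1) \otimes \C_p$ by the absorption arranged in Step~2; hence $x \in \Hp(M_1)$. For the annihilator condition, the minimality of $H$ provided by Fuchs' theorem translates into: an element $\omega \in \TdR\ve(M^n) = \coLie((G^\natural)^n)_{\bar\K}$ kills $x$ if and only if it restricts to zero on $\Lie(H) = \TdR(M_1)$, if and only if, by dualizing the exact sequence $0 \to \TdR(M_1) \to \TdR(M^n) \to \TdR(M_2) \to 0$, the form $\omega$ belongs to $\TdR\ve(M_2) \hookrightarrow \TdR\ve(M^n)$. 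This gives $\Ann(x) \subseteq \TdR\ve(M_2)$. The main obstacle is the clean handling of the transcendental vector contribution $v \in V(M) \otimes \C_p$: it is not itself algebraic, so it cannot be fed directly into Fuchs' theorem, and the device of replacing $M$ by $M^2$ to realize $v$ as the derivative of a $\bar\K$-morphism between two copies of $M$ is precisely the algebraic explanation of the alternative $n \in \{1,2\}$ in the statement.
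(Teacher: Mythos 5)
Your overall skeleton—decompose $x = u + v$ with $u \in \hp(M,\bar\K)$ and $v \in V(M)\otimes\C_p$, apply Fuchs' p-adic analytic subgroup theorem in the universal vector extension $G^\natural$, and read off a sub-1-motive $M_1$—is indeed the route the paper takes. But your mechanism for dealing with the transcendental vector part $v$ contains a genuine gap, and it is precisely at the point you yourself flag as the ``main obstacle.''

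You claim that ``a diagonal twist in the two copies of $V(M)$ inside $(G^\natural)^2$ realizes $x$ \ldots as $\log_{(G^\natural)^2}$ of a single $\bar\K$-point.'' This cannot work. The logarithm of a $\bar\K$-point of $(G^\natural)^2$ necessarily has its $V(M)^2$-component in $V(M)^2(\bar\K)$, since the logarithm on a vector group is the identity. No change of coordinates defined over $\bar\K$ (diagonal twist, shear, or otherwise) will turn a transcendental $v \in V(M)\otimes\C_p$ into the vector component of such a logarithm; twists act algebraically and preserve algebraicity of vector coordinates. Consequently, you cannot feed $x$ into Fuchs' theorem as a single logarithm, and the ensuing claim ``the vector part of $x$ lies in $V(M_1)\otimes\C_p$ by the absorption arranged in Step~2'' has no support. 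There is also a secondary imprecision: the image $G_1 \subseteq G$ of the subgroup $H$ from Fuchs need not have $H$ equal to the universal vector extension of $M_1$—the paper only shows $G_1^\natural \hookrightarrow H_1$ is an injection, and $H_1$ can have a strictly larger vector part. The paper handles the transcendental $v$ quite differently: it applies the subgroup theorem and builds $M_1 = [L_1 \to G_1]$, $M_2$ as you do, obtaining $\Ann(x) \subseteq \TdR^\vee(M_2)_{\C_p}$. It then case-splits on whether $v \in V(M_1)\otimes\C_p$. If not, it selects a \emph{second} sub-1-motive $M'_1 = [L_1 \to N^\vee] \subseteq M$ with $M_1 \subseteq M'_1$, chosen so that $V(M'_1)\otimes\C_p \ni v$ (the choice $N^\vee = G$ always works), yielding $x \in \Hp(M'_1)$ and a second exact sequence $0 \to M'_1 \to M \to M'_2 \to 0$. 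Because the first sequence gives the annihilator bound and the second gives membership of $x$, the two are combined by taking $0 \to M_1 \oplus M'_1 \to M^2 \to M_2 \oplus M'_2 \to 0$; this is the true source of the $n=2$ alternative, not a realization of $x$ as a single algebraic logarithm. You would need to replace your absorption step with this two-subobject/direct-sum device (or an equivalent one) for the proof to go through.
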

To prove this theorem, our main tool is the p-adic analytic subgroup theorem which is stated in \cite{bertrand1985lemmes} and proved in \cite{fuchs2015p}, and which is the p-adic version of the celebrated Wustholz's (classical) analytic subgroup theorem \cite{wustholz1989algebraische}. 

\begin{thm}[P-adic analytic subgroup theorem]\label{original p-adic analytic subgroup theorem}
Let $G$ be a commutative algebraic group defined over $\barQ$ and let $V\subseteq\Lie(G)$ be a non-trivial $\barQ$-linear subspace. For any $\gamma\in\G(\barQ)_f$ with $0\neq\log_{G(\C_p)}(\gamma)\in V\otimes_{\barQ}\C_p$, there exists an algebraic group $H\subseteq G$ defined over $\barQ$ such that $\Lie(H)\subseteq V$ and $\gamma\in H(\barQ)$.
\end{thm}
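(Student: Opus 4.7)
The plan is to adapt Wüstholz's transcendence method to the $p$-adic setting, following the general strategy of Bertrand--Fuchs--Pham. We argue by contradiction. Replacing $G$ by the Zariski closure of $\Z\cdot\gamma$, we may assume $\gamma$ is Zariski dense in $G$, and we suppose that no proper algebraic subgroup $H\subsetneq G$ defined over $\barQ$ with $\Lie(H)\subseteq V$ contains $\gamma$; we aim to show $V=\Lie(G)$. Write $v=\log_{G(\C_p)}(\gamma)\in V_{\C_p}\subseteq \Lie(G)_{\C_p}$, which is nonzero by hypothesis. Fix an embedding of $G$ into projective space (and choose a basis of rational, respectively theta-type, functions on $G$ to handle the affine, abelian and toric parts separately via the Chevalley decomposition), and let $\exp_G$ be the $p$-adic exponential of $G$, which converges on a neighbourhood of $0$ in $\Lie(G)_{\C_p}$.

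The first main step is the construction of an auxiliary function via a $p$-adic Siegel lemma. Choose parameters $D,T,S_0\in\N$ satisfying an inequality of the shape $D^{\dim G}\gg TS_0$. Using the $p$-adic Siegel lemma over the ring of integers of a number field containing the coordinates of $\gamma$, produce a nonzero polynomial $P$ in the chosen coordinates, of multidegree bounded by $D$ and of controlled logarithmic height, such that the analytic function
\[
F(z)\ :=\ P\bigl(\exp_G(zv)\bigr)
\]
vanishes to order at least $T$ at each of the points $z=1,2,\dots,S_0$. The hypothesis $v\in V_{\C_p}$ is used here to restrict $P$ to a polynomial whose derivatives along $V$ are small; this is where $V$ enters the construction.

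The second main step is extrapolation. Using the $p$-adic maximum modulus principle for analytic functions on a disc in $\C_p$, combined with an upper bound for $|F|_p$ on a larger disc and lower bounds for the nonzero values of $F$ at algebraic arguments (obtained from Liouville-type height inequalities and the fact that $\gamma\in G(\barQ)_f$), one shows that $F$ must in fact vanish to order $\geq T$ at $z=1,\dots,S_1$ for some $S_1\gg S_0$. Iterating this extrapolation $k$ times produces vanishing at $z=1,\dots,S_k$ with $S_k$ growing geometrically, while the degree and height of $P$ remain fixed.

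Finally, apply a multiplicity/zero estimate on commutative algebraic groups (Philippon--Masser--Wüstholz type, in the $p$-adic formulation of Fuchs--Pham). Such a zero estimate asserts that if a nonzero polynomial of degree $D$ vanishes to very high order on too many translates of a one-parameter analytic subgroup in $G$, then there must exist a proper algebraic subgroup $H\subsetneq G$ defined over $\barQ$ such that $\gamma\in H$ and $\exp_G(\C_p\cdot v)\subseteq H(\C_p)$, the latter forcing $v\in\Lie(H)_{\C_p}$ and hence, after intersecting, producing a subgroup with $\Lie(H)\subseteq V$. For the chosen parameter sizes the extrapolated vanishing exceeds the bound of the zero estimate, so either the nontrivial $H$ exists---contradicting Zariski density of $\gamma$ unless $H=G$, in which case $\Lie(G)\subseteq V$ gives the theorem---or we reach an arithmetic contradiction. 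Either outcome concludes the proof.

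The principal obstacle is the $p$-adic analytic extrapolation step: unlike in the complex case, one does not have a Phragmén--Lindelöf principle, so the upper bounds must be obtained purely from the radius of convergence of $\exp_G$ and careful tracking of the Newton polygon of $F$; balancing the parameters $D,T,S_0$ so that extrapolation can be iterated without destroying the height estimates is the delicate combinatorial heart of the argument, and it is here that the $p$-adic proof genuinely diverges from its archimedean prototype.
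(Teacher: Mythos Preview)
The paper does not prove this theorem. It is quoted from the literature: the statement is attributed to Bertrand and the proof to Fuchs--Pham (\texttt{fuchs2015p}), and the paper uses it as a black box to derive its own ``p-adic subgroup theorem for 1-motives'' (\cref{p-adic subgroup theorem for Fontaine pairing}). So there is no proof in the paper to compare your proposal against.

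Your outline is a recognisable sketch of the Bertrand--Fuchs--Pham strategy, and the overall architecture (auxiliary function via Siegel, $p$-adic extrapolation, Philippon--Masser--W\"ustholz zero estimate) is correct in spirit. But as written it is a plan, not a proof: several of the steps you flag are genuinely nontrivial and are precisely where the work lies. In particular, the sentence ``The hypothesis $v\in V_{\C_p}$ is used here to restrict $P$ to a polynomial whose derivatives along $V$ are small; this is where $V$ enters the construction'' is too vague --- in the actual argument one works with differential operators coming from a basis of $V$, and the vanishing conditions are imposed with respect to those operators, which is what makes the zero estimate output a subgroup with $\Lie(H)\subseteq V$ rather than an arbitrary subgroup. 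Also, the reduction ``replacing $G$ by the Zariski closure of $\Z\cdot\gamma$'' needs care: that Zariski closure need not be connected, and you must track what happens to $V$ under this replacement. If you want to turn this into an actual proof you should consult Fuchs--Pham directly; the paper under review does not attempt it.
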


\begin{lemma}\label{lemma annihilator for exact sequence}
    Assume that \[0\to \fh\to\fg\xrightarrow{\pi}\fg/\fh\to 0\] is an exact sequence of Lie algebras. We have $\Ann(\fh)=\pi^*(\fg/\fh)\ve$.
\end{lemma}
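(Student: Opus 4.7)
The statement is a standard linear-algebraic identification, and the plan is to prove the two inclusions $\Ann(\fh) \supseteq \pi^*((\fg/\fh)\ve)$ and $\Ann(\fh) \subseteq \pi^*((\fg/\fh)\ve)$ directly from the definition of $\Ann$ (\cref{def: left-right kernel duality pairing}) together with the universal property of the quotient. No Lie-algebra structure is actually used: only the underlying short exact sequence of vector spaces matters.

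First, I would dualize the given exact sequence. Since dualization is exact on vector spaces, one obtains the exact sequence
\[
0 \to (\fg/\fh)\ve \xrightarrow{\pi^*} \fg\ve \to \fh\ve \to 0,
\]
where $\pi^*(\bar f) = \bar f \circ \pi$. For the inclusion $\pi^*((\fg/\fh)\ve) \subseteq \Ann(\fh)$, take $f = \pi^*(\bar f) = \bar f \circ \pi$ and observe that for every $h \in \fh$ we have $f(h) = \bar f(\pi(h)) = \bar f(0) = 0$, so $f \in \Ann(\fh)$.

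For the reverse inclusion $\Ann(\fh) \subseteq \pi^*((\fg/\fh)\ve)$, take $f \in \Ann(\fh)$, so that $f$ vanishes on $\fh = \ker \pi$. By the universal property of the quotient, $f$ factors uniquely through $\pi$, i.e.\ there exists $\bar f \in (\fg/\fh)\ve$ with $f = \bar f \circ \pi = \pi^*(\bar f)$, hence $f \in \pi^*((\fg/\fh)\ve)$. Combining the two inclusions yields the claimed equality. There is no real obstacle here; the lemma is included only as a preparatory step to be used in the proof of the p-adic subgroup theorem for 1-motives (\cref{p-adic subgroup theorem for Fontaine pairing}), where it will let one translate the condition $\Lie(H) \subseteq V$ (with $V = \Ann\Ann(x)$) into a condition of the form $\Ann(x) \subseteq \pi^*\TdR\ve(M_2)$ for a suitable quotient motive $M_2$.
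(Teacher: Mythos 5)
Your proof is correct and follows essentially the same route as the paper: both arguments establish the equality by double inclusion, using that an element of $\Ann(\fh)$ vanishes on $\ker\pi$ and hence factors through $\pi$ (equivalently, lies in the kernel of the restriction $\fg\ve\to\fh\ve$), and conversely that any $\bar f\circ\pi$ kills $\fh$. The only cosmetic difference is that you explicitly dualize the short exact sequence and invoke the universal property of the quotient, whereas the paper phrases the same facts directly in terms of the pairing; neither uses the Lie bracket.
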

\begin{proof}
    If $f\in \Ann(\fh)$, this means that the restriction $f\mid_{\fh}=0$. Thus, $f$ belongs to the kernel of $\fg\ve\to\fh\ve$ which implies that $f\in \pi^*(\fg/\fh)\ve$. Conversely $\pi(\fh)=0$ and so, \[\langle \fh,\pi^*f\rangle=\langle \pi(\fh),f\rangle=0 \] for any $f\in(\fg/\fh)\ve$. Thus, $\Ann(\fh)=\pi^*(\fg/\fh)\ve$. 
\end{proof}
We need the following reformulation of p-adic analytic subgroup theorem. 
\begin{prop}\label{general reformulation of p-adic analytic subgroup theroem for algebraic groups}
Let $G$ be a commutative connected algebraic group over a number field $\K$. Assume that $u_1,\dots,u_n\in \log_{G(\C_p)}(G(\barQ)_f)$ (or in $\log_{\sG}(\sG(\barQ))$ when $G$ is a semi-abelian variety). There exists an exact sequence 
\[0\to H_1\to G\to H_2\to 0\] of connected commutative algebraic groups over a finite extension of $\K$ such that $u_1,\dots,u_n\in \Lie(H_1)_{\C_p}$ and $\Ann(u_1,\dots,u_n)=\coLie(H_2)_{\C_p}$. The sequence is uniquely determined by these properties.
\end{prop}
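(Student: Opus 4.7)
The plan is to assemble $H_1$ from the minimal algebraic subgroups produced by applying the p-adic analytic subgroup theorem (Theorem \ref{original p-adic analytic subgroup theorem}) to each $\gamma_i$ separately, and then to verify the annihilator condition using the minimality properties of the resulting Lie subalgebras together with Lemma \ref{lemma annihilator for exact sequence}. Write $u_i=\log_{G(\C_p)}(\gamma_i)$ with $\gamma_i\in G(\barQ)_f$, and discard any $u_i=0$.

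First, for each $i$ I would form the intersection
\[
V_i:=\bigcap\bigl\{W\subseteq\Lie(G)\,:\, W\text{ is a }\bar{\K}\text{-subspace with }u_i\in W\otimes_{\bar{\K}}\C_p\bigr\}.
\]
A direct linear-algebra argument (choose a common $\bar{\K}$-basis of $\Lie(G)$ and compare coordinates in $\C_p$) shows that $V_i$ is itself the smallest $\bar{\K}$-linear subspace of $\Lie(G)$ whose $\C_p$-extension contains $u_i$. Applying Theorem \ref{original p-adic analytic subgroup theorem} to the pair $(\gamma_i,V_i)$ would then yield a connected algebraic subgroup $H^{(i)}\subseteq G$ defined over $\bar{\K}$ with $\Lie(H^{(i)})\subseteq V_i$ and $\gamma_i\in H^{(i)}(\barQ)$. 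Since $u_i\in\Lie(H^{(i)})_{\C_p}$ and $\Lie(H^{(i)})$ is a $\bar{\K}$-subspace of $V_i$, the minimality of $V_i$ forces $\Lie(H^{(i)})=V_i$. I would then define $H_1\subseteq G$ to be the connected algebraic subgroup generated by $H^{(1)},\dots,H^{(n)}$, that is, the image of the multiplication morphism $H^{(1)}\times\cdots\times H^{(n)}\to G$; this $H_1$ is automatically defined over a finite extension of $\K$, satisfies $\Lie(H_1)=V_1+\cdots+V_n$, and contains each $u_i$ in its $\C_p$-extension by construction. Setting $H_2:=G/H_1$ then produces the required exact sequence, with $u_i\in\Lie(H_1)_{\C_p}$ trivially.

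It remains to verify $\Ann(u_1,\dots,u_n)=\coLie(H_2)_{\C_p}$ and uniqueness. The inclusion $\supseteq$ is immediate from Lemma \ref{lemma annihilator for exact sequence} applied to $0\to\Lie(H_1)_{\C_p}\to\Lie(G)_{\C_p}\to\Lie(H_2)_{\C_p}\to 0$, since every $\omega\in\coLie(H_2)_{\C_p}$ annihilates $\Lie(H_1)_{\C_p}$ and hence each $u_i$. For the reverse direction, it suffices by $\C_p$-linearity and descent to check that any $\bar{\K}$-rational $\omega\in\coLie(G)_{\bar{\K}}$ vanishing on all $u_i$ must vanish on $\Lie(H_1)$: the hyperplane $\ker(\omega)\subseteq\Lie(G)$ is then a $\bar{\K}$-subspace whose $\C_p$-extension contains every $u_i$, so the defining minimality of $V_i$ gives $V_i\subseteq\ker(\omega)$ for each $i$, and summing yields $\Lie(H_1)\subseteq\ker(\omega)$, i.e.\ $\omega\in\Ann(\Lie(H_1))=\coLie(H_2)$. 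Uniqueness follows automatically, since the two conditions determine $\Lie(H_1)_{\C_p}$ by double annihilation and therefore determine the $\bar{\K}$-subspace $\Lie(H_1)$ and the connected subgroup $H_1\subseteq G$ itself via the standard correspondence between connected algebraic subgroups and their Lie subalgebras in characteristic zero. The main obstacle I anticipate is precisely this descent step in the reverse inclusion: passing from $\C_p$-rational forms to $\bar{\K}$-rational ones requires that both $\Ann(u_1,\dots,u_n)$ and $\coLie(H_2)_{\C_p}$ be defined over $\bar{\K}$, which is ultimately the transcendence content packaged by Theorem \ref{original p-adic analytic subgroup theorem} through the minimality of the $V_i$; all remaining steps are formal manipulations with the Lie algebra--subgroup correspondence and Lemma \ref{lemma annihilator for exact sequence}.
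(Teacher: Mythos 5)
Your proof is correct and follows essentially the same strategy as the paper: the minimal $\bar{\K}$-subspace $V_i$ you construct by intersecting all $\bar{\K}$-rational subspaces whose $\C_p$-extension contains $u_i$ is exactly the paper's $\Ann(\Ann(u_i))$, and your minimality argument showing $\Lie(H^{(i)})=V_i$ plays the same role as the paper's annihilator-sandwich $\Ann(\Ann(\Ann(u)))\subseteq\Ann(\Lie(H_1))\subseteq\Ann(u)$; for $n>1$ both you and the paper take the subgroup generated by the $H^{(i)}$ and identify its co-Lie algebra with the intersection of the $\Ann(u_i)$. The one point worth sharpening is the ``obstacle'' you flag at the end: the statement is not that the full $\C_p$-linear annihilator $\{\omega\in\coLie(G)_{\C_p}:\omega(u_i)=0\,\forall i\}$ descends to $\bar{\K}$ (that would be false already for $G=\G_m^2$, $u=(\log_p 2,\log_p 3)$); rather, $\Ann$ in the proposition is to be read as the $\bar{\K}$-rational annihilator (equivalently its $\C_p$-extension), and once that convention is in place your reduction to $\bar{\K}$-rational $\omega$ needs no further descent argument --- it is simply the content of the definition, consistent with how the paper then uses $\Ann(x)\subseteq\TdR^{\vee}(M_2)$ in Theorem~\ref{p-adic subgroup theorem for Fontaine pairing}.
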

\begin{proof}
Assume that $n=1$. Let $u=\log(\gamma)$ for some $\gamma\in G(\barQ)_f$. If $u=0$, then the theorem holds with $H_1=0$. Otherwise, put $V=\Ann(\Ann(u))\subset\Lie(G)_{\C_p}$. We apply \cref{original p-adic analytic subgroup theorem} to get a connected algebraic subgroup $H_1$ of $G_{\K'}$\footnote{We drop the index $L$ if it is clear from the context.} defined over a finite extension $\K'/\K$ with $u\in \Lie(H_1)_{\C_p}\subseteq V$. Taking the annihilator, we obtain $\Ann(\Ann(\Ann(u))) \subseteq \Ann(\Lie(H_1))\subseteq\Ann(u)$. Thus, $\Ann(u)=\Ann\Lie(H_1)$. Now, for $H_2=G/H_1$, we get an exact sequence of Lie algebras
\[0\to \Lie(H_1)\to\Lie(G)\to\Lie(H_2)\to 0\] which corresponds to an exact sequence \begin{equation}\label{eq: exact sequence in p-adic analytic subgroup theorem} 0\to H_1\to G\to H_2\to 0\end{equation} of algebraic groups over a finite extension of $\K$. If $G$ is a semi-abelian variety, then so are $H_1$ and $H_2=G/H_1$. By \cref{lemma annihilator for exact sequence}, we obtain $\Ann(u)=\Ann(\Lie(H))=\pi^*(\Lie(G)/\Lie(H_1))\ve$. As $\pi^*$ is injective, we can view $(\Lie(G)/\Lie(H_1))\ve$ as a subspace of $\Lie(G)$.

For $n>1$, we apply the above argument for $u_1,\dots, u_n$ and obtain subgroups $H_1,\dots, H_n$ defined over a finite extension of $\K$ such that $u_i\in\Lie(H_i)_{\C_p}$ and $\Ann(u_i)=\coLie(G/H_i)$. Let $H=H_1+\dots+H_n$. This is an algebraic subgroup of $G$ with 
\[
u_1,\dots,u_n\in\Lie(H)_{\C_p}=\Lie(H_1)_{\C_p}+\dots+\Lie(H_n)_{\C_p}
\]
and 
\begin{align*}
\coLie(G/H)_{\C_p}=\coLie(G/(H_1+\dots+H_n))_{\C_p}\cong\bigcap\coLie(G/H_i)_{\C_p}\\=\bigcap\Ann(u_i)=\Ann(u_1,\dots,u_n),
\end{align*}
where the above intersection occurs inside $\coLie(G)$. Now, the exact sequence 
\[
0\to H\to G\to G/H\to 0
\]
is the desired exact sequence.

Suppose that \[0\to H'\to G\to H''\to 0\] is another exact sequence with the same properties. We have 
\[\Ann(u_1,\dots,u_n)=\coLie(G/H)_{\C_p}=\coLie(H'')_{\C_p}=(\Lie(G)/\Lie(H'))_{\C_p}\ve.\] This implies that $H=H'$, as $H_1$ and $H'$ are connected.
\end{proof}

\textit{\large Proof of \cref{p-adic subgroup theorem for Fontaine pairing}}.
Without loss of generality, we can assume that $L\to G$ is injective. Indeed, we have the decomposition of $$M=[L'\to 0]\oplus[L/L'\pmod{\text{torsion}}\to G]$$ in the isogeny category of $\gMi(\K)$, where $L/L'\to G$ is injective. For the motive $[L'\to 0]$, we have $\TdR(L')_{\C_p}=V(L')_{\C_p}=\Hp(L')$. We apply the p-adic analytic subgroup theorem to the element $x\in V(L')$, yielding the desired exact sequence of constant finitely generated free groups modulo torsion (1-motives), thereby completing the proof. Thus, we only need to consider the case where $L\to G$ is injective.

Let $x\in\Hp(M)$. We can assume that $x=u+v$, where $u\in\hp(M,\bar{\K})$ and $v\in V(M)$. By the construction of $\hp(M,\bar{K})$ and \cref{fundamental equality for hp in chapter 5}, there exists some $\gamma\in\sG(\bar{\K})$ such that $\log_{\sG}(\gamma)=u$, up to scalar multiplication by a rational number. As $G\nat$ is a vector extension of $G$, we can view $u$ in the image of $\log_{G\nat}$. Recall that the logarithm map on the vector group $V(M)$ is the identity map. We apply \cref{general reformulation of p-adic analytic subgroup theroem for algebraic groups} to $u+v$ to get the exact sequence 
\[
0\to H_1\to G\nat\to H_2\to 0
\]
over a finite extension of $\K$ with $u+v\in H_1$ and $\Ann(u+v)=\coLie(H_2)_{\C_p}$. By the structure theory of commutative algebraic groups (\cref{Structure theory of algebraic groups}), there are canonical exact sequences
\[
0\to V_1\to H_1\to G_1\to 0\]
\[0\to V_2\to H_2\to G_2\to 0\]
where $G_i$ is semi-abelian, and $V_i$ is a vector group for $i=1,2$. Note that $v\in (V_1)_{\C_p}$. We have the following diagram with exact rows and columns
\begin{equation}
    \begin{tikzcd}
            & 0 \arrow[d]             & 0 \arrow[d]               & 0 \arrow[d]             &   \\
0 \arrow[r] & V_1 \arrow[r] \arrow[d] & V(M) \arrow[r] \arrow[d]     & V_2 \arrow[r] \arrow[d] & 0 \\
0 \arrow[r] & H_1 \arrow[r] \arrow[d] & G\nat \arrow[r] \arrow[d] & H_2 \arrow[r] \arrow[d] & 0 \\
0 \arrow[r] & G_1 \arrow[r] \arrow[d] & G \arrow[r] \arrow[d]     & G_2 \arrow[r] \arrow[d] & 0 \\
            & 0                       & 0                         & 0                       &  
\end{tikzcd}
\end{equation}
We define $L_1:=L\cap H_1$ and $L_2:=L/L_1\pmod{\text{torsion}}$. By construction, $L_1\to L\to G$ factors through $G_1$ and $L\to G\to G_2$ factors through $L_2\to G_2$. There is an exact sequence
\begin{equation}\label{7.3.3}
    0\to M_1\to M\to M_2\to 0,
\end{equation} 
where
$M_1=[L_1\to G_1]$ and $M_2=[L_2\to G_2]$. Notice that both $M_1$ and $M_2$ have good reductions at $p$ since $M$ has a good reduction at $p$ (see \cref{cor: good reduction exact sequence of 1-motives}). Therefore, the exact sequence \ref{7.3.3} can be lifted to $\cO_K$, where $K$ is a finite extension of $\Q_p$ containing $\K$. By the property of universal vector extensions, the exact sequence \[0\to V_i\to [L_i\to H_i]\to [L_i\to G_i]\to 0 \]
is the push-out of $G\nat_i$ for $i=1,2$. The compositions $G\nat_1\to H_1\to G\nat$ and $G\nat\to G\nat_2\to H_2$ are injective and surjective respectively. So, the maps $G\nat_1\to H_1$ and $G\nat_2\to H_2$ are injective and surjective respectively. It follows that $V(M_1)\to V_1$ is also injective. Since $H_1$ is a vector extension of $G_1$, we deduce that $u\in\hp(G_1,\bar{\K})$. 

Now, if $v\in V(M_1)_{\C_p}$, then $x=u+v\in\Hp(M_1)$. Because of the short exact sequence \ref{7.3.3}, we have $\TdR\ve(M_2)_{\C_p}=\coLie(G\nat_2)_{\C_p}\subseteq\Ann(x)$. However, $G\nat_2\twoheadrightarrow H_2$ is surjective, hence 
\begin{equation}\label{ann(u)=TdR(M2)}
\Ann(u+v)=\coLie(H_2)_{\C_p}\subseteq\coLie(G\nat_2)_{\C_p}\subseteq\TdR\ve(M_2)_{\C_p},      
\end{equation}
as desired.

We now assume that $v\notin V(M_1)$. From \cref{V(M)=coLie(Gv)}, we know that $V(M_1)=\coLie(G\ve_1)$, and $\coLie(G\ve_1)\into\coLie(G\ve)=V(M)$. Choose a semi-abelian scheme $N$ such that its Cartier dual $N\ve$ is a subgroup of $G$, with a surjection $N\twoheadrightarrow G_1\ve$, and such that $v\in\coLie(N)\subset\coLie(G\ve)$ (e.g. $G\ve$ is a candidate that satisfies this condition\footnote{The Zorn's lemma can be applied to identify the smallest $N$ that satisfies these conditions; however, this is not essential for the argument in our proof.}). Taking the Cartier duality, we have $G_1\into N\ve\into G$. Thus, $u\in\hp(N\ve)$ as well. Therefore, we obtain the diagram 
\begin{equation}\label{7.3.4}
\begin{tikzcd}
0 \arrow[r] & H_1 \arrow[r] \arrow[d]       & G\nat \arrow[r] \arrow[d]                  & H_2 \arrow[r] \arrow[d, two heads] & 0 \\
0 \arrow[r] & G_1 \arrow[d, hook] \arrow[r] & G \arrow[d, Rightarrow, no head] \arrow[r] & G_2 \arrow[d, two heads] \arrow[r] & 0 \\
0 \arrow[r] & N\ve \arrow[r]                & G \arrow[r]                                & G/N\ve \arrow[r]                   & 0
.\end{tikzcd}
\end{equation}
Furthermore, we define 1-motives $M'_1=[L_1\to N\ve]$ and $M'_2=[L_2\to G/N\ve]$ to get the diagram 

\begin{equation}\label{diagram 435}
\begin{tikzcd}
0 \arrow[r] & M_1 \arrow[r] \arrow[d, hook] & M \arrow[r] \arrow[d, Rightarrow, no head] & M_2 \arrow[r] \arrow[d, two heads] & 0 \\
0 \arrow[r] & M'_1 \arrow[r]                & M \arrow[r]                                & M_2' \arrow[r]                     & 0
.\end{tikzcd}
\end{equation}
Based on the condition we impose on $N$, we have $v\in\coLie(N)=V(M'_1)_{\C_p}$ and $u\in\hp(M'_1)$. Consequently, $x=u+v\in\Hp(M'_1)$, and diagram \ref{diagram 435} implies that $$\TdR\ve(M'_2)_{\C_p}\subseteq\Ann(x)\subseteq\TdR\ve(M_2)_{\C_p},$$ where the latter inclusion is from \ref{ann(u)=TdR(M2)}. Moreover, if we consider the exact sequence
\[
0\to M'\to M^2\to M''\to 0,
\]
where $M'=M_1\oplus M'_1$ and $M''=M_2\oplus M'_2$, then $x\in\Hp(M')$ and $\Ann(x)\subset\TdR\ve(M'')$. This completes the proof.

\qed
Along the way of the proof we get the following:
\begin{cor}\label{cor 5.6.1}
Let $x\in\Hp(M)$. We can write $x$ uniquely as $x=u+v$, where $u\in\hp(M,\bar{\K})$ and $v\in V(M)\otimes\C_p$. Moreover, there exists a commutative diagram 
\begin{equation}
\begin{tikzcd}
0 \arrow[r] & M_1 \arrow[r] \arrow[d, hook] & M \arrow[r] \arrow[d, Rightarrow, no head] & M_2 \arrow[r] \arrow[d, two heads] & 0 \\
0 \arrow[r] & M'_1 \arrow[r]                & M \arrow[r]                                & M_2' \arrow[r]                     & 0
\end{tikzcd}
\end{equation}
with the following properties:
\begin{enumerate}
    \item $u\in\Hp(M_1)$.
    \item $x\in\Hp(M'_1)$.
    \item $\Ann(x)=\TdR\ve(M_2)_{\C_p}\supseteq\TdR\ve(M'_2)$.
    \item If $\hp(M)=\Hp(M)$ (or $x\in\hp(M)$), then the exact sequence provided in \cref{p-adic subgroup theorem for Fontaine pairing} occurs with $n=1$.
\end{enumerate}
\end{cor}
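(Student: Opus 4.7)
The plan is to extract and refine the assertions directly from the proof of Theorem~\ref{p-adic subgroup theorem for Fontaine pairing}, keeping careful track of the canonical decomposition $x=u+v$ throughout.

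First I would establish the uniqueness of the decomposition $(u,v)$. The bottom row of diagram~(\ref{7.2.1}) splits canonically since the Hodge--Tate weight considerations (spelled out just after~(\ref{7.2.1})) force the relevant $\Hom$ and $\Ext^1$ groups to vanish; pulling this splitting back along $\Hp(M)\hookrightarrow\Tp(M)\otimes_{\Z_p}\C_p$ yields a canonical decomposition $\Hp(M)=V(M)\otimes\C_p\,\oplus\,\hp(M,\bar{\K})$, from which uniqueness of $(u,v)$ is immediate.

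Next I would construct the top row by applying Proposition~\ref{general reformulation of p-adic analytic subgroup theroem for algebraic groups} to the \emph{single} element $u\in\hp(M,\bar{\K})\subseteq\Lie(G\nat)_{\C_p}$ (rather than to $u+v$ as in the theorem's proof). This yields an exact sequence $0\to H_1\to G\nat\to H_2\to 0$ over a finite extension of $\K$ with $u\in\Lie(H_1)_{\C_p}$ and $\Ann(u)=\coLie(H_2)_{\C_p}$. Setting $G_i:=H_i/V_i$, $L_1:=L\cap H_1$, $L_2:=L/L_1\bmod\mathrm{torsion}$, and $M_i:=[L_i\to G_i]$ exactly as in the theorem's proof produces the top row, and the inclusion $u\in\hp(G_1,\bar{\K})\subseteq\Hp(M_1)$ gives~(1). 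The bottom row is then constructed verbatim from the case $v\notin V(M_1)_{\C_p}$ in that proof: I would choose a semi-abelian subgroup $N\ve\hookrightarrow G$ admitting a surjection $N\twoheadrightarrow G_1\ve$ with $v\in\coLie(N)\subseteq V(M)_{\C_p}$ (for instance $N=G\ve$ always works by Cartier duality applied to $G_1\hookrightarrow G$), and set $M'_1:=[L_1\to N\ve]$ and $M'_2:=[L_2\to G/N\ve]$. Then $u\in\hp(N\ve,\bar{\K})$ and $v\in V(M'_1)_{\C_p}$, so $x\in\Hp(M'_1)$, which is~(2); the induced vertical surjection $M_2\twoheadrightarrow M'_2$ furnishes the inclusion $\TdR\ve(M'_2)\subseteq\TdR\ve(M_2)_{\C_p}$ of~(3). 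Property~(4) is then automatic: when $x=u\in\hp(M)$ we have $v=0$, the passage from $M_1$ to $M'_1$ is unnecessary, and both rows of the diagram collapse to a single exact sequence of the form in Theorem~\ref{p-adic subgroup theorem for Fontaine pairing} with $n=1$.

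The main subtlety I anticipate lies in upgrading the inclusion $\Ann(x)\subseteq\TdR\ve(M_2)_{\C_p}$ to the equality asserted in~(3): the proof of the theorem only yields $\Ann(u+v)=\coLie(H_2)_{\C_p}\subseteq\coLie(G_2\nat)_{\C_p}=\TdR\ve(M_2)_{\C_p}$. The cleanest way to obtain equality will be to absorb the auxiliary vector quotient $V_2=\ker(G_2\nat\twoheadrightarrow H_2)$ into the construction, refining $M_2$ so that its universal vector extension is precisely $H_2$; this tightening is conceptually straightforward but must be carried out compatibly with the exactness of the top row, which is where the verification needs the most care.
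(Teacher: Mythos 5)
Your proposal deviates from the paper's argument in a way that creates a genuine gap. You propose to apply Proposition~\ref{general reformulation of p-adic analytic subgroup theroem for algebraic groups} to the single element $u$ rather than to $x=u+v$. This yields $\Ann(u)=\coLie(H_2)_{\C_p}$, but property~(3) concerns $\Ann(x)=\Ann(u+v)$, and these two annihilators are not the same in general: since $v\in V(M)_{\C_p}$ is nonzero and pairs nontrivially with $\coLie(G\nat)_{\C_p}$ outside of $\coLie(G)_{\C_p}$, adding $v$ changes which forms annihilate the element. Your own discussion of the ``main subtlety'' slides between $\Ann(u)$ and $\Ann(u+v)$ without noticing this discrepancy --- you cite ``$\Ann(u+v)=\coLie(H_2)_{\C_p}$'' as what your construction provides, but your construction actually gives $\Ann(u)=\coLie(H_2)_{\C_p}$. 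Thus your diagram simply does not control $\Ann(x)$, and neither inclusion in $\Ann(x)=\TdR\ve(M_2)_{\C_p}$ is available.

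There are two further downstream problems. First, the paper's argument crucially uses that $v\in(V_1)_{\C_p}$, which follows from applying the subgroup theorem to $u+v$ (so that $u+v\in\Lie(H_1)_{\C_p}$, whence the vector component $v$ lies in the vector part $V_1$ of $H_1$ by the structure theory). When you apply the theorem to $u$ alone, there is no reason for $v$ to lie in $\Lie(H_1)_{\C_p}$ at all, and the case analysis on $v\in V(M_1)_{\C_p}$ versus $v\notin V(M_1)_{\C_p}$ is no longer meaningful relative to your $H_1$. Second, your proposed fix --- ``refining $M_2$ so that its universal vector extension is precisely $H_2$'' --- is not a coherent operation: the universal vector extension $G_2\nat$ is intrinsic to the 1-motive $M_2=[L_2\to G_2]$ and cannot be altered independently. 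In the paper's approach, applying the theorem to $u+v$ with $x\in\Hp(M_1)$ gives both $\coLie(G\nat_2)_{\C_p}\subseteq\Ann(x)$ (from $x\in\Lie(G\nat_1)_{\C_p}$) and $\Ann(x)=\coLie(H_2)_{\C_p}\subseteq\coLie(G\nat_2)_{\C_p}$ (from the surjection $G\nat_2\twoheadrightarrow H_2$), and it is precisely this two-sided squeeze that forces $G\nat_2\cong H_2$ and hence the equality in~(3); there is no independent ``tightening'' step. To repair your argument you must apply the subgroup theorem to $u+v$ exactly as in Theorem~\ref{p-adic subgroup theorem for Fontaine pairing}, after which the corollary does indeed fall out of the existing proof with only the splitting/uniqueness observation added.
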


We are now ready to prove the p-adic period conjecture relative to the pairing $\displaystyle\int^{\Hpp}$ at depth 2.

\begin{lemma}\label{lemma 5.6.2}
    Let $N$ be an admissible filtered isocrystal over $K_0$ with non-zero slopes and equipped with the filtration
    \begin{equation*}
        \Fil^i(N)=\begin{cases}
            N,\,\, i\leq 0\\
            X,\,\, i=1\\
            0,\,\, i\geq 2.
        \end{cases}
    \end{equation*}
Then $\sum_{n\geq 0}F^n(X)=N$.
\end{lemma}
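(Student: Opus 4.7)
\textbf{Proof plan for \cref{lemma 5.6.2}.} The plan is to let $Y := \sum_{n\geq 0} F^n(X) \subseteq N$ and show $Y=N$ by comparing Newton and Hodge numbers on $Y$ using weak admissibility of $N$. First I would check that $Y$ is a sub-isocrystal: by construction $F(Y)\subseteq Y$, and since $F$ is bijective on the finite-dimensional $N$ its restriction to the finite-dimensional $F$-stable subspace $Y$ is injective, hence bijective, so $Y$ is stable under $F$ and $F^{-1}$.

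Next I would equip $Y$ with the induced filtration. Because $X = \Fil^1 N \subseteq Y$, we have $\Fil^1(Y_K) = Y_K \cap X = X$ and $\Fil^0(Y_K) = Y_K$, so $t_H(Y) = \dim_K X = t_H(N)$. By the weak admissibility of $N$ (\cref{definition weakly addmissible isocrystal}), this forces
\begin{equation*}
t_N(Y) \;\geq\; t_H(Y) \;=\; t_H(N) \;=\; t_N(N).
\end{equation*}

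Now I would invoke the isoclinic decomposition (\cref{isoclinic decomposition lemma}): since slope decompositions are unique and functorial, the inclusion $Y\hookrightarrow N$ respects slope pieces, giving $N=\bigoplus_{\alpha} N(\alpha)$ and $Y=\bigoplus_\alpha Y(\alpha)$ with $Y(\alpha)\subseteq N(\alpha)$. By hypothesis every slope $\alpha$ of $N$ is non-zero, and in the context in which the lemma is applied (isocrystals $\tilde N(M)$ arising from the positive-slope part of a $1$-motive with good reduction, cf.\ \cref{remark 6.2.2}) every such $\alpha$ is strictly positive. Therefore
\begin{equation*}
0 \;\geq\; t_N(N)-t_N(Y) \;=\; \sum_{\alpha} \alpha\bigl(\dim N(\alpha)-\dim Y(\alpha)\bigr),
\end{equation*}
and each summand on the right is $\geq 0$. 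Hence every summand vanishes, $\dim Y(\alpha) = \dim N(\alpha)$ for every slope $\alpha$, and thus $Y(\alpha)=N(\alpha)$, giving $Y=N$ as desired.

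The only delicate step is the sign analysis in the last display, which requires the positivity of all slopes (strict non-zero is not enough in the abstract setting, but is automatic here). A minor technical point will be that the slope decomposition a priori lives over an algebraic closure of the residue field, but by the uniqueness assertion in \cref{isoclinic decomposition lemma} it descends to $K_0$ and the inclusion $Y\subseteq N$ respects the descended decomposition; this makes the computation of $t_N(Y)$ and $t_N(N)$ legitimate over $K_0$ without further work.
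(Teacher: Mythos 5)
Your proof is correct, and it takes a somewhat different route to the conclusion than the paper's. Both arguments rest on the same two facts: weak admissibility of $N$ gives $t_N(Y)\geq t_H(Y)=t_H(N)=t_N(N)$, and the slope decomposition with $Y(\alpha)\subseteq N(\alpha)$ and all slopes positive gives $t_N(Y)\leq t_N(N)$. Where the paths diverge is in what you do with the equality $t_N(Y)=t_N(N)$. The paper concludes from this that $Y$ is itself weakly admissible, then invokes the fact that $\MFw$ is an abelian category to form the weakly admissible quotient $N/Y$; since $N/Y$ has positive slopes but its filtration jumps only in degree zero (so $t_H(N/Y)=0$), it cannot be weakly admissible unless it is zero. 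You instead stay inside $Y$ and finish by unpacking the Newton number as $\sum_\alpha\alpha\bigl(\dim N(\alpha)-\dim Y(\alpha)\bigr)$, observing that every term is $\geq 0$ and the total is $\leq 0$, so every term vanishes and $Y(\alpha)=N(\alpha)$ for each slope. Your variant is slightly more elementary — it never needs the theorem that $\MFw$ is abelian — at the cost of making the slope-by-slope bookkeeping explicit.

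Your remark that positivity (rather than mere non-vanishing) of slopes is what the sign analysis requires is a good instinct, but in fact it is already forced by the other hypotheses: with the given filtration the Hodge polygon has slopes only in $\{0,1\}$, and for a weakly admissible object the Newton polygon lies above the Hodge polygon with the same endpoints, which confines the Newton slopes to $[0,1]$; discarding $0$ leaves $(0,1]$. So "non-zero" together with admissibility and this filtration type does give positivity, without any appeal to the geometric provenance of $N$ in the application.
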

\begin{proof}
    We first claim that $Y:=\sum_{n\geq 0}F^n(X)$ is a weekly admissible filtered sub-isocrystal. Recall the definition of Newton numbers (\cref{Newton number}) and the definition of weakly admissible filtered isocrystals (\cref{definition weakly addmissible isocrystal}). The submodule $Y$ is clearly a filtered sub-isocrystal of $N$, therefore $t_N(Y)\geq t_H(Y)$, by admissibility of $N$. Assume that $\dim_{K_0}(X)=r$ and $\dim_{K_0}(N)=s$. According to the filtration of $N$, we can observe that the Hodge polygon of $D$ consists of slope a horizontal segment of length $s-r$, and a segment of slope $1$ with multiplicity $r$. As the filtration of $Y$ is given by
    \begin{equation*}
          \Fil^i(Y)=\begin{cases}
            Y,\,\, i\leq 0\\
            X,\,\, i=1\\
            0,\,\, i\geq 2.
        \end{cases}
    \end{equation*}
    The Hodge polygon of $Y$ has a horizontal segment of length $\dim_{K_0}(Y)-r$, and a segment of slope 1 with multiplicity $r$. As all slopes of $N$ and $Y$ are non-zero, we have that $t_N(Y)\leq t_N(N)=t_H(N)=r$. Therefore, $Y$ is indeed a weakly admissible sub-isocrystal of $N$. Then quotient $N/Y$ is also weakly admissible in the category of weakly admissible isocrystals over $K_0$. However, $N/Y$ has again positive slopes. Assume that $N/Y\neq 0$. Its filtration is given by
    \begin{equation*}
        \Fil^i(N/Y)=\begin{cases}
            N/Y,\,\, i\leq 0\\
            0,\,\, i\geq 1.
        \end{cases}
    \end{equation*}
This means that the Hodge polygon of $N/Y$ has only one horizontal segment, and since all of its slopes are positive, it cannot be weakly admissible. Therefore, $N/Y=0$, and so $N=Y$.
    
\end{proof}

\begin{thm}\label{level 2 period cinjecture for Hpp}
The $\Hpp$-period conjecture holds at depth $2$.
\end{thm}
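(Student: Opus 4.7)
The plan is to reduce the statement, via Lemma~\ref{colim injective period conjecture} and Proposition~\ref{dimension prop}, to the following pointwise form: for each $M\in\gMi(\K)$ and each pure tensor $x\otimes\omega\in\tilde{\Hpp}(M)\otimes_{\Q}\tilde{N}(M)$ with $\int^{\Hpp}_x\omega=0$, to exhibit an exact sequence $0\to M_1\to M^n\to M_2\to 0$ with $n\in\{1,2\}$, $x\in\tilde{\Hpp}(M_1)$ and $\omega\in\tilde{N}(M_2)$; this is exactly a depth--$2$ relation realising $x\otimes\omega$. The key input will be the p-adic subgroup theorem for 1-motives (Theorem~\ref{p-adic subgroup theorem for Fontaine pairing}), applied to the image $\pi(x)\in\Hp(M)$ of $x$ under the surjection $\pi\colon\tilde{\Hpp}(M)\to\Hp(M)$ coming from diagram~\ref{7.2.6}.

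To route the Fontaine--Messing hypothesis through the Fontaine pairing, I first decompose $\omega$. Under the crystalline--de Rham comparison (Theorem~\ref{crystalline-de Rham comparison isomorphism}), $\tilde{N}(M)$ corresponds to a weakly admissible filtered sub-isocrystal of $\Tcrys\ve(\bar M)\otimes K_0$ having only non-zero slopes, so Lemma~\ref{lemma 5.6.2} produces a finite decomposition $\omega=\sum_{n\geq 0}F^n\omega_n$ with $\omega_n\in\coLie(G)_{\bar{\K}}\cap\tilde{N}(M)$. Applying Proposition~\ref{prop 4.3.2}, Corollary~\ref{cor 4.3.2} and Corollary~\ref{comparing parings Hp and Hpp}, the vanishing $\int^{\Hpp}_x\omega=0$ rewrites as
\[
\sum_{n\geq 0}\phi^n_{cris}\!\left(\int^{\varphi}_{\pi(x)}\omega_n\right)=0\qquad\text{in }\Bcrisp.
\]
The defining condition $x\in\tilde{\Tp}(M)\otimes_{\Z_p}\bigcap_m\phi^m_{cris}(\Bct)$ of $\tilde{\Hpp}(M)$ is tailored to control the $\phi_{cris}$-behaviour of each summand: combining it with the injectivity of $\phi_{cris}$ on $\Bcrisp$ and the fact that each $\int^{\varphi}_{\pi(x)}\omega_n$ lies in $\C_p(1)\subset\Bcrisp$, I expect to split the above identity into the individual vanishings $\int^{\varphi}_{\pi(x)}\omega_n=0$ for every $n\geq 0$.

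With the individual vanishings in hand, I apply the p-adic subgroup theorem (Theorem~\ref{p-adic subgroup theorem for Fontaine pairing}, refined by Corollary~\ref{cor 5.6.1}) to $\pi(x)\in\Hp(M)$ together with the set $\{\omega_n\}_n\subseteq\Ann(\pi(x))$: this yields an exact sequence $0\to M_1\to M^n\to M_2\to 0$ with $n\in\{1,2\}$ of 1-motives with good reductions over a finite extension of $\K$, together with $\pi(x)\in\Hp(M_1)$ and $\omega_n\in\Ann(\pi(x))\subseteq\TdR\ve(M_2)_{\bar{\K}}$ for every $n$. Using the contravariant exactness of $\tilde{N}$ and the fact that the projection $M^n\twoheadrightarrow M_2$ is Frobenius-compatible on the crystalline side, each $F^n\omega_n$ remains in $\tilde{N}(M_2)$, hence $\omega=\sum_nF^n\omega_n\in\tilde{N}(M_2)$. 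Finally, lifting $\pi(x)$ through diagram~\ref{7.2.6} and invoking the exactness of $\hp$ and $\Tp$ in $\gMi(\K)$ produces $x\in\tilde{\Hpp}(M_1)$, closing the argument.

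The main obstacle will be the separation step: extracting the individual vanishings $\int^{\varphi}_{\pi(x)}\omega_n=0$ from the single identity above in $\Bcrisp$. This is precisely the role of the intersection $\bigcap_m\phi^m_{cris}(\Bct)$ in the definition of $\tilde{\Hpp}(M)$, and the technical heart of the proof will consist in verifying that, for $x\in\tilde{\Hpp}(M)$, the Frobenius-weighted summands $\phi^n_{cris}(\int^{\varphi}_{\pi(x)}\omega_n)$ land in pairwise distinct Frobenius-slope pieces of $\Bcrisp$ and therefore must vanish individually; once this separation is established, the remainder of the argument is a formal consequence of the p-adic subgroup theorem.
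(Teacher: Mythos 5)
Your proposal follows the same overall route as the paper: reduce to a single pure tensor with $\int^{\Hpp}_x\omega=0$, invoke Lemma~\ref{lemma 5.6.2} to place $\omega$ inside $\sum_n F^n(\coLie(G)\cap\tilde N)$, use Proposition~\ref{prop 4.3.2}, Corollary~\ref{cor 4.3.2}, the definition of $\tilde{\Hpp}$ and the injectivity of $\phi_{cris}$ to transfer the vanishing from the crystalline/integration pairing down to the Fontaine pairing for $\pi(x)$, and then close with Theorem~\ref{p-adic subgroup theorem for Fontaine pairing} followed by a diagram chase using the exactness of $\tilde N$, $\Hp$ and $\Hpp$.

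The gap sits in the decomposition step. You (correctly, given the statement of Lemma~\ref{lemma 5.6.2}) read the conclusion $\sum_n F^n(X)=\tilde N$ as yielding a sum $\omega=\sum_n F^n\omega_n$, and you then need to deduce from a single identity of the form $\sum_n\phi^n_{cris}(a_n)=0$ in $\Bcrisp$ that each $a_n=0$, hence each $\int^{\varphi}_{\pi(x)}\omega_n=0$. The heuristic you offer --- that the summands ``land in pairwise distinct Frobenius-slope pieces of $\Bcrisp$'' --- is not available: $\phi_{cris}$ is an injective endomorphism of $\Bcrisp$, not a grading operator, and a relation such as $a_0=-\phi_{cris}(a_1)$ produces a nontrivial cancellation. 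The intersection $\bigcap_m\phi^m_{cris}(\Bct)$ in the definition of $\tilde{\Hpp}(M)$ lets you pull Frobenius powers out of the first argument of the pairing --- which is the computation you and the paper both perform --- but it does not impose a slope grading on the target that would split such a sum. So, as written, the proposal does not prove the theorem.

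The paper avoids the separation problem by passing to a single term: it asserts ``$\omega=F^n(\gamma)$ for some $\gamma\in\coLie(G)\cap\tilde N_{K'_0}$ and some $n$'', after which injectivity of $\phi_{cris}$ alone finishes the transfer with no separation required. That is the step you should interrogate rather than attempt a slope separation: Lemma~\ref{lemma 5.6.2} only gives $\sum_n F^n(X)=\tilde N$ as a sum of $K'_0$-subspaces, and for an isoclinic isocrystal of slope $1/2$ (the supersingular case) the images $F^n(X)$ cycle between two proper lines, so $\bigcup_n F^n(X)\subsetneq\tilde N$ and a general $\omega$ is not of the form $F^n(\gamma)$ for a single $n$. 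Either the single-term form needs a justification beyond the lemma, or the sum must be handled by a mechanism other than the slope separation you sketch. Your concluding steps --- the application of the subgroup theorem to $\pi(x)$, the membership $\omega\in\tilde N(M_2)$, and the diagram chase yielding $x\in\tilde{\Hpp}(M_1)$ --- mirror the paper once the individual vanishings are in hand.
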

\begin{proof}
We need to prove that the evaluation map \[\tilde{\cP}^2_{\Hpp}(\gMi(\K))\to\cP_{\Hpp}(\gMi(\K))\]
is injective. Let $\displaystyle\alpha_i=\int^{\varpi}_{x_i}\omega_i$ be an $\Hpp$-period of the 1-motive $M_i$, where $x_i\in\tilde{\Hpp}(M_i)$ and $\omega_i\in\tilde{N}(M_i)$ for $i=1,\dots,n$. Assume that 
\[
c_1\alpha_1+\dots+c_n\alpha_n=0
\]
for some $c_i\in\K^u$. As it is shown in \cref{dimension prop}(1), a linear combination of $\Hpp$-periods is again a $\Hpp$-period. More precisely, we can write 
\begin{equation}\label{4.3.7}
\displaystyle c_1\alpha_1+\dots+c_n\alpha_n=\sum_{i,j}c_j\int_{x_i}\omega_j=\sum_ic_i\int^{\Hpp}_{x_i}\omega_i=\int^{\Hpp}_{x}\omega
,\end{equation}
where $x=x_1+\dots+x_n\in\tilde{\Hpp}(\bigoplus M_i)$ and $\omega=c_1\omega_1+\dots+c_n\omega_n\in\tilde{N}(\bigoplus M_i)$. The 1-motive $M=\bigoplus M_i$ is defined over a finite extension $\K'$ of $\K$ and has still a good reduction at $p$ (\cref{section reduction types and deformation theory}). The preceding equality \ref{4.3.7} follows from the bilinearity relation. Now, it suffices to show that for the period $\displaystyle\int^{\Hpp}_x\omega=0$ of $M$, the element $(x\otimes\omega)_{\tilde{\cP}_{\Hpp}^2(M)}=0$. Recall the notations in \cref{remark 6.2.2}. Since $\omega\in \tilde{N}(M)$, there exists an isocrystal $N_{K'_0}$ defined over a finite unramified extension $K'_0$ such that $\omega\in \tilde{N}_{K'_0}\cap\TdR\ve(M)_{\K'}$ and $\tilde{N}_{K'_0}\subseteq\Tcrys\ve(\bar{M})\otimes_{\W(k')}K'$, as shown in the diagram \ref{eq 6.2.8}. But $\Tcrys\ve(\bar{M})\otimes_{\W(k')}K'\cong \TdR\ve(M)_{K'}$ is a filtered isomorphism as discussed in \cref{section crystalline realization for 1-motive}, which induces the following filtration on $\tilde{N}_{K'_0}$ that is identical to the filtration induced as a subobject of $N_{K'_0}$.
    \begin{equation*}
        \Fil^i(\tilde{N}_{K'_0})=\begin{cases}
            \TdR\ve(M)_{K'}\cap \tilde{N}_{K'_0},& i\leq 0\\
            \coLie(G)_{K'}\cap\tilde{N}_{K'_0},& i=1\\
            0& i\geq 2.
        \end{cases}
    \end{equation*}
By \cref{lemma 5.6.2}, we have 
\[
\sum_{n\geq 0}F^n(\coLie(G)\cap\tilde{N}_{K'_0})=\TdR\ve(M)_{K'}\cap \tilde{N}_{K'_0}=\tilde{N}_{K'_0}
\]
Therefore, there exist $\gamma\in\coLie(G)\cap\tilde{N}_{K'_0}$ and $n\geq 0$ such that $\omega=F^n(\gamma)$. As $x\in\tilde{\Hpp}(M)$, we write it as $x=b_1\otimes x_1+\dots+b_m\otimes x_m$, where $x_i\in\tilde{\Tp}(M)$ and $b_i\in\bigcap_{m=1}\phi^m_{cris}(\Bct)$. There exists elements $b_i'\in\Bct$ such that $\phi^n_{cris}(b_i')=b_i$. Since $\phi_{cris}$ is injective, one can show that $b'_i\in\bigcap_{m=1}\phi_{cris}^m(\Bct)$. \cref{cor 4.3.2} implies that $\displaystyle 0=\int^{\varpi}_x\omega=\int^{cris}_x\omega$. Meanwhile, we have 
\[
\displaystyle \int^{cris}_{x}\omega=\int^{cris}_{b_1\otimes x_1+\dots+b_m\otimes x_m}F^n(\gamma)=\sum_i\int^{cris}_{x_i}b_i\otimes F^n(\gamma)=\sum_i\int^{cris}_{x_i}(\phi^n_{cris}\otimes F^n)(b'_i\otimes\gamma).
\]
On the other hand, the action of Frobenius $F$ on $\tilde{N}_{K'_0}$ is $\sigma$-semilinear and $\phi_{cris}$ is a lift of Frobenius $\sigma$ over $K_0$, then the endomorphism \[F\otimes\phi_{cris}:N\otimes_{K_0}\Bct\to N\otimes_{K_0}\phi_{cris}(\Bct)\] is bijective and $\sigma$-semilinear. Therefore, by \cref{prop 4.3.2} we obtain $$ \int^{cris}_{x_i}(\phi^n_{cris}\otimes F^n)(b'_i\otimes\gamma)=\phi^n_{cris}(b'_i)\phi^n_{cris}\left(\int^{cris}_{x_i}\gamma\right)=\phi^n_{cris}\left(b'_i\int^{cris}_{x_i}\gamma\right).$$
Therefore, $$\displaystyle 0=\int^{cris}_x\omega=\sum_i\phi^n_{cris}\left(b'_i\int^{cris}_{x_i}\gamma\right)=\phi^n_{cris}\left(\sum_i\int^{cris}_{b'_i\otimes x_i}\gamma\right)=\phi^n_{cris}\left(\int^{cris}_{\sum b'_ix_i}\gamma\right).$$
Since $\phi_{cris}$ is injective on $\Bcris$, we obtain \[0=\int^{cris}_{\sum b'_ix_i}\gamma=\int^{cris}_{\sum \phi^n(b'_i)x_i}\gamma=\int^{cris}_x\gamma=\int^{\varpi}_x\gamma,\]
where the second equality follows from the Frobenius-equivariance of the crystalline integration on the first argument (\cref{tate module is crystalline}), and the last equality follows from \cref{cor 4.3.2}. 
As $\gamma\in\coLie(G)$, we have $\displaystyle\int^{\varpi}_x\gamma=\int^{\varphi}_x\gamma=0$ by \cref{theorem p-adic integration pairing for M}. Moreover, \cref{comparing parings Hp and Hpp} implies that 
\[
\displaystyle 0=\int^{\varphi}_x\gamma=\int^{\varphi}_{\pi(x)}\gamma,
\]
where $\pi(x)\in\Hp(M)$. Now, we can apply the p-adic subgroup theorem for motives (\cref{p-adic subgroup theorem for Fontaine pairing}) to get an exact sequence 
\begin{equation}\label{exact seq in proof of period conjecture}
0\to M_1\xrightarrow{\iota} M^n\xrightarrow{\pi} M_2\to 0
\end{equation}
of 1-motives over a finite extension of $\K'$ such that $M_1$ and $M_2$ have good reductions at $p$, $\pi(x)\in\Hp(M_1)$, $n\in\{1,2\}$, and $\Ann(\pi(x))\subset\TdR\ve(M_2)_{\C_p}$. Now, by definition, we only need to show that $\omega\in\tilde{N}(M_2)$ and $x\in\tilde{\Hpp}(M_1)$.

Since $\Ann(\pi(x))\subset\TdR\ve(M_2)_{\C_p}$, we have $\gamma\in\TdR\ve(M_2)_{\C_p}$. As $\gamma$ is in $\coLie(G)_{K'}\cap\tilde{N}_{K'_0}$, it has non-zero slopes and the same holds for every power $F^n(\gamma)$. Thus, $\omega=F^n(\gamma)\in\tilde{N}(M_2)$. 

We can obtain a commutative exact diagram 
\begin{equation*}
\begin{tikzcd}
            & 0                                                 & 0                             & 0                            &   \\
0 \arrow[r] & \Tp(M_2)\otimes_{\Z_p}\C_p(1) \arrow[u] \arrow[r] & \Hpp(M_2) \arrow[u] \arrow[r] & \Hp(M_2) \arrow[u] \arrow[r] & 0 \\
0 \arrow[r] & \Tp(M^n)\otimes_{\Z_p}\C_p(1) \arrow[u] \arrow[r] & \Hpp(M^n) \arrow[u] \arrow[r] & \Hp(M^n) \arrow[u] \arrow[r] & 0 \\
0 \arrow[r] & \Tp(M_1)\otimes_{\Z_p}\C_p(1) \arrow[u] \arrow[r] & \Hpp(M_1) \arrow[u] \arrow[r] & \Hp(M_1) \arrow[u] \arrow[r] & 0 \\
            & 0 \arrow[u]                                       & 0 \arrow[u]                   & 0 \arrow[u]                  &  
\end{tikzcd}
\end{equation*}
such that $x\in\Hpp(M^n)$ and $\pi(x)$ is in both $\Hp(M_1)$ and $\Hp(M^n)$. The rows are exact by the diagram \ref{7.2.6}, and the columns are exact because $\Hp$ and $\Hpp$ are exact functors. By diagram chasing, it follows that $x$ belongs to the kernel of the map $\Hpp(M^n)\to\Hpp(M_2)$, which implies that $x\in\Hpp(M_1)$. However, $x$ was taken in $\tilde{\Hpp}(M)\subset\Hpp(M)$, and so $x\in\tilde{\Hpp}(M_1)$. Thus, we have shown that $\left(x\otimes\omega\right)_{\tilde{\cP}^2_{\Hpp}}=0$ is zero in $\tilde{\cP}^2_{\Hpp}(M)$ and, as a result, the evaluation map
\[\tilde{\cP}^2_{\Hpp}(\gMi(\K))\to\cP_{\Hpp}(\gMi(\K))\]
is bijective, by \cref{colim injective period conjecture}(2).
\end{proof}

\begin{thm}\label{depth 2 and 1 period cinjecture for Hp and hp}
The $\Hp$-period conjecture holds at depth 2 and the $\hp$-period conjecture holds at depth 1.
\end{thm}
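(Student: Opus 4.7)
The plan is to follow exactly the blueprint of Theorem \ref{level 2 period cinjecture for Hpp}, but with two simplifications: no crystalline Frobenius manipulation is required because $\omega$ already lies in $\coLie(G)_{\bar{\K}} = \Fil^1\TdR\ve(M)_{\bar{\K}}$, and for the $\hp$-case the sharper form of the p-adic subgroup theorem (Corollary \ref{cor 5.6.1}(4)) reduces the length of the exact sequence to $n=1$.

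First I would reduce to a single vanishing period. Given a $\K^u$-linear (respectively $\barQ$-linear) relation $\sum_i c_i\int^{\Hp}_{x_i}\omega_i=0$ among $\Hp$-periods of $M_1,\dots,M_r\in\gMi(\K)$, bilinearity collapses it to $\int^{\Hp}_x\omega=0$ on $M:=\bigoplus_i M_i$ with $x=\sum x_i\in\Hp(M)$ and $\omega=\sum c_i\omega_i\in\coLie(G)_{\bar{\K}}$, defined over a finite extension $\K'/\K$ in $\gMi(\K')$ (using exactness of $\Hp$ and compatibility of good reduction with direct sums). By Corollary \ref{comparing parings Hp and Hpp} the vanishing $\int^{\Hp}_x\omega=0=\int^{\varphi}_{\pi(x)}\omega$ translates, via the canonical embedding $\Hp(M)\hookrightarrow\Lie(G\nat)_{\C_p}$ provided by the splitting of the top row of diagram \ref{7.2.1}, to the annihilation condition $\omega\in\Ann(x)\subseteq\coLie(G\nat)_{\C_p}$.

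Next I apply the p-adic subgroup theorem for 1-motives (Theorem \ref{p-adic subgroup theorem for Fontaine pairing}) to $x$, producing an exact sequence
\[
0\to M_1\to M^n\to M_2\to 0
\]
of 1-motives with good reduction at $p$, defined over a finite extension of $\K$, with $n\in\{1,2\}$, $x\in\Hp(M_1)$ and $\Ann(x)\subseteq\TdR\ve(M_2)_{\C_p}$. Hence $\omega\in\TdR\ve(M_2)\cap\Fil^1\TdR\ve(M^n)_{\bar{\K}}$. Since the induced morphism $\TdR\ve(M_2)\hookrightarrow\TdR\ve(M^n)$ is strict for the Hodge filtration (coming from an exact sequence of 1-motives and so of filtered de Rham realizations), this intersection equals $\Fil^1\TdR\ve(M_2)_{\bar{\K}}=\coLie(G_2)_{\bar{\K}}$. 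Consequently $(x\otimes\omega)=0$ in $\tilde{\cP}^2_{\Hp}(M)$, which by Lemma \ref{colim injective period conjecture} yields injectivity of $\tilde{\cP}^2_{\Hp}(\gMi(\K))\to\cP_{\Hp}(\gMi(\K))$, proving the $\Hp$-period conjecture at depth $2$.

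For the $\hp$-period conjecture at depth $1$, the same reduction gives $\int^{\hp}_x\omega=0$ with $x\in\hp(M)\subseteq\Hp(M)$ and $\omega\in\coLie(G)_{\bar{\K}}$. Since $x\in\hp(M)$, Corollary \ref{cor 5.6.1}(4) guarantees that the exact sequence produced above already has $n=1$, namely $0\to M_1\to M\to M_2\to 0$. Exactness of the functor $\hp(\,\cdot\,)$ combined with $x\in\hp(M)\cap\Hp(M_1)$ forces $x\in\hp(M_1)$; the Hodge-filtration strictness argument again gives $\omega\in\coLie(G_2)_{\bar{\K}}$, so $(x\otimes\omega)=0$ in $\tilde{\cP}^1_{\hp}(M)$. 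The main obstacle is already dispatched in the proof of Theorem \ref{p-adic subgroup theorem for Fontaine pairing} via the Fuchs–Wüstholz p-adic analytic subgroup theorem applied to $G\nat$; the genuinely new input here is the easy but crucial strictness of the Hodge filtration, which is what allows $\omega$ to descend to the prescribed subspace $\coLie(G_2)_{\bar{\K}}$ cutting out the pairings $\int^{\Hp}$ and $\int^{\hp}$.
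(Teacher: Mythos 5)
Your proof is correct and follows essentially the same route as the paper's: reduce to a single vanishing period via bilinearity, apply Theorem \ref{p-adic subgroup theorem for Fontaine pairing} to $x\in\Hp(M)$, and then argue that $\omega$ descends from $\coLie(G^\nat_2)_{\C_p}$ to $\coLie(G_2)_{\bar{\K}}$. Where the paper exhibits the explicit commutative diagram comparing the two short exact sequences
\[
0\to\coLie(G_2)\to\coLie(G)^n\to\coLie(G_1)\to 0
\quad\text{and}\quad
0\to\coLie(G^\nat_2)\to\coLie(G^\nat)^n\to\coLie(G^\nat_1)\to 0
\]
and chases it, you package the same information as strictness of the Hodge filtration; these are the same argument stated differently. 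One genuine contribution of your write-up is the explicit verification that $x\in\hp(M_1)$ (not merely $\Hp(M_1)$) in the $\hp$-case, using exactness of $\hp(\cdot)$ together with the vanishing of the image of $x$ in $\Hp(M_2)$; the paper leaves this step implicit (it also follows from the construction of $M_1$ in the proof of Theorem \ref{p-adic subgroup theorem for Fontaine pairing}, where $u=x\in\hp(G_1,\bar{\K})$ when $v=0$), so your version is slightly more self-contained but not a different method.
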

\begin{proof}
 Similar to the proof of \cref{level 2 period cinjecture for Hpp}, we can reduce a linear combination of $\Hp$-periods to a single relation $\displaystyle\int^{\Hp}_x\omega=0$. We now assume that $\displaystyle\int^{\Hp}_x\omega=0$, where $x\in\Hp(M)$, and $\omega\in\coLie(G)_{\bar{\K}}$. By using a similar approach to the one in the above proof and applying the p-adic subgroup theorem for motives (\cref{p-adic subgroup theorem for Fontaine pairing}) to $x\in\Hp(M)$, we obtain an exact sequence 
    \begin{equation}\label{6.3.2 exact seq}
      0\to M_1\to M^n\to M_2\to 0
    \end{equation}
of 1-motives over a finite extension of $\K$ with good reductions at $p$ such that $x\in\Hp(M_1)$ and $\Ann(x)\subseteq\TdR\ve(M_2)_{\C_p}=\coLie(G\nat_2)_{\C_p}$. This means that $\omega\in\coLie(G\nat_2)$. The exact sequence \ref{6.3.2 exact seq} yields the commutative diagram
    \begin{equation*}
\begin{tikzcd}
0 \arrow[r] & \coLie(G_2) \arrow[r] \arrow[d, hook] & \coLie(G)^n \arrow[r] \arrow[d, hook] & \coLie(G_1) \arrow[r] \arrow[d, hook] & 0 \\
0 \arrow[r] & \coLie(G\nat_2) \arrow[r]                 & \coLie(G\nat)^n \arrow[r]                 & \coLie(G\nat_1) \arrow[r]                 & 0,
\end{tikzcd}
    \end{equation*}
  and, as $\omega\in\coLie(G\nat_2)\cap\coLie(G)^n_{\bar{\K}}$, we can conclude that $\omega\in\coLie(G_2)_{\bar{\K}}$. Hence, in $\tilde{\cP}^2_{\Hpp}(M)$ we have
  \[
  \left(x\otimes\omega\right)_{\tilde{\cP}^2_{\Hpp}(M)}=0
  .\] 
This completes the proof of the $\Hp$-period conjecture at depth $2$.

 As for the proof of the $\hp$-period conjecture, we proceed exactly in the same way as above. In this case, since $x\in\hp(M)$, and \cref{cor 5.6.1} implies that in the exact sequence \ref{6.3.2 exact seq}, we must have $n=1$. Therefore, the evaluation map 
  \[
  \tilde{\cP}^{1}_{\hp}(\gMi)\to \cP_{\hp}(\gMi)
  \]
  is bijective.
\end{proof}
\begin{cor}
All relations between the $\hp$-periods of $\gMi$ are induced by bilinearilty and functoriality. More precisely, the evaluation map $\tilde{\cP}_{\hp}(\gMi)\to\cP_{\hp}(\gMi)$ is bijective.
\end{cor}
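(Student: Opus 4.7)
The plan is to deduce this corollary directly from the depth $1$ statement of Theorem \ref{depth 2 and 1 period cinjecture for Hp and hp} combined with the abstract equivalence established in Corollary \ref{cor: P1=P for period conjecture}. Recall that Theorem \ref{depth 2 and 1 period cinjecture for Hp and hp} asserts that the evaluation map
\[
\tilde{\cP}^{1}_{\hp}(\gMi) \longrightarrow \cP_{\hp}(\gMi)
\]
is a bijection, i.e.\ the $\hp$-period conjecture holds at depth $1$ on the full category $\gMi$.

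By Lemma \ref{colim injective period conjecture}(2), injectivity at depth $1$ on the whole category $\gMi$ is equivalent to injectivity of $\tilde{\cP}^{1}_{\hp}(M)\to \cP_{\hp}\langle M\rangle$ for every object $M\in\gMi(\K)$. For each such $M$, Corollary \ref{cor: P1=P for period conjecture} then yields the chain of equalities
\[
\cP_{\hp}\langle M\rangle \cong \tilde{\cP}^{1}_{\hp}(M) = \tilde{\cP}^{2}_{\hp}(M) = \cdots = \tilde{\cP}^{\infty}_{\hp}(M) = \tilde{\cP}_{\hp}(M),
\]
and in particular the bijectivity of the evaluation map $\tilde{\cP}_{\hp}(M)\to\cP_{\hp}\langle M\rangle$ for each $M$.

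Finally, passing to the colimit over $M\in\gMi(\K)$ and invoking Lemma \ref{colim injective period conjecture}(1), the pointwise bijections assemble into a bijection
\[
\tilde{\cP}_{\hp}(\gMi) \longrightarrow \cP_{\hp}(\gMi),
\]
which is exactly the statement of the corollary. There is essentially no obstacle here: the work has already been carried out in proving the depth $1$ case of Theorem \ref{depth 2 and 1 period cinjecture for Hp and hp}, and the present corollary is a formal consequence via the depth-versus-depth-$\infty$ comparison of Hörmann recorded in Corollary \ref{cor: P1=P for period conjecture}. The only point to verify is that the colimit argument of Lemma \ref{colim injective period conjecture} applies verbatim to the pairing $\displaystyle\int^{\hp}$, which it does since $\hp$ and $\coLie(G)_{\barQ}$ are both exact functors of $M$.
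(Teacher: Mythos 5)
Your argument is correct and follows precisely the same route as the paper: the paper's proof is a one-line citation of Theorem \ref{depth 2 and 1 period cinjecture for Hp and hp} together with Corollary \ref{cor: P1=P for period conjecture}, and you have simply unwound that citation, inserting Lemma \ref{colim injective period conjecture} to pass between the statement for each subcategory $\langle M\rangle$ and the statement for all of $\gMi$. No issues.
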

\begin{proof}
This follows directly from \cref{depth 2 and 1 period cinjecture for Hp and hp}, together with \cref{cor: P1=P for period conjecture}.
\end{proof}

\begin{remark}\label{remark 6.3.2}
Let $\displaystyle\alpha=\int^{\Hpp}_x\omega=0$ be a vanishing $\Hpp$-period of $M$, where $x\in\tilde{\Hpp}(M)$ and $\omega\in\tilde{N}(M)$. Let $\pi:\tilde{\Hpp}(M)\to\Hp(M)$. The above observations, together with \cref{cor 5.6.1}, show that if $\pi(x)$ belongs to $\hp(M)$, then $(x\otimes\omega)_{\tilde{\cP}^1_{\Hpp}(M)}$ is zero in $\tilde{\cP}^1_{\Hpp}(M)$, i.e., there exists an exact sequence 
\[
0\to M_1\to M\to M_2\to 0
\]
such that $x\in\tilde{\Hpp}(M_1)$ and $\omega\in\tilde{N}(M_2)$. In particular, if $\Hp(M)=\hp(M)$, then the evaluation map
\[
\tilde{\cP}^1_{\Hpp}(M)\to \cP_{\Hpp}\langle M\rangle.
\] is injective, and as a result, \[\tilde{\cP}^1_{\Hpp}( M)=\tilde{\cP}^2_{\Hpp}(M)=\cP_{\Hpp}\langle M\rangle.\]
The same applies for $\Hp$-periods as well.
\end{remark}
\begin{remark}\label{remark Hpp period does not hold at depth 1}
    In general, $\tilde{\cP}^2_{\Hpp}(\gMi)\neq \tilde{\cP}^1_{\Hpp}(\gMi)$ (and also $\tilde{\cP}^2_{\Hp}(\gMi)\neq \tilde{\cP}^1_{\Hp}(\gMi)$), see \cref{example M[LtoGm]} in the following section. Therefore, by \cref{cor: P1=P for period conjecture}, the map $\tilde{\cP}_{\Hpp}(\gMi)\to \cP_{\Hpp}(\gMi)$ is not injective. In other words, there are relations among the $\Hpp$-periods (or $\Hp$-periods) beyond those induced by bilinearity and functoriality. In fact, as the Theorems \ref{level 2 period cinjecture for Hpp} and \ref{depth 2 and 1 period cinjecture for Hp and hp} demonstrate, all relations among the $\Hpp$-periods ($\Hp$-periods, respectively) are exactly those that are induced by the depth 2 formal space $\tilde{\cP}^2_{\Hpp}$ ($\tilde{\cP}^2_{\Hp}$, respectively).
\end{remark}

\section{Examples}\label{section: examples}
In the following examples, $M$ is a 1-motive over a number field $\K$ with good reduction at $p$.
\begin{example}[Periods of 0-motives]
Let $M=[L\to 0]$ be a 1-motive with good reduction at $p$. Let $r=\rank(L)$. We have:
\begin{itemize}
    \item The Hodge filtration of $L$ is
\[
0\to V(L) \to V(L)\to 0 \to 0
.\]
\item $\Tcrys\ve(\bar{M})=\D(\bar{M}[p^{\infty}])=\D((\underline{\Q_p/\Z_p})^r)=(1_{FD})^r$, where $1_{FD}$ is the unit Dieudonn\'e module. Then, all the slopes are zero. 
\item $\tilde{N}(M)=0$.
\item $\hp(M)=0$.
\item $\Hp(M)=V(L)_{\C_p}$.
\item $\cP_{\Hpp}(M)=\cP_{\Hp}(M)=\cP_{\hp}(M)=0$.
\end{itemize}
\end{example}

\begin{example}[Torus]
Let $M=[0\to\G_m]$ be a 1-motive with good reduction. We have:
\begin{itemize}
    \item The Hodge filtration of $M$ is given by
    \[
    0\to 0\to \Lie(\G_m)\to\Lie(\G_m)\to 0,
    \]
    since $V(M)=\Ext^1(\G_m,\G_a)\ve=0$.
    \item $\hp(M)=\log(\mup(\barQ))\otimes\Q=\log_{\mup}((1+\fm_{\ocp})\cap\barQ)\otimes_{\Z}\Q$.
    \item As $\mup$ is connected, $\Tp(M)=\tilde{\Tp}(M)$.
    \item $\Tcrys\ve(\bar{M})=\D(\bar{M}[p^{\infty}])=\D(\mup)=\Delta_1$. Then, all slopes are non-zero.
    \item $\Hp(M)=\hp(M)$, as $V(M)=0$. By \cref{remark 6.3.2}, this implies that $\tilde{\cP}^1_{\Hpp}\langle M\rangle=\tilde{\cP}^2_{\Hpp}\langle M\rangle\cong\cP_{\Hpp}\langle M\rangle$, i.e., all relations among $\Hpp$-periods of a torus are induced by bilinearity and functoriality. The same holds for the space of formal $\Hp$-periods.
    \item As $\hp(M)=\Hp(M)$, we have also $\cP_{\hp}(M)=\cP_{\Hp}(M)$. Since all forms in $\coLie(G)$ have non-zero slopes, we have 
    \[
    \int^{\Hpp}_{x}\omega=\int^{\Hp}_{\pi(x)}\omega
    \]
    for any $\omega\in \tilde{N}(M)\subseteq\coLie(\G_m)$ and $x\in\tilde{\Hpp}(M)$. The above equality follows from \cref{comparing parings Hp and Hpp}. This implies that $\cP_{\Hpp}(M)\subseteq\cP_{\Hp}(M)$.
\end{itemize}
\end{example}
\begin{example}[Kummer motive]\label{example M[LtoGm]}
The 1-motive $M=[\Z\to\G_m], 1\mapsto g$ is called a Kummer motive. If $g$ is a root of unity, the canonical exact sequence 
\begin{equation}\label{exact seq in examples}
 0\to \G_m\to M\to \Z\to 0
\end{equation}
splits. We have
\begin{itemize}
    \item $G\nat$ is the extension of $\G_m$ by $\Hom(\Z,\G_a)=\G_a$, and by the structure theory of algebraic groups, we have $G\nat=\G_a\times\G_m$.
    \item By applying the functor $\Tcrys\ve(.)$ to the natural sequence of motives (\ref{exact seq in examples}), we get a splitting sequence \[ 0\to 1_{FD}\to\Tcrys\ve(\bar{M})\to\Delta_1\to 0.\] Therefore, we have slopes 1 and 0 with multiplicity 1, and $\tilde{N}(M)$ is contained in the filtered isocrystal associated with $\Delta_1$.
    \item If the sequence \ref{exact seq in examples} splits, taking formal p-divisible groups yields a split exact sequence. Therefore, we have \[\hp(M)=\hp(\G_m)\oplus\hp(\Z)=\hp(\G_m),\]
since $\hp(\Z)=0$. In fact, this equality holds even if $g$ is not a root of unity. This is because $\Z[p^{\infty}]$ is \'etale, and consequently, $\hp(M)=\hp(\G_m)$.
    \item $V(M)=V(\Z)=\G_a$, and $\Hp(M)=\hp(M)\oplus V(M)_{\C_p}=\hp(\G_m)\oplus V(\Z)_{\C_p}$.
    \item $\cP_{\hp}\langle M\rangle=\cP_{\hp}\langle \G_m\rangle$, $\cP_{\Hp}\langle\G_m\rangle\subset\cP_{\Hp}\langle M\rangle$.
\end{itemize}
Assume that $\displaystyle\int^{\Hpp}_{x}\omega$ is an $\Hpp$-period of $M$, where $\omega\in\tilde{N}(M)\cap\coLie(\G_m)_{\bar{\K}}$, and $x\in\tilde{\Hpp}(M)$ such that the image of $x$ in $\Hp(M)$ lies in $V(\Z)_{\C_p}$, i.e., $\pi(x)\in V(M)_{\C_p}$, where $\pi:\tilde{\Hpp}(M)\to \Hp(M)$. Suppose further that $x,\omega,\pi(x)\neq 0$. By \cref{comparing parings Hp and Hpp}, we have 
\[
\displaystyle\int^{\Hpp}_x\omega=\int^{\Hpp}_{\pi(x)}\omega=\int^{\Hp}_{\pi(x)}\omega=0.
\]
The latter equality holds because $\Ann(\pi(x))$ contains $\coLie(G)$, as $\pi(x)\in V(M)$. Now, by \cref{level 2 period cinjecture for Hpp}, $x\otimes\omega$ is zero in $\tilde{\cP}^2_{\Hpp}( M)$, i.e., there exists an exact sequence
\[
0\to M_1\to M^n\to M_2\to 0
\]
with $n\leq 2$, such that $x\in\tilde{\Hpp}(M_1)$ and $\omega\in\tilde{N}(M_2)$. We want to show that $n=2$. Assume that $n=1$. When $g$ is not a root of unity, there are only three possibilities for $M_1$:
\[
0,\,\, [0\to\G_m],\text{ and}\,\, M.
\]
The case $M_1=0$ is impossible, because $x\in\tilde{\Hpp}(M_1)$ and $x\neq 0$. Now, if $M_1=\G_m$, then $x\in\tilde{\Hpp}(M_1)=\tilde{\Hpp}(\G_m)$, and $\pi(x)\in\Hp(\G_m)=\hp(\G_m)$, but $\pi(x)\in V(\Z)_{\C_p}\subset\Hp(M)$, which is a contradiction as both $\hp(\G_m)$ and $V(\Z)_{\C_p}$ are direct summand of $\Hp(M)$. Finally, we can also exclude the case $M_1=M$, as $\omega\neq 0$. We conclude that $n=2$.  In other words, for the Kummer motive $M=[\Z\to\G_m]$, we have 
\[
\tilde{\cP}^1_{\Hpp}( M)\neq \tilde{\cP}^2_{\Hpp}( M)\cong\cP_{\Hpp}\langle M\rangle.
\]
The latter identification is due to the fact that the $\Hpp$-period conjecture holds at depth 2 for $\langle M\rangle$ (\cref{level 2 period cinjecture for Hpp}). 

Note that we took $\omega\in\coLie(G)$. The above argument still applies to $\displaystyle\int^{\Hp}_{\pi(x)}\omega=0$. We have the same conclusion for the $\Hp$-periods of $\langle M\rangle$, namely \[\tilde{\cP}^1_{\Hp}( M)\neq\tilde{\cP}^2_{\Hp}( M)\cong\cP_{\Hp}\langle M\rangle.\]
By \cref{cor: P1=P for period conjecture}, we can conclude that there are relations among $\Hpp$-periods and $\Hp$-periods of the Kummer motive $M$ beyond those induced by bilinearlity and functoriality.
\end{example}

\begin{prop}\label{prop 4.4.1}
Assume that $\alpha_1,\dots,\alpha_n$ are nonzero $\Hpp$-periods ($\Hp$-periods, resp.) of $M_1,\dots,M_n$. Let $\cC_i=\langle M_i\rangle$ be the abelian subcategories generated by $M_i$. If $\Hom(\cC_i,\cC_j)=\Hom(\cC_j,\cC_i)=0$, for $i\neq j$, then $\alpha_1,\dots,\alpha_n$ are $\K^u$-linearly ($\barQ$-linearly, resp.) independent.
\end{prop}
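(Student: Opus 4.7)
The plan is to bundle the relation $\sum_{i=1}^n c_i\alpha_i = 0$ into a single vanishing $\Hpp$-period on $M := \bigoplus_i M_i$, invoke the depth-2 period conjecture (Theorem \ref{level 2 period cinjecture for Hpp}), and decompose the resulting exact sequence along the direct-sum structure using the Hom-vanishing hypothesis.

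Writing $\alpha_i = \int^{\Hpp}_{x_i}\omega_i$ with $x_i\in\tilde{\Hpp}(M_i)$, $\omega_i\in\tilde{N}(M_i)$, set $x := \sum_i\iota_{i*}x_i \in \tilde{\Hpp}(M)$ and $\omega := \sum_i c_i p_i^{*}\omega_i \in \tilde{N}(M)$, using the inclusions $\iota_i\colon M_i\hookrightarrow M$ and the projections $p_j\colon M\to M_j$. Since $\tilde{\Hpp}$ and $\tilde{N}$ are additive and the integration pairing is functorial, the cross terms $\int^{\Hpp}_{\iota_{i*}x_i}p_j^{*}\omega_j = \int^{\Hpp}_{x_i}(p_j\iota_i)^{*}\omega_j$ vanish whenever $i\neq j$, because $p_j\iota_i = 0$. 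Hence $\int^{\Hpp}_x\omega = \sum_i c_i\alpha_i = 0$, and Theorem \ref{level 2 period cinjecture for Hpp} produces an exact sequence
\[
0 \to N_1 \to M^k \to N_2 \to 0, \qquad k\in\{1,2\},
\]
in $\gMi$ over a finite extension of $\K$, with $x\in\tilde{\Hpp}(N_1)$ and $\omega\in\tilde{N}(N_2)$.

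The crucial step is to show $N_1 = \bigoplus_i N_1^{(i)}$ with $N_1^{(i)} := N_1\cap M_i^k$. First, the Hom-vanishing hypothesis immediately gives $\cC_i\cap\cC_j = 0$ for $i\neq j$: any non-zero object lying in both would furnish a non-zero identity morphism in $\Hom(\cC_i,\cC_j)$. Setting $\bar{N}_1 := N_1/\bigoplus_i N_1^{(i)}$, this embeds into $\bigoplus_i Q_i$ where $Q_i := M_i^k/N_1^{(i)}\in\cC_i$, and satisfies $\bar{N}_1\cap Q_j = 0$ for every $j$. I will prove $\bar{N}_1 = 0$ by induction on $n$: examining the projection $\pi_1\colon\bar{N}_1\to Q_1$, if $\pi_1$ is injective then $\bar{N}_1\in\cC_1$, which forces $\pi_j = 0$ for all $j\neq 1$ by Hom-vanishing and hence $\bar{N}_1\subseteq Q_1\cap\bar{N}_1 = 0$; otherwise $\ker\pi_1 = \bar{N}_1\cap\bigoplus_{i\geq 2}Q_i$ is non-zero and satisfies the analogous hypothesis for the $(n-1)$-tuple $(Q_2,\ldots,Q_n)$, contradicting the inductive hypothesis. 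Consequently $N_1 = \bigoplus_i N_1^{(i)}$ and $N_2 = \bigoplus_i Q_i$.

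Decomposing $x$ and $\omega$ accordingly gives $x_i\in\tilde{\Hpp}(N_1^{(i)})$ and $c_i\omega_i\in\tilde{N}(Q_i)$ for each $i$, and the exact sequences $0\to N_1^{(i)}\to M_i^k\to Q_i\to 0$ show $(x_i\otimes c_i\omega_i) = 0$ in $\tilde{\cP}^k_{\Hpp}(\langle M_i\rangle)\subseteq\tilde{\cP}^2_{\Hpp}(\langle M_i\rangle)$. By Theorem \ref{level 2 period cinjecture for Hpp} applied to $M_i$, the period $c_i\alpha_i$ vanishes in $\Bt$, and since $\alpha_i\neq 0$ we conclude $c_i = 0$. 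The $\Hp$-case is identical, using Theorem \ref{depth 2 and 1 period cinjecture for Hp and hp}. The main obstacle is the direct-sum decomposition of $N_1$: the inductive argument relies crucially on the consequence $\cC_i\cap\cC_j = 0$ of the Hom-vanishing hypothesis, sidestepping the much stronger Ext-vanishing (which does not follow from the hypothesis in general).
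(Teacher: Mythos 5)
Your proof is correct and follows the same underlying strategy as the paper's, but it unfolds a step that the paper leaves implicit. The paper's proof is very short: it reduces to $n=2$ by induction, asserts the decomposition $\tilde{\cP}^i_{\Hpp}(M_1\oplus M_2)=\tilde{\cP}^i_{\Hpp}(M_1)\oplus\tilde{\cP}^i_{\Hpp}(M_2)$ ``by the assumption and the definition of formal space of periods at depth $i$,'' and then invokes \cref{level 2 period cinjecture for Hpp} twice --- once for $M_1\oplus M_2$ and once for each $M_i$ via $\tilde{\cP}^2_{\Hpp}(M_i)\cong\cP_{\Hpp}\langle M_i\rangle$ --- to conclude. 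The content behind the asserted decomposition of the formal period space is precisely that every subobject of $(M_1\oplus M_2)^m$ splits along the two factors, which is exactly what your inductive argument (built on $\cC_i\cap\cC_j=0$ and the case analysis on $\pi_1$) establishes concretely. So you have supplied the real justification for the key step; what you do differently is that you apply the depth-$2$ conjecture first to obtain one explicit exact sequence $0\to N_1\to M^k\to N_2\to 0$ and split that, rather than splitting the entire formal period space in advance. This is more elementary and self-contained; the paper's version is shorter but requires the reader to accept the direct-sum claim. Two minor remarks: (1) your phrasing ``$x_i\in\tilde{\Hpp}(N_1^{(i)})$'' and the claim that the component sequences show $(x_i\otimes c_i\omega_i)=0$ need the slightly fussy bookkeeping for $k=2$, where ``$x\in\tilde{\Hpp}(N_1)$'' is really a statement about a lift of $x$ in $\tilde{\Hpp}(M)^2$; your decomposition argument does go through component-by-component with that interpretation. (2) In the final step you write ``by Theorem \ref{level 2 period cinjecture for Hpp} applied to $M_i$, the period $c_i\alpha_i$ vanishes'' --- in fact, once $(x_i\otimes c_i\omega_i)=0$ in $\tilde{\cP}^k_{\Hpp}(\langle M_i\rangle)$, the vanishing $c_i\alpha_i=0$ is simply the well-definedness of the evaluation map (functoriality of the pairing), not a second application of the theorem; the conclusion is unaffected.
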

\begin{proof}
We prove the case for $n=2$. The result for arbitrary $n$ then follows by induction. Recall that $\langle M_1\rangle$ and $\langle M_2\rangle$ are full additive subcategories, generated by $M_1$ and $M_2$, respectively, and closed under subquotients.
    By the assumption and the definition of formal space of periods at depth $i$, we have \[\tilde{\cP}^i_{\Hpp}( M_1\oplus M_2)=\tilde{\cP}^i_{\Hpp}( M_1)\oplus\tilde{\cP}^i_{\Hpp}( M_2).\]
    Assume that there exist some $\lambda_1,\lambda_2\in\K^u$ such that 
    \[
    \lambda_1\alpha_1+\lambda_2\alpha_2=0.
    \]
    The period $\lambda_1\alpha_1+\lambda_2\alpha_2$ is a $\Hpp$-period of $M_1\oplus M_2$. The validity of $\Hpp$-period conjecture at depth 2 (\cref{level 2 period cinjecture for Hpp}) implies that $(\lambda_1\alpha_1+\lambda_2\alpha_2)_{\tilde{\cP}^2_{\Hpp}( M_1\oplus M_2)}$ is zero in $\tilde{\cP}^i_{\Hpp}( M_1\oplus M_2)$. But \[\tilde{\cP}^2_{\Hpp}(M_1\oplus M_2)=\tilde{\cP}^2_{\Hpp}( M_1)\oplus\tilde{\cP}^2_{\Hpp}( M_2)\cong\cP_{\Hpp}\langle M_1\rangle\oplus\cP_{\Hpp}\langle M_2\rangle,\] thus $\lambda_1\alpha_1=0$ and $\lambda_2\alpha_2=0$. As $\alpha_1,\alpha_2\neq 0$, this means that $\lambda_1=\lambda_2=0$.

    For the $\Hp$-periods the proof is similar.
\end{proof}

\begin{cor}
    Let $A$ be an abelian variety with good reduction over $\K$. Assume that $\alpha_1$ is a nonzero $\Hpp$-period (or $\Hp$-period, resp.) of $A$, and $\alpha_2$ is a nonzero $\Hpp$-period (or $\Hp$-period, resp.) of $\G_m$. Then $\alpha_1, \alpha_2$ are $\K^u$-linearly independent ($\barQ$-linearly independent, resp.). 
\end{cor}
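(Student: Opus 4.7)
The plan is to deduce this corollary directly from \cref{prop 4.4.1}, applied with $M_1 = A$ and $M_2 = \G_m$ (both viewed as 1-motives over $\K$ with good reduction at $p$). The only thing that requires verification is the vanishing of the two Hom-groups
\[
\Hom(\langle A\rangle,\langle \G_m\rangle) = 0 \quad \text{and} \quad \Hom(\langle \G_m\rangle,\langle A\rangle) = 0
\]
in the isogeny category $\gMi(\K)$, after which the linear independence of $\alpha_1$ and $\alpha_2$ is an immediate consequence of the proposition.

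First I would describe the objects of each abelian subcategory. Since $\langle A\rangle$ is the smallest full abelian subcategory of $\gMi(\K)$ containing $A$ and closed under subquotients, every object of $\langle A\rangle$ is, up to isogeny, a subquotient of some $A^n$; in particular it is again an abelian variety (with good reduction at $p$). Similarly, every object of $\langle \G_m\rangle$ is, up to isogeny, a subquotient of some $\G_m^n$, hence a torus. Thus one is reduced to showing that between abelian varieties and tori over $\K$ there are no nonzero morphisms (even after tensoring with $\Q$).

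The next step is the classical rigidity argument. A morphism of group schemes $f\colon T \to B$, with $T$ a torus and $B$ an abelian variety, must be constant: $T$ is affine and connected while $B$ is proper, and any morphism from $\G_m^n$ to a proper variety is constant (e.g.\ by the rigidity lemma for abelian varieties, or by noting that the pullback of an ample line bundle would be ample on an affine variety). Since $f$ sends the identity to the identity, $f = 0$. Conversely, a morphism $g\colon B \to T$ is constant by the same rigidity lemma (every morphism from a connected proper variety to an affine variety is constant), hence $g = 0$. These arguments pass to subquotients and to the isogeny category, yielding the two required Hom-vanishings.

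Finally, applying \cref{prop 4.4.1} with $n=2$ produces the desired $\K^u$-linear independence of $\alpha_1, \alpha_2$ in the $\Hpp$-case, and the $\barQ$-linear independence in the $\Hp$-case. I do not anticipate a serious obstacle; the only step that might merit extra care is checking that the Hom-vanishing survives passage to the full abelian subcategories (in particular, to all subquotients), but this follows from the structural fact that subquotients of abelian varieties are abelian varieties and subquotients of tori are tori.
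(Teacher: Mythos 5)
Your proof is correct and takes the same route as the paper, which simply cites the absence of nonzero morphisms between $\G_m$ and $A$ together with \cref{prop 4.4.1}. You have spelled out the necessary intermediate steps that the paper leaves implicit — in particular, that objects of $\langle A\rangle$ are abelian varieties, objects of $\langle \G_m\rangle$ are tori, and that the rigidity argument gives the vanishing of both $\Hom$-groups required by the proposition's hypothesis.
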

\begin{proof}
    The proof follows from the fact that there are no non-trivial morphisms between $\G_m$, and $A$, combined with the application of the above proposition.
\end{proof}


\subsection*{1-motives associated with smooth varieties} Varieties provide another rich source of examples for our p-adic periods. Following \cite{BS01}, we begin by associating a 1-motive to any equidimensional variety $X$ over a field $K$ of characteristic $0$. Let $S\subset X$ be the singular locus, $f:\tilde{X}\to X$ a resolution of singularities of $X$ and $\tilde{S}$ denote the reduced inverse image of $S$. Consider a smooth compactification $\bar{X}$ of $X$ with boundary $Y=\bar{X}-\tilde{X}$ and denote by $\bar{S}$ the Zariski closure of $\tilde{S}$ in $X$. We can choose the resolution $\tilde{X}$ and the compactification $\bar{X}$ of $X$ such that $\bar{X}$ is a projective variety with a reduced normal crossing divisor $\bar{S}+Y$. The fpqc sheaf 
\[
T\mapsto\Pic(\bar{X}\times_{\Spec K} T,Y\times_{\Spec K}T)
\]
is representable by a $K$-group scheme which is locally of finite type over $K$ whose group of $K$-rational points is $\Pic(\bar{X},Y)$ (\cite[Lemma 2.1]{BS01}). Its identity component is denoted by $\Pic^0(\bar{X},Y)$. Let $Y_i$ be the smooth irreducible components of $Y$. The identity component of the kernel $A(\bar{X},Y):=\ker^0(\Pic^0(\bar{X})\to\oplus_i\Pic^0(Y_i))$ is an abelian variety and we have an exact sequence 
\[
0\to T(\bar{X},Y)\to\Pic^0(\bar{X},Y)\to A(\bar{X},Y)\to 0, 
\]
where $T(\bar{X},Y)$ is a torus (see \cite[Proposition 2.2]{BS01}). Thus, $\Pic^0(\bar{X},Y)$ should represent the semi-abelian part of the 1-motive associated with $X$ ($\W_{-1}(M)=\Pic^0(\bar{X},Y)$). We now identify its lattice part. Denote by $\Div^0_{\bar{S}}(\bar{X},Y)$ the subgroup of divisors $D$ on $\bar{X}$ such that $\operatorname{supp}(D)\cap Y=\emptyset$, $\operatorname{supp}(D)\subset\bar{S}$, and $[D]\in\Pic^0(\bar{X},Y)$\footnote{The divisor $D$ has a section trivializing it on $\bar{X}-D$, therefore $(\cO_{\bar{X}}(D),1)$ identifies a class $[D]\in\Pic^0(\bar{X},Y)$.}. Consider the push-forward of Weil divisors $f_*:\Div_{\tilde{S}}(\tilde{X})\to\Div_S(X)$ and let $\Div_{\tilde{S}/S}(\tilde{X},Y)$ be its kernel and $\Div^0_{\bar{S}/S}(\bar{X},Y)$ denote the intersection of $\Div_{\tilde{S}/S}(\tilde{X},Y)$ and $\Div^0_{\bar{S}}(\bar{X},Y)$, i.e., the group of divisors on $\bar{X}$ which are linear combinations of compact components in $\tilde{S}$
which have trivial push-forward under $f:\tilde{X}\to X$ and which are algebraically equivalent to zero
relative to $Y$. We have the following definition:
\begin{defn}{\cite[Definition 2.3]{BS01}}
    The homological Picard 1-motive of $X$ (or the 1-motive associated with $X$) is defined as 
    \[
    \Pic^{-}(X)=[\Div^0_{\bar{S}/S}(\bar{X},Y)\xrightarrow{u}\Pic^0(\bar{X},Y)],
    \]
    where $u(D)=[D]$. The cohomological Albanese 1-motive $\Alb^+(X)$ of $X$ is defined as the Cartier dual of $\Pic^{-}(X)$.
\end{defn}

\begin{example}
    Let $C$ be a curve. For the homological Picard 1-motive $\Pic^{-}(C)$ of $C$, $\W_{-1}(\Pic^{-}(C))$ is isomorphic to the generalized Jacobian $J(C)$ of $C$ (see \cite[\S 1.8]{miyanishi_algebraic_2020}). Now, assume that $C$ is a smooth curve and $D\subset C$ a subvariety of dimension $0$. To the pair $(C,D)$, we can associate the 1-motive 
    \[
    [\Div^0(D)\to J(C) ],
    \]
    where $\Div^0(D)$ is the subgroup of the degree-zero divisors supported on $D$. Let $H$ be a cohomology theory. Applying the long exact sequence for relative cohomology to the inclusion $C\to J(C)$ yields
    \begin{equation*}
\begin{tikzcd}
H^0(J(C)) \arrow[r] \arrow[d, "f"] & H^0(D) \arrow[r] \arrow[d, Rightarrow, no head] & {H^1(J(C),D)} \arrow[r] \arrow[d, "h"] & H^1(J(C)) \arrow[r] \arrow[d, "g"] & 0 \\
H^0(C) \arrow[r]              & H^0(D) \arrow[r]                                & {H^1(C,D)} \arrow[r]              & H^1(C) \arrow[r]              & 0
.\end{tikzcd}
    \end{equation*}
    By the five lemma, if both $f$ and $g$ are isomorphisms, then so is $h$. According to \cite[Chapter V]{serrealgebraicgroupsandclaasfields}, this condition holds for the de Rham, singular, and the \'etale cohomology. For the crystalline cohomology over field of characteristic $p\geq 3$, this follows from \cite[Theorem B']{andreatta2005crystalline}. Thus, one can transfer all arithmetic information obtained for 1-motives to the pair $(C,D)$. In particular, we can identify our $\Q$-structures within the homology classes of $(C,D)$ and define our notion of p-adic periods for $(C,D)$ when the 1-motive associated with $(C,D)$ has a good reduction at $p$.
\end{example}

\begin{defn}
    Let $X$ be a variety defined over a number field $\K$. Assume that $\Pic^{-}(X)$ is a 1-motive with a good reduction at $p$. An $\Hpp$-period ($\Hp$-period or $\hp$-period, resp.) of $X$ is a p-adic number which is an $\Hpp$-period ($\Hp$-period or $\hp$-period, resp.) of $\Pic^{-}(X)$. The space of $\Hpp$-periods ($\Hp$-periods or $\hp$-periods, resp.) of $X$ is denoted by $\cP_{\Hpp}(X)$ ($\cP_{\Hp}(X)$ or $\cP_{\hp}(X)$, resp.) and its space of formal periods at depth $i$ is denoted by $\tilde{\cP}^i_{\Hpp}(X)$ ($\tilde{\cP}^i_{\Hp}(X)$ or $\tilde{\cP}^i_{\hp}(X)$, resp.).
\end{defn}
When the 1-motive associated with $X$ has a good reduction at $p$, these p-adic periods of $X$ arise from the p-adic integration pairing of the 1-motive $\Pic^{-}(X)$. \cref{level 2 period cinjecture for Hpp} and \cref{depth 2 and 1 period cinjecture for Hp and hp} provide the following motivic classification of their vanishing behaviour:
\begin{thm}
Assume the homological Picard 1-motive $M=\Pic^{-}(X)$ of the variety $X$ defined over a number field $\K$ has a good reduction at $p$. Let $\alpha$ be an $\Hpp$-period ($\Hp$-period or $\hp$-period, resp.) of $X$. If $\displaystyle\alpha=\int^{\Hpp}_x\omega=0$ ($\displaystyle\alpha=\int^{\Hp}_x\omega=0$ or $\displaystyle\alpha=\int^{\hp}_x\omega=0$, resp.), then there exists an exact sequence
     \[
     0\to M_1\to M^n\to M_2\to 0
     \]
     of 1-motives over a finite extension of $\K$ with good reductions at $p$, where $n\in\{1,2\}$ ($n\in\{1,2\}$ or $n=1$, resp.), $x\in\tilde{\Hpp}(M_1)$ ($x\in\Hp(M_1)$ or $x\in\hp(M_1)$, resp.), and $\omega\in\tilde{N}(M_2)$ ($\omega\in\Fil^1(\TdR\ve(M_2))_{\bar{\K}}$, resp.).
\end{thm}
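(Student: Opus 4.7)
The plan is to observe that this statement is, in essence, a direct translation of the main period theorems of the chapter (\cref{level 2 period cinjecture for Hpp} and \cref{depth 2 and 1 period cinjecture for Hp and hp}) into the language of varieties, via the definition of the $\Hpp$/$\Hp$/$\hp$-periods of $X$ as those of the homological Picard 1-motive $M=\Pic^{-}(X)$. So no genuinely new argument is required; the task is to unwind the definitions and invoke the appropriate theorem.

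First I would spell out the reduction: by definition, any $\Hpp$-period $\alpha$ of $X$ is a $\Hpp$-period of $M=\Pic^{-}(X)$, and similarly for $\Hp$- and $\hp$-periods. Since $M$ has good reduction at $p$ by hypothesis, $M$ lies in $\gMi(\K)$, so the period pairings $\displaystyle\int^{\Hpp}$, $\displaystyle\int^{\Hp}$, $\displaystyle\int^{\hp}$ of \cref{three pairings hp Hp Hpp} are defined on $M$, and a vanishing relation $\displaystyle\alpha=\int^{\Hpp}_x\omega=0$ with $x\in\tilde{\Hpp}(M)$ and $\omega\in\tilde{N}(M)$ gives a nontrivial element of the kernel of the evaluation map $\tilde{\cP}^2_{\Hpp}(M)\to\cP_{\Hpp}\langle M\rangle$.

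Next I would apply the relevant theorem in each of the three cases. For $\alpha$ an $\Hpp$-period, \cref{level 2 period cinjecture for Hpp} asserts that this evaluation map is injective, i.e. the $\Hpp$-period conjecture holds at depth $2$; hence $(x\otimes\omega)_{\tilde{\cP}^2_{\Hpp}(M)}=0$, and unwinding the definition of the depth-$2$ formal space (\cref{space of formal periods at depth i}) produces an exact sequence $0\to M_1\to M^n\to M_2\to 0$ with $n\in\{1,2\}$ in $\gMi$ over a finite extension of $\K$, together with $x\in\tilde{\Hpp}(M_1)$ and $\omega\in\tilde{N}(M_2)$, as required. The $\Hp$-case is identical, using the second statement of \cref{depth 2 and 1 period cinjecture for Hp and hp} together with the identification $\Fil^1\TdR\ve(M_2)=\coLie(G_2)$. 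For the $\hp$-case, the first statement of \cref{depth 2 and 1 period cinjecture for Hp and hp} yields injectivity at depth $1$, so we obtain the same kind of exact sequence but with $n=1$.

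There is really no hard step here; the theorem is a corollary. The only thing worth checking carefully is that \cref{cor: good reduction exact sequence of 1-motives} ensures that both $M_1$ and $M_2$ in the resulting exact sequence again have good reduction at $p$, which is already built into the statements of \cref{level 2 period cinjecture for Hpp} and \cref{depth 2 and 1 period cinjecture for Hp and hp}. Thus the proof amounts to a direct invocation of these two theorems, followed by a one-line translation through the definition $\cP_{\Hpp}(X):=\cP_{\Hpp}\langle\Pic^{-}(X)\rangle$ and its analogues.
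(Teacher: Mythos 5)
Your overall strategy is the same as the paper's: the theorem is stated in the paper as a direct corollary of \cref{level 2 period cinjecture for Hpp} and \cref{depth 2 and 1 period cinjecture for Hp and hp}, obtained by unwinding the definition $\cP_{\Hpp}(X)=\cP_{\Hpp}\langle\Pic^{-}(X)\rangle$ (and its analogues). Your first reduction is exactly right, and so is the observation about good reduction of $M_1$, $M_2$.

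There is, however, a small but real gap in your step ``hence $(x\otimes\omega)_{\tilde{\cP}^2_{\Hpp}(M)}=0$, and unwinding the definition of the depth-$2$ formal space produces an exact sequence.'' The bare statement ``the $\Hpp$-period conjecture holds at depth $2$'' only gives injectivity of the evaluation map, hence $(x\otimes\omega)_{\tilde{\cP}^2_{\Hpp}(M)}=0$. But $\tilde{\cP}^2_{\Hpp}(M)$ is the quotient of $F(M)\otimes_{\Q}G(M)$ by the \emph{subspace generated by} the expressions $\sum_j\nu_j\otimes\gamma_j$ arising from exact sequences of length $\le 2$. Vanishing in the quotient means that $x\otimes\omega$ is a $\K^u$-linear combination of such generators; it does not, by itself, say that $x\otimes\omega$ is \emph{one} such generator, i.e. that there is a single exact sequence $0\to M_1\to M^n\to M_2\to 0$ with $x\in\tilde{\Hpp}(M_1)$ and $\omega\in\tilde{N}(M_2)$. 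That stronger assertion is what the \emph{proofs} of \cref{level 2 period cinjecture for Hpp} and \cref{depth 2 and 1 period cinjecture for Hp and hp} actually establish (via the p-adic subgroup theorem, \cref{p-adic subgroup theorem for Fontaine pairing}): they produce a single exact sequence as an intermediate step and then conclude vanishing in the formal space as a consequence. To close the gap you should cite that stronger form directly (it is the content of Theorem~B of the introduction), rather than trying to reverse-engineer a single exact sequence from $(x\otimes\omega)_{\tilde{\cP}^2_{\Hpp}(M)}=0$.
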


\clearpage
\appendix
\begin{appendix}
\chapter{Schemes}
\section{On morphisms of schemes}

Recall that a ring homomorphism $R\to A$ is of finite presentation if $A$ is isomorphic to $R[x_1,\dots,x_n]/(f_1,\dots,f_m)$ as an $R$-algebra. We say $R\to A$ is of finite type if $A$ is isomorphic to a quotient of $R[x_1,\dots,x_n]$ as an $R$-algebra.\\
The commutative ring $R$ is regular if all the localization rings $R_{\fp}$ are regular local rings for every prime ideal $\fp$ of $R$. For detailed information on smooth ring maps, see \cite[\href{https://stacks.math.columbia.edu/tag/00T1}{Section 00T1}]{stacks-project}.\\
For any morphism $f\colon X\to S$ of schemes, the residue fields at point $x\in X$ and $s=f(x)\in S$ are denoted by $k(x)$ and $k(f(x))$ respectively.\\
An $R$-module $M$ is faithfully flat if any complex of $R$-modules $N_1\to N_2\to N_3$ is exact if and only if the sequence $M\otimes_R N_1\to M\otimes_R N_2\to M\otimes_R N_3$ is exact.
The ring homomorphism $R\to A$ is called faithfully flat if $A$ is faithfully flat as an $R$-module.\\
Let $(X,\cO_X)$ be a locally ringed space. Let $\cF$ be a sheaf of $\cO_X$-modules. We say that $\cF$ is finite locally free as $\cO_X$-modules if there exists an open covering $\{U_i\to X\}_{i\in I}$ of $X$ such that each restriction $\cF\mid_{U_i}$ is a finite free $\cO_{X/U_i}$-module. We say that a sheaf $\cF$ of $\cO_X$-modules is quasi-coherent (on Zariski topology of $X$) if $\cF$ is locally an $\cO_X$-module which has a global presentation. See \cite[\href{https://stacks.math.columbia.edu/tag/01BD}{Section 01BD}]{stacks-project} for the details.
\begin{defn}
Let $f: X\to S$ be a morhism of schemes.
\begin{enumerate}
    \item We say that $f$ is \textbf{locally of finite presentation} if for each $x\in X$ there exists an affine neighbourhood $U=\Spec A\subseteq X$ of $x$ and affine $V=\Spec R\subseteq S$ with $f(U)\subseteq V$ such that the induced ring homomorphism $f^*:R\to A$ is of finite presentation. 
    \item We say that $f$ is of \textbf{finite presentation} if it is locally of finite presentation, quasi-compact and quasi-separated.
        
    \item We say that $f$ is \textbf{locally of finite type} if for each $x\in X$ there exists an affine open neighbourhood $U=\Spec A\subseteq X$ of $x$ and an open affine $V=\Spec R\subseteq S$ with $f(U)\subseteq V$ such that the induced ring homomorphism $R\to A$ is of finite type.
    
    \item We say that $f$ is \textbf{of finite type} if it is locally of finite type and quasi-compact.

    \item We say that $f$ is \textbf{integral} if for each $x\in X$ there exists an affine open neighborhood $U=\Spec A\subseteq X$ of $x$ and an affine open $V=\Spec R\subseteq S$ with $f(U)\subseteq V$ such that the induced ring homomorphism $R\to A$ is integral.
    
    \item We say that $f$ is \textbf{unramified} if $f$ is locally of finite type and for each $x\in X$ we have $\fm_{f(x)}\cO_{x,X}=\fm_x$ and the extension $k(x)/k(f(x))$ is finite separable.

    \item We say that $f$ is \textbf{flat} if for each $x\in X$ the induced ring map on stalks $\cO_{S,f(x)}\to \cO_{X,x}$ is flat.

    \item We say that $f$ is \textbf{faithfully flat} if for each $x\in X$ the induced ring map on stalks $\cO_{S,f(x)}\to \cO_{X,x}$ is faithfully flat.
    
    \item We say that $f$ is regular if for any $x\in X$, $X_{f(x)}\to\Spec k(f(x))$ is regular.

    \item We say that $f$ is \textbf{smooth} if $f$ is locally of finite presentation and flat and for any $x\in X$, $X_{f(x)}\to \Spec(k(f(x))$ is smooth.
    \item We say that $f$ is \textbf{\'etale} if $f$ is smooth and unramified or equivalently, for every $s$ the fibre $X_s$ is a disjoint union $\coprod^{m}_{i=1}\Spec(l_i) $ where each $l_i/k(s)$ is a finite separable extension.

    \item We say that $f$ is \textbf{proper} if $f$ is separated, finite type, and universally closed.
    
    \item We say that $f$ is \textbf{affine} (\textbf{quasi-projective or projective} resp.) if the inverse image of every affine open of $S$ is affine (quasi-projective or projective resp.).

    \item We say that $f$ is \textbf{finite locally free} if $f$ is affine and $f_*\cO_X$ is finite locally free as $\cO_S$-modules.

    \item We say that $f$ is \textbf{finite} if $f$ is affine and $f_*\cO_X$ is locally finitely generated as $\cO_S$-modules i.e. $\cO_S(U)\to\cO_X(f^{-1}(U))$ is finite for any affine open $U\subseteq S$.
    
    \item We say that $f$ is of \textbf{relative dimension $d$} if all nonempty fibres $X_s$ have the same dimension $d$.
\end{enumerate}
See \cite{stacks-project}.
\end{defn}
\begin{prop}
Let $f:X\to S$ be a morphism of schemes.
\begin{enumerate}
    \item The morphism $f\colon X\to S$ is faithfully flat if and only if it is flat and surjective.
    \item (Infinitesimal lifting criterion) The morphism $f\colon X\to S$ is smooth if and only if $f$ is locally of finite presentation and for every affine scheme $\Spec A$ over $S$ and for every nilpotent ideal $I\subset A$, the natural map $X(A)\to X(A/I)$ is surjective.
    \item If one replaces “surjective” in (2) by “injective”
    or “bijective”, one gets equivalent definitions for $X\to S$ unramified
    or \'etale morphisms, respectively.
    \item One can get an equivalent condition in both (2) and (3) if one allows only ideals $I$ for which $I^2=0$.
    \item $f$ is integral if and only if $f$ is affine and universally closed.
    \item If $f$ is integral and locally of finite type, then $f$ is finite.
    \item $f$ is finite if and only if $f$ is affine and proper.
    \item (Valuative criterion for properness) $f$ is proper if and only if for every valuation ring $A$ (defined over $S$) with field of fraction $K$, the natural map $X(A)\to X(K)$ is surjective.
\end{enumerate}
\end{prop}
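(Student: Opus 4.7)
The plan is to treat each item by reducing, as far as possible, to ring-theoretic statements over affine open patches of $S$ and $X$, and then to invoke standard local-to-global principles, since flatness, local finite presentation, affineness and integrality are all local on source and target. Throughout, I would rely on the book-keeping of \cite{stacks-project} to handle the passage from local rings to schemes.

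For (1), I would recall that for a module $M$ over a local ring $R$, flatness plus $M/\fm M\neq 0$ already forces faithful flatness; translating this globally, a flat morphism of schemes is faithfully flat precisely when its image meets every point of $S$, which is surjectivity. For (2)--(4) the anchor is the square-zero case of the lifting criterion. I would first prove (4) for smooth morphisms by reducing (via the Jacobi criterion and standard deformation theory for the relative cotangent complex) to the statement that any $S$-map $\Spec(A/I)\to X$ with $I^2=0$ extends to $\Spec(A)\to X$; then (2) follows by filtering a general nilpotent ideal as $I\supset I^2\supset\cdots\supset I^n=0$ and lifting one step at a time. Item (3) is handled analogously: unramifiedness amounts to $\Omega_{X/S}=0$, which is equivalent to uniqueness of any square-zero lift, and étaleness is smooth plus unramified, yielding bijectivity of lifts.

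For (5)--(7), I would use the characterisation of integrality as: affine plus $f_*\cO_X$ integral over $\cO_S$. The going-up theorem makes integral morphisms closed, and stability of integrality under base change yields universal closedness. The converse direction in (5) is the key technical point: given an affine universally closed map corresponding to $R\to A$, one applies universal closedness to $\Spec A[t]\to \Spec R[t]$ and tests against the closed subscheme cut out by $xt-1$, a classical trick which forces every $x\in A$ to satisfy a monic relation over $R$. Item (6) then follows because an integral ring map of finite type produces finitely many generators, each integral, hence the whole algebra is module-finite; this is Noetherian descent from the finitely generated algebra to a module. Item (7) then combines (5), (6) and the equivalence ``proper equals separated, universally closed and of finite type''.

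Item (8) will be the main obstacle. One direction is a diagram chase: given a commutative square with $\Spec K\to X$ and $\Spec A\to S$, one forms $X_A:=X\times_S\Spec A$, takes the scheme-theoretic closure $Z$ of the image of $\Spec K$ in $X_A$, and uses properness of $X_A\to\Spec A$ to produce a closed point of $Z$ above the closed point of $\Spec A$, which is the desired section. The harder direction is to show that the valuative lifting condition plus finite type plus separatedness implies universal closedness; the plan is to invoke the classical existence of valuation rings dominating an arbitrary local ring (an application of Zorn's lemma in the lattice of local subrings of a given field) in order to detect specialisations, and then deduce closedness of $f(Z)$ for any closed $Z\subset X$ by checking it on generalisations and specialisations. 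This existence input for valuation rings is where the real work sits, and it cannot be bypassed; everything else in (8) is then a specialisation-by-specialisation verification.
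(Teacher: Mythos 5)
Your proposal is mathematically sound in outline, but it takes a genuinely different route from the paper: the paper's ``proof'' of this appendix proposition consists entirely of pointers to \cite{stacks-project} (Tags 00HQ, 02H6, 02HE, 01WM, 01WJ, 01WN, 0BX5) and to EGA, with no citation even offered for item~(4), whereas you reconstruct each item from scratch. What the paper buys is brevity and reliability in a background section whose sole purpose is to fix terminology; what your approach buys is self-containment and a clear view of where the actual work sits (the $I^2=0$ bootstrap behind (2)--(4), the integrality trick in (5), and the existence of dominating valuation rings in (8)), which is exactly the pedagogically useful information a reference list hides.

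Two remarks worth flagging. First, the paper's formulation of item~(8) is imprecise as stated: the valuative criterion characterizes properness only under a standing hypothesis that $f$ is of finite type and quasi-separated, and surjectivity of $X(A)\to X(K)$ alone detects universal closedness rather than properness (injectivity is what supplies separatedness). Your write-up silently repairs this by saying ``valuative lifting condition plus finite type plus separatedness implies universal closedness,'' but in a final version you should make those hypotheses explicit rather than smuggle them in. Second, in (5) your $at-1$ trick is a known device but is not literally the argument in Tag 01WM; if you want to use it, spell out precisely which closed subset of $\Spec A[t]$ you push forward and why the resulting closed set in $\Spec R[t]$ forces a monic relation on $a$ over $R$ -- as sketched, the crucial step (why a defining polynomial of the image can be normalized to be monic in $t$) is left implicit, and that is where the argument can quietly go wrong.
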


\begin{proof}
(1): \cite[\href{https://stacks.math.columbia.edu/tag/00HQ}{Lemma 00HQ}]{stacks-project}.\\
(2): \cite[\href{https://stacks.math.columbia.edu/tag/02H6}{Lemma 02H6}]{stacks-project}, or \cite[\S 17.5.2]{grothendieck1967elements}.\\
(3): \cite[\href{https://stacks.math.columbia.edu/tag/02HE}{Lemma 02HE}]{stacks-project} , or \cite[\S 17.1.1, 17.3.1]{grothendieck1967elements}.\\
(5):\cite[\href{https://stacks.math.columbia.edu/tag/01WM}{Lemma 01WM}]{stacks-project}\\
(6): \cite[\href{https://stacks.math.columbia.edu/tag/01WJ}{Lemma 01WJ}]{stacks-project}\\
(7): \cite[\href{https://stacks.math.columbia.edu/tag/01WN}{Lemma 01WN}]{stacks-project}\\
(8): \cite[\href{https://stacks.math.columbia.edu/tag/0BX5}{Lemma 0BX5}]{stacks-project}
\end{proof}

\begin{defn}{\cite[\href{https://stacks.math.columbia.edu/tag/04EW}{Section 04EW}]{stacks-project}}
We say a scheme $X'$ is a thickening of a scheme $X$ if $X$ is a closed subscheme of $X'$ and the underlying topological spaces are equal.
\end{defn}
\begin{defn}
Let $k$ be a field and $X$ a scheme over $k$. We say that $X$ is geometrically reduced (irreducible, or connected, or integral, or normal resp.) over $k$ if for any field extension $k'/k$, $X_{k'}$ is reduced (irreducible, or connected, or integral, or normal resp.).
\end{defn}
Recall that a $k$-algebra $A$ is called normal if the localisation $A_{\fp}$ is integrally closed in its field of fractions for each prime ideal $\fp$ in $A$. Recall that a $k$-algebra $A$ is called connected if $\Spec A$ is a connected scheme. Moreover, $\Spec A$ is connected if and only if there is no nontrivial idempotents in $A$ (\cite{326463}).

\begin{prop}
Let $k$ be a field and $X$ a scheme over $k$.
\begin{enumerate}
\item If $k$ is a prefect field, then the scheme $X$ over $k$ is smooth if and only if $X\to\Spec k$ is locally of finite type and regular.
\item  $X$ is irreducible (reduced or connected resp.) if and only if $X_{\bar{k}}$ is irreducible (reduced or connected resp.), where $\bar{k}$ is the separable closure of $k$.

\item If $X$ is geometrically integral over $k$ if and only if $X$ is geometrically reduced and geometrically irreducible over $k$.

\item If $k$ is a prefect field then $X$ is geometrically reduced (normal resp.) if and only if $X_k$ is reduced (normal resp.).

\item Assume that $X$ is a smooth scheme over field $k$. Then $X$ is geometrically regular, geometrically normal, and geometrically reduced over $k$.

\item If $X$ is a proper geometrically normal $k$-scheme the following are equivalent:
\begin{itemize}
    \item $X$ is geometrically connected
    \item $X$ is geometrically integral
    \item $X$ is geometrically irreducible
\end{itemize}
 \end{enumerate}
\end{prop}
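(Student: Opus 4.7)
The plan is to handle each of the six items by local arguments on rings, appealing to standard results in EGA IV and the Stacks project. Throughout, I would work affine-locally and translate properties of $X$ into ring-theoretic properties of finitely generated (or finitely presented) $k$-algebras, then invoke standard descent/ascent results to pass between $X$ and $X_{\bar k}$.

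For (1), I would combine two classical criteria: smoothness of a locally-of-finite-type morphism is equivalent to regularity of every geometric fibre, while over a perfect field $k$ the residue field extensions $k(x)/k$ are automatically separable. Thus at each $x\in X$, the standard criterion (Stacks Tag 00TV) gives that $\cO_{X,x}$ regular is equivalent to $X\to\Spec k$ being smooth at $x$. The forward implication is automatic because smooth morphisms with a regular base have regular source.

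For (2)--(4), the descent direction (property of $X_{\bar k}$ implies property of $X$) uses faithful flatness of the base change morphism $X_{\bar k}\to X$: connectedness descends via the absence of nontrivial idempotents, irreducibility via surjectivity together with openness on underlying spaces, and reducedness via faithful flatness directly. The ascent direction is more delicate. For reducedness it uses that a separable field extension preserves reducedness, which yields (4) in the perfect case because $k\to\bar k$ is then a geometrically regular extension (Stacks Tag 033A); normality then follows from preservation of Serre's conditions $R_1$ and $S_2$ under such a regular extension. For (3), the identity ``integral = reduced + irreducible'' is compatible with base change, so the geometric version decomposes as the conjunction of the two geometric properties.

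Part (5) is essentially immediate: smoothness is stable under arbitrary base change, and any smooth $k$-scheme is, locally, an \'etale cover of affine space, hence regular, normal, and reduced; applying this to $X_{\bar k}$ gives geometric regularity, geometric normality, and geometric reducedness. Finally, for (6), the plan is to apply Stein factorization to $X\to\Spec k$: since $X$ is proper and geometrically normal, $R:=H^0(X,\cO_X)$ is a finite product of finite separable field extensions of $k$; geometric connectedness forces $R$ to be a single field $k'$, and geometric irreducibility collapses $k'$ to $k$, after which normality upgrades irreducibility to integrality. The main obstacle across all six parts will be the careful handling of imperfect fields in (2)--(4), where the distinction between the separable closure and the full algebraic closure matters because purely inseparable extensions can introduce nilpotents; this subtlety is precisely what the perfectness/smoothness hypotheses in (4) and (5) circumvent.
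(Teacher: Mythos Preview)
Your proposal is correct and essentially unpacks the standard arguments; the paper itself gives no proof beyond listing Stacks project references (Tags 0364, 0366, 038L for items (1)--(4), and nothing for (5)--(6)), so your sketch is in fact more detailed than what the paper provides. The content you outline---regularity versus smoothness over a perfect base, faithfully flat descent for topological properties, preservation of reducedness/normality under separable (geometrically regular) base change, stability of smoothness under base change, and Stein factorization for (6)---is exactly what those references contain.
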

\begin{proof}
(1): \cite[\href{https://stacks.math.columbia.edu/tag/0364}{Section 0364}]{stacks-project}
(2): \cite[\href{https://stacks.math.columbia.edu/tag/0366}{Section 0366}]{stacks-project}
(3): \cite[\href{https://stacks.math.columbia.edu/tag/038L}{Section 038L}]{stacks-project}
(4): \cite[\href{https://stacks.math.columbia.edu/tag/038L}{Section 038L}]{stacks-project}.
\end{proof}

\begin{defn}[\cite{katz1985arithmetic}]
\begin{enumerate}
    \item  Let $k$ be a field. A variety over $k$ is a reduced, separated scheme of finite type over $k$.
    \item A smooth curve $C$ of genus $g$ over a scheme $S$ is a proper, smooth morphism $C\to S$ of relative dimension 1 such that all the geometrically connected fibres are curves of genus $g$.
    \item A smooth curve $E$ of genus $1$ over a scheme $S$ is called an elliptic curve over $S$. Equivalently, an elliptic curve over $S$ is a smooth, proper group scheme $E$ over $S$ of relative dimension $1$ with a section $0\colon S\to E$.
\end{enumerate}
\end{defn}
\begin{remark}
For curve $C$ over $S=\Spec k$ where $k$ is perfect, the following are equivalent:
\begin{itemize}
    \item $C\to S$ is smooth.
    \item $C\to S$ is normal.
    \item $\cO_{C,x}$ is a discrete valuation ring for any $x\in C$.
    \item Each point $x$ of $C$ is an effective Cartier divisor.
\end{itemize}
\end{remark}
\begin{remark}
Recall that the local dimension of a scheme $X$ at a point $x\in X$ is $\dim_x X=\inf_U\dim U$ where $\inf$ runs through all open neighborhoods of $x$. We define $\dim X:=\sup_x\dim_x X$.\\ If $X$ is a scheme that is locally of finite type over a field and $x$ is closed, then $\dim_x X=\dim\cO_{X,x}$ where $\dim\cO_{X,x}$ is the Krull dimension of a local ring. Assume that $f\colon X\to S$ is locally of finite type, then $\dim_x X_{f(x)}=\dim\cO_{X_{f(x)},x}+\text{trdeg}_{k(f(x))}k(x)$ (\cite[\href{https://stacks.math.columbia.edu/tag/00P1}{Lemma 00P1}]{stacks-project}).
\end{remark}

\section{Topologies on scheme}
\begin{defn}
Let $X$ be a scheme.
\begin{enumerate}
    \item An \'etale covering (of $X$) is a family of morphisms $\{f_i\colon T_i\to X\}_{i\in I}$ such that each $f_i$ is \'etale and $X=\bigcup_{i\in I}f_i(T_i)$.
    \item An fppf\footnote{It stands for fidèlement plate de présentation finie} covering (of $X$) is a family of morphisms $\{f_i\colon T_i\to X\}_{i\in I}$ such that each $f_i$ is flat, locally of finite presentation, and such that $X=\bigcup_{i\in I}f_i(T_i)$.
    \item An fpqc\footnote{It stands for fidèlement plate et quasi-compacte} covering (of $X$) is a family of morphisms $\{f_i\colon T_i\to X\}_{i\in I}$ such that each $f_i$ is flat and for every affine open $U\subseteq X$ there exist quasi-compact opens $U_i\subseteq T_i$ which are almost all empty except finitely many of them, such that $U=\bigcup_{i\in I} f_i(U_i)$.
\end{enumerate}
\end{defn}
\begin{defn}{\cite[\href{https://stacks.math.columbia.edu/tag/00VH}{Definition 00VH}]{stacks-project}}
A site consists of a category $\cC$ and a set $Cov(\cC)$ of families of morphisms $U=\{\varphi_i\colon U_i\to U\}_{i\in I}$ called coverings, such that
\begin{enumerate}
    \item (isomorphism) If $\varphi\colon V \to U$ is an isomorphism in $\cC$, then $\{\varphi\colon V\to U\}$ is a covering in $Cov(\cC)$.
    \item (locality) If $\{\varphi_i\colon U_i\to U\}_{i\in I}$ is a covering and for all $i\in I$ we are given a covering $\{\psi_{ij}\colon U_{ij}\to U_i\}_{j\in I_i}$, then $\{\varphi_i \circ \psi_{ij}\colon U_{ij}\to U\}_{i\in I,j\in I_i}\in Cov(\cC)$.
    \item (base change) If $\{U_i\to U\}_{i\in I}$ is a covering and $V\to U$ is a morphism in $\cC$, then
    for all $i\in I$ the fibre product $U_i \times_U V$ exists in $\cC$, and $\{U_i \times_U V\to V \}_{i\in I}\in Cov(\cC)$.
\end{enumerate}
\end{defn}
\begin{defn}\label{Appendix: def fppf and etale}
Let $S$ be a scheme.
\begin{enumerate}
    \item The Zariski site of $S$, denoted by $S_{\text{zar}}$, is the site that the underlying category $\cC$ is the category of open immersions $U\hookrightarrow S$ with open immersions over $S$ as morphisms. The covering $Cov(\cC)$ is the set of all open coverings $\{U_i\hookrightarrow S\}_{i\in I}$.
    \item The (big) fppf site of $S$, denoted by $(\text{Sch}/S)_{\text{fppf}}$, is the site that underlying category $\cC$ is the category of schemes over $S$ and the morphisms are flat and locally of finite presentation. The coverings $Cov(\cC)$ is the set of all fppf covering of $S$.
    \item The (big) \'etale site of $S$, denoted by $(\text{Sch}/S)_{\text{\'etale}}$, is the site that underlying category $\cC$ is the category of schemes over $S$ and the morphisms are \'etale morphisms. The coverings $Cov(\cC)$ is the set of all \'etale covering of $S$.
    \item The small fppf (\'etale resp.) site of $S$, denoted by $S_{\text{fppf}}$ ($S_{\text{\'etale}}$ resp.), is the full subcategory of the site $(Sch/S)_{\text{fppf}}$ ($(Sch/S)_{\text{\'etale}}$ resp.) with the same set of coverings consisting of fppf (\'etale resp.) schemes over $S$.
\end{enumerate}
\end{defn}
\begin{defn}\label{def: sheaf on site}
Let $\cC$ be a site. A presheaf $\cF$ of sets (resp. abelian groups, vector spaces, etc.) on $\cC$ is a contravariant functor from
the underlying category $\cC$ to the category of sets (resp. abelian groups, vector spaces, etc.).\\
We say that the presheaf $\cF$ on $\cC$ is a sheaf if for all coverings $\{U_i\to U\}_{i\in I}$ in $Cov(\cC)$, the sequence 
    $$0\to \cF(U)\to \prod_{i\in I}\cF(U_i)\rightrightarrows\prod_{i,j\in I}\cF(U_i\times_U U_j)$$ is exact.
\end{defn}
\begin{defn}
An fppf (\'etale resp.) sheaf on $S$ is a sheaf on the small site $S_{\text{fppf}}$ ($S_{\text{\'etale}}$ resp.). 
\end{defn}
\begin{example}[\cite{conradetale}]
The coverings in the \'etale site $(\Spec k)_{\text{\'etale}}$ are refined form 
$$(\coprod_{j\in J_i} k_{ij}\to k)_{i\in I} $$ where $k_{ij}/k$ are finite separable extensions upto isomorphism.\\
The presheaf $\cF$ on $(\Spec k)_{\text{\'etale}}$ is sheaf if and only if
\begin{enumerate}
    \item $\cF(\coprod U_i)=\prod \cF(U_i)$, for any disjoint union $U_i$ of \'etale schemes over $k$,
    \item $\cF(\Spec k')\to\cF(\Spec k'')$ is injective and $$\cF(\Spec k')= \cF(\Spec k'')^{Gal(k''/k')},$$ for all separable extensions $k''/k'/k$ such that $k''/k$ is Galois.
\end{enumerate}
We have an equivalence of categories
$$\{\text{abelian \'etale sheaves on }\Spec k\}\xrightarrow{\cong} \{\text{discrete }G_k-\text{modules}\}$$
\end{example}
\begin{remark}
    The fpqc is finer than the Zariski, étale, and fppf topologies. Hence any presheaf satisfying the sheaf condition for the fpqc topology will be a sheaf on the Zariski, étale, and fppf sites.
\end{remark}



\section{On local rings}
\textbf{References:} \cite[\S18]{grothendieck1967elements}, \cite{raynaud2006rings}, \cite[I.\S4]{milne2016etale}, \cite[\href{https://stacks.math.columbia.edu/tag/04GE}{Section 04GE}]{stacks-project}, \cite[\href{https://stacks.math.columbia.edu/tag/04GE}{Section 04GE}]{stacks-project}.
\begin{defn}
Let $R$ be a local ring. We say that $R$ is henselian if every finite R-algebra 
decomposes into a finite product of local rings. It is called strictly
henselian if it is henselian and its residue field is separably closed.
\end{defn}
\begin{prop}
Let $R$ be a local ring.
\begin{enumerate}
    \item $R$ is henselian if and only if Hensel's lemma holds for $R[T]$. 
    \item Any complete local ring is henselian.
    \item Let $R$ be a henselian local ring with residue field $k$. The functor $S\mapsto S\times_R k$ induces an equivalence of categories 
    $$\{\text{finite \'etale schemes over }R \} \to \{\text{finite \'etale schemes over }k\}$$
\end{enumerate}
\end{prop}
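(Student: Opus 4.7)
In the forward direction I would take a monic $f \in R[T]$ with coprime reduction $\bar f = \bar g_0 \bar h_0$ in $k[T]$, form the finite $R$-algebra $A = R[T]/(f)$, and invoke the henselian hypothesis to write $A \cong A_1 \times A_2$ as a product of local $R$-algebras. The Chinese Remainder decomposition $A/\mathfrak{m}A \cong k[T]/(\bar g_0) \times k[T]/(\bar h_0)$ identifies each factor $A_i$ with the summand reducing to $k[T]/(\bar g_i)$, and the required lifts $g, h \in R[T]$ emerge as the characteristic polynomials of multiplication by $T$ on $A_1$ and $A_2$. Conversely, assuming Hensel's lemma for $R[T]$, I would show that for any finite $R$-algebra $A$ every idempotent $\bar e \in A/\mathfrak{m}A$ lifts to a unique idempotent in $A$; decomposing via the complete orthogonal set of idempotents of the artinian quotient $A/\mathfrak{m}A$ then expresses $A$ as a product of local rings. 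The idempotent lift reduces to the Hensel factorisation of a suitable monic polynomial killing a chosen preimage $a$ of $\bar e$, applied to the coprime factorisation $T^2 - T = T(T-1)$ modulo $\mathfrak m$.

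\textbf{Plan for (2).} Here I would run the standard Newton-style successive approximation. Choose lifts $g_0, h_0 \in R[T]$ of $\bar g_0, \bar h_0$ with $g_0$ monic of the correct degree, together with a B\'ezout identity $\bar u \bar g_0 + \bar v \bar h_0 = 1$ lifted to $u_0, v_0 \in R[T]$. Inductively construct increments $\delta_n, \epsilon_n$ with coefficients in $\mathfrak m^n$ such that $g_n := g_{n-1} + \delta_n$ and $h_n := h_{n-1} + \epsilon_n$ satisfy $f \equiv g_n h_n \pmod{\mathfrak m^{n+1}}$; solvability at each step is routine linear algebra whose solution is produced by the lifted B\'ezout relation. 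Completeness of $R$ then ensures that $g = \lim g_n$ and $h = \lim h_n$ converge in $R[T]$ with $gh = f$.

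\textbf{Plan for (3).} Essential surjectivity is the easy half: a finite \'etale $k$-scheme splits as $\coprod \Spec k_i$ with $k_i = k[T]/(\bar f_i)$ for monic separable $\bar f_i$; lifting each $\bar f_i$ to a monic $f_i \in R[T]$ of the same degree gives $R_i = R[T]/(f_i)$, which is finite \'etale over $R$ because separability of $\bar f_i$ makes the discriminant a unit modulo $\mathfrak m$, hence a unit in the local ring $R$. For fully faithfulness I would realise a morphism $\varphi: S \to S'$ over $R$ as its graph $\Gamma_\varphi \subseteq S \times_R S'$; since $S \times_R S' \to S$ is finite \'etale, $\Gamma_\varphi$ is an open-closed subscheme and so corresponds to an idempotent in $\Gamma(S \times_R S', \mathcal O)$. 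By the idempotent-lifting consequence of (1), idempotents in $\Gamma(S_k \times_k S'_k, \mathcal O)$ lift uniquely, producing a bijection between sections over $k$ and sections over $R$, i.e. between $\Hom_k(S_k, S'_k)$ and $\Hom_R(S, S')$.

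\textbf{Principal obstacle.} The delicate step is the reverse direction of (1), which requires upgrading Hensel's lemma for $R[T]$ to a statement about idempotents of an arbitrary finite $R$-algebra $A$. The natural bridge is to pass through the monogenic subalgebra $R[a] \subseteq A$ generated by any preimage $a$ of the idempotent, factor its defining polynomial via Hensel, and then argue using commutativity of $A$ that the resulting idempotent of $R[a]$ remains idempotent (and central) in $A$. Once this idempotent-lifting principle is available, both (3) and the compatibility between decompositions of finite \'etale covers and idempotents in their structure sheaves follow in a uniform way; the rest of the argument is formal.
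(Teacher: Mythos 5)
The paper gives no proof here---only citations to Milne and EGA---so your sketch is doing work the paper leaves out, and I will assess it on its own terms. The forward direction of (1), part (2), and part (3) are all sound as you describe them: the Newton iteration in (2) is the standard argument, essential surjectivity in (3) via the discriminant criterion is correct, and reducing full faithfulness in (3) to unique lifting of the graph-idempotent in $\Gamma(S\times_R S',\mathcal O)$ is a clean, elementary route.

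The gap is in the reverse direction of (1). Your proposed mechanism---Hensel applied to the factorisation $T^2 - T = T(T-1)$---cannot work as written, because $\bar a$ lives in $A/\mathfrak m A$ rather than in $k$: knowing $\bar a$ is a root of $T^2-T$ gives you no coprime factorisation over $k$ of the monic polynomial $\bar f$ killing $a$. The correct Hensel input is the prime-power factorisation $\bar f = \prod \bar p_i^{e_i}$, iterated to exhibit $R[T]/(f)$, and hence its quotient $R[a]$, as a finite product of local rings. Moreover, the obstacle you single out---that the idempotent constructed inside $R[a]$ ``remains idempotent and central in $A$''---is automatic in a commutative ring; the genuine subtlety is that $\mathfrak m A\cap R[a]$ may strictly contain $\mathfrak m R[a]$, so an idempotent $e\in R[a]$ chosen componentwise to approximate $a$ only visibly satisfies $e-a\in\mathrm{Jac}(R[a])\subseteq\mathrm{Jac}(A)$, not $e-a\in\mathfrak m A$. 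You can close this by observing that $\overline{e-a}$ is then nilpotent in $A/\mathfrak m A$ and that two idempotents differing by a nilpotent are equal, so $\bar e=\bar a$ after all; or you can sidestep the matching problem entirely by inducting on the number of maximal ideals of $A$: pick $a$ lying in exactly one of them, deduce $R[a]$ has at least two maximal ideals and is a product of local rings, extract a nontrivial idempotent, split $A$ accordingly, and recurse.
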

\begin{proof}
(1), (2): See \cite{milne2016etale}, or \cite{grothendieck1967elements}.\\
(3): see \cite[Proposition I.4.4]{milne2016etale}, or \cite[\href{https://stacks.math.columbia.edu/tag/0A48}{Lemma 0A48}]{stacks-project}. 
\end{proof}
\begin{thm}[Hensel's lemma]
Let $R$ be a complete noetherian ring with maximal ideal $\fm$ and $X$ a scheme over $R$.
\begin{enumerate}
    \item If $X$ is smooth over $\Spec R$, then the natural map $X(R)\to X(R/\fm)$ is surjective.
    \item If $X$ is \'etale over $\Spec R$, then the natural map $X(R)\to X(R/\fm)$ is bijective.
\end{enumerate}
\end{thm}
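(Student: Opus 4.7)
The plan is to reduce the statement to a successive application of the infinitesimal lifting criterion for smooth (respectively étale) morphisms recorded earlier in this section, using completeness of $R$ to pass from a tower of lifts to a genuine $R$-point.

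First I would localize. Given a point $x_0 : \Spec(R/\fm) \to X$, its image lies in some affine open neighborhood $U = \Spec A \subseteq X$, where $A$ is a smooth (respectively étale) $R$-algebra of finite presentation. Lifting $x_0$ to an $R$-point of $X$ is then equivalent to lifting the $R$-algebra homomorphism $\phi_0 : A \to R/\fm$ to a homomorphism $\phi : A \to R$.

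Next I would build the lift step by step up the $\fm$-adic tower. For each $n \geq 1$, the ideal $\fm^n/\fm^{n+1}$ in $R/\fm^{n+1}$ squares to zero, so it is nilpotent. The infinitesimal lifting criterion (in the $I^2=0$ formulation from the proposition preceding this theorem) gives that the natural map $X(R/\fm^{n+1}) \to X(R/\fm^n)$ is surjective in the smooth case and bijective in the étale case. Starting from $\phi_0$, I would choose recursively $\phi_n : A \to R/\fm^n$ lifting $\phi_{n-1}$, producing a compatible system $(\phi_n)$ in $\varprojlim_n X(R/\fm^n)$. Since $R$ is complete noetherian, $R \cong \varprojlim_n R/\fm^n$, and because $A$ is finitely presented over $R$, an $R$-algebra homomorphism $A \to R$ is the same as a compatible system of maps $A \to R/\fm^n$. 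The system $(\phi_n)$ therefore assembles into an element $\phi \in X(R)$ lifting $x_0$, proving (1).

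For (2) I would run the same argument with uniqueness built in: two lifts $\phi, \phi' \in X(R)$ of $x_0$ induce lifts modulo each $\fm^n$, and the bijectivity half of the infinitesimal criterion, applied inductively, forces these to coincide at every finite level; passing to the inverse limit gives $\phi = \phi'$. The main obstacle I expect is precisely the passage from the tower $(\phi_n)$ to a genuine $R$-algebra map $\phi$: this identification $\Hom_R(A, \varprojlim R/\fm^n) = \varprojlim \Hom_R(A,R/\fm^n)$ can fail without some finiteness hypothesis on $A$, but in our setting it is automatic because smoothness and étaleness include finite presentation. So the real content of Hensel's lemma here is that completeness lets one feed the iterated infinitesimal liftings into a single $R$-valued point.
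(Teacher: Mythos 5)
Your argument is correct and is exactly the standard proof one writes out for complete noetherian local rings; the paper itself gives no argument here and simply cites Poonen, Theorem~3.5.63, so your proposal supplies the missing details. The two things worth making explicit, which you gesture at but do not fully spell out, are (a) that because $R$ and each $R/\fm^n$ are local, any point of $X$ over one of these rings whose closed-point image lies in $U$ automatically factors through $U$ (the complement of the preimage of $U$ is closed and misses the unique closed point, hence is empty), which is what lets you stay inside a single affine chart throughout the tower and also lets you compare two global lifts $\phi,\phi'$ as ring maps $A\to R$ in part (2); and (b) that the identification $\Hom_R(A,\varprojlim R/\fm^n)\cong\varprojlim\Hom_R(A,R/\fm^n)$ is where finite presentation (in fact finite generation suffices) is used, as you correctly observe. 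One further remark: the cited theorem in Poonen is proved more generally for henselian local rings, where the $\fm$-adic tower argument does not apply directly and one instead uses the local structure theory of \'etale and smooth ring maps; your proof only covers the complete case, but that is all the statement in the paper asserts, so there is no gap.
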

\begin{proof}
    See \cite[Theorem 3.5.63]{poonen2017rational}
\end{proof}
\begin{remark}
The above theorem implies the Hensel's lemma in algebraic number theory when $R=\Z_p$ and $X=\Z_p[t]/(f)$ where $f$ is a monic polynomial over $\Z_p$ such that its reduction modulo $p$ is separable.
\end{remark}

\section{Tangent space and differentials}
\textbf{References:} \cite[\S 26]{matsumura1970commutative}, \cite[\href{https://stacks.math.columbia.edu/tag/00RM}{Section 00RM}]{stacks-project}, \cite[\href{https://stacks.math.columbia.edu/tag/08RL}{Section 08RL}]{stacks-project}  

Let $\varphi: R\to A$ be a ring homomorphism and $M$ an $A$-module. A derivation or more precisely an $R$-derivation into $M$ is a map $D\colon A\to M$ which is a group homomorphism, annihilates elements of $\varphi(R)$, and satisfies the Leibniz rule: $D(ab)=aD(b)+bD(a)$. We denote the $A$-module of all $R$-derivations into $M$ by $\Der_R(A,M)$.

There exists a universal object $\Omega_{A/R}$, the module of K\"ahler differentials, such that $\Hom_A(\Omega_{A/R},-)\to\Der_R(A,-)$ is an isomorphism of functors of $A$-modules. The map $d\colon A\to \Omega_{A/R}$ corresponding to the identity map $id\in \Hom_A(\Omega_{A/R},\Omega_{A/R})$ is called universal derivation.

Let $X\to S$ be a morphism of schemes and $\cM$ be an $\cO_X$-module. A derivation or more precisely an $S$-derivation into $\cM$ is a map of abelian sheaves $D\colon \cO_X\to \cM$ on $X$ such that for each open subset $U$ of $X$, $D_U\colon \Gamma(U,\cO_X)\to\Gamma(U,\cM)$ is a $\Gamma(U,\cO_S)$-derivation of $\Gamma(U,\cO_X)$ into $\Gamma(U,\cM)$. We denote the $\cO_X$-module of all $S$-derivations into $\cM$ by $\Der_S(\cO_X,\cM)$.

The functor $\cM\to \Der_S(\cO_X,\cM)$ is representable. The module of differentials is the object representing the above functor. It is denoted $\Omega_{X/S}$, and the universal derivation is denoted $d\colon \cO_X\to \Omega_{X/S}$. The elements in $\Der_S(\cO_X,\cO_X)=\Hom_{\cO_X}(\Omega_{X/S},\cO_X)$ are called vector fields on $X$.

For any ring $R$ the ring of dual numbers over $R$ is the $R$-algebra $R[\epsilon]=R[x]/(x^2)$. Let $X\to S$ be a morphism of schemes. Let $x\in X$ with $f(x)=s\in S$. The set of all dotted arrows making the diagram
\begin{center}
\begin{tikzcd}
{\Spec(k(x)[\epsilon])} \arrow[r, dotted] \arrow[d] & X \arrow[d] \\
\Spec(k(s)) \arrow[r]                               & S 
\end{tikzcd}
\end{center}
commute is a $k(x)$-vector space. It is called the tangent space of $X$ over $S$ at $x$ and it is denoted $\Tang_{X/S,x}$. 

\begin{prop}
\begin{enumerate}
    \item Let $R\to A$ be a ring map. Let $J=\ker(A\otimes_R A\to A)$ be the kernel of multiplication. There is a canonical isomorphism of $A$-modules $\Omega_{A/R}\to J/J^2,\, adb\mapsto a\otimes b-ab\otimes 1$.
    \item Let $X\to S$ be a morphism of schemes. Let $x\in X$. There are canonical isomorphisms
    $\Tang\ve_{X/S,x}=\Omega_{X/S,x}\otimes_{\cO_{X,x}}k(x)$ and $\Tang_{X/S,x}=\Hom_{\cO_{X,x}}(\Omega_{X/S,x},k(x))$ as $k(x)$-vector spaces.
    \item Let $X\to S$ be a morphism of schemes. Let $x\in X$ and $s=f(x)$. If $k(x)/k(s)$ is separable algebraic extension, then $$\fm_x/(\fm^2_x+\fm_s\cO_{X,x})=\Omega_{X/S,x}\otimes_{\cO_{X,x}}k(x)=\Tang\ve_{X/S,x}$$ 
    \item Let $f\colon X\to S$ be locally of finite presentation. $f$ is smooth (unramified resp.) at $x$ if and only if the $k(x)$-vector space 
    $$\Omega_{X_s/s,x}\otimes_{X_s,x}k(x)=\Omega_{X/S,x}\otimes_{\cO_{x,x}}k(x)=\Tang\ve_{X/S,x}$$
    is of dimension $\dim_x(X_{f(x)})$ (dimension $0$ resp.). 
\end{enumerate}
\end{prop}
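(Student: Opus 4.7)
The plan is to handle the four assertions in order, using the universal property of K\"ahler differentials as the central tool throughout.

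For (1), I would first exhibit an explicit candidate map in each direction and then verify they are mutually inverse. Define $d\colon A\to J/J^2$ by $d(a)=1\otimes a-a\otimes 1 \pmod{J^2}$. Additivity and $R$-linearity are immediate from the tensor product structure; the Leibniz rule is the classical identity
\[
(1\otimes ab-ab\otimes 1)-a(1\otimes b-b\otimes 1)-b(1\otimes a-a\otimes 1)=(a\otimes 1-1\otimes a)(b\otimes 1-1\otimes b),
\]
whose right-hand side lies in $J^2$. By the universal property this induces an $A$-linear map $\Omega_{A/R}\to J/J^2$. For the inverse, I would use the square-zero extension trick: the $A$-algebra structure on $A\oplus\Omega_{A/R}$ with $(a,\omega)(a',\omega')=(aa',a\omega'+a'\omega)$ receives a ring map $A\otimes_R A\to A\oplus\Omega_{A/R}$ sending $a\otimes b\mapsto(ab,a\,db)$. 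This map kills $J^2$, sends $J$ into the ideal $0\oplus\Omega_{A/R}$, and so induces an $A$-linear map $J/J^2\to\Omega_{A/R}$ which is seen to invert the first on generators.

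For (2), the statement is essentially a reformulation of the universal property in two stages. First, by adjunction $\Omega_{X/S,x}\otimes_{\cO_{X,x}}k(x)$ represents $k(x)$-valued $S$-derivations out of $\cO_{X,x}$. Second, by the infinitesimal lifting criterion, a morphism $\Spec(k(x)[\epsilon])\to X$ over $\Spec k(s)$ extending the given point $x$ corresponds exactly to a $k(s)$-derivation $\cO_{X,x}\to k(x)$, because the surjection $k(x)[\epsilon]\twoheadrightarrow k(x)$ has kernel of square zero and the datum of an $\cO_{X,x}$-algebra structure on $k(x)[\epsilon]$ extending $k(x)$ amounts precisely to a derivation. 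Dualizing gives both claimed identifications.

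For (3), I would work entirely in the local ring $B=\cO_{X,x}$ with residue field $k(x)$ over the residue field $k=k(s)$. The idea is to reduce the computation of $\Omega_{B/\cO_{S,s}}\otimes_B k(x)$ to $\Omega_{B/\fm_sB\,|\,k}\otimes k(x)$ via the exact sequence associated to $\cO_{S,s}\to B$, and then to $\fm_x/\fm_x^2$ for the local $k$-algebra $B/\fm_sB$ with residue field $k(x)$. The conormal exact sequence gives a surjection $\fm_x/(\fm_x^2+\fm_sB)\twoheadrightarrow\Omega_{B/\cO_{S,s}}\otimes k(x)$; the hypothesis that $k(x)/k$ is separable algebraic is exactly what forces $\Omega_{k(x)/k}=0$ and hence the kernel to vanish, giving the asserted equality.

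For (4), this is the Jacobian criterion, and the main obstacle will be assembling the correct ingredients rather than any single hard estimate. By base-change compatibility of $\Omega$ one reduces to the case $S=\Spec k(s)$, i.e.\ to checking smoothness and unramifiedness of the fiber $X_s$ at $x$. Unramifiedness then reads off from $\Omega_{X_s/s,x}\otimes k(x)=0$ directly from the definition via (3), since $\fm_x=\fm_s\cO_{X,x}$ plus $k(x)/k(s)$ separable is precisely the unramified condition. For smoothness, combining (2)--(3) with the infinitesimal lifting criterion stated in the appendix, one checks that if $\dim_{k(x)}\Tang\ve_{X/S,x}=\dim_x(X_{f(x)})$ then a system of parameters lifts to an \'etale-local presentation of $X_s$ over affine space, yielding smoothness; conversely, smoothness forces the cotangent space to have exactly the relative dimension as $k(x)$-dimension by standard flatness and freeness of $\Omega_{X/S}$ on a smooth locus. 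I would cite \cite[17.1--17.5]{grothendieck1967elements} or the relevant Stacks tags rather than reproving these standard equivalences in full.
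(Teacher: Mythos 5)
The paper's ``proof'' of this proposition consists entirely of citations to the Stacks project (Lemmas 00RW, 0B2D, 0B2E, 01V9, 02GF) with no argument given, so your proposal is doing strictly more work than the paper itself. Your arguments for (1)--(3) are correct and are indeed the standard proofs one finds in the cited references: the identity
\[
(1\otimes ab-ab\otimes 1)-a(1\otimes b-b\otimes 1)-b(1\otimes a-a\otimes 1)=(a\otimes 1-1\otimes a)(b\otimes 1-1\otimes b)
\]
checks out (using the left $A$-module structure; the right-hand side is a product of two elements of $J$, hence lies in $J^2$), the square-zero extension $A\oplus\Omega_{A/R}$ supplies the inverse in (1), the identification of $\Spec(k(x)[\epsilon])$-points with derivations via the infinitesimal criterion is exactly right for (2), and the conormal sequence plus $\Omega_{k(x)/k(s)}=0$ for a separable algebraic extension is the correct mechanism for (3). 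For (4) you are honest that you are sketching the Jacobian criterion and deferring to EGA IV / Stacks rather than rebuilding it; since the paper itself only cites Stacks tags 01V9 and 02GF, this is entirely in keeping with the level of detail expected here. The one thing worth flagging is that your sketch of the smooth direction in (4) (``a system of parameters lifts to an \'etale-local presentation'') elides the flatness verification that is the actual content of 01V9, so if you were to write it out you would need to either invoke local criterion of flatness or reproduce that part of the cited lemma; but as a blind reconstruction of an appendix item the paper itself does not prove, your proposal is complete and correct in approach.
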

(1): \cite[\href{https://stacks.math.columbia.edu/tag/00RW}{Lemma 00RW}]{stacks-project}\\
(2): \cite[\href{https://stacks.math.columbia.edu/tag/0B2D}{Lemma 0B2D}]{stacks-project}\\
(3): \cite[\href{https://stacks.math.columbia.edu/tag/0B2E}{Lemma 0B2E}]{stacks-project}\\
(4): \cite[\href{https://stacks.math.columbia.edu/tag/01V9}{Lemma 01V9}]{stacks-project} and \cite[\href{https://stacks.math.columbia.edu/tag/02GF}{Lemma 02GF}]{stacks-project}.


\subsection{Derived category and Yoneda extensions} The derived category is a fundamental tool in homological algebra, algebraic geometry, and other areas of mathematics. It provides a framework to study objects like complexes of modules or sheaves, and is particularly useful in dealing with homological properties of these objects. We refer the reader to \cite[\href{https://stacks.math.columbia.edu/tag/05QI}{Chapter 05QI}]{stacks-project} for more details.

Let \( \mathcal{A} \) be an abelian category. The derived category \( D(\mathcal{A}) \) is constructed from the category of chain complexes in \( \mathcal{A} \) by formally inverting quasi-isomorphisms. The derived category contains objects that are complexes of objects in \( \mathcal{A} \), but its morphisms are refined, as homotopic complexes are identified.

Derived categories allow us to define derived functors, such as derived $\Hom$ and tensor product, without needing explicit resolutions. The derived $\Hom$ functor, denoted \( \mathbb{R}\mathrm{Hom}(A, B) \), is an object in the derived category, whose cohomology recovers the classical Ext groups:
  \[
  H^n(\mathbb{R}\mathrm{Hom}(A, B)) = \mathrm{Ext}^n(A, B).
  \]
  
For an abelian category \( \mathcal{A} \), the $\Ext$ groups \( \mathrm{Ext}^n(A, B) \) of $n$-th Yoneda extensions of $A$ by $B$ classify \( n \)-fold extensions of an object \( A \) by an object \( B \). An element of \( \mathrm{Ext}^n(A, B) \) corresponds to an exact sequence of the form
  \[
  0 \to B \to E_n \to E_{n-1} \to \dots \to E_1 \to A \to 0,
  \]
  where \( E_i \) are objects in \( \mathcal{A} \), and the class of this sequence in the derived category gives the corresponding $\Ext$ class. For the definition of equivalence classes of Yoneda extensions, see \cite[\href{https://stacks.math.columbia.edu/tag/06XT}{Definition 06XT}]{stacks-project}.

The group \( \mathrm{Ext}^1(A, B) \) classifies short exact sequences \( 0 \to B \to E \to A \to 0 \). This can be interpreted as equivalence classes of extensions of \( A \) by \( B \), where two extensions are equivalent if they differ by a split sequence.

Higher Ext groups \( \mathrm{Ext}^n(A, B) \) for \( n > 1 \) classify longer exact sequences, or more generally, \( n \)-fold extensions of \( A \) by \( B \). In the derived category, morphisms between objects represent all possible extensions, and the $\Ext$ groups can be computed directly as the cohomology of the derived $\Hom$ complex:
  \[
  \mathrm{Ext}^n(A, B) = H^n(\mathbb{R}\mathrm{Hom}(A, B)).
  \]
We can naturally associate a Yoneda $i$-extension
\[
0\to Y\to E_{i-1}\to\cdots\to Z_0\to X\to 0
\]
of $X$ by $Y$ the element 
\[
Y[i]\leftarrow[Y\to Z_{i-1}\to\cdots\to Z_0]\to X
\]
of $\Hom_{D(\cA)}(X,Y[i]).$ This is an isomorphism if $\cA$ has enough injective or projective objects. See \cite{MR1453167} for further details.


\section{Galois cohomology}\label{sec: Galois cohomology}

    Let $G$ be a group and $A$ a $\Z[G]$-module. We define $A^G=\{a\in A\mid ga=a\}$. Obviously, we have $$A^G=\Hom_{\Z[G]}(\Z,A)$$
    We define the $i$-th group cohomology of $G$ with coefficients in $A$ to be $$H^i(G,A):=\Ext^i_{\Z[G]}(\Z,A)$$
Let $G$ be a topological abelian group. Assume that $A$ is a topological abelian group endowed with a continuous $G$-action. Set $C^i_{cont}(G,A)$ be the group of all continuous maps $G^i\to A$ and $d^{i+1}:C^{i+1}_{cont}(G,A)\to C^i_{cont}(G,A)$ given by 
\begin{align*}
d^{i+1}(f)(g_1,\dots,g_{i+1})=g_1(f(g_2,\dots,g_{i+1}))\\+\sum_{j=1}^i(-1)^jf(g_1,\dots,g_j,g_{j+1},\dots,g_{i+1})+(-1)^{i+1}f(g_1,\dots,g_i)
\end{align*}
and for $i=0$, sends $a\in A$ to the map $g\mapsto g(a)-a$. We define the $i$-th continuous cohomology of $G$ with coefficients in $A$ to be 
$$H^i_{cont}(G,A):=H^i(C^{\bullet}_{cont}(G,A))$$

\begin{defn}
    When $G$ is an absolute Galois group of a field $K$, we say that $H^i_{cont}(G,A)$ is $i$-th Galois cohomology group of $K$ with coefficients in $A$, and it is often denoted by $H^i(K,A)$. 
\end{defn}
\begin{remark}
    For an abstract group $G$ (group $G$ with discrete topology) or profinite group $G$, we have an isomorphism $H^i(G,A)\xrightarrow{\cong} H^i_{cont}(G,A)$.
\end{remark}
\begin{prop}[\cite{serre_galois_1997}]
    Let $A$ be a $G$-module and $H$ a normal subgroup of $G$. Then we have the exact sequence
    \[
    0\to H^1(G/H,A^H)\to H^1(G,A)\to H^1(H,A)\to H^2(G/H,A^H)\to\cdots.
    \]
\end{prop}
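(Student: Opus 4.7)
The plan is to obtain this five-term exact sequence as a consequence of the Hochschild--Serre spectral sequence
\[
E_2^{p,q} = H^p(G/H, H^q(H,A)) \Rightarrow H^{p+q}(G,A).
\]
First I would set up this spectral sequence via the Grothendieck spectral sequence for the composition of left-exact functors $(-)^G = ((-)^H)^{G/H}$ from the category of (continuous) $G$-modules to abelian groups. The key input is that $(-)^H$ sends injective $G$-modules to acyclic $G/H$-modules; in the discrete module setting this is standard, and the profinite/continuous version is handled by noting that a continuous injective discrete $G$-module $I$ has $I^H$ injective as a discrete $G/H$-module. This guarantees the convergence of the Grothendieck spectral sequence and identifies the $E_2$-page.

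Second, from any first-quadrant cohomological spectral sequence one extracts the canonical five-term exact sequence of low-degree terms
\[
0 \to E_2^{1,0} \to E^1 \to E_2^{0,1} \to E_2^{2,0} \to E^2.
\]
Substituting $E_2^{p,0} = H^p(G/H, A^H)$, $E_2^{0,1} = H^1(H,A)^{G/H}$, and $E^n = H^n(G,A)$ yields
\[
0 \to H^1(G/H, A^H) \to H^1(G,A) \to H^1(H,A)^{G/H} \to H^2(G/H, A^H) \to H^2(G,A),
\]
and composing the third arrow with the inclusion $H^1(H,A)^{G/H} \hookrightarrow H^1(H,A)$ gives the sequence in the statement (the ``$\cdots$'' being continued by the higher-degree terms of the spectral sequence). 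The three maps receive their classical names: inflation, restriction, and transgression.

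The main obstacle, as a technical matter, is the continuous setting: one must ensure throughout that cocycles and coboundaries are continuous, and that the spectral sequence really does compute continuous cohomology as defined in the excerpt. This is routine but must be checked, and for a purely hands-on alternative I would instead define the three maps directly on cocycles (inflation by pullback along $G \twoheadrightarrow G/H$, restriction by literal restriction, and transgression by lifting a $G/H$-invariant class on $H$ to a set-theoretic $1$-cochain on $G$ and descending its coboundary to a $G/H$-valued $2$-cocycle in $A^H$) and verify exactness at each of the four middle terms by direct cocycle manipulation. Either route leads to the claim, and in both cases the work is entirely homological-algebraic with no input from the specific geometry of the thesis.
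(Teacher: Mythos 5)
Your proof is correct and follows the standard route. The paper does not prove this proposition---it is stated with a citation to Serre's \emph{Galois Cohomology} and no argument is given---so the relevant comparison is with the reference, and your argument (five-term exact sequence of low-degree terms in the Hochschild--Serre spectral sequence, with the direct-cocycle verification as a fallback) is precisely the standard one found there.

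One small point of precision worth flagging, since you already touch on it: the third term produced by the spectral sequence is $H^1(H,A)^{G/H}$, and the transgression $d_2$ is canonically defined only on that subgroup. The sequence as written in the proposition, with the unrestricted $H^1(H,A)$, is exact at $H^1(G,A)$ because restriction automatically lands in the $G/H$-invariants, but exactness at the $H^1(H,A)$ spot and the definition of the map to $H^2(G/H,A^H)$ really do require the $G/H$-superscript. This is a notational looseness in the statement rather than a gap in your proof---you correctly identify the $E_2^{0,1}$ term and the inclusion---but it is worth being explicit that ``the sequence in the statement'' should be read with $H^1(H,A)^{G/H}$ in the third slot, as is standard. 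The trailing ``$\cdots$'' should also be read as ending at $H^2(G,A)$: the general Hochschild--Serre spectral sequence does not produce a long exact sequence beyond the five-term edge sequence without additional vanishing hypotheses (such as $H^1(H,A)=0$, which gives a seven-term version).
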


\begin{thm}[\cite{tate1967pdivisble}, \cite{sen1980continuous}]
Let $K$ be a finite extension of $\Q_p$. Then
     \[H^i_{cont}(\Gamma_K,\C_p(n))\cong\begin{cases}
        K ,& n=0,i=0,1\\
        0 ,& n=0, i\neq 0,1\\ 
        0 ,&  n\neq 0, i\in\Z.
    \end{cases}\]
\end{thm}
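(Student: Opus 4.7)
The plan is to follow Tate's original strategy via the cyclotomic tower and normalized trace maps. Set $K_\infty := K(\mu_{p^\infty})$, $\Gamma := \Gal(K_\infty/K)$, and $H := \Gal(\bar{K}/K_\infty)$; then $\Gamma$ is an open subgroup of $\Z_p^\times$ (hence virtually pro-cyclic) and $\chi\vert_H$ is trivial. The first move is to apply the Hochschild--Serre spectral sequence
\[
E_2^{i,j} = H^i_{cont}(\Gamma, H^j_{cont}(H, \C_p(n))) \Rightarrow H^{i+j}_{cont}(\Gamma_K, \C_p(n)),
\]
which reduces the whole problem to computing the $\Gamma$-cohomology of $\hat{K}_\infty(n)$.

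To handle the $H$-cohomology, I would invoke the Ax--Sen--Tate theorem: for any closed subgroup $G \subseteq \Gamma_K$ fixing a subfield $L \subseteq \bar{K}$, one has $\C_p^G = \hat{L}$. Combined with Tate's stronger vanishing $H^j_{cont}(H,\C_p) = 0$ for all $j \geq 1$, and the identification $\C_p(n) \cong \C_p$ as $H$-modules (since $\chi\vert_H = 1$), the spectral sequence collapses to canonical isomorphisms
\[
H^i_{cont}(\Gamma_K, \C_p(n)) \cong H^i_{cont}(\Gamma, \hat{K}_\infty(n)).
\]

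The central ingredient for the next step is Tate's normalized trace $R \colon \hat{K}_\infty \to K$: a continuous $K$-linear $\Gamma$-equivariant projection onto the subfield $K$, constructed as a suitably rescaled limit of partial traces $\tfrac{1}{[K_m:K]}\tr_{K_m/K}$ with $K_m := K(\mu_{p^m})$. Its existence rests on sharp asymptotic estimates for the different of $K_m/K$. With $R$ in hand one obtains a $\Gamma$-equivariant topological decomposition $\hat{K}_\infty = K \oplus X$ with $X := \ker R$. Tate's refined ramification analysis then shows that for a suitable topological generator $\gamma_0$ of an open pro-$p$ subgroup of $\Gamma$, the operator $\gamma_0 - 1$ is bijective with continuous inverse on $X$, and, for each $n \neq 0$, bijective with continuous inverse on the whole twisted module $\hat{K}_\infty(n)$ (using that $\chi(\gamma_0)^n - 1$ is a unit in $K$ to handle the $K(n)$-summand, and a small-perturbation argument to handle the $X(n)$-summand). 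Using that $\Gamma$ has $p$-cohomological dimension $1$ (after removing its prime-to-$p$ torsion), one then reads off: for $n \neq 0$, all $H^i_{cont}(\Gamma, \hat{K}_\infty(n))$ vanish; for $n = 0$, $H^0 = K$, $H^1 \cong K$ (with class represented by $\log_p \circ \chi$), and $H^i = 0$ for $i \geq 2$.

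The hard part will be the construction of $R$ and the bijectivity of $\gamma_0 - 1$ on $X$ and on $\hat{K}_\infty(n)$ for $n \neq 0$. These rest on explicit control of the higher ramification filtration of $K_m/K$ --- specifically the asymptotics of the different --- which is Tate's key technical input and the part of Ax--Sen--Tate that needs real analytic estimates rather than formal manipulations. Once those estimates are secured, the vanishing and computation of each cohomology group follow formally from Hochschild--Serre combined with the cohomological dimension of $\Gamma$.
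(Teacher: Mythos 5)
Your outline is correct and is essentially the classical Tate--Sen argument from the two cited sources (\cite{tate1967pdivisble}, \cite{sen1980continuous}): Hochschild--Serre along the cyclotomic tower, Ax--Sen--Tate and the vanishing of $H^j(H,\C_p)$ for $j\geq 1$, the normalized trace decomposition $\hat{K}_\infty = K\oplus X$, invertibility of $\gamma_0-1$ on $X$ and on $\hat{K}_\infty(n)$ for $n\neq 0$, and the cohomological dimension bound on $\Gamma$. The paper gives no independent proof here and simply cites these references, so there is nothing to compare beyond noting that your plan reproduces exactly the approach of the cited works.
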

Let $K$ be a nonarchimedean valued field with the norm $|.|:K\to\R_{\geq 0}$. Let Let $V$ be a vector space over $K$. A seminorm on $V$ is $\|.\|:V\to\R_{\geq 0}$ satisfying:
\begin{itemize}
    \item For $a\in K$ and $v\in V$, $\|av\|=|a|\|v\|$,
 \item For $v,w\in V$, $\|v+w\|\leq max\{\|v\|,\|w\|\}$.
\end{itemize}
The function $\|.\|$ is said to be a norm on $V$ if moreover $\|v\|=0$ implies that $v=0$, for any $v\in V$. If $V$ has a norm, it is said to be a normed space. If $V$ is a normed space which is complete under its norm, we say $V$ is a Banach space over $K$.

Any finite dimensional vector space over $K$ is a Banach space, and any two norms on a Banach space are equivalent. A Banach algebra over $K$ is a Banach space over $K$ which is also a commutative $K$-algebra, and
\begin{itemize}
    \item $\|1\|=1$,
    \item for $x,y\in A$, $\|xy\|\leq\|x\|y\|$.
\end{itemize}

\chapter{Groups}\label{Appendix: groups}
\section{Commutative group schemes}
\textbf{References:} \cite{milne2017algebraic}, \cite{waterhouse2012introduction}, \cite{milneLAG}, \cite[\href{https://stacks.math.columbia.edu/tag/022R}{Section 022R}]{stacks-project}, \cite{artin2006schemas}, \cite{oort2006commutative}, \cite{tate1997finite}, \cite{milneAV}, \cite{MumfordAVar}, \cite{poonen2017rational}, \cite{Sc2}, \cite{hase2020group}  

Let $S$ be a scheme. A group scheme $G$ over $S$ is a group object $G\to S$ in the category of $S$-schemes i.e. there are given maps $m\colon G\times_S G\to G$, $i\colon G\to G$, and $e\colon S\to G$ satisfying the commutative diagrams that characterize the group law with multiplication, inversion, and identity. We say that a group scheme is commutative if the multiplication $m$ satisfies the commutative diagram that characterize the commutative multiplication law. Using Yoneda's lemma, equivalently, we can say $G$ is a group scheme over $S$ if for any $S$-scheme $T$, $G(T):=\Hom_S(T,G)$ equipped with a group structure which is functorial in $T$. We can regard group scheme over $S$ as a representable object in the category sheaves on site $(\text{Sch}/S)$. But the category of all representable sheaves on $(\text{Sch}/S)$ is not abelian.

If $G=\Spec A$ is an affine group scheme over a ring $R$, then $A$ is an Hopf-algebra over $R$ with the comultiplication $\mu\colon A\to A\otimes_R A$, counit $\epsilon\colon A\to R$, and coinverse $\iota\colon A\to A$ respectively induced by the multiplication, unit section, and inverse of $G$.
\begin{defn}
    A group scheme of finite type over a field $K$ is said to be an algebraic group over $K$.
\end{defn}
\begin{lemma}{\cite[\href{https://stacks.math.columbia.edu/tag/023Q}{Lemma 023Q}]{stacks-project}}\label{representable functors are fppf,etale sheaf}
Every representable functor on $(\text{Sch}/S)$ is an fpqc (fppf, and \'etale) sheaf.
\end{lemma}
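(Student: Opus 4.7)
The plan is to reduce the statement to the fpqc case, since every \'etale and fppf covering is in particular an fpqc covering, so the sheaf condition for the fpqc topology implies the sheaf condition for the weaker topologies. Fix an $S$-scheme $X$ and consider the functor $h_X = \Hom_S(-,X)$. I need to verify that for any fpqc covering $\{U_i \to U\}_{i\in I}$ of $S$-schemes, the diagram
\[
h_X(U) \longrightarrow \prod_i h_X(U_i) \rightrightarrows \prod_{i,j} h_X(U_i \times_U U_j)
\]
is an equalizer of sets.

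My first step is a standard two-stage reduction. Because representable functors are already Zariski sheaves (which one checks directly by gluing morphisms on an open cover), I can refine any fpqc covering to a single faithfully flat quasi-compact morphism $V \to U$ by taking $V = \coprod U_i'$ over suitable quasi-compact opens; the Zariski sheaf property reassembles the local data. After this reduction, I may further assume $U = \Spec A$ and $V = \Spec B$ are affine with $A \to B$ faithfully flat, again by using the Zariski sheaf property to localize.

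The heart of the proof is then the affine case. I would first treat $X = \Spec R$ affine, where morphisms $\Spec A \to \Spec R$ correspond bijectively to ring maps $R \to A$, and the sheaf condition becomes exactness of
\[
\Hom(R, A) \longrightarrow \Hom(R, B) \rightrightarrows \Hom(R, B \otimes_A B).
\]
This reduces to exactness of the Amitsur-type complex $0 \to A \to B \rightrightarrows B \otimes_A B$, which is precisely the statement of faithfully flat descent for modules applied to $A$ itself (tensor the complex with the faithfully flat $A$-module $B$ to reduce to the case where $A \to B$ has a section, where exactness is obvious; faithful flatness then descends the exactness back to $A$). This is the main technical step, but it is a standard consequence of faithful flatness that I will cite from \cite{stacks-project}.

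For general (non-affine) $X$, the hardest part will be descending the morphism itself and not merely its effect on points. I would handle this by choosing an affine open cover $\{X_\alpha\}$ of $X$ and showing that compatible morphisms $f_i : U_i \to X$ (agreeing on $U_i \times_U U_j$) give rise to a descent datum for the closed/open subschemes $f_i^{-1}(X_\alpha)$, to which I can apply the affine case to produce morphisms $U \to X_\alpha$ that then glue via the Zariski sheaf property to the desired $f : U \to X$. The main obstacle here is verifying that the preimages $f_i^{-1}(X_\alpha)$ genuinely form descent data of quasi-compact opens in $U$; this rests on the fact that fpqc morphisms are submersive (quotient maps on underlying topological spaces), so opens descend. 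Once this is settled, uniqueness of the glued morphism follows from the injectivity part of the affine case, completing the argument.
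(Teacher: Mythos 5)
Your proposal is correct and follows essentially the same route as the proof the paper cites from the Stacks Project (Tag 023Q): reduce the fpqc sheaf condition to a single faithfully flat quasi-compact morphism of affines via the Zariski sheaf property, establish the affine target case through exactness of the Amitsur complex (faithfully flat descent for rings/modules), and then handle a general target by covering it with affines and descending the induced opens, which relies on fpqc morphisms being universally submersive. One small slip: the preimages $f_i^{-1}(X_\alpha)$ are open subschemes, not ``closed/open,'' but the rest of the argument is sound.
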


\begin{prop}
Let $G$ be a group scheme over $S$.
\begin{enumerate}
    \item If $S$ is a noetherian integral regular scheme whose irreducible components all have dimension $1$ and $G$ is quasi-compact and separated over $S$, then $G$ is quasi-projective.
    \item If $S=\Spec K$ and $G$ is locally of finite type over $\Spec K$, then $\dim G=\dim_g G$ for all $g\in G$.
    \item The structure morphism $G\to S$ is separated if and only if the identity morphism $e:S\to G$ is a closed immersion.
    \item If $S=\Spec K$, then the multiplication map is open, $G$ is separated. Moreover, $G$ is connected if and only if $G$ is irreducible. In addition, if the characteristic of $K$ is $0$ or $G$ is reduced over $K$, then $G$ is smooth over $K$.
\end{enumerate}
\end{prop}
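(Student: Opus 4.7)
\medskip

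The plan is to exploit translations by sections of $G$ (which are scheme automorphisms) as the unifying tool. For part (1), I would not attempt a direct proof: this is the classical theorem of Anantharaman on quasi-projectivity of quasi-compact, separated group schemes over a Dedekind base, and I would simply cite it (Anantharaman, \emph{Sch\'emas en groupes, espaces homog\`enes et espaces alg\'ebriques sur une base de dimension~$1$}). The remaining parts are elementary and will be proved by hand.

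For part (2), the key point is that for any $T$-valued point $g$ of $G$, left translation $\ell_g : G_T \to G_T$ is an automorphism of $G_T$ as a $T$-scheme (with inverse $\ell_{g^{-1}}$). Applied to the identity section $e \in G(K)$ and any closed point $g \in G$ (passing if necessary to a residue field extension), translation gives a scheme automorphism of $G_{k(g)}$ sending the image of $e$ to $g$. Since local dimension is invariant under scheme automorphisms and is preserved under separable extension of the base field, one gets $\dim_g G = \dim_e G$ for every point. Taking the supremum recovers $\dim G = \dim_g G$ for all~$g$.

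For part (3), the forward direction uses the standard fact that any section of a separated morphism is a closed immersion, applied to $e : S \to G$. For the converse, consider the morphism $\phi : G \times_S G \to G$ given by $(x,y) \mapsto x\cdot y^{-1}$ (an $S$-morphism definable from $m$ and $i$); one checks directly that the diagonal $\Delta_{G/S} : G \to G \times_S G$ is the base change of $e : S \to G$ along $\phi$. Since closed immersions are stable under base change, $\Delta_{G/S}$ is a closed immersion, so $G \to S$ is separated. For the separatedness claim in part~(4), the image of $e : \Spec K \to G$ is a single point with residue field~$K$, hence closed in $G$ (for $G$ locally of finite type over~$K$, every $K$-rational point is closed), so $e$ is a closed immersion and part~(3) applies. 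Openness of multiplication is immediate from the automorphism $(x,y) \mapsto (x, xy)$ of $G \times_K G$: this intertwines $m$ with the second projection, which is open as a projection from a product.

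For the remainder of part (4), the identity component $G^{\circ}$ is an open (and closed) subgroup scheme of $G$, and every connected component of $G$ is a translate of $G^{\circ}$ by a $K'$-point (over a suitable finite extension $K'/K$). Since translations are scheme isomorphisms, all connected components are isomorphic as $K$-schemes; since $G^{\circ}$ is irreducible (it is connected and, being a group scheme over a field, has a unique generic point characterized by the identity), every connected component is irreducible, so connectedness and irreducibility coincide. Finally, the smoothness statement is where the real content lies: in characteristic zero one invokes Cartier's theorem that every group scheme locally of finite type over a field of characteristic~$0$ is reduced, hence smooth; if $G$ is reduced (and $K$ perfect, which is the mild hidden hypothesis), the smooth locus $G^{\mathrm{sm}}$ is a nonempty open subscheme stable under translations, and transitivity of translations on $K$-points forces $G^{\mathrm{sm}} = G$. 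The only subtle step here is the invocation of Cartier's theorem, which I would cite rather than reprove.
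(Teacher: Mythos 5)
The paper offers no proof of its own here — it simply cites Raynaud (Th\'eor\`eme VIII.2) for (1) and Stacks Project Lemmas 045X, 047G and Section 047J for (2)--(4). Your proposal unwinds exactly the arguments that underlie those citations (translation automorphisms, base change of the identity section along $(x,y)\mapsto xy^{-1}$, etc.), substituting Anantharaman for Raynaud on part (1); both are standard references for that result, so there is no real divergence of approach.

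A few small slips worth noting. In (2), you say local dimension is ``preserved under separable extension of the base field,'' but the residue field $\kappa(g)/K$ need not be separable (or even algebraic); what you actually need is that for a scheme locally of finite type over a field, $\dim_x X = \dim_{x'} X_{K'}$ for any field extension $K'/K$ and any $x'$ above $x$, which holds with no separability hypothesis — so the argument is fine, just the stated justification is too narrow. In (4), your explanation that $G^{\circ}$ is irreducible because it has ``a unique generic point characterized by the identity'' conflates the identity (a closed point) with the generic point; the correct reason is a genuine argument using translations to show the group acts transitively on generic points of irreducible components, which together with connectedness forces a single component. And in (4) you invoke locally-of-finite-type to get that $K$-rational points are closed; in fact every $K$-rational point of any $K$-scheme is closed (if $\kappa(x)=K$ then in any affine open through $x$ the corresponding prime is maximal, so $\{x\}$ has no proper specializations), so the hypothesis isn't needed there — which is good, since the proposition's part (4) doesn't assume it except implicitly for the smoothness clause. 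You correctly flag the hidden perfectness hypothesis in the ``reduced $\Rightarrow$ smooth'' statement, which the paper's phrasing glosses over.
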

\begin{proof}
    (1): See \cite[Th\'eor\`eme VIII.2]{raynaud2006rings}\\
    (2): See  \cite[\href{https://stacks.math.columbia.edu/tag/045X}{Lemma 045X}]{stacks-project}\\
    (3): See \cite[\href{https://stacks.math.columbia.edu/tag/047G}{Lemma 047G}]{stacks-project}\\
    (4): See \cite[\href{https://stacks.math.columbia.edu/tag/047J}{Section 047J}]{stacks-project}.
\end{proof}
\begin{defn}
An abelian variety over a field $K$ is a group scheme over $K$ which is also a proper, geometrically integral variety over $K$.
\end{defn}
\begin{prop}[Properties of abelian varieties-\cite{MumfordAVar}]
Let $A$ be an abelian variety over a field $k$.
\begin{enumerate}
    \item $A$ is projective and commutative.
    \item If $n\in k^{\times}$ and $k$ is algebraic closed, then $A(k)[n]\cong (\Z/n\Z)^{2\dim A}$.
    \item If $k$ is algebraic closed of characteristic $p$, then there exists and integer $0\leq f\leq\dim A$ such that $A(k)[p^m]\cong(\Z/p^m\Z)^f$ for all $m\geq 1$.
\end{enumerate}
\end{prop}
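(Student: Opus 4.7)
The plan for part (1) is to prove commutativity via the rigidity lemma for complete varieties: any morphism $f:X\times Y\to Z$ between varieties with $X$ complete, which collapses $X\times\{y_0\}$ and $\{x_0\}\times Y$ to a point, collapses the whole product to that point. Applied to the commutator map $A\times A\to A$, $(a,b)\mapsto aba^{-1}b^{-1}$, which sends $(e,b)$ and $(a,e)$ to $e$, rigidity forces it to be constant equal to $e$. For projectivity, one first produces an ample line bundle. The theorem of the cube gives that any line bundle $L$ on $A\times A\times A$ whose restriction to each of $\{e\}\times A\times A$, $A\times\{e\}\times A$, $A\times A\times\{e\}$ is trivial is itself trivial; this forces the Mumford formula $[n]^*L\cong L^{\otimes(n^2+n)/2}\otimes [-1]^*L^{\otimes(n^2-n)/2}$ and hence $[n]^*L\cong L^{\otimes n^2}$ for symmetric $L$. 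Then one exhibits a symmetric effective divisor whose sections separate points and tangent vectors after passing to a sufficiently high multiple.

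For part (2), the plan is to study the multiplication-by-$n$ morphism $[n]:A\to A$. Using a symmetric ample line bundle $L$ on $A$, the identity $[n]^*L\cong L^{\otimes n^2}$ combined with the self-intersection formula gives $\deg[n]=n^{2g}$, where $g=\dim A$. Whenever $n$ is invertible in $k$, the differential $d[n]=n\cdot\mathrm{id}$ on $\mathrm{Lie}(A)$ is an isomorphism, so $[n]$ is étale; hence $A[n](\bar k)$ has exactly $n^{2g}$ elements. The group $A[n](\bar k)$ is a finite abelian group annihilated by $n$, so the structure theorem for finitely generated abelian groups reduces the identification $A[n](\bar k)\cong(\Z/n\Z)^{2g}$ to the prime power case $n=\ell^r$, and then by induction on $r$ to the case $n=\ell$, where $A[\ell](\bar k)$ is an $\F_\ell$-vector space of cardinality $\ell^{2g}$, hence of dimension exactly $2g$.

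For part (3), the key difficulty shifts: in characteristic $p$, $[p]$ is still an isogeny of degree $p^{2g}$ (same intersection-theoretic computation), but it is no longer étale because $d[p]=0$. One factors $[p]=V\circ F$ where $F:A\to A^{(p)}$ is the relative Frobenius and $V:A^{(p)}\to A$ the Verschiebung; $F$ is purely inseparable of degree $p^g$, so the separable part of $[p]$ has degree $p^f$ for some $0\le f\le g$. This integer $f$, the $p$-rank of $A$, is intrinsic and controls the étale quotient of the finite group scheme $A[p]$. One then argues inductively: the connected–étale sequence of $A[p^m]$ over the perfect field $\bar k$ splits, and the connected component contributes no $\bar k$-points, while the étale quotient has order $p^{mf}$ and is annihilated by $p^m$. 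Since $A[p](\bar k)\cong(\Z/p\Z)^f$ by the degree count, the same structure-theorem argument as in (2), applied to the étale quotient, gives $A[p^m](\bar k)\cong(\Z/p^m\Z)^f$.

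The main obstacle across all three parts is the characteristic-$p$ behaviour of $[p]$: showing that the $p$-rank $f$ is well-defined, that $0\le f\le g$, and that the étale quotient of $A[p^m]$ has the clean cyclic structure. This requires genuine input from Dieudonné theory (or the theorem that $F$ has degree $p^g$), not just elementary intersection theory, and it is exactly the step where the simple Weil-pairing/étaleness argument that worked for $n$ coprime to $p$ fails.
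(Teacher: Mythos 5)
The paper gives no proof of this proposition; it simply cites Mumford's \emph{Abelian Varieties} and lists the facts. Your sketch follows exactly the arguments one finds in that reference (rigidity lemma for commutativity, theorem of the cube and $[n]^*L\cong L^{\otimes n^2}$ for a symmetric ample $L$ to get $\deg[n]=n^{2g}$, \'etaleness of $[n]$ for $n$ invertible, and the Frobenius--Verschiebung factorization $[p]=V\circ F$ with $\deg F=p^g$ to isolate the $p$-rank $f$), and your treatment of the characteristic-$p$ case via the connected--\'etale sequence is the correct way to handle the failure of \'etaleness of $[p]$; so the proposal is a correct and faithful rendering of the standard proof behind the citation.
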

\begin{example}\label{examples group schemes in appendix}
\begin{enumerate}
    \item Additive group: The additive group over $\Z$ is the affine group scheme $\G_{a,\Z}=\Spec \Z[t]$ with the natural additive group structure on $\G_a(R)=R$ for each $\Z$-algebra $R$ that is associated with the Hopf-algebra structure given by $$\mu(t)=t\otimes 1 +1\otimes t,\quad \epsilon(t)=0,\quad \iota(t)=-t$$
    The additive group over $S$ is the group scheme $\G_{a,S}:=\G_{a,\Z}\times_{\Spec \Z}S$.
    
    \item Multiplicative group: The multiplicative group over $\Z$ is the affine group scheme $\G_{m,\Z}=\Spec \Z[t,t^{-1}]$ with the natural multiplicative group structure on $\G_m(R)=R^{\times}$ for each $\Z-$algebra $R$ that is associated to the Hopf-algebra structure given by 
    $$\mu(t)=t\otimes t,\quad \epsilon(t)=1,\quad \iota(t)=t^{-1}$$
    The multiplicative group over $S$ is the group scheme $\G_{m,S}:=G_{m,\Z}\times_{\Spec\Z}S$.
    
    \item The $n$-th roots of unity: The group of $n$-th roots of unity over $\Z$, denoted by $\mu_{n,\Z}$, is the closed subgroup scheme $\G_{m,\Z}$ associated to $\Z[t,t^{-1}]\twoheadrightarrow\Z[t]/(t^n-1)$ that has the natural multiplicative group structure on $\mu_n(R)=\{r\in R\mid r^n=1\}$ for each $\Z-$algebra $R$.\\
    The group $n$-th roots of unity over $S$ is the group scheme $\mu_{n,S}:=\mu_{n,\Z}\times_{\Spec\Z}S$.
    
    \item The group $\alpha_p$: The group scheme $\alpha_p$ over $\F_p$ is the closed subgroup scheme $\G_{a,\F_p}$ associated to $\F_p[t]\to\F_p[t]/(t^p)$ that has the natural additive group structure on $\alpha_p(R)=\{r\in R\mid r^p=0\}$ for each $\F_p$-algebra $R$.\\
    If $S$ is a scheme in characteristic $p$. The group $\alpha_{p,S}$ over $S$ is the group scheme $\alpha_{p,S}:=\alpha_{p,\F_p}\times_{\Spec\F_p}S$.
    
    \item Abstract groups: A constant group scheme is an abstract group. Assume that $M$ is an abstract group. Let $\underline{M}_S:=M\times S$, where $M\times S$ denotes the disjoint union of copies of $S$ indexed by $M$. We define the group operation $m:\underline{M}_S\times_S\underline{M}_S\to \underline{M}_S$ as follows: Note that $\underline{M}_S\times_S\underline{M}_S\cong \underline{M\times M}_S$. If $(x_1,x_2)\in M\times M$ then $m$ sends $s_{(x_1,x_2)}$ to $s_{x_1x_2}$ via the identity map. The morphisms $i$, and $e$ are defined analogously. One can see that $\underline{M}_S$ is an $S$-group scheme.\\
    The assignment $M\mapsto \underline{M}_S$ is functorial and $\underline{M}_S$ is commutative if $M$ is a commutative abstract group.
    
    \item Kummer sequence: If $n\in \cO^*_S$ then the sequence
    $$0\to \mu_{n,S}\to \G_{m,S}\xrightarrow{(.)^n}\G_{m,S}\to 0 $$
    is exact as sheaves on the site $(\text{Sch}/S)_{\text{fppf}}$, the small site $S_{\text{\'etale}}$, and the big site $(\text{Sch}/S)_{\text{\'etale}}$. But it is not exact on the Zariski site $S_{\text{zar}}$ (see \cite[\href{https://stacks.math.columbia.edu/tag/03PK}{Section 03PK}]{stacks-project}). 
\end{enumerate}
\end{example}

\begin{lemma}
Let $G$ be a scheme over a locally noetherian base scheme $S$. Then, $G\to S$ is finite locally free if and only if $G\to S$ is finite and flat.
\end{lemma}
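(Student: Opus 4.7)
The statement actually has nothing to do with the group structure on $G$: it is a purely local property of the morphism $f\colon G\to S$, so I would drop the group scheme hypothesis and treat it as an assertion about morphisms of locally noetherian schemes. Since ``finite'', ``flat'', and ``finite locally free'' are all local on the base, I plan to reduce immediately to the affine case $S=\Spec R$ with $R$ a noetherian ring, in which case $f_*\cO_G=\tilde M$ for some $R$-module $M$, and the problem becomes the algebraic statement: for a noetherian ring $R$ and a finitely generated $R$-module $M$, $M$ is finite locally free if and only if $M$ is flat.

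For the forward direction (finite locally free $\Rightarrow$ finite + flat), I would simply unwind definitions. Being finite locally free means that, after passing to a suitable open cover $\{\Spec R_i\}$ of $\Spec R$, the module $M\otimes_R R_i$ is a free $R_i$-module of finite rank; in particular it is flat and finitely generated over $R_i$, and flatness and finite generation both descend to statements about the original $M$. So $f$ is affine with $f_*\cO_G$ finitely generated (hence $f$ is finite, since it is already affine), and stalkwise $f_*\cO_G$ is flat.

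For the reverse direction (finite + flat $\Rightarrow$ finite locally free), the whole point is to exploit the noetherian hypothesis. The key algebraic input I would cite is the following standard fact: over any ring, a finitely presented flat module is locally free (see e.g.\ \cite[\href{https://stacks.math.columbia.edu/tag/00NX}{Lemma 00NX}]{stacks-project}). The noetherian hypothesis on $R$ converts our finitely generated module $M$ into a finitely presented one automatically, since submodules of finitely generated modules over a noetherian ring are finitely generated, and a finite presentation is obtained by taking a surjection $R^n\twoheadrightarrow M$ and noting that its kernel is again finitely generated. Once $M$ is both finitely presented and flat, the cited lemma gives that $M$ is locally free, of necessarily finite rank (because it is finitely generated), so $f_*\cO_G$ is finite locally free as an $\cO_S$-module.

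I do not anticipate any genuine obstacle: the only non-formal step is the implication ``finitely presented + flat $\Rightarrow$ locally free'', which is a classical result, and it is really the noetherian hypothesis on $S$ that makes the equivalence hold (without it, one only gets ``finite + flat + finitely presented $\Leftrightarrow$ finite locally free''). I would therefore structure the write-up as two very short paragraphs: one reducing to the affine noetherian case and handling the easy direction, and one invoking the noetherian hypothesis to upgrade ``finitely generated'' to ``finitely presented'' and then applying the standard locally-free-from-flat-plus-finitely-presented result.
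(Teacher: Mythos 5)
Your proof is correct and takes essentially the same route as the paper, which literally just cites \cite[\href{https://stacks.math.columbia.edu/tag/00NX}{Lemma 00NX}]{stacks-project} with no further commentary. In fact your write-up is the more informative of the two: tag 00NX as stated concerns ``finitely presented and flat $\Leftrightarrow$ finite locally free'', and you correctly isolate the role of the (locally) noetherian hypothesis in upgrading ``finite'' (a finitely generated module over each affine chart, since the morphism is affine) to ``finitely presented'' — a small but genuine bridging step that the paper's bare citation leaves implicit.
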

\begin{proof}
 See \cite[\href{https://stacks.math.columbia.edu/tag/00NX}{Lemma 00NX}]{stacks-project}.
\end{proof}

\begin{defn}\label{def: finite flat group scheme}
Over a locally noetherian base scheme $S$, a group scheme over $S$ is called finite flat group scheme if $G\to S$ is finite locally free. If $G\to S$ is finite locally free of rank $n$, we say that the group scheme $G$ is of order $n$ (or rank $n$). For an affine group scheme $G=\Spec A$ over an affine base $S=\Spec R$, $G$ is a finite flat group scheme of rank $n$ if $A$ is locally free of rank $n$ as $R$-module.
\end{defn}
\begin{remark}
\begin{enumerate}
    \item The category of commutative group schemes of finite type over field $K$ (algebraic groups) is an abelian category.
 
    \item We always assume that the finite flat group scheme $G$ is commutative.
    
    \item The category of finite flat commutative group schemes over a field $K$ is an abelian category (see \cite[II.6]{demazure2006lectures}). 

    \item The category of finite flat group schemes over a general base scheme $S$ is just a pre-abelian category. However, we always regard the category of finite flat group schemes over $S$, denoted $(\text{ffgrp}/S)$, as representable sheaves on (big) fppf site $(\text{Sch}/S)_{\text{fppf}}$ (see Lemma \ref{representable functors are fppf,etale sheaf}); such category of sheaves is an abelian category with enough injectives (see \cite{poonen2017rational}).\\
    We can go back to the original category of finite flat group schemes over $S$ via fppf descent. Roughly speaking a sheaf is representable if and only if it is representable locally on the fppf site. See \cite[\href{https://stacks.math.columbia.edu/tag/02W5}{Lemma 02W5}]{stacks-project} and also 
    \cite{vistoli2004notes} for more details.

    \item We define a complex of finite flat group schemes over a base scheme $S$ to be exact if it is exact as a complex of fppf sheaves on fppf site of $S$. Over a noetherian ring $R$, a sequence $0\to G'\to G\to G''\to 0$ of finite flat $R$-group schemes is exact if and only if $G\to G''$ is faithfully flat and $G'\to G$ is the kernel of $G\to G''$. It is equivalent to say that $G'\to G$ is a closed immersion and the image is a normal subgroup of $G$, and $G\to G''$ is identified with the cokernel of $G'\to G$.
    
    \item If $\cF$ is a quasi-coherent sheaf of $\cO_S$-modules, we denote by $\underline{\cF}$ the fppf sheaf given by $\underline{\cF}(S')=\Gamma(S',\cF\otimes_{\cO_S}\cO_{S'})$ for any $S'\in (\text{Sch}/S)_{\text{fppf}}$.

\end{enumerate}
\end{remark}
\begin{prop}
Assume that $0\to G'\to G\to G''\to 0$ is an exact sequence of commutative group schemes over $S$ (as fppf sheaves).
\begin{enumerate}
    \item If $G'$ is finite flat $S$-group scheme, then $G\to G''$ is finite and faithfully flat.
    \item If $G'$ and $G''$ are finite flat group schemes over $S$, then so is $G$ i.e. extensions of finite flat group schemes (as fppf sheaves) are finite flat group schemes.
\end{enumerate}
\end{prop}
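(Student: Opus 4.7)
The plan is to prove part (1) first; part (2) will then follow almost immediately by composition. For part (1), the key point is that ``exact as fppf sheaves'' means precisely that $p: G \to G''$ is an fppf epimorphism whose sheaf-theoretic kernel is $G'$. Since $G$ is commutative, this exhibits $p$ as a $G'$-torsor over $G''$, and torsors under finite flat group schemes are finite flat.

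First, I would make the torsor structure concrete. Using the action of $G'$ on $G$ by translation, the map
\[
\alpha: G \times_S G' \longrightarrow G \times_{G''} G, \qquad (g,h) \longmapsto (g,\, g+h)
\]
is an isomorphism of fppf sheaves (its inverse is $(g_1,g_2)\mapsto (g_1,\, g_2-g_1)$, which lands in $G \times_S G'$ precisely because $G'=\ker p$). Since source and target are representable, $\alpha$ is an isomorphism of $S$-schemes.

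Second, being an fppf epimorphism means there exists an fppf covering $f: U \to G''$ that admits a lift $s: U \to G$ with $p\circ s = f$. Using $s$ together with $\alpha$, the base change of $p$ along $f$ becomes the projection
\[
U \times_{G''} G \;\cong\; U \times_S G' \;\longrightarrow\; U,
\]
which is finite and faithfully flat because $G' \to S$ is finite and faithfully flat and these properties are stable under base change. Since ``finite'' and ``faithfully flat'' are both fpqc-local on the target (descent for affine morphisms plus descent for flatness along faithfully flat quasi-compact morphisms), we conclude that $p: G \to G''$ is itself finite and faithfully flat. This gives part (1).

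For part (2), once $p: G \to G''$ is known to be finite and faithfully flat by (1), and $G'' \to S$ is finite and faithfully flat by hypothesis, the composition $G \to S$ is finite and faithfully flat, hence $G$ is a finite flat $S$-group scheme. The main obstacle is the careful handling of the fppf-local picture in part (1)---namely, unwinding the sheaf-theoretic exactness into a local section $s:U\to G$ and identifying the resulting base change with $U\times_S G'$ via the torsor isomorphism $\alpha$. Once this identification is in place, the two descent statements (for finiteness and for faithful flatness) do all the work.
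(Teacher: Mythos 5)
Your argument is correct and is the standard torsor--descent proof of this fact; the paper itself gives no proof but cites SGA~3, Exp.~IV and Poonen, Prop.~5.2.7, and those sources run essentially the same argument you do: trivialize the $G'$-torsor $G \to G''$ fppf-locally via the isomorphism $G\times_S G' \cong G\times_{G''} G$, observe that the base change of $p$ is the projection $U\times_S G' \to U$, and conclude by fpqc descent for the properties ``finite'' and ``faithfully flat''; part (2) then follows by composing $G \to G'' \to S$ (using that over a locally noetherian base, which is the paper's standing convention, finite + flat = finite locally free). One small point worth making explicit: you need the covering $U \to G''$ to be quasi-compact so that it qualifies for fpqc descent, which can always be arranged by refining the fppf cover; and you implicitly use that a finite flat group scheme has rank $\geq 1$ everywhere (because of the unit section), so that ``finite locally free'' does give ``faithfully flat'' over $S$.
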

\begin{proof}
    \cite[Exp. IV]{demazure1970schemas(SGA3I)} and \cite[Prop. 5.2.7]{poonen2017rational}
\end{proof}


\begin{defn}
Let $G$ be a commutative group scheme over $S$.
\begin{enumerate}
    \item $G$ is called a vector group scheme if $G$ is locally isomorphic to $\G^r_a$ in the fpqc topology i.e. for every point $s\in S$ there exists a Zariski open neighbourhood $U$ of $s$ and an fpqc morphism $S'\to U$ such that $G\times_U S'$ is isomorphic to $\G^r_{a,S'}$ for some $r\geq 0$.
    \item $G$ is called a torus if it is locally isomorphic to $\G^r_m$ in fpqc topology. A torus is called quasi-isotrivial if in the above definition one can choose the morphisms $S'\to U$ to be \'etale i.e. $G$ is locally isomorphic to $\G^r_m$ in \'etale topology. It is called isotrivial, if there exists a surjective finite \'etale map $S'\to S$ such that $G' = G\times_S S'$ is isomorphic to $\G^r_{m,S'}$ as a group scheme over $S'$.
    \item $G$ is called a (free) quasi-Galois $S$-module or a $S$-lattice if it is locally isomorphic for \'etale topology to a finite free abelian constant group i.e. $\Z^r$ for some $r\geq 0$. A (free) quasi-Galois $S$-module is called a (free) Galois $S$-module or isotrivial lattice, if there exists a surjective finite \'etale map $S'\to S$ such that $G'=G\times_{S}S'$ is isomorphic to $\underline{\Z^r}_{S'}$ for some $r\geq 0$.
    \item $G$ is called an abelian scheme if $G\to S$ is proper, smooth and all fibres are geometrically connected.
    \item $G$ is called a semi-abelian scheme over $S$ if $G$ is an extension of an abelian scheme by a torus over $S$.
    
\end{enumerate}
\end{defn}
\begin{defn}
Let $f\colon A\to B$ be a homomorphism of abelian schemes or fppf group schemes over $S$. We say that $f$ is an isogeny if $f$ is finite and faithfully flat (surjective as fppf sheaves) with finite flat kernel.
\end{defn}
\begin{prop}
    Abelian group scheme $A$ over $S$ is commutative. If $S$ is normal, then $A$ is projective over $S$. The multiplication by $n$ map, $[n]\colon A\to A$, is an isogeny of degree $n^{2g}$, where $g$ is the relative dimension of $A$ over $S$.
\end{prop}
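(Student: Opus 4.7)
The plan is to treat the three assertions separately, in the order given, reducing each to a fibrewise statement plus a rigidity / flatness argument that upgrades it to the relative setting.

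For commutativity, the key tool is the rigidity lemma for abelian schemes: if $f\colon X\to Y$ is a morphism of $S$-schemes with $X\to S$ proper, flat, with geometrically integral fibres, and $f$ contracts some fibre of $X\to S$ to a point, then $f$ factors through $S$. I would apply this to the commutator morphism $c\colon A\times_S A\to A$ defined by $(x,y)\mapsto xyx^{-1}y^{-1}$. Since $c$ sends the two ``axes'' $\{e\}\times_S A$ and $A\times_S\{e\}$ to the identity section, on every geometric fibre $A_{\bar s}$ the induced map $A_{\bar s}\times A_{\bar s}\to A_{\bar s}$ is constant on the axes, and then by the classical rigidity lemma for abelian varieties it is constant, equal to $e_{\bar s}$. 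Applying rigidity in the relative form (or equivalently descending the fibrewise vanishing via the fact that $A\times_S A\to S$ has geometrically connected fibres and $A\to S$ is separated) gives $c=e\circ\pi$, which is commutativity.

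For projectivity over a normal base, I would follow the Raynaud/Grothendieck argument. Locally on $S$ one picks a section $e\colon S\to A$ and, using the relative Picard scheme $\Pic_{A/S}$ plus the fact that each geometric fibre $A_{\bar s}$ admits an ample symmetric line bundle (every abelian variety is projective, by the classical construction of a theta divisor), one constructs an invertible sheaf $\cL$ on $A$ whose restriction to every geometric fibre is ample and symmetric. Normality of $S$ is used here to ensure that the line bundle, initially defined up to pullback from $S$ on an étale cover, descends to $A$ (Raynaud's theorem on extending line bundles over normal bases). Once $\cL$ is fibrewise ample on a proper flat morphism, it is relatively ample, hence $A\to S$ is projective.

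For the statement about $[n]$, the heart is the fibrewise fact that for an abelian variety $A_{\bar s}$ of dimension $g$ the multiplication-by-$n$ map is a finite flat surjection of degree $n^{2g}$; this is classical and rests on the non-degeneracy of the intersection form on $\Pic^0$ combined with the theorem of the cube. I would first show $[n]$ is surjective: it is a morphism of abelian schemes which is fibrewise surjective (by the classical result), and its fibres are finite (kernel is a finite group scheme on each fibre), so by fibrewise criteria $[n]$ is quasi-finite and surjective; being proper (as a morphism between proper $S$-schemes) and quasi-finite, it is finite. For flatness, since $A\to S$ is smooth of relative dimension $g$ and $[n]$ is a finite surjective morphism between smooth equidimensional $S$-schemes of the same relative dimension, the fibrewise flatness criterion (or ``miracle flatness'') gives that $[n]$ is flat, hence finite locally free. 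The degree is then locally constant on $S$, and computed on any geometric fibre to be $n^{2g}$, so $[n]$ is an isogeny of degree $n^{2g}$, with kernel $A[n]$ a finite flat $S$-group scheme.

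The main obstacle in this plan is the projectivity step over a normal base: the construction of a globally defined, relatively ample line bundle is substantially more delicate than the commutativity or the $[n]$ assertion, because it requires either Raynaud's descent theorem for line bundles over normal bases or an explicit theta-divisor construction together with a flatness analysis of $\Pic_{A/S}$. The other two items, by contrast, are essentially fibrewise statements glued together by rigidity and the fibrewise flatness criterion.
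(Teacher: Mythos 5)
The paper does not actually give a proof of this proposition: it appears in Appendix B as a reference fact, with the textbook sources listed at the head of Section B.1 (Mumford, Faltings--Chai, Oort, etc.) serving as implicit citations. There is therefore no argument in the text to compare against; I can only assess your proof on its own terms.

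Your sketch is a correct reconstruction of the standard proofs and identifies the right key tools: the relative rigidity lemma for commutativity, Raynaud's descent theorem for projectivity over a normal base, and (fibrewise flatness criterion $+$ miracle flatness on fibres) for the structure of $[n]$. Two small points of precision are worth flagging, neither a genuine gap. First, in the commutativity argument the relative rigidity lemma should be applied with base one of the factors of $A\times_S A$ (say via $\mathrm{pr}_1$), not $S$: one checks that the second component of $(a,b)\mapsto(a,\,aba^{-1}b^{-1})$ contracts the fibre over the identity section $e\colon S\to A$ to the identity, so rigidity forces it to factor through the base $A$, and evaluating at $b=e$ shows it is constantly $e$. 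Your alternative route, deducing $c=e\circ\pi$ from agreement on geometric fibres plus separatedness, needs $A\times_S A$ (hence $S$) reduced to conclude that two morphisms coinciding on geometric points coincide as morphisms; so the relative rigidity lemma must be treated as the primary argument. Second, in the flatness step for $[n]$ you conflate the fibrewise flatness criterion (EGA IV 11.3.10), which reduces flatness over $S$ to flatness of each $[n]_s$, with miracle flatness, which produces the flatness of $[n]_s\colon A_s\to A_s$ as a finite map between regular equidimensional schemes over a field; both are used, one after the other, and it is cleaner to state them separately. The projectivity sketch correctly reduces to Raynaud's theorem; the details are substantial, as you observe, but the plan is sound.
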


\section{Cartier dual}\label{sec: Cartier dual in appendix}
Let $V$ be an $R$-module. We define the dual module of $V$, denoted by $V^{\vee}$, to be $V^{\vee}:=\Hom_R(V,R)$.\\
If $G$ is either an $S$-torus, quasi-Galois $S$-module, or an affine commutative finite flat group scheme over $S$, we define Cartier dual of $G$ to be the $S$-group scheme $G^{\vee}:=\ihom(G,\G_m)$ as a representable sheaf on fppf site. For $G=\Spec A$ over $S=\Spec R$, by dualizing comultiplication, counit, and coinverse, $A^{\vee}=\Hom_R(A,R)$ becomes an $R$-Hopf algebra which is also finite flat over $R$ if $A$ is finite flat, and it represents $G^{\vee}$.

For abelian scheme $A$ over $S$, notice that $\ihom(A,\G_m)=0$ since $A$ is proper and $\G_m$ is affine. We define the Cartier dual of $A$ to be $A^{\vee}:=\Pic^0_{A/S}$.\\
Barsotti-Weil formula states that there is a natural isomorphism $$\Ext^1_S(A,\G_m)\to A^{\vee}(S)$$ which is compatible with base change, hence induces an isomorphism $\underline{\Ext}^1(A,\G_m)\cong A^{\vee}$ (see \cite[Chapter III]{oort2006commutative}). This means that $A^{\vee}$ is representable.

\begin{thm}
Let $G$ be one of the previously mentioned group schemes. The Cartier dual $(-)^{\vee}$ is an exact contravariant functor and $(G^{\vee})^{\vee}\cong G$. Moreover, The Cartier dual induces an antiequivalence between the category of $S$-tori and $S$-lattices that restricts to an antiequivalence between the category of isotrivial $S$-tori and isotrivial $S$-lattices.
\end{thm}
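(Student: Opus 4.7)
The plan is to prove each assertion via local-to-global arguments with respect to the fppf or \'etale topology. Contravariance is immediate from the functoriality of $\ihom_S(-,\G_{m,S})$ (and, in the abelian scheme case, from the Barsotti--Weil formula $A^\vee \cong \iext^1_S(A,\G_{m,S})$ recalled in Theorem~1.2.5). For the biduality isomorphism $(G^\vee)^\vee \cong G$ I would construct a canonical evaluation morphism $G \to G^{\vee\vee}$ in each case and verify it is an isomorphism after passage to a suitable cover.

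For an affine commutative finite flat group scheme $G = \Spec A$ with $A$ finite locally free over $\cO_S$, both exactness and biduality reduce via the Hopf algebra formalism to the corresponding classical statements for finite locally free $\cO_S$-modules: the functor $\Hom_{\cO_S}(-,\cO_S)$ is exact, $A^{\vee\vee}\cong A$, and the dualized comultiplication, counit and coinverse equip $A^\vee$ with the Hopf structure encoding $G^\vee$. For abelian schemes, biduality $A\cong A^{\vee\vee}$ is the classical Weil theorem, and exactness of $A\mapsto A^\vee$ on isogenies (or short exact sequences in the isogeny category) is a consequence of the vanishings $\ihom_S(A,\G_{m,S}) = 0 = \iext^2_S(A,\G_{m,S})$ from Theorem~1.2.5, applied to the long exact sequence of $\iext_S(-,\G_{m,S})$.

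For the antiequivalence between $S$-tori and $S$-lattices, the key is two fundamental computations of internal $\Hom$-sheaves on the fppf site of $S$:
\begin{equation*}
\ihom_S(\G_{m,S},\G_{m,S}) \cong \underline{\Z}_S, \qquad \ihom_S(\underline{\Z}_S,\G_{m,S}) \cong \G_{m,S}.
\end{equation*}
The first encodes that every $S$-group scheme morphism $\G_m \to \G_m$ is locally of the form $x\mapsto x^n$ for a locally constant $n\in\Z$; the second is evaluation at $1\in\underline{\Z}_S$. If $T$ is an $S$-torus, fpqc-locally $T\cong\G_m^r$, whence $T^\vee = \ihom_S(T,\G_{m,S})$ becomes $\underline{\Z}^r$ after pulling back along the same cover, exhibiting $T^\vee$ as an $S$-lattice; the parallel argument handles $L^\vee$ when $L$ is a lattice. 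The canonical evaluation maps $T\to T^{\vee\vee}$ and $L\to L^{\vee\vee}$ are then isomorphisms after pullback along the cover, hence globally by fpqc descent. For the isotrivial subcategories, Cartier duality commutes with base change, so a finite \'etale surjection $S'\to S$ trivialising $T$ also trivialises $T^\vee$, giving the restricted antiequivalence.

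The main obstacles I anticipate are the careful verification of the two internal-$\Hom$ computations above as fppf sheaves (rather than merely on the level of points), and the abelian-scheme case of exactness, where one must reconcile the two incarnations of $A^\vee$ as $\Pic^0_{A/S}$ and as $\iext^1_S(A,\G_{m,S})$ and ensure the resulting functor is exact on mixed exact sequences of the type arising in 1-motives. Both points are treated in \cite{oort2006commutative} and \cite{Grothendieck_1971}, which I would cite rather than reproduce in detail.
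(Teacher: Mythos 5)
The paper states this theorem in Appendix~B without proof; it appears there as a recalled background fact (the very next theorem on abelian-scheme duality is dispatched with a bare reference to \cite{oort2006commutative}). So there is no internal argument for you to match, and your task is simply to supply a correct proof of standard material. Your overall strategy — functoriality, evaluation maps, fppf descent, the two internal-$\ihom$ computations, and the long exact sequence of $\iext^{\bullet}(-,\G_m)$ in the abelian case — is the standard one and is sound.

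There is one genuine gap you should close. The paper's definitions are asymmetric in their topologies: an $S$-torus is required to be fpqc-locally isomorphic to $\G_m^r$, but an $S$-lattice is required to be \'etale-locally isomorphic to $\underline{\Z}^r$. Your argument shows that if $T$ is a torus, then after pulling back along the fpqc cover trivialising $T$, the dual $T^\vee$ becomes $\underline{\Z}^r$; but this establishes only fpqc-local triviality of $T^\vee$, which is a priori weaker than the \'etale-local triviality demanded by the definition of an $S$-lattice. The fix is to invoke the fact that a torus over an $S$ as in the paper's conventions (locally Noetherian, integral, regular) is automatically quasi-isotrivial and indeed isotrivial — this is \cite[Exp.~X, Cor.~4.5 and Th.~5.16]{artin2006schemas} — so that the trivialising cover can be taken finite \'etale from the start, and the issue disappears. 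Alternatively, one can argue directly that $\underline{\Isom}_S(T,\G_m^r)$ is a torsor under the \'etale group scheme $\underline{\operatorname{GL}_r(\Z)}_S$, hence representable and \'etale over $S$ by descent, which again yields an \'etale cover trivialising $T$. Either way, you should say so explicitly; as written, the fpqc cover does not literally witness the lattice condition. The rest of your proposal is fine, and the remark that Cartier duality commutes with base change correctly handles the isotrivial subcategory.
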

\begin{thm}[Duality theorem for abelian schemes]\label{duality theorem for abelian schemes} 
Let $f\colon A\to B$ be an isogeny of abelian schemes over $S$. Then the $\ker (f^{\vee})$ is naturally isomorphic to $(\ker f)^{\vee}$. So, we obtain the exact sequence $$0\to (\ker f)^{\vee}\to B^{\vee}\to A^{\vee}\to 0$$
\end{thm}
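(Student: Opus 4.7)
The plan is to derive the duality by applying the derived internal $\ihom(-, \G_m)$ functor to the short exact sequence
\[
0 \to \ker f \to A \xrightarrow{f} B \to 0
\]
of fppf sheaves on $S$ and reading off the resulting long exact sequence of $\iext^i_S(-,\G_m)$ sheaves. Since $f$ is an isogeny, it is finite and faithfully flat with finite flat kernel, so the displayed sequence is indeed exact in the abelian category of fppf sheaves.

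Applying $\iext^{\bullet}_S(-,\G_m)$ yields the long exact sequence
\[
0 \to \ihom_S(B,\G_m) \to \ihom_S(A,\G_m) \to \ihom_S(\ker f,\G_m) \to \iext^1_S(B,\G_m) \to \iext^1_S(A,\G_m) \to \iext^1_S(\ker f,\G_m) \to \cdots
\]
Next I would identify the terms using facts already recorded in the excerpt. The first two $\ihom$ terms vanish: $\ihom_S(A,\G_m)=0=\ihom_S(B,\G_m)$ because $A,B$ are proper over $S$ while $\G_m$ is affine. The third term $\ihom_S(\ker f,\G_m)$ is by definition the Cartier dual $(\ker f)^\vee$, since $\ker f$ is a finite flat commutative group scheme. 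By the Weil--Barsotti formula recalled after the construction of the dual 1-motive, $\iext^1_S(A,\G_m)\cong A^\vee$ and $\iext^1_S(B,\G_m)\cong B^\vee$, and the induced map between these is $f^\vee$ by naturality of the Yoneda $\Ext$ construction. Finally, Theorem~\ref{1.2.5}(1) gives $\iext^1_S(\ker f,\G_m)=0$ because $\ker f$ is finite flat.

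Substituting these identifications into the long exact sequence collapses it to
\[
0 \to (\ker f)^\vee \to B^\vee \xrightarrow{f^\vee} A^\vee \to 0,
\]
from which the natural isomorphism $\ker(f^\vee)\cong(\ker f)^\vee$ is read off directly. The only real point that needs care, and which I view as the main potential obstacle, is the verification that the connecting map $\ihom_S(\ker f,\G_m)\to \iext^1_S(B,\G_m)$ in the long exact sequence matches, under the Weil--Barsotti identification, the inclusion of $(\ker f)^\vee$ into $B^\vee$ as a subgroup scheme; this is a naturality/compatibility check one carries out by unwinding the Yoneda $\Ext$ description of both maps, using that the extension representing the image of $\chi\in(\ker f)^\vee$ is precisely the pushout of $0\to \ker f\to A\to B\to 0$ along $\chi:\ker f\to \G_m$. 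Everything else is routine once the long exact sequence is in place.
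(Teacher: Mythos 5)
Your argument is correct and is essentially the standard proof of the duality theorem, the same one found in the reference that the paper cites (Oort, \emph{Commutative Group Schemes}, Theorem~19.1): one applies $\iext^{\bullet}_S(-,\G_m)$ to the fppf-exact sequence $0\to\ker f\to A\to B\to 0$, uses the vanishing $\ihom_S(A,\G_m)=\ihom_S(B,\G_m)=0$ (properness of abelian schemes against the affineness of $\G_m$), the Weil--Barsotti identification $\iext^1_S(A,\G_m)\cong A^\vee$, and the vanishing $\iext^1_S(\ker f,\G_m)=0$ for finite flat $\ker f$ from \cref{1.2.5}(1). The compatibility check you flag at the end is exactly the right one to spell out, and the pushout description resolves it. Since the paper's proof consists only of the citation, your blind argument fills in precisely the expected details.
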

\begin{proof}
    See see \cite[Theorem 19.1]{oort2006commutative}
\end{proof}
\begin{prop}{\cite[Exp. X]{artin2006schemas}}\label{Appendix isotrivial lattice torus}
Let $R$ be a henselian local ring with residue field $k$.
\begin{enumerate}
    \item Every $R$-lattice ($R$-torus, resp.) is an isotrivial $R$-lattice ($R$-torus, resp.).
    \item The special fibre functor $G\mapsto G\times_{\Spec R}\Spec k$ induces an equivalence between the category of Galois $R$-modules ($R$-tori reps.) and the category of Galois $k$-modules ($k$-tori resp.).
    \item The functor $G\mapsto G(k^{sep})$ induces an equivalence between the category of Galois $R$-modules (finite \'etale group schemes resp.) and the category of finitely-generated free $\Z$-modules (finite abelian groups resp.) with continuous $G_k$-action.
\end{enumerate}
\end{prop}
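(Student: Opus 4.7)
The strategy is to reduce all three statements to the theorem, already cited earlier in this appendix, that over a henselian local ring $R$ with residue field $k$ the special fibre functor induces an equivalence between the category of finite étale $R$-schemes and the category of finite étale $k$-schemes. Parts (2) and (3) will then essentially be Galois descent, while the content of part (1) is that ``fpqc-local form'' can be upgraded to ``trivializable by a single finite étale cover'' over a henselian base. I would treat (1) first, as (2) and (3) rest on it.

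For part (1), let $L$ be an $R$-lattice of rank $r$, so by definition $L$ is an fpqc-local form of $\underline{\Z^r}_R$. Consider the sheaf $\sI:=\underline{\mathrm{Isom}}(\underline{\Z^r}_R, L)$ on $R\fppf$; this is a torsor under $\underline{\mathrm{Aut}}(\underline{\Z^r}_R)=\underline{GL_r(\Z)}_R$, which is the constant group scheme associated to the discrete group $GL_r(\Z)$ and is therefore (ind-)étale. Because the structure group is ind-étale, the torsor $\sI$ is itself representable by an ind-étale $R$-scheme, and in particular becomes trivial after some étale cover (so $L$ is already quasi-isotrivial). Now the crucial input: since $\sI$ is fpqc-locally trivial and covered étale-locally by translates of a constant finite-index subgroup $H\subset GL_r(\Z)$ (pick a basis at a geometric point and take the stabilizer), we may assume it is a torsor under a finite constant group $\underline{H}_R$. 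By the equivalence between finite étale $R$-schemes and finite étale $k$-schemes, this finite étale torsor descends to $k$, is trivialized by a finite Galois cover $k'/k$, and hence lifts uniquely to a finite étale cover $R'/R$ trivializing $L$ — exactly the definition of isotriviality. For $R$-tori the same argument applies with $\underline{\mathrm{Aut}}(\G_m^r)=\underline{GL_r(\Z)}_R$, which has the same discrete automorphism sheaf; alternatively, one invokes Cartier duality (already shown to exchange tori and lattices) to transfer the result.

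For part (2), once $L$ (resp.\ $T$) is isotrivial, it is determined by the finite Galois cover $R'/R$ together with a descent datum, i.e., an action of $\Gal(R'/R)$ on the trivialized object $\underline{\Z^r}_{R'}$ (resp.\ $\G_{m,R'}^r$) by automorphisms of group schemes. Under the henselian lifting equivalence, $R'/R$ corresponds to a finite Galois cover $k'/k$ with canonically identified Galois groups, and the descent datum on $R'$ corresponds under $(-)\times_R k$ to the analogous descent datum on $k'$. Thus the special fibre functor is fully faithful and essentially surjective on Galois $R$-modules/tori. For part (3), take the composition with the functor ``Galois module $\mapsto$ underlying free $\Z$-module with continuous $G_k$-action''. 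On the residue-field side this is a classical equivalence: a Galois $k$-module (resp.\ finite étale $k$-group scheme) is the same datum as a finitely generated free $\Z$-module (resp.\ finite abelian group) equipped with a continuous action of $G_k=\Gal(k^{sep}/k)$ that factors through a finite quotient, recovered from the set $G(k^{sep})$.

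The main obstacle is the upgrade step in part (1): moving from ``fpqc-locally isomorphic to $\underline{\Z^r}$'' to ``trivialized by a single finite étale cover''. The étale step follows because $\underline{\mathrm{Aut}}(\underline{\Z^r}_R)$ is étale, so $\sI$ is an étale sheaf of sets and automatically étale-locally trivial. The passage from ``étale-locally'' to ``finite-étale'' then requires both the henselian hypothesis (to lift Galois covers from $k$) and a reduction of structure group to a finite subgroup $H\subset GL_r(\Z)$; the latter is the only subtle point, and it is handled by choosing a basis at the closed point and observing that all fibres of $L$ have the same finite stabilizer in $GL_r(\Z)$ relative to the chosen basis, by connectedness of $\Spec R$ and continuity of the Galois action. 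With this finite reduction in hand, the henselian equivalence of finite étale covers closes the argument, and parts (2) and (3) then follow formally.
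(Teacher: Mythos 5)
The paper gives no proof of this proposition; it simply cites \cite[Exp.\ X]{artin2006schemas}, so there is no in-house argument to compare against. Your reconstruction follows the standard route — reduce everything to the henselian equivalence of finite \'etale $R$-schemes and finite \'etale $k$-schemes already quoted earlier in the appendix, plus Galois descent — and in outline this is exactly how the result is proved in SGA~3. Two minor remarks on the setup: the paper's definition of an $S$-lattice already asks for \'etale-local (not fpqc-local) triviality, so your first ``upgrade to \'etale'' observation is unnecessary for lattices (it is needed for tori, or one uses Cartier duality as you suggest, which is cleaner given that the anti-equivalence is already stated in the appendix).

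The one step that, as written, does not actually work is the finiteness of the structure group. You justify passing from a $\underline{GL_r(\Z)}_R$-torsor to an $\underline{H}_R$-torsor with $H$ finite by ``choosing a basis at the closed point and observing that all fibres of $L$ have the same finite stabilizer in $GL_r(\Z)$.'' But the stabilizer of a basis of $\Z^r$ in $GL_r(\Z)$ is the trivial group, not a finite subgroup that controls the torsor, and ``stabilizer of a fibre'' does not pick out the relevant $H$ either. The correct argument is the continuity argument you gesture at but do not quite land on: $\Spec R$ is connected (local), so the $\underline{GL_r(\Z)}_R$-torsor $\sI$ is classified, after a choice of geometric basepoint, by a continuous homomorphism $\rho\colon\pi_1^{\mathrm{\acute et}}(\Spec R)\to GL_r(\Z)$; since the source is profinite and the target discrete, $\rho$ has finite image $H$, and $\sI$ is induced from an $\underline{H}_R$-torsor, i.e.\ a finite \'etale $R$-scheme. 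Now invoke $\pi_1^{\mathrm{\acute et}}(\Spec R)\cong\pi_1^{\mathrm{\acute et}}(\Spec k)=G_k$ (the henselian equivalence of finite \'etale covers) and you have both the isotriviality in part~(1) and the identification of descent data needed for parts~(2) and~(3). With that replacement the proof is sound.
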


\begin{prop}
Let $G$ be either an $S$-torus, an $S$-lattice, or an abelian $S$-scheme.
\begin{enumerate}
    \item The multiplication by $n$ map $[n]\colon G\to G$ is finite and faithfully flat. Its kernel $G[n]$ is a finite flat group scheme over $S$.
    \item If $n$ is coprime to the characteristics of all residue fields of $S$, then $A[n]$ is \'etale over $S$.
\end{enumerate}
\end{prop}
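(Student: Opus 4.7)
The plan is to reduce each of the three cases to an explicit local model: fpqc descent takes care of tori, \'etale descent handles lattices, and the abelian scheme case is essentially quoted from the result cited just before the statement. Finiteness, flatness, and the formation of the kernel all behave well under such descent, so working with the local models suffices; the only care needed is to recognize that, for lattices, $[n]$ is a closed immersion with trivial kernel rather than a genuine isogeny in the sense of being faithfully flat.

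For a torus, it suffices to treat $[n]\colon \G_m\to\G_m$, which on coordinate rings is the map $R[t,t^{-1}]\to R[t,t^{-1}]$, $t\mapsto t^n$; the target is a free module of rank $n$ over the source with basis $\{1,t,\ldots,t^{n-1}\}$, so $[n]$ is finite and faithfully flat, with kernel $\mu_{n,R}=\Spec R[t]/(t^n-1)$ a finite flat group scheme. For a lattice, \'etale-locally $L\simeq \underline{\Z}_{S'}^{\,r}$, and $[n]$ is a closed immersion whose kernel is the zero section, which is trivially a finite flat group scheme. For an abelian scheme $A$, the proposition cited immediately above the statement already provides the claim: $[n]$ is an isogeny of degree $n^{2g}$, hence finite and faithfully flat with finite flat kernel $A[n]$.

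For (2), since $G[n]$ is already finite flat, it only remains to verify unramifiedness, which can be checked fibrewise. On a fibre over $s\in S$, the tangent map of $[n]\colon G_s\to G_s$ at the identity is multiplication by $n$ on $\Lie(G_s)$. Under the hypothesis that $n$ is coprime to $\operatorname{char}(k(s))$, this map is an isomorphism, so $\Lie(G_s[n])=0$; by translation-invariance of differentials on a group scheme, $\Omega_{G_s[n]/k(s)}=0$, and combined with finiteness and flatness this forces $G_s[n]$ to be finite \'etale over $k(s)$. Hence $G[n]$ is \'etale over $S$. The lattice case is degenerate, since $L[n]=0$, so nothing further needs to be checked there. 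The main conceptual point — and essentially the only thing that is nontrivial beyond bookkeeping — is the passage from trivial Lie algebra to \'etaleness on the fibre, which is where the coprimality assumption is genuinely used.
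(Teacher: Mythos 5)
The paper does not supply an argument here: its ``proof'' is only a citation to SGA~3 (\cite{artin2006schemas}) and to Milne, \emph{Arithmetic Duality Theorems}, Theorem~8.2. Your spelled-out proof is therefore a genuine addition rather than a parallel route, and it is essentially correct. The reduction by fpqc (resp.\ \'etale) descent to the explicit models $\G_m^r$ (resp.\ $\underline{\Z}^r_S$), together with quoting the preceding proposition for abelian schemes, is the standard way to discharge part~(1). The computation for $\G_m$, with $t\mapsto t^n$ exhibiting the target coordinate ring as free of rank $n$ with basis $\{1,t,\ldots,t^{n-1}\}$ and kernel $\mu_n$, is exactly right, and the fibrewise tangent-space argument for part~(2) --- $\Lie([n])$ is multiplication by $n$, invertible by hypothesis, so $\Lie(G_s[n])=0$, hence by translation-invariance $\Omega_{G_s[n]/k(s)}=0$, hence unramified, and with finiteness and flatness, \'etale --- is the textbook proof.

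One remark worth making explicit. You are right to flag the lattice case as degenerate, and in fact you are being more careful than the statement itself: for $L\cong\underline{\Z}^r_S$ the map $[n]$ is a finite monomorphism (an isomorphism from each connected component onto a different component) that is \emph{not} surjective, hence not faithfully flat. The literal claim ``$[n]$ is finite and faithfully flat'' thus fails for lattices; what survives, and what you prove, is the claim about the kernel: $L[n]=0$ is trivially a finite flat (indeed \'etale) group scheme over $S$. If you wanted to phrase this without an exception, you could restrict the faithful-flatness claim to the torus and abelian-scheme cases, or formulate the lattice part as: $[n]$ is a closed and open immersion with trivial kernel.
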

\begin{proof}
\cite{artin2006schemas}, \cite[Theorem 8.2]{milne1986arithmetic}
\end{proof}

\begin{thm}[Structure theory]\label{Structure theory of algebraic groups}
Let $K$ be a prefect field and $G$ an algebraic group over $K$. There is an exact sequence of algebraic groups 
$$0\to L\to G \to A\to 0,$$
where $L$ is a smooth linear algebraic group and $A$ an abelian variety. Assume that $G$ is commutative. Then $L$ is commutative and $L\cong V\times T$ with $V$ a unipotent group and $T$ a torus. If characteristic of $K$ is $0$, then $V$ is a vector group.
\end{thm}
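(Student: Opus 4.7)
The statement is the classical Chevalley structure theorem together with the standard decomposition of a commutative linear algebraic group. The plan is to establish the two halves separately and then glue.

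For the first assertion, the plan is to invoke Chevalley's theorem directly: over a perfect field $K$, any connected algebraic group $G$ admits a unique maximal connected affine (equivalently, linear) normal subgroup $L$, and the quotient $A := G/L$ exists in the category of algebraic groups and is an abelian variety. The perfectness of $K$ is essential so that $L$ is geometrically reduced, hence smooth. If $G$ is not connected, one first passes to the identity component $G^0$, applies Chevalley to obtain $0 \to L \to G^0 \to A \to 0$, and notes that the component group $G/G^0$ is \'etale so can be absorbed into $L$ after replacing $G$ by $G^0$ (or, in the commutative case we ultimately care about, the component group is a finite \'etale commutative group and is itself linear, so it can be incorporated into $L$). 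The main content here is Chevalley's theorem, which I would simply cite from Conrad's exposition or from \cite{milne2017algebraic}; I would not reprove it.

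For the second assertion, assume $G$ is commutative. Then $L$ is a connected commutative smooth affine group over the perfect field $K$. The plan is to use the standard structure theory of commutative linear algebraic groups: the subgroup $G_s \subseteq L$ of semisimple elements and the subgroup $G_u \subseteq L$ of unipotent elements are both closed algebraic subgroups, and commutativity of $L$ forces the multiplication map $G_s \times G_u \to L$ to be a morphism of algebraic groups; the Jordan decomposition then shows it is an isomorphism. Setting $T := G_s$ (a torus, since it is a connected commutative group of semisimple elements over a perfect field) and $V := G_u$ (a connected commutative unipotent group), one obtains $L \cong T \times V$.

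Finally, for the characteristic zero statement, the plan is to use that over a field of characteristic $0$ every connected commutative unipotent group is isomorphic to $\mathbb{G}_a^n$ as an algebraic group, via the exponential/logarithm maps between the Lie algebra and the group (both exist as polynomial maps because the nilpotent Lie bracket makes the Baker--Campbell--Hausdorff series terminate). Hence $V$ is a vector group. The main potential obstacle is subtlety in bad characteristic for the $L \cong V \times T$ decomposition: over a non-perfect field this can fail (wound unipotent groups appear), which is exactly why the perfectness hypothesis on $K$ is used again here. Since we assume $K$ perfect throughout, no further work is needed; I would cite \cite[Thm.~16.13]{milne2017algebraic} or the corresponding result in \cite{demazure2006lectures} for the decomposition, and \cite[\S 14]{milne2017algebraic} for the identification of unipotent commutative groups with vector groups in characteristic $0$.
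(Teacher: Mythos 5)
Your proof takes essentially the same route as the paper: both cite Chevalley's theorem (the paper references Conrad and Milne for this) for the existence of the exact sequence, and both cite the classical decomposition of a commutative linear group into torus times unipotent (the paper points to Serre's \emph{Algebraic Groups and Class Fields}, Ch.~III Prop.~12 and Ch.~VII \S 2.7); your sketch of the second step via the Jordan decomposition $L \cong G_s \times G_u$ is precisely the content of those citations.

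One caution on your handling of the non-connected case. You assert that for commutative $G$ the component group $\pi_0(G)$ ``is itself linear, so it can be incorporated into $L$.'' This is not always possible: writing $0 \to L^0 \to G^0 \to A \to 0$ for Chevalley's sequence for $G^0$, incorporating $\pi_0$ into a smooth affine $L$ with $L \cap G^0 = L^0$ and $G/L \cong A$ requires the extension $0 \to A \to G/L^0 \to \pi_0 \to 0$ to split, which can fail (e.g. a nonsplit extension of $\Z/n\Z$ by an elliptic curve, classified by $E(K)/nE(K)$, admits no affine subgroup $L$ with $G/L$ an abelian variety). Chevalley's theorem, and hence the statement as literally written, requires $G$ connected. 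The paper's proof shares the same implicit restriction and does not flag it; this is harmless here because the theorem is invoked only for connected groups (the $H_i$ produced by the p-adic analytic subgroup theorem in \cref{general reformulation of p-adic analytic subgroup theroem for algebraic groups} are connected by construction) and, more globally, the paper works in the isogeny category of commutative groups where finite component groups vanish. So the safer fix is not ``absorb $\pi_0$ into $L$'' but ``assume $G$ connected,'' which is what the application supplies.
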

\begin{proof}
See \cite{conrad2002modern}, or \cite{milne2013proof} for exactness of the sequence.\\
By \cite[Ch. III Proposition 12]{serrealgebraicgroupsandclaasfields} we have $L$ is the product of a unipotent group and a torus. By \cite[Ch. VII \S 2.7]{serrealgebraicgroupsandclaasfields}, in characteristic $0$ all unipotents are vector groups.
\end{proof}

From now on, by a finite flat group scheme, we mean an affine commutative finite flat group scheme over an affine base, unless we specify otherwise.
\begin{prop}
Assume that $G$ is a finite flat group scheme over $S=\Spec R$ of order $n$.
    \begin{enumerate}
        \item If the order of $G$ is invertible in $R$, then $G$ is \'etale. In particular, if $R$ is a field of characteristic $0$, then every finite flat group scheme over $R$ is \'etale.
        \item(Deligne) The multiplication by $n$ map $[n]\colon G\to G$ factors through unit section $\Spec R\to G$ i.e. $[n]$ annihilates $G$. 
    \end{enumerate}
\end{prop}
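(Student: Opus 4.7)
The plan is to prove (2) first, since it will serve as the engine for (1). For Deligne's theorem, reduce immediately to the case $S=\Spec R$ with $R$ local, and write $G=\Spec A$ where $A$ is a cocommutative Hopf $R$-algebra, locally free of rank $n$. Let $\epsilon,\mu,\iota$ denote counit, comultiplication, antipode. The map $[n]_G$ corresponds to $a\mapsto m^{(n-1)}\circ\mu^{(n-1)}(a)$, and the goal is to identify this with $a\mapsto \epsilon(a)\cdot 1_A$. The key is the regular representation $\rho:G\to \underline{\operatorname{GL}}(A)$ obtained from right translation (here we use that $A$ is locally free of rank $n$, so $\underline{\operatorname{GL}}(A)$ makes sense as a group scheme). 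Taking determinant yields a character $\chi=\det\rho:G\to\G_m$. On $T$-points $g\in G(T)$, left multiplication by $g$ permutes $G_T$ freely, and one computes that $\chi(g)^{-1}$ equals the product $g^n$ evaluated against the one-dimensional module $\det A$; combined with the fact that $\chi$ is itself a character of a group of order $n$ and hence satisfies $\chi^n=1$ by Cartier-dual considerations, this forces $g^n=1$ on $T$-points universally. Translating back to Hopf algebras gives $[n]^*=\epsilon(\cdot)\cdot 1_A$.

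For (1), use that being \'etale is a local property that for a finite locally free morphism can be checked on geometric fibres (since flatness is already granted). Reduce therefore to $S=\Spec k$ with $k$ algebraically closed and $n\in k^\times$. By (2) we have $[n]_G=0$. On the other hand the differential of $[n]_G$ at the identity is multiplication by $n$ on $\fg:=\Lie(G)$, since $[n]$ arises from iterated addition in the commutative group law. Thus $n\cdot\fg=0$, and as $n\in k^\times$ we conclude $\fg=0$. By translation invariance $\Omega_{G/k}=\cO_G\otimes_k\fg\ve=0$, so $G$ is unramified over $k$, and being finite flat and unramified is \'etale. The special case $\operatorname{char}(R)=0$ follows because any finite flat group scheme has some order $n\geq 1$, automatically invertible in a characteristic-zero field.

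The main obstacle is Deligne's theorem itself. The Lie-algebra argument reducing (1) to (2) is essentially formal, and the fibral reduction for \'etaleness is standard. By contrast, the identification of the determinant of the regular representation with the multiplication-by-$n$ map has to be done carefully at the Hopf-algebra level and has to be carried out integrally over $R$, not just over a field; one must verify that the rank-one $R$-module $\det A$ behaves correctly under the $G$-action so that the universal character $\chi$ detects $g^n$. This is exactly where a direct computation would be unavoidable, and it is the only place where the hypothesis that $G$ is commutative is genuinely used.
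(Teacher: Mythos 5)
Your argument for part (1) is sound. Once $[n]_G$ is known to be the constant map to the identity, its differential at $e$ vanishes; that differential is multiplication by $n$ on $\Lie(G)$ (commutativity of $G$ is needed here so that $[n]$ is a homomorphism), so invertibility of $n$ forces $\Lie(G)=0$, translation invariance of $\Omega_{G/k}$ then gives $\Omega_{G/k}=0$, and a finite flat unramified morphism is \'etale. The reduction to geometric fibres is also standard. For comparison: the paper simply states this proposition in the appendix without any proof and without even a citation attached to this particular item, so there is no proof of record to measure against.

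Your sketch of (2), however, has two genuine gaps, and they are not cosmetic.

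The first concerns the identity you claim to compute. The determinant $\chi(g) = \det(\tau_g)$ of the translation operator on $A$ does not see $g^n$ at all. Already for a constant group $\underline{\Z/m\Z}_R$, one has $A = \prod_{h} R$ with translation permuting the idempotent basis, so $\det(\tau_g)$ is the sign of the permutation $\ell_g$: a $\{\pm 1\}$-valued character unrelated to the $m$-th power map. Likewise for $\mu_m$ over $\C$ with $g(x) = \zeta$ a primitive root, $\tau_g(x^i) = \zeta^i x^i$, hence $\det(\tau_g) = \zeta^{m(m-1)/2}$, which is $1$ for $m$ odd and $-1$ for $m$ even; it depends only on the parity of $m$, not on $\zeta^m$. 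The McCoy/Lagrange trick you are invoking multiplies all the elements of $G$ together, and the determinant of the translation representation captures the sign of the permutation, not the product of the group elements.

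The second gap is the step that "$\chi$ is a character of a group of order $n$ and hence $\chi^n = 1$ by Cartier-dual considerations." A character $\chi\colon G\to\G_m$ is by definition a point of the Cartier dual $G\ve$, which also has order $n$; asserting $\chi^n=1$ is therefore literally Deligne's theorem applied to $G\ve$. There is no independent Lagrange input available for group schemes, because Deligne's theorem is exactly what supplies Lagrange in this setting. As written, the argument assumes what it sets out to prove.

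To make a McCoy-style argument go through scheme-theoretically, the object one actually needs is a transfer (corestriction) homomorphism $N\colon G(A)\to G(R)$ along the degree-$n$ finite flat extension $A/R$ where $A$ is the coordinate ring of $G$ itself. For $G=\G_m$ this $N$ is the usual norm $a\mapsto\det(m_a)$, the determinant of the multiplication operator, which is a different object from the determinant of the translation operator. Applying $N$ to the universal point $u=\mathrm{id}_A\in G(A)$ and to its translate $\tau_g = g * u$ gives $N(\tau_g) = g^n * N(u)$, while $N(\tau_g)=N(u)$ because the transfer is invariant under precomposition with $R$-algebra automorphisms of $A$; combining yields $g^n = e$. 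Constructing such a transfer for an arbitrary commutative affine $G$ (via Weil restriction and norms, or equivalently via the theory of integrals in the dual Hopf algebra $A\ve$) is the genuinely nontrivial content of Deligne's argument, and it is also precisely where commutativity is indispensable. The references the paper cites nearby (Tate's finite flat group schemes article, Stix's course notes) carry this out carefully.
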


\begin{thm}[The connected-\'etale sequence]\label{connected-etale exact sequence for finite flat}
Let $R$ be a henselian local ring with residue field $k$ and $G$ a finite flat group scheme over $R$.
\begin{enumerate}
    \item $G$ is \'etale (connected resp.) if and only if its special fibre is \'etale (connected resp.).
    \item $G$ is connected if and only if it is a spectrum of a henselian local finite $R$-algebra.
    \item We denote by $G^0$ the connected component of the unit section. It is closed normal subgroup scheme of $G$.
    \item The quotient $G^{\'et}:=G/G^0$ is finite \'etale $R$-group scheme and we have the exact sequence
    $$0\to G^0\to G\to G^{\'et}\to 0 $$
    called the connected-\'etale sequence for $G$. \item Every group homomorphism $G\to H$ of \'etale finite $R$-group schemes factors through $G\to G^{\'et}$.
    \item Every group homomorphism $H\to G$ of connected finite flat $R$-group schemes factors through $G^{0}\to G$.
    \item If $R=k$ is a prefect field, then the connected-\'etale sequence splits canonically.
    \item The functor $(.)^0$ and $(.)^{\'et}$ are exact.
    \item $G$ is connected (\'etale resp.) if and only if $G(\bar{k})=0$ ($G^0=0$ resp.).
    \item An extension of a connected (\'etale resp.) finite flat $R$-group scheme by a connected (\'etale resp.) finite flat $R$-group scheme is connected (\'etale resp.).
\end{enumerate}
See \cite{tate1997finite} , \cite{pink2004finite}, or\cite{stix2009course}.
\end{thm}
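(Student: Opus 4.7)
The plan is to exploit the structure of finite algebras over a henselian local ring together with rigidity of étale morphisms.

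First I would fix notation: write $G = \Spec A$ with $A$ a finite free $R$-algebra, and note that because $R$ is henselian, $A$ decomposes canonically as a finite product $A \cong \prod_i A_i$ of local henselian finite $R$-algebras. The factor $A_0$ corresponding to the augmentation ideal (i.e.\ containing the kernel of the counit $e^\sharp : A \to R$) defines $G^0 := \Spec A_0$, and the remaining factors reassemble into a finite \'etale $R$-algebra $A^{\text{\'et}}$, giving a closed immersion $G^0 \hookrightarrow G$ and a morphism $G \to G^{\text{\'et}} := \Spec A^{\text{\'et}}$. Statements (1) and (2) follow almost immediately from this: a finite flat $R$-algebra is local iff its reduction is local (Nakayama plus henselianness), and \'etaleness reduces to the fibre by the infinitesimal lifting criterion.

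For (3), (4), and (10), the content is that $G^0$ is a subgroup scheme. I would deduce this from the fact that the multiplication $m : G \times_R G \to G$ pulled back along $G^0 \times_R G^0$ lands in $G^0$: the fibre product $G^0 \times_R G^0$ is connected (because both factors are spectra of henselian local finite $R$-algebras, and tensor product of two such is again local and henselian by completion/finiteness), it contains the identity section, and its image under $m$ must therefore lie in the connected component of $G$ containing $e$, namely $G^0$. The same argument handles inversion, proving $G^0 \lhd G$. Normality is automatic from commutativity if we assume the commutative setting; otherwise it uses that conjugation is a morphism of connected schemes preserving the identity. Exactness of the quotient sequence then falls out because $G^{\text{\'et}} = G/G^0$ is the étale quotient obtained by inverting the local factor. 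The extension statement (10) is a consequence: for finite flat $H_1 \to H \to H_2$, apply $(-)^0$ and $(-)^{\text{\'et}}$ and use the fibrewise characterisation.

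Statements (5) and (6) are rigidity arguments: any morphism $\phi : G \to H$ of finite flat $R$-group schemes sends $G^0$ into $H^0$ (image of a connected scheme with distinguished basepoint), hence if $H$ is étale, $\phi|_{G^0}$ is constant equal to $e$, so $\phi$ factors through $G^{\text{\'et}}$; symmetrically for (6). Statement (8), exactness of $(-)^0$ and $(-)^{\text{\'et}}$, then follows from the snake lemma applied to the connected-\'etale sequence, combined with the observation that $(-)^0$ preserves injections (closed immersions of connected parts) and $(-)^{\text{\'et}}$ preserves surjections (quotients of étale parts remain étale). Statement (9) is immediate from $G^0(\bar k) = \{e\}$ and $G^{\text{\'et}}(\bar k) = G(\bar k)$. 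For (7), the canonical splitting over a perfect field $k$: I would use that the extension $0 \to G^0 \to G \to G^{\text{\'et}} \to 0$ is split by the unique lifting of the Galois-set $G^{\text{\'et}}(\bar k) = G(\bar k)$ through the perfect residue field, giving a section which is a group homomorphism by functoriality of $(-)^{\text{\'et}}$.

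The main obstacle will be verifying carefully that $G^0 \times_R G^0$ is genuinely connected so that $m$ restricts to $G^0$; this is where the henselian hypothesis is doing the real work, via the statement that a tensor product of two henselian local finite $R$-algebras is again henselian and local. Once this step is secured, everything else is a formal consequence of functoriality, the infinitesimal lifting criterion for étaleness, and the five lemma.
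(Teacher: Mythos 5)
The paper does not prove this theorem — it simply cites \cite{tate1997finite}, \cite{pink2004finite}, and \cite{stix2009course} — so your proposal can only be judged on its own merits, and there is a genuine gap at exactly the point you yourself identified as ``the main obstacle.''

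Your claim that the tensor product of two henselian local finite $R$-algebras is again local is false. Take $R=\R$ (a field, hence henselian local) and $A_0=B_0=\C$: this is a finite local henselian $R$-algebra, but $\C\otimes_{\R}\C\cong\C\times\C$ is not local. The parenthetical ``by completion/finiteness'' does not rescue this, because the obstruction has nothing to do with completion; it comes from the residue field of $A_0$ being strictly larger than $k$. What actually saves the argument in the group-scheme case is an additional fact you did not record: the counit $e^\sharp:A\to R$ factors through the local factor $A_0$, and since the preimage of $\fm_R$ under the surjection $A_0\onto R$ is a maximal ideal of the local ring $A_0$, one gets $\kappa(A_0)=k$. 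It is precisely this equality of residue fields — not mere locality and henselianness — that makes $A_0\otimes_R A_0$ local: reducing mod $\fm_R$ gives $\bar{A}_0\otimes_k\bar{A}_0$ with $\bar{A}_0$ a local Artinian $k$-algebra whose residue field is $k$, so $\fm\otimes\bar{A}_0+\bar{A}_0\otimes\fm$ is a nilpotent ideal with quotient $k\otimes_k k=k$. Without the residue-field observation the connectedness of $G^0\times_R G^0$ does not follow, and your argument that $m$ restricts to $G^0$ is incomplete. A secondary, lesser issue: your sketch of (7) appeals to ``the unique lifting of the Galois-set $G^{\et}(\bar k)$ through the perfect residue field,'' which does not by itself yield a morphism of \emph{schemes}; the standard route is to show that over a perfect field $G_{\red}$ is a closed subgroup scheme (because $(G\times G)_{\red}=G_{\red}\times G_{\red}$ when $k$ is perfect) and that $G_{\red}\to G^{\et}$ is an isomorphism. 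The remaining items (1), (2), (5), (6), (8), (9), (10) follow essentially as you describe once (3)–(4) are secured.
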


\subsection{Lie module}

Let $G$ be a group scheme over $S$. The Lie functor $\underline{\Lie}_{G}$ or more precisely $\underline{\Lie}_{G/S}$ is the sheaf of $\cO_S$-module of left invariant vector fields on $G$ i.e. for any $S'$, $\underline{\Lie}_G(S')$ is the elements in $\Der_S(\cO_G(S'),\cO_G(S'))=\Hom_{\cO_G}(\Omega_{G/S}(S'),\cO_G(S'))$ which are left invariant.

If $S'=\Spec(R)$ is an affine base scheme over $S$, we have the canonical identification
$$\underline{\Lie}_{G}(R)=\ker(G(R[\epsilon])\to G(R))=\Hom_{R}(e^*\Omega_{G/R}(G),R)$$
where $e$ is just the unit section $e\colon S\to G$ and $R[\epsilon]=R[x]/(x^2)$ is the dual number over $R$.

We denote $\underline{\Lie}_G(S)$ by $\Lie(G)$ and it is called the Lie algebra of $G$. $\Lie(G)$ is exactly the tangent space of $G$ at $e$ i.e. $\Lie(G)=\Tang_{G,e}$. Over an affine base $S=\Spec R$, the co-Lie algebra of $G$ is the dual $R$-algebra $\Lie\ve(G):=\Hom_R(\Lie(G),R)$.

Assume that $G=\Spec(A)$ is an affine commutative group scheme over an affine base $S=\Spec(R)$. We have $\Omega_{G/S}=\Omega_{A/R}\cong J/J^2$, where $J$ is the kernel of multiplication map $A\otimes_R A\to A,\, a\otimes b\mapsto ab$. We define the augmentation ideal of $G$ to be the kernel of the counit i.e. $I:=\ker(\epsilon:A\to R)$. We have $A\cong R\oplus I$ as an $R$-module. We can write the following isomorphisms $$J\cong A\otimes_R I,\,\, J/J^2\cong A\otimes_R I/I^2\cong \Omega_{A/R}$$
So, $\Lie(G)=\Hom_R(\Omega_{A/R},R)=\Hom_R(A\otimes_R I/I^2,R)$.

\section{Relative Frobenius}\label{sec: relative Frobenius}
Let $S$ be a scheme in characteristic $p$. $S$ is equipped with the absolute Frobenius morphism, denoted by $\sigma_S:S\to S$, which is identity map on the underlying topology of $X$ with $\cO_S\to\cO_S$ given by $x\mapsto x^p$. The absolute Frobenius map is integral and purely inseparable.
\begin{defn}\label{Frobenius twist}
Let $S$ be a scheme in characteristic $p$ and $f\colon G\to S$ a morphism. We define $G^{(p)}:=G\times_{S,\sigma_S}S$ viewed as a scheme over $S$. Looking at the below diagram, there is a unique morphism $F_{G/S}\colon G\to G^{(p)}$ over $S$ making the diagram commute.

\begin{equation*}
\begin{tikzcd}
G \arrow[rrd, "\sigma_G", bend left] \arrow[rdd, "f", bend right] \arrow[rd, "F_{G/S}", dashed] &                           &                  \\
                                                                                                & G^{(p)} \arrow[r] \arrow[d] & G \arrow[d, "f"] \\
                                                                                                & S \arrow[r, "\sigma_S"]   & S               
\end{tikzcd}
\end{equation*}
The morphism $F_{G/S}$ is called the relative Frobenius of $G/S$ and when the base is known, it is denoted by $F_G$.

Assume that $G=\Spec A$ is affine scheme over affine base scheme $S=\Spec R$. Then the relative Frobenius $F_G\colon G\to G^{(p)}$ is given by $(\sigma_G,f)$ which is induced by $A\otimes_{F_A,R}R\to A,\, a\otimes r\mapsto a^pr$. 

The assignment $X\mapsto X^{(p)}$ is a base change functor for the absolute Frobenius map $\sigma_S\colon S\to S$. Inductively, we can define $G^{(p^n)}:=(G^{(p^{n-1})})^{(p)}$ and $F^n_G=F_{G^{(p^{n-1})}}\circ F^{n-1}_G$ for any $n\geq 2$. The scheme $G^{(p^n)}$ is $n$-th Frobenius twist.
\end{defn}

\begin{prop}
\begin{enumerate}
    \item  The relative Frobenius $F_G:G\to G^{(p)}$ is integral and purely inseparable (\cite[\href{https://stacks.math.columbia.edu/tag/0CCB}{Lemma 0CCB}]{stacks-project}).
\item $\Omega_{G/S}=\Omega_{G/G^{(p)}}$. (\cite[\href{https://stacks.math.columbia.edu/tag/0CCC}{Lemma 0CCC}]{stacks-project}).
\item If $G\to S$ is locally finite, then $F_{G}\colon G\to G^{(p)}$ is finite (\cite[\href{https://stacks.math.columbia.edu/tag/0CCD}{Lemma 0CCD}]{stacks-project}).
\item If $G$ is a finite flat group scheme over $S$, then $G^{(p^{n})}$ is a finite flat group scheme over $S$ and the relative Frobenius map $F^n_G$ is a homomorphism. 
\end{enumerate}
\end{prop}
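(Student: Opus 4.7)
The plan is to handle (1)--(3) via short local arguments (these are the cited Stacks lemmas) and then spend most of the effort on (4), which is the only statement with real content beyond what is in the references.

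For (1), I would pass to an affine chart $\Spec A \to \Spec R$, so that $F_G$ corresponds to the ring map $A \otimes_{R,\sigma_R} R \to A$, $a \otimes r \mapsto a^p r$. Every $a \in A$ is a root of the monic polynomial $T^p - (a \otimes 1)$ over the source, which simultaneously gives integrality and shows that each residue field extension is generated by $p$-th roots, hence purely inseparable. For (2), I would invoke the relative cotangent sequence
\[
F_G^{*}\Omega_{G^{(p)}/S} \longrightarrow \Omega_{G/S} \longrightarrow \Omega_{G/G^{(p)}} \longrightarrow 0
\]
and observe that the leftmost arrow vanishes, because locally $F_G^{\#}(x) = x^p$ and $d(x^p)=px^{p-1}dx=0$ in characteristic $p$; the map $\Omega_{G/S} \to \Omega_{G/G^{(p)}}$ is therefore surjective, and injectivity is automatic since $F_G$ is a universal homeomorphism and the two sheaves differ only through the choice of base. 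For (3), $F_G$ is locally of finite type (because $G\to S$ is, and $G^{(p)} \to S$ is a base change of this), and any integral morphism that is locally of finite type is finite; combining with (1) gives the claim.

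For (4), the first assertion is formal: by definition $G^{(p)} = G \times_{S,\sigma_S} S$, so $G \mapsto G^{(p)}$ is a base change along $\sigma_S \colon S \to S$, and finite flat morphisms are stable under base change; iterating yields that $G^{(p^n)} \to S$ is finite flat. For the homomorphism property of $F^n_G$, the key input is the naturality of the absolute Frobenius: for every morphism $f\colon X \to Y$ of $\F_p$-schemes, $\sigma_Y \circ f = f \circ \sigma_X$. Applying this to the structure morphisms $m\colon G\times_S G \to G$, $i\colon G\to G$, $e\colon S\to G$ shows that $\sigma_G$ is compatible with the group law. Using the canonical identification $(G\times_S G)^{(p)} \cong G^{(p)} \times_S G^{(p)}$ (commutation of fiber product with base change along $\sigma_S$), one descends along the universal property defining $F_G$ to obtain that $F_G$ intertwines the group operations on $G$ and $G^{(p)}$. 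Iterating, $F^n_G$ is a homomorphism $G \to G^{(p^n)}$.

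The only mildly delicate point in the whole proposition is the bookkeeping in (4): one has to verify the canonical isomorphism $(G \times_S G)^{(p)} \cong G^{(p)} \times_S G^{(p)}$ and ensure that the various structural arrows (multiplication, unit, inverse) fit into a coherent commutative diagram after applying $(-)^{(p)}$. This is a formal manipulation with fibre products and the universal property of $G^{(p)}$, so I do not expect a substantive obstacle, and no deeper ingredient than those already used in (1)--(3) is required.
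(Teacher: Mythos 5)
The paper does not actually give a proof of this proposition: it simply cites the three Stacks project lemmas for parts (1)--(3) and states part (4) without argument, so there is no ``paper's proof'' to compare against in any substantive sense. Your proposal is correct and, in effect, supplies the local arguments behind those citations and then fills in the unproved part (4) by the standard base-change-plus-naturality-of-Frobenius reasoning. One small remark on (2): the extra sentence about injectivity being ``automatic since $F_G$ is a universal homeomorphism'' is not the real reason and is best deleted --- once you know the first arrow $F_G^{*}\Omega_{G^{(p)}/S}\to\Omega_{G/S}$ is zero, the right-exact cotangent sequence already forces $\Omega_{G/S}\to\Omega_{G/G^{(p)}}$ to be an isomorphism, so no further appeal is needed; if you keep the sentence as written it reads as a hand-wave rather than a step. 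Likewise in (1), it is cleaner to say that $a\in A$ satisfies the monic polynomial $T^p - F_G^{\#}(a\otimes 1)$ with coefficients in the image of $A\otimes_{R,\sigma_R}R$, rather than ``over the source,'' and to note that $F_G^{\#}(a\otimes r)=a^p r$ in general. These are cosmetic; the approach and all the substantive steps are sound, and part (4) correctly identifies both the finite-flat stability under the base change along $\sigma_S$ and the need for the canonical isomorphism $(G\times_S G)^{(p)}\cong G^{(p)}\times_S G^{(p)}$ to transport the group structure.
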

\begin{defn}
Let $G$ be a finite flat group scheme over field $k$ of characteristic $p$. We define the Verschiebung of $G/k$ to be the map $V_G\colon G^{(p)}\to G$ induces by the dual of the relative Frobenius of $G^{\vee}$ via the natural identification $((G^{\vee})^{(p)})^{\vee}\cong G^{(p)}$.
\end{defn}
\begin{prop}\cite[\S 14]{pink2004finite}\label{frobenius and connectedness and etaleness}\\
Assume that $G$ is a finite flat group scheme over field $k$ of characteristic $p$.
    \begin{enumerate}
        \item $V_G\circ F_G=[p]_G$ and $F_G\circ V_G=[p]_{G^{(p)}}$.
        \item $G$ is connected if and only if $F_G$ is nilpotent.
        \item $G$ is \'etale if and only if $F_G$ is an isomorphism.
        \item $G$ is unipotent if and only if $V$ is topologically nilpotent.
        \item $G$ is multiplicative if and only if $V$ is an isomorphism.
        \item $G$ is bi-infinitesimal if and only if both $F_G$ and $V_G$ are nilpotent.
    \end{enumerate}
\end{prop}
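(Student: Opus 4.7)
The plan is to prove parts (1)--(3) directly and then deduce (4)--(6) from them via Cartier duality, exploiting the fundamental identity $V_G = (F_{G^\vee})^\vee$ built into the very definition of Verschiebung.

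For part (1), I would argue that it suffices to check $V_G \circ F_G = [p]_G$, since then applying this to $G^\vee$ and dualising (together with $(F_{G^\vee})^\vee = V_G$ and $[p]^\vee = [p]$) yields the companion identity $F_G \circ V_G = [p]_{G^{(p)}}$. To prove $V_G \circ F_G = [p]_G$, I would use the Cartier pairing $G \times G^\vee \to \G_m$ and compute: the relative Frobenius on $G^\vee$ raises characters to the $p$-th power, so its dual acts on points of $G$ as multiplication by $p$ after precomposition with $F_G$. Alternatively (and perhaps cleaner in exposition), one reduces to the generating cases $\mu_p$, $\alpha_p$, and $\Z/p$, where the identity is immediate.

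For (2) and (3), I would invoke $\Omega_{G/S} = \Omega_{G/G^{(p)}}$ (already stated just before the proposition) together with the connected--étale sequence $0\to G^0\to G\to G^{\text{ét}}\to 0$, which is split over the perfect field $k$. If $G=\Spec A$ is connected, then $A$ is local Artinian with nilpotent augmentation ideal $I$, and $F_G^*\colon A^{(p)}\to A$ factors through $a\otimes r \mapsto a^p r$, whose image lies in $k\oplus I^p$; iterating gives $F_G^n = 0$ once $I^{p^n}=0$. Conversely, on an étale factor $\Spec L$ with $L/k$ finite separable, Frobenius on $L$ is an isomorphism, so nilpotency of $F_G$ forces $G^{\text{ét}}=0$. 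This gives (2). For (3), the étaleness of $F_G$ is equivalent to $\Omega_{G/G^{(p)}} = \Omega_{G/k} = 0$, which (since $F_G$ is already finite flat) upgrades to $F_G$ being an isomorphism; and $G$ is étale over $k$ iff $\Omega_{G/k}=0$, closing the loop.

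For (4)--(6) I would invoke Cartier duality as a black box, the key input being $V_G = (F_{G^\vee})^\vee$. By definition, $G$ is multiplicative iff $G^\vee$ is étale; by (3) applied to $G^\vee$, this happens iff $F_{G^\vee}$ is an isomorphism, and dualising gives (5). Similarly $G$ is unipotent iff $G^\vee$ is (ind-)connected; applying (2) to $G^\vee$ gives $F_{G^\vee}$ nilpotent, which dualises to $V_G$ being (topologically) nilpotent, yielding (4); the word \emph{topologically} is only needed if one wants to state the result for pro-finite unipotent objects such as $p$-divisible groups, and in the strict finite-flat setting nilpotence in the usual sense suffices. Finally (6) is the conjunction: $G$ bi-infinitesimal means both $G$ and $G^\vee$ are connected, and by (2) and its dual this amounts to both $F_G$ and $V_G$ being nilpotent.

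The main obstacle is part (1): producing a genuinely intrinsic proof of $V\circ F = [p]$ requires some care, because the definition $V_G := (F_{G^\vee})^\vee$ relies on the canonical isomorphism $((G^\vee)^{(p)})^\vee \cong G^{(p)}$ and one must verify that the base-change of Frobenius on $S=\Spec k$ intertwines the Cartier pairing correctly. Once (1) is established, (2)--(6) are essentially a formal exercise in combining the connected--étale sequence with Cartier duality, and the exposition can be kept short by systematically exchanging $F\leftrightarrow V$ via duality.
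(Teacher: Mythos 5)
The paper gives no proof of this proposition; it simply cites \cite[\S 14]{pink2004finite}, so I can only check your argument on its own terms. Your treatment of (2) and your duality scheme for (4)--(6) are sound (the iterated Verschiebung dualises to the iterated Frobenius on $G\ve$, so nilpotency transfers as you say). A small slip in (3): you write ``since $F_G$ is already finite flat'', but $F_G$ is finite and \emph{not} flat in general (e.g.\ $F_{\mu_p}$ factors through the zero section, and the corresponding ring map $k[s]/(s^p-1)\to k[t]/(t^p-1)$ sends $s\mapsto 1$, which is not flat). What you actually need is that $G\to\Spec k$ is finite flat; then $F_G$ an isomorphism forces $\Omega_{G/k}=\Omega_{G/G^{(p)}}=0$, hence $G$ is unramified and therefore \'etale over $k$. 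No flatness of $F_G$ itself is used, or available.

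The genuine gap is in part (1). You claim the companion identity $F_G\circ V_G=[p]_{G^{(p)}}$ follows from $V_G\circ F_G=[p]_G$ ``by applying the latter to $G\ve$ and dualising''. Carry this out. Applying the first identity to $G\ve$ gives $V_{G\ve}\circ F_{G\ve}=[p]_{G\ve}$. Taking Cartier duals of both sides and using $(V_{G\ve}\circ F_{G\ve})\ve=(F_{G\ve})\ve\circ(V_{G\ve})\ve$ together with the definitional identities $(F_{G\ve})\ve=V_G$ and $(V_{G\ve})\ve=\bigl((F_G)\ve\bigr)\ve=F_G$, the dual identity reads $V_G\circ F_G=[p]_G$ --- you land back on the hypothesis, not on $F_G\circ V_G=[p]_{G^{(p)}}$. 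The same thing happens if you dualise first and then pass to $G\ve$: the assignment $G\mapsto G\ve$ followed by $(\cdot)\ve$ permutes the two identities \emph{within each family} (sending ``$V\circ F=p$ for $G$'' to ``$V\circ F=p$ for $G\ve$'' and ``$F\circ V=p$ for $G$'' to ``$F\circ V=p$ for $G\ve$''), so neither statement is reachable from the other purely by duality. You therefore need an independent argument for $F\circ V=p$: either by the explicit construction of Verschiebung via the $p$-fold symmetric power diagram (which delivers both identities at once), or by a direct verification on a suitable collection of objects together with exactness/base-change arguments to propagate it. The latter reduction to $\mu_p$, $\alpha_p$, $\underline{\Z/p\Z}$ also needs more care than you allow, since a general finite flat commutative group scheme over a non--algebraically-closed field is not a product of these, and the extension argument has to be set up so that Frobenius and Verschiebung, which are natural in $G$, behave compatibly with the connecting maps.
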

\begin{example}
    \begin{enumerate}
        \item $F_{\mu_p}=0$ and $V_{\mu_p}$ is an isomorphism.
        \item $F_{\alpha_p}=0$ and $V_{\alpha_p}=0$.
        \item $F_{\underline{\Z/p\Z}}$ is an isomorphism and $V_{\underline{\Z/p\Z}}=0$.
    \end{enumerate}
\end{example}

\begin{thm}
\begin{enumerate}
    \item Let $k$ be perfect field of characteristic $p>0$ and $G=\Spec A$ is a connected finite group scheme over $k$. Then there is a $k$-isomorphism 
    $$A\cong k[X_1,\dots,X_n]/(X^{p^{e_1}}_1,\dots,X^{p^{e_n}}_n)   $$
    for some $n\in\N$ and $e_i\in\N$, which are invariant of $G$ up to permutation of $e_i$'s.
    
    \item The order of an affine connected finite flat group scheme over a field of characteristic $p$ is a power of $p$.
    
    \item A finite flat group scheme of order invertible in the base scheme $S=\Spec R$ is \'etale.
    
    \item Let $(R,\fm)$ be a complete noetherian local ring with prefect residue field $k$. Then for a finite flat connected group scheme $G=\Spec A$ over $R$ there is a $k$-isomorphism
    $$A\cong R[[X_1,\dots,X_n]]/(f_1,\dots,f_n)  $$
    for each $1\leq i\leq n$, there exists $e_i\N$ such that $f_i-X^{p^{e_i}_i}_i\in \fm R[X_1,\dots,X_n]$ is a polynomial of degree less than $p^{e_i}$.
    \item Let $R$ be a noetherian domain and $p$ a prime ideal of $R$. Let $\hat{R}$ be the completion of $R$ with respect to the p-adic topology. The functor 
    $$G\mapsto (G_{\hat{R}},G_{R[\frac{1}{p}]},id_{G_{\hat{R}[\frac{1}{p}]}})$$
    is an equivalence of categories from the category of finite flat group schemes over $R$ to the category of the triples $(G,H,\phi)$, where $G$ and $H$ are finite flat group schemes over $\hat{R}$ and $R[\frac{1}{p}]$, respectively, and $\varphi\colon G\times_{\Spec \hat{R}}\Spec \hat{R}[\frac{1}{p}]\to H\times_{\Spec R[\frac{1}{p}]}\Spec \hat{R}[\frac{1}{p}]$ is an isomorphism.
    \item (Mayer-Vietoris exact sequence): Let $G$ and $H$ be p-power order finite flat group schemes over noetherian ring $R$. Then, we have the following exact sequence
    \begin{align*}
    0\to \Hom_R(G,H)\to\Hom_{\hat{R}}(G,H)\times\Hom_{R[\frac{1}{p}]}(G,H)\to \Hom_{\hat{R}[\frac{1}{p}]}(G,H)\\ \xrightarrow{\delta} \Ext^1_R(G,H)\to \Ext^1_{\hat{R}}(G,H)\times \Ext^1_{R[\frac{1}{p}]}(G,H)\to \Ext^1_{\hat{R}[\frac{1}{p}]}(G,H)  
    \end{align*}
    where $\delta$ is defined by $$\delta\alpha=((G\times_R H)_{\hat{R}},(G\times_R H)_{R[\frac{1}{p}]},id_H\circ id_G+\alpha)$$ for any $\alpha\in \Hom_{\hat{R}[\frac{1}{p}]}(G,H)$.
\end{enumerate}
See \cite[\S 14]{pink2004finite}, \cite{stix2009course}, or \cite{Sc2}. See also \cite[\href{https://stacks.math.columbia.edu/tag/032A}{Theorem 032A}]{stacks-project} for (4).
\end{thm}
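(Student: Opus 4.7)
The proof will proceed part by part, beginning with the characteristic-$p$ structure theory and then bootstrapping to the complete local ring setting and the patching statements.

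For part (1), I would work with the connected finite flat $k$-Hopf algebra $A$. Since $G$ is connected and finite over the perfect field $k$, $A$ is a local artinian $k$-algebra with residue field $k$, and the augmentation ideal $I$ equals the maximal ideal. The plan is to pick lifts $X_1,\dots,X_n\in I$ of a $k$-basis of $I/I^2$ (equivalently of the cotangent space $\coLie(G)$), and to use the fact that the relative Frobenius on a connected finite $k$-group scheme is nilpotent (\cref{frobenius and connectedness and etaleness}) to show that each $X_i$ satisfies some least relation $X_i^{p^{e_i}}=0$. Perfectness of $k$ is exactly what allows us to straighten out $p$-th power behaviour and to extract these monomial relations cleanly; a standard Hopf algebra argument (coming from the fact that $(1-\sigma)$ annihilates primitives in a cocommutative Hopf algebra) then shows no other relations are needed, giving $A\cong k[X_1,\dots,X_n]/(X_1^{p^{e_1}},\dots,X_n^{p^{e_n}})$. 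The invariance of $\{e_i\}$ up to permutation follows from recovering each $e_i$ intrinsically from the filtration by iterated Frobenius kernels. Part (2) is then immediate since $\dim_k A=p^{e_1+\dots+e_n}$, and in the general (not necessarily connected) affine case we invoke the connected-étale sequence (\cref{connected-etale exact sequence for finite flat}) after base change to $\bar k$. For part (3), the cleanest route is Deligne's theorem that $[n]$ annihilates $G$ when $n=|G|$: if $n\in R^\times$ then $[n]\colon G\to G$ is an isomorphism on one hand and the zero map on the other, forcing the differential $d[n]=n\cdot\mathrm{id}$ on $\Omega_{G/R}$ at the identity to be both zero and invertible, so $\Omega_{G/R}=0$ and $G\to\Spec R$ is unramified; combined with flatness this gives étaleness.

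For part (4), the idea is to deform the structure in (1) along the henselian local ring $R$. By the connected-étale sequence, after replacing $G$ by $G^0$ one may assume $G=\Spec A$ is connected, so $A$ is a henselian local finite $R$-algebra. Choose lifts $X_1,\dots,X_n\in \mathfrak m_A$ of a $k$-basis of $\mathfrak m_{A\otimes_R k}/\mathfrak m_{A\otimes_R k}^2$. By Nakayama's lemma and the completeness of $R$, these generate $A$ as a continuous $R$-algebra, giving a surjection $R[[X_1,\dots,X_n]]\twoheadrightarrow A$. Using that the special fibre has the form given in (1), each $X_i^{p^{e_i}}$ is zero modulo $\mathfrak m_R$, so it lifts to some $f_i\in R[[X_1,\dots,X_n]]$ with $f_i-X_i^{p^{e_i}}\in\mathfrak m_R R[[X_1,\dots,X_n]]$. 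The key step is to check that the resulting map $R[[X]]/(f_1,\dots,f_n)\to A$ is an isomorphism: since $A$ is $R$-flat of the same rank as the special fibre and the map is surjective reducing mod $\mathfrak m_R$ to the isomorphism in (1), flatness plus the local criterion for flatness gives the claim.

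Parts (5) and (6) are a descent/patching argument, and I would reduce (6) to (5) at the end. For (5), the functor sending $G$ over $R$ to the triple $(G_{\hat R}, G_{R[1/p]}, \mathrm{id})$ is clearly well defined; one must construct a quasi-inverse. Given a triple $(G_1,G_2,\varphi)$ with $G_1$ over $\hat R$ and $G_2$ over $R[1/p]$, the plan is to glue the underlying coherent sheaves using the faithfully flat square $R\to \hat R\times R[1/p]\to \hat R[1/p]$ (the Beauville–Laszlo descent situation, valid because $p$ is a regular element of $R$ and $R$ is noetherian), then transport the Hopf structure. Since $G$ is affine and finite flat, the relevant coherent sheaf is the $p$-torsion / $p$-adically complete module glued from its $\hat R$- and $R[1/p]$-restrictions, and the faithful flatness of $\hat R\times R[1/p]\to\hat R[1/p]$ after inverting $p$-adic issues gives the equivalence. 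The hard part here is justifying that the group-scheme structure descends uniquely, which I would handle by observing that $\Hom$ of finite flat group schemes is itself determined by the triple in the same way. Finally, (6) follows formally: once (5) identifies the category of finite flat $R$-group schemes with the 2-fibre product, both $\Hom_R(G,H)$ and $\Ext^1_R(G,H)$ become the equalizer/cokernel of the pair of restriction maps to $\hat R$ and $R[1/p]$, the difference being measured on $\hat R[1/p]$; writing out this Mayer–Vietoris square yields the stated long exact sequence, with the connecting map $\delta$ given precisely by the glueing construction of extensions described. The main obstacle I anticipate is in (5): making the descent argument precise in the non-projective, non-affine-flat-module setting, since $G$ is finite over $R$ but the Beauville–Laszlo patching requires careful bookkeeping of the $p$-adic topology on coherent modules over $R$ and compatibility with the Hopf algebra structure.
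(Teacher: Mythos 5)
The paper offers no proof of this theorem; it simply refers to the standard sources (Pink's lecture notes \S 14, Stix's course notes, Schoof, and Stacks tag 032A for part (4)). Your sketch essentially reconstructs the arguments one finds in those references, so there is no competing ``paper approach'' to compare against; you are supplying what the citations would supply.

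That said, two points deserve attention. In part (3), your opening phrase that ``$[n]\colon G\to G$ is an isomorphism on one hand and the zero map on the other'' is not right: for a finite flat group scheme of order $n$, Deligne's theorem says $[n]$ \emph{is} the zero map (it factors through the unit section), and $[n]$ is not an isomorphism in any interesting case. What your argument actually needs, and what you correctly say next, is only the consequence: $d[n]=n\cdot\mathrm{id}$ on the tangent space at the unit section, and since $[n]$ is the zero map this differential vanishes, while $n$ being invertible forces the tangent space to vanish, hence $\Omega_{G/R}$ vanishes along $e$ and by translation everywhere, giving unramified; combined with flat and finite presentation this gives \'etale. So delete the ``isomorphism'' clause --- it is a non sequitur, and the rest of your reasoning carries the weight. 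In part (1), the phrase about ``$(1-\sigma)$ annihilates primitives in a cocommutative Hopf algebra'' does not really identify the key lemma; the genuine content (as in Pink \S 14 or Demazure--Gabriel) is a delicate induction on the nilpotency degree of the relative Frobenius, using perfectness of $k$ to extract $p$-th roots and renormalize the $X_i$ so that the ideal of relations is generated precisely by the monomials $X_i^{p^{e_i}}$. Your sketch gestures in the right direction but papers over the actual inductive mechanism. The remaining parts (2), (4), (5), (6) are fine as outlines: (2) follows from (1) and a connected-\'etale reduction, (4) is a formal lift of (1) via Nakayama and the local flatness criterion, and (5)--(6) are correctly identified as Beauville--Laszlo / Schoof-style gluing along $R\to\hat R\times R[1/p]\to\hat R[1/p]$ with (6) a formal Mayer--Vietoris consequence of (5).
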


\end{appendix}
\cleardoublepage
\phantomsection
\addcontentsline{toc}{chapter}{\bibname}
\bibliographystyle{alpha}
\bibliography{References.bib, references}

\end{document}